\documentclass[a4paper, twoside]{report}

\usepackage{geometry}
\usepackage{setspace}
\usepackage[T1]{fontenc}
\usepackage{charter}
\usepackage{float}
\usepackage{todonotes}
\usepackage{mathrsfs}
\usepackage{amsmath}
\usepackage{amssymb}
\usepackage{amsthm}
\usepackage{etoolbox}
\usepackage{graphicx}
\usepackage{enumerate}
\usepackage{tikz-cd}
\usepackage{fancyhdr}
\usepackage{pictex}
\usepackage{multirow}
\usepackage{mathtools}
\usepackage{hyperref}
\usepackage{adjustbox}
\usepackage{footnote}
\usepackage{bbm}
\usepackage{tikz,tikz-3dplot}
\usetikzlibrary{arrows,decorations,patterns,shadings,calc,shapes}
\usepackage{pgfplots}
\pgfplotsset{compat=1.12}

\makesavenoteenv{tabular}
\makesavenoteenv{table}

\linespread{1.2}

\newtheorem{theorem}{Theorem}[section]

\newtheorem{cor}[theorem]{Corollary}

\newtheorem{definition}[theorem]{Definition}
\newtheorem{fact}[theorem]{$\textbf{Fact}$}
\newtheorem{factdefinition}[theorem]{Fact/Definition}
\newtheorem{lemma}[theorem]{Lemma}
\newtheorem{example}[theorem]{Example}
\AtEndEnvironment{example}{\null\hfill\qedsymbol}
\newtheorem{notation}[theorem]{Notation}

\newtheorem{theor}{Theorem}
\newtheorem{defi}{Definition}
\newtheorem{lemm}{Lemma}
\newtheorem{factum}{Fact}
\newtheorem{coro}{Corollary}
\newtheorem{claiming}{Claim}

\newtheorem*{th*}{Theorem}
\newtheorem*{not*}{List of symbols and notations}
\newtheorem*{def*}{Definition}
\newtheorem*{fact*}{Fact}
\newtheorem*{example*}{Example}
\newtheorem*{lemma*}{Lemma}
\newtheorem*{cor*}{Corollary}
\newtheorem*{claim*}{Claim}
\theoremstyle{remark}
\newtheorem{remark}[theorem]{\bf Remark}
\newtheorem{remarks}[theorem]{\bf Remarks}

\newtheorem{claim}[theorem]{\bf Claim}
\newtheorem*{ackn*}{{\bf Acknowledgements}}

\newcommand{\bul}{\bullet}

\newcommand{\Q}{\mathbb{Q}}
\newcommand{\R}{\mathbb{R}}

\newcommand{\C}{\mathbb{C}}
\newcommand{\Z}{\mathbb{Z}}
\newcommand{\N}{\mathbb{N}}
\newcommand{\G}{\mathbb{G}}

\newcommand{\PP}{\mathbb{P}^1}
\newcommand\QQ[1]{\{\mathbb{P}^1/{{#1}}\}}
\newcommand\XQ[2]{\{{#1}/{{#2}}\}}
\newcommand\XQQ[2]{\left[{#1}/{{#2}}\right]}

\newcommand\XX[1]{\{X/{{#1}}\}}
\title{Dynamical Topoi}
\author{Jacopo Garofali}

\begin{document}
	\baselineskip=24pt
	\begin{titlepage}
		\begin{center}
			{\Large \textsc{University of Rome}}\\
			\vspace{0.4em}
			{\Large \textsc{ ``Tor Vergata''}}\\
			\vspace{2em}
			\begin{figure}[!h]
				\centering
				  \includegraphics[scale=0.2]{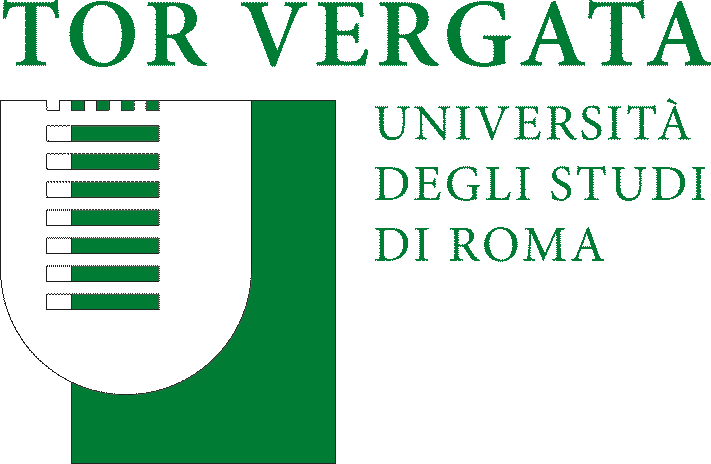}
			\end{figure}
			\vspace{1em}
			{\Large \textsc{Doctoral school in Mathematics}}\\
			\vspace{1em}
			
			{\Large \text{Ph.D. thesis}}\\
			\vspace{0.5 cm}
			\hrulefill \\ 
			\bigbreak
			{\huge \textbf{Dynamical sheaves}\medskip\\}
		\hrulefill
		\end{center}
		\vspace{1 em}
		\begin{center}
			\begin{tabular}{c c c c c c c c}
				Advisor &&&&&&&  Author \\[0.2cm]
				\large{Prof. Michael McQuillan}  &&&&&&& \large{Jacopo Garofali}\\[1.3cm] \\
		&&&	Coordinator \\[0.2cm]
			&&&	\large{Prof. Carlangelo Liverani}  
			\end{tabular}
		\end{center}
		
		\begin{table}[!b]
			\centering
			{\normalsize Cycle XXXIV - Academic Year 2020/2021}
		\end{table}
	\end{titlepage}
\thispagestyle{empty}
\phantom{h}
\newpage
\begin{table}[ht!]
\hrulefill 
\bigbreak
{\centering  {\Large \textsc{ABSTRACT}}

}

\hrulefill \\
\thispagestyle{empty}
\smallbreak
In the present work we define and study the classifying (or ``quotient'') site
$\XQQ{X}{\Sigma}$ for any small site $X$ with (countable) coproducts
endowed with an action of a (countable) semigroup $\Sigma$. 
A simple case (the most relevant to our applications) 
 is the case $\Sigma=\N$, on which, therefore
 we concentrate. 
Our main result consists in establishing an 
equivalence of the corresponding Tòpos 
with the category of sheaves on $X$ with ``$\Sigma-$action''.
We prove also that there is a spectral sequence computing sheaf 
cohomology in $\XQQ{X}{\N}$ and we deduce some topological
properties of this site, such as its fundamental group.
We finally apply the above formalism in Holomorphic Dynamics, 
giving a Tòpos-theoretic interpretation
of Epstein's work on the Fatou-Shishikura Inequality and Infinitesimal Thurston's Rigidity.
\end{table}
\begin{figure}[H]
\centering
\includegraphics[scale=0.4]{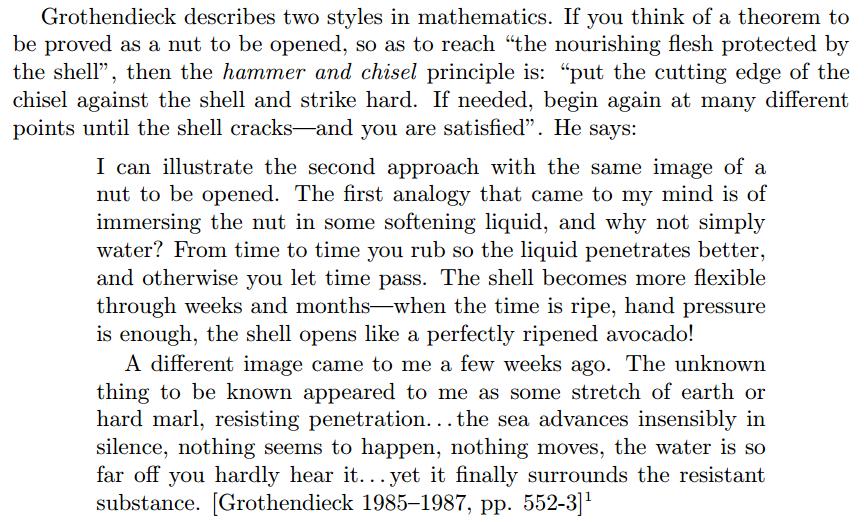}
\end{figure}
\begin{figure}[H]
	\centering
	\includegraphics[scale=1.5]{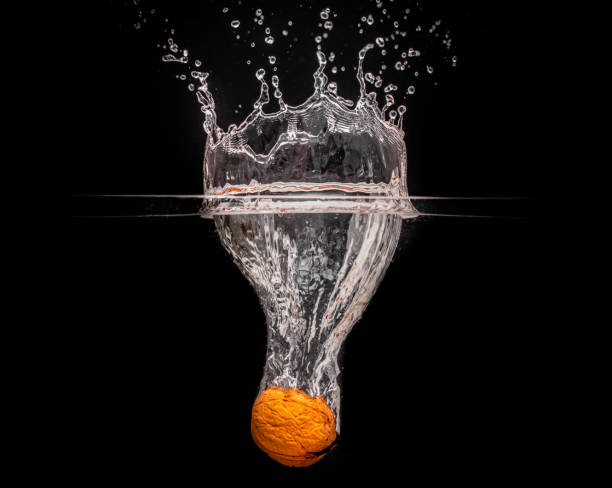}
\end{figure}
\newpage
\phantom{h}
\thispagestyle{empty}
	\tableofcontents
	\newpage
	\begin{not*}\
		
		\noindent
		We set $\N\coloneqq\Z_{\geq 0}$. Let $X$ be a small category and let us denote by $ob(X)$ and $ar(X)$ the set of its objects and arrows, respectively. We sometimes refer improperly to an arrow as a ``map''.
		Throughout this work, we shall employ the following notation.
		\begin{itemize}
		\item $X^{\wedge}\coloneqq [X^{\rm op}, {\rm Set}]$ is the category
		of ${\rm Set}$-valued pre-sheaves on $X$;
		\item Let $U \in ob(X)$, then $\underline{U} \in X^{\wedge}$
		denotes the representable functor associated to $U$, \textit{i.e.} 
		${\rm Hom}_{X}(-,U)$;
		\item If $J_{X}$ is a Grothendieck topology on $X$,
		we denote by $Sh(X, J_{X})$
		the category of $Set$-valued sheaves on $X$. When there is no
		 room for confusion we shall simplify notation and write $Sh(X)$;
		 \item We denote by $s,t: ar(X) \to ob(X)$ 
		 the source and target functors, taking $i: U \to V \in X$ to $U$ and $V$, respectively;
		 \item $a: X^{\wedge} \to Sh(X)$ denotes the associated sheaf functor, cf. \eqref{sheaficationofpreheaf};
		 \item Morphisms between sites may be denoted by $f,g,h$, as well as
		  lower-case Greek letters $\pi, \rho, \varepsilon$. 
		  On the other hand, functors are denoted by lower-case Latin letters $a,b,c, s, t$.
		  \item If $c: Y \to X$ is  a functor, we denote by $c^*$ 
		 the composition functor associated to $c$,
		  \begin{equation}\label{compfunctorcap1}
		  	X^{\wedge} \to Y^{\wedge}, \; \mathcal{F} \mapsto \mathcal{F}\circ c,
		  \end{equation}
	  and with $c_!$ (resp. $c_*$) its left adjoint (resp. its right adjoint).
	  \item If $g: X \to Y$ is a morphism of sites, we denote by 
	  \begin{equation}\label{pushforwardfunctorcap1}
	  g_*:	X^{\wedge} \to Y^{\wedge},
	  \end{equation}
   the functor $(g^{-1})^*$. As usual, we abuse notation by denoting $g^{-1}\coloneqq (g^{-1})_!$ its left adjoint. On the other hand,
   as usual the restriction
   $
   g_*|_{Sh(X)}: Sh(X) \to Sh(Y)
   $
    is still denoted by $g_*$, while its left adjoint is denoted by $g^*$;
    \item Let $X$ be a category with coproducts indexed by a set $A$. 
    Then, for any collection of object $\{ U_{\alpha} \}_{\alpha \in A}$ we denote by
    $\iota_{\alpha} :U_{\alpha} \hookrightarrow \coprod\limits_{\alpha \in A}U_{\alpha}$
    the monomorphisms furnished by the definition of coproduct;
    \item If $c$ is a functor admitting a right adjoint $d$, we write $d=ad(c)$;
    \item If $\mathcal{F}$ is a (pre-)sheaf on a site $X$ and $R$ is a sieve on an object of $X$, we adopt the following notation:
    $
    \mathcal{F}(R)\coloneqq {\rm Hom}_{X^{\wedge}}(R,\mathcal{F}).
    $
		\end{itemize}
	Moreover, as usual, by monoid we mean a semigroup with identity and 
	a monoid morphism $(A,\ast) \to (B, \circ)$ is a multiplicative 
	map preserving the identity element. \\
	\end{not*}
\vfill

	\chapter*{\centering Introduction}
	\addtocontents{toc}{\def\string\@dotsep{100}}
	\addcontentsline{toc}{chapter}{\large{\textbf{Introduction}}}
	The aim of the present work is to show that Grothendieck's
    strategy of studying
	sheaves, and their cohomology, may be applied 
	to the field of Holomorphic Dynamical Systems. 
	The procedure of defining new \textit{Grothendieck topologies} -- with the aim of
	studying the corresponding \textit{T\`opoi} -- has proved an extremely
	successful substitute for classical metric topology
	 in many fields such as Algebraic Geometry 
	in characteristic $p$, so it is reasonable to
	expect analogous results in Dynamics.\\	
	Let $(X, J_X)$ be a site, \textit{i.e.} $X$ is a category, and $J_{X}$
	is a Grothendieck topology on $X$, cf. \cite[II.1]{SGA4}. We write simply $X$ when 
	there is no room for confusion. 
	\begin{defi}\label{intro:def1}
We say that a site $X$ has the property {\rm \bf (D)} if 
	\begin{itemize}	
		\item[\mbox{\boldmath $D_1)$}] $X$
		is a small site that has finite limits and countable coproducts;
		\item[\mbox{\boldmath $D_2)$}]  coproducts  in $X$ commute with finite limits in $X$;
		\item[\mbox{\boldmath $D_3)$}] coproducts in $X$ are disjoint, cf. 
		{\rm\cite{sheavesmaclane}}. In other words, 
		we require that  the defining morphisms 
		$\displaystyle U_{\alpha} \hookrightarrow 
		U=\coprod_{\alpha \in A} U_{\alpha}$
		 are monomorphisms such that $\forall \, \alpha,\beta \in A$
		\begin{equation}
			\emptyset \overset{\sim}{\longrightarrow} U_{\alpha} \times_{U}U_{\beta},
		\end{equation}
	where $\emptyset$ is the initial object of $X$.
	\end{itemize}
	Let us fix a site $X$ that has the property {\rm (D)}, together 
	with an endomorphism of sites $f:X \to X$ 
	that commutes with finite limits and coproducts. 
	We say that 
	\begin{equation}\label{intro:sitewithdyndef}
		\text{$(X,f)$ is a \textbf{site with dynamics}.}
	\end{equation}
	\end{defi}
The objective of Chapter I is to study the
	 \textit{discrete dynamical system} 
	generated by $f$, \textit{i.e.} the monoid morphism 
	\begin{equation}\label{intro:action}
		\begin{tikzcd}[row sep=0.04pc, column sep=1.3pc]
			\Phi: \N\arrow[ r] & {\rm End}(X),
		\end{tikzcd} 
	\end{equation}
satisfying $\Phi(1)=f$. 
	The first section of Chapter I is dedicated to defining
	the ``classifying site'' $\XX{f}$ (or simply ``dynamical site''), 
	cf. \eqref{defdynsite1}, 
	associated to a site with dynamics
	$(X, f)$. 
	We already anticipate that in Chapter II
	we consider any countable
	 monoid $\Sigma$, 
	and an action of $\Sigma$, \textit{i.e.} a
	monoid morphism $\Phi$ as in \eqref{intro:action},
	with $\N$ replaced by $\Sigma$, on a site $X$ having the
	property {\rm (D)}.
	To these data, there is associated a site,
	denoted by $\XQQ{X}{\Sigma}$ ,
	which is called the
	``classifying site for the action of $\Sigma$ on  $X$''.
	Let us observe that:
	\begin{equation}\label{countability}
		\begin{aligned}
			 \text{replacing the assumption "countable coproducts" in $(D_1)$, Definition \ref{intro:def1}, with}\\
			 \text{"coproducts of cardinality $\# \Sigma$", the assumption ``$\Sigma$ countable'' can be dropped.}
		\end{aligned}
	\end{equation}
	When $\Sigma=\N$ and $\Phi$ is generated by $f$,
	the sites $\XX{f}$ and $\XQQ{X}{\N f}$ are equivalent 
	(actually, they are equivalent both as categories 
	and as sites).
	Therefore, the notation $\XX{f}$
	can be considered as a shorthand for 
	$\XQQ{X}{\N f}$. 
	This notation has been set up
	in order to avoid confusion 
	with $\XQQ{X}{f}$,	
	which in turn is employed as a shorthand for the 
	classifying site $\XQQ{X}{\Z f}$ associated to 
	the \textit{group} action on $X$ generated by 
	an automorphism $f$.
 	\begin{defi}\label{intro:def2}
		Let $(X,f)$ be a site with dynamics, \eqref{intro:sitewithdyndef}.
		 Consider the category $\XX{f}$
		whose objects are maps $u: f^{-1}U \to U$, for $U \in ob(X)$, 
		and arrows $u \to v$ are commutative squares
			\begin{equation*}
			\begin{tikzcd}[row sep=2.6pc, column sep=2.6pc]
				f^{-1}U \arrow[r, "u"] 
				\arrow[d," f^{-1}j"'] &
				U \arrow[d, "j"] \\
				f^{-1}V \arrow[r, "v"] &
				V,
			\end{tikzcd} 
		\end{equation*}
	where $j \in ar(X)$. The category $\XX{f}$ endowed 
	with the topology induced, cf. \ref{sitewithdyn}, by the target functor $t: \XX{f} \to X$,
	is called the \textbf{classifying site} for the action of $\N f$ on $X$.
	\end{defi}
	The main achievement of this section is the following description of the 
	 T\`opos $Sh(\XX{f})$, cf. \ref{equivalencelemmaXf}.
	  \begin{theor}\label{intro:theorem1}
	  		Let $(X,f)$ be a site with dynamics,  \eqref{intro:sitewithdyndef}.
Then, a sheaf on the classifying site $\XX{f}$ 
consists of a pair $(\mathcal{F},\varphi)$
		where $\mathcal{F}$ is a sheaf on $X$ and 
		$$
		\varphi: f^*\mathcal{F} \to \mathcal{F}
		$$
		is a sheaf morphism. A morphism of sheaves 
		$(\mathcal{F},\varphi) \to (\mathcal{G},\gamma)$ is a commutative square
			\begin{equation}\label{intro:square1}
			\begin{tikzcd}[row sep=2.6pc, column sep=2.6pc]
				f^{*}\mathcal{F} \arrow[r, "\varphi"] 
				\arrow[d," f^{*}\theta"'] &
				\mathcal{F} \arrow[d, "\theta"] \\
				f^{*}\mathcal{G}\arrow[r, "\gamma"] &
				\mathcal{G}
			\end{tikzcd} 
		\end{equation}
	\end{theor}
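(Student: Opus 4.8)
The plan is to realise the asserted description as an equivalence of categories induced by the target functor $t:\XX{f}\to X$, exploiting that $t$ admits a left adjoint. The objects of $\XX{f}$ are $f^{-1}$-algebras $u:f^{-1}U\to U$ and $t$ is the forgetful functor; since $f^{-1}$ commutes with countable coproducts and $X$ has them, $t$ has a left adjoint $\ell$, the free-algebra functor $\ell(U)=\big(\coprod_{n\ge 0}f^{-n}U,\ \sigma\big)$ with $\sigma$ the shift. On presheaves this yields the adjoint triple $t_!\dashv t^*\dashv t_*$ together with the identification $t_*=\ell^*$ (precomposition with $\ell$), i.e. $(t_*\mathcal S)(U)=\mathcal S(\ell U)$. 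First I would check that both $t^*$ and $t_*$ carry sheaves to sheaves: for $t^*$ this is immediate, because a covering $\{u_i\to u\}$ of $\XX{f}$ lies over a covering $\{U_i\to U\}$ and, by $(D_2)$, the fibre products $u_i\times_u u_{i'}$ are computed in $X$; for $t_*$ one uses in addition $(D_3)$, so that a sheaf sends the coproducts appearing in $\ell$ to products.

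The forward functor $\Phi:Sh(\XX{f})\to\{(\mathcal F,\varphi)\}$ then sends $\mathcal S$ to $\mathcal F:=t_*\mathcal S$, a sheaf on $X$ by the previous step. To equip it with a structure morphism I would use the canonical algebra map $\iota:\ell(f^{-1}U)\to\ell(U)$ (the shift, natural in $U$); applying $\mathcal S$ gives $\mathcal F(U)=\mathcal S(\ell U)\to\mathcal S(\ell f^{-1}U)=\mathcal F(f^{-1}U)=(f_*\mathcal F)(U)$, that is, a morphism $\widetilde\varphi:\mathcal F\to f_*\mathcal F$. Transposing along the adjunction $f^*\dashv f_*$ produces the desired $\varphi:f^*\mathcal F\to\mathcal F$; functoriality in $\mathcal S$ is formal and recovers the commutative squares \eqref{intro:square1}.

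For the quasi-inverse $\Psi$, given a pair $(\mathcal F,\varphi)$ I would set, for each object $u:f^{-1}U\to U$,
\[
\big(\Psi(\mathcal F,\varphi)\big)(u)\ :=\ \operatorname{eq}\Big(\mathcal F(U)\ \rightrightarrows\ \mathcal F(f^{-1}U)\Big),
\]
the two arrows being the restriction $\mathcal F(u)$ along the structure map and the action $\widetilde\varphi_U$ adjoint to $\varphi$; this is functorial in $u$ precisely because the defining squares of $\XX{f}$ assert that the two parallel arrows are respected. Evaluating on a free algebra and using $(D_3)$ to turn $\coprod_n f^{-n}U$ into a product, the equaliser condition $a_{n+1}=\widetilde\varphi(a_n)$ collapses the resulting system to its initial term, whence $(t_*\Psi(\mathcal F,\varphi))(U)\cong\mathcal F(U)$ and $\Phi\Psi\cong\mathrm{id}$; the reverse isomorphism $\Psi\Phi\cong\mathrm{id}$ I would obtain by comparing $\mathcal S(u)$ with the same equaliser through the counit $\ell U\to u$.

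The main obstacle is verifying that $\Psi(\mathcal F,\varphi)$ is a genuine sheaf for the topology induced by $t$, and not merely a presheaf. This is where property $(D)$ must be used in full: for an arbitrary covering $\{u_i\to u\}$ one has to descend the equaliser presentation, which requires that the fibre products $u_i\times_u u_{i'}$ be computed in $X$ by $(D_2)$, that the coproducts entering the comparison be disjoint by $(D_3)$, and that $f^{-1}$ preserve both. A secondary delicate point is the passage between the presheaf-level left adjoint $f^{-1}$ and the sheaf-level $f^*$, which differ by sheafification; I would circumvent it by working throughout with the concretely described $f_*$ and the transpose $\widetilde\varphi$, invoking the associated-sheaf functor only at the very end in order to read off $\varphi$ from $\widetilde\varphi$.
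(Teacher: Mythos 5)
Your argument is essentially the paper's own proof of Theorem \ref{equivalencelemmaXf}: the forward functor is precomposition with the left adjoint $b$ of the target functor (the ``free orbit'' $\coprod_{n\ge 0}f^{-n}U$ with the shift), the quasi-inverse is the equalizer of $\mathcal{F}(u)$ and the adjoint of $\varphi$, and the two composites are identified exactly as you describe --- via the collapse $x_{n+1}=\widetilde{\varphi}(x_n)$ on free objects, and via the coequalizer presentation \eqref{coequalizerobjectXf} of a general object. The only slip is notational: precomposition with $\ell=b$ is the \emph{left} adjoint $t_!$ of $t^*$, not the right adjoint $t_*$ (cf.\ \ref{corleftadjointcap1}), but the formula $U\mapsto\mathcal{S}(\ell U)$ you actually work with is the correct functor, so nothing downstream is affected.
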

Let us now consider a topological space 
$X$ and a continuous self-map $f$. 
Then, the site $Ouv(X)$ of open sets of $X$, cf. \cite[IV.2]{SGA4} 
is not, in general, 
closed for countable, cf.  \eqref{countability}, coproducts, \textit{e.g.} 
the coproduct of two open sets is not necessarily immersed in $X$.
Although the site $\XQ{Ouv(X)}{f}$ (resp. $\XQQ{Ouv(X)}{\Sigma}$) 
can still be defined, it may be, in general, trivial, \textit{i.e.} 
there may be no nontrivial 
backward invariant open sets (\textit{e.g.} $\R/\Q$). 
In this case, \textit{par abus de langage}, we write
$\XX{f}$, or $\XQQ{X}{\N f}$, 
as a shorthand for the classifying site 
obtained by extending $f^{-1}: Ouv(X) \to Ouv(X)$ to the
 category whose 
objects are countable, cf.  \eqref{countability}, disjoint unions of open sets of $X$ and arrows are 
local homeomorphisms, 
(\textit{i.e.} a (very small) 
topological version of the étale site of an algebraic variety $X/k$,
cf.  \cite{milne}, \cite{etalesite}). 
Note that $Ouv(X)$ and the étale 
site of $X$ are equivalent, \textit{i.e.} their respective Tòpoi are equivalent, cf. \ref{cor:topspace}.
Therefore, in Theorem \ref{intro:theorem1} we
can take pairs $(\mathcal{F},\varphi)$ with
 $\mathcal{F}$ a sheaf on the topological space $X$, 
 and $\varphi: f^*\mathcal{F} \to \mathcal{F}$ a morphism of sheaves on $X$.\\
    In order to illustrate the properties of $\XX{f}$, let us 
    consider the simplest, 
    and perhaps the most illuminating, 
    example, namely when $X=pt$  
    is the topological space with one point, and the action of $\N$ is, of course, trivial.
   The resulting dynamical site is called the 
   ``classifying site of $\N$'' and it is denoted by $B_{\N}\coloneqq\XQQ{pt}{\N}$.
    This may be compared with the more 
    familiar notions of classifying space of $\Z$, 
    $B{\Z}$, or classifying champ of $\Z$, $B_{\Z}=[pt/\Z]$.
    Indeed, for $G$ a group, 
   the classifying space $BG$ (a.k.a. the Eilenberg-MacLane 
    $K(G,1)$ for $G$ discrete, cf. \cite{eilenberg-maclane}) 
   involves the construction of a topological space, unique
    up to homotopic equivalence, 
  that classifies isomorphism classes of $G$-principal bundles. 
  In particular, $BG$ carries a natural 
  contractible total space $EG$, and $EG \to BG$ is
   a universal $G$-torsor. Thus, maps from a (paracompact) topological
   space $X$ to $BG$ define, up to homotopic equivalence, a unique 
   isomorphism class of $G$-torsors over $X$. 
  More recently, the classical definition of classifying space 
   has been replaced by a higher 
   categorical construction, \textit{i.e.} the 
 \textit{Deligne-Mumford champ} 
 $B_G=[pt/G]$, cf. \cite[2.4.2]{L-MB}.
The advantages of the latter formalism
are that for any ``reasonable'' category $X$ (e.g. topological spaces)
the maps $X \to B_G$, up to natural transformations rather than homotopy,
classify isomorphism classes of $G$-torsors on $X$.
As an example, in the case $G=\Z$ we have 
$B\Z=S^1$ (up to homotopic equivalence) 
and it is easy to see that any locally constant sheaf
on $S^1$ defines a locally constant sheaf $\mathcal{E}$ on $\R$ invariant for the $\Z$-action 
generated by $x \mapsto x+1$, \textit{i.e.} 
its pullback under this map is isomorphic to 
$\mathcal{E}$ as a $\Z$-torsor. 
Similarly, given a set $F$ with a 
$\Z$-action, the invariants of this 
action define a locally constant sheaf on $S^1$,
but there are many more sheaves on the circle.
However, a sheaf on $B_{\Z}=[pt/\Z]$ is a $\Z$-set. 
Indeed, 
using the theory of Deligne-Mumford champs, 
to any ``reasonable'' category $X$ (e.g. topological spaces)
with $G$ action we can associate a quotient champ $[X/G]$,
\textit{i.e.} the classifier of the action.
The resulting theory of sheaves on $[X/G]$ is characterized by the fact that
the projection map
 $\pi: X \to [X/G]$ has the
 following
 ``descent property'', cf. \cite[12.2.1]{L-MB}: to give a sheaf $\mathcal{F}$ on $[X/G]$
 is equivalent to giving a sheaf on $X$ with a ``descent datum'',
 \textit{i.e.} an isomorphism between the two inverse images of 
 $\pi^*\mathcal{F}$ on $X\times G$, satisfying a cocycle condition.
 The last property is commonly reformulated in terms of $G$-equivariant
 sheaves on $X$, \textit{i.e.} sheaves with an action of $G$:
 \begin{factum}
 	Let $X$ be a topological space.
 	The category of sheaves on the classifying champ 
 	$[X/G]$ is equivalent to the category of $G$-equivariant sheaves on $X$.
 \end{factum}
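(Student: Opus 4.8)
The plan is to deduce the statement from the descent property of the projection $\pi \colon X \to [X/G]$ recalled just above. Writing $Sh([X/G])$ for the category of sheaves on the quotient champ, that property identifies $Sh([X/G])$ with the category of \emph{descent data} relative to $\pi$: pairs $(\mathcal{F},\theta)$ where $\mathcal{F}$ is a sheaf on $X$ and $\theta$ is an isomorphism between the two inverse images of $\mathcal{F}$ on $X \times G$, satisfying the cocycle condition over the triple fibre product. The whole argument therefore reduces to a single point, namely that for \emph{this} particular $\pi$ a descent datum is the same thing as a $G$-equivariant structure on $\mathcal{F}$; everything else (morphisms, functoriality of the correspondence) will be formal.

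First I would pin down the relevant fibre products. Since $\pi$ is, by construction of $[X/G]$, a $G$-torsor, it trivialises after pulling back along itself, and one obtains canonical isomorphisms
\begin{gather*}
X \times_{[X/G]} X \;\cong\; X \times G, \\
X \times_{[X/G]} X \times_{[X/G]} X \;\cong\; X \times G \times G,
\end{gather*}
under which the two projections from the double fibre product become the first projection $\mathrm{pr}_X \colon X \times G \to X$ and the action map $\sigma \colon X \times G \to X$. This is precisely the identification already used in the cited descent statement, and it is the step that encodes all the geometry of the quotient.

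Next I would unwind the descent datum under these identifications. Because $G$ is discrete we have $X \times G = \coprod_{g \in G} X$, and over the component indexed by $g$ the maps $\mathrm{pr}_X$ and $\sigma$ restrict, respectively, to $\mathrm{id}_X$ and to the homeomorphism $x \mapsto gx$; hence an isomorphism $\theta \colon \mathrm{pr}_X^{*}\mathcal{F} \xrightarrow{\sim} \sigma^{*}\mathcal{F}$ is exactly a family of sheaf isomorphisms $\theta_g \colon \mathcal{F} \to g^{*}\mathcal{F}$ indexed by $g \in G$. Restricting the cocycle condition over $X \times G \times G$ to each component then yields the normalisation $\theta_e = \mathrm{id}_{\mathcal{F}}$ together with $\theta_{hg} = g^{*}(\theta_h)\circ\theta_g$, which is verbatim the axiom for an action of $G$ on $\mathcal{F}$ lifting the $G$-action on $X$; that is, a $G$-equivariant structure. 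A morphism of descent data is, under the same unwinding, a morphism of sheaves on $X$ commuting with the $\theta_g$, i.e. a $G$-equivariant morphism, so the correspondence is an equivalence of categories and not merely a bijection on objects.

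The main obstacle is the bookkeeping in this middle step rather than any deep input: one must match the source/target convention of the action groupoid with the ordering appearing in the cocycle relation, since the opposite convention produces an anti-action $\theta_{hg}=\theta_g\circ g^{*}(\theta_h)$ instead of an action. Once the fibre products of the preceding display and the direction of $\theta$ are fixed consistently, the identification of the cocycle condition with the equivariance axiom is immediate, and the statement follows.
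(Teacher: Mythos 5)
Your proposal is correct and follows essentially the same route as the paper: the Fact is presented there as a direct consequence of the descent property of $\pi\colon X\to [X/G]$ cited from Laumon--Moret-Bailly, with the identification of a descent datum on $X\times_{[X/G]}X\cong X\times G$ with a $G$-equivariant structure being exactly the ``common reformulation'' the text alludes to. Your unwinding of the cocycle condition over $\coprod_{g\in G}X$ into the axioms $\theta_e=\mathrm{id}$ and $\theta_{hg}=g^*(\theta_h)\circ\theta_g$, including the care about the source/target convention, simply makes that step explicit.
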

Actually, if $X=pt$, it is usual to take the latter as 
the definition, cf. \cite[IV.2.4]{SGA4}, \cite[5.1]{giraud}, 
of $\mathcal{B}_G$ as a Tòpos.
Classically, however, 
examples of sites inducing the tòpos 
of $G$-equivariant sheaves, for $G$ a group, on a topological space $X$ already exist. 
For example, if $G$ is a discrete group acting on $X$ through homeomorphisms,
the tòpos 
of $G$-equivariant sheaves on $X$ can be realized as
 the topos of sheaves on the (relative) site whose 
 underlying category is the fibration corresponding 
 to the indexed category assigning the unique object of $G$
 to the category $Ouv(X)$ and the arrow $g \in G$
 to the frame homomorphism $g^{-1}:Ouv(X)  \to Ouv(X)$, 
 cf. \cite[2.1.11(c)]{johnstone2002sketches}.
 In this way, one obtains as site of definition for the topos of $G$-equivariant 
 sheaves on $X$ the site $(O_G(X),E)$, where $O_G(X) $ is the category 
 whose objects are the open sets of $X $ and whose arrows $ U \to V $
 are the one for which there exists an element $g \in G $ such that $g(U) \subset V$ and $E$ is the topology 
 given by the families $\{g_i : U_i \to  V | i \in I\}$ such that $V$ is the union of the $g_i(U_i)$’s. \\
  The novelty of our 
approach, already in the case of groups,
 is to have explicitly provided a description of this Tòpos
in terms of the category of sheaves on the aforesaid
 site $\XQQ{X}{\Sigma}$: in the case $\Sigma=G$ is a group,
the definition of the site 
 $\XQQ{X}{\Sigma}$ is extremely concrete 
 and the corresponding tòpos is equivalent 
 to the category of sheaves on the 
classifying champ $[X/G]$.
Critically, however, our construction, 
cf. \ref{sheavesonSigmasite}, works in the 
more general set up of a 
 monoid $\Sigma$ acting on a site $X$ 
(assuming the property {\rm (D)}, \ref{intro:def1}),
wherein the ``$2$-functor in groupoids'' approach of \cite[VI]{sga1}
fails to yield enough sheaves since not all arrows are invertible.
We refer the reader to the text, cf. \ref{Sigmasite}, for the definition of 
$\XQQ{X}{\Sigma}$. Here, we state the main result of Chapter II.
\begin{theor}\label{intro:theorem2}
	Let $X$ be a site with the property {\rm (D)}, \ref{intro:def1}, and let $\Sigma$
	be a countable, cf.  \eqref{countability}, monoid acting on $X$. Then, the category of 
	sheaves on the site $\XQQ{X}{\Sigma}$ is the following:
		\begin{itemize}
		\item  The objects are pairs $ (\mathcal{F}, \varphi_{\bul}) $ consisting of
		\begin{enumerate}[a)]
			\item A sheaf $ \mathcal{F}\in 
			ob(Sh(X)) $;
			\item A (right) action of $\Sigma$ on $\mathcal{F}$, \textit{i.e.}
			a map of sheaves  
			$$ \varphi_{\bul}\coloneqq 
			\coprod_{\sigma \in \Sigma}\varphi_{\sigma}:
			 \coprod_{\sigma \in \Sigma}\sigma^{*}\mathcal{F} \to \mathcal{F}\in ar(Sh(X)),
			$$
			satisfying $\varphi_{id_{\Sigma}}=id_{\mathcal{F}}$ and the \textit{semigroup property}:
			$$
			\varphi_{\sigma \tau}=\varphi_{\tau}(\tau^*\varphi_{\sigma})\quad \forall \; \sigma,\tau \in \Sigma,
			$$ 
			\textit{i.e.} the following diagram in $Sh(X)$ is commutative:
				\begin{equation*}
				\begin{tikzcd}[row sep=2.6pc, column sep=4.6pc]
					\tau^{*}\mathcal{F} \arrow[r, "\varphi_{\tau}"] & \mathcal{F} \\
					\tau^{*}\left( \sigma^{*}\mathcal{F} \right)=
					(\sigma \tau)^{*}\mathcal{F} \arrow[u, "\tau^{*}\varphi_{\sigma}"] 
					\arrow[ur, "\varphi_{\sigma \tau}"']
				\end{tikzcd} 
			\end{equation*}
		\end{enumerate}
		\item The arrows 
		$  (\mathcal{F}, \varphi_{\bul}) \to (\mathcal{G}, \gamma_{\bul}) $ are natural transformations
		$
		\theta \in {\rm Hom}(\mathcal{F},
		\mathcal{G}),
		$
		such that $\forall\; \sigma \in \Sigma$ the following diagram commutes
		\begin{equation*}
			\begin{tikzcd}[row sep=2.6pc, column sep=2.6pc]
				\sigma^*\mathcal{F} \arrow[r, "\varphi_{\sigma}"] 
				\arrow[d," \sigma^*\theta"'] &
				\mathcal{F} \arrow[d, "\theta"] \\
				\sigma^*\mathcal{G} \arrow[r, "\gamma_{\sigma}"] &
				\mathcal{G}
			\end{tikzcd} 
		\end{equation*}
	\end{itemize}
\end{theor}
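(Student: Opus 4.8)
The plan is to establish the equivalence in two stages: first at the level of presheaves, identifying $\XQQ{X}{\Sigma}^{\wedge}$ with the category of presheaves on $X$ carrying a $\Sigma$-action, and then to cut down to sheaves by analysing the induced topology. Throughout I would work with the explicit description of the classifying site $\XQQ{X}{\Sigma}$, cf. \ref{Sigmasite}, and with the target functor $t: \XQQ{X}{\Sigma} \to X$ whose associated topology is the one carried by the site. The guiding principle is that the underlying category of $\XQQ{X}{\Sigma}$ is a variant of the Grothendieck construction attached to the action $\Phi: \Sigma \to \mathrm{End}(X)$, so that a presheaf on it amounts to a presheaf on $X$ together with transition maps indexed by $\Sigma$.

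First I would treat the presheaf-level statement. Given a presheaf $S$ on $\XQQ{X}{\Sigma}$, restricting along the fibre over each $U \in ob(X)$ produces a presheaf $\mathcal{F}$ on $X$, and the arrows of $\XQQ{X}{\Sigma}$ coming from the translations $\sigma \in \Sigma$ produce, for each $\sigma$, a map $\varphi_{\sigma}: \sigma^{*}\mathcal{F} \to \mathcal{F}$; conversely such data assemble into a presheaf on the classifying category. Here property {\rm (D)}, \ref{intro:def1} --- the commutation of countable coproducts with finite limits together with their disjointness --- is what lets one package the family $\{\varphi_{\sigma}\}$ as a single morphism $\varphi_{\bul}: \coprod_{\sigma \in \Sigma}\sigma^{*}\mathcal{F} \to \mathcal{F}$ out of the coproduct. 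The two conditions $\varphi_{id_{\Sigma}}=id_{\mathcal{F}}$ and the semigroup property $\varphi_{\sigma\tau}=\varphi_{\tau}(\tau^{*}\varphi_{\sigma})$ are then nothing but functoriality of $S$ with respect to the identity and to composition $\sigma\tau$ in $\Sigma$; since $\Phi$ is a monoid morphism one has $\tau^{*}\sigma^{*}=(\sigma\tau)^{*}$, so that the displayed triangle makes sense.

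Next I would impose the sheaf condition. Because the topology on $\XQQ{X}{\Sigma}$ is induced by $t$, every covering sieve of an object lying over $U$ is the pullback along $t$ of a covering sieve of $U$; consequently the sheaf axiom for $S$ is tested only through the $X$-topology and decouples fibrewise. It follows that $S$ is a sheaf if and only if the underlying $\mathcal{F}$ is a sheaf on $X$, no further constraint being placed on the structure maps beyond their being morphisms of sheaves --- one checks that each $\sigma^{*}$ preserves sheaves and that sheafification does not disturb the already-defined $\varphi_{\bul}$. This step is the exact analogue, for a general countable monoid, of the equivalence \ref{equivalencelemmaXf} underlying Theorem \ref{intro:theorem1}, and I would either invoke the Comparison Lemma of {\rm\cite{SGA4}} or transcribe that argument. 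Finally, the description of arrows is immediate: a natural transformation $S \to S'$ restricts to $\theta: \mathcal{F} \to \mathcal{G}$, and naturality against the translation arrows is precisely the commutativity, for every $\sigma$, of the square relating $\varphi_{\sigma}$ and $\gamma_{\sigma}$.

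The step I expect to be the main obstacle is the sheaf-condition analysis. The delicate point is to show that the induced topology imposes no gluing across distinct $\sigma \in \Sigma$ --- that covers genuinely see only the $X$-topology through $t$ --- so that an equivariant system glues exactly when each $\mathcal{F}$-component does. This forces careful bookkeeping with the countable coproducts $\coprod_{\sigma}\sigma^{*}\mathcal{F}$: one must use disjointness and commutation with finite limits from {\rm (D)} to control the fibre products appearing in the descent data, and verify that the action map, defined on the nose at the presheaf level, remains a well-defined map of sheaves after sheafification. Once this is secured, the two constructions are visibly mutually inverse and the asserted equivalence of categories follows.
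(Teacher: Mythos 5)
There is a genuine gap, and it lies in your first stage. You treat the underlying category of $\XQQ{X}{\Sigma}$ as a Grothendieck-construction--type fibration over $X$, so that every $U \in ob(X)$ has a ``fibre'' in the classifying site and the translations $\sigma \in \Sigma$ appear as extra arrows; on such a category the identification of presheaves with presheaves-plus-action is indeed formal. But that is a different site (the semidirect-product site recalled in the introduction for group actions, cf.\ \cite[2.1.11(c)]{johnstone2002sketches}). In the site actually defined in \ref{Sigmasite}, the objects are pairs $(U,u_{\bul})$, \textit{i.e.} objects of $X$ \emph{equipped with} a right $\Sigma$-action, and the arrows are the equivariant ones: a general $U\in ob(X)$ need not underlie any object of $\XQQ{X}{\Sigma}$ at all, so ``restricting along the fibre over each $U$'' does not typecheck. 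To produce a presheaf on all of $X$ from a presheaf on $\XQQ{X}{\Sigma}$ one must evaluate on the \emph{free} objects $b_{\Sigma}(U)=\bigl(\coprod_{\sigma\in\Sigma}\sigma^{-1}U,\ \mathrm{shift}\bigr)$ furnished by the left adjoint of the target functor (this is where property {\rm (D)} really enters, to form these coproducts in $X$ and keep their summands disjoint), \textit{i.e.} one takes $\pi_{\Sigma}^{-1}=b_{\Sigma}^{*}$ as in \ref{results:Sigmasite}; the action on $\pi_{\Sigma}^{-1}\mathcal{F}$ is then induced by the shift, not read off from translation arrows.

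This mislocation is not cosmetic: it invalidates your two-stage decomposition. The equivalence is \emph{not} true at the presheaf level, because a presheaf on $\XQQ{X}{\Sigma}$ is not determined by its values on the free objects $b_{\Sigma}(U)$. What rescues the statement for sheaves is that every $(U,u_{\bul})$ has the coequalizer presentation $b_{\Sigma}(A^{-1}(U))\rightrightarrows b_{\Sigma}(U)\to (U,u_{\bul})$ of \ref{results:Sigmasite}, and the sieve it generates is a \emph{covering} sieve, so the sheaf axiom recovers $\mathcal{F}(U,u_{\bul})$ as the corresponding equalizer. Your proposal also never constructs the quasi-inverse, which is the actual content of \ref{sheavesonSigmasite}: to $(\mathcal{G},\gamma_{\bul})$ one associates the equalizer of $(\pi_{\Sigma})_{*}\mathcal{G}\rightrightarrows(\pi_{\Sigma})_{*}(A_{*}\mathcal{G})$ (one arrow from the adjoint of $\gamma_{\bul}$, one from the structure maps $u_{\bul}$ of the objects), and checks on the basis $\{(b_{\Sigma})_{!}R\}$ that $\ker\bigl(\prod_{\sigma}\mathcal{G}_{\sigma}(U)\rightrightarrows\prod_{\sigma,\tau}\mathcal{G}_{\sigma\tau}(U)\bigr)\cong\mathcal{G}(U)$ via $x\mapsto(\gamma_{\sigma}x)_{\sigma}$. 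Without the adjunction $b_{\Sigma}\dashv t_{\Sigma}$, the coequalizer presentation, and this explicit inverse, the argument does not go through.
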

Consider the following direct consequence of 
Theorem \ref{intro:theorem1}.
\begin{coro}
	The Tòpos $Sh(B_{\N})$ consists of pairs $(F,\varphi)$, where:
	\begin{itemize}
		\item $F$ is a set;
		\item $\varphi: F \to F$ is a ${\rm Set}$-endomorphism of $F$.
	\end{itemize}
\end{coro}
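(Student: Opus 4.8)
The plan is to obtain this as an immediate specialization of Theorem \ref{intro:theorem1}. First I would recall that, by construction, $B_{\N}=\XQQ{pt}{\N}$ is the classifying site $\XX{f}$ attached to the site with dynamics $(pt,f)$, where $pt$ is the one-point space and the $\N$-action is trivial, so that $f=\mathrm{id}_{pt}$. Theorem \ref{intro:theorem1} then asserts that an object of $Sh(B_{\N})$ is a pair $(\mathcal{F},\varphi)$ with $\mathcal{F}\in ob(Sh(pt))$ and $\varphi:f^*\mathcal{F}\to\mathcal{F}$ a morphism of sheaves, while a morphism $(\mathcal{F},\varphi)\to(\mathcal{G},\gamma)$ is a map $\theta:\mathcal{F}\to\mathcal{G}$ making the square \eqref{intro:square1} commute. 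Thus the corollary is just the transparent translation of that statement along two trivializations.

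The first trivialization is the standard identification $Sh(pt)\simeq\mathrm{Set}$, which sends a sheaf $\mathcal{F}$ on the one-point space to its set of global sections $F\coloneqq\mathcal{F}(pt)$; since $Ouv(pt)$ has only the objects $\emptyset$ and $pt$, a sheaf is determined by the set it assigns to $pt$, and this assignment is an equivalence of categories. Under it, a sheaf corresponds to a set $F$ and a sheaf morphism to a map of sets.

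The second, and only computational, point — though it is immediate — is that since $f=\mathrm{id}_{pt}$ the inverse-image functor $f^*$ is canonically isomorphic to the identity on $Sh(pt)$. Hence $f^*\mathcal{F}\cong\mathcal{F}$, and the structural morphism $\varphi:f^*\mathcal{F}\to\mathcal{F}$ becomes, under $Sh(pt)\simeq\mathrm{Set}$, an endomorphism $\varphi:F\to F$ in $\mathrm{Set}$. This yields exactly the description of objects claimed in the statement.

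Finally, for completeness I would note that the same substitution $f^*\theta=\theta$ collapses the commutative square \eqref{intro:square1} to the single relation $\theta\circ\varphi=\gamma\circ\theta$, so that $Sh(B_{\N})$ is equivalent to the category whose objects are sets equipped with a single endomorphism and whose arrows are the maps intertwining these endomorphisms; equivalently, this is the functor category from the free monoid $\N$ on one generator into $\mathrm{Set}$, in agreement with Theorem \ref{intro:theorem2} for $\Sigma=\N$. I do not expect any genuine obstacle here: the entire content is carried by Theorem \ref{intro:theorem1}, and the proof consists solely in reading off that result through the equivalences $Sh(pt)\simeq\mathrm{Set}$ and $f^*\simeq\mathrm{id}$.
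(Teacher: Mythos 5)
Your proof is correct and follows the same route as the paper, which simply presents this corollary as a direct specialization of Theorem \ref{intro:theorem1} to $X=pt$ with the trivial ($f=\mathrm{id}_{pt}$) action, read through the equivalences $Sh(pt)\simeq{\rm Set}$ and $f^*\simeq\mathrm{id}$. Your closing remark identifying the result with the functor category $[\N,{\rm Set}]$ is a harmless and accurate addition consistent with Theorem \ref{intro:theorem2}.
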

 Note that $Sh(B_{\N})$ is somehow ``softer'' than $Sh(B_{\Z})$, 
 since the latter consists only of pairs $\left( F, \varphi \right)$,
 where $\varphi$ is an automorphism of $F$.
 A direct generalization of the above discussion is provided by 
 the following example. Let $X$ be a topological space and 
 consider the case in which both $\N$ and $\Z$ act trivially on $X$.
 We compare the new theory resulting from $\XQQ{X}{\N}$ with the
 one provided by $[X/\Z]$,
 \textit{i.e.} $\XX{id_X}$ and $[X/id_X]$ 
 according to the shorthand notation.
 \begin{coro}
 	Let $X$ be a site that has the property {\rm (D)}, \ref{intro:def1}. 
 A sheaf on $\XX{id_X}$ consists of a pair $(\mathcal{F},\varphi)$,
 where $\mathcal{F}$ is a sheaf on $X$ and 
 $\varphi: \mathcal{F} \to \mathcal{F}$ is any map of sheaves.
 \end{coro}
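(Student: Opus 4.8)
The plan is to read off the statement as the special case $f=id_X$ of Theorem \ref{intro:theorem1}. First I would verify that $(X,id_X)$ is a site with dynamics in the sense of \eqref{intro:sitewithdyndef}: by hypothesis $X$ enjoys the property {\rm (D)} of Definition \ref{intro:def1}, and the identity $id_X\colon X\to X$ is visibly an endomorphism of sites commuting with finite limits and countable coproducts. Hence Theorem \ref{intro:theorem1} applies verbatim and describes a sheaf on $\XX{id_X}$ as a pair $(\mathcal{F},\varphi)$ with $\mathcal{F}\in ob(Sh(X))$ and $\varphi\colon (id_X)^*\mathcal{F}\to \mathcal{F}$ a morphism of sheaves; the morphisms of such pairs are the commutative squares \eqref{intro:square1} with $f$ replaced by $id_X$.

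The only substantive point is the identification of the pullback functor $(id_X)^*$ on $Sh(X)$ with the identity functor. Following the conventions of the List of symbols, the morphism of sites $id_X$ has underlying continuous functor $id_X\colon X\to X$, so its direct image $(id_X)_*\colon X^{\wedge}\to X^{\wedge}$ is the composition functor $\mathcal{F}\mapsto \mathcal{F}\circ id_X=\mathcal{F}$, \textit{i.e.} the identity on $X^{\wedge}$; restricting to sheaves gives $(id_X)_*=id_{Sh(X)}$. The inverse image $(id_X)^*\colon Sh(X)\to Sh(X)$ appearing in Theorem \ref{intro:theorem1} is by definition the left adjoint of $(id_X)_*$, and since $id_{Sh(X)}$ is its own left adjoint and adjoints are unique up to a unique natural isomorphism, we obtain a canonical isomorphism $(id_X)^*\cong id_{Sh(X)}$.

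Transporting $\varphi$ across this natural isomorphism, the datum $\varphi\colon (id_X)^*\mathcal{F}\to \mathcal{F}$ becomes an unconstrained endomorphism $\varphi\colon \mathcal{F}\to \mathcal{F}$ in $Sh(X)$, which is exactly the claimed description. I expect no real obstacle here: the whole content is the substitution $f=id_X$ into Theorem \ref{intro:theorem1} together with the routine natural isomorphism $(id_X)^*\cong id_{Sh(X)}$, and the only thing worth stressing is that, in contrast with the group case $[X/G]$, there is no invertibility requirement on $\varphi$, precisely because the acting monoid (generated by $id_X$, \textit{i.e.} $\N$) is not a group.
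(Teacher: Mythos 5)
Your proposal is correct and matches the paper's intent exactly: the paper states this corollary as an immediate consequence of Theorem \ref{intro:theorem1} (i.e.\ of \ref{equivalencelemmaXf}) specialized to $f=id_X$, with no further argument. Your explicit verification that $(X,id_X)$ is a site with dynamics and the routine identification $(id_X)^*\cong id_{Sh(X)}$ via uniqueness of adjoints supply precisely the details the paper leaves implicit.
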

It follows again that considering a monoid action on $X$
results in softening the category $Sh([X/id_X])$, 
since the latter consists only of pairs $(\mathcal{F}, \varphi)$ 
where $\varphi: \mathcal{F} \to \mathcal{F}$ is invertible.
In the second section of Chapter I we 
introduce a different ``dynamical site''
 ${\rm E}_f(X)$,
to which we refer as Epstein's site, 
cf. \ref{Epsdynsitedef}. In order to simplify notation, we
drop the dependence on $X$ and write just ${\rm E}_f$
when there is no room for confusion.
	\begin{defi}
	Let $(X, f)$ be a site with dynamics,  \eqref{intro:sitewithdyndef}.
	Consider the category ${\rm E}_f(X)$
	whose objects are (ordered) pairs 
	$u_{\bul}\coloneqq (u_0,u_1)$, 
	where 
	$$
	u_0: U_0 \to U_1 \in X, \quad 
	u_1: f^{-1}U_0 \to U_1 \in X,\quad  \mbox{for } (U_0,U_1) \in ob(X)\times ob(X),
	$$
	and arrows $u_{\bul} \to v_{\bul}$ are pairs of commutative squares
	\begin{equation*}
	\begin{tabular}{l r }
		\begin{tikzcd}[row sep=2.6pc, column sep=2.6pc]
			U_0 \arrow[r, "u_0"] 
			\arrow[d," j_0"'] &
			U_1 \arrow[d, "j_1"] \\
			V_0 \arrow[r, "v_0"] &
			V_1
		\end{tikzcd} & 	\qquad
		\begin{tikzcd}[row sep=2.6pc, column sep=2.6pc]
			f^{-1}U_0 \arrow[r, "u_1"] 
			\arrow[d,"f^{-1} j_0"'] &
			U_1 \arrow[d, "j_1"] \\
			f^{-1}V_0 \arrow[r, "v_1"] &
			V_1
		\end{tikzcd} 
	\end{tabular}
	\end{equation*}
	where $j_0,j_1 \in ar(X)$. 
\end{defi}
The category ${\rm E}_f(X)$ defined above 
is the underlying category of Epstein's site.
The main result of this section is the following, cf. \ref{sheavesonepsteinsite}.
\begin{theor}\label{intro:theorem3}
Let $(X, f)$ be a site with dynamics,  \eqref{intro:sitewithdyndef}.
A sheaf on ${\rm E}_f(X)$ consists of the following data: 
\begin{itemize}
 \item a pair of sheaves  $\mathcal{F}_{\bul}=(\mathcal{F}_0, \mathcal{F}_1) \in ob(Sh(X)) \times ob(Sh(X))$;
 \item a map 
 $	\varphi_{\bul}\coloneqq\varphi_0\coprod \varphi_1: 
 \mathcal{F}_0\coprod f^*\mathcal{F}_0 \to \mathcal{F}_1 \in ar(Sh(X))$.
\end{itemize}
A morphism of sheaves $(\mathcal{F}_{\bul}, \varphi_{\bul})
\to (\mathcal{G}_{\bul}, \gamma_{\bul})$ consists of two commutative squares
	\begin{equation*}
	\begin{tabular}{l r }
		\begin{tikzcd}[row sep=2.6pc, column sep=2.6pc]
			\mathcal{F}_0 \arrow[r, "\varphi_0"] 
			\arrow[d," \theta_0"'] &
			\mathcal{F}_1 \arrow[d, "\theta_1"] \\
			\mathcal{G}_0 \arrow[r, "\gamma_0"] &
			\mathcal{G}_1
		\end{tikzcd} & 	\qquad
		\begin{tikzcd}[row sep=2.6pc, column sep=2.6pc]
			f^*\mathcal{F}_0 \arrow[r, "\varphi_1"] 
			\arrow[d," f^*\theta_0"'] &
			\mathcal{F}_1 \arrow[d, "\theta_1"] \\
			f^*\mathcal{G}_0 \arrow[r, "\gamma_1"] &
			\mathcal{G}_1
		\end{tikzcd} 
	\end{tabular}
\end{equation*}
\end{theor}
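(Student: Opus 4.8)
The plan is to compute the category of sheaves on $\mathrm{E}_f(X)$ by exploiting the same technique that underlies Theorem \ref{intro:theorem1}: namely, to understand sheaves on a classifying-type site via the target functor to $X$ together with a description of the covering sieves. The underlying category $\mathrm{E}_f(X)$ has a natural functor $t: \mathrm{E}_f(X) \to X$ sending $u_\bullet = (u_0, u_1)$ to its common target $U_1$, and this functor induces the topology on $\mathrm{E}_f(X)$ exactly as in Definition \ref{intro:def2}. First I would set up this target functor and verify that it is continuous and cocontinuous in the appropriate sense, so that pullback/pushforward along $t$ behaves well enough to transfer sheaf data between the two sites; this is the structural backbone and should parallel the argument already carried out for $\XX{f}$.

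Next, I would unwind what a presheaf on $\mathrm{E}_f(X)$ amounts to. An object $u_\bullet$ records two arrows into a common target $U_1$, one from $U_0$ and one from $f^{-1}U_0$; evaluating a presheaf $\mathcal{S}$ and tracking the representables $\underline{u_\bullet}$ suggests that the data split according to the two components $U_0$ and $U_1$. The key observation is that the morphisms $u_0 : U_0 \to U_1$ and $u_1 : f^{-1}U_0 \to U_1$ are \emph{independent} data — there is no compatibility forced between them at the level of objects — which is precisely why the resulting sheaf carries \emph{two} separate sheaves $\mathcal{F}_0, \mathcal{F}_1$ on $X$ rather than one. I would make this precise by identifying $\mathcal{F}_0$ and $\mathcal{F}_1$ as the ``components'' of a sheaf $\mathcal{S}$ obtained by restricting along the two structural inclusions of $X$-data into $\mathrm{E}_f(X)$, and then showing that the two arrows $u_0, u_1$ assemble into a single map out of the coproduct $\mathcal{F}_0 \coprod f^*\mathcal{F}_0 \to \mathcal{F}_1$. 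Here the term $\mathcal{F}_0$ comes from the $U_0$-leg and the term $f^*\mathcal{F}_0$ from the $f^{-1}U_0$-leg, using the adjunction that gives $f^*$ as the sheaf-level left adjoint to $f_*$; the universal property of coproducts in $Sh(X)$ (which exists by property (D) via \eqref{countability}) packages $\varphi_0$ and $\varphi_1$ into the single morphism $\varphi_\bullet = \varphi_0 \coprod \varphi_1$.

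Having produced the functor from $Sh(\mathrm{E}_f(X))$ to the target category of triples $(\mathcal{F}_0, \mathcal{F}_1, \varphi_\bullet)$, I would construct the inverse functor by reversing the construction: given such a triple, I would build a sheaf on $\mathrm{E}_f(X)$ by prescribing its value on each object $u_\bullet$ in terms of $\mathcal{F}_0(U_0)$, $\mathcal{F}_1(U_1)$ and the map $\varphi_\bullet$, and then verify the sheaf condition against the $t$-induced covering families. I would then check that the morphism descriptions match: a morphism of sheaves on $\mathrm{E}_f(X)$ corresponds, under the splitting, to a pair $(\theta_0, \theta_1)$ making both displayed squares commute, the left square being the naturality over the $U_0$-leg and the right square the naturality over the $f^{-1}U_0$-leg after applying $f^*$. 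Verifying that these two functors are mutually inverse (an equivalence of categories) completes the proof.

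The main obstacle I anticipate is the sheaf condition itself, that is, showing that the descent data for covers in $\mathrm{E}_f(X)$ reduce cleanly to the \emph{separate} descent data for $\mathcal{F}_0$ and $\mathcal{F}_1$ on $X$ while keeping the map $\varphi_\bullet$ well defined and compatible. The delicate point is that a covering family of $u_\bullet$ induces, via $t$, a covering of $U_1$, but one must check that the two legs $u_0$ and $u_1$ are controlled independently and that the coproduct decomposition $\mathcal{F}_0 \coprod f^*\mathcal{F}_0$ survives sheafification — this is exactly where disjointness of coproducts $(D_3)$ and their commutation with finite limits $(D_2)$ are needed, to ensure the coproduct of sheaves is computed ``componentwise'' and that $f^*$ (which exists because $f$ commutes with finite limits and coproducts) is exact enough for the gluing to go through. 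Once this compatibility is secured, the remaining verifications are the routine diagram chases identifying morphisms with the pairs of commuting squares.
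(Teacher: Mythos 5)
There is a genuine gap in your setup, and it is concentrated in the very first step. You take the structure functor to be $t: {\rm E}_f(X) \to X$, $u_{\bul} \mapsto U_1$, and induce the topology from $X$ along it, ``exactly as in Definition \ref{intro:def2}''. That is not the site the theorem is about: by Definition \ref{Epsdynsitedef} the topology on ${\rm E}_f$ is induced from the product site $X \times X$ (equivalently the \'etale coproduct site of \ref{fact:prodvscoprod}) along $T: (U_{\bul}, u_{\bul}) \mapsto (U_0, U_1)$, which remembers \emph{both} slots. This is not a cosmetic difference. With your topology a sieve on $u_{\bul}$ is covering as soon as its $U_1$-legs cover $U_1$, with no constraint on the $U_0$-legs; the sheaf condition then fails to impose descent on the component indexed by $U_0$, so the object you would call $\mathcal{F}_0$ need not be a sheaf on $X$, and the claimed equivalence with pairs of \emph{sheaves} $(\mathcal{F}_0,\mathcal{F}_1)$ breaks down. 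The covering sieves of ${\rm E}_f$ really are pairs of covering sieves $(R_0, R_1)$ on $(U_0, U_1)$ compatible with $u_{\bul}$, cf. \ref{coveringsievesEfcor}, and this is what forces both components to satisfy descent separately.

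The remainder of your plan is in the right spirit but leans on machinery you have not actually set up. The ``two structural inclusions of $X$-data'' along which you want to restrict are, in the paper, the two pieces $B^0(U) = ((U, U\coprod f^{-1}U),(\iota_0,\iota_1))$ and $B^1(U) = ((\emptyset,U),(\ast_U,\ast_U))$ of the left adjoint $B = B^0 pr^0 \amalg B^1 pr^1$ of $T$ (Lemma \ref{resultsEpsteinsite}); without $T$ landing in $X\times X$ there is no such adjoint producing both components. Likewise, the step you correctly flag as the main obstacle --- showing that the unit $\mathcal{F} \to (\mbox{\boldmath $\Pi^*$}\mathcal{F})^f$ is an isomorphism --- is resolved in the paper by presenting every object $(U_{\bul},u_{\bul})$ as a coequalizer of free objects \eqref{coeqobjectsEf} whose generated sieve is covering, and then invoking the sheaf property; your proposal does not identify this mechanism. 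Your instinct that disjointness $(D_3)$ is what keeps $\varphi_0$ and $\varphi_1$ independent is correct and matches the paper, but the proof cannot get off the ground until the base site is corrected to $X\times X$.
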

In the same spirit as above, let us
compare the theory resulting from ${\rm E}_f(X)$ 
and the one provided by $\XX{f}$ in the simple case of
the action of $\N$ on $X=pt$.
\begin{factum}
	A sheaf on the site ${\rm E}_{id_{pt}}(pt)$ consists of a pair of sets
	$(F_0,F_1)$, together with a pair of maps 
	$$
	\varphi_i: \mathcal{F}_0 \to F_1, \quad i=0,1.
	$$
\end{factum}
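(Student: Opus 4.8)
The plan is to obtain the statement as the specialization of Theorem \ref{intro:theorem3} to the site with dynamics $(pt, id_{pt})$. First I would make explicit the two identifications that drive the specialization. Since by \ref{cor:topspace} the tòpos attached to the one-point space is ${\rm Set}$, a sheaf on $pt$ is nothing but a set, so I replace the sheaves $\mathcal{F}_0, \mathcal{F}_1 \in ob(Sh(pt))$ appearing in Theorem \ref{intro:theorem3} by sets $F_0, F_1$. Moreover, as $f = id_{pt}$, the pullback functor $f^*$ is naturally isomorphic to the identity on $Sh(pt)$, so that $f^*\mathcal{F}_0 \cong F_0$.

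Granting these identifications, Theorem \ref{intro:theorem3} asserts that a sheaf on ${\rm E}_{id_{pt}}(pt)$ amounts to a pair of sets $(F_0, F_1)$ together with a single arrow of sheaves
$$
\varphi_{\bul} = \varphi_0 \coprod \varphi_1 : F_0 \coprod f^*F_0 \cong F_0 \coprod F_0 \longrightarrow F_1.
$$
The final step is to unwind this coproduct. The coproduct in ${\rm Set}$ is disjoint union, so by its universal property giving a map out of $F_0 \coprod F_0$ is the same as giving an ordered pair of maps $\varphi_0, \varphi_1 : F_0 \to F_1$ --- which is exactly the decomposition $\varphi_{\bul} = \varphi_0 \coprod \varphi_1$ already named in Theorem \ref{intro:theorem3}. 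This recovers the asserted data.

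I do not anticipate any genuine obstacle: the result is a corollary, with all the substance carried by Theorem \ref{intro:theorem3}. The only points requiring (routine) verification are those of the first paragraph, namely the equivalence $Sh(pt) \simeq {\rm Set}$ and the triviality of $id_{pt}^*$ on sheaves, after which the statement follows by the universal property of the coproduct alone.
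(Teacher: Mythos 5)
Your proposal is correct and is exactly how the paper obtains this statement: it is presented as the specialization of Theorem \ref{intro:theorem3} to $X=pt$, $f=id_{pt}$, using $Sh(pt)\simeq {\rm Set}$ and $id_{pt}^*\cong id$, and then splitting the coproduct map into the two components $\varphi_0,\varphi_1$. No gaps.
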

It is evident that this Tòpos is even softer than $\XX{f}$, 
and in fact we
are far from exploiting all of its features in this work. 
 What is needed in our applications 
is a site slightly more 
elaborate than $\XX{f}$ 
but certainly much less general than ${\rm E}_f(X)$.
However, this hypothetical intermediate site may well not exist.
The aim of introducing the site ${\rm E}_f(X)$, 
which in fact ``enlarges'' the site $\XX{f}$, 
is to consider forward orbits that are not immersed in $X$.
As an example, if $X$ is a Hausdorff topological space, one would like to immerse
the discrete topological space $\displaystyle \coprod_{n\geq 0}f^n(x)$ into $X$, but 
this fails as soon as the sequence $\{ f^n(x) \}_{n}$ admits an accumulation point in $X$.
A truncation of the above sequence provides a
sheaf on ${\rm E}_f$.
Namely, if we take as $\mathcal{F}_0$ the set of functions on the first $n$ 
points of the sequence and as $\mathcal{F}_1$ the set of functions on the first
$n-1$ points, with maps $\varphi_0,\varphi_1$ 
given by the natural projection and pullback map, respectively,
we have a sheaf on ${\rm E}_f$ by Theorem \ref{intro:theorem3}.\\
The following result makes evident the fact that $E_f$
``enlarges'' $\XX{f}$, cf. \ref{F0=F1}.
\begin{lemm}\label{intro:lemma0}
	The Tòpos $Sh(\XX{f})$ is equivalent to the subcategory $S_f$ 
	of 
	$Sh({\rm E}_f)$ consisting of diagonal pairs: 
	$$
	ob(S_f)= \{ (\mathcal{F},\mathcal{F},id_{\mathcal{F}}, \varphi) : (\mathcal{F}, \varphi) \in Sh(\XX{f}) \}.
	$$
\end{lemm}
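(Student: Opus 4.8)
The plan is to build the equivalence by hand, working throughout with the two concrete descriptions of the respective topoi supplied by Theorem~\ref{intro:theorem1} and Theorem~\ref{intro:theorem3}, rather than with the underlying sites. First I would define a functor $\Psi: Sh(\XX{f}) \to S_f$ on objects by sending a pair $(\mathcal{F},\varphi)$, with $\varphi: f^*\mathcal{F}\to\mathcal{F}$, to the diagonal object $(\mathcal{F},\mathcal{F},id_{\mathcal{F}},\varphi)$, i.e. to the datum of Theorem~\ref{intro:theorem3} with $\mathcal{F}_0=\mathcal{F}_1=\mathcal{F}$, $\varphi_0=id_{\mathcal{F}}$ and $\varphi_1=\varphi$; and on a morphism $\theta:(\mathcal{F},\varphi)\to(\mathcal{G},\gamma)$ by sending it to the pair $(\theta,\theta)$. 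By construction $\Psi$ lands among the diagonal pairs, so it is a well-defined functor into $S_f$, and it is surjective on objects—hence \emph{a fortiori} essentially surjective—since by definition every object of $S_f$ has the form $(\mathcal{F},\mathcal{F},id_{\mathcal{F}},\varphi)$ for some $(\mathcal{F},\varphi)\in Sh(\XX{f})$.

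Next I would check that $\Psi$ is well defined on arrows. A morphism $\theta$ in $Sh(\XX{f})$ is, by Theorem~\ref{intro:theorem1}, a sheaf map making the square \eqref{intro:square1} commute. Feeding $(\theta,\theta)$ into the two commutativity constraints of Theorem~\ref{intro:theorem3}, with $\varphi_0=\gamma_0=id$ and $\varphi_1=\varphi$, $\gamma_1=\gamma$, the left-hand square degenerates to the trivially commuting identity square, while the right-hand square is literally \eqref{intro:square1}. Thus $(\theta,\theta)$ is a genuine morphism in $Sh({\rm E}_f)$, and functoriality (respect for identities and composition) is immediate because $\Psi$ operates as the identity on the underlying sheaf maps.

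The crux is full faithfulness, which rests on the rigidity forced by the diagonal condition $\varphi_0=id$. Given diagonal objects $(\mathcal{F},\mathcal{F},id,\varphi)$ and $(\mathcal{G},\mathcal{G},id,\gamma)$, an arbitrary morphism between them in $S_f$ is a pair $(\theta_0,\theta_1)$ of sheaf maps $\mathcal{F}\to\mathcal{G}$ subject to the two squares of Theorem~\ref{intro:theorem3}. The first square reads $\theta_1\circ id=id\circ\theta_0$, which forces $\theta_0=\theta_1=:\theta$; with this identification the second square collapses to exactly the commutativity of \eqref{intro:square1} for $\theta$. Hence $\theta\mapsto(\theta,\theta)$ is a bijection from $\mathrm{Hom}_{Sh(\XX{f})}\big((\mathcal{F},\varphi),(\mathcal{G},\gamma)\big)$ onto $\mathrm{Hom}_{S_f}$ of the corresponding diagonal objects, so $\Psi$ is fully faithful, and therefore an equivalence onto $S_f$.

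I do not expect a genuine obstacle here; the one point deserving care is the verification that the first square of Theorem~\ref{intro:theorem3} really does pin $\theta_0=\theta_1$. This is precisely where the choice $\varphi_0=\gamma_0=id$ built into the definition of a diagonal pair is used, and it is exactly this constraint that makes the full subcategory of $Sh({\rm E}_f)$ on the diagonal objects coincide with the image of $\Psi$: every $Sh({\rm E}_f)$-morphism between diagonal pairs is automatically of the form $(\theta,\theta)$, so no morphisms are lost or gained in passing between the two descriptions.
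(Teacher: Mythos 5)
Your proof is correct, but it takes a different route from the paper's. You work entirely downstream of the two classification theorems (\ref{intro:theorem1} and \ref{intro:theorem3}), writing down the diagonal functor $(\mathcal{F},\varphi)\mapsto(\mathcal{F},\mathcal{F},id,\varphi)$ by hand and checking surjectivity on objects and bijectivity on hom-sets; the decisive observation, which you correctly isolate, is that the first commutative square of Theorem~\ref{intro:theorem3} with $\varphi_0=\gamma_0=id$ forces $\theta_0=\theta_1$, after which the second square is literally \eqref{intro:square1}. The paper instead works at the level of sites: it introduces the projection $p:{\rm E}_f\to\XX{f}$ with $p^{-1}(U,u)=((U,U),id_U\coprod u)$, asserts that $p^*$ is fully faithful, and identifies its essential image with $S_f$ by a cofinality computation of the two colimits $(p^*\mathcal{F})(B^0(U))$ and $(p^*\mathcal{F})(B^1(U))$ at the presheaf level. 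Your argument is more elementary and arguably more complete as written (the paper's full-faithfulness claim for $p^*$ is left unjustified), while the paper's approach has the advantage of exhibiting the equivalence as induced by an explicit morphism of sites $p$, which is reused later (e.g.\ in the functor ${\rm\mathbb{H}om}(p^*\mathcal{F},-)$ of Chapter~III); if you wanted to recover that extra information from your argument you would still need to check that your diagonal functor agrees, up to natural isomorphism, with $p^*$.
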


In Chapter III we first establish some notation for the set (or group) of morphisms between two sheaves in $\XX{f}$ (resp. in ${\rm E}_f$).
Then, we consider the category of abelian sheaves on the above-mentioned sites
with the aim of studying ``$\mathrm{Ext}$'' functors. 
We fix a sheaf -- or an abelian sheaf --
$(\mathcal{F}, \varphi)$ on $\XX{f}$ (resp. $(\mathcal{F}_{\bul}, \varphi_{\bul})$ on ${\rm E}_f$) and 
denote by $ \mathbb{H}{\rm om}(\mathcal{F}_{\bul}, \mathcal{G}_{\bul})$ 
the set -- or the group -- of sheaves morphisms from
$(\mathcal{F},\varphi)$ to $(\mathcal{G}, \gamma)$ in
$Sh(\XX{f})$ (resp. in $Sh({\rm E}_f)$).
Note that we drop, for convenience,
the defining morphisms $\varphi$ in the notation
and that we use Lemma \ref{intro:lemma0} to identify 
$(\mathcal{F},\varphi)$ with its diagonal pair in $S_f$.
Moreover, we use the same notation for both 
sheaves in $\XX{f}$ and ${\rm E}_f$.
It follows from Theorem \ref{intro:theorem1}, 
cf. \eqref{HomequalizerX/f}, that for any pair of abelian 
sheaves $(\mathcal{F}, \varphi), (\mathcal{G}, \gamma)$ on $\XX{f}$ we have 
$$
 \mathbb{H}{\rm om}(\mathcal{F}_{\bul}, \mathcal{G}_{\bul})= 
 \ker \left( {\rm Hom}(\mathcal{F}, \mathcal{G})\overset{d^{0,0}}{\longrightarrow}
  {\rm Hom}(f^*\mathcal{F}, \mathcal{G}) \right),
$$
where the map $d^{0,0}$ on the right is given, cf. \eqref{intro:square1},
by 
\begin{equation}\label{differencemap}
	\theta \in  {\rm Hom}(\mathcal{F}, \mathcal{G}) \mapsto 
	\theta\varphi - \gamma (f^*\theta) \in  
	{\rm Hom}(f^*\mathcal{F}, \mathcal{G}).
\end{equation}
In view of the fact that the category
of abelian sheaves on a site has enough injectives, cf. \ref{enoughinj}, 
the derived functors $\mathbb{E}{\rm xt}^i(\mathcal{F}_{\bul},-)
={\rm R}^i \mathbb{H}{\rm om}(\mathcal{F}_{\bul},-)$ are well defined. 
\begin{lemm}\label{intro:lemma1}
Let $\mathcal{F}_{\bul}$ and $\mathcal{G}_{\bul}$ be
sheaves in $Ab(\XX{f})$. There exists a spectral 
	sequence $\{ E_r \}_{r \geq 0}$ degenerating at $r=2$ such that
	$$
	E_r^{p,q} \Rightarrow \mathbb{E}{\rm xt}^{p+q}(\mathcal{F}_{\bul},\mathcal{G}_{\bul}),
	$$
	where
	\begin{equation*}
		\begin{aligned}
		&	E_1^{0,q}={\rm Ext}^q(\mathcal{F},\mathcal{G}) 
		\overset{d^{0,q}}{\longrightarrow} {\rm Ext}^q(f^*\mathcal{F},\mathcal{G})=
		E_1^{1,q}, \quad \forall q \geq 0  \\
		& E_1^{p,q}=0, \quad \forall p >1, q\geq 0,
		\end{aligned}
	\end{equation*}
and the differentials $d^{0,q}$, $q>1$, are the maps derived from $d^{0,0}$, cf. \eqref{ExtisequenceEf}.\\
The homological algebra in this case can be organized in the following long exact sequence
	\begin{equation}\label{intro:longexactseq}
	\begin{tikzcd}[row sep=1.2pc, column sep=0.8pc]
		0 \arrow[r] & {\mathbb{H}{\rm om}(\mathcal{F}_{\bul},\mathcal{G}_{\bul} )}\arrow[r] \arrow[d, phantom, ""{coordinate, name=Z}]
		&  {{\rm Hom}(\mathcal{F},\mathcal{G} )} \arrow[r, "d^{0,0}"] & {{\rm Hom}(f^*\mathcal{F},\mathcal{G})}\arrow[dlll,
		rounded corners,
		to path={ -- ([xshift=3ex]\tikztostart.east)
			|- (Z) [near end]\tikztonodes
			-| ([xshift=-3ex]\tikztotarget.west)
			-- (\tikztotarget)}] \\
	{\mathbb{E}{\rm xt^1}(\mathcal{F}_{\bul}, \mathcal{G}_{\bul})}		\arrow[r]
& {{\rm Ext^1}(\mathcal{F},\mathcal{G})} \arrow[r, "d^{0,1}"] & {{\rm Ext^1}(f^*\mathcal{F},\mathcal{G})} \arrow[r] & \cdots
	\end{tikzcd}
\end{equation}
which clearly splits into short exact sequences 
for each $n \geq 0$:
\begin{equation*}
	\begin{tikzcd}[row sep=2.6pc, column sep=1.6pc]
		0 \arrow[r] & C^{n-1} \arrow[r] & 
		\mathbb{E}{\rm xt}^n(\mathcal{F}_{\bul}, \mathcal{G}_{\bul})
		\arrow[r] & K^n \arrow[r] & 0,
	\end{tikzcd}
\end{equation*}
where $C^{-1}\coloneqq 0$, and for each $n \geq 1$ we have set
	\begin{equation*}
	\begin{tikzcd}[row sep=1.6pc, column sep=1.8pc]
		C^n	\coloneqq	{\rm coker}\, \Big( {\rm Ext}^n(\mathcal{F},\mathcal{G})
		\arrow[r, "d^{0,n}"] &
		{\rm Ext}^n(f^*\mathcal{F},\mathcal{G})	\Big) 
	\end{tikzcd}
\end{equation*}
and
\begin{equation*}
	\begin{tikzcd}[row sep=1.6pc, column sep=1.8pc]
		K^n	\coloneqq	{\rm ker}\, \Big( {\rm Ext}^n(\mathcal{F},\mathcal{G})
		\arrow[r, "d^{0,n}"] &
		{\rm Ext}^n(f^*\mathcal{F},\mathcal{G}) \Big).
	\end{tikzcd}
\end{equation*}
\end{lemm}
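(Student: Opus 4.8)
The plan is to identify $Ab(\XX{f})$ with the category of modules over the free monad generated by the (exact) inverse image $f^{*}$, which reduces the whole statement to a single explicit length-one free resolution of $(\mathcal{F},\varphi)$ together with the degeneracy of a two-column spectral sequence.

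First I would set up the forgetful functor $U\colon Ab(\XX{f})\to Ab(X)$, $(\mathcal{G},\gamma)\mapsto\mathcal{G}$, and exhibit its left adjoint
$$F\colon Ab(X)\to Ab(\XX{f}),\qquad F\mathcal{H}=\Big(\bigoplus_{n\geq 0}(f^{n})^{*}\mathcal{H},\ \mathrm{shift}\Big),$$
the structural map being the inclusion of the summands of index $\geq 1$. A direct check of the universal property — a morphism $F\mathcal{H}\to(\mathcal{G},\gamma)$ is determined by its restriction $\theta_{0}\colon\mathcal{H}\to\mathcal{G}$ to the $n=0$ summand, since equivariance forces $\theta_{n}=\gamma\circ f^{*}\theta_{n-1}$ — gives $F\dashv U$. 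Because $f^{*}$ is exact and countable coproducts are exact in a tòpos of abelian sheaves, $F$ is exact; hence $U$ preserves injectives. Combining this with exactness of $U$ yields, for every sheaf $\mathcal{H}$ on $X$,
$$\mathbb{E}{\rm xt}^{q}(F\mathcal{H},\mathcal{G}_{\bul})={\rm Ext}^{q}(\mathcal{H},\mathcal{G}),$$
since an injective resolution $\mathcal{G}_{\bul}\to I^{\bullet}$ in $Ab(\XX{f})$ has underlying $X$-injective resolution $U(I^{\bullet})$ of $\mathcal{G}$.

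Next I would write down the canonical two-term free resolution of $(\mathcal{F},\varphi)$ in $Ab(\XX{f})$,
$$0\longrightarrow F(f^{*}\mathcal{F})\overset{\alpha}{\longrightarrow} F\mathcal{F}\overset{\epsilon}{\longrightarrow}(\mathcal{F},\varphi)\longrightarrow 0,$$
where $\epsilon$ is the counit (on the $n$-th summand the $n$-fold iterate of the action) and $\alpha=\mathrm{shift}-\mathrm{action}$. That this is short exact is the standard telescoping resolution of a module over a free monad, and is verified on underlying sheaves. Now fix an injective resolution $I^{\bullet}$ of $\mathcal{G}_{\bul}$ (available by \ref{enoughinj}) and form the two-column double complex $D^{p,q}={\rm Hom}_{\XX{f}}(P_{p},I^{q})$ with $P_{0}=F\mathcal{F}$ and $P_{1}=F(f^{*}\mathcal{F})$. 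Since $I^{\bullet}$ is a bounded-below complex of injectives and $P_{\bullet}\to(\mathcal{F},\varphi)$ is a resolution, $\mathrm{Tot}\,D$ computes $R{\rm Hom}((\mathcal{F},\varphi),\mathcal{G}_{\bul})$, so $H^{n}(\mathrm{Tot}\,D)=\mathbb{E}{\rm xt}^{n}(\mathcal{F}_{\bul},\mathcal{G}_{\bul})$. Taking cohomology first in the $I$-direction produces the spectral sequence with $E_{1}^{0,q}={\rm Ext}^{q}(\mathcal{F},\mathcal{G})$ and $E_{1}^{1,q}={\rm Ext}^{q}(f^{*}\mathcal{F},\mathcal{G})$ (by the displayed formula with $\mathcal{H}=\mathcal{F}$ and $\mathcal{H}=f^{*}\mathcal{F}$) and $E_{1}^{p,q}=0$ for $p\geq 2$; as there are only two columns, $d_{r}=0$ for $r\geq 2$ and the sequence degenerates at $E_{2}$, converging to $\mathbb{E}{\rm xt}^{p+q}$. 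The differential $d_{1}=d^{0,q}$ is ${\rm Ext}^{q}$ applied to $\alpha$; unwinding the adjunction at $q=0$ recovers exactly $\theta\mapsto\theta\varphi-\gamma(f^{*}\theta)$ of \eqref{differencemap}, and $\delta$-functoriality propagates this to all $q$. Finally, the two-column degeneration gives the long exact sequence \eqref{intro:longexactseq} (equivalently, the contravariant Ext sequence of the resolution above) and, reading off $E_{2}=E_{\infty}$ with $E_{2}^{0,n}=K^{n}$ and $E_{2}^{1,n-1}=C^{n-1}$, the claimed short exact sequences $0\to C^{n-1}\to\mathbb{E}{\rm xt}^{n}\to K^{n}\to 0$.

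I expect the main obstacle to be the exactness of $F$ — hence that $U$ preserves injectives — because it hinges on $f^{*}$ being exact (not merely left exact) and on countable coproducts being exact in $Ab(\XX{f})$; these are precisely the points where the property {\rm (D)} and the hypothesis that $f$ commutes with finite limits and coproducts enter. The only other delicate point is the bookkeeping identifying the edge differential with the iterated difference map \eqref{differencemap}, which I would settle once at $q=0$ from the explicit form of $\alpha$ and then extend formally.
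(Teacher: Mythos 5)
Your proposal is correct, and it proves the lemma by a genuinely different route than the paper. The paper works with Yoneda extensions: Lemma \ref{ext1_lemma} establishes the $n=1$ case by an explicit analysis of $1$-extensions with compatible actions (the pull-back/push-out cube \eqref{parallelepiped} and the cocycle--coboundary computation \eqref{cocycle}--\eqref{coboundary}), Corollary \ref{exti_cor} then propagates this to all degrees by dimension shifting along $0 \to \mathcal{G}_{\bul} \to \mathcal{I}_{\bul} \to \mathcal{Q}_{\bul}\to 0$, Corollary \ref{ext1_cor} specializes from ${\rm E}_f$ to $\XX{f}$, and \ref{fact:longexact} repackages the result as the long exact sequence of the double complex \eqref{specseq2}. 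You instead exploit the free--forgetful adjunction $F \dashv U$ between $Ab(X)$ and $Ab(\XX{f})$, the exactness of $F=\bigoplus_{n\ge 0}(f^n)^*(-)$ (hence preservation of injectives by $U$ --- a fact the paper also uses, tacitly, in the sentence ``Since each $\mathcal{I}_n$ is also injective''), and the two-term free resolution $0 \to F(f^*\mathcal{F}) \to F\mathcal{F} \to (\mathcal{F},\varphi)\to 0$; the whole statement, including degeneration at $E_2$ and the long exact sequence \eqref{intro:longexactseq}, then falls out of a two-column double complex ${\rm Hom}(P_p, I^q)$ in one stroke, with no induction and no extension-class bookkeeping. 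What the paper's approach buys is an explicit description of classes in $\mathbb{E}{\rm xt}^1$ as honest extensions together with the concrete cocycle \eqref{cocycle}, which is reused in the dynamical applications; what yours buys is brevity, a transparent reason for the degeneration (only two columns can be nonzero), and an identification of the edge map with \eqref{differencemap} that is forced by adjunction rather than checked by diagram chase. The two points you flag as delicate --- exactness of $f^*$ on abelian sheaves and exactness of countable direct sums --- are both automatic here ($f^*$ is the inverse image of a geometric morphism and $Ab(X)$ is a Grothendieck abelian category), so there is no gap.
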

The main result proved in this chapter, cf. \ref{exti_cor},
is a refined version 
of Lemma \ref{intro:lemma1}, computing ``$\mathrm{Ext}$''-functors
in $Ab({\rm E}_f)$.
The first section of this chapter is dedicated to prove the existence
of the above-mentioned spectral sequence by taking 
the point of view of \textit{extensions} in $Ab({\rm E}_f)$ (cf. Appendix A).
In the last section of this chapter we prove the last assertion of Lemma 
\ref{intro:lemma1}, cf. \ref{fact:longexact}.

In the same vein: if $f$ is an endomorphism of a ringed space $(X, \mathcal{O}_X)$,
 then the structure sheaf $\mathcal{O}_X$ defines naturally a sheaf 
$\mathcal{O}_{\bul}\coloneqq (\mathcal{O}_X, f^*)$ on $\XX{f}$, 
where $f^*: f^*\mathcal{O}_X \to \mathcal{O}_X$ is the 
defining morphism.
In such a context, the proof of Lemma \ref{intro:lemma1} goes over \textit{verbatim}
on replacing the category $Ab(\XX{f})$ with the subcategory 
of pairs $(\mathcal{F}, \varphi)$, 
where $\mathcal{F}$ is a $\mathcal{O}_X$-modules,
the sheaf of groups $f^*\mathcal{F}$ is required to be an $\mathcal{O}_X$-module, \textit{i.e.}
$\displaystyle f^*\mathcal{F} \otimes _{f^*\mathcal{O}_X}\mathcal{O}_X$,
and the morphism $f^*\mathcal{F} \to \mathcal{F}$
is $\mathcal{O}_X$-linear. 
We refer to this as the category 
of $\mathcal{O}_{\bul}$-modules on $\XX{f}$, and, as usual,
the notation $f^*\mathcal{F}$ is employed irrespectively of whether 
the pullback is to be understood in the category of  $\mathcal{O}_X$-modules, 
or abelian sheaves, since the context is invariably clear.

Chapter IV is an application of the cohomological results of Chapter III.
Here, we describe some topological properties of 
the site $\XX{f}$, for $X$ a Galois category, cf. \cite[V.5]{sga1} for the pro-finite
case, and \cite[III.i]{et} for the pro-discrete case.
Recall that a category $X$ together with a 
functor valued in finite sets,
$F: X \to {\rm F}Set$,
satisfying axioms $G1-G6$ of \cite[V.4]{sga1} is called a Galois category. 
The (pro-finite) group $\pi_1(X,F)={\rm Aut}(F)$ (or simply $\pi_1(X)$) 
is called the fundamental (pro-finite)
group of the Galois category $(X,F)$. In the pro-discrete
case we essentially replace everywhere ``finite'' by ``discrete''.
The main result of this chapter consists in
establishing a relation between the
fundamental (pro-finite) groups of $X$ and $\XX{f}$, cf. \ref{cor:pi1}.
\begin{lemm}
		Let $X$ be a connected, cf. \ref{connectedsite}, Galois site
		satisfying the property ${\rm (D)}$, \ref{intro:def1}.
		Then, the fundamental (pro-finite) group of $\XX{f}$ is 
	$$
	\pi_1(\XX{f})=\Z,
	$$
   if $X$ is simply connected, \textit{i.e.} if $\pi_1(X)=0$,
   and it is an extension of $\Z$ by a (pro-finite) quotient of the 
   (pro-finite) group $\pi_1(X)$ otherwise.
\end{lemm}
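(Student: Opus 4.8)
The plan is to reduce the computation to the covering theory of $\XX{f}$ and then to identify $\pi_1(\XX{f})$ directly as the automorphism group of a fibre functor. First I would pin down the finite locally constant sheaves on $\XX{f}$. By Theorem \ref{intro:theorem1} such a sheaf is a pair $(\mathcal{F},\varphi)$ with $\mathcal{F}\in Sh(X)$ and $\varphi:f^*\mathcal{F}\to\mathcal{F}$. A constant sheaf on $\XX{f}$ has the form $(\underline{S},{\rm can})$ with ${\rm can}:f^*\underline{S}\xrightarrow{\sim}\underline{S}$ the canonical isomorphism; since being an isomorphism is local on $X$ and $f^*$ commutes with the covers defining the topology, cf. \ref{sitewithdyn}, local constancy of $(\mathcal{F},\varphi)$ forces $\mathcal{F}$ to be locally constant finite on $X$ and $\varphi$ to be an isomorphism. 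This is the decisive point, and it is consistent with the earlier corollaries: the full tòpos is ``softer'' precisely because a general $\varphi$ need not be invertible, but the covering theory only sees the invertible ones, which is why $B_{\N}$ and $B_{\Z}$ share the same fundamental group. Using that $X$ is a connected Galois site, I identify ${\rm LCF}(X)$ (finite locally constant sheaves on $X$) with the category of finite continuous $\pi$-sets, $\pi:=\pi_1(X)$, and record that $f$ induces a continuous endomorphism $\phi:=\pi_1(f):\pi\to\pi$ under which $f^*$ corresponds to restriction of the action along $\phi$. Thus a finite locally constant sheaf on $\XX{f}$ is a finite $\pi$-set $M$ together with a $\pi$-equivariant isomorphism $\varphi:\phi^*M\xrightarrow{\sim}M$, equivalently a bijection $\varphi:M\to M$ with $\varphi(\phi(g)m)=g\,\varphi(m)$. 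I would then verify that this category, with the fibre functor $\widetilde{F}(M,\varphi)=M$ obtained from the one on $X$ through the target functor $t:\XX{f}\to X$, satisfies the axioms $G1$--$G6$, so that $\XX{f}$ is a connected Galois site and $\pi_1(\XX{f})={\rm Aut}(\widetilde{F})$.

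Next I would compute ${\rm Aut}(\widetilde{F})$ by exhibiting generators. The residual $\pi$-action on each $M$ is natural in morphisms of $\XX{f}$, giving a continuous homomorphism $\pi\to{\rm Aut}(\widetilde{F})$; moreover the family of structure maps $\{\varphi\}$ is itself a natural automorphism of $\widetilde{F}$ (naturality is exactly the commuting square defining a morphism in $\XX{f}$), giving a distinguished element $t\in{\rm Aut}(\widetilde{F})$. The twisted equivariance $\varphi\circ(\phi(g)\cdot)=(g\cdot)\circ\varphi$ translates into the relation $t^{-1}g\,t=\phi(g)$ for every $g\in\pi$, an HNN/semidirect type relation. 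The $f$-equivariant map $X\to pt$ (since $f$ covers $id_{pt}$) yields a morphism of sites $p:\XX{f}\to B_{\N}=\XQQ{pt}{\N}$; on fundamental groups $p_*$ sends $t$ to the canonical generator of $\pi_1(B_{\N})=\Z$ and kills $\pi$. Using the corollary describing $Sh(B_{\N})$ I would check that $\pi_1(B_{\N})=\Z$ and that $p_*$ is surjective with kernel the closed subgroup generated by the image of $\pi$.

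It then remains to identify that image. An element $g\in\pi$ acts trivially in ${\rm Aut}(\widetilde{F})$ exactly when it acts trivially on every finite $\pi$-set admitting a twisted isomorphism $\varphi:\phi^*M\cong M$; the relevant $M$ are the quotients $\pi/H$ with $H$ open normal and $\phi^{-1}(H)=H$, so the kernel is the closed normal subgroup $N:=\bigcap_{\phi^{-1}(H)=H}H$ and the image of $\pi$ is the pro-finite quotient $\pi/N$. This produces the extension
\begin{equation*}
1\longrightarrow \pi/N\longrightarrow \pi_1(\XX{f})\overset{p_*}{\longrightarrow}\Z\longrightarrow 1,
\end{equation*}
which is the assertion: $\pi_1(\XX{f})$ is an extension of $\Z$ by the (pro-finite) quotient $\pi/N$ of $\pi_1(X)$. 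When $X$ is simply connected, $\pi=0$ forces $\pi/N=0$, whence $p_*$ is an isomorphism and $\pi_1(\XX{f})=\Z$.

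The hard part will be this last step together with the surjectivity of $p_*$: one must show that the $\phi$-invariant finite quotients of $\pi$ are plentiful enough that the image of $\pi$ is exactly $\pi/N$ (and not smaller), and that the powers of $t$ are separated by genuine finite cyclic covers so that $p_*$ lands onto all of $\Z$ rather than a finite quotient. This is precisely where the fact that $\phi$ is merely an endomorphism, not an automorphism, enters, and where continuity and closedness of $N$ in the pro-finite topology have to be controlled. Verifying the Galois axioms $G1$--$G6$ for $\XX{f}$ -- in particular connectedness, cf. \ref{connectedsite}, and the exactness of $\widetilde{F}$ -- is routine but must be carried out to legitimise the identification $\pi_1(\XX{f})={\rm Aut}(\widetilde{F})$.
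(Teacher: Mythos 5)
Your route is genuinely different from the paper's. The paper never touches the Galois category of $\XX{f}$ directly: it computes $\mathbb{H}^1(\XX{f},\underline{\Gamma}_{\bul})$ for a finite group $\Gamma$ via the degenerate spectral sequence of Lemma \ref{intro:lemma1} (cf. \ref{top:Gammatorsorlemma} and Figure \ref{Gammatorsorfigure}), obtaining the short exact sequence $0\to\Gamma\to\mathbb{H}^1(\XX{f},\underline{\Gamma}_{\bul})\to\ker(f^*-id)\to 0$ with $\ker(f^*-id)\subseteq H^1(X,\underline{\Gamma})$, and then \emph{dualizes} through $H^1(-,\underline{\Gamma})={\rm Hom}(\pi_1(-),\Gamma)$ to read off an extension $0\to Q\to\pi_1(\XX{f})\to\Z\to 0$ with $Q$ a quotient of $\pi_1(X)$. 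You instead identify the finite locally constant objects of $\XX{f}$ as finite $\pi$-sets with a twisted isomorphism and compute ${\rm Aut}(\widetilde F)$ by generators and relations. Your key observation -- that local constancy forces the structure map $\varphi$ to be invertible -- is correct (it is already implicit in the paper's description of $\underline{\Gamma}_{\bul}$-torsors as actions that are ``locally a self-bijection'', and one checks it directly on the covering $b(\Delta)\to(\Delta,\delta)$, e.g.\ on $(\N,+1)\to pt$ for $B_{\N}$), and it is exactly the reason the covering theory of $B_{\N}$ and $B_{\Z}$ coincide. What your approach buys is precision: the paper's $Q$ is left as ``some quotient'', whereas you identify it as $\pi/N$ with $N=\bigcap_{\phi^{-1}(H)=H}H$, and you get the HNN-type relation $t^{-1}gt=\phi(g)$ which the cohomological argument cannot see. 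What the paper's approach buys is brevity, since the spectral sequence is already in place.

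The one step you leave open -- that ${\rm Aut}(\widetilde F)$ is \emph{topologically generated} by the image of $\pi_1(X)$ and $t$, equivalently that $\ker(p_*)$ is exactly the closure of the image of $\pi$ -- is a genuine gap, not a routine verification: it amounts to a structure theorem for connected objects of the Galois category of $\XX{f}$ (every connected finite cover must be dominated by a fibre product of a cover pulled back from $X$ with a cyclic cover pulled back from $B_{\N}$). Note, however, that this is precisely the exactness-in-the-middle that the paper itself obtains only by asserting that its $H^1$ sequence ``is obtained by applying ${\rm Hom}(-,\Gamma)$'' to a sequence of fundamental groups, a step which is no more rigorous (and, for non-abelian $\Gamma$, rather less transparent) than what you would have to supply. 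So the gap is real but symmetric with the paper's own; filling it in your framework would strengthen the result.
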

This objective is pursued by taking the ``dual'' point of view, \textit{i.e.} by studying
$\underline{\Gamma}$-torsors over $\XX{f}$. Following the usual identifications, 
if $X$ is a category and $\Gamma$ is a finite group (considered as a trivial $\pi_1(X)$-module), 
we have, cf. \cite{torsor}, \cite[XIII.4.5]{sga1} and compare \cite[III.i.5]{et}: 
\begin{equation*}
	\begin{aligned}
&	H^1(X, \underline{\Gamma})=\{ \underline{\Gamma}-\mbox{torsors on } X \}/\cong,\\
& 	H^1(X, \underline{\Gamma})={\rm Hom}_{\bf Grp}(\pi_1(X),\Gamma).
	\end{aligned}
\end{equation*}
The second and third section of this chapter 
exam how classical results about recovering 
analytical information from topology change in the 
presence of an endomorphism. In particular, we consider 
a differentiable manifold $X$ and study the connection
between 
complex line bundles on $\XX{f}$ and the second cohomology group 
of $\Z$. There is, of course, an exponential 
 exact sequence, cf. \eqref{expseq:Xf}, on $\XX{f}$. Specifically, let $\mathcal{A}_{X}$
denote the sheaf of complex-valued differentiable functions on $X$,
and $f^*$ be the natural pullback of such, then
the pair $\mathcal{A}_{\bul}\coloneqq (\mathcal{A}_X,f^*)$
is a sheaf on $\XX{f}$.
Thus, the exponential sequence in $\XX{f}$
 is given by the following commutative diagram
	\begin{equation*}
	\begin{tikzcd}[row sep=1.2pc, column sep=1.8pc]
		0  \arrow[r]& \underline{\Z}(1)  \arrow[r]&  \mathcal{A}_{X}  
		\arrow[r, "{\rm exp}"] & \mathcal{A}_{X}^*   \arrow[r]& 0 \\
		0  \arrow[r]& f^*\underline{\Z}(1) \arrow[r]  \arrow[u, "f^*"]&  
		f^*\mathcal{A}_{X}  \arrow[r, "f^*{\rm exp}"]  \arrow[u, "f^*"] & 
		f^*\mathcal{A}_{X}^*   \arrow[r]  \arrow[u, "f^*"]& 0.
	\end{tikzcd}
\end{equation*}
Consequently, the classical computation can be carried out,
 \textit{mutatis mutandis}, but with the
 difference now that
$\mathcal{A}_{\bul}$-modules 
may not be acyclic, cf. \ref{cor:linebundles} and compare \cite[II.g.1]{et}.
\begin{factum}\label{intro:bundlefact}
	Let $X$ be a complex differentiable manifold.
	Then, the natural map 
	\begin{equation}\label{bundlequation}
		 \mathbb{H}^1(\XX{f}, \mathcal{A}_{\bul}^*) \longrightarrow
		\mathbb{H}^2(\XX{f}, \underline{\Z}(1)_{\bul})
	\end{equation}
	is not, in general, an isomorphism. 
	For example, if $X$ is a simply connected compact K\"aler manifold, 
     its kernel contains a copy of $\C^*$.
\end{factum}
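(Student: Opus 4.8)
\emph{Plan.} The strategy is to run the classical Picard/Chern-class computation inside the tòpos $Sh(\XX{f})$, using the exponential sequence \eqref{expseq:Xf} together with the homological machinery of Lemma \ref{intro:lemma1}, and to isolate the precise place where the dynamics obstructs the classical isomorphism. First I would take the long exact cohomology sequence on $\XX{f}$ associated to $0 \to \underline{\Z}(1)_{\bul} \to \mathcal{A}_{\bul} \xrightarrow{\exp} \mathcal{A}_{\bul}^* \to 0$, in which \eqref{bundlequation} is the connecting homomorphism $c$. Exactness then gives
$$\ker c = \mathrm{im}\big(\mathbb{H}^1(\XX{f},\mathcal{A}_{\bul}) \xrightarrow{\exp} \mathbb{H}^1(\XX{f},\mathcal{A}_{\bul}^*)\big) \cong \mathbb{H}^1(\XX{f},\mathcal{A}_{\bul})/\,\mathrm{im}\,\iota,$$
where $\iota\colon \mathbb{H}^1(\XX{f},\underline{\Z}(1)_{\bul}) \to \mathbb{H}^1(\XX{f},\mathcal{A}_{\bul})$ is induced by the inclusion. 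In the absence of dynamics $\mathcal{A}_X$ is a fine (hence soft, hence acyclic) sheaf, so $\mathbb{H}^1$ of it would vanish and $c$ would be the classical isomorphism; the whole point is that $\mathcal{A}_{\bul}$ fails to be acyclic on $\XX{f}$.

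Next I would compute the two relevant groups by applying Lemma \ref{intro:lemma1} to the unit sheaf $\underline{\Z}_{\bul}=(\underline{\Z},\mathrm{id})$, so that $\mathbb{H}^n(\XX{f},\mathcal{G}_{\bul})=\mathbb{E}\mathrm{xt}^n(\underline{\Z}_{\bul},\mathcal{G}_{\bul})$ sits in a short exact sequence
$$0 \to \mathrm{coker}\big(1-f^*\mid H^{n-1}(X,\mathcal{G})\big) \to \mathbb{H}^n(\XX{f},\mathcal{G}_{\bul}) \to \ker\big(1-f^*\mid H^{n}(X,\mathcal{G})\big) \to 0,$$
where $1-f^*$ is the operator induced by the differential $d^{0,n}$ of \eqref{differencemap} (here $f^*$ denotes the endomorphism of $H^{\bullet}(X,\mathcal{G})$ coming from pullback along $f$ composed with the structural action). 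For $\mathcal{G}=\mathcal{A}_X$ softness gives $H^{\geq 1}(X,\mathcal{A}_X)=0$, whence $\mathbb{H}^1(\XX{f},\mathcal{A}_{\bul})=\mathrm{coker}\big(1-f^*\colon \mathcal{A}_X(X)\to\mathcal{A}_X(X)\big)$, i.e. smooth functions modulo those of the form $g-g\circ f$. For $\mathcal{G}=\underline{\Z}(1)$, simple connectivity gives $H^1(X,\underline{\Z}(1))=0$, while $1-f^*=0$ on the constants $H^0(X,\underline{\Z}(1))=\Z(1)$, so $\mathbb{H}^1(\XX{f},\underline{\Z}(1)_{\bul})=\Z(1)=2\pi i\,\Z$ and $\mathrm{im}\,\iota$ is precisely the image of the $2\pi i\,\Z$-valued constant functions.

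Finally I would exhibit the $\C^*$ by restricting to constants. Since $f^*$ fixes constant functions, the inclusion $\C \hookrightarrow \mathcal{A}_X(X)$ descends to a map $\C \to \ker c$, and it remains to show its kernel is exactly $2\pi i\,\Z$; then $\C/2\pi i\,\Z \cong \C^*$ embeds into $\ker c$ and $c$ cannot be injective. A constant $a$ lies in this kernel precisely when $a = (g-g\circ f) + 2\pi i n$ for some $g\in\mathcal{A}_X(X)$ and $n\in\Z$; setting $a'=a-2\pi i n$ this reads $g\circ f = g - a'$, whence $g\circ f^m = g - m\,a'$ for all $m\geq 0$. Here compactness enters as the main obstacle: $g$ is bounded on $X$, so $\{g\circ f^m\}_m$ is uniformly bounded, forcing $a'=0$ and hence $a\in 2\pi i\,\Z$. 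Thus the conceptual crux is twofold: recognising that $\mathcal{A}_{\bul}$ is non-acyclic on $\XX{f}$, with its first cohomology governed by $\mathrm{coker}(1-f^*)$, and verifying by this telescoping/boundedness argument that no nonzero constant is a dynamical coboundary $g-g\circ f$. Only compactness and simple connectivity are genuinely used, the Kähler hypothesis merely singling out a familiar class of examples.
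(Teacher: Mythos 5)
Your argument is correct, and it follows the same skeleton as the paper's: the exponential sequence \eqref{expseq:Xf}, its long exact cohomology sequence \eqref{expexactseqXf}, and the two-column spectral sequence of Lemma \ref{intro:lemma1} identifying $\mathbb{H}^1(\XX{f},\mathcal{G}_{\bul})$ with $\mathrm{coker}(d^{0,0})$ when $H^1(X,\mathcal{G})=0$. Where you diverge is in how the copy of $\C^*$ inside $\ker\delta=\mathrm{im}(\exp_*)$ is actually exhibited. The paper routes the computation through the holomorphic sheaf $\mathcal{O}_{\bul}$: compactness gives $H^0(X,\mathcal{O}_X)=\C$, the K\"ahler plus simply connected hypotheses give $H^1(X,\mathcal{O}_X)=0$, so $\mathbb{H}^1(\XX{f},\mathcal{O}_{\bul})\cong\C$ (Fact \ref{fact:cohomologylinebundles}), the exact sequence of Fact \ref{cor:linebundles} then yields $\ker\delta\cong\C/\Z(1)\cong\C^*$ realized by the bundles $(\mathcal{O}_X,\lambda)$, and a remark transfers the conclusion to $\mathcal{A}_{\bul}$. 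You instead stay in the smooth category throughout, where $\mathbb{H}^1(\XX{f},\mathcal{A}_{\bul})$ is the much larger group $C^\infty(X)/\mathrm{im}(1-f^*)$, and you restrict to the constants; the price is that you must prove the constants inject modulo $2\pi i\,\Z$, which you do by the telescoping identity $g\circ f^m=g-ma'$ together with boundedness of $g$ on the compact $X$. This is a genuine gain in two respects: it supplies the injectivity verification that the paper compresses into ``by the same reasoning'' in Fact \ref{imageofconnectingXf} (one does need \emph{some} argument that distinct multipliers $\lambda$ give non-isomorphic pairs $(\mathcal{A}_X,\lambda)$, and yours is the natural one), and it makes visible that for the differentiable statement only compactness and simple connectivity are used, the K\"ahler hypothesis being needed only for the holomorphic refinement where $H^1(X,\mathcal{O}_X)=0$ must be extracted from $b_1(X)=0$ via Hodge theory.
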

\textit{``The first point to note is that the group 
$\mathbb{H}^1(\XX{f}, \mathcal{A}_{\bul}^*)$ classifies 
(isomorphism classes of)
$\mathbb{G}_m$-torsors on $\XX{f}$. 
Consequently its elements are
not simply line bundles, $\mathcal{E}$, on $X$ with a map of sheaves,
$f^*{\mathcal{E}}\rightarrow \mathcal{E}$, but bundles such that on some
$f$-invariant \'etale cover (in the site sense) $U\rightarrow X$,
the fibre $L_U$ has a nowhere vanishing 
$f$-invariant section.''}\footnote{Note added by the supervisor.}
As such, (isomorphism classes of) line bundles on $\XX{f}$ are classified
by (isomorphism classes of) pairs $(\mathcal{E}, \epsilon)$, where $\mathcal{E}$
is a line bundle on $X$, and
$\epsilon: f^*\mathcal{E} \to \mathcal{E}$ is an 
$\mathcal{A}_X$-linear isomorphism.
An isomorphism between two pairs 
$(\mathcal{E}, \epsilon)$, $(\mathcal{E}', \epsilon')$ is a
commutative diagram (of $\mathcal{A}_{X}$-modules)
\begin{equation*}
		\begin{tikzcd}[row sep=2.6pc, column sep=2.6pc]
		f^*\mathcal{E} \arrow[r, "\epsilon"] 
		\arrow[d," f^*\theta"] &
		\mathcal{E} \arrow[d, "\theta"] \\
		f^*\mathcal{E}' \arrow[r, "\epsilon'"] &
		\mathcal{E}'.
	\end{tikzcd} 
\end{equation*}
If, however, $X$ were simply connected, then 
the map in \eqref{bundlequation} simply sends the pair
$(\mathcal{E}, \epsilon)$ to the differentiable isomorphism
class of $\mathcal{E}$, and, for example:
\begin{factum}
Let $f: \PP \to \PP$ be a rational map of degree $D>1$. Then, we have 
$$
	\mathbb{H}^2(\QQ{f}, \underline{\Z}(1)_{\bul})=0.
$$	
\end{factum}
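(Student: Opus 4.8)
The plan is to apply the spectral sequence of Lemma \ref{intro:lemma1} with $\mathcal{G}_{\bul}=\underline{\Z}(1)_{\bul}=(\underline{\Z}(1),f^*)$ and with first argument the constant sheaf $\underline{\Z}_{\bul}=(\underline{\Z},{\rm id})$, so that $\mathbb{E}{\rm xt}^n(\underline{\Z}_{\bul},\underline{\Z}(1)_{\bul})=\mathbb{H}^n(\QQ{f},\underline{\Z}(1)_{\bul})$ computes the cohomology in question. Since $f^*\underline{\Z}=\underline{\Z}$ canonically, the two columns of the $E_1$-page coincide, $E_1^{0,q}=E_1^{1,q}=H^q(\PP,\underline{\Z}(1))$, and the differential $d^{0,q}$ — derived, by Lemma \ref{intro:lemma1}, from the degree-zero map \eqref{differencemap} specialised to $\varphi={\rm id}$ and $\gamma=f^*$ — reduces on ordinary cohomology to $\alpha\mapsto\alpha-f^*\alpha$; that is, $d^{0,q}={\rm id}-f^*$.

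First I would record the cohomology of the underlying differentiable manifold of $\PP$, namely the $2$-sphere: it is simply connected, with $H^1(\PP,\underline{\Z}(1))=0$ and $H^2(\PP,\underline{\Z}(1))\cong\Z$. Feeding this into the short exact sequence of Lemma \ref{intro:lemma1} for $n=2$, namely $0\to C^1\to\mathbb{H}^2(\QQ{f},\underline{\Z}(1)_{\bul})\to K^2\to 0$ with $C^1={\rm coker}(d^{0,1})$ and $K^2=\ker(d^{0,2})$, the vanishing of $H^1$ forces $d^{0,1}$ to be the zero map between zero groups, so that $C^1=0$; it thus suffices to prove $K^2=0$.

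The crux is to identify $f^*$ on $H^2(\PP,\underline{\Z}(1))\cong\Z$. Being a rational self-map of the sphere, $f$ is holomorphic of mapping degree equal to its algebraic degree $D$ — holomorphicity makes every local degree positive, so the topological and algebraic degrees agree — whence $f^*$ acts on top cohomology as multiplication by $D$ and $d^{0,2}={\rm id}-f^*=1-D$. As $D>1$, the integer $1-D$ is nonzero, so multiplication by it is injective on $\Z$; therefore $K^2=\ker(1-D)=0$. The short exact sequence then collapses, giving $\mathbb{H}^2(\QQ{f},\underline{\Z}(1)_{\bul})=0$. The only non-formal ingredient is the degree computation on $H^2$; the remainder is bookkeeping in the spectral sequence, and the hypothesis $D>1$ enters exactly through the injectivity of $1-D$ (for $D=1$ one would instead obtain $K^2\cong\Z$).
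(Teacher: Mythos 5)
Your argument is correct and is essentially the paper's own: both reduce $\mathbb{H}^2(\QQ{f},\underline{\Z}(1)_{\bul})$ to the kernel of $\mathrm{id}-f^*$ acting on $H^2(\PP,\underline{\Z}(1))\cong\Z$ (the cokernel term from $H^1$ vanishing by simple connectedness), and both kill that kernel using the fact that $f^*$ multiplies by $D>1$. The only cosmetic difference is that the paper phrases the degree computation via line bundles, $\deg(f^*E)=\deg(f)\deg(E)$, through the exponential-sequence identification, whereas you state it directly as the action of the topological degree on top cohomology.
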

On the other hand,  an example of a (non-trivial) line bundle on $\XX{f}$ 
may as well arise 
by taking the pair $(\mathcal{A}_{X}, \lambda)$, 
consisting of the trivial line bundle on $X$, and
``multiplication''
$\lambda: f^*\mathcal{A}_{X} \to \mathcal{A}_{X}$ by a non-zero 
(and non-identity) complex number $\lambda \in \C^*$,
\textit{i.e.} the composition of 
$\lambda: \mathcal{A}_{X} \to \mathcal{A}_{X}, \; f \mapsto \lambda f$
with the canonical map $f^{*}\mathcal{A}_{X} \to \mathcal{A}_{X}$.
If we work holomorphically rather that differentiably, this is an exhaustive 
description of line bundles on $X=\PP$, to wit:
\begin{factum}
	Let $f: \PP \to \PP$ be a rational map of degree $D>1$. 
	Then, we have 
	$$
		 \mathbb{H}^1(\QQ{f}, \mathcal{O}_{\bul}^*) \cong \C^*.
	$$
\end{factum}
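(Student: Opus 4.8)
The plan is to reduce the whole computation to the explicit classification of line bundles on $\QQ{f}$ recalled immediately above the statement. Indeed, $\mathbb{H}^1(\QQ{f}, \mathcal{O}_{\bul}^*)$ classifies $\mathbb{G}_m$-torsors on $\QQ{f}$, and these are, up to isomorphism, pairs $(\mathcal{E}, \epsilon)$ with $\mathcal{E}$ a holomorphic line bundle on $\PP$ and $\epsilon: f^*\mathcal{E} \to \mathcal{E}$ an $\OO_{\PP}$-linear \emph{isomorphism}; two pairs $(\mathcal{E},\epsilon)$ and $(\mathcal{E}',\epsilon')$ being identified precisely when there is an isomorphism $\theta: \mathcal{E}\to\mathcal{E}'$ with $\epsilon'\circ f^*\theta = \theta\circ\epsilon$. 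The group law is the tensor product $(\mathcal{E},\epsilon)\otimes(\mathcal{E}',\epsilon') = (\mathcal{E}\otimes\mathcal{E}', \epsilon\otimes\epsilon')$. Thus the statement amounts to classifying such pairs and identifying the resulting group with $\C^*$.

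First I would pin down the underlying bundle $\mathcal{E}$. Since $\mathrm{Pic}(\PP) = \Z$ via the degree, we may write $\mathcal{E} \cong \OO_{\PP}(d)$ for a unique $d \in \Z$. Because $f$ has degree $D$, pullback along $f$ multiplies degrees by $D$, so $f^*\mathcal{E} \cong \OO_{\PP}(Dd)$. The existence of the isomorphism $\epsilon: f^*\mathcal{E} \to \mathcal{E}$ forces $f^*\mathcal{E} \cong \mathcal{E}$, that is $Dd = d$; as $D > 1$ this gives $d = 0$, hence $\mathcal{E} \cong \OO_{\PP}$. Therefore every line bundle on $\QQ{f}$ has \emph{trivial} underlying bundle, and the problem collapses to analysing the twisting datum $\epsilon$ on $\OO_{\PP}$.

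Next I would describe $\epsilon$ on the trivial bundle. Under the canonical identification $f^*\OO_{\PP} \cong \OO_{\PP}$, an $\OO_{\PP}$-linear automorphism $\epsilon$ is multiplication by a global nowhere-vanishing holomorphic function on $\PP$; since $\PP$ is compact and connected, $\Gamma(\PP, \OO_{\PP}^*) = \C^*$, so $\epsilon$ is multiplication by a scalar $\lambda \in \C^*$. This yields a surjection $\C^* \to \mathbb{H}^1(\QQ{f}, \mathcal{O}_{\bul}^*)$, $\lambda \mapsto [(\OO_{\PP}, \lambda)]$, which is manifestly a homomorphism for the tensor structure since $(\OO_{\PP},\lambda)\otimes(\OO_{\PP},\mu) = (\OO_{\PP},\lambda\mu)$.

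It remains to verify injectivity, which is the only step requiring a little care. An isomorphism $(\OO_{\PP}, \lambda) \to (\OO_{\PP}, \mu)$ is an automorphism $\theta$ of $\OO_{\PP}$ — again a constant $c \in \C^*$ — making the defining square commute, i.e. $\mu\cdot(f^*\theta) = \theta\cdot\lambda$. The pullback of a constant being the same constant, $f^*\theta = c$, so $\mu c = c\lambda$ and hence $\lambda = \mu$. Thus distinct scalars give non-isomorphic pairs, the homomorphism above is a bijection, and $\mathbb{H}^1(\QQ{f}, \mathcal{O}_{\bul}^*) \cong \C^*$. The main obstacle in the argument is the degree computation that forces the underlying bundle to be trivial: this is the only place the hypothesis $D > 1$ is used, and everything else is formal, the injectivity step resting merely on the fact that the global holomorphic automorphisms of $\OO_{\PP}$ are central scalars that commute with $f^*$.
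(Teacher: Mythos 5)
Your proof is correct and is in substance the paper's own argument: the paper runs the two-term spectral sequence $0 \to \mathrm{coker}\bigl(H^0(\PP,\mathcal{O}^*)\to H^0(\PP,f^*\mathcal{O}^*)\bigr) \to \mathbb{H}^1(\QQ{f},\mathcal{O}_{\bul}^*) \to \ker\bigl(f^*-\mathrm{id}\,\big|_{\mathrm{Pic}(\PP)}\bigr) \to 0$ and uses exactly your two inputs, namely $\deg(f^*E)=D\deg(E)$ with $D>1$ to kill the kernel term, and compactness of $\PP$ (so $H^0(\PP,\mathcal{O}^*)=\C^*$, cf.\ \ref{imageofconnectingXf}) to identify the cokernel term with $\C^*$. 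Your explicit classification of pairs $(\mathcal{E},\epsilon)$ together with the final injectivity check is just the concrete unwinding of that short exact sequence.
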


The next section is dedicated to define the 
De Rham cohomology
on $\XX{f}$, wherein a similar phenomenon is encountered . 
Let us abuse notation and equally denote by $\mathcal{A}_{X}=\mathcal{A}^0_{X}$
the sheaf of real-valued differentiable functions on $X$. Then,
the classical strategy of defining De Rham cohomology
of a manifold $X$ of dimension $n$ 
as the cohomology of the sequence of vector spaces 
\begin{equation*}
	\begin{tikzcd}[row sep=2.6pc, column sep=2.6pc]
		H^0(X, \mathcal{A}_X^{0}) \arrow[r, "d"] & 
		H^0(X, \mathcal{A}_X^{1})  \arrow[r, "d"] &  \cdots \arrow[r, "d"] &
		H^0(X, \mathcal{A}_X^{n}).
	\end{tikzcd} 
\end{equation*}
fails in general, since $\mathcal{A}_X$ modules are not necessarily acyclic,  
and it need to be replaced by its sheaf-theoretic 
definition, \textit{i.e.} the hyper-cohomology of the 
complex of sheaves 
\begin{equation*}
	\begin{tikzcd}[row sep=2.6pc, column sep=2.6pc]
		\mathcal{A}_X^{0} \arrow[r, "d"] & 
		\mathcal{A}_X^{1} \arrow[r, "d"] &  \cdots \arrow[r, "d"] & \mathcal{A}_X^{n}.
	\end{tikzcd} 
\end{equation*}
Subsequently, we compute explicitly the De Rham cohomology
 of $\XX{f}$ in the simplest possible case,
\textit{i.e.} when $X$ is contractible and hence 
has trivial De Rham cohomology groups
in positive degrees, cf. \ref{cor:DeRham}.
\begin{factum}
Let $X$ be a contractible differentiable manifold. 
Then, the De Rham cohomology groups of $\XX{f}$ are 
\begin{equation*}
	\mathbb{H}^{p} (\XX{f}, \underline{\R}_{\bul})= 
	\begin{cases}
		\R \quad \mbox{ if } p=0,1;\\
		0 \; \quad \mbox{ if } p>1.
	\end{cases}
\end{equation*}
\end{factum}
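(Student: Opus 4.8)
The plan is to identify the De Rham cohomology of $\XX{f}$ with the ordinary sheaf cohomology $\mathbb{H}^p(\XX{f}, \underline{\R}_{\bul})$ of the constant sheaf $\underline{\R}_{\bul}=(\underline{\R},\mathrm{id})$, and then to extract the answer from the long exact sequence of Lemma \ref{intro:lemma1}. The whole computation rests on the single input that $X$ is contractible.

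First I would check that the De Rham complex $\mathcal{A}^0_{X} \to \cdots \to \mathcal{A}^n_X$ on $\XX{f}$, whose terms are the pairs $(\mathcal{A}^i_X, f^*)$ and whose differentials are induced by $d$ (which commutes with $f^*$, hence is a morphism in $\XX{f}$), is a \emph{resolution} of $\underline{\R}_{\bul}$. Indeed, the Poincaré lemma gives an exact sequence $0 \to \underline{\R} \to \mathcal{A}^{\bul}_X$ of sheaves on the contractible manifold $X$; since a complex of sheaves on $\XX{f}$ is exact precisely when its underlying complex of $\mathcal{F}$-components on $X$ is exact (the forgetful functor $(\mathcal{F},\varphi) \mapsto \mathcal{F}$ being exact and conservative), the same sequence is exact on $\XX{f}$. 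The crucial point, already flagged in the preamble, is that this resolution is \emph{not} by acyclic objects, so genuine hypercohomology is required; nonetheless one gets $\mathbb{H}^p(\XX{f}, \mathcal{A}^{\bul}_X) = \mathbb{H}^p(\XX{f}, \underline{\R}_{\bul})$, reducing the statement to a computation with the constant sheaf.

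Next I would feed $\mathcal{F}_{\bul}=\underline{\Z}_{\bul}=(\underline{\Z},\mathrm{id})$ (the unit) and $\mathcal{G}_{\bul}=\underline{\R}_{\bul}$ into the long exact sequence \eqref{intro:longexactseq}, using that $\mathbb{H}^p(\XX{f}, \underline{\R}_{\bul}) = \mathbb{E}\mathrm{xt}^p(\underline{\Z}_{\bul},\underline{\R}_{\bul})$ and that ${\rm Ext}^p_X(\underline{\Z},\underline{\R}) = {\rm Ext}^p_X(f^*\underline{\Z},\underline{\R}) = H^p(X,\underline{\R})$. By the difference-map formula \eqref{differencemap}, with both structure maps equal to the identity, the differential $d^{0,p}$ becomes $\theta \mapsto \theta - f^*\theta$ acting on $H^p(X,\underline{\R})$. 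Since $X$ is contractible we have $H^0(X,\underline{\R})=\R$ and $H^p(X,\underline{\R})=0$ for $p>0$; moreover $f^*$ is the identity on $H^0$ (a constant pulls back to itself), so $d^{0,0}=0$ and every higher $d^{0,p}$ is the zero map between zero groups.

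Finally I would read the groups off the resulting short exact sequences $0 \to C^{p-1} \to \mathbb{H}^p \to K^p \to 0$. The kernel $K^0=\ker d^{0,0}=\R$ gives $\mathbb{H}^0=\R$; the cokernel $C^0=\mathrm{coker}\, d^{0,0}=\R$ together with $K^1=\ker(d^{0,1}\colon 0 \to 0)=0$ gives $\mathbb{H}^1=\R$; and for $p\geq 2$ both $C^{p-1}$ and $K^p$ vanish, whence $\mathbb{H}^p=0$. I expect the only conceptually delicate step to be the non-acyclicity of the resolution, which is also exactly what produces the extra $\R$ in degree one: that class comes not from $H^1(X)$ (which is zero) but from the cokernel of $d^{0,0}$, i.e. from the coinvariants of the trivial $f^*$-action on $H^0(X,\underline{\R})$, mirroring the $\mathbb{H}^1(B_{\N})=\R$ phenomenon discussed earlier.
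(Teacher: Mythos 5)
Your proposal is correct and follows essentially the same route as the paper, which deduces the result from the spectral sequence of Lemma \ref{intro:lemma1} (cf. \ref{ext1_corXf}) applied to $\underline{\R}_{\bul}$: the differential $d^{0,0}=\mathrm{id}-f^*$ vanishes on $H^0(X,\underline{\R})=\R$, all higher terms vanish by contractibility, and the cokernel in bidegree $(1,0)$ produces the extra $\R$ in $\mathbb{H}^1$. The only cosmetic remark is that the sheaf-level exactness of $0\to\underline{\R}\to\mathcal{A}^{\bul}_X$ is a local (Poincar\'e lemma) fact valid on any manifold; contractibility is only needed to kill $H^p(X,\underline{\R})$ for $p>0$, exactly as you then use it.
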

Consequently, we note that, although $B_{\N}$ and $B_{\Z}$
behave differently in terms of sheaf theory, 
they share most of their topological features. 
Indeed, the former does not admit a geometric realization 
analogous to $B_{\Z}$, but as soon as the orbit relations
are inverted in order 
to form an equivalence relation, the two coincide.

    In Chapter V we apply our machinery to
    holomorphic dynamical systems on the Riemann sphere $\PP$.
    Let $f:\PP \to \PP$ be a rational map of degree $D>1$.
    The first section is a revision of known results, namely
    the ``Fatou-Shishikura inequality'' 
    and ``Infinitesimal Thurston's Rigidity''.
    The former is an upper bound for the number of 
    stable regions of $f$ and can be found in its sharp version in 
    \cite{ASENS_1987_4_20_1_1_0}.
    The latter is Epstein's nomenclature  for the key infinitesimal content of 
    Thurston's topological characterization 
    of post-critically finite rational maps, cf. \cite{MR1251582},
    which he adapts in order to give a new
    proof of the former, refining it, cf. \cite{1999math......2158E}.
    In the
    same paper
    he also provides a different
    approach to the Fatou-Shishikura inequality, cf. \eqref{intro:EpsFS},
    revealing for the first time a direct connection between the
    latter and Thurston's Theorem.
     We conclude our revision by describing his original approach,
    cf. \ref{Eps}, \ref{T-E}.
    It was this fundamental paper of Epstein which motivated 
    this thesis since it cries out for a Tòpos theoretic 
    interpretation, and we afford such in the second section of
    Chapter V. 
More concretely, our contribution lies in the following interpretation of
 Epstein's extension of Infinitesimal 
Thurston's rigidity:
this result can be explained as an
	 ``absence of obstruction'' to lifting some 
	 ``local invariant infinitesimal deformations'' of $\PP$ --
	 more precisely ``local infinitesimal deformations'' of $\QQ{f}$ 
	 (resp. of ${\rm E}_f$) -- and 
	 one should probably refer to it as 
	 ``{\bf Thurston-Epstein Vanishing}''.
	 In fact, we first observe
	 that $\QQ{f}$ (resp. ${\rm E}_f$) carries not only a natural structure sheaf $\mathcal{O}_{\bul}$ but 
	also a sheaf of holomorphic differential forms $\Omega_{\bul}$.
	\footnote{From now on we work in the category 
		of sheaves of $\mathcal{O}_{\bul}$-modules on $\XX{f}$ (resp. ${\rm E}_f$), 
		and hence by morphism of sheaves in $\XX{f}$ (resp. ${\rm E}_f$)
		we mean a 
		$\mathcal{O}_X$-linear commutative diagram, cf. \ref{intro:theorem1}.}
	Thus, the tangent space of $\QQ{f}$ (resp. ${\rm E}_f$) is the 
	 $\C$-vector space of
	 sheaf morphisms between $\Omega_{\bul}$ and 
	 $\mathcal{O}_{\bul}$, in our notation
	 $
	 \mathbb{H}{\rm om}(\Omega_{\bul}, \mathcal{O}_{\bul}).
	 $
 The latter is isomorphic, by \ref{intro:lemma1}, to the 
 space of globally invariant vector fields on $\PP$, \textit{i.e.}
 $$
  \mathbb{H}{\rm om}(\Omega_{\bul}, \mathcal{O}_{\bul})
  ={\rm ker}
 \left( H^0(\PP, T_{\PP}) \overset{d^{0,0}}{\longrightarrow} 
 H^0(\PP, f^*T_{\PP}) \right),
 $$
 which actually vanishes, cf. \ref{noinvariantvf}.\\
	Similarly,
	 ``infinitesimal deformations'' of $\QQ{f}$ (resp. ${\rm E}_f$) are, by Lemma \ref{intro:lemma1}, elements of 
	  the vector space
	 $$
	 \mathbb{E}{\rm xt^1}(\Omega_{\bul}, \mathcal{O}_{\bul})=
	 {\rm coker}
	 \left( H^0(\PP, T_{\PP}) \overset{d^{0,0}}{\longrightarrow} 
	 H^0(\PP, f^*T_{\PP})	 \right).
	 $$
	 	 The latter, as it happens, is isomorphic, cf. 
	 	\cite{transversality}, to the 
	 \textit{orbifold} tangent space
	 $T_f{\bf rat_D}$
	 of the moduli space of rational maps on the Riemann sphere
	 up to conjugation, and has dimension $2D-2$.
	 Heuristically speaking, the content of the latter isomorphism 
	 is that infinitesimal deformations of the dynamical systems $(\PP,f)$
	 are simply infinitesimal deformations of the map $f$, 
	 and the dimension is as expected.
	 The space of infinitesimal deformations enjoys a 
	 natural relation with local deformations. Specifically,
	 to any effective divisor $\Delta$ on $\PP$,
	 supported on a cycle of $f$, there is associated
	 a divisor $\Delta_{\bul} $ in $\QQ{f}$, cf. \ref{f.i.divisor}.
	  Thus, there is a
	 canonical restriction map from the space of 
	infinitesimal deformations of $\QQ{f}$
	to the space of local deformations of $\Delta_{\bul}$, \textit{i.e.}
	 \begin{equation}\label{intro:resmap}
		\begin{tikzcd}[row sep=1.2pc, column sep=0.8pc]
		{\rm Res}_{\Delta_{\bul}}:	\mathbb{E}{\rm xt^1}(\Omega_{\bul},\mathcal{O}_{\bul}) \arrow[r]
			& \mathbb{E}{\rm xt^1}(\Omega_{\bul}, \mathcal{O}_{\Delta_{\bul}}).
		\end{tikzcd}
	\end{equation}
	 For example, let
	 $\Delta$ be a divisor on $\PP$ 
	 supported on a fixed point $x$ of $f$ and having multiplicity 
	 $\deg_x(\Delta)=n$. Then, the sheaf
	 $\mathcal{O}_{\Delta}\coloneqq O_{\PP, x}/\mathfrak{m}_x^n$
	 defines, by \ref{intro:theorem1}, a sheaf on $\XX{f}$ 
	 given by the pair
	 $
	 \mathcal{O}_{\Delta_{\bul}}\coloneqq (\mathcal{O}_{\Delta},\varphi),
	 $
	 with $ \varphi: f^*\mathcal{O}_{\Delta} \to \mathcal{O}_{\Delta} $
	 the canonical map.
	 Under certain assumptions on the nature of the 
	 fixed point ($x$ is neither a Cremer point nor a critical point of $f$), 
	 cf.  \cite[8.2]{Milnor},
	 there exists a local analytic conjugation 
	 between $f$ and the map $z \mapsto \lambda z$,
	 for some multiplier $\lambda \in \C^*$. In this case,
	 letting $\Delta=2[x]$, we find in \ref{locallemma1} a one dimensional 
	 space of local deformations of ``the fixed point'', 
	 \begin{equation}\label{intro:easyexample}
	 	 \mathbb{E}{\rm xt^1}(\Omega_{\bul}, \mathcal{O}_{\Delta_{\bul}})\cong \C,
	 \end{equation}
	 arising from deformations of the multiplier $\lambda$.\\
	  More generally, let $\Delta$ be a finite union of nonrepelling cycles of $f$, \textit{i.e.} 
	 those cycles $\{  x, \dots, f^{k-1}x  \}$ for which the multiplier 
	 $\lambda=(f^k)'(x)$ lies inside the closed unit disk. 
	 The restriction map \eqref{intro:resmap} fits into 
	 a natural long exact sequence,
	cf. \eqref{ideallongexact}, \textit{i.e.}
	the long exact sequence in 
	 cohomology associated to $\mathbb{H}{\rm om}(\Omega_{\bul}, - )$ 
	 of the following short exact sequence in $Ab(\XX{f})$,
	cf. \eqref{idealexact}, 
	  $$
	 0 \to  \mathcal{O}(-\Delta_{\bul}) \to \mathcal{O}_{\bul} \to \mathcal{O}_{\Delta_{\bul}} \to 0,
	 $$
	 where $\mathcal{O}(-\Delta_{\bul})$ 
	 is the ``ideal of holomorphic function
	 vanishing with multiplicity on $\Delta_{\bul}$'', \textit{i.e.} the sheaf
	 $(\mathcal{O}_{\PP}(-\Delta), i)$ on $\QQ{f}$, associated to the natural 
	 inclusion
	 $$
	 f^{*}\mathcal{O}_{\PP}(-\Delta)=
	 \mathcal{O}_{\PP}(-f^*\Delta) \hookrightarrow \mathcal{O}_{\PP}(-\Delta).
	 $$
	 The aforesaid long exact sequence in cohomology finishes as follows:
	  \begin{equation}\label{intro:longexact}
	 	\begin{tikzcd}[row sep=1.2pc, column sep=0.8pc]
	 		\mathbb{E}{\rm xt^1}(\Omega_{\bul},\mathcal{O}_{\bul}) \arrow[r]
	 		& \mathbb{E}{\rm xt^1}(\Omega_{\bul}, \mathcal{O}_{\Delta_{\bul}}) \arrow[r] 
	 		& \mathbb{E}{\rm xt^2}(\Omega_{\bul},\mathcal{O}(-\Delta_{\bul})) \arrow[r]& 0.
	 	\end{tikzcd}
	 \end{equation}
 The obstruction, therefore, to lifting local deformations to 
 global ones 
 is the $\mathbb{E}{\rm xt^2}$-group in \ref{intro:longexact}., which is controlled by way of, cf. \ref{Thurstonvanishing}, \ref{Epsteinvanishing}:
	  \begin{claiming}[{\bf Thurston-Epstein vanishing on ${\bf \XX{f}}$}]\label{intro:claim1}
	  	Let $f:\PP \to \PP$ be a rational map of degree $D>1$ and
	 let $\Delta$ be an effective divisor on $\PP$ having 
	 everywhere multiplicity 
	 $\deg_x(\Delta) \leq 1$. 
	 	 Then, if the cohomology group 
	 	$ \mathbb{E}{\rm xt^2}(\Omega_{\bul},\mathcal{O}(-\Delta_{\bul}))$,
	 	computed in $\QQ{f}$,
	 	does not vanish, 
	 	$f$ is a
	   $(2,2,2,2)$ Lattès map, cf. \cite{MR1251582} 
	(and $ \mathbb{E}{\rm xt^2}(\Omega_{\bul},\mathcal{O}(-\Delta_{\bul}))\cong \C$).
	Moreover, if $f$ is not 
	Lattès, $\Delta$ is supported on a union of nonrepelling cycles of $f$ and its multiplicity is $\deg_x(\Delta)\leq 2$ everywhere, 
	then $ \mathbb{E}{\rm xt^2}(\Omega_{\bul},\mathcal{O}(-\Delta_{\bul}))$ still vanishes.
	 \end{claiming}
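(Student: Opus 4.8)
The plan is to read the group $\mathbb{E}{\rm xt}^2(\Omega_{\bul},\mathcal{O}(-\Delta_{\bul}))$ off the spectral sequence of Lemma \ref{intro:lemma1}, then translate it by Serre duality into the kernel of Epstein's pushforward operator, whose behaviour is dictated by the (revised) Infinitesimal Thurston Rigidity. First I would specialise Lemma \ref{intro:lemma1} to $\mathcal{F}_{\bul}=\Omega_{\bul}$ and $\mathcal{G}_{\bul}=\mathcal{O}(-\Delta_{\bul})$ on $\QQ{f}$. Since $\PP$ is a smooth curve, ${\rm Ext}^q_{\PP}(-,-)=H^q(\PP,-)=0$ for $q\geq 2$, so the term $K^2=\ker(d^{0,2})$ vanishes and the relevant short exact sequence of Lemma \ref{intro:lemma1} degenerates to
\[ \mathbb{E}{\rm xt}^2(\Omega_{\bul},\mathcal{O}(-\Delta_{\bul}))\cong C^1={\rm coker}\,\big(d^{0,1}\big), \]
where $d^{0,1}\colon {\rm Ext}^1(\Omega,\mathcal{O}(-\Delta))\to {\rm Ext}^1(f^*\Omega,\mathcal{O}(-\Delta))$ is the map induced by the difference map \eqref{differencemap}. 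Using $\Omega=\mathcal{O}(-2)$ and $T_{\PP}=\Omega^{-1}$, these two groups are $H^1(\PP,T_{\PP}(-\Delta))$ and $H^1(\PP,f^*T_{\PP}(-\Delta))$.

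Next I would dualise. By Serre duality on $\PP$ the cokernel of $d^{0,1}$ is dual to the kernel of the transpose
\[ \check{d}^{0,1}\colon H^0(\PP,\Omega\otimes f^*\Omega(\Delta))\longrightarrow H^0(\PP,\Omega^{\otimes 2}(\Delta)). \]
The target $Q(\Delta):=H^0(\PP,\Omega^{\otimes 2}(\Delta))$ is the space of meromorphic quadratic differentials whose polar part is bounded by $\Delta$; the hypothesis $\deg_x(\Delta)\leq 1$ forces these poles to be at most simple, which is exactly Epstein's integrability condition. Transposing the two summands of \eqref{differencemap} term by term, the contribution of $\theta\varphi$ becomes contraction against the codifferential $df\colon f^*\Omega\to\Omega$, while the contribution of $\gamma(f^*\theta)$ becomes the pushforward $f_*$. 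Thus $\check{d}^{0,1}$ is Epstein's operator $\nabla_f$, and $\ker\check{d}^{0,1}$ is the space of integrable quadratic differentials fixed by $f_*$, equivalently those annihilated by $\nabla_f={\rm id}-f_*$.

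It then remains to invoke the revised Infinitesimal Thurston Rigidity of Chapter V, \ref{Eps}, \ref{T-E} (cf. \cite{1999math......2158E}, \cite{MR1251582}): the operator $\nabla_f$ is injective on integrable quadratic differentials, with the sole exception of the flexible $(2,2,2,2)$ Lattès maps, for which $\ker\nabla_f$ is one-dimensional, spanned by the square of the invariant holomorphic $1$-form descending from the flat torus. Dualising back gives $\mathbb{E}{\rm xt}^2=0$ in every non-Lattès case and $\mathbb{E}{\rm xt}^2\cong\C$ in the exceptional one, which is the first assertion.

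For the refinement I would allow $\deg_x(\Delta)\leq 2$ along a union of nonrepelling cycles, so that the admissible quadratic differentials may now carry double poles at those periodic points. The new local freedom is measured by the holomorphic fixed-point index, and since each such cycle has multiplier in the closed unit disc, the residue estimates underlying the sharp Fatou-Shishikura inequality (\ref{Eps}, cf. \cite{ASENS_1987_4_20_1_1_0}) rule out any nonzero $\nabla_f$-fixed differential acquiring a double pole there when $f$ is not Lattès; hence the kernel, and with it $\mathbb{E}{\rm xt}^2$, still vanishes. The main obstacle is the identification described above: matching $\check{d}^{0,1}$ with $\nabla_f={\rm id}-f_*$ requires careful bookkeeping of the ramification divisor (the codifferential $df$ vanishes at the critical points, so the ``${\rm id}$'' summand is not literally the identity) and of the pole divisors, and in the refined statement one must track precisely the local contributions of the double poles supported on the nonrepelling cycles.
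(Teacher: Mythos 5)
Your proposal follows essentially the same route as the paper: the spectral sequence of Chapter III reduces $\mathbb{E}{\rm xt}^2(\Omega_{\bul},\mathcal{O}(-\Delta_{\bul}))$ to the cokernel of $d_1^{0,1}$ (since $H^2$ vanishes on a curve), Serre duality turns this into the kernel of a transpose map on quadratic differentials, the transpose is identified with $\nabla_f = i - f_*$ after twisting by $Df^{-1}$ to absorb the ramification divisor $\Gamma_f$, and the conclusion is read off from (Epstein's extension of) Infinitesimal Thurston Rigidity — exactly the content of Lemma \ref{ThurstonvanishingLemma} and Theorems \ref{Thurstonvanishing}, \ref{Epsteinvanishing}. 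The ``careful bookkeeping'' you flag as the main obstacle is precisely what the paper's proof of Lemma \ref{ThurstonvanishingLemma} supplies, via the explicit Dolbeault/change-of-variables computation showing $df^{\top}\eta = i$ and $(f^{\star})^{\top}\eta = f_*$.
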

 As a consequence of Claim \ref{intro:claim1}, 
 and by the fact that
  the contribution of each nonrepelling cycles (with the right multiplicity) 
  to the dimension of  
 $\mathbb{E}{\rm xt^1}(\Omega_{\bul}, \mathcal{O}_{\Delta_{\bul}})$ 
 is at least $1$, cf. \ref{locallemma1}, we get that
 \begin{equation}\label{intro:weakFS}
 	\#\{   \mbox{nonrepelling cycles of } f\} \leq 2D-2,
 \end{equation}
 \textit{i.e.} a weak version of the Fatou-Shishikura Inequality,
 and it can be expressed as
 $$
 \dim_{\C} 
 \mathbb{E}{\rm xt^1}(\Omega_{\bul}, \mathcal{O}_{\Delta_{\bul}}) 
 \leq \dim_{\C} \mathbb{E}{\rm xt^1}(\Omega_{\bul}, \mathcal{O}_{\bul}).
 $$
 As such, the key point in Claim \ref{intro:claim1}, and
 in the best traditions of the subject (e.g. duality implies
 that non compact Riemann surfaces are Stein),
 is that the ``right''
 thing to prove is that the dual group vanishes, which is
 exactly what Epstein did in \cite{1999math......2158E}. 
What we have added to his
 argument is to observe its (dual) functorial content in terms 
 of sheaves on $\QQ{f}$.
 
	 Finally, let us turn to the immersion problem 
	 behind the definition of the site ${\rm E}_f$.
	 In order to obtain the sharp count
	 of nonrepelling cycles of $f$ as in \cite{1999math......2158E},
	 the aforesaid divisor cannot be chosen in $\XX{f}$. In fact,
	 there is a contribution to the dimension of
	 $ \mathbb{E}{\rm xt^1}(\Omega_{\bul}, \mathcal{O}_{\Delta_{\bul}})$ 
	 coming from the critical divisor, together with its entire forward orbit.
	 Unfortunately, that is not, in general, a divisor on $\PP$ and
	 hence it does not define a divisor on $\QQ{f}$.
	 In order to make our argument work, we make a
	 truncation of the forward orbit at a certain time, 
	 which leads up to what we call a ``divisor'' on ${\rm E}_f$, 
	 cf. \ref{E-dynimacaldivisor}, \textit{i.e.} 
	 a pair of divisors $\Delta_{\bul}=(\Delta_{0},\Delta_{1})$ such that 
	 $ \Delta_1 \preceq \Delta_{0} \wedge f^*\Delta_{0} $.\\
	 Associated to the short exact sequence in $Ab({\rm E}_f)$,
	 $$
	0 \to  \mathcal{O}(-\Delta_{\bul}) \to \mathcal{O}_{\bul} \to \mathcal{O}_{\Delta_{\bul}} \to 0,
	 $$
 there is, again, a long exact sequence in cohomology, cf. \eqref{ideallongexact},
	finishing as in \eqref{intro:action}.
 Therefore, the obstruction to lifting local deformations 
 to global deformations of ${\rm E}_f$ still lies 
 is the $\mathbb{E}{\rm xt^2}$-group on the right, and 
 the vanishing of the latter, cf. 
 \ref{Epsteinvanishing}, is still dual to Epstein's extension
 of Thurston's theorem:
 \begin{claiming}[{\bf Thurston-Epstein vanishing on ${\bf E}_f$}]\label{intro:claim2}
 	Let $f:\PP \to \PP$ be a rational map of degree $D>1$
 	which is not a Lattés map.
 	The vanishing of the group $ \mathbb{E}{\rm xt^2}(\Omega_{\bul},\mathcal{O}(-\Delta_{\bul}))$, for some 
 	appropriate choice of the ``divisor'' $\Delta_{\bul}$, in ${\rm E}_f$, 
 	is equivalent, by duality, to (Epstein's extension of)
 	Infinitesimal Thurston's rigidity, cf. \cite{1999math......2158E}.
 \end{claiming}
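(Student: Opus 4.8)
The plan is to reduce Claim~\ref{intro:claim2} to the corresponding statement on $\QQ{f}$, namely Claim~\ref{intro:claim1}, via a careful comparison between the cohomology computed on ${\rm E}_f$ and that computed on $\XX{f}$. By Theorem~\ref{intro:theorem3}, a sheaf on ${\rm E}_f$ is a pair $(\mathcal{F}_0,\mathcal{F}_1)$ with a map $\varphi_0\coprod\varphi_1:\mathcal{F}_0\coprod f^*\mathcal{F}_0\to\mathcal{F}_1$, whereas Lemma~\ref{intro:lemma0} identifies $Sh(\XX{f})$ with the diagonal subcategory where $\mathcal{F}_0=\mathcal{F}_1$ and $\varphi_0=id$. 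The ``divisor'' $\Delta_{\bul}=(\Delta_0,\Delta_1)$ on ${\rm E}_f$, satisfying $\Delta_1\preceq\Delta_0\wedge f^*\Delta_0$, is precisely the device that decouples the two components so that the relevant forward-orbit truncation becomes immersed. First I would write down explicitly, from the analogue of Lemma~\ref{intro:lemma1} for ${\rm E}_f$ (the refined version referenced as \ref{exti_cor}), the spectral sequence computing $\mathbb{E}{\rm xt}^{\bul}(\Omega_{\bul},\mathcal{O}(-\Delta_{\bul}))$ in ${\rm E}_f$, and identify its $E_1$-page in terms of ordinary $\mathrm{Ext}$-groups on $\PP$ twisted by $\Delta_0$ and $\Delta_1$.

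The core of the argument is to recognize the terminal $\mathbb{E}{\rm xt}^2$-group in the long exact sequence \eqref{ideallongexact} as the cokernel of a single map of $\mathrm{Ext}^1$-groups on $\PP$, exactly as in Lemma~\ref{intro:lemma1}, and then to Serre-dualize. Following the philosophy stated in the excerpt, the ``right'' object to control is the dual: by Serre duality on $\PP$, the group $\mathbb{E}{\rm xt}^2(\Omega_{\bul},\mathcal{O}(-\Delta_{\bul}))$ is dual to a space of global sections of a sheaf of meromorphic quadratic differentials with prescribed poles along the support of $\Delta_{\bul}$, subject to an $f$-invariance (or $f$-compatibility) constraint coming from the difference map $d^{0,0}$ of \eqref{differencemap}. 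The plan is then to show that this dual space is exactly the space on which Epstein's operator $\nabla=id-f_*$ acts in \cite{1999math......2158E}, so that its vanishing is literally the injectivity half of Infinitesimal Thurston Rigidity. The decoupling $\Delta_1\preceq\Delta_0\wedge f^*\Delta_0$ is what allows the poles on the source and target copies to be prescribed independently, thereby recovering the sharp pole orders along the critical orbit that a genuine divisor on $\QQ{f}$ could not accommodate.

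Concretely, the steps in order are: (i) transcribe the $\mathbb{E}{\rm xt}$-spectral sequence of \ref{exti_cor} for the pair $(\Omega_{\bul},\mathcal{O}(-\Delta_{\bul}))$ on ${\rm E}_f$ and extract the long exact sequence ending in $\mathbb{E}{\rm xt}^2$; (ii) apply Serre duality componentwise on $\PP$ to rewrite each $E_1$-entry as a space of quadratic differentials, being careful that the pullback $f^*$ on $T_{\PP}$ dualizes to pushforward $f_*$ on quadratic differentials, so that $d^{0,0}$ becomes $\nabla=id-f_*$; (iii) match the resulting cokernel-dual with Epstein's finite-dimensional space $Q_{\Delta}$ and his divergence operator; (iv) invoke, for $f$ not Lattès, Epstein's injectivity theorem to conclude the dual space, hence $\mathbb{E}{\rm xt}^2(\Omega_{\bul},\mathcal{O}(-\Delta_{\bul}))$, vanishes, and conversely read off that the vanishing forces the injectivity, giving the claimed equivalence. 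The main obstacle I expect is step~(ii)–(iii): tracking the precise pole-order bookkeeping through Serre duality so that the ${\rm E}_f$-divisor condition $\Delta_1\preceq\Delta_0\wedge f^*\Delta_0$ translates \emph{exactly} into the admissible-divisor hypotheses of Epstein's space, and verifying that the boundary map in the long exact sequence is dual to Epstein's $\nabla$ \emph{on the nose} rather than up to an uncontrolled correction term. Establishing this dictionary precisely is what upgrades the soft statement ``obstruction lies in an $\mathbb{E}{\rm xt}^2$'' to the sharp equivalence with Infinitesimal Thurston Rigidity.
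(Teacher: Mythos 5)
Your proposal follows essentially the same route as the paper: Lemma \ref{ThurstonvanishingLemma} identifies $\mathbb{E}{\rm xt}^2(\Omega_{\bul},\mathcal{O}(-\Delta_{\bul}))$ via Corollary \ref{exti_cor} as the cokernel of $d_1^{1,0}\colon H^1(T_{\PP}(-\Delta_0))\to H^1(f^*T_{\PP}(-\Delta_1))$, Serre-dualizes, and verifies by an explicit Dolbeault computation (using the twist $Df^{-1}\colon f^*T_{\PP}\overset{\sim}{\to}T_{\PP}(+\Gamma_f)$, which is exactly where the $-\Gamma_f$ in the pole bookkeeping comes from) that the transpose is $\nabla_f=id-f_*$ on the nose, after which Theorem \ref{Eps} gives the vanishing for rigid divisors. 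The only caveats are notational: the differential that dualizes to $\nabla_f$ is the degree-one map $d_1^{1,0}$ rather than $d^{0,0}$, and the opening framing of ``reducing to $\QQ{f}$'' is backwards (the paper deduces the $\QQ{f}$ claim from the ${\rm E}_f$ one), but your later remarks show you understand both points.
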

Let us describe the features of
 Epstein's approach to 
the Fatou-Shishikura inequality. 
Firstly, the count of the number of nonrepelling cycles of 
$f$ in \cite{1999math......2158E} is sharpened
by assigning to each cycle a certain
multiplicity, which may be greater than $1$ if the multiplier 
is a root
of unity, according to a formal invariant of the cycle called the \textit{parabolic multiplicity}, cf. \eqref{parabolicmult}.
Moreover, if we do not count superattracting cycles 
\textit{i.e.} those which contain a critical point, 
the upper bound is also sharpened,
and the degree of $f$ no longer appears.
In fact, Epstein shows that the total count, with multiplicity,
of the nonrepelling and non superattracting 
cycles of $f$, denoted by $\gamma_f$ (which, \textit{a priori}, may be infinite),
is less or equal to the number of \textit{infinite tails} $\delta_f$, \textit{i.e.}
the number of distinct orbit-equivalence classes 
in the non (pre-)periodic part
of the postcritical set of $f$:
\begin{equation}\label{intro:EpsFS}
	\gamma_f \leq \delta_f.
\end{equation}
Note that the statement \eqref{intro:weakFS} is implied by \eqref{intro:EpsFS},
since there at most $2D-2-\delta_f$ superattracting cycles.

The next statement follows from Claim \ref{intro:claim2}
and from \ref{localHomEps}, 
\ref{computationExtcriticaldivisor}:
\begin{claiming}
	For any finite set $A$ of cycles of $f$, 
	there is a divisor $\Delta_{\bul}\coloneqq (\Delta_{0}, \Delta_{1})$
	on ${\rm E}_f$ such that 
	\begin{itemize}
		\item $A$ is contained in the support of $\Delta_0$;
		\item 	the vanishing 
		of $\mathbb{E}{\rm xt^2}(\Omega_{\bul},\mathcal{O}(-\Delta_{\bul}))$
		holds;
		\item if $\gamma_A$ denotes the count (with multiplicity) of the nonrepelling (and non superattracting) cycles in $A$, we have
		\begin{equation*}
			2D-2+ \gamma_{A} - \delta_f	=\dim_{\C} 
			\mathbb{E}{\rm xt^1}(\Omega_{\bul}, \mathcal{O}_{\Delta_{\bul}}) 
			\leq \dim_{\C} \mathbb{E}{\rm xt^1}(\Omega_{\bul}, \mathcal{O}_{\bul})=2D-2.
		\end{equation*}
	\end{itemize}
\end{claiming}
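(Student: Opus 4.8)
The plan is to produce the divisor $\Delta_{\bul}$ by hand, to extract the vanishing of $\mathbb{E}{\rm xt^2}$ from Claim \ref{intro:claim2}, and then to read off the dimension of the local deformation space by a purely local computation. First I would assemble $\Delta_0$ out of three superposed pieces: the critical divisor of $f$, a \emph{sufficiently long truncation} of its forward (postcritical) orbit — which on ${\rm E}_f$ plays the role that the non-immersible full forward orbit cannot play on $\PP$ — and the cycles of $A$, each weighted by the multiplicity dictated by the parabolic multiplicity \eqref{parabolicmult} of the associated nonrepelling cycle. I would then choose $\Delta_1 \preceq \Delta_0 \wedge f^*\Delta_0$ so that $\Delta_{\bul}=(\Delta_0,\Delta_1)$ is a genuine divisor on ${\rm E}_f$ in the sense of \ref{E-dynimacaldivisor}. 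By construction $A$ lies in the support of $\Delta_0$, which is the first bullet.

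Next I would invoke the short exact sequence $0 \to \mathcal{O}(-\Delta_{\bul}) \to \mathcal{O}_{\bul} \to \mathcal{O}_{\Delta_{\bul}} \to 0$ in $Ab({\rm E}_f)$ and the associated long exact sequence \eqref{ideallongexact} obtained by applying $\mathbb{H}{\rm om}(\Omega_{\bul},-)$, whose tail reads
\[
\mathbb{E}{\rm xt^1}(\Omega_{\bul},\mathcal{O}_{\bul}) \longrightarrow \mathbb{E}{\rm xt^1}(\Omega_{\bul},\mathcal{O}_{\Delta_{\bul}}) \longrightarrow \mathbb{E}{\rm xt^2}(\Omega_{\bul},\mathcal{O}(-\Delta_{\bul})) \longrightarrow 0.
\]
Since $\Delta_{\bul}$ has been chosen as above and $f$ is not Lattès, Claim \ref{intro:claim2} gives the vanishing of the rightmost group — this is the second bullet, and it is exactly the dual (sheaf-theoretic) form of Epstein's extension of Infinitesimal Thurston's rigidity. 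The vanishing forces the restriction map to be surjective, whence
\[
\dim_{\C}\mathbb{E}{\rm xt^1}(\Omega_{\bul},\mathcal{O}_{\Delta_{\bul}}) \leq \dim_{\C}\mathbb{E}{\rm xt^1}(\Omega_{\bul},\mathcal{O}_{\bul}) = 2D-2,
\]
which is the inequality in the third bullet.

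It then remains to prove the equality $2D-2+\gamma_A-\delta_f = \dim_{\C}\mathbb{E}{\rm xt^1}(\Omega_{\bul},\mathcal{O}_{\Delta_{\bul}})$. Because $\mathcal{O}_{\Delta_{\bul}}$ is supported on the finite divisor $\Delta_{\bul}$, the group $\mathbb{E}{\rm xt^1}(\Omega_{\bul},\mathcal{O}_{\Delta_{\bul}})$ splits as a sum of strictly local contributions, which I would evaluate via \ref{localHomEps} and \ref{computationExtcriticaldivisor}. Each nonrepelling, non-superattracting cycle in $A$ contributes its parabolic multiplicity (summing to $\gamma_A$, in the manner of the elementary fixed-point model \eqref{intro:easyexample}), while the critical divisor together with its truncated forward orbit contributes $2D-2-\delta_f$, the defect $\delta_f$ coming from the relations among the distinct orbit-equivalence classes of infinite tails. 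Adding these gives the asserted total.

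The hard part will be this last local computation. One has to disentangle the contributions of the critical orbit from those of the cycles where their supports overlap, verify that the parabolic multiplicities enter with precisely the correct weight (so that the local count is genuinely $\gamma_A$ and not merely the number of cycles), and check that the truncation length in $\Delta_0$ has been taken large enough that no tail contribution is lost and the $\delta_f$ defect is exact. This is exactly the point at which the enlargement from $\XX{f}$ to ${\rm E}_f$ is indispensable, since the full forward postcritical orbit accumulates and is not a divisor on $\PP$; the truncated pair $\Delta_{\bul}=(\Delta_0,\Delta_1)$ on ${\rm E}_f$ is what makes the bookkeeping, and hence the sharp count, possible.
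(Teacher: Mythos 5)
Your plan coincides with the paper's own proof of this claim: the divisor is taken to be $\Lambda^N_{\bul}+\sum_i\Delta_{\bul}(C_i)$ (truncated critical-orbit divisor plus sharp rigid divisors on the cycles, with the multiplicity adjusted where a cycle meets the postcritical set), the vanishing of $\mathbb{E}{\rm xt}^2$ comes from Theorem \ref{Epsteinvanishing} via \eqref{ideallongexact}, and the dimension is assembled from the local computations \ref{localHomEps} and \ref{computationExtcriticaldivisor} using the splitting \eqref{splitproperty}. The subtleties you flag (overlap of cycle and critical supports, correct parabolic weights, sufficiency of the truncation length $N$) are exactly the ones the paper resolves in \ref{locallemma1}, \ref{locallemma2} and the final proof of Claim \ref{claim1}, so this is essentially the same argument.
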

Taking the 
supremum over the family of finite sets of cycles of $f$, 
yields \eqref{intro:EpsFS}.
	 \vskip 4 em
	\begin{ackn*}
     We owe a debt of gratitude 
     to our advisor Prof. Michael McQuillan for having 
     shared his ideas about Epstein's work,
     and also for his constant support. 
     We are also deeply grateful to Adam Epstein for
     his kind hospitality during our visits in Warwick,
     which have been extremely helpful
     to our understanding of his work. 
     Finally, we thank our referees, Olivia Caramello and Carlo Gasbarri,
     who have helped us to adjust some
      mistakes. 
    	\end{ackn*}
    \newpage
    \phantom{h}
    \thispagestyle{empty}
    \newpage
	\chapter{Dynamical sites}
	\section{Dynamical site {\rm \bf  I}}
	Let $(X, J_X)$ be a small site that has finite limits 
	and countable, cf.  \eqref{countability}, coproducts. In particular, $X$ has an 
initial object which we shall denote by $\emptyset$. 
Moreover, let us assume that 
\begin{itemize}
	\item  coproducts commute with finite projective limits;
	\item coproducts in $X$ are disjoint, \textit{i.e.} $\forall \, U,V \in ob(X)$
		\begin{equation}\label{notdisjointcoproducts}
		\emptyset \overset{\sim}{\longrightarrow} U \times_{U \coprod V} V.
	\end{equation}
\end{itemize}
We fix a morphism of sites
$$
f:X \to X,
$$
meaning that there is a (non trivial) functor $f^{-1}: X \to X$ inducing a 
geometric morphism, \textit{i.e.} a pair of adjoint functors
\begin{equation*}
	\begin{tikzcd}[row sep=2.6pc, column sep=1.6pc]
		f^* \dashv f_*:  Sh( X) \arrow[r, shift right] \arrow[ from=r,shift right]&
		Sh(X),
	\end{tikzcd} 
\end{equation*}
with $f^*$ preserving finite limits.
We say that the pair $(X,f)$ satisfying the above hypotheses 
is a \textbf{site with dynamics}, cf. \eqref{intro:sitewithdyndef}.
	We want to define in this generality a category 
	adequate to describe the dynamical system generated by $f$, 
or the action on $X$ of the semigroup $\N[ f]$,
 \textit{i.e.} the image of the homomorphism 
\begin{equation}\label{NactiononX}
	\begin{tikzcd}[row sep=0.04pc, column sep=1.3pc]
		\N \arrow[ r] & {\rm End}(X) \\
		n\arrow[r, mapsto]  &  f^{ n}
	\end{tikzcd} 
\end{equation}

\begin{example}
	A natural example to keep in mind is a topological space $X$ with a  
	continuous map $f: X \to X$. 
	The site to take into consideration is then the \'etale site of the topological space. 
\end{example}

Let $g: X \to Y$ be a morphism
of sites  and recall that we denote by
$$
g_* : Sh(X) \to Sh(Y)
$$
the composition functor associated to $g^{-1}$, 
cf. \eqref{compfunctorcap1}, and by  
$
g^*
$
its left adjoint, cf. \cite[I.3]{SGA4}. Let us recall its construction:
given a sheaf $\mathcal{F}$ on $Y$ we consider 
the pre-sheaf on $X$ given by
\begin{equation}\label{pullbackpresheafgeneral}
 g^{-1} \mathcal{F} 	: U \in ob(X) \mapsto \varinjlim_{\overset{V \in ob(Y)}{U \to g^{-1}V \in X}}  \mathcal{F}(V),
\end{equation}
which is a separated pre-sheaf and $g^*\mathcal{F}=a(g^{-1}\mathcal{F}) \in  Sh( X) $ is the associated 
sheaf, \textit{i.e.}
\begin{equation}\label{sheaficationofpreheaf}
g^*\mathcal{F}(U)= \varinjlim_{R \in J_{{\bf {X}}}(U)}
 (g^{-1} \mathcal{F})(R).
\end{equation}

\begin{definition}
	Let $X,Y$ be sites. We say that a functor $c: Y \to X$ is continuous
	if the composition functor associated to $c$
	sends sheaves on $X$ to sheaves on $Y$.
\end{definition}
Recall the following fact, cf. \cite[III.3]{SGA4}
\begin{factdefinition}[{\bf Induced topology }]\label{sitewithdyn}
Let $X$ be a site and $Y$ any category  
with a functor $c: Y \to X$. Then, there is an induced topology $J_Y$ on $Y$ 
and a continuos functor $c:  (Y, J_Y) \to (X, J_X) $. 
The topology
$J_Y$, obtained by ``pulling back'' the topology 
$J_X$ along the functor $c$, 
is the finest topology on $Y$ among the topologies which makes
$c$ a continuous functor.
\end{factdefinition}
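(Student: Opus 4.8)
The plan is to exhibit $J_Y$ as the supremum, in the complete lattice of Grothendieck topologies on the category $Y$ (small, as throughout the thesis), of the family of all topologies rendering $c$ continuous, and then to show that this supremum is itself a member of the family. Concretely, let $\mathcal{T}$ denote the set of Grothendieck topologies $J$ on $Y$ such that $c^*\mathcal{F}=\mathcal{F}\circ c$ is a $J$-sheaf for every $\mathcal{F}\in Sh(X,J_X)$; by the paper's definition of continuity, $\mathcal{T}$ is exactly the set of topologies on $Y$ for which $c$ is continuous. First I would observe that $\mathcal{T}$ is nonempty: the chaotic topology, whose only covering sieves are the maximal ones, admits every presheaf as a sheaf and hence lies in $\mathcal{T}$. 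Since the Grothendieck topologies on a fixed small category form a complete lattice, the supremum $J_Y\coloneqq\sup\mathcal{T}$ exists and is automatically a Grothendieck topology; granting $J_Y\in\mathcal{T}$, it is by construction the finest topology making $c$ continuous, which is the assertion.

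The crux is therefore to prove that $J_Y\in\mathcal{T}$, i.e. that each $c^*\mathcal{F}$ remains a sheaf for the join of all the topologies in $\mathcal{T}$. For this I would record two facts. First, the family $K\coloneqq\bigcup_{J\in\mathcal{T}}J$ of all sieves covering for some member of $\mathcal{T}$ is stable under pullback --- each $J\in\mathcal{T}$ being so --- and the topology it generates is precisely $J_Y$, since a topology contains every sieve of $K$ if and only if it dominates every $J\in\mathcal{T}$. Second, I would invoke the standard criterion (cf. \cite[II]{SGA4}) that a presheaf satisfying the sheaf axiom along every sieve of a pullback-stable generating family satisfies it along every covering sieve of the generated topology; the proof of this criterion propagates the gluing condition from the generators to an arbitrary cover by an inductive use of the local-character (transitivity) axiom. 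Granting the criterion, the conclusion is immediate: any $R\in K$ lies in some $J\in\mathcal{T}$, and $c^*\mathcal{F}$ is a $J$-sheaf, so $c^*\mathcal{F}$ satisfies the sheaf axiom along $R$; hence $c^*\mathcal{F}$ is a $J_Y$-sheaf.

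It then remains to unwind the two structural claims into the statement, which is purely formal. Continuity of $c\colon(Y,J_Y)\to(X,J_X)$ is a tautology, since $J_Y\in\mathcal{T}$ says exactly that $c^*$ carries $Sh(X)$ into $Sh(Y)$. Maximality is equally formal: any topology $J'$ on $Y$ for which $c$ is continuous belongs to $\mathcal{T}$ by definition, hence satisfies $J'\le\sup\mathcal{T}=J_Y$; so $J_Y$ is the finest topology with the stated property, and in particular it is unique, which justifies the definite article in ``the induced topology''.

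The step I expect to be the main obstacle is the sheaf criterion invoked in the second paragraph --- establishing that the sheaf axiom, once known along a pullback-stable generating family, holds along every cover of the generated topology. This is the only genuinely non-formal ingredient; everything else is lattice-theoretic bookkeeping. To keep the argument self-contained, or to cover the case of a large $Y$ where the supremum is set-theoretically delicate, one could instead describe $J_Y$ by hand --- declaring a sieve $R$ on $U$ to be covering precisely when, for every $g\colon V\to U$ and every $\mathcal{F}\in Sh(X)$, the restriction map $\mathcal{F}(c(V))\to\mathrm{Hom}_{Y^{\wedge}}(g^*R,\,c^*\mathcal{F})$ is bijective --- and then verify the three topology axioms directly; but this merely relocates the same transitivity argument into the verification of the local-character axiom, so I would keep the lattice-theoretic formulation as the backbone.
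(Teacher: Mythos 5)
Your argument is correct. Note first that the paper offers no proof of this statement: it is recorded as a Fact/Definition with a bare citation to \cite[III.3]{SGA4}, so there is nothing internal to compare against, and your job was to reconstruct the SGA4 argument. Your lattice-theoretic packaging does this correctly: $\mathcal{T}$ is nonempty because the chaotic topology (only maximal sieves cover) makes every presheaf a sheaf by Yoneda; the topologies on the small category $Y$ form a complete lattice; and $\sup\mathcal{T}$ is generated by the pullback-stable family $K=\bigcup_{J\in\mathcal{T}}J$. The one genuinely non-formal step is exactly the one you isolate, namely that a presheaf satisfying the sheaf axiom along every member of a pullback-stable generating family is a sheaf for the generated topology. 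The cleanest way to discharge it --- and the way \cite[II.2.2]{SGA4} does --- is to show that, for a fixed family of presheaves, the sieves $R$ on $U$ such that every pullback $g^*R$ along $g\colon V\to U$ induces a bijection $\mathcal{F}(V)\to\mathrm{Hom}_{Y^{\wedge}}(g^*R,\mathcal{F})$ already form a Grothendieck topology (only the local-character axiom needs an argument); this topology contains $K$, hence contains $J_Y$, which gives your key lemma and simultaneously validates the ``by hand'' description of $J_Y$ in your last paragraph. Your gloss of the lemma's proof as ``an inductive use of local character'' is looser than this, but since you explicitly flag it as the ingredient to be supplied rather than claim it is automatic, the proposal stands as a complete and correct account of the cited fact.
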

Note that a continuous functor $c:  (Y, J_Y) \to (X, J_X) $ induces 
only a \textit{weak} geometric morphism,
\textit{i.e.} setting $g_*\coloneqq c^*|_{Sh(X)}$, we have a pair of adjoint functors
\begin{equation*}
	\begin{tikzcd}[row sep=2.6pc, column sep=1.6pc]
		g^* \dashv g_*:  Sh( X) \arrow[r, shift right] \arrow[ from=r,shift right]&
		Sh(Y).
	\end{tikzcd} 
\end{equation*}
	We are ready to give our first definition of \textit{classifying site} of $f$.
	\begin{definition}[{\bf Dynamical category of $f$}]\label{defdyncat1}
		Let $\XX{f}$ be the following category: 
		\begin{equation*}
			\begin{aligned}
				\bul \; \;&  ob(\XX{f})=\coprod_{ U \in ob(X)} (f_*\underline{U})(U) =\coprod_{ U \in ob(X)} 
			\{ u:	f^{-1}U \to  U \in X \};\\
			\bul	\; \;& {\rm Hom}_{\XX{f}}(u,v)=\{ i: t(u) \to t(v): iu=(f^{-1}i)v \};
			\end{aligned}
		\end{equation*}
	In other words, the arrows in $\XX{f}$ are commutative diagrams
	\begin{equation*}
		\begin{tikzcd}[row sep=2pc, column sep=1.8pc]
			U	\arrow[r, "i"] &  V \\ 
			f^{-1}U \arrow[u, "u"]  \arrow[ r, "f^{-1}i"'] & f^{-1}V \arrow[u, "v"'] 
		\end{tikzcd} 
	\end{equation*}
		\end{definition}
	\begin{notation}
		The notation is as usual: an object $u \in ob(\XX{f})$ is a pair, 
		$(U,u)\coloneqq (t(u),u)$. We think of an object $(U,u) \in ob(\XX{f})$ as a 
		``backward invariant'' object in $X$ for the action of $f$.
	\end{notation}
Observe that we have a natural target functor $t: u \mapsto t(u)$, whence a functor
\begin{equation}\label{targetfunctorcap1}
	t: \XX{f} \to X, \;(U,u) \to U.
\end{equation}
	\begin{lemma}\label{objectinitialXffact}
		The functor $t: \XX{f} \to X$ admits a left adjoint $b: X \to \XX{f}$, 
		which is defined as follows. Let $\Delta \in ob(X)$, then,
		$$
 b(\Delta)\coloneqq \left( \coprod_{n\geq 0} \,f^{-n}\Delta, \;j_{\Delta} \right)
		$$
		of $\XX{f}$. Here, $j_{\Delta}$ denotes the canonical morphism
		\begin{equation}\label{shiftXf}
			j_{\Delta}: \coprod_{n\geq 0} \,f^{-(n+1)}\Delta \longrightarrow \coprod_{n\geq 0} \,f^{-n}\Delta
		\end{equation}
		defined by the coproduct of the canonical monomorphisms of definition,
		$$\iota_k: f^{-k}\Delta \hookrightarrow \coprod_{n \geq 0}f^{-n}\Delta, \quad k \geq 1.$$
		We set the following notation $U_{\Delta}\coloneqq t(b(\Delta))$.\\
		Moreover, for any $i: \Delta' \to \Delta \in X$, we define 
		$b(i): b(\Delta') \to b(\Delta)$  as 
		the coproduct for $n \geq 0$ of the maps
		\begin{equation*}
			\begin{tikzcd}[row sep=2.6pc, column sep=2.6pc]
				f^{-n}\Delta'  \arrow[r, "f^{-n}i"] 
				& f^{-n}\Delta  \arrow[r, hook] & U_{\Delta}. 
			\end{tikzcd}
		\end{equation*}
	\end{lemma}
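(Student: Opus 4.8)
The plan is to establish the adjunction $b \dashv t$ by exhibiting a universal arrow, i.e.\ a unit $\eta_{\Delta}\colon \Delta \to t(b(\Delta))$ for each $\Delta \in ob(X)$ enjoying the appropriate universal property; by the standard universal-arrow criterion this both produces the left adjoint and forces its action on morphisms. First I would check that $b(\Delta)$ is a legitimate object of $\XX{f}$. Since $f^{-1}$ commutes with coproducts (a standing hypothesis on a site with dynamics), we have
\[
f^{-1}U_{\Delta} = f^{-1}\Big(\coprod_{n\geq 0} f^{-n}\Delta\Big) = \coprod_{n\geq 0} f^{-(n+1)}\Delta ,
\]
so the shift map $j_{\Delta}$ of \eqref{shiftXf} is genuinely a morphism $f^{-1}U_{\Delta} \to U_{\Delta}$, and the pair $(U_{\Delta}, j_{\Delta})$ lies in $\XX{f}$. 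As unit I would take the structural inclusion of the zeroth summand, $\eta_{\Delta}\coloneqq \iota_0 \colon \Delta = f^{-0}\Delta \hookrightarrow U_{\Delta}$.

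The heart of the argument is an explicit description of ${\rm Hom}_{\XX{f}}(b(\Delta),(V,v))$. By the universal property of the coproduct a map $g\colon U_{\Delta} \to V$ is the same datum as a family $g_n \coloneqq g\circ \iota_n \colon f^{-n}\Delta \to V$, $n \geq 0$. I would then unwind the commutativity condition $g\circ j_{\Delta} = v\circ (f^{-1}g)$ that characterises a morphism in $\XX{f}$. Using again that $f^{-1}$ preserves coproducts, $f^{-1}g = \coprod_{n\geq 0} f^{-1}(g_n)$, and restricting both sides to the $n$-th summand $f^{-(n+1)}\Delta$ of $f^{-1}U_{\Delta}$ — on which $j_{\Delta}$ acts as $\iota_{n+1}$ — the condition collapses to the recursion
\[
g_{n+1} = v\circ f^{-1}(g_n), \qquad n \geq 0 .
\]
Hence the whole family $\{g_n\}$ is determined by its initial term $g_0 = g\circ \eta_{\Delta}$, and conversely any $g_0\colon \Delta \to V$ extends uniquely to such a family (explicitly $g_n = v\circ f^{-1}(v)\circ \cdots \circ f^{-(n-1)}(v)\circ f^{-n}(g_0)$). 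This yields a bijection
\[
{\rm Hom}_{\XX{f}}(b(\Delta),(V,v)) \overset{\sim}{\longrightarrow} {\rm Hom}_X(\Delta, V) = {\rm Hom}_X(\Delta, t(V,v)), \qquad g \mapsto g\circ \eta_{\Delta},
\]
natural in $(V,v)$, which is precisely the universal property of $\eta_{\Delta}$ and therefore establishes $b \dashv t$.

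Finally I would verify that the action of $b$ on arrows forced by the adjunction agrees with the formula in the statement: for $i\colon \Delta' \to \Delta$, the unique morphism $b(\Delta')\to b(\Delta)$ lifting $\eta_{\Delta}\circ i$ is the coproduct over $n \geq 0$ of the composites $f^{-n}\Delta' \xrightarrow{f^{-n}i} f^{-n}\Delta \hookrightarrow U_{\Delta}$, exactly as claimed; naturality of $\eta$ is then the identity $b(i)\circ \iota_0^{\Delta'} = \iota_0^{\Delta}\circ i$, read off on the zeroth summand.

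I expect the one genuinely delicate point to be the bookkeeping in the second paragraph: correctly identifying $f^{-1}U_{\Delta}$ with the shifted coproduct, tracking how $j_{\Delta}$ reindexes the summands (the $n$-th summand of $f^{-1}U_{\Delta}$ being sent via $\iota_{n+1}$), and confirming that $f^{-1}g$ decomposes summand-wise as $\coprod_n f^{-1}(g_n)$ — all of which rest on the hypothesis that $f^{-1}$ commutes with countable coproducts. Once the recursion is read off correctly, everything else is a formal consequence of the universal-arrow criterion for the existence of an adjoint.
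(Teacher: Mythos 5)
Your proposal is correct and follows essentially the same route as the paper: both arguments reduce a morphism $b(\Delta)\to (V,v)$ to its restriction $g_0$ to the zeroth summand and reconstruct it via the recursion $g_{n+1}=v\circ f^{-1}(g_n)$ (the paper's $\underline{v}(i)_n=v^n\circ f^{-n}i$ with $v^{n+1}=v\circ f^{-1}v^n$), relying on $f^{-1}$ commuting with coproducts. Phrasing it as a universal arrow with unit $\iota_0$ rather than as a direct bijection of hom-sets is only a cosmetic difference.
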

\begin{proof}
	The map $j_{\Delta}$ is well defined since $f^{-1}$ 
	commutes with coproducts.
	Moreover, note that $j_{\Delta}$ induces, almost tautologically, a map
	\begin{equation}\label{jbarXf}
			\bar{j}_{\Delta}: b(f^{-1}\Delta) \to b(\Delta).
	\end{equation} 
	Let us fix $i: \Delta' \to \Delta \in X$, and note that the diagram
	\begin{equation}\label{bisafunctorXf}
		\begin{tikzcd}[row sep=2.6pc, column sep=1.8pc]
			\displaystyle	f^{-1} \coprod_{n \geq 0} f^{-n}\Delta'  \arrow[r, "j_{\Delta}"] \arrow[d, "f^{-1}(b(i))"]&\displaystyle
			\coprod_{n \geq 0} f^{-n}\Delta' \arrow[d, "b(i)"]\\
			\displaystyle	f^{-1} \coprod_{n \geq 0} f^{-n}\Delta   \arrow[r, "j_{\Delta'}"] & \displaystyle
			\coprod_{n \geq 0} f^{-n}\Delta
		\end{tikzcd}
	\end{equation}
	commutes, hence $b(i): b(\Delta') \to b(\Delta)$ is well defined.
	The fact that $b$ is a functor follows immediately from the functoriality 
	of the definition.
	 We need to prove that the following bifunctor in the variables $(\Delta,(V, v_{\bul}))$,
	\begin{equation*}
		{\rm Hom}_{\XX{f}}(b(\Delta), (V,v)) \overset{\sim}{\longrightarrow} {\rm Hom}_{X}(\Delta, t(V,v)),
	\end{equation*}
taking $i_{\bul}: b(\Delta) \to (V,v)$ to its target $t(i_{\bul})=i_0: \Delta \to V$,
is an equivalence. First, observe that any $i: \Delta \to V \in X$
where $\Delta \in ob(X)$ and $(V,v) \in ob(\XX{f})$, affords a map
$$
\underline{v}(i)_n: f^{-n}\Delta \to  V \in X,
$$
for each $n \geq 0$. We set $\underline{v}(i)_0=i$.
We can recursively define maps
\begin{equation}\label{actionNobjects}
	v^n: f^{-n}V \to V \in X
\end{equation}
for each $n$, starting with the given maps $v^0=id_V, v^1 \coloneqq v$:
we apply the functor $f^{-1}$ to $v^n$ and compose with $v$ to get $v^{n+1}$,
$$
v^{n+1}\coloneqq v \circ f^{-1}v^n.
$$
Note that each $v^n$ can be viewed as a map 
\begin{equation}\label{inducedmaponX/f}
\bar{v}^n: (f^{-n}V, f^{-n}v) \to (V,v) \in \XX{f},
\end{equation}
essentially by definition. The desired map $\underline{v}(i)$ is obtained 
by applying the functor $f^{-n}$ to $i$ and composing with $v^n$,
$$
\underline{v}(i)_n= v^n \circ f^{-n}i.
$$
Note that we have $\underline{v}(id_V)_n=v^n$ by definition. Consequently, there is a map 
\begin{equation}\label{pieceofcounit}
	\underline{v}(i)_{\bul}\coloneqq\coprod_{n\geq 0}\underline{v}(i)_n : U_{\Delta} \to V,
\end{equation}
which is indeed an arrow in $\XX{f}$, since
\begin{equation}\label{commdiagr2Xf}
	\begin{tikzcd}[row sep=2.6pc, column sep=1.8pc]
		V & \arrow[l, "v"] f^{-1}V \\
		\displaystyle \coprod_{n \geq 0} f^{-n}\Delta \arrow[u, "\underline{v}(i)_{\bul}"]
		 &\displaystyle  \arrow[l, "j_{\Delta}"] \coprod_{n \geq 0} f^{-(n+1)}\Delta \arrow[u, "f^{-1}\underline{v}(i)_{\bul}"']
	\end{tikzcd}
\end{equation}
clearly commutes, having by definition 
$$
v \circ f^{-1}\underline{v}(i)_n=v^{n+1}\circ f^{-(n+1)}i=\underline{v}(i)_{n+1}=\underline{v}(i)_{\bul}\circ  j_{\Delta,n}.
$$
The map $i \mapsto \underline{v}(i)_{\bul}$ is the required inverse, since by 
the commutativity of \eqref{commdiagr2Xf} we deduce a recursive relation 
implying that for any $i_{\bul}: b(\Delta) \to (V,v) \in \XX{f}$ we have 
$i_{\bul}=\underline{v}(i_0)_{\bul}$.
\end{proof}
A useful consequence, cf. \cite[I.5.5]{SGA4}, of the above fact is the following.

\begin{cor}\label{corleftadjointcap1}
The left adjoint of the composition functor associated to $t$, 
cf. \eqref{compfunctorcap1}, is the composition functor associated to $b$.
Equivalently, for any $\Delta \in ob(X)$ the canonical monomorphism $\iota_0: \Delta \hookrightarrow U_{\Delta}$ 
is an initial object in the category $I^{\Delta}$ whose objects are arrows 
$\alpha_v: \Delta \to V \in X$ such that $(V,v) \in ob(\XX{f})$, 
and the arrows $\alpha_v \to \alpha_{v'}$ are maps $i: (V,v) \to (V',v') \in \XX{f}$ 
such that the diagram
\begin{equation}\label{commdiagr1cap1}
	\begin{tikzcd}[row sep=2pc, column sep=1.8pc]
		& 	\Delta \arrow[dl, "\alpha_v"'] \arrow[dr, "\alpha_{v'}"]\\
		V	\arrow[rr, "i"] &		 &  V' 
	\end{tikzcd} 
\end{equation}
commutes, \textit{i.e.}
\begin{equation}\label{initialobXf}
	\forall \,(V,v) \in ob(\XX{f})
	\mbox{ such that  } \exists \, \alpha_v:
	\Delta \to V \in X, 
	\,  \,
	\exists!\, i_v:	(U_{\Delta}, j_{\Delta}) \to (V,v) \in \XX{f} 
\end{equation}
such that	
\begin{equation*}
	\begin{tikzcd}[row sep=2pc, column sep=1.8pc]
		& 	\Delta \arrow[dl, "\iota_0"'] \arrow[dr, "\alpha_v"]\\
		U_{\Delta}	\arrow[rr, "i_v"] &		 &  V 
	\end{tikzcd} 
\end{equation*}

\noindent
commutes. Moreover, the association $(V,v)\in ob(\XX{f}) \mapsto i_v \in ar(I^{\Delta}) $ is 
functorial in the sense that for each diagram as in \eqref{commdiagr1cap1}, we have $i_{v'}=i\circ i_v$.
\end{cor}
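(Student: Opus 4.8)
The plan is to read off both assertions from Lemma~\ref{objectinitialXffact}, which already exhibits $b$ as a left adjoint of $t$: the corollary is a formal unwinding of that adjunction, first at the level of presheaves and then as a universal property at each object. Before anything else I would pin down the unit $\eta_{\Delta}\colon \Delta \to t(b\Delta)=U_{\Delta}$ of the adjunction $b \dashv t$, since it governs both statements. By definition the unit is the image of $\mathrm{id}_{b(\Delta)}$ under the bijection $\mathrm{Hom}_{\XX{f}}(b\Delta,b\Delta)\cong \mathrm{Hom}_{X}(\Delta,U_{\Delta})$ established in the proof of Lemma~\ref{objectinitialXffact}, which sends an arrow $i_{\bul}$ to its $n=0$ component, i.e. to $t(i_{\bul})\circ\iota_{0}$. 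Feeding in $i_{\bul}=\mathrm{id}_{b(\Delta)}$, whose underlying map is $\mathrm{id}_{U_{\Delta}}$, gives $\eta_{\Delta}=\iota_{0}\colon \Delta \hookrightarrow U_{\Delta}$.

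For the first assertion I would invoke the general principle of \cite[I.5.5]{SGA4}: an adjunction $b \dashv t$ transports by pre-composition to an adjunction $b^{*}\dashv t^{*}$ on presheaf categories, where $t^{*}\colon X^{\wedge}\to(\XX{f})^{\wedge}$ and $b^{*}\colon(\XX{f})^{\wedge}\to X^{\wedge}$ are the composition functors of \eqref{compfunctorcap1}. Concretely, a natural transformation $\xi\colon\mathcal{G}\to t^{*}\mathcal{F}$ is sent to the transformation $b^{*}\mathcal{G}\to\mathcal{F}$ whose component at $\Delta$ is $\mathcal{G}(b\Delta)\xrightarrow{\xi_{b\Delta}}\mathcal{F}(U_{\Delta})\xrightarrow{\mathcal{F}(\iota_{0})}\mathcal{F}(\Delta)$, using the unit $\iota_{0}$ just identified; the inverse uses the counit, and the triangle identities give mutually inverse bijections. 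As a sanity check one may verify it on representables, where $b^{*}\underline{(V,v)}\cong\underline{V}$ and $(t^{*}\underline{U})(V,v)=\underline{U}(V)$ both compute $\mathrm{Hom}_{X}(V,U)$. Hence $b^{*}$ is a left adjoint of $t^{*}$.

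The ``equivalent'' reformulation is just the universal-arrow description of the same adjunction at a fixed $\Delta$. I would first note that $I^{\Delta}$ is precisely the comma category $(\Delta\downarrow t)$: an object is a pair $((V,v),\alpha_{v}\colon\Delta\to V=t(V,v))$ and a morphism $\alpha_{v}\to\alpha_{v'}$ is an arrow $i\colon(V,v)\to(V',v')$ of $\XX{f}$ whose underlying map satisfies $t(i)\circ\alpha_{v}=\alpha_{v'}$, i.e. \eqref{commdiagr1cap1} commutes. The standard characterization of adjoints then says that $b\Delta$ is the value of the left adjoint precisely because $(\Delta\downarrow t)$ has an initial object given by the unit $\eta_{\Delta}=\iota_{0}$. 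Thus initiality of $\iota_{0}$ in $I^{\Delta}$ is exactly \eqref{initialobXf}: for every $(V,v)$ equipped with some $\alpha_{v}$, Lemma~\ref{objectinitialXffact} produces the unique $i_{v}=\underline{v}(\alpha_{v})_{\bul}\colon U_{\Delta}\to V$ with $i_{v}\circ\iota_{0}=\alpha_{v}$. Finally the functoriality clause $i_{v'}=i\circ i_{v}$ is forced by uniqueness: given \eqref{commdiagr1cap1}, the composite $i\circ i_{v}$ satisfies $(i\circ i_{v})\circ\iota_{0}=i\circ\alpha_{v}=\alpha_{v'}$, so it agrees with the unique arrow $i_{v'}$ out of the initial object.

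The one place needing genuine care --- the ``main obstacle,'' such as it is --- is the bookkeeping of variances: confirming that the $n=0$ component of the Lemma's bijection is precisely pre-composition with $\iota_{0}$, so that the categorical unit really is the canonical monomorphism, and then that this same unit is the one appearing in the presheaf transport. Once this identification is fixed, both assertions are entirely forced by the universal property and no further computation is required.
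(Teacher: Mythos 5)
Your proof is correct and follows essentially the same route as the paper: both read the corollary off the adjunction $b \dashv t$ of Lemma~\ref{objectinitialXffact}, setting $i_v = \underline{v}(\alpha_v)_{\bullet}$ so that existence and uniqueness come from the bijection established there, with \cite[I.5.5]{SGA4} supplying the presheaf-level statement. The only (harmless) divergence is in the functoriality clause $i_{v'}=i\circ i_v$, which the paper verifies component-wise by applying $f^{-n}$ to the triangle \eqref{commdiagr1cap1}, whereas you deduce it more cleanly from the uniqueness part of initiality.
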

\begin{proof}
We set $i_v\coloneqq \underline{v}(\alpha_v)_{\bul}$, cf. \eqref{pieceofcounit}. To prove that 
$\iota_0: \Delta \hookrightarrow U_{\Delta}$ is an initial object in $I^{\Delta}$ it is 
sufficient to observe that the commutativity of \eqref{commdiagr1cap1}
implies that for each $n$
\begin{equation*}\label{commdiagr2cap1}
	\begin{tikzcd}[row sep=2pc, column sep=1.8pc]
		& f^{-n}	\Delta \arrow[dl, "f^{-n}\alpha_v"'] \arrow[dr,"f^{-n}\alpha_{v'}"]\\
		f^{-n}	V	\arrow[rr, "f^{-n}i"] &		 &  f^{-n} V' 
	\end{tikzcd} 
\end{equation*}
commutes. 
\end{proof}
\begin{cor}
	The co-unit morphism of the adjunction \ref{objectinitialXffact} leads to a canonical arrow in $\XX{f}$
	\begin{equation}\label{epimorphismXf}
		b(t(\Delta,\delta))=(U_{\Delta},j_{\Delta}) \overset{\delta_{\bul}}{\longrightarrow} (\Delta,\delta),
	\end{equation}
	for any $(\Delta,\delta) \in ob(\XX{f})$, which is given by $$
	\delta_{\bul}\coloneqq \underline{\delta}(id_{\Delta})_{\bul}= \coprod_{n \geq 0}\delta^n,
	$$ 
	in the notation of \eqref{actionNobjects}, \eqref{pieceofcounit}.
\end{cor}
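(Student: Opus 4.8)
The plan is to recall the standard description of the co-unit of an adjunction as the adjunct of an identity morphism, and then to read off the assertion from the explicit inverse bijection already constructed in the proof of Lemma \ref{objectinitialXffact}.

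First I would observe that $t(\Delta,\delta)=\Delta$, so that $b(t(\Delta,\delta))=b(\Delta)=(U_{\Delta},j_{\Delta})$; hence the co-unit at $(\Delta,\delta)$ is indeed a morphism $(U_{\Delta},j_{\Delta})\to(\Delta,\delta)$ of $\XX{f}$, as claimed in \eqref{epimorphismXf}. Write $\phi$ for the natural bijection
$$
{\rm Hom}_{\XX{f}}(b(\Delta),(V,v)) \overset{\sim}{\longrightarrow} {\rm Hom}_{X}(\Delta,t(V,v))
$$
of Lemma \ref{objectinitialXffact}, which sends an arrow $i_{\bul}$ to its target $i_0$. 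By the general theory of adjoint functors, the co-unit $\epsilon_{(\Delta,\delta)}$ is the image under $\phi^{-1}$ of the identity $id_{\Delta}=id_{t(\Delta,\delta)}$, the bijection $\phi$ being taken here with $(V,v)=(\Delta,\delta)$.

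The second step is to substitute the explicit formula for the inverse obtained in that proof, namely $\phi^{-1}(i)=\underline{v}(i)_{\bul}$. Setting $v=\delta$ and $i=id_{\Delta}$ yields $\epsilon_{(\Delta,\delta)}=\underline{\delta}(id_{\Delta})_{\bul}$, which is precisely the morphism $\delta_{\bul}$ of the statement. It then remains only to identify the components: from the definition of $\underline{\delta}(i)_{\bul}$, cf. \eqref{pieceofcounit}, one has $\underline{\delta}(id_{\Delta})_n=\delta^n\circ f^{-n}(id_{\Delta})$, and since $f^{-n}$ carries $id_{\Delta}$ to $id_{f^{-n}\Delta}$, each component reduces to $\delta^n$, whence $\delta_{\bul}=\coprod_{n\geq 0}\delta^n$ in the notation of \eqref{actionNobjects}.

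The argument is entirely formal, so there is no genuine obstacle. The only point deserving a line of verification is that the resulting coproduct is a bona fide arrow of $\XX{f}$, i.e. that it is compatible with the backward-invariance data $j_{\Delta}$ and $\delta$; but this is exactly the commutativity of diagram \eqref{commdiagr2Xf}, already established in the proof of Lemma \ref{objectinitialXffact}, specialised to $\Delta'=\Delta$ and $i=id_{\Delta}$.
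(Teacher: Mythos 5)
Your argument is correct and is exactly the (implicit) reasoning behind the paper's statement: the co-unit at $(\Delta,\delta)$ is the image of $id_{\Delta}$ under the inverse bijection $i\mapsto \underline{v}(i)_{\bul}$ constructed in the proof of Lemma \ref{objectinitialXffact}, and the identification $\underline{\delta}(id_{\Delta})_n=\delta^n$ is already noted there (``$\underline{v}(id_V)_n=v^n$ by definition''), as is the compatibility with $j_{\Delta}$ and $\delta$ via \eqref{commdiagr2Xf}. The paper leaves this unpacking to the reader; your write-up supplies it faithfully.
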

The role of the co-unit morphism \eqref{epimorphismXf} should be cleared by the following.
\begin{lemma}\label{basisobjectsXf}
	Let $(\Delta,\delta) \in ob(\XX{f})$. Note that the map 
	$$
	b(\delta):	b(f^{-1}\Delta) \to b(\Delta) \in \XX{f}
	$$
	given by the coproduct, for $n \geq 0$ of the compositions
	$$
	f^{-(n+1)}\Delta \overset{f^{-n}\delta}{\longrightarrow} 
	f^{-n}\Delta \hookrightarrow U_{\Delta},
	$$
	fits into a natural coequalizer in $\XX{f}$
	\begin{equation}\label{coequalizerobjectXf}
		\begin{tikzcd}[row sep=2.6pc, column sep=2.6pc]
			b(f^{-1}\Delta)	\arrow[r, shift right, "b(\delta)"'] 
			\arrow[r, shift left, "\bar{j}_{\Delta}"] & b(\Delta)  \arrow[r,"\delta_{\bul}"] & (\Delta,\delta),
		\end{tikzcd} 
	\end{equation}
where $\bar{j}_{\Delta}$ is the map induced by $j_{\Delta}$, cf. \eqref{jbarXf}.
\end{lemma}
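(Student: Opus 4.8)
The plan is to deduce the coequalizer property entirely from the adjunction $b \dashv t$ of Lemma~\ref{objectinitialXffact}, by testing \eqref{coequalizerobjectXf} against an arbitrary object $(V,v) \in ob(\XX{f})$. Throughout, write $\Phi_A \colon {\rm Hom}_{\XX{f}}\bigl(b(A),(V,v)\bigr) \xrightarrow{\sim} {\rm Hom}_X(A,V)$ for the adjunction bijection, which by the proof of Lemma~\ref{objectinitialXffact} sends $h$ to $t(h)\circ \iota_0$ and has inverse $i_0 \mapsto \underline{v}(i_0)_{\bul}$, cf.\ \eqref{pieceofcounit}.

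First I would identify the two parallel arrows through their adjuncts. Fix $h \colon b(\Delta) \to (V,v)$ and set $i_0 \coloneqq \Phi_\Delta(h)$. Since $b(\delta)$ is the functorial image under $b$ of the structure map $\delta \colon f^{-1}\Delta \to \Delta$, naturality of the adjunction gives at once
$$
\Phi_{f^{-1}\Delta}\bigl(h\circ b(\delta)\bigr)= i_0 \circ \delta .
$$
For $\bar{j}_\Delta$, whose underlying map $t(\bar{j}_\Delta)$ is the shift $j_\Delta$ of \eqref{shiftXf}, I would compute $\Phi_{f^{-1}\Delta}(h\circ \bar{j}_\Delta)= t(h)\circ j_\Delta \circ \iota_0$, using that $j_\Delta$ carries the $0$-th summand of $U_{f^{-1}\Delta}$ onto the $1$st summand of $U_\Delta$, so that $j_\Delta \circ \iota_0 = \iota_1$. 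By the explicit description $h = \underline{v}(i_0)_{\bul}$ one has $t(h)\circ \iota_1 = \underline{v}(i_0)_1 = v^1\circ f^{-1}i_0$, whence, cf.\ \eqref{actionNobjects},
$$
\Phi_{f^{-1}\Delta}\bigl(h\circ \bar{j}_\Delta\bigr)= v \circ f^{-1}i_0 .
$$

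The crux is then immediate: since $\Phi_{f^{-1}\Delta}$ is a bijection, $h$ coequalizes the pair $(\bar{j}_\Delta, b(\delta))$ if and only if $i_0 \circ \delta = v\circ f^{-1}i_0$, i.e.\ exactly when $i_0$ satisfies the defining commutative square of an arrow $(\Delta,\delta)\to(V,v)$ in $\XX{f}$, cf.\ Definition~\ref{defdyncat1}. Because $\delta_{\bul}$ is the co-unit \eqref{epimorphismXf}, with $\Phi_\Delta(\delta_{\bul}) = id_\Delta$, post-composition $\tilde h \mapsto \tilde h \circ \delta_{\bul}$ carries an arrow $\tilde h\colon (\Delta,\delta)\to (V,v)$ of $\XX{f}$ to a coequalizing map $b(\Delta)\to(V,v)$ whose adjunct is $t(\tilde h)=i_0$; conversely every coequalizing $h$ arises this way from $i_0 = \Phi_\Delta(h)$ regarded as an $\XX{f}$-arrow. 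This is precisely the universal property of \eqref{coequalizerobjectXf} with coequalizing map $\delta_{\bul}$. That the fork itself commutes is the special case $i_0 = id_\Delta$, where both adjuncts equal $\delta$; and uniqueness of the factorization follows from the faithfulness of $t$ — an arrow of $\XX{f}$ is by definition determined by its underlying target map, so $t(\tilde h)=i_0$ pins $\tilde h$ down.

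The main obstacle is the second adjunct computation: one must unwind the coproduct bookkeeping of the shift $j_\Delta$ — that it deletes the $0$-th summand and raises every index by one — and feed it into the recursion $\underline{v}(i_0)_n = v^n\circ f^{-n}i_0$ to land exactly on $v\circ f^{-1}i_0$. Once the two adjuncts are in hand, the remainder is formal adjunction calculus, and the equivalence with the arrow-condition in $\XX{f}$ is read off directly.
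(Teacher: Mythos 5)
Your proposal is correct and follows essentially the same route as the paper: both reduce the coequalizer condition to the identity $i_0\circ\delta = v\circ f^{-1}i_0$ by extracting the $0$-th component of the two composites (your adjunction bookkeeping $\Phi_{f^{-1}\Delta}(h\circ\bar j_\Delta)=t(h)\circ\iota_1=v\circ f^{-1}i_0$ and $\Phi_{f^{-1}\Delta}(h\circ b(\delta))=i_0\circ\delta$ is exactly the paper's computation of $(\alpha_\bullet\circ\bar j_\Delta)_0=\alpha_1=v\circ f^{-1}\alpha_0$ and $(\alpha_\bullet\circ b(\delta))_0=\alpha_0\circ\delta$). The only difference is presentational — you make the adjunction $b\dashv t$ and the commutativity of the fork explicit where the paper argues directly on components — and both arguments are sound.
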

\begin{proof}
	Let us fix an arbitrary $(V,v) \in ob(\XX{f})$ and let us prove directly that 
	$(\Delta,\delta)$ satisfies the universal property of coequalizers. 
	Let us fix a map $\alpha_{\bul}: (U_{\Delta},j_{\Delta}) \to (V,v)$ such that
	$\alpha_{\bul}\circ \bar{j}_{\Delta}=\alpha_{\bul}\circ b(\delta)$. 
	This, in particular gives a map $\alpha_0: \Delta \to V$ which satisfies 
	$\alpha_0\circ \delta=v\circ f^{-1}\alpha_0$, \textit{i.e.} we obtain a map 
	$\alpha_0: (\Delta,\delta) \to (V,v) \in \XX{f}$.
	In fact, 
	$$
	(\alpha_{\bul}\circ \bar{j}_{\Delta})_0=\alpha_1,
	$$
	and $\alpha_1=v\circ f^{-1}\alpha_0$, since $\alpha_{\bul}$ is a map in $\XX{f}$, so
	$$
	(\alpha_{\bul}\circ b(\delta))_0=\alpha_0\circ \delta=v\circ f^{-1}\alpha_0.
	$$
	The map $\alpha_0$ is clearly unique.
\end{proof}

	\begin{definition}[{\bf Classifying site of $f$}]\label{defdynsite1}
		\noindent
	Consider the topology $J_{\XX{f}}$ on $\XX{f}$ induced by the functor 
	$t: \XX{f} \to X$, cf. \ref{sitewithdyn}.
	The resulting site
	$\left( \XX{f},J_{\XX{f}}  \right)$ is called the {\bf classifying site} of $f$,  
	and, as usual when there is no room for 
	confusion, we abuse notation by writing just $\XX{f}$.
\end{definition}

Note that, by definition of induced topology, 
the functor $t: \XX{f}\longrightarrow  X$ is a \textit{continuous functor}, 
cf. \cite[III.1.1]{SGA4}.  The following is an immediate consequence of \ref{objectinitialXffact}.

\begin{cor}\label{tgeomorph}
	The functor $t$ induces a morphism of sites
		\begin{equation}\label{projectionmapXf}
		\pi : (X, J_X) \to\left(  \XX{f}, J_{\XX{f}} \right).
	\end{equation}
\end{cor}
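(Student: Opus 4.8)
The plan is to observe that the content of the corollary is \emph{not} the existence of adjoint functors on sheaves — that is automatic — but rather the left exactness of the inverse image. Indeed, by the very construction of $J_{\XX{f}}$ as the topology induced by $t$, cf. \ref{sitewithdyn}, the functor $t:\XX{f}\to X$ is continuous, so that, setting $\pi_*\coloneqq t^*|_{Sh(X)}:Sh(X)\to Sh(\XX{f})$, we already have a \emph{weak} geometric morphism $\pi^*\dashv\pi_*$. To promote this to a morphism of sites in the sense fixed above, the only thing I would need to verify is that $\pi^*:Sh(\XX{f})\to Sh(X)$ preserves finite limits.

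To that end I would first make $\pi^*$ explicit. A routine adjunction chase — using $a\dashv(\text{inclusion})$, the presheaf-level adjunction $t_!\dashv t^*$, and the fact that $t^*$ already carries sheaves to sheaves (continuity) — identifies the left adjoint of $\pi_*$ with
\begin{equation*}
	\pi^* \;=\; a\circ t_!\big|_{Sh(\XX{f})},
\end{equation*}
where a sheaf on $\XX{f}$ is regarded as a presheaf. The decisive input is Corollary \ref{corleftadjointcap1}: since $b$ is left adjoint to $t$ by Lemma \ref{objectinitialXffact}, the presheaf-level left adjoint $t_!$ coincides with the composition functor $b^*$ associated to $b:X\to\XX{f}$. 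Hence $\pi^*=a\circ b^*\big|_{Sh(\XX{f})}$.

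It then suffices to note that each factor is left exact. The inclusion $Sh(\XX{f})\hookrightarrow(\XX{f})^{\wedge}$, being a right adjoint, preserves all limits; the functor $b^*$ is precomposition with $b$, and limits of presheaves are computed objectwise, so $b^*$ preserves all limits; and the associated sheaf functor $a$ preserves finite limits. Consequently $\pi^*$ preserves finite limits, and $\pi$ is a morphism of sites in the required sense.

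I expect the essential point — and the reason the statement requires proof at all — to be exactly this left exactness. For an arbitrary continuous functor the presheaf adjoint $t_!$ is a left Kan extension and has no reason to commute with finite limits, so continuity alone yields only the weak geometric morphism. What saves the day is the adjoint $b$ produced in \ref{objectinitialXffact}, which forces $t_!$ to be the limit-preserving functor $b^*$; this is the single place where the specific combinatorial structure of $\XX{f}$ enters, and everything else is formal.
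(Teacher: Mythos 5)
Your proposal is correct and follows essentially the same route as the paper: identify the left adjoint of $\pi_*=t^*|_{Sh(X)}$ as $a\circ b^*$ via the adjunction $b\dashv t$ of Lemma \ref{objectinitialXffact}/Corollary \ref{corleftadjointcap1}, then verify left exactness. The only (welcome) difference is at the last step, where the paper asserts that preservation of binary products and equalizers is a ``straightforward direct computation,'' while you replace this by the cleaner formal observation that $b^*$ is a precomposition functor (hence preserves all limits, computed objectwise), the inclusion of sheaves into presheaves is a right adjoint, and $a$ is left exact.
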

 \begin{proof}
 	The functor $t^*|_{Sh(X)}: Sh(X) \to Sh(\XX{f})$ clearly admits a left adjoint 
 	and it coincides with $a\circ b^*: Sh(\XX{f}) \to Sh(X)$. 
 	The fact that it preserves finite limits can be checked by a direct computation.
 	In fact, cf. \cite{finitelimits}, it is enough to check 
 	that it preserves binary products and equalizers, which is straightforward.
 \end{proof}
The following statement is an equivalent formulation of 
\ref{corleftadjointcap1}, written using the geometric map $\pi$.
\begin{cor}\label{pullbackXfonbasefact}
	Let $\mathcal{F}$ be a pre-sheaf on $\XX{f}$. 
	Then, there is a canonical isomorphism
	\begin{equation}\label{pullbackXfonbase}
		b^*\mathcal{F}\overset{\sim}{\longrightarrow}  	\pi^{-1}\mathcal{F}.
	\end{equation}	
\end{cor}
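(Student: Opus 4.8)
The plan is to recognize the asserted isomorphism as nothing more than the uniqueness of left adjoints, combined with the identification of the left adjoint of $t^*$ already obtained in \ref{corleftadjointcap1}. Since the morphism of sites $\pi$ of \ref{tgeomorph} is induced by the continuous functor $t\colon \XX{f}\to X$, its underlying functor is $\pi^{-1}=t$, and hence at the level of presheaves $\pi_*=(\pi^{-1})^*=t^*\colon X^{\wedge}\to\XX{f}^{\wedge}$. By our conventions the functor denoted $\pi^{-1}$ on presheaves is by definition $(\pi^{-1})_!=t_!$, namely the left adjoint of $\pi_*=t^*$.

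First I would invoke \ref{corleftadjointcap1}, which exhibits $b^*$ (the composition functor associated to $b$) as a left adjoint of $t^*$. Since any two left adjoints of a given functor are canonically isomorphic, there is a unique natural isomorphism $b^*\xrightarrow{\sim}t_!=\pi^{-1}$ compatible with the two adjunctions, that is, matching the unit of $b\dashv t$ from \ref{objectinitialXffact} against the canonical unit of $t_!\dashv t^*$. Evaluating at a presheaf $\mathcal{F}$ then yields the canonical isomorphism $b^*\mathcal{F}\xrightarrow{\sim}\pi^{-1}\mathcal{F}$, natural in $\mathcal{F}$. This is the entire content of the statement, and no computation beyond \ref{corleftadjointcap1} is required.

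For a hands-on check I would instead unwind the pointwise colimit formula \eqref{pullbackpresheafgeneral} for $\pi^{-1}$: for $U\in ob(X)$ one has $\pi^{-1}\mathcal{F}(U)=\varinjlim\mathcal{F}(V)$, the colimit taken over the category of arrows $U\to t(V)$ with $V\in ob(\XX{f})$. This indexing category is precisely the category $I^{U}$ of \ref{corleftadjointcap1} (read with its morphisms reversed, since $\mathcal{F}$ is contravariant), in which $b(U)=(U_{U},j_{U})$ together with $\iota_0\colon U\hookrightarrow U_{U}$ is an initial object; equivalently, $b(U)$ is a terminal object of the category indexing the colimit. As a colimit over a category with a terminal object is the value at that object, we get $\pi^{-1}\mathcal{F}(U)\cong\mathcal{F}(b(U))=b^*\mathcal{F}(U)$, recovering the same map.

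The only genuine subtlety, and the step I expect to demand the most care, is the bookkeeping of variance: one must verify that the initiality of $b(U)$ in $I^{U}$ furnished by \ref{corleftadjointcap1} is exactly the terminality needed to collapse the covariantly indexed colimit defining $\pi^{-1}\mathcal{F}(U)$, i.e. that $I^{U}$ is the opposite of the comma category appearing in \eqref{pullbackpresheafgeneral}. Once this passage to the opposite category is correctly accounted for, both routes produce the same canonical arrow and the proof is complete.
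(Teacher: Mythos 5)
Your proposal is correct and matches the paper's own argument: the proof in the text is precisely your second, hands-on route --- $b(\Delta)$ already sits in the direct system \eqref{pullbackpresheaf} and, being initial in $I^{\Delta}$ by \ref{corleftadjointcap1} (hence terminal in the opposite category actually indexing the colimit, the variance point you rightly flag), collapses the colimit to $\mathcal{F}(b(\Delta))=b^*\mathcal{F}(\Delta)$. Your first route via uniqueness of left adjoints of $t^*$ is equally valid and is in effect what the paper means when it introduces the corollary as ``an equivalent formulation of \ref{corleftadjointcap1}, written using the geometric map $\pi$.''
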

\begin{proof}
	Let $\Delta \in ob(X)$. The object $b(\Delta)=(U_{\Delta}, j_{\Delta})$ 
	is already part of the direct system \eqref{pullbackpresheaf}, so
	we get the map \eqref{pullbackXfonbase}.
	This map is invertible since this object is the final object in the 
	filtered category $I^{\Delta}$, cf. \ref{corleftadjointcap1}.
\end{proof}

It follows from \ref{tgeomorph} that there is a geometric morphism
\begin{equation}\label{geometricmorphism}
	\begin{tikzcd}[row sep=2.6pc, column sep=1.6pc]
		\pi^{*} \dashv \pi_*:   Sh(X)\arrow[r, shift right] \arrow[ from=r,shift right]&
		Sh(\XX{f})
	\end{tikzcd} 
\end{equation}
resulting from a pair of adjoint functors
\begin{equation}\label{quasigeometricmorphism}
	\begin{tikzcd}[row sep=2.6pc, column sep=1.6pc]
		b^* \dashv t^*:   X^{\wedge}\arrow[r, shift right] \arrow[ from=r,shift right]&
		\XX{f}^{\wedge}.
	\end{tikzcd} 
\end{equation}
For any $\mathcal{F} \in \XX{f}^{\wedge}$ we have, cf. \ref{corleftadjointcap1},
\begin{equation}\label{pullbackpresheaf}
	\pi^{-1}\mathcal{F}:	\Delta \in ob(X) \mapsto \varinjlim_{I^{\Delta}} \mathcal{F}(\cdot),
\end{equation}
in the notation of \ref{corleftadjointcap1}.

We think of $\pi$ as the canonical projection of 
${X}$ onto its
``classifying site" $\XX{f}$, when we consider the action on $X$
of the monoid generated by $f$.
The reason for this terminology lies in the fact
 that we are going to prove that $\pi$ is a \textit{descent map}.
		\begin{claim}\label{claimaction}
		Sheaves on $\XX{f}$ 
			corresponds functorially to sheaves $\mathcal{F}$ on $X$ together 
			with a \textit{descent datum}, or an \textit{action} of 
			$f$ on $\mathcal{F}$, \textit{i.e} a map of sheaves
			\begin{equation}\label{actiononsheaf}
				\varphi: f^*\mathcal{F} \to \mathcal{F}.
			\end{equation}
		\end{claim}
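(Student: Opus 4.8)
The plan is to produce mutually quasi-inverse functors between $Sh(\XX{f})$ and the category of pairs $(\mathcal{F},\varphi)$, the backbone being the adjunction $b\dashv t$ of \ref{objectinitialXffact} together with the coequalizer presentation of \ref{basisobjectsXf}. First I would build the functor $\Psi\colon Sh(\XX{f})\to\{(\mathcal{F},\varphi)\}$. Given $\mathscr{G}\in Sh(\XX{f})$, set $\mathcal{F}\coloneqq\pi^{*}\mathscr{G}$; by \ref{corleftadjointcap1} and \ref{pullbackXfonbasefact} the filtered colimit \eqref{pullbackpresheaf} reduces to the value at the universal object $b(\Delta)$, so that on objects $\mathcal{F}(\Delta)\cong\mathscr{G}(b(\Delta))$, and $\mathcal{F}$ is a genuine sheaf on $X$ since $\pi$ is a morphism of sites, \ref{tgeomorph}. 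To produce the action I would apply $\mathscr{G}$ to the canonical shift $\bar{j}_{\Delta}\colon b(f^{-1}\Delta)\to b(\Delta)$ of \eqref{jbarXf}; this yields maps $\mathcal{F}(\Delta)=\mathscr{G}(b(\Delta))\to\mathscr{G}(b(f^{-1}\Delta))=\mathcal{F}(f^{-1}\Delta)=(f_{*}\mathcal{F})(\Delta)$, natural in $\Delta$, hence a morphism of sheaves $\mathcal{F}\to f_{*}\mathcal{F}$. By the adjunction $f^{*}\dashv f_{*}$ this is the same datum as a map $\varphi\colon f^{*}\mathcal{F}\to\mathcal{F}$, and I set $\Psi(\mathscr{G})\coloneqq(\mathcal{F},\varphi)$. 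Functoriality in $\mathscr{G}$ is immediate, and the commuting square \eqref{intro:square1} for a morphism $\theta$ follows from naturality of $\theta$ against the shift maps $\bar{j}_{\Delta}$.

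Conversely I would define $\Phi$ on a pair $(\mathcal{F},\varphi)$ by the presheaf
\begin{equation*}
	\mathscr{G}_{(\mathcal{F},\varphi)}(\Delta,\delta)\coloneqq
	\mathrm{eq}\Big(\mathcal{F}(\Delta)\rightrightarrows\mathcal{F}(f^{-1}\Delta)\Big),
\end{equation*}
where the two arrows are, respectively, restriction along $\delta$ (namely $\mathscr{G}(b(\delta))=\mathcal{F}(\delta)$) and the $\Delta$-component of the $\varphi$-adjoint map $\mathcal{F}\to f_{*}\mathcal{F}$; morphisms of $\XX{f}$ act by functoriality of these equalizers. That $\Phi(\mathcal{F},\varphi)$ is a sheaf I would deduce from the sheaf property of $\mathcal{F}$ together with the axioms {\rm (D)}: coproducts are disjoint and covering, so $\mathcal{F}$ turns $U_{\Delta}=\coprod_{n}f^{-n}\Delta$ into a product and the equalizer is computed levelwise.

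Establishing $\Psi\Phi\cong\mathrm{id}$ is then a direct computation: evaluating $\Phi(\mathcal{F},\varphi)$ on $b(\Delta)=(U_{\Delta},j_{\Delta})$ gives the equalizer of two maps $\prod_{n\ge0}\mathcal{F}(f^{-n}\Delta)\rightrightarrows\prod_{n\ge1}\mathcal{F}(f^{-n}\Delta)$ induced by $j_{\Delta}$ and by $\varphi$, whose equalizing condition forces $s_{n+1}=\varphi(s_{n})$ and hence identifies the value with $\mathcal{F}(\Delta)$ via the zeroth component; one checks along the way that the reconstructed action is again $\varphi$.

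The essential — and only genuinely non-formal — point is the other composite $\Phi\Psi\cong\mathrm{id}$, i.e. that an arbitrary sheaf $\mathscr{G}$ is recovered from $(\pi^{*}\mathscr{G},\varphi_{\mathscr{G}})$ by the above equalizer. This amounts to showing that $\mathscr{G}$ carries the coequalizer \eqref{coequalizerobjectXf} of \ref{basisobjectsXf} to an equalizer of sections — something a mere presheaf need not do. The plan here is descent-theoretic: the counit $\delta_{\bul}\colon b(\Delta)\to(\Delta,\delta)$ of \eqref{epimorphismXf} is a covering in $J_{\XX{f}}$, because its image $t(\delta_{\bul})\colon U_{\Delta}\to\Delta$ under the continuous functor $t$ admits the section $\iota_{0}\colon\Delta\hookrightarrow U_{\Delta}$ (the $n=0$ summand), so that the identity of $\Delta$ factors through it and it generates the maximal, hence covering, sieve. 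The sheaf axiom then presents $\mathscr{G}(\Delta,\delta)$ as the equalizer of $\mathscr{G}(b(\Delta))\rightrightarrows\mathscr{G}\big(b(\Delta)\times_{(\Delta,\delta)}b(\Delta)\big)$, so the crux reduces to identifying the fibre product $b(\Delta)\times_{(\Delta,\delta)}b(\Delta)$ and matching the two \v{C}ech projections with the pair $(b(\delta),\bar{j}_{\Delta})$; this is where the disjointness axiom $D_{3}$ and the commutation of $f^{-1}$ with finite limits and coproducts do the work, and I expect this fibre-product computation to be the main obstacle of the whole argument.
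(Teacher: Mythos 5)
Your two functors are precisely the ones the paper uses to establish this claim (which it proves as Theorem \ref{equivalencelemmaXf}): your $\Psi$ is the functor of \eqref{pullbackXmodfsheaves}, with the action read off from the shift $\bar{j}_{\Delta}$ exactly as in the remark following \ref{boldpullbackfact}, and your $\Phi$ is the equalizer $(\cdot)^f$ of \eqref{inverseequivalenceXf}; your verification of $\Psi\Phi\cong\mathrm{id}$ on the objects $b(\Delta)$ is the computation \eqref{computationkernelXf}. The only real divergence is in the composite $\Phi\Psi\cong\mathrm{id}$. The paper's primary argument checks the unit on the basis of covering sieves $\{b_!R\}$ of \ref{basissievesXf}, where the adjunction $b_!\dashv b^*$ converts everything into a computation of sections over $X$ and no fibre product in $\XX{f}$ is ever formed; your descent argument through the counit cover $\delta_{\bul}\colon b(\Delta)\to(\Delta,\delta)$ of \eqref{epimorphismXf} is the route the paper only sketches in one sentence at the end of that proof. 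Your reason why $\delta_{\bul}$ generates a covering sieve (the section $\iota_0$ forces the pullback of the sieve under $\pi^{-1}$ to contain $\mathrm{id}_{\Delta}$, hence to be maximal) is correct.

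The step you flag as the main obstacle does go through, and since the paper is equally terse there it is worth closing. By left-exactness of $t$ and the axioms $D_2$, $D_3$, the kernel pair decomposes as
\begin{equation*}
	b(\Delta)\times_{(\Delta,\delta)}b(\Delta)\;\cong\;
	\Big(\coprod_{m,n\geq 0} f^{-m}\Delta\times_{\Delta}f^{-n}\Delta,\ \ast\Big),
\end{equation*}
the fibre products being taken along the maps $\delta^{m},\delta^{n}$ of \eqref{actionNobjects}. The pair $(b(\delta),\bar{j}_{\Delta})$ only reaches the components with $(m,n)=(n,n+1)$, so \emph{a priori} the \v{C}ech equalizer imposes strictly more relations than the equalizer along $(b(\delta),\bar{j}_{\Delta})$; what must be checked is that the extra relations are automatic. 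They are: after passing to sections over $X$ via \ref{pullbackXfonbasefact} and the covering by the summands of $U_{\Delta}$ (as in \ref{example2cap1}), the adjacent relations force $s_{n}=(\delta^{n})^{*}s_{0}$ for every $n$, and then on the component $f^{-m}\Delta\times_{\Delta}f^{-n}\Delta$ one has $pr_1^{*}s_m=(\delta^{m}\circ pr_1)^{*}s_0=(\delta^{n}\circ pr_2)^{*}s_0=pr_2^{*}s_n$ by the defining equation of the fibre product. If you prefer to avoid this computation altogether, follow the paper: evaluate the unit on $b_!R$ and use ${\rm Hom}(b_!R,\mathcal{F})\cong{\rm Hom}(R,b^{*}\mathcal{F})$ to reduce to the equalizer over $X$ you have already computed.
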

	\begin{notation}
		We employ the word \textit{action} to indicate a monoid
		action of $\N [f] \hookrightarrow End(X)$ on a 
		sheaf $\mathcal{F}$ (resp. a pre-sheaf $\mathcal{F}$) on $X$, 
		\textit{i.e.} a map of sheaves
		\begin{equation}\label{Actionmeaning}
			f^*\mathcal{F} \to \mathcal{F},
		\end{equation}
		(resp. a map of pre-sheaves $f^{-1}\mathcal{F}\to \mathcal{F}$).
	\end{notation}

The following is a well-known characterization of the covering sieves in $\XX{f}$, cf. \cite[III.3.2]{SGA4}.
\begin{factdefinition}{\bf [Sieves on \boldmath $\XX{f}$]}\label{defsievesonXf}
	
	\noindent
	A covering sieve $ R_u \hookrightarrow \underline{(U,u)} \in J_{\XX{f}}(U,u) $
	is, by definition, 
	a sieve on $(U,u)$ in $\XX{f}$ such that 
	$\pi^{-1}R_u \hookrightarrow \pi^{-1}\underline{(U,u)}=\underline{U}$
	 is a covering sieve on $U$ in $X$, \textit{i.e.} $\pi^{-1}R_u \in J_{X}(U)$.
	\end{factdefinition}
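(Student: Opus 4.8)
The plan is to recognize the statement as the specialization to the functor $t\colon \XX{f}\to X$ of the general description of the topology induced by a functor into a site, as in \ref{sitewithdyn} (and SGA4 III.3.2). Accordingly I would prove two things: first, that $J_{\XX{f}}$ — the finest topology on $\XX{f}$ making $t$ continuous — coincides with the collection $J'$ of sieves $R$ on $(U,u)$ whose image generates a $J_X$-covering sieve of $U$; and second, that this generated sieve is precisely $\pi^{-1}R\hookrightarrow\pi^{-1}\underline{(U,u)}=\underline U$, thereby matching the asserted characterization.

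For the identification of $\pi^{-1}R$, I would use that $\pi^{-1}=t_!\cong b^*$ (by \ref{pullbackXfonbasefact}) together with $\pi^{-1}\underline{(U,u)}=t_!\underline{(U,u)}=\underline U$, which follows from \ref{corleftadjointcap1}. Given a sieve $R\subseteq\underline{(U,u)}$, I would compute $\pi^{-1}R$ via the filtered-colimit formula \eqref{pullbackpresheaf}, and check that the image of the induced map $\pi^{-1}R\to\underline U$ is exactly the sieve on $U$ generated by $\{t(\varphi):\varphi\in R\}$. The subtle point here is that $t_!$ does not preserve monomorphisms, so $\pi^{-1}R\to\underline U$ is a priori only a map of presheaves; the symbol $\pi^{-1}R\hookrightarrow\underline U$ must be read as the image sieve, and one must verify that this image is genuinely the sieve generated by the $t(\varphi)$.

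For the identification $J_{\XX{f}}=J'$, the cleanest route is descent: for $F\in Sh(X)$ one has $(t^*F)(V,v)=F(t(V,v))=F(V)$, so $t^*F$ satisfies the sheaf condition along a sieve $R$ on $(U,u)$ exactly when $t(R)$ generates a $J_X$-covering sieve of $U$. This simultaneously shows that $t$ is continuous for $J'$ and that any topology making $t$ continuous has all its covers in $J'$, so $J'$ is the finest such topology. It remains to verify $J'$ is a Grothendieck topology: the maximal sieve passes because $t(\mathrm{id}_{(U,u)})=\mathrm{id}_U$; stability under pullback and the local (transitivity) axiom are transported from $J_X$ using that $t$, being right adjoint to $b$ by \ref{objectinitialXffact}, preserves finite limits and in particular fibre products.

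The main obstacle, therefore, is twofold and lies entirely in the two ``interpretation'' points above. On the one hand, because $t_!$ is not left exact one must argue carefully that passing to the image recovers the generated sieve and that ``generation by $t$-images'' commutes with pullback, i.e. that $t$ of a restricted sieve $h^*R$ agrees with the restriction of the sieve generated by $t(R)$; this is where the preservation of fibre products by $t$ is essential. On the other hand, the transitivity axiom for $J'$ requires propagating a family of local $J_X$-covers back through $t$ and re-assembling them, which is routine given the stability already established but is the step most prone to hidden assumptions on the behaviour of coproducts and finite limits guaranteed by property {\rm (D)}.
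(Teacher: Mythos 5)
Your overall route---characterising the induced topology via SGA4 III.3.2 and computing $\pi^{-1}R_u=t_!R_u$ through the adjunction $b\dashv t$---is the same as the paper's, but you have built in a misconception that both lengthens the argument and opens a gap. You assert that $t_!$ does not preserve monomorphisms and therefore insist on replacing $\pi^{-1}R_u\to\underline{U}$ by its image sieve. In fact $t_!\cong b^*$ by \ref{corleftadjointcap1} (equivalently \ref{pullbackXfonbasefact}), because $t$ admits the left adjoint $b$ of \ref{objectinitialXffact}; a composition functor is exact, so $\pi^{-1}$ preserves monomorphisms and $\pi^{-1}R_u$ is already a subpresheaf of $\underline{U}$, \textit{i.e.}\ a genuine sieve. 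This is precisely the content of the paper's two-line proof: $\pi^{-1}$ (hence $\pi^*$) is left exact, and a monomorphism is a covering if and only if it is a bicovering, so the general bicovering criterion for the induced topology collapses to the stated one with no image-taking required.

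The gap created by the image-taking sits in your descent step. You pass from the sheaf condition for $t^*F$ along $R_u$, which by adjunction reads $F(U)\overset{\sim}{\longrightarrow}{\rm Hom}(t_!R_u,F)$, to the assertion that the \emph{generated} sieve is covering; this silently uses ${\rm Hom}(t_!R_u,F)={\rm Hom}({\rm im}(t_!R_u),F)$ for every sheaf $F$, \textit{i.e.}\ that $t_!R_u\to{\rm im}(t_!R_u)$ is a bicovering---exactly the kind of statement your image device was meant to avoid. Once you restore the fact that $t_!R_u\to\underline{U}$ is monic the gap closes, and the remainder of your plan (the descent characterisation of the finest topology making $t$ continuous, and the verification of the axioms using that $t$, as a right adjoint, preserves fibre products) is correct, though it re-proves the existence of the induced topology, which the paper simply quotes from SGA4; property {\rm (D)} plays no role in that verification.
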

\begin{proof}
	It follows from the fact that the functor $\pi^*$ is left exact and also 
	that any monomorphism is a covering if and only if it is a bi-covering.
\end{proof}

In the proof of \ref{objectinitialXffact} we have seen that $f$ factorizes 
through $\XX{f}$, \textit{i.e.} there is a natural commutative square
\begin{equation}\label{commutativesquareXf}
	\begin{tikzcd}[row sep=2.6pc, column sep=2.6pc]
		X	\arrow[r, "\pi"] \arrow[d, "f"'] & \XX{f} \arrow[d, "\tilde{f}"] \\ 
		X \arrow[r, "\pi"] & \XX{f},
	\end{tikzcd} 
\end{equation}
where 
\begin{equation}\label{tildefunction}
	\tilde{f}^{-1}(U,u)\coloneqq (f^{-1}U,f^{-1}u).
\end{equation}
Moreover, there are natural maps, cf. \eqref{actionNobjects},
$$
\bar{u}^n: \tilde{f}^{-n}(U,u) \to (U,u) \in \XX{f},
$$
for each $n \in \N$. We set $\bar{u}^0\coloneqq id_{(U,u)}, \bar{u}\coloneqq \bar{u}^1$.
The induced map $\tilde{f}$ should be thought as a ``shift" map, since its action on the 
elements $b(\Delta)$, cf. \ref{objectinitialXffact}, is by shifting.
Note the following property of $\XX{f}^{\wedge}$.
\begin{fact}\label{actiondown}
	For any $\mathcal{F} \in \XX{f}^{\wedge}$ there is a canonical natural transformation of functors 
	\begin{equation}\label{mapfpullbackpreXf}
		\phi(\mathcal{F}):\tilde{f}^{-1}\mathcal{F} \longrightarrow 	\mathcal{F}.
	\end{equation}
 Moreover, any $\theta: \mathcal{F} \to \mathcal{G} \in \XX{f}^{\wedge}$ 
	commutes with \eqref{mapfpullbackpreXf}, \textit{i.e.} there is a commutative diagram in $\XX{f}^{\wedge}$
	\begin{equation*}
		\begin{tikzcd}[row sep=2.6pc, column sep=2.8pc]
			\tilde{f}^{-1}\mathcal{F} \arrow[r, "	\phi(\mathcal{F})"] 
			\arrow[d, "	\tilde{f}^{-1}\theta"'] &
			\mathcal{F} \arrow[d, "\theta"] \\
			\tilde{f}^{-1}\mathcal{G}\arrow[r, "	\phi(\mathcal{G})"] &
			\mathcal{G}
		\end{tikzcd} 
	\end{equation*}
\end{fact}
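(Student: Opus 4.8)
The plan is to exhibit $\phi(\mathcal{F})$ as the image, under the adjunction that defines $\tilde{f}^{-1}$ on presheaves, of a canonical natural transformation assembled from the maps $\bar{u}=\bar{u}^1$ introduced just above the statement. The whole argument is formal once one has recognised that those maps constitute a natural transformation of endofunctors of $\XX{f}$, so I would organise it so that the only real verification is that single fact.

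First I would package the $\bar{u}$ into a natural transformation of endofunctors of $\XX{f}$. For each object $(U,u)$ the underlying arrow $u\colon f^{-1}U\to U$ of $X$ is a morphism $\bar{u}\colon\tilde{f}^{-1}(U,u)=(f^{-1}U,f^{-1}u)\to(U,u)$ in $\XX{f}$, cf. \eqref{tildefunction}: its defining square (Definition \ref{defdyncat1}) reads $u\circ f^{-1}u=u\circ f^{-1}u$ and so commutes tautologically. To see these components are natural, fix a morphism $i\colon(U,u)\to(V,v)$ of $\XX{f}$, i.e. an arrow $i\colon U\to V$ of $X$ with $i\circ u=v\circ f^{-1}i$. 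Applying $\tilde{f}^{-1}$ yields the arrow $f^{-1}i\colon\tilde{f}^{-1}(U,u)\to\tilde{f}^{-1}(V,v)$, and on underlying arrows of $X$ the naturality square for $\bar{u}$ is exactly the identity $i\circ u=v\circ f^{-1}i$. Hence $\bar{u}\colon\tilde{f}^{-1}\Rightarrow\mathrm{id}_{\XX{f}}$ is a natural transformation of endofunctors of $\XX{f}$.

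Next I would transport $\bar{u}$ to presheaves. Whiskering with a presheaf $\mathcal{F}$, which is contravariant, turns $\bar{u}$ into a morphism of presheaves $\rho_{\mathcal{F}}\colon\mathcal{F}\to(\tilde{f}^{-1})^{*}\mathcal{F}=\tilde{f}_{*}\mathcal{F}$, whose component at $(U,u)$ is the restriction map $\mathcal{F}(\bar{u})\colon\mathcal{F}(U,u)\to\mathcal{F}(f^{-1}U,f^{-1}u)$; this $\rho_{\mathcal{F}}$ is manifestly natural in $\mathcal{F}$, so $\rho\colon\mathrm{id}\Rightarrow\tilde{f}_{*}$ is a natural transformation of endofunctors of $\XX{f}^{\wedge}$. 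By the notational conventions, $\tilde{f}^{-1}$ on presheaves denotes the left adjoint $(\tilde{f}^{-1})_{!}$ of $(\tilde{f}^{-1})^{*}=\tilde{f}_{*}$, and I define $\phi(\mathcal{F})\colon\tilde{f}^{-1}\mathcal{F}=(\tilde{f}^{-1})_{!}\mathcal{F}\to\mathcal{F}$ to be the adjunct (mate) of $\rho_{\mathcal{F}}$ under $(\tilde{f}^{-1})_{!}\dashv(\tilde{f}^{-1})^{*}$. Since mates of natural transformations are again natural, $\phi$ is itself a natural transformation $\tilde{f}^{-1}\Rightarrow\mathrm{id}$ of endofunctors of $\XX{f}^{\wedge}$. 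The second assertion of the statement is then automatic: naturality of $\phi$ gives, for every $\theta\colon\mathcal{F}\to\mathcal{G}$, the identity $\theta\circ\phi(\mathcal{F})=\phi(\mathcal{G})\circ\tilde{f}^{-1}\theta$, which is exactly the claimed commutative square.

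The passage through the adjunction and the invocation of naturality of mates are routine, so the step I would treat most carefully is the verification that the components $u$ assemble into the natural transformation $\bar{u}\colon\tilde{f}^{-1}\Rightarrow\mathrm{id}_{\XX{f}}$, since both the ``$\bar{u}$ is an arrow of $\XX{f}$'' check and the naturality check reduce to the single relation $i\circ u=v\circ f^{-1}i$ that defines morphisms in $\XX{f}$. I would also keep strict track of variance, so as to produce the adjunct $\tilde{f}^{-1}\mathcal{F}\to\mathcal{F}$ rather than the precomposition map $\mathcal{F}\to\tilde{f}_{*}\mathcal{F}$ from which it is built.
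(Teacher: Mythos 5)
Your proposal is correct and is essentially the paper's own argument: the paper likewise defines $\phi(\mathcal{F})$ as the adjunct of the precomposition map $s\in\mathcal{F}(U,u)\mapsto\mathcal{F}(\bar{u})(s)\in\mathcal{F}(f^{-1}U,f^{-1}u)$ and deduces the commutativity with $\theta$ from functoriality. You merely spell out in more detail the check that the $\bar{u}$ assemble into a natural transformation $\tilde{f}^{-1}\Rightarrow\mathrm{id}_{\XX{f}}$, which the paper leaves implicit.
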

\begin{proof}
	The adjoint of the natural transformation $\phi(\mathcal{F})$ is defined as follows: 
	$$
	s \in \mathcal{F}(U,u) \mapsto  \mathcal{F}(\bar{u})(s) \in \mathcal{F}(f^{-1}U, f^{-1}u),
	$$
	for any $(U,u) \in ob(\XX{f})$. By functoriality of the construction, 
	this map commutes with any natural transformation $\theta$.
\end{proof}

\begin{definition}\label{pre-toposXf}
	
	\noindent
	Let $\widehat{X}_f$ be the following category
	\begin{itemize}
		\item The objects are pairs
		$(\mathcal{F}, \varphi)$, where $\mathcal{F} $ is a pre-sheaf on $ X$ and 
		$\varphi$ is an action of $f$ on $\mathcal{F}$, \textit{i.e.} a map of pre-sheaves
		$$
		\varphi: f^{-1}\mathcal{F} \to \mathcal{F};
		$$
		\item The arrows from 
		$(\mathcal{F}, \varphi)$ to $(\mathcal{G}, \gamma)$ are maps of pre-sheaves 
		$\theta: \mathcal{F} \to \mathcal{G}$ such that the induced square
		\begin{equation*}
			\begin{tikzcd}[row sep=2.6pc, column sep=2.6pc]
				f^{-1}\mathcal{F} \arrow[r, "\varphi"] 
				\arrow[d," f^{-1}\theta"'] &
				\mathcal{F} \arrow[d, "\theta"] \\
				f^{-1}\mathcal{G}\arrow[r, "\gamma"] &
				\mathcal{G}
			\end{tikzcd} 
		\end{equation*}
		commutes.
	\end{itemize}

\end{definition}
\begin{remark}\label{remarkhomsetXf}
Let $(\mathcal{F}, \varphi),(\mathcal{G}, \gamma)\in ob(\widehat{X}_f)$. 
Note that  the set
${\rm Hom}_{\widehat{X}_f}((\mathcal{F}, \varphi),(\mathcal{G}, \gamma))$
could have been equivalently defined as the equalizer of the following diagram
	\begin{equation*}
	\begin{tikzcd}[row sep=1.8pc, column sep=1.8pc]
		\ker\Big(  	{\rm Hom}(\mathcal{F},\mathcal{G})
	\arrow[r, shift left, "\alpha_0"] \arrow[r, shift right, "\alpha_1"'] & 
	{\rm Hom}(f^{-1}\mathcal{F},\mathcal{G}) \Big),
	\end{tikzcd} 
\end{equation*}
where $\alpha_0(\theta)= \theta\circ \varphi$ and $\alpha_1(\theta)=\gamma\circ f^{-1}\theta$.
\end{remark}

\begin{example}\label{example1cap1}
	Note that by Yoneda embedding any $(U,u) \in ob(\XX{f} )$ is uniquely
	determined by a 
	natural transformation in ${X}^{\wedge}$
	\begin{equation}\label{phi_U}
		h_u: \underline{U} \to f_*\underline{U}.
	\end{equation}
	Therefore, Yoneda embedding allows us to identify the category $\XX{f}$ 
	consisting of pairs $(U,u)$ with the subcategory of $\widehat{X}_f$ 
	consisting of pairs $(\underline{U}, \underline{u})$, 
	with action $\underline{u}$ given by the left adjoint of $h_u$.
\end{example}
Observe that there is an evident forgetful map 
\begin{equation}\label{forgetfulmapXf}
	\widehat{X}_f \to X^{\wedge},
	\;  (\mathcal{F},\varphi) \to  \mathcal{F}.
\end{equation}
The following shows one of the expected properties for $\XX{f}$.
\begin{lemma}\label{boldpullbackfact}
	The map $\pi^{-1}$ in \eqref{quasigeometricmorphism} 
	factorizes through the forgetful map \eqref{forgetfulmapXf}.
	We denote by {\boldmath $\pi^{-1}$} the map so obtained:
	\begin{equation}\label{pullbackXmodf}
		\mbox{\boldmath $\pi^{-1}$} :  \XX{f}^{\wedge} \to\widehat{X}_f , \;
		\mathcal{F} \mapsto (\pi^{-1}\mathcal{F},\varphi).
	\end{equation}
\end{lemma}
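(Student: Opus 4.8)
The plan is to produce the required action $\varphi$ on the presheaf $\pi^{-1}\mathcal{F}$ by \emph{pushing forward}, through the functor $\pi^{-1}$, the canonical natural transformation $\phi(\mathcal{F}) \colon \tilde f^{-1}\mathcal{F}\to\mathcal{F}$ supplied by Fact \ref{actiondown}, and then reading off its source by means of the commutative square \eqref{commutativesquareXf}. Since the objects of $\widehat{X}_f$ are merely pairs $(\mathcal G,\gamma)$ with $\gamma\colon f^{-1}\mathcal G\to\mathcal G$ an arbitrary map of presheaves (Definition \ref{pre-toposXf} imposes no further axiom), it suffices to exhibit one such map, naturally in $\mathcal F$, whose underlying presheaf is $\pi^{-1}\mathcal F$.

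First I would record the consequence of \eqref{commutativesquareXf} at the level of presheaf inverse images. Because $\tilde f\circ\pi=\pi\circ f$ as morphisms of sites and inverse image is contravariantly (pseudo\nobreakdash-)functorial, there is a canonical identification of functors $\XX{f}^{\wedge}\to X^{\wedge}$,
\[
\pi^{-1}\circ\tilde f^{-1}\;\cong\;f^{-1}\circ\pi^{-1}.
\]
Applying $\pi^{-1}$ to $\phi(\mathcal{F})$ then yields a morphism $\pi^{-1}\tilde f^{-1}\mathcal{F}\to\pi^{-1}\mathcal{F}$, and rewriting the source via the above identification gives exactly
\[
\varphi_{\mathcal{F}}\colon f^{-1}\bigl(\pi^{-1}\mathcal{F}\bigr)\longrightarrow\pi^{-1}\mathcal{F},
\]
an action of $f$ on $\pi^{-1}\mathcal F$. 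I then set $\mbox{\boldmath $\pi^{-1}$}(\mathcal{F}):=(\pi^{-1}\mathcal{F},\varphi_{\mathcal{F}})$.

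For functoriality, given $\theta\colon\mathcal{F}\to\mathcal{G}$ in $\XX{f}^{\wedge}$ I would apply $\pi^{-1}$ to the naturality square of Fact \ref{actiondown} (the one relating $\phi(\mathcal{F}),\phi(\mathcal{G})$ to $\tilde f^{-1}\theta$ and $\theta$); after the identification of sources this becomes precisely the commuting square required of a morphism $(\pi^{-1}\mathcal{F},\varphi_{\mathcal{F}})\to(\pi^{-1}\mathcal{G},\varphi_{\mathcal{G}})$ in $\widehat{X}_f$, so $\mbox{\boldmath $\pi^{-1}$}(\theta):=\pi^{-1}\theta$ is well defined. By construction the composite of $\mbox{\boldmath $\pi^{-1}$}$ with the forgetful functor \eqref{forgetfulmapXf} is $\pi^{-1}$, which is the asserted factorization.

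The one genuinely delicate point is the canonical identification $\pi^{-1}\tilde f^{-1}\cong f^{-1}\pi^{-1}$: I must check that the coherence isomorphisms for presheaf inverse image are compatible with the \emph{strict} commutativity $f^{-1}\circ t=t\circ\tilde f^{-1}$ of the underlying functors in \eqref{commutativesquareXf}, so that $\varphi_{\mathcal F}$ is truly natural rather than natural only up to an uncontrolled coherence. The clean way around this is to invoke Corollary \ref{pullbackXfonbasefact}, which identifies $\pi^{-1}$ with the honest precomposition functor $b^{*}$; this turns the composition above into a strict equality of precomposition functors and removes the bookkeeping entirely.
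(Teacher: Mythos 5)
Your proof is correct and follows essentially the same route as the paper: apply $\pi^{-1}$ to the canonical morphism of Fact \ref{actiondown}, identify $\pi^{-1}\tilde f^{-1}$ with $f^{-1}\pi^{-1}$ via the commutativity of \eqref{commutativesquareXf}, and deduce functoriality from naturality. Your closing observation that the coherence bookkeeping can be bypassed via Corollary \ref{pullbackXfonbasefact} is precisely the remark the paper makes immediately after its proof.
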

\begin{proof}
	The claim follows immediately from \ref{actiondown} and the fact that
	\eqref{commutativesquareXf} is commutative. Indeed, applying the functor 
	$\pi^{-1}$ to the morphism resulting from \ref{actiondown}, we get a map
	\begin{equation}\label{theoricalaction}	
		\pi^{-1}(\tilde{f}^{-1}\mathcal{F} ) \to \pi^{-1}\mathcal{F},
	\end{equation}
	which is the required action $\varphi$ since 
	$	\pi^{-1}\tilde{f}^{-1}=(\tilde{f}\pi)^{-1}=(\pi f)^{-1}=f^{-1}\pi^{-1}$. 
	The map \mbox{\boldmath $\pi^{-1}$} so obtained is 
	easily seen to be a functor, by naturality of the construction.
	The action \eqref{theoricalaction} can be described more explicitly as follows. \\ 
	Given $\mathcal{F} \in\XX{f}^{\wedge}$, we need to define a map
	\begin{equation}\label{actionpresheaf}
		 \varinjlim_{\overset{(V,v) \in \XX{f}}{U \to V \in X}} \mathcal{F}(V,v)\; \longrightarrow
		\varinjlim_{\overset{ (W,w) \in \XX{f}}{f^{-1}U \to W \in X}} \mathcal{F}(W,w),
	\end{equation}
	for each $U \in ob(X)$. Given $(V,v) \in ob(\XX{f})$ such that $\alpha: U \to V \in X$, 
	we consider the map, cf. \ref{inducedmaponX/f}
	$$
	\mathcal{F}(\bar{v}): \mathcal{F}(V,v) \to \mathcal{F}(f^{-1}V, f^{-1}v).
	$$
	By hypothesis, we have $f^{-1}\alpha: f^{-1}U \to f^{-1}V \in X$, so $\mathcal{F}(\bar{v})$ can be 
	composed with the canonical injection of $\mathcal{F}(f^{-1}V, f^{-1}v)$ 
	into the direct limit on the right of \eqref{actionpresheaf}.
	This association is functorial  in $(Vv)$: given $(V',v')\to (V,v) \in \XX{f}$, we have a commutative diagram
	\begin{equation*}
		\begin{tikzcd}[row sep=0.8pc, column sep=1.3pc]
			\mathcal{F}(V,v) \arrow[r] 
			\arrow[d] & \mathcal{F}(f^{-1}V,f^{-1}v)  \arrow[r] \arrow[d] &
			\left( \varinjlim\limits_{\overset{ (W,w) \in \XX{f}}{f^{-1}U \to W \in X}} \mathcal{F}(W,w)    \right) \\
			\mathcal{F}(V',v') \arrow[r] &
			\mathcal{F}(f^{-1}V', f^{-1}v') \arrow[ur]
		\end{tikzcd} 
	\end{equation*}
	which is, by definition, the required map \eqref{actionpresheaf}.
\end{proof}
\begin{remark}
	Note that \ref{boldpullbackfact} would have followed trivially from \ref{pullbackXfonbasefact}. 
	In fact, the map 
	$\pi^{-1}\mathcal{F}(\Delta) \to \pi^{-1}\mathcal{F}(f^{-1}\Delta)$
	is identified to $\mathcal{F}(\bar{j}_{\Delta})$, cf. \ref{jbarXf}. 
	Consequently, the action furnished by \ref{boldpullbackfact} is in some sense ``tautological".
\end{remark}

\begin{definition}[{\bf Dynamical tòpos {\rm \bf I}}]\label{dynamicaltoposdef}
	\noindent
	Let $Sh( X)_f$ be the category whose objects consist of pairs
	$(\mathcal{F}, \varphi)$, where $\mathcal{F}$ is 
	a sheaf on $X$ and 
	$\varphi$ is an action, as in \eqref{actiononsheaf}. 
	The arrows $\theta_{\bul}:  (\mathcal{F}, \varphi) \to (\mathcal{G}, \gamma)$
	are commutative squares
		\begin{equation*}
		\begin{tikzcd}[row sep=2.6pc, column sep=2.6pc]
			f^{*}\mathcal{F} \arrow[r, "\varphi"] 
			\arrow[d," f^{*}\theta"'] &
			\mathcal{F} \arrow[d, "\theta"] \\
			f^{*}\mathcal{G}\arrow[r, "\gamma"] &
			\mathcal{G}
		\end{tikzcd} 
	\end{equation*}
\end{definition}
\begin{remark}\label{remarkhomsetsheafXf}
	Let $(\mathcal{F}, \varphi),(\mathcal{G}, \gamma)\in ob(Sh({X})_f)$. 
	Note that  the set
	$${\rm Hom}_{Sh({X})_f}\Big((\mathcal{F}, \varphi),(\mathcal{G}, \gamma)\Big)$$
	could have been equivalently defined as the equalizer of the following diagram
	\begin{equation*}
		\begin{tikzcd}[row sep=1.8pc, column sep=1.8pc]
			\ker\Big(  	{\rm Hom}(\mathcal{F},\mathcal{G})
			\arrow[r, shift left, "\alpha_0"] \arrow[r, shift right, "\alpha_1"'] & 
			{\rm Hom}(f^{*}\mathcal{F},\mathcal{G}) \Big),
		\end{tikzcd} 
	\end{equation*}
	where $\alpha_0(\theta)= \theta\circ \varphi$ and $\alpha_1(\theta)=\gamma\circ f^{*}\theta$.
\end{remark}
Note that there is an evident forgetful functor $$Sh(X)_f \to Sh(X), \;
(\mathcal{F}, \varphi) \mapsto \mathcal{F}.$$
The following result follows immediately from \ref{boldpullbackfact}.
\begin{cor}
	The inverse image functor $\pi^*$ factorizes through the forgetful functor. We denote by
	\begin{equation}\label{pullbackXmodfsheaves}
		\mbox{\boldmath $\pi^{*}$} :  Sh(\XX{f})\to Sh(X)_f, \;
		\mathcal{F} \mapsto (\pi^{*}\mathcal{F},\varphi),
	\end{equation}
the resulting functor.
\end{cor}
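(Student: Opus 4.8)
The plan is to reduce the statement to the presheaf-level factorization already established in \ref{boldpullbackfact}, transporting it across the associated sheaf functor $a$. Recall that, by definition, the sheaf inverse image is $\pi^{*}=a\circ \pi^{-1}$, where $\pi^{-1}:\XX{f}^{\wedge}\to X^{\wedge}$ is the presheaf pullback and $a: X^{\wedge}\to Sh(X)$ is sheafification, cf. \eqref{sheaficationofpreheaf}. By \ref{boldpullbackfact}, for every $\mathcal{F}\in \XX{f}^{\wedge}$ the presheaf $\pi^{-1}\mathcal{F}$ comes equipped with a canonical presheaf action $\varphi: f^{-1}\pi^{-1}\mathcal{F}\to \pi^{-1}\mathcal{F}$, and this assignment is functorial in $\mathcal{F}$, cf. \eqref{pullbackXmodf}.

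First I would record the standard compatibility of sheafification with inverse image, namely that there is a canonical natural isomorphism $f^{*}\circ a\cong a\circ f^{-1}$ of functors $X^{\wedge}\to Sh(X)$. The cleanest way to see this is to note that both functors are left adjoint to one and the same functor $Sh(X)\to X^{\wedge}$, sending a sheaf $\mathcal{G}$ to the presheaf $(f^{-1})^{*}\mathcal{G}$ (which is already a sheaf, since $f^{-1}$ is continuous), so they agree up to unique natural isomorphism. Equivalently, and more directly for our purposes, $f^{-1}$ carries the sheafification unit $\pi^{-1}\mathcal{F}\to a\,\pi^{-1}\mathcal{F}$ — a local isomorphism — to a local isomorphism, and $a$ inverts it; this yields precisely the identification $a\bigl(f^{-1}\pi^{-1}\mathcal{F}\bigr)\cong a\bigl(f^{-1}a\,\pi^{-1}\mathcal{F}\bigr)=f^{*}\bigl(\pi^{*}\mathcal{F}\bigr)$ that the argument needs.

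Then I would simply apply $a$ to the presheaf action $\varphi$. Using the isomorphism above to rewrite the source $a(f^{-1}\pi^{-1}\mathcal{F})$ as $f^{*}(\pi^{*}\mathcal{F})$, the morphism $a(\varphi)$ becomes a morphism of sheaves
\[
\varphi : f^{*}\pi^{*}\mathcal{F} \longrightarrow \pi^{*}\mathcal{F},
\]
that is, an action in the sense of \eqref{actiononsheaf}. Hence $(\pi^{*}\mathcal{F},\varphi)$ is an object of $Sh(X)_f$ lying over $\pi^{*}\mathcal{F}$, which is exactly the asserted factorization \eqref{pullbackXmodfsheaves}. For the behaviour on arrows, given $\theta:\mathcal{F}\to\mathcal{G}$ in $Sh(\XX{f})$ one applies $a$ to the commutative square furnished by the naturality clause of \ref{boldpullbackfact}; since $a$ is a functor, the resulting square of sheaves commutes, so $\pi^{*}\theta$ is a morphism in $Sh(X)_f$, establishing that the factorization is functorial.

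The only genuinely non-formal point — and hence the step I expect to require the most care — is the compatibility isomorphism $f^{*}a\cong a f^{-1}$, together with the check that under this identification $a(\varphi)$ is honestly a morphism of \emph{sheaves} with the correct source $f^{*}\pi^{*}\mathcal{F}$ and target $\pi^{*}\mathcal{F}$, rather than merely a sheafified map between presheaf pullbacks. Everything else is pure functoriality of $a$ applied to the data already produced in \ref{boldpullbackfact}, which is precisely why the corollary \emph{follows immediately}.
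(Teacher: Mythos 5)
Your proposal is correct and follows essentially the same route as the paper, whose entire proof is "apply the associated sheaf functor $a$ to \ref{boldpullbackfact}." The extra care you take over the compatibility isomorphism $f^{*}\circ a\cong a\circ f^{-1}$ is exactly the standard fact the paper leaves implicit, so your write-up is a more detailed version of the same argument rather than a different one.
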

\begin{proof}
	It is sufficient to apply the associated sheaf functor $a$ to \ref{boldpullbackfact}.
\end{proof}
\begin{example}\label{example2cap1}
	Given a sheaf $\mathcal{F}\in Sh({X})$, we have the following isomorphism
	$$
	\mbox{\boldmath $ \pi^{*}$}\pi_*\mathcal{F} \, 
	\overset{\sim}{\longrightarrow} \left(  \prod_{n \geq 0}(f^n)_*\mathcal{F},\; \mbox{$pr_0$} \right),
	$$
	where  \mbox{$pr_0$} is the left adjoint of the canonical projection morphism
	$$
	\prod_{n \geq 0}(f^n)_*\mathcal{F} \longrightarrow 
	f_*\left(  \prod_{n \geq 0}(f^n)_*\mathcal{F}   \right)= \prod_{n \geq 1}(f^n)_*\mathcal{F}.
	$$
	In fact, using \eqref{pullbackXfonbase}, 
	we get an isomorphism at the level of pre-sheaves
	$$
	\pi^{-1}\pi_*\mathcal{F}(\Delta)\overset{\sim}{\longrightarrow}
	\pi_*\mathcal{F}(U_{\Delta}, j_\Delta)=\mathcal{F}(U_{\Delta})=	\prod_{n \geq 0}\mathcal{F}(f^{-n}\Delta).
	$$
	The claim follows by taking the associated sheaf functor ``$a$'', 
	and by noting that pull-back along the ``shift" map \eqref{shiftXf}
	 provides the action  ``\mbox{$pr_0$}''.
\end{example}
Note the following fact.

\begin{cor}\label{basissievesXf}
	The family of sieves on $(U,u)$ in $\XX{f}$ given by
$$
\{ b_!R : R \in J_{X}(U)\}
$$
is a family of covering sieves in $J_{\XX{f}}(U,u)$. Moreover, 
the collection of all such covering sieves forms a basis for the topology $J_{\XX{f}}$.
\end{cor}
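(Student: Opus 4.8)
The plan is to prove the statement in two parts, exploiting the characterisation of covering sieves in $\XX{f}$ given by \ref{defsievesonXf}, namely that a sieve $R_u \hookrightarrow \underline{(U,u)}$ is covering precisely when $\pi^{-1}R_u \in J_X(U)$. First I would fix $R \in J_X(U)$ and check that $b_! R$ is a covering sieve on $(U,u)$. Here $b_!$ denotes the left adjoint of $b^*$, producing a sieve on $b(U) = (U_U, j_U)$; but one must first pass to the sieve on $(U,u)$ itself. The cleaner route is to observe that, since $t$ is continuous and $\pi^{-1} \cong a \circ b^*$ on sheaves (cf. \ref{corleftadjointcap1}, \ref{pullbackXfonbasefact}), applying $\pi^{-1}$ to the sieve generated by $b$-images of a cover $R$ recovers $R$ up to the identification $\pi^{-1}\underline{(U,u)} = \underline{U}$. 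Concretely, I would use that $b$ is left adjoint to $t$ and that $t^*$ sends the representable $\underline{U}$ to $\underline{(U,u)}$, so that the unit/counit of the adjunction \eqref{quasigeometricmorphism} translates a covering family $\{V_\alpha \to U\}$ in $X$ into the sieve on $(U,u)$ whose $\pi^{-1}$-image is exactly $R$.

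The key computation is that $\pi^{-1}(b_! R) = R$ as a sieve in $J_X(U)$. This follows because $\pi^{-1} = a\circ b^*$ and $b^*$ is left adjoint to $t^*$, so $b^*$ commutes with the colimits computing sieves; combined with the identity $b^* \circ t^* = \mathrm{id}$ (which is implicit in $\pi^{-1}\underline{(U,u)} = \underline{U}$ from \ref{defsievesonXf}), one gets that the $\pi^{-1}$-image of the $b$-generated sieve is the original $R$. Since $R \in J_X(U)$ by assumption, the criterion of \ref{defsievesonXf} immediately yields $b_! R \in J_{\XX{f}}(U,u)$. This establishes the first assertion.

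For the second assertion — that these sieves form a basis — I would argue that every covering sieve $R_u \in J_{\XX{f}}(U,u)$ is refined by one of the form $b_! R$. Given a covering sieve $R_u$, by \ref{defsievesonXf} its image $R \coloneqq \pi^{-1} R_u$ lies in $J_X(U)$. I then claim $b_! R \subseteq R_u$: this is precisely the adjunction $b_! \dashv \pi^{-1}$ (or rather the inclusion obtained by transposing the identity $R = \pi^{-1}R_u \subseteq \pi^{-1}R_u$ across the adjunction), which gives a canonical map $b_! \pi^{-1} R_u \to R_u$ factoring the inclusion. Since every covering sieve contains a sieve of the required form, and such sieves are themselves covering by the first part, the collection $\{ b_! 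R : R \in J_X(U),\ (U,u)\in ob(\XX{f})\}$ satisfies the defining property of a basis (cf. \cite{sheavesmaclane} or \cite[II]{SGA4}): every covering sieve contains a basic one, and stability under pullback follows from the stability of $J_X$ under pullback together with the fact that $b^*$ (equivalently $\pi^{-1}$) commutes with pullback of sieves, since $\pi^{-1}$ is left exact by \ref{tgeomorph}.

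The main obstacle I anticipate is bookkeeping the precise relationship between $b_!$ and $\pi^{-1}$ at the level of \emph{sieves} (subobjects of representables) rather than general presheaves, and in particular verifying the inclusion $b_! \pi^{-1} R_u \subseteq R_u$ rigorously. The subtlety is that $b_!$ is a left adjoint and need not preserve monomorphisms a priori, so one must check that the $b_!$-image of a sieve genuinely lands inside $R_u$ as a subobject; this is where the concrete description of arrows in $\XX{f}$ (commuting squares over $X$) and the shift-functoriality from \ref{actiondown} must be invoked to show that a $b$-generated covering family on $(U,u)$ genuinely refines $R_u$. Once this compatibility is pinned down, both the covering property and the basis property fall out of the continuity and left-exactness of $\pi$ already established in \ref{tgeomorph} and \ref{defsievesonXf}.
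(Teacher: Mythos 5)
Your second step --- cofinality of the family via the counit $\epsilon(R_u)\colon b_!b^*R_u \to R_u$ of the adjunction $b_!\dashv b^*$ --- is exactly the argument the paper gives, and it is sound. The problem is in the first step. The identity you call ``the key computation'', namely $\pi^{-1}(b_!R)=R$, is false, and so are the two facts you derive it from: $t^*$ does \emph{not} send $\underline{U}$ to $\underline{(U,u)}$ (one has $t^*\underline{U}(V,v)={\rm Hom}_X(V,U)$, with no compatibility with the actions imposed), and $b^*\circ t^*\neq \mathrm{id}$, since $t(b(\Delta))=\coprod_{n\geq 0}f^{-n}\Delta$ and hence $b^*t^*\mathcal{G}(\Delta)=\mathcal{G}\bigl(\coprod_{n\geq 0}f^{-n}\Delta\bigr)$; compare \ref{example2cap1}, where $\pi^{-1}\pi_*\mathcal{F}\cong\prod_{n\geq 0}(f^n)_*\mathcal{F}$, which is not $\mathcal{F}$. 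The identity $\pi^{-1}\underline{(U,u)}=\underline{U}$ only records that $\pi^{-1}=t_!$ carries the representable on $(U,u)$ to the representable on its target; it does not say that $b^*$ inverts $t^*$.

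What is actually true, and what the paper proves, is that $\pi^{-1}(b_!R)=b^*b_!R$ is \emph{bicovered} by the full backward orbit $\coprod_{n\geq 0}(f^n)^{-1}R$ of $R$: this comes from the unit $\mathrm{id}\to b^*b_!$ together with the identification $b^*=t_!$ and the computation ${\rm Hom}(t_!b_!R,\mathcal{F})\cong\prod_{n\geq 0}{\rm Hom}(R,(f^n)_*\mathcal{F})$. The extra summands for $n\geq 1$ are genuinely present and need not factor through $R$, so $\pi^{-1}(b_!R)$ is in general strictly larger than $R$ as a sieve on $\underline{U}$. The covering property nevertheless follows, because $\coprod_{n\geq 0}(f^n)^{-1}R\to R$ is evidently a covering (the $n=0$ summand already maps by the identity), so the sieve generated by $\pi^{-1}(b_!R)$ on $U$ contains $R$ and is covering, which is what the criterion of \ref{defsievesonXf} requires. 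So your conclusion stands, but only after replacing the false identity $\pi^{-1}(b_!R)=R$ by this bicovering statement; as written, the justification of the first assertion does not go through.
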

\begin{proof}
	Let us fix $(U,u) \in ob(\XX{f})$. The sieves on $(U,u)$ in the statement are covering sieves, since there is a bicovering
	\begin{equation}\label{basisXfup}
		\coprod_{n \geq 0} (f^n)^{-1}R \longrightarrow 	t_!b_!R,
	\end{equation}
	induced by the unit $id \to b^*b_!$ (after the 
	identification $b^*=t_!$), composed with the canonical morphism
	$$
	\coprod_{n \geq 0} (f^n)^{-1}R \to R,
	$$ 
	which is evidently a covering.
	To see that \eqref{basisXfup} is a bicovering, note that for any sheaf $\mathcal{F}$ on $X$ we have 
	$$
	{\rm Hom}(t_!b_!R,\mathcal{F})\overset{\sim}{\longrightarrow} 	
	{\rm Hom}(R,(tb)^*\mathcal{F})=\prod_{n \geq 0}{\rm Hom}(R, (f^n)_*\mathcal{F}).
	$$
	 In order to prove that the above family forms a basis, it is sufficient to observe that it is cofinal 
	 in the topology $J_{\XX{f}}(U,u)$. In fact, or any covering sieve 
	 $R_u \hookrightarrow \underline{(U,u)}$, the co-unit of the adjunction 
	 $b_! \dashv b^*$ computed in $R_u$, gives a map 
	$$
	\epsilon(R_u): b_!b^*R_u \to R_u.
	$$
\end{proof}
\begin{cor}
	The basis of sieves, viewed as elements of $\widehat{X}_f$, is given by 
	$$
	\mbox{\boldmath $\pi^{-1}$}(b_!R)\overset{\sim}{\longrightarrow} 
	\left(  \coprod_{n \geq 0} (f^n)^{-1}R, j_R \right),
	$$
	where $j_R$ is the natural inclusion 
	$$
j_R:	f^{-1}\left(  \coprod_{n \geq 0} (f^n)^{-1}R  \right)= 
	\coprod_{n\geq 0} (f^{n+1})^{-1}R \hookrightarrow \coprod_{n\geq 0} (f^n)^{-1}R.
	$$
	\end{cor}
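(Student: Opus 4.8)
The plan is to compute $\mbox{\boldmath $\pi^{-1}$}(b_!R)$ as an object of $\widehat{X}_f$ by treating its two pieces of data separately: first the underlying presheaf on $X$, then the action, matching these with $\coprod_{n\geq 0}(f^n)^{-1}R$ and with $j_R$ respectively.

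First I would identify the underlying presheaf. By \ref{pullbackXfonbasefact} the forgetful image of $\mbox{\boldmath $\pi^{-1}$}$ is $\pi^{-1}\cong b^*$, and since \eqref{quasigeometricmorphism} exhibits $b^*$ as a left adjoint of $t^*$ — as is $t_!$ — uniqueness of adjoints gives the identification $b^*\cong t_!$ already used in \ref{basissievesXf}. Hence the underlying presheaf of $\mbox{\boldmath $\pi^{-1}$}(b_!R)$ is $t_!b_!R=(t\circ b)_!R$. Now $(t\circ b)(\Delta)=U_\Delta=\coprod_{n\geq 0}f^{-n}\Delta$ by \ref{objectinitialXffact}, so this is exactly the presheaf that the bicovering \eqref{basisXfup} of \ref{basissievesXf} compares to $\coprod_{n\geq 0}(f^n)^{-1}R$; that bicovering $\coprod_{n\geq 0}(f^n)^{-1}R\to t_!b_!R$ furnishes the desired identification. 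Here I use that $(f^{-1})_!$, being a left adjoint, preserves coproducts, so that $f^{-1}\coprod_{n}(f^n)^{-1}R=\coprod_n(f^{n+1})^{-1}R$, which is the source of $j_R$ as written in the statement.

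It then remains to identify the action. By \ref{boldpullbackfact}, and more concretely by the remark following it, the action carried by $\pi^{-1}\mathcal{F}$ is at each $\Delta\in ob(X)$ the map $\mathcal{F}(\bar{j}_\Delta)$ induced by the shift $\bar{j}_\Delta:b(f^{-1}\Delta)\to b(\Delta)$ of \eqref{jbarXf} and \eqref{shiftXf}; equivalently it is obtained from the canonical transformation $\tilde f^{-1}\mathcal{F}\to\mathcal{F}$ of \ref{actiondown}. Specializing to $\mathcal{F}=b_!R$ and tracing it through the identification of the previous paragraph, $\bar{j}_\Delta$ acts on $U_\Delta=\coprod_{n\geq 0}f^{-n}\Delta$ by sending the $(n+1)$-st summand to the $n$-th, so the induced map on $\coprod_{n\geq 0}(f^n)^{-1}R$ is precisely the inclusion $j_R$. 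This yields the claimed isomorphism in $\widehat{X}_f$.

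The main obstacle I anticipate is this last matching: one must verify that the abstract ``pullback along the shift'' $\bar j_\Delta$ coincides with the concrete index-shifting inclusion $j_R$, which is a bookkeeping verification on the decomposition $U_\Delta=\coprod_n f^{-n}\Delta$ and on how $b_!$ transports the sieve $R$. A secondary point requiring care is that the identification of underlying presheaves is genuinely an isomorphism only on the associated sheaves — at the level of raw presheaves the representable $\underline{U_\Delta}$ of a coproduct is not the coproduct of the $\underline{f^{-n}\Delta}$ — so the symbol $\overset{\sim}{\longrightarrow}$ is to be read, exactly as in \ref{basissievesXf}, as the bicovering \eqref{basisXfup}, which suffices since the whole point is to describe a basis of covering sieves.
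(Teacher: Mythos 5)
Your argument is correct, but it reaches the conclusion by a route dual to the paper's. The paper never writes down a comparison morphism between the two objects: it argues by representability, computing for an arbitrary $(\mathcal{G},\gamma)\in Sh(X)_f$ the set ${\rm Hom}_{\widehat{X}_f}(\mbox{\boldmath $\pi^{-1}$}(b_!R),(\mathcal{G},\gamma))$ via the adjunctions $b_!\dashv b^*$ and $b^*=t_!\dashv t^*$ (so that underlying presheaf maps form $\prod_{n\geq 0}(f^n)_*\mathcal{G}(R)$, as in \ref{basissievesXf}), and then invoking \ref{remarkhomsetXf} to see that equivariance cuts out exactly the equalizer $\mathcal{K}(R)$ of $ad(\gamma)$ and $pr^0$ from \ref{example2cap1} --- which is also what $\left(\coprod_{n\geq 0}(f^n)^{-1}R,\,j_R\right)$ co-represents. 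You instead exhibit the bicovering \eqref{basisXfup} as an explicit comparison map and verify by hand that it is equivariant, identifying both actions with pullback along the shift $\bar{j}_{\Delta}$ of \eqref{jbarXf}. Both routes rest on the same two ingredients ($U_{\Delta}=\coprod_{n}f^{-n}\Delta$ and the description of the action in the remark after \ref{boldpullbackfact}); the trade-off is that your version must confront the action-matching bookkeeping directly, where the paper absorbs it into the equalizer condition, while in exchange you make the isomorphism concrete as a map rather than leaving it implicit in a natural bijection of Hom-sets. Your closing caveat --- that $\overset{\sim}{\longrightarrow}$ must be read as a bicovering, i.e.\ an isomorphism only after testing against sheaves or applying $a$, and not as a presheaf isomorphism --- is exactly right, and is also the tacit content of the paper's proof, which only tests against objects of $Sh(X)_f$.
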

\begin{proof}
Let $(\mathcal{G},\gamma) \in Sh(X)_f$. We need to show that
$$
{\rm Hom}_{\widehat{X}_f}(\mbox{\boldmath $\pi^{-1}$}(b_!R), 
(\mathcal{G},\gamma))\overset{\sim}{\longrightarrow} \mathcal{K}(R),
$$
where, cf. \ref{example2cap1}
\begin{equation*}
	\begin{tikzcd}[row sep=1.6pc, column sep=1.8pc]
	\mathcal{K} = 
		\ker\Bigg(  \displaystyle	\prod_{n \geq 0}(f^n)_*\mathcal{G}
		\arrow[r, shift left, "ad(\gamma)"] \arrow[r, shift right, "pr^0"'] & 
	\displaystyle	\prod_{n \geq 0}(f^{n+1})_*\mathcal{G} \Bigg).
	\end{tikzcd}
\end{equation*}
We already noted that
$$
{\rm Hom}_{{X}^{\wedge}}(\pi^{-1}(b_!R), \mathcal{G})\cong 
{\rm Hom}_{{X}^{\wedge}}(R,\pi^{-1}\pi_*\mathcal{G})=
\prod\limits_{n \geq 0}(f^n)_*\mathcal{G}(R),
$$
while, from \ref{remarkhomsetXf}, the commutativity condition 
is satisfied if and only if the above morphisms lie in the kernel $\mathcal{K}(R)$.
\end{proof}
As anticipated, the following main result holds. 
\begin{theorem}\label{equivalencelemmaXf}
	The map {\boldmath $\pi^*$} defined in 
	\eqref{pullbackXmodfsheaves} is an equivalence of categories.
\end{theorem}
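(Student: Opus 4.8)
The plan is to exhibit a quasi-inverse $\Psi\colon Sh(X)_f\to Sh(\XX{f})$ to $\boldsymbol{\pi}^{*}$, or equivalently to prove that $\boldsymbol{\pi}^{*}$ is fully faithful and essentially surjective. The definition of $\Psi$ is forced by the results already in hand. By \ref{pullbackXfonbasefact} a sheaf $\mathcal{F}$ on $\XX{f}$ satisfies $\mathcal{F}(b(\Delta))=\pi^{-1}\mathcal{F}(\Delta)$, and by \ref{basisobjectsXf} every object sits in a coequalizer
$$b(f^{-1}\Delta)\overset{\bar{j}_{\Delta}}{\underset{b(\delta)}{\rightrightarrows}}b(\Delta)\xrightarrow{\ \delta_{\bullet}\ }(\Delta,\delta);$$
since $\delta_{\bullet}$ is a covering (its image $t(\delta_{\bullet})=\coprod_{n}\delta^{n}$ is split by $\iota_{0}$, whence $\pi^{-1}$ of the sieve it generates is the maximal, hence covering, sieve on $\Delta$), any sheaf is constrained on $(\Delta,\delta)$. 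I therefore set $\Psi(\mathcal{G},\gamma)$ to be the presheaf sending $(\Delta,\delta)$ to the equalizer of $\mathcal{G}(\delta)$ and $ad(\gamma)_{\Delta}$, the two induced arrows $\mathcal{G}(\Delta)\rightrightarrows f_{*}\mathcal{G}(\Delta)=\mathcal{G}(f^{-1}\Delta)$; functoriality is immediate from the kernel description of \ref{remarkhomsetsheafXf}.

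First I would check that $\Psi(\mathcal{G},\gamma)$ is a sheaf. By \ref{basissievesXf} it is enough to verify the sheaf axiom against the basis of covers $b_{!}R$, $R\in J_{X}(U)$, and the corollary preceding this theorem computes ${\rm Hom}_{\widehat{X}_f}(\boldsymbol{\pi}^{-1}(b_{!}R),(\mathcal{G},\gamma))\cong\mathcal{K}(R)$, where $\mathcal{K}=\ker(\prod_{n}(f^{n})_{*}\mathcal{G}\rightrightarrows\prod_{n}(f^{n+1})_{*}\mathcal{G})$. As $\mathcal{G}$ is a sheaf on $X$, each $(f^{n})_{*}\mathcal{G}$ and hence the kernel $\mathcal{K}$ is a sheaf; the identity $\mathcal{K}(U)\xrightarrow{\sim}\mathcal{K}(R)$ for $R\in J_{X}(U)$ is exactly the descent that makes $\Psi(\mathcal{G},\gamma)$ a sheaf on $\XX{f}$.

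Next I would build the counit isomorphism $\boldsymbol{\pi}^{*}\Psi\cong{\rm id}$. Evaluating on the basis objects $b(U)=(U_{U},j_{U})$, the two arrows in the defining equalizer become the shift $\prod_{n}\mathcal{G}(f^{-n}U)\to\prod_{n\geq 1}\mathcal{G}(f^{-n}U)$ and the map built from $\gamma$, so the equalizer collapses onto its zeroth coordinate and returns $\mathcal{G}(U)$; this is the invariant-section avatar of \ref{example2cap1}. Hence $\pi^{*}\Psi(\mathcal{G},\gamma)\cong\mathcal{G}$, and following the shift through the identification shows the induced action is $\gamma$, giving $\boldsymbol{\pi}^{*}\Psi(\mathcal{G},\gamma)\cong(\mathcal{G},\gamma)$ naturally.

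The remaining, and hardest, point is the unit isomorphism $\Psi\boldsymbol{\pi}^{*}\cong{\rm id}$, the descent statement that a sheaf $\mathcal{F}$ on $\XX{f}$ is recovered from $(\pi^{*}\mathcal{F},\varphi)$; unwound, it says that for all $(U,u)$ the map ${\rm Hom}_{\XX{f}^{\wedge}}(\underline{(U,u)},\mathcal{F})\to{\rm Hom}_{\widehat{X}_f}(\boldsymbol{\pi}^{-1}\underline{(U,u)},\boldsymbol{\pi}^{-1}\mathcal{F})$ is bijective, which is precisely full faithfulness of $\boldsymbol{\pi}^{-1}$ on these arguments. The naive reduction to the kernel pair of $\delta_{\bullet}$ fails, since the reflexive pair $(\bar{j}_{\Delta},b(\delta))$ of \ref{basisobjectsXf} is \emph{not} the full kernel pair $b(\Delta)\times_{(\Delta,\delta)}b(\Delta)$. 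I would instead reduce, via \ref{basissievesXf}, to the basis sieves $\boldsymbol{\pi}^{-1}(b_{!}R)\cong(\coprod_{n}(f^{n})^{-1}R,j_{R})$ and invoke the Hom-computation above---which already packages the two structure arrows $ad(\gamma)$ and $pr^{0}$---so that comparing the kernel $\mathcal{K}$ with the value of $\mathcal{F}$ forces the bijection; faithfulness follows because $\pi^{*}$ is conservative, the topology on $\XX{f}$ being the one induced by $t$ from $J_{X}$.
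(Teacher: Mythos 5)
Your proposal is correct and follows essentially the same route as the paper: the quasi-inverse you call $\Psi$ is exactly the paper's $(\mathcal{G},\gamma)^f=\ker\bigl(\pi_*\mathcal{G}\rightrightarrows\pi_*f_*\mathcal{G}\bigr)$, the counit is checked by the same collapse of the equalizer onto its zeroth coordinate on the objects $b(\Delta)$ (via \ref{pullbackXfonbasefact} and \ref{example2cap1}), and the unit is verified on the basis of sieves $b_!R$ from \ref{basissievesXf} exactly as in the text. Your explicit caveat that the reflexive pair $(\bar{j}_{\Delta},b(\delta))$ is not the full kernel pair of $\delta_{\bullet}$ is a useful clarification of a point the paper only touches on in its closing remark, where it instead passes through the covering sieve generated by the coequalizer \eqref{coequalizerobjectXf}.
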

\begin{proof}
	Let $(\mathcal{G}, \gamma) \in  Sh(X)_f$ and 
	observe that there are two natural transformations
	\begin{equation}\label{inverseequivalenceXf}
		\begin{tikzcd}[row sep=2.6pc, column sep=2.6pc]
			\pi_*\mathcal{G}
			\arrow[r, shift left, " \sigma"] 
			\arrow[r,shift right,"\tau"'] &
			\pi_* f_*	\mathcal{G}
		\end{tikzcd} 
	\end{equation}
	given by
	\begin{equation*}
		\begin{aligned}
			&\sigma=\pi_*(ad(\gamma)); \\
			& \tau_{(V,v)}= \mathcal{G}(v): \mathcal{G}(V) \to \mathcal{G}(f^{-1}V).
		\end{aligned}
	\end{equation*}
	We consider the equalizer
	\begin{equation*}
		\begin{tikzcd}[row sep=2.6pc, column sep=2.6pc]
		(\mathcal{G}, \gamma)^f\coloneqq 	\ker  \Big(	\pi_*\mathcal{G}
			\arrow[r, shift left, " \sigma"] 
			\arrow[r,shift right,"\tau"'] &
			\pi_* f_*\mathcal{G} \Big),
		\end{tikzcd} 
	\end{equation*}
   which is a sheaf on $\XX{f}$. We claim that
	\begin{equation}\label{inversepullbackequivalence}
		(\cdot)^f:Sh(X)_f \to 	 Sh(\XX{f}), \;(\mathcal{G}, \gamma)\mapsto (\mathcal{G}, \gamma)^f
	\end{equation}
	is the essential inverse of {\boldmath $\pi^*$}.\\
	First, we check that for any $(\mathcal{G},\gamma)\in Sh(X)_f$ we have
	$$
	\mbox{\boldmath $\pi^*$}(\mathcal{G},\gamma)^f \overset{\sim}{\longrightarrow} (\mathcal{G},\gamma).
	$$
	The left adjoint of $(\mathcal{G},\gamma)^f \hookrightarrow \pi_*\mathcal{G}$  gives a morphism 
	\begin{equation}\label{co-unitcircinclusion}
		\pi^*(\mathcal{G},\gamma)^f \hookrightarrow \pi^*\pi_*\mathcal{G} \longrightarrow \mathcal{G},
	\end{equation}
	which, as it can be seen directly from the definition of $(\mathcal{G},\gamma)^f$, 
	coincides with the canonical inclusion, eventually composed 
	with the projection on the first factor, cf. \ref{example2cap1}.
	We claim that the above map is invertible, and in order 
	to show this, cf. \ref{pullbackXfonbasefact},
	it is sufficient to prove that it is invertible at the level of pre-sheaves, 
	\textit{i.e.} (cf. \ref{pullbackXfonbasefact})
	$$
b^*(\mathcal{G},\gamma)^f \overset{\sim}{\longrightarrow} 	\mathcal{G}.
	$$
	Let us compute, for any $\Delta \in ob(\XX{f})$,
	\begin{equation*}
		\begin{tikzcd}[row sep=2.6pc, column sep=2.6pc]
			(\mathcal{G},\gamma)^f(U_{\Delta},j_{\Delta})= 
				\ker  \Bigg(\displaystyle	\prod_{n\geq 0} \mathcal{G}_n(\Delta)
			\arrow[r, shift left, " \sigma"] 
			\arrow[r,shift right,"\tau"'] &
	\displaystyle	\prod_{n\geq 0} \mathcal{G}_{n+1}(\Delta) \Bigg),
		\end{tikzcd} 
	\end{equation*}
	where we have set $\mathcal{G}_n\coloneqq (f^{n})_*\mathcal{G}$. \\
	It follows from the definitions that if $x_n \in \mathcal{G}_n(\Delta)\; \forall \,n\in \N $,
	\begin{equation}\label{computationkernelXf}
			\sigma[ (x_n)_{n\geq 0}]=(ad(\gamma)(x_n))_{n\geq 0} \qquad \mbox{ and } \qquad
		\tau[ (x_n)_{n\geq 0}]=(x_{n+1})_{n\geq 0}.
	\end{equation}
	Consequently, the map 
	$$
	x \in \mathcal{G}(\Delta) \mapsto (ad(\gamma)^n(x))_{n\geq 0} 
	\in 	(\mathcal{G},\gamma)^f(U_{\Delta},j_{\Delta})
	$$ 
	is the inverse of 
	\eqref{co-unitcircinclusion}.
	To conclude, recall from \ref{pullbackXfonbasefact} that the action on $\pi^*(\mathcal{G},\gamma)^f$
	is by shifting, and consider the following diagram 
	\begin{equation*}
		\begin{tikzcd}[row sep=2.6pc, column sep=1.6pc]
			\mathcal{G}(\Delta)  \arrow[r] \arrow[d]  & 	b^*(\mathcal{G},\gamma)^f(\Delta) \arrow[d]\\
			\mathcal{G}(f^{-1}\Delta)  \arrow[r] &	b^*(\mathcal{G},\gamma)^f(f^{-1}\Delta)
		\end{tikzcd} 
	\end{equation*}
	where the horizontal arrows are the maps we just described and the vertical
	arrows are the respective actions. It is clearly commutative, 
	with both compositions equal to 
	$\displaystyle x \in \mathcal{G}(\Delta) \mapsto 
	(ad(\gamma)^n(x))_{n \geq 1} \in 	\underset{n\geq 1}{\prod} \mathcal{G}_n(\Delta)$. 
	Finally, we are left to show that
	for any  $\mathcal{F} \in Sh(\XX{f})$ there is an isomorphism
	$$
	\mathcal{F}	\overset{\sim}{\longrightarrow} (\mbox{\boldmath $\pi^*$}\mathcal{F})^f.
	$$
	The unit morphism  $\mathcal{F} \to \pi_*\pi^*\mathcal{F}$ provides the map above, 
	since its image is contained in the kernel of $(\sigma, \tau)$.
	It is sufficient to show that the above morphism is an isomorphism on the basis 
	\ref{basissievesXf}, \textit{i.e.} for any covering sieve $R \hookrightarrow \underline{\Delta}$ in $X$,  
	$$
	\mathcal{F}(b_!R)\overset{\sim}{\longrightarrow}
	\ker( \pi_*\pi^*\mathcal{F}(b_!R) \rightrightarrows \pi_*f_*\pi^*\mathcal{F}(b_!R)).
	$$
The kernel on the right hand side is computed by adjunction as follows
	\begin{equation*}
		\ker\big( b^*\pi_*\pi^*\mathcal{F}(R) \rightrightarrows b^*\pi_*f_*\pi^*\mathcal{F}(R)\big).
	\end{equation*}
	Recall from \ref{example2cap1} that
	$$
b^*\pi_*\pi^*\mathcal{F}(R)=\prod_{n \geq 0} (f^n)_*\pi^*\mathcal{F}(R).
	$$
	Consequently, the computation of the resulting kernel is analogous to
	the previous one, cf. \eqref{computationkernelXf}, giving an isomorphism
	$$
    \pi^*\mathcal{F}(R)\overset{\sim}{\longrightarrow} 	
    \ker\big( \pi_*\pi^*\mathcal{F}(b_!R) \rightrightarrows \pi_*f_*\pi^*\mathcal{F}(b_!R)\big).
	$$
	The isomorphism of pre-sheaves 
	$b^*\mathcal{F}\overset{\sim}{\longrightarrow} \pi^{-1}\mathcal{F}$, cf. \ref{pullbackXfonbasefact},
	yields an isomorphism between the associated sheaves
	$$
	a(b^*\mathcal{F})\overset{\sim}{\longrightarrow} \pi^*\mathcal{F},
	$$
	concluding the proof, since $\mathcal{F}(b_!R)\cong \mathcal{F}(a(b_!R))\cong a(b^*\mathcal{F})(R)$. \\
	Note that what we have provided above is an excessively detailed proof 
	of the isomorphism above, for the sake of explaining its nature. 
	For the sake of brevity, we could have 
	alternatively observed that 
	the sieve $R$ on $(\Delta,\delta)$ generated by \eqref{coequalizerobjectXf} is a covering sieve, 
	so by the sheaf property,
	the resulting equalizer $(\pi^*\mathcal{F})^f(\Delta,\delta)=
	{\rm Hom}_{\XX{f}}(R,\mathcal{F})$
	is isomorphic to $\mathcal{F}(\Delta,\delta)$.
\end{proof}
\begin{cor}\label{coveringsievesXfcor}
	The covering sieves 
	$R_u \hookrightarrow \underline{(U,u)} $ in $ \XX{f}$ shall be identified, 
	by means of \ref{equivalencelemmaXf}, with pairs $(R,\underline{u}')$ 
	where $R$ is a covering sieve $i_R: R\hookrightarrow \underline{U} $ in $X$ and
	$\underline{u}': f^{-1}R \to R$  is an action, such that the following diagram
	\begin{equation*}
		\begin{tikzcd}[row sep=2.6pc, column sep=2.6pc]
			f^{-1}	R	\arrow[r,"\underline{u}'"] \arrow[d, hook, "f^{-1}i_R"'] &  R \arrow[d, hook,  "i_R"] \\ 
			f^{-1}	\underline{U}  \arrow[ r, "\underline{u}"] & \underline{U}
		\end{tikzcd} 
	\end{equation*}
	is commutative, in the notation of \ref{example1cap1}. 
	We shall refer to $(R,\underline{u}')$ as a dynamical sieve on 
	$(\underline{U},\underline{u})$ in $\widehat{X}_f$.
\end{cor}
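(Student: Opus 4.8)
The plan is to realise the asserted correspondence as the restriction, to sub-objects of representables, of the functor $\mbox{\boldmath $\pi^{-1}$}\colon \XX{f}^{\wedge} \to \widehat{X}_f$ of \ref{boldpullbackfact}. Recall first, from \ref{defsievesonXf}, that a covering sieve on $(U,u)$ is by definition a sieve $R_u \hookrightarrow \underline{(U,u)}$ with $\pi^{-1}R_u \in J_X(U)$; thus the sieve $R := \pi^{-1}R_u$ on $U$ is forced to be a covering sieve in $X$, and this already accounts for the ``covering'' adjective on both sides. Since $\pi^{-1}$ is the inverse image of a geometric morphism (\ref{tgeomorph}), hence left exact, and agrees with $b^{*}$ at the level of pre-sheaves (\ref{pullbackXfonbasefact}), it preserves monomorphisms; applying it to $R_u \hookrightarrow \underline{(U,u)}$ yields a monomorphism $R \hookrightarrow \pi^{-1}\underline{(U,u)} = \underline{U}$ in $X^{\wedge}$, i.e. exactly the sieve $i_R$.

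Next I would promote this datum to $\widehat{X}_f$. By \ref{actiondown} and \ref{boldpullbackfact} the pre-sheaf $\pi^{-1}R_u$ carries a canonical action $\underline{u}'$, and $\mbox{\boldmath $\pi^{-1}$}$ sends the inclusion $R_u \hookrightarrow \underline{(U,u)}$ to a morphism in $\widehat{X}_f$; by \ref{example1cap1} its target is the pair $(\underline{U}, \underline{u})$. Unwinding the definition of an arrow in $\widehat{X}_f$ (\ref{pre-toposXf}), this morphism is precisely the stated commutative square, and moreover it exhibits $\underline{u}'$ as the (necessarily unique) corestriction of $\underline{u}$ along $i_R$, so that the square commutes iff $\underline{u}(f^{-1}R) \subseteq R$. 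Thus to every covering sieve $R_u$ we have functorially attached a \emph{dynamical sieve} $(R, \underline{u}')$ with $R \in J_X(U)$.

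Finally --- and this is where the real work lies --- I must check that $R_u \mapsto (R,\underline{u}')$ is a \emph{bijection}, not merely a well-defined assignment. The delicate point is that $\pi^{-1}$ records only the evaluations $R_u(b(\Delta))$, which by the adjunction $b \dashv t$ of \ref{objectinitialXffact} identify with maps $\Delta \to U$; a naive attempt to invert the construction, by declaring an arrow $i\colon (V,v)\to (U,u)$ to lie in $R_u$ whenever its underlying map belongs to $R(V)$, can fail to respect the actions $v,u$, and so need not recover $R$ after applying $\pi^{-1}$. The compatibility encoded in the commutative square is exactly what repairs this: iterating $\underline{u}(f^{-1}R)\subseteq R$ gives $u^{n}(f^{-n}R)\subseteq R$ for all $n$, which is what is needed to match the shift action on $b(\Delta)=\coprod_{n} f^{-n}\Delta$ described in \ref{pullbackXfonbasefact}. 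To obtain the bijection cleanly I would argue at the level of \emph{closed} covering sieves, using \ref{defsievesonXf} in the sharp form that a sieve is covering iff its pullback is, together with the equivalence \ref{equivalencelemmaXf}: the latter guarantees that a sub-object of $(\underline{U},\underline{u})$ in $\widehat{X}_f$ is reflected by a unique closed sieve on $(U,u)$, whence the assignment is invertible. I expect the main obstacle to be precisely this injectivity/surjectivity verification --- ensuring that the action datum $\underline{u}'$ rigidifies the lift of $R$ to $\XX{f}$ --- rather than the formal production of the square, which is immediate from the functoriality of $\mbox{\boldmath $\pi^{-1}$}$.
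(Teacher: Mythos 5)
Your forward direction is exactly the paper's: a covering sieve $R_u$ is sent to $\mbox{\boldmath $\pi^{-1}$}R_u$, which by \ref{defsievesonXf} and \ref{boldpullbackfact} is a covering sieve $R$ on $U$ equipped with the tautological action, and the commutative square is just the statement that the resulting inclusion is an arrow of $\widehat{X}_f$. You have also correctly located the crux: the compatibility $\underline{u}(f^{-1}R)\subseteq R$, iterated to $u^{n}(f^{-n}R)\subseteq R$, is precisely what matches the datum $R$ against the shift action on $b(\Delta)=\coprod_{n}f^{-n}\Delta$. Where you diverge from the paper is in how the bijection is closed up. The paper does not pass through closed sieves or through the sheaf-level equivalence at all: it simply restricts the explicit inverse of \ref{equivalencelemmaXf} to presheaves, setting $(R,\underline{u}')^{f}\coloneqq \ker\big(t^{*}R\rightrightarrows t^{*}(f_{*}R)\big)$, and checks the two composites $\mbox{\boldmath $\pi^{-1}$}[(R,\underline{u}')^{f}]\cong(R,\underline{u}')$ and $R_u\cong(\mbox{\boldmath $\pi^{-1}$}R_u)^{f}$ directly, using the coequalizer presentation \eqref{coequalizerobjectXf} of an object $(\Delta,\delta)$ by $b(\Delta)$ together with the adjunction $b\dashv t$. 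Concretely, $(R,\underline{u}')^{f}(\Delta,\delta)$ is the set of arrows $(\Delta,\delta)\to(U,u)$ whose underlying map lies in $R$, and the verification that $\mbox{\boldmath $\pi^{-1}$}$ of this recovers $(R,\underline{u}')$ is exactly your iteration argument --- so the ``real work'' you defer is in fact the computation you have already sketched.

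The one point I would push back on is your proposed detour through closed sieves and the equivalence \ref{equivalencelemmaXf} of \emph{sheaf} categories. A sieve is a sub-pre-sheaf of a representable, not in general a sheaf, so invoking that equivalence to ``reflect'' a subobject of $(\underline{U},\underline{u})$ would only identify the closure of a covering sieve, i.e. it would establish a bijection between \emph{closed} covering sieves on $(U,u)$ and closed dynamical sieves --- a statement weaker than the one claimed (though sufficient for describing the topology). Since the kernel defining $(R,\underline{u}')^{f}$ makes sense for pre-sheaves and the coequalizer \eqref{coequalizerobjectXf} lives in $\XX{f}$ itself, the detour is unnecessary: work at the pre-sheaf level as the paper does and the closedness issue never arises.
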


\begin{proof}
	Note that the inverse of $\mbox{\boldmath $\pi^{*}$}$ 
	provided in \ref{equivalencelemmaXf}, cf. \eqref{inverseequivalenceXf},
	is a well defined functor
	\begin{equation*}
		\begin{tikzcd}[row sep=0.5pc, column sep=1.3pc]
			(R,\underline{u}') \in \widehat{X}_f \arrow[r, mapsto]& (R,\underline{u}')^f \in \XX{f}^{\wedge},\\
			(R,\underline{u}')^f \coloneqq \ker \Big( t^*R 	\arrow[r, shift left, " \sigma"] 
			\arrow[r,shift right,"\tau"'] 
			&  t^*(f_*R)\Big).
		\end{tikzcd} 
	\end{equation*}
	Let $(R, \underline{u}')$  be any dynamical sieve. 
	The fact that $\mbox{\boldmath $\pi^{-1}$}(R, \underline{u}')^f \overset{\sim}{\longrightarrow} (R, \underline{u}')$
	is immediate for $(R,\underline{u}')=(\underline{U},\underline{u})$ the maximal sieve. 
	In fact, it follows by the definition of $(\cdot)^f$ that
	$$
	(\underline{U},\underline{u})^f \overset{\sim}{\longrightarrow} \underline{(U,u)},
	$$
	while from \ref{objectinitialXffact},
	$$
	\mbox{\boldmath $\pi^{-1}$}\underline{(U,u)} \overset{\sim}{\longrightarrow} (\underline{U},\underline{u}).
	$$
	In order to conclude the proof for any dynamical sieve, 
	recall that for any object $\delta: f^{-1}\Delta \to \Delta \in \XX{f}$, the set
	$(R,\underline{u}')(\Delta, \delta)$ consists of those maps $(\Delta, \delta) \to (U,u) \in \XX{f}$,
	which factorizes through $(R,\underline{u}')$. In the notation of \eqref{basisobjectsXf}, 
	this is the same as a map $\delta_{\bul}: b(\Delta) \to (U,u)$ such that 
	$\delta_{\bul} b(\delta)=\delta_{\bul}\bar{j}_\Delta$, 
	which factorizes through $(R,\underline{u}')$. 
	Conversely, let $R_u$ be a covering sieve on $(U,u) \in ob(\XX{f})$. 
	Recall from \ref{defsievesonXf}, \ref{boldpullbackfact} that
	$\mbox{\boldmath $\pi^{-1}$}R_u$ is a dynamical sieve. Applying
	the same argument as before we conclude that 
	$R_u \overset{\sim}{\longrightarrow} (\mbox{\boldmath $\pi^{-1}$}R_u)^f$. 
\end{proof}
\begin{notation}\label{notationcoupleXf}
	We shall refer to $Sh(X)_f$ as the {\bf dynamical tòpos} of $f$.	
	Using \ref{equivalencelemmaXf} we shall identify sheaves on $\XX{f}$
	with pairs $(\mathcal{F}, \varphi) \in Sh(X)_f$.
When there is no room for confusion, we shall omit the ``action"
	$\varphi$ and write just $\mathcal{F}$.
\end{notation}

\begin{remark}
	If $(X, \mathcal{O}_X)$ is a ringed space, and $f$ is an 
	endomorphism of ringed spaces, we have a canonical sheaf of rings sitting in
	$Sh(X)_f$, that is $\mathcal{O}_{\bul}\coloneqq (\mathcal{O}_X, f^*)$, 
	where we abuse notation by writing $f^*$ for the canonical action
	\begin{equation}\label{actionstructuresheaf}
		f^*: f^*\mathcal{O}_X \to \mathcal{O}_X.
	\end{equation}
	Note that in this case we are interested only in a sub-category 
	of $Ab(\XX{f})$. For any sheaf $\mathcal{F}$ 
	of $\mathcal{O}_X$-modules, we replace the abelian sheaf
	$f^*\mathcal{F}$ with the corresponding
	$\mathcal{O}_X$-module 
	$\displaystyle f^*\mathcal{F} \otimes _{f^*\mathcal{O}_X}\mathcal{O}_X$, 
	which abusing notation we still denote by $f^*\mathcal{F}$. Note that 
	the map \eqref{actionstructuresheaf}
	in this notation is an isomorphism induced by the identity. 
	Then, the interesting subcategory of $Ab(\XX{f})$ 
	is the category of \textbf{sheaves of $\mathcal{O}_{\bul}$-modules}, 
	\textit{i.e.} sheaves $\mathcal{F}$ of $\mathcal{O}_X$-modules with a 
	linear action $\varphi: f^*\mathcal{F} \to \mathcal{F}$, in the notation above.
\end{remark}

\section{Dynamical site {\rm \bf  II}}
It seems likely that most of the dynamics of $f$ is captured by the site $\XX{f}$,
but in practice it is not enough: the backward invariant elements $U \in \XX{f}$ 
are not sufficient to describe some aspects of the dynamics, as we shall see in the applications.
The next definitions are motivated 
by our need of enlarging slightly the site in such a way 
that we are allowed more freedom in the action. 
In Lemma \ref{F0=F1} we shall discuss the relations between these sites.
\begin{definition}\label{Epsteintopos}{\bf [Dynamical tòpos {\rm \bf II}]}\
	
	Let  ${\widetilde{\mathit{E}}_f}$ be the following category
		\begin{itemize}
		\item  $ob(\widetilde{\mathit{E}}_f)$ are pairs $ (\mathcal{F}_{\bul}, \varphi_{\bul}) $ consisting of
		\begin{enumerate}[a)]
			\item A pair of sheaves $ \mathcal{F}_{\bul}= (\mathcal{F}_0, \mathcal{F}_1)\in 
			ob(Sh(X)) \times 	ob(Sh(X)) $;
			\item An action map $ \varphi_{\bul}=\varphi_0\coprod \varphi_1: 
			\mathcal{F}_0\coprod f^*\mathcal{F}_0 \to \mathcal{F}_1 \in ar(Sh(X)).
			$
		\end{enumerate}
		\item The arrows from  $  (\mathcal{F}_{\bul}, \varphi_{\bul})$ to $ (\mathcal{G}_{\bul}, \gamma_{\bul}) $  in $
		\widetilde{\mathit{E}}_f $ are pairs of natural transformations
		$$
		\theta_{\bul}=(\theta_0,\theta_1) \in {\rm Hom}(\mathcal{F}_0,
		\mathcal{G}_0)\times {\rm Hom}(\mathcal{F}_1,\mathcal{G}_1)
		$$
		such that the two following diagrams commute
		\begin{equation}\label{arrowsEf}
			\begin{tabular}{l r }
				\begin{tikzcd}[row sep=2.6pc, column sep=2.6pc]
					\mathcal{F}_0 \arrow[r, "\varphi_0"] 
					\arrow[d," \theta_0"'] &
					\mathcal{F}_1 \arrow[d, "\theta_1"] \\
					\mathcal{G}_0 \arrow[r, "\gamma_0"] &
					\mathcal{G}_1
				\end{tikzcd} & 	\qquad
				\begin{tikzcd}[row sep=2.6pc, column sep=2.6pc]
					f^*\mathcal{F}_0 \arrow[r, "\varphi_1"] 
					\arrow[d," f^*\theta_0"'] &
					\mathcal{F}_1 \arrow[d, "\theta_1"] \\
					f^*\mathcal{G}_0 \arrow[r, "\gamma_1"] &
					\mathcal{G}_1
				\end{tikzcd} 
			\end{tabular}
		\end{equation}
	\end{itemize}
\end{definition}

\begin{example}
	If $(X, \mathcal{O}_X)$ is a ringed space, and $f$ is an endomorphism of 
	ringed spaces, we have a canonical sheaf of rings sitting in
	$\widetilde{\mathit{E}}_f$, that is $\mathcal{O}_{\bul}\coloneqq (\mathcal{O}_X,\mathcal{O}_X)$ 
	with action $id_X\coprod f^*$, cf. \eqref{actionstructuresheaf}. 
	In this case we will be mostly interested in the subcategory of $\widetilde{\mathit{E}}_f$ 
	consisting of \textbf{sheaves of $\mathcal{O}_{\bul}$-modules}, 
	\textit{i.e.} pairs of sheaves of $\mathcal{O}_X$-modules with a linear action (here
	the functor $f^*$ has to be intended as the one preserving the module structure).
\end{example}

\begin{claim}
	The category $\widetilde{\mathit{E}}_f$
	coincides with the category of sheaves on a new site, which 
	we define below in \ref{Efcategorydef}, \ref{Epsdynsitedef}.
\end{claim}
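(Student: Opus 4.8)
The plan is to transcribe the proof of Theorem~\ref{equivalencelemmaXf} to the two-component setting, the single target functor $t$ being replaced by the two projections
$$t_0,\,t_1:{\rm E}_f(X)\to X,\qquad t_0(u_{\bul})=U_0,\quad t_1(u_{\bul})=U_1 .$$
First I would produce their left adjoints explicitly, in the manner of Lemma~\ref{objectinitialXffact}: one verifies directly that
$$b_1(W)=\bigl(\emptyset\to W,\ \emptyset\to W\bigr),\qquad b_0(W)=\bigl(W\overset{\iota_0}{\hookrightarrow}W\coprod f^{-1}W,\ f^{-1}W\overset{\iota_1}{\hookrightarrow}W\coprod f^{-1}W\bigr)$$
satisfy ${\rm Hom}_{{\rm E}_f}(b_i(W),u_{\bul})\cong{\rm Hom}_X(W,t_i(u_{\bul}))$, both checks being immediate since a map out of a coproduct is determined by its components and $\emptyset$ is initial. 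As in \ref{tgeomorph}, each $t_i$ then induces a morphism of sites $\pi_i:X\to{\rm E}_f$ for the topology of \ref{Epsdynsitedef}, and I set
$$\Phi:Sh({\rm E}_f)\to\widetilde{\mathit{E}}_f,\qquad\mathcal{G}\mapsto(\mathcal{F}_0,\mathcal{F}_1,\varphi_{\bul}),\quad\mathcal{F}_i\coloneqq\pi_i^{*}\mathcal{G},$$
so that $\mathcal{F}_i(W)=\mathcal{G}(b_i(W))$ on the basis objects, cf.~\ref{pullbackXfonbasefact}. The action $\varphi_{\bul}=\varphi_0\coprod\varphi_1$ is read off from the two structural inclusions of $b_0$: the natural morphism $b_1\Rightarrow b_0$ through $\iota_0$ gives $\varphi_0:\mathcal{F}_0\to\mathcal{F}_1$, while $\iota_1$, carrying the $f$-twist, gives $\varphi_1:f^{*}\mathcal{F}_0\to\mathcal{F}_1$ after adjunction.

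The crux is the analogue of Lemma~\ref{basisobjectsXf}: every $u_{\bul}$ fits into a coequalizer
$$b_1\bigl(U_0\coprod f^{-1}U_0\bigr)\rightrightarrows b_0(U_0)\coprod b_1(U_1)\longrightarrow u_{\bul},$$
whose two arrows are the canonical $\alpha=(\emptyset\to U_0,\ {\rm id})$ into the $b_0$-summand and $b_1(u_0\coprod u_1)$ into the $b_1$-summand. I would prove this by the universal property: a map from $b_0(U_0)\coprod b_1(U_1)$ to any $v_{\bul}$ is a pair $(g_0,g_1)$, and the coequalizing condition unwinds, componentwise over $U_0\coprod f^{-1}U_0$, into exactly the two identities $v_0g_0=g_1u_0$ and $v_1f^{-1}g_0=g_1u_1$ defining a morphism $u_{\bul}\to v_{\bul}$. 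Dually, the two counits $b_0(U_0)\to u_{\bul}\leftarrow b_1(U_1)$ generate a covering sieve $R$, so the sheaf property yields $\mathcal{G}(u_{\bul})\cong{\rm Hom}(R,\mathcal{G})$; inserting the coequalizer presentation gives the explicit formula
$$\mathcal{G}(u_{\bul})\cong\ker\Bigl(\mathcal{F}_0(U_0)\times\mathcal{F}_1(U_1)\rightrightarrows\mathcal{F}_1(U_0)\times\mathcal{F}_1(f^{-1}U_0)\Bigr),$$
the two maps being $(s_0,s_1)\mapsto(\varphi_0(s_0),\varphi_1(s_0))$ and $(s_0,s_1)\mapsto(\mathcal{F}_1(u_0)(s_1),\mathcal{F}_1(u_1)(s_1))$. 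This formula exhibits $\mathcal{G}$ as determined by $(\mathcal{F}_0,\mathcal{F}_1,\varphi_{\bul})$ and, read backwards, defines the candidate inverse $\Psi:\widetilde{\mathit{E}}_f\to Sh({\rm E}_f)$, just as $(\cdot)^f$ inverts $\pi^{*}$ in \ref{equivalencelemmaXf}.

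It remains to check that $\Phi$ and $\Psi$ are mutually quasi-inverse. For $\Phi\Psi\cong{\rm id}$ I would evaluate the kernel formula on the basis objects: on $b_0(W)$ the second map becomes the identity of $\mathcal{F}_1(W)\times\mathcal{F}_1(f^{-1}W)$, so the equalizer collapses onto $\mathcal{F}_0(W)$ and recovers $(\varphi_0,\varphi_1)$; on $b_1(W)$ the $U_0$-factors are terminal and it collapses onto $\mathcal{F}_1(W)$. For $\Psi\Phi\cong{\rm id}$ one reuses the covering-sieve computation above, transported along $b_i^{*}\mathcal{G}\cong\pi_i^{-1}\mathcal{G}$. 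Fully faithfulness on arrows is then automatic: under $\Phi$ a morphism of sheaves becomes precisely a pair $(\theta_0,\theta_1)$ making the two squares \eqref{arrowsEf} commute. The main obstacle I anticipate is the topological bookkeeping of the middle paragraph — verifying that the two counits generate a covering sieve for the topology of \ref{Epsdynsitedef} and that matching families over it reduce to the displayed kernel. This is exactly where property~{\rm (D)}, cf.~\ref{intro:def1}, must be used, both to split $\mathcal{F}_1(U_0\coprod f^{-1}U_0)=\mathcal{F}_1(U_0)\times\mathcal{F}_1(f^{-1}U_0)$ and to guarantee that $\pi_i^{*}$ is left exact, as in the proof of \ref{tgeomorph}.
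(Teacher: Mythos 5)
Your proposal is correct and follows essentially the same route as the paper: your $b_0$, $b_1$ are exactly the paper's $B^0$, $B^1$ (the components of the left adjoint $B$ of $T:{\rm E}_f\to X\times X$), your coequalizer presentation of $u_{\bul}$ is the paper's \eqref{coeqobjectsEf}, and your kernel formula is the paper's equalizer \eqref{inverseequivalence} defining the quasi-inverse $(\cdot)^f$. The only cosmetic difference is that the paper bundles the two projections into a single target functor to the product site $X\times X$ whereas you keep them separate, and you correctly identify where property {\rm (D)} enters (disjointness of $U_0$ and $f^{-1}U_0$ inside $B^0(U_0)$, hence the independence of $\varphi_0$ and $\varphi_1$ and the splitting of $\mathcal{F}_1(U_0\coprod f^{-1}U_0)$).
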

\begin{definition}[{\bf Coproduct site}]
	Let $(X_0, J_{X_0}),(X_1, J_{X_1})$
	 be two small sites with countable, cf.  \eqref{countability}, coproducts. 
	Let us denote by $\emptyset_0,\emptyset_1$ their initial object, respectively.\\
	The site $X_0 \coprod X_1$ is defined as follows:
	\begin{itemize}
		\item The underlying category is the coproduct 
		category $X_0 \coprod X_1$, whose objects are 
		$(\{ 0 \}\times ob(X_0) )\cup( \{ 1 \}\times ob(X_1))$;
		\item The arrows are, for $i,j=0,1$,
		$$
		{\rm Hom}_{X_0 \coprod X_1}((i,U), (j,V))= 
		\begin{cases}
				{\rm Hom}_{X_i}(U, V), \mbox{ if } i=j;\\
			 \emptyset, \mbox{ otherwise. }
		\end{cases}
		$$
	\end{itemize}
A covering sieve $R_i \hookrightarrow (i,U)$ is a covering sieve
$R \in J_{X_i}(U)$. The axioms of a Grothendieck topology are 
trivially satisfied in $X_0 \coprod X_1$, since they are satisfied in each $X_i$. 
\end{definition}
The problem arising from the above definition is that it
provides a site that is not closed for countable, cf.  \eqref{countability}, coproducts. 
\begin{factdefinition}[{\bf \'Etale coproduct site}]\label{fact:prodvscoprod}
	Let $(X_0, J_{X_0}),(X_1, J_{X_1})$
be two small sites with countable, cf.  \eqref{countability}, coproducts. 
Let us denote by $\emptyset_0,\emptyset_1$ their initial object, respectively.
The category $X_0 \vee X_1$ is defined as follows:
\begin{itemize}
\item The objects are $ob\left( X_0 \coprod X_1 \right)$;
\item The arrows are the union of the arrows in 
$X_0 \coprod X_1$  and the set of arrows generated
 by $\{ i_0,i_1 \}$, where 
 $$
 i_0: \emptyset_0 \to \emptyset_1; \quad i_1: \emptyset_1 \to \emptyset_0.
 $$
\end{itemize}
The arrows $i_0,i_1$ are isomorphisms and hence they identify 
the two initial objects (since both $\emptyset_0,\emptyset_1$ are
 defined up to a unique isomorphism), thus defining an initial object 
 $\emptyset \in ob(X_0 \vee X_1)$. Note that, although 
 $X_0 \vee X_1$ and $X_0 \coprod X_1$ are different, 
 they define equivalent sites. Therefore,
 $$ 
 Sh(X_0 \vee X_1)= Sh\left(X _0 \coprod X_1 \right)= Sh(X_0) \prod Sh(X_1).
 $$\\
Let us now consider the 
closure of the site $X_0 \vee X_1$ by countable, cf.  \eqref{countability}, coproducts, defined 
as follows. The objects of the underlying category, 
denoted by $(\mbox{\'Et}(X_0 \vee X_1))$, are
 countable, cf.  \eqref{countability}, disjoint
coproducts $\coprod_{\alpha \in A}U_{\alpha}$, 
where $A \subseteq \N$, and 
$U_{\alpha} \in ob(X_0 \vee X_1)$. 
Using some version of the axiom of choice if necessary, 
one can arrange things such that each object of this new category is written 
in a unique way as $U_0 \coprod U_1$, where $U_0 \in ob(X_0)$, and 
$U_1\in ob(X_1)$ (essentially because both $X_0$ and $X_1$ are closed 
for coproducts). 
Then, each arrow $U_0 \coprod U_1 \to V_0 \coprod V_1$ in 
$\mbox{\'Et}(X_0 \vee X_1) $  arises as a pair $(j_0,j_1)$, where
$j_0:U_0  \to V_0 \coprod V_1$ and $j_1: U_1  \to V_0 \coprod V_1$
are in $\mbox{\'Et}(X_0 \vee X_1)$, meaning that for $i=0,1$, the arrow 
$j_i: U_i  \to V_0 \coprod V_1$ factorizes as 
$j_i: U_i  \to V_i \hookrightarrow V_0 \coprod V_1$, 
where the first arrow is in $X_i$.\\
Let $X_0 \times X_1$ be the product category. 
Its objects are pairs $(U_0,U_1) \in ob(X)\times ob(X)$
and its arrows are defined componentwise. 
If $X_0, X_1$ are closed for
countable, cf.  \eqref{countability}, coproducts, so it is $X_0 \times X_1$ and we have
$(U_0,U_1) \coprod (V_0,V_1)= (U_0 \coprod U_1, V_0 \coprod V_1) $.
It follows that there is an equivalence of categories
\begin{equation}\label{coprodvsprod}
 \Psi:	\mbox{\'Et}(X_0 \vee X_1) \overset{\sim}{\longrightarrow} 
 X_0 \times X_1, \; U_0 \coprod U_1 \mapsto (U_0,U_1),
\end{equation}
and hence we can turn the category $X_0 \times X_1$ 
in a site whose category of sheaves is the product $Sh(X_0) \prod Sh(X_1)$. 
With an abuse of language, we still denote this site by 
$X_0 \times X_1$.
\end{factdefinition}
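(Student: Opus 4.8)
The plan is to verify, in turn, the several assertions bundled into \ref{fact:prodvscoprod}: first that $i_0,i_1$ identify the two initial objects; then that $Sh(X_0\vee X_1)=Sh(X_0\coprod X_1)=Sh(X_0)\prod Sh(X_1)$; then that $\Psi$ of \eqref{coprodvsprod} is an equivalence of categories; and finally that transporting the topology along $\Psi$ realises $Sh(X_0)\prod Sh(X_1)$ as the sheaves on $X_0\times X_1$.

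Since $\emptyset_0$ and $\emptyset_1$ are each initial in their own component, any arrow between them is the unique such arrow and is therefore invertible; hence $i_0$ and $i_1$ are mutually inverse isomorphisms, collapsing $\emptyset_0,\emptyset_1$ to a single initial object $\emptyset$. To see that this collapse is invisible to sheaves, I would use that, coproducts being disjoint (cf. $D_3$ of \ref{intro:def1}), the empty family covers every initial object, so every sheaf takes the terminal value on $\emptyset_0$, on $\emptyset_1$ and on $\emptyset$, and the unique isomorphism $i_0$ is necessarily sent to the identity of the one-point set. Consequently the fully faithful inclusion $X_0\coprod X_1\hookrightarrow X_0\vee X_1$ is continuous and the induced geometric morphism is an equivalence of topoi. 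Together with the standard description of the topos of a coproduct of sites — there being no arrows and no covering families joining the two components, restriction $\mathcal{F}\mapsto(\mathcal{F}|_{X_0},\mathcal{F}|_{X_1})$ is an equivalence $Sh(X_0\coprod X_1)\simeq Sh(X_0)\prod Sh(X_1)$ — this gives the displayed chain of equalities.

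The heart of the matter is the equivalence $\Psi$. On objects, an element of $\mbox{\'Et}(X_0\vee X_1)$ is a countable disjoint coproduct $\coprod_\alpha U_\alpha$ of objects of $X_0\vee X_1$; discarding summands equal to $\emptyset$ and sorting the remaining indecomposable summands according to whether they lie in $X_0$ or in $X_1$ — a partition that is well defined because no arrow crosses between the two components — and then using that each $X_i$ is closed under countable coproducts, I assemble the $X_0$-summands into a single $U_0\in ob(X_0)$ and the $X_1$-summands into $U_1\in ob(X_1)$. Disjointness guarantees the decomposition $U_0\coprod U_1$ is unique up to canonical isomorphism (a choice function only pins down representatives), so $\Psi(U_0\coprod U_1)=(U_0,U_1)$ is well defined and essentially surjective. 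For full faithfulness I would invoke the universal property of coproducts: a map out of $U_0\coprod U_1$ is a compatible family of maps out of its indecomposable summands, and since an indecomposable summand in $X_i$ admits no arrow into any summand lying in the other component, each such map factors componentwise through $V_i$. This yields the decomposition of an arrow as a pair $(j_0,j_1)$, hence a bijection between the hom-set in $\mbox{\'Et}(X_0\vee X_1)$ from $U_0\coprod U_1$ to $V_0\coprod V_1$ and ${\rm Hom}_{X_0}(U_0,V_0)\times{\rm Hom}_{X_1}(U_1,V_1)$, identifying $\mbox{\'Et}(X_0\vee X_1)$ with $X_0\times X_1$.

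Finally I would note that closing a site under disjoint coproducts leaves its topos unchanged, once the coproduct inclusions are declared to generate covering families compatibly with disjointness; thus $Sh(\mbox{\'Et}(X_0\vee X_1))=Sh(X_0\vee X_1)$, and transporting this topology along $\Psi$ endows $X_0\times X_1$ with a topology whose sheaves are, by the identifications above, precisely $Sh(X_0)\prod Sh(X_1)$. The step I expect to demand the most care is the componentwise factorization of arrows: it rests on the claim that in the coproduct completion a map out of an indecomposable object factors through a single summand of its target, and that the only arrows between the two components pass through the identified initial object. Pinning down ``indecomposable summand'' and ``factors through a single summand'', and verifying their compatibility with the disjointness axiom $D_3$ and with the passage to $\mbox{\'Et}$, is where the genuine content sits; granted this, the sheaf-theoretic equalities are then formal.
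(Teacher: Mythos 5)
Your proof is correct and follows essentially the same route as the paper's: identify the two initial objects harmlessly, decompose objects and arrows of $\mbox{\'Et}(X_0\vee X_1)$ componentwise to obtain $\Psi$ together with its evident essential inverse $(U_0,U_1)\mapsto (U_0\coprod\emptyset_1)\coprod(\emptyset_0\coprod U_1)$, and transport the topology. The only minor divergence is in justifying the first step: the paper asserts that the presheaf categories of $X_0\vee X_1$ and $X_0\coprod X_1$ are already equivalent, whereas you argue at the level of sheaves via the empty cover of the initial object forcing the terminal value there --- your version is if anything the more defensible one, since presheaves on $X_0\coprod X_1$ assign independent values to $\emptyset_0$ and $\emptyset_1$.
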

\begin{proof}
	The sites $X_0 \vee X_1$ and $X_0 \coprod X_1$ 
	 are equivalent since their respective categories of pre-sheaves are equivalent. In fact, they differ on objects by only one object, which is the initial object of
	 $X_0 \vee X_1$. 
	 On arrows instead, their difference consists of
	 the presence in $X_0 \vee X_1$ of universal maps from its initial object  to all of its other objects, 
	  with pullback along this maps of any pre-sheaf being the trivial map from a set to a pointed set. 
	  Hence, we can transport the topology of $X_0 \coprod X_1$
	  to a topology on $X_0 \vee X_1$, and the sheaves are the same.
	The functor 
	$\Psi$ is well defined, thanks to the description of the 
	arrows in $\mbox{\'Et}(X_0 \vee X_1)$ provided above, and 
	have has obvious essential inverse obtained by sending
	$(U_0,U_1)$ to $\left( U_0 \coprod \emptyset_1 \right) \coprod 
	\left(  \emptyset_0  \coprod U_1 \right)$ (we choice the order once for all in the case $X_0=X_1$). 
	Note that the initial object 
	$\emptyset =[\emptyset_0]=[\emptyset_1] \in 	\mbox{\'Et}(X_0 \vee X_1)$
	is isomorphic to $\emptyset_0 \coprod \emptyset_1$. 
	\end{proof}
\begin{definition}
Let $X$ be a small site with countable, cf.  \eqref{countability}, coproducts and $f:X \to X$ a 
morphism of sites. Let us choice $X_0\coloneqq X_1=X$ in \ref{fact:prodvscoprod}. 
There is an obvious 
morphism of sites that we denote by 
$\left( \mathbbm{1} + f \right)$, which on objects is
\begin{equation}\label{generatorepsteinsite}
	\begin{tikzcd}[row sep=0.6pc, column sep=2.6pc]
		&	\left( \mathbbm{1} + f \right):X \times X \to X \times X, \\
		& 	\left( \mathbbm{1} + f \right)^{-1}(U_0,U_1)\coloneqq (\emptyset, U_0 \coprod f^{-1}U_0).
	\end{tikzcd} 
\end{equation}
\end{definition}
In the following $X$ is a a small site with countable, cf.  \eqref{countability}, coproducts and $X\times X$ is the site
 defined above.
The morphism of sites $\left( \mathbbm{1} + f \right)$ yields to the definition of
a new site.
	\begin{definition}[{\bf $f$-compatibility}]\label{f-compatibility}
	A ``$f$-compatibility'' on a pair of objects $U_{\bul}\coloneqq (U_0,U_1)$, with $U_0,U_1 \in  ob(X)$ 
	is the datum of a map 
	$$
 u_{\bul}: U_{0}\coprod  f^{-1} U_{0} \rightarrow	U_{1}  \; \in X.
	$$
\end{definition}
One should think of a $f$-compatibility as an extension of the notion of 
``backward invariant'' objects in $X$, cf. \ref{notationcoupleXf}. 
In fact, to each $(U,u)\in ob(\XX{f}) $  
there is associated a 
$f$-compatibility on $(U,U)$, namely $u_{\bul}\coloneqq id_U\coprod u$.
Observe that for any $f$-compatibility on $(U_0,U_1)$, the canonical maps 
\begin{equation}\label{differentmapsEf}
	u_0: U_0 \rightarrow U_1, \quad 
	u_1:  f^{-1}U_0 \rightarrow U_1  \quad \in X
\end{equation}
furnished by \ref{f-compatibility} are considered two independent data, 
even if they coincide as maps in $X$, \textit{e.g.} whenever $U_0=f^{-1}U_0$, $u_0=u_1$.
\begin{definition}[{\bf Epstein's dynamical category}]\label{Efcategorydef}\
		\noindent
	The category ${{\rm E}_f}$ is defined as follows:
	\begin{itemize}
		\item	$\displaystyle ob({{\rm E}_f})=\{ (U_{\bul},u_{\bul}): U_{\bul} \in ob(X)^2, \;u_{\bul} \; f\mbox{-compatibility on } U_{\bul}  \}$;
		\item   The arrows $j_{\bul}: u_{\bul} \to v_{\bul}$ 
		are ordered pairs $(j_0,j_1)$ of arrows $j_i \in {\rm Hom}_{X}(U_i,V_i)$, $i=0,1$, such that
		the following diagrams commute
			\begin{equation}\label{conditionmapEf}
			\begin{tabular}{l r }
				\begin{tikzcd}[row sep=2.6pc, column sep=2.6pc]
				U_0 \arrow[r, "u_0"] 
					\arrow[d," j_0"'] &
					U_1 \arrow[d, "j_1"] \\
					V_0 \arrow[r, "v_0"] &
					V_1
				\end{tikzcd} & 	\qquad
				\begin{tikzcd}[row sep=2.6pc, column sep=2.6pc]
					f^{-1}U_0 \arrow[r, "u_1"] 
				\arrow[d,"f^{-1} j_0"'] &
				U_1 \arrow[d, "j_1"] \\
				f^{-1}V_0 \arrow[r, "v_1"] &
				V_1
				\end{tikzcd} 
			\end{tabular}
		\end{equation}
	\end{itemize}
\end{definition}
	
	\begin{definition}[{\bf Epstein's dynamical site}]\label{Epsdynsitedef}\
		\noindent
		The category ${\rm E}_f$ is a site when 
		we consider the topology induced,
		 cf. \ref{sitewithdyn}, from the topology of ${X}\times {X}$, cf.
		 \ref{fact:prodvscoprod}, through the functor which on objects is
		 the following
		 $$
		 T: {{\rm E}_f} \rightarrow {X}\times {X}, \; (U_{\bul}, u_{\bul})\mapsto U_{\bul}.
		 $$ 
			\end{definition}	We shall refer to
$({{\rm E}_f},J_{{\rm E}_f} )$ as the Epstein's dynamical site of $f$,  
and as usual when there is no room for 
confusion we abuse notation by writing just ${{\rm E}_f}$.
\begin{definition}\label{pre-topoEf}
	Let $\widehat{\mathit{E}}_f$ denote the category of pre-sheaves 
	corresponding to $\widetilde{\mathit{E}}_f$, \ref{Epsteintopos}.
	 Its objects (resp. its arrows) are pairs
	$(\mathcal{F}_{\bul}, \varphi_{\bul})$ as in \ref{Epsteintopos}, 
	with the only difference that all the objects 
	involved are
	pre-sheaves, in analogy with \ref{pre-toposXf}. Note that Yoneda embedding induces a fully faithful
	functor ${{\rm E}_f} \to \widehat{\mathit{E}}_f$, cf. \ref{example1cap1}.
\end{definition}
A series of results for Epstein's site are analogous to those of $\XX{f}$, to wit
\begin{lemma}\label{resultsEpsteinsite}
	The following results hold:
	\begin{enumerate}
		\item $T$ admits a left adjoint $B: X \times X \to {{\rm E}_f}$
		 given on objects by
	\begin{equation}\label{leftadjointtargetEf}
			B: \; (U_0,U_1) \mapsto B^0(U_0)\coprod B^1(U_1),
		\end{equation}
	where we have set $B=B^0pr^0 \amalg B^1pr^1$,
	$$
	pr^i: X \times X \to X, \quad i=0,1,
	$$
	are the canonical projections on the first and second factor, 
	respectively, while the functors $B^i$, $i=0,1$ are defined on objects as
	\begin{equation*}
		\begin{aligned}	
			&B^0(U)\coloneqq ( (U,U\coprod f^{-1}U), (\iota_0,\iota_1) ),\\
			&B^1(U)\coloneqq ( (\emptyset,U), (\ast_U,\ast_U) ),
		\end{aligned}
		\end{equation*}
 for any $U \in ob(X)$, where $\iota_{\alpha}$ are the canonical monomorphisms 
	$\displaystyle \iota_{\alpha}: U_{\alpha}\hookrightarrow \coprod_{\alpha}U_{\alpha}$ and 
	$\ast_U$ is the canonical initial morphism $\ast_U:\emptyset \hookrightarrow U$;
	\item The co-unit of the above adjunction yields, for any 
	$(U_{\bul},u_{\bul})\in ob(	{{\rm E}_f})$, a map 
	$$
\epsilon_{\bul}=\epsilon_{\bul}(U_{\bul},u_{\bul}):	B(T(U_{\bul},u_{\bul})) \to (U_{\bul},u_{\bul}),
	$$
	that fits into a natural coequalizer
		\begin{equation}\label{coeqobjectsEf}
			\begin{tikzcd}[row sep=2.6pc, column sep=2.6pc]
				B^1(U_0\coprod f^{-1}U_0)
				\arrow[r, shift left, "{j_{U_{\bul}}}"] 
				\arrow[r,shift right, "B^1(u_{\bul})"'] &
				B(U_0, U_1) \arrow[r, "\epsilon_{\bul}"] 
				& (U_{\bul}, u_{\bul}),
			\end{tikzcd} 
		\end{equation}
	where the map ${j_{U_{\bul}}}$ on the left is defined as follows:
	\begin{equation*}
		{j_{U_{\bul}}}\coloneqq \ast_{U_0} \times id_{U_0 \coprod f^{-1}U_0};
	\end{equation*}
	\item 	The family of sieves 
	$$
	\{ B_!(R_{\bul}): R_{\bul}  \in J_{X\times X}(U_{\bul})\}
	$$
	is a family of covering sieves on $(U_{\bul},u_{\bul})$. This collection forms a basis for ${{\rm E}_f}$;
	\item The functor 
	$$
	\Pi^{-1}:{{\rm E}_f}^{\wedge} \to (X \times X)^{\wedge}, 
	$$
	coincides by \ref{resultsEpsteinsite} with the functor $B^*$, taking a pre-sheaf $\mathcal{F}\in {{\rm E}_f}^{\wedge}$ to 
	$$
	\Pi^{-1}\mathcal{F}:\; U_{\bul}\mapsto \mathcal{F}(B(U_{\bul}))
	$$

	\item There is a functor
	\begin{equation}\label{fuctoreqEf}
		\mbox{\boldmath	$\Pi^{*}$}: Sh({{\rm E}_f}) \to \widetilde{\mathit{E}}_f,\; 
		\mathcal{F} \mapsto(\mathcal{F}_{\bul}, \varphi_{\bul}),
	\end{equation}	
obtained applying the associated sheaf
functor ``$a$'' to the construction above . 
We have set,
for any $U \in ob({X})$, 
\begin{equation}
\begin{aligned}
	&	\mathcal{F}_0 (U)\coloneqq\mathcal{F}(B^0(U));\\
	&	\mathcal{F}_1(U)\coloneqq \mathcal{F}(B^1(U)),
\end{aligned}
\end{equation}
and the map $\varphi_{\bul}=\varphi_0 \coprod \varphi_1: 	
\mathcal{F}_0 \coprod  f^{*}	\mathcal{F}_0 \to 	\mathcal{F}_1$ is defined as follows:
\begin{equation}
	\varphi_0(U)\coloneqq\mathcal{F}(\bar{\iota}_0: B^1(U) \hookrightarrow B^0(U))	;
\end{equation}
while the 	adjoint of  $\varphi_1$, 	$ad(\varphi_1): \mathcal{F}_0 \to 	f_*\mathcal{F}_1$	 
\begin{equation}
	ad(\varphi_1)(U)\coloneqq\mathcal{F}(\bar{\iota}_1: B^1(f^{-1}U)  \hookrightarrow B^0(U)).
\end{equation}
Note that $\varphi_0$ and $\varphi_1$ are independent, since by hypothesis 
we have 
$$
(\emptyset, U) \times_{ (U,U \coprod f^{-1}U)} (\emptyset, f^{-1}U) \overset{\sim}{\longrightarrow} \emptyset_{X\times X}
$$
hence
$$
B^0(U) \times_{ B^0(U)} B^1(f^{-1}U) \overset{\sim}{\longrightarrow} \emptyset_{{\rm E}_f}
$$
	\end{enumerate}
\end{lemma}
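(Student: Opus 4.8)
The plan is to establish the five assertions in parallel with the corresponding results for $\XX{f}$ from the first section, treating the adjunction $B \dashv T$ of part (1) as the foundational step from which the rest descends formally. First I would prove (1) by checking the hom-set bijection componentwise. Since $B = B^0 pr^0 \amalg B^1 pr^1$ and coproducts in ${\rm E}_f$ are computed component-by-component in $X \times X$ (so that $B(U_0,U_1)$ has underlying pair $(U_0,\, U_0 \coprod f^{-1}U_0 \coprod U_1)$, with $f$-compatibility the inclusion of the first two summands), it suffices to exhibit
$$
{\rm Hom}_{{\rm E}_f}(B^0(U), v_{\bul}) \cong {\rm Hom}_X(U, V_0), \qquad
{\rm Hom}_{{\rm E}_f}(B^1(U), v_{\bul}) \cong {\rm Hom}_X(U, V_1).
$$
For $B^1(U)$ this is immediate: an arrow is a pair $(j_0,j_1)$ with $j_0:\emptyset \to V_0$ forced and $j_1:U \to V_1$ free, since the two squares of \eqref{conditionmapEf} are vacuous once their sources are initial. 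For $B^0(U)$ the same two squares instead force $j_1 = (v_0 j_0) \coprod (v_1 f^{-1}j_0)$, so the arrow is determined by $j_0:U\to V_0$ alone. Assembling the two gives the bijection with ${\rm Hom}_{X\times X}((U_0,U_1),(V_0,V_1))$, with naturality clear, and this simultaneously pins down the unit and co-unit.

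Part (4) is then a formal consequence, exactly as \ref{pullbackXfonbasefact} follows from \ref{objectinitialXffact}: because $B$ is left adjoint to $T$, the object $B(U_{\bul})$ is final in the comma category indexing the colimit that computes $\Pi^{-1}\mathcal{F}(U_{\bul})$, so the colimit collapses to $\mathcal{F}(B(U_{\bul})) = B^*\mathcal{F}(U_{\bul})$. For the coequalizer (2) I would verify the universal property by hand: using (1) to rewrite ${\rm Hom}_{{\rm E}_f}(B(U_0,U_1),-)$ and ${\rm Hom}_{{\rm E}_f}(B^1(U_0 \coprod f^{-1}U_0),-)$ as hom-sets in $X\times X$, the parallel maps $j_{U_{\bul}}$ and $B^1(u_{\bul})$ turn into the two evident comparison maps, and coequalizing them becomes precisely the statement that an arrow out of $B(U_0,U_1)$ descends along $\epsilon_{\bul}$ to $(U_{\bul}, u_{\bul})$; uniqueness of the factorization is then automatic, mirroring \ref{basisobjectsXf}. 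For the basis (3) I would follow \ref{basissievesXf}: for $R_{\bul}\in J_{X\times X}(U_{\bul})$ the sieve $B_! R_{\bul}$ is covering because it admits a bicovering built from the unit $id \to B^*B_!$, and cofinality among all covering sieves on $(U_{\bul},u_{\bul})$ comes from the co-unit $\epsilon(R_{\bul}): B_! B^* R_{\bul} \to R_{\bul}$.

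For part (5) I would apply the associated-sheaf functor $a$ to the factorization of $\Pi^{-1}$ through the forgetful map $\widehat{\mathit{E}}_f \to (X\times X)^{\wedge}$ (the analogue of \ref{boldpullbackfact}) and then read off the components. Setting $\mathcal{F}_0(U)=\mathcal{F}(B^0(U))$ and $\mathcal{F}_1(U)=\mathcal{F}(B^1(U))$, the two canonical monomorphisms $\bar{\iota}_0:B^1(U)\hookrightarrow B^0(U)$ and $\bar{\iota}_1:B^1(f^{-1}U)\hookrightarrow B^0(U)$ --- the inclusions into the first, resp. second, summand of $U \coprod f^{-1}U$ --- induce $\varphi_0=\mathcal{F}(\bar{\iota}_0)$ and $ad(\varphi_1)=\mathcal{F}(\bar{\iota}_1)$, yielding the claimed $\varphi_{\bul}=\varphi_0\coprod\varphi_1: \mathcal{F}_0 \coprod f^*\mathcal{F}_0 \to \mathcal{F}_1$.

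The main obstacle --- and the one genuinely new feature relative to $\XX{f}$ --- is the independence of $\varphi_0$ and $\varphi_1$ at the end of (5), which is not formal but rests on the disjointness of coproducts, property $(D_3)$. Concretely I would compute the fibre product componentwise in $X\times X$,
$$
(\emptyset, U) \times_{(U,\, U \coprod f^{-1}U)} (\emptyset, f^{-1}U) \overset{\sim}{\longrightarrow} \emptyset,
$$
the only nontrivial factor being $U \times_{U \coprod f^{-1}U} f^{-1}U \cong \emptyset$, which is exactly \eqref{notdisjointcoproducts}. Transporting this back to ${\rm E}_f$ gives $B^1(U)\times_{B^0(U)} B^1(f^{-1}U) \cong \emptyset_{{\rm E}_f}$, certifying that the two summands carrying $\varphi_0$ and $\varphi_1$ meet trivially and may therefore be prescribed independently. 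The remaining care throughout is purely bookkeeping of the asymmetric three-summand coproduct and the role of $f^{-1}$.
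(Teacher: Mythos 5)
Your proposal is correct and follows essentially the same route as the paper: part (1) by a direct componentwise hom-set computation establishing $B\dashv T$ (with the $B^1$ case trivial because its source in degree $0$ is initial and $f^{-1}\emptyset\cong\emptyset$), part (4) as a formal consequence of the adjunction, part (2) by direct verification of the universal property, part (3) via the unit/co-unit bicovering argument modelled on \ref{basissievesXf}, and part (5) by reading off the components and invoking $(D_3)$ for the independence of $\varphi_0,\varphi_1$. Your write-up is in places more explicit than the paper's (notably the identification of the forced components of $j_1$ in part (1)), but it is the same argument.
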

\begin{proof}\
	\begin{enumerate}
		\item There is a natural transformation of bifunctors which on objects is
		$$
		{\rm Hom}_{\rm E_f}(B(U_0,U_1), (V_{\bul}, v_{\bul})) \to 
		{\rm Hom}_{X\times X}((U_0,U_1), T(V_{\bul}, v_{\bul})),
		$$
		obtained applying the functor $T$. In order to see that it is invertible, note that
		the commutativity condition on the diagrams involved are trivially satisfied,
	   as a consequence of the fact that $f^{-1}\emptyset \cong \emptyset$;
		\item We can check easily that the universal property of coequalizers is satisfied. 
		In fact, giving a map $j_{\bul}:(U_{\bul},u_{\bul}) \to (V_{\bul},v_{\bul}) \in {\rm E}_f$
		is, by definition, equivalent to giving the map 
		$
j_{\bul}\epsilon_{\bul}	: B(U_{\bul}) \to (V_{\bul}, v_{\bul}),
		$
		\textit{i.e.} a pair of maps $j_i: U_{i} \to T_{i} \in X$, $i=0,1$, such that 
		the commutativity condition \eqref{conditionmapEf} is satisfied. 
		This, in turn, is equivalent to asking that 
		$
	j_{\bul}\epsilon_{\bul}	j_{U_{\bul}}	=j_{\bul}\epsilon_{\bul}B^1(u_{\bul}),
		$
		or, more explicitly, that $j_1u_0=v_1j_0$ and $j_1u_1=v_1f^{-1}j_0$, \textit{i.e.} \eqref{conditionmapEf};
		\item We claim that the sieve $B^*B_!R_{\bul}$ is a covering sieve on $U_{\bul}$, since
		 the unit morphism $R_{\bul} \to B^*B_!R_{\bul}$ is a bi-covering.
		 First, note that they are both products, since
		 $B^*B_!R_{\bul}= (B^0pr^0)^*B_!R_{\bul} \times (B^1pr^1)^*B_!R_{\bul}$, as
		  its value on an object $\Delta_{\bul}$ 
		  consists by definition of those maps 
		 $$B(\Delta_{\bul})= B^0(\Delta_0)\coprod B^1(\Delta_1) \to (U_{\bul}, u_{\bul})$$ factorizing through $B_!R_{\bul}$.
		 Therefore, it is sufficient to show that 
		 $$
		 R_i \to (B^ipr^i)^*B_!R_{\bul}  , \quad i=0,1,
		 $$
		 is a bi-covering. Let $\mathcal{F}_{\bul}=(\mathcal{F}_0,\mathcal{F}_1)$ be a pair of sheaves on $X$, then
		 by adjunction, we need to show that
		 $$
		{\rm Hom}_{X}(R_{\bul}, \Pi^{-1}\Pi_*\mathcal{F}_{\bul}) \overset{\sim}{\longrightarrow} 
		  {\rm Hom}_{X}(R_0, \mathcal{F}_0) \times 
		 {\rm Hom}_{X}(R_1, \mathcal{F}_1),
		 $$
		which can be checked explicitly. In fact, we have functorial isomorphisms
		\begin{equation}\label{equationsheavesonbasisEf}
		\begin{aligned}
		&(\Pi^{-1}\Pi_*\mathcal{F}_{\bul})(\Delta_{\bul})=
		(\Pi_*\mathcal{F}_{\bul})(B(\Delta_{\bul}))= \\
		& (\Pi_*\mathcal{F}_{\bul})(B^0(\Delta_{0}))\times (\Pi_*\mathcal{F}_{\bul})(B^1(\Delta_{1})) \cong \mathcal{F}_{\bul}(\Delta_{\bul}),
		\end{aligned}	
		\end{equation}
	where in the second equality we used that $\mathcal{F}_0,\mathcal{F}_1$ are sheaves. 
	The last isomorphism is obtained by projecting on the first and second factor, 
	respectively. Its inverse is given by 
$$
\mathcal{F}_{\bul} \mapsto (pr^0)_*\left( \mathcal{F}_0 \times \mathcal{F}_1 
\times (f_*\mathcal{F}_1) \right) \times (pr^1)_*\left( pt \times \mathcal{F}_1 \right);
$$
		\item	It follows already by 1) and \cite[I.5.5]{SGA4};
		\item  It follows from the computations in 3).
	\end{enumerate}
\end{proof}

\begin{cor}
	The functor $T$ induces a morphism of sites
	\begin{equation}\label{PimapEf}
		\Pi: \left( X \times X , J_{X \times X}\right)
		\rightarrow \left( {{\rm E}_f} , J_{{{\rm E}_f}}\right).
	\end{equation}
\end{cor}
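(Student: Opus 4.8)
The plan is to mirror, \emph{mutatis mutandis}, the proof of \ref{tgeomorph}: to exhibit $\Pi$ as a morphism of sites it suffices to produce an adjoint pair $\Pi^{*}\dashv \Pi_{*}$ between $Sh(X\times X)$ and $Sh({\rm E}_f)$ whose left member $\Pi^{*}$ preserves finite limits. The direction is fixed by setting $\Pi_{*}\coloneqq T^{*}|_{Sh(X\times X)}$, which is legitimate precisely because $T$ is a continuous functor: indeed $J_{{\rm E}_f}$ is, by \ref{Epsdynsitedef}, the topology induced by $T$, cf. \ref{sitewithdyn}, so $T^{*}$ carries sheaves on $X\times X$ to sheaves on ${\rm E}_f$ and $\Pi_{*}$ is well defined. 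In these terms the statement is exactly that the resulting weak geometric morphism is a genuine one.

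First I would identify the left adjoint. By \ref{resultsEpsteinsite}(1) the functor $T$ admits a left adjoint $B\colon X\times X\to {\rm E}_f$, whence \cite[I.5.5]{SGA4} — as already recorded in \ref{resultsEpsteinsite}(4) — identifies the composition functor $\Pi^{-1}=B^{*}$ as the left adjoint of $T^{*}$ at the level of presheaves. Sheafifying, the functor $\Pi^{*}\coloneqq a\circ \Pi^{-1}$ is left adjoint to $\Pi_{*}$, exactly as $\pi^{*}=a\circ b^{*}$ in \ref{tgeomorph}; the verification is the same short adjunction chase, using that $T^{*}\mathcal{G}$ is already a sheaf whenever $\mathcal{G}$ is, so that no correction is needed on the target side.

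The one substantive point — and the real reason $\Pi$ is a genuine morphism of sites and not merely a weak geometric morphism — is the left exactness of $\Pi^{*}$. Here I would argue that $a$ is left exact and that, finite limits of sheaves agreeing with those of the underlying presheaves, it is enough that $\Pi^{-1}=B^{*}$ preserve finite limits. But $B^{*}$ is the composition functor $\mathcal{F}\mapsto \mathcal{F}\circ B$, and limits of presheaves are computed objectwise, so $B^{*}$ preserves \emph{all} limits; this is exactly the place where the existence of the left adjoint $B$ is decisive, for a general continuous functor would only furnish a left Kan extension, which need not be left exact. Hence $\Pi^{*}=a\circ B^{*}$ is left exact and $\Pi^{*}\dashv \Pi_{*}$ is a geometric morphism. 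Alternatively, imitating \ref{tgeomorph} \emph{verbatim}, one checks directly that $\Pi^{*}$ preserves binary products and equalizers — which by \cite{finitelimits} suffices for finite limits — using the explicit description of $\Pi^{-1}$ in \ref{resultsEpsteinsite}(4) together with the objectwise computation \eqref{equationsheavesonbasisEf}. Either route completes the proof; I expect no genuine obstacle beyond keeping the two independent components $\varphi_0,\varphi_1$ of the action separated in the objectwise check, as was already flagged in \ref{resultsEpsteinsite}(5).
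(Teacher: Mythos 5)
Your proposal is correct and follows essentially the same route as the paper, which simply invokes \ref{resultsEpsteinsite} and declares the argument analogous to \ref{tgeomorph}: take $\Pi_{*}=T^{*}|_{Sh(X\times X)}$ (legitimate since the topology on ${\rm E}_f$ is induced by $T$), identify the left adjoint as $a\circ B^{*}$ via \cite[I.5.5]{SGA4}, and check left exactness. Your observation that $B^{*}$, being a composition functor, preserves all limits objectwise — so that left exactness of $\Pi^{*}$ follows from that of $a$ without any case-by-case check — is a cleaner justification than the paper's "direct computation", but it is a refinement of the same argument rather than a different proof.
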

\begin{proof}
	The proof is a consequence of \ref{resultsEpsteinsite} and it analogous to \ref{tgeomorph}.
\end{proof}
\begin{theorem}\label{sheavesonepsteinsite}
	The functor $	\mbox{\boldmath	$\Pi^{*}$}: Sh({{\rm E}_f}) \to \widetilde{\mathit{E}}_f$ is an equivalence of categories.
\end{theorem}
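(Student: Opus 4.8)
The plan is to mirror the proof of Theorem~\ref{equivalencelemmaXf}: I construct an explicit essential inverse to $\mbox{\boldmath $\Pi^{*}$}$, cf. \eqref{fuctoreqEf}, as the kernel of a pair of sheaf maps on ${\rm E}_f$, and then verify the two round-trip isomorphisms by reducing every computation to the adjunction $B\dashv T$ and the covering-sieve basis furnished by Lemma~\ref{resultsEpsteinsite}. Concretely, given $(\mathcal{G}_{\bul},\gamma_{\bul})\in\widetilde{\mathit{E}}_f$ with $\gamma_{\bul}=\gamma_0\coprod\gamma_1$, I set
\[
(\mathcal{G}_{\bul},\gamma_{\bul})^{{\rm E}_f}\coloneqq \ker\Big(\Pi_*(\mathcal{G}_0,\mathcal{G}_1)\ \overset{\Theta}{\underset{\Sigma}{\rightrightarrows}}\ \Pi_*(\mathcal{G}_1\times f_*\mathcal{G}_1,\ \ast)\Big),
\]
where $\ast$ is the final sheaf on $X$ and, over an object $(U_{\bul},u_{\bul})$, a section of $\Pi_*(\mathcal{G}_0,\mathcal{G}_1)$ is a pair $(s_0,s_1)\in\mathcal{G}_0(U_0)\times\mathcal{G}_1(U_1)$, with $\Theta(s_0,s_1)=(\gamma_0(s_0),\,ad(\gamma_1)(s_0))$ built from the action and $\Sigma(s_0,s_1)=(u_0^*s_1,\,u_1^*s_1)$ built from restriction along the two structural maps $u_0,u_1$ of \eqref{differentmapsEf}. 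Thus the sections of $(\mathcal{G}_{\bul},\gamma_{\bul})^{{\rm E}_f}$ over $(U_{\bul},u_{\bul})$ are exactly the pairs satisfying $\gamma_0(s_0)=u_0^*s_1$ and $ad(\gamma_1)(s_0)=u_1^*s_1$, i.e. those compatible with the $f$-compatibility $u_{\bul}$. As a kernel of sheaf maps this is a sheaf on ${\rm E}_f$, and naturality of $\Sigma$ and $\Theta$ in $(U_{\bul},u_{\bul})$ amounts precisely to the defining commutativity \eqref{conditionmapEf} of morphisms in ${\rm E}_f$.

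Next I would check $\mbox{\boldmath $\Pi^{*}$}\big((\mathcal{G}_{\bul},\gamma_{\bul})^{{\rm E}_f}\big)\cong(\mathcal{G}_{\bul},\gamma_{\bul})$. Writing $\mathcal{F}=(\mathcal{G}_{\bul},\gamma_{\bul})^{{\rm E}_f}$ and using $\mathcal{F}_i(U)=\mathcal{F}(B^i(U))$ from Lemma~\ref{resultsEpsteinsite}, I evaluate on the two generating objects. For $B^0(U)=\big((U,U\coprod f^{-1}U),(\iota_0,\iota_1)\big)$ the two structural maps are the coproduct inclusions, so the defining equalizer forces $s_1=(\gamma_0(s_0),ad(\gamma_1)(s_0))\in\mathcal{G}_1(U)\times\mathcal{G}_1(f^{-1}U)=\mathcal{G}_1(U\coprod f^{-1}U)$; projection to $s_0$ then yields $\mathcal{F}_0(U)\cong\mathcal{G}_0(U)$. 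For $B^1(U)=\big((\emptyset,U),(\ast,\ast)\big)$ the constraints land in $\mathcal{G}_1(\emptyset)\times\mathcal{G}_1(f^{-1}\emptyset)$ and are vacuous because $\mathcal{G}_0(\emptyset)$ is a point, giving $\mathcal{F}_1(U)\cong\mathcal{G}_1(U)$. Tracing the recovered actions $\varphi_0=\mathcal{F}(\bar{\iota}_0)$ and $ad(\varphi_1)=\mathcal{F}(\bar{\iota}_1)$ through these identifications returns $\gamma_0$ and $\gamma_1$; this last step is routine.

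For the reverse composite I take $\mathcal{F}\in Sh({\rm E}_f)$ and use the co-unit $\epsilon_{\bul}:B(T(U_{\bul},u_{\bul}))\to(U_{\bul},u_{\bul})$ of Lemma~\ref{resultsEpsteinsite}. The induced map $\mathcal{F}(U_{\bul},u_{\bul})\to\mathcal{F}(B(U_{\bul}))=\mathcal{F}_0(U_0)\times\mathcal{F}_1(U_1)$ lands in the kernel precisely because $\epsilon_{\bul}$ coequalizes $j_{U_{\bul}}$ and $B^1(u_{\bul})$ in \eqref{coeqobjectsEf}, and under $\mathcal{F}$ these two maps become exactly $\Theta$ and $\Sigma$. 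To see the map is invertible, I note that \eqref{coeqobjectsEf} presents $(U_{\bul},u_{\bul})$ as a coequalizer whose associated sieve is covering — this is Lemma~\ref{resultsEpsteinsite}(3) applied to $B_!$ of the maximal sieve — so the sheaf property of $\mathcal{F}$ identifies $\mathcal{F}(U_{\bul},u_{\bul})$ with $\ker\big(\mathcal{F}(B(U_{\bul}))\rightrightarrows\mathcal{F}(B^1(U_0\coprod f^{-1}U_0))\big)$, which is $(\mbox{\boldmath $\Pi^{*}$}\mathcal{F})^{{\rm E}_f}(U_{\bul},u_{\bul})$ by construction.

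The main obstacle is precisely this last identification: one must confirm that applying the sheaf $\mathcal{F}$ to the coequalizer \eqref{coeqobjectsEf} yields an equalizer. Two ingredients make this work, and both deserve care. First, the sieve generated by $\epsilon_{\bul}$ must be a covering sieve, which follows from the basis description in Lemma~\ref{resultsEpsteinsite}(3). Second, $\mathcal{F}(B(U_{\bul}))$ must genuinely split as the product $\mathcal{F}_0(U_0)\times\mathcal{F}_1(U_1)$; this rests on the disjointness of the coproduct $B(U_{\bul})=B^0(U_0)\coprod B^1(U_1)$ in ${\rm E}_f$, together with the independence of $\varphi_0$ and $\varphi_1$ recorded in Lemma~\ref{resultsEpsteinsite}(5), i.e. the vanishing of the relevant fibre product. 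Once these are in place, the two displayed equalizers coincide and the composite is the identity up to canonical isomorphism, completing the proof.
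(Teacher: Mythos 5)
Your proposal is correct and follows essentially the same route as the paper's proof: you construct the same essential inverse as the equalizer of the pair of maps built from the action $(\gamma_0, ad(\gamma_1))$ and the restrictions $(u_0^*, u_1^*)$ (the paper's $\alpha_0,\alpha_1$, packaged there as $\Pi_*(\mathcal{F}_0\times\mathcal{F}_1)\rightrightarrows \Pi_*(\mathbbm{1}+f)_*(\mathcal{F}_0\times\mathcal{F}_1)$), and you verify both round-trips by the same means — evaluation on $B^0(U)$, $B^1(U)$ for one direction, and the co-unit together with the covering coequalizer \eqref{coeqobjectsEf} and the sheaf property for the other. The two ingredients you flag as delicate (the covering-sieve basis and the disjoint splitting of $\mathcal{F}(B(U_{\bul}))$) are exactly the points the paper delegates to Lemma \ref{resultsEpsteinsite}.
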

\begin{proof}
In order to provide the essential 
 inverse of $	\mbox{\boldmath	$\Pi^{*}$}$,
	note that
	for any
	$(\mathcal{F}_{\bul}, \varphi_{\bul})\in ob(\widetilde{\mathit{E}}_f)$, and any 
	$(U_{\bul},u_{\bul})\in ob({{\rm E}_f})$,  there are canonical maps 
	\begin{equation}\label{inverseequivalence}
		\begin{tikzcd}[row sep=2.6pc, column sep=2.6pc]
			\mathcal{F}_0(U_0)\times	\mathcal{F}_1(U_1) 
			\arrow[r, shift left, "\alpha_0(U_{\bul})"] 
			\arrow[r,shift right," \alpha_1(U_{\bul})"'] &
			\mathcal{F}_1(U_0) \times 	\mathcal{F}_1(f^{-1}U_0) 
		\end{tikzcd} 
	\end{equation}
	where, in the notation of \eqref{differentmapsEf},
	\begin{equation*}
		\begin{aligned}
			& \alpha_0(U_{\bul})(s_0,s_1)= (\varphi_0(s_0),ad(\varphi_1)(s_0)); \\
			& \alpha_1(U_{\bul})(s_0,s_1)= ({u}_0^*(s_1), {u}_1^*(s_1)).
		\end{aligned}
	\end{equation*}
Note that the diagram above is functorial in $(U_{\bul},u_{\bul})$.
In fact, it may be written as the equalizer of a diagram in $Sh({\rm E}_f)$:
	\begin{equation*}
	\begin{tikzcd}[row sep=2.6pc, column sep=2.6pc]
	\Pi_*\left( 	\mathcal{F}_0\times	\mathcal{F}_1 \right)
		\arrow[r, shift left, "\alpha_0"] 
		\arrow[r,shift right," \alpha_1"'] &
	\Pi_*\left( \mathbbm{1} + f \right)_*\left( 	\mathcal{F}_0\times	\mathcal{F}_1 \right),
	\end{tikzcd} 
\end{equation*}
where we have used the notation \eqref{generatorepsteinsite}.
We claim that the equalizer of \eqref{inverseequivalence} is the required inverse, \textit{i.e.} we define
	\begin{equation*}
		(\mathcal{F}_{\bul}, \varphi_{\bul})^f\coloneqq\ker (\alpha_0, \alpha_1).
	\end{equation*}
	Note that projections on the two factors of the equalizer of \eqref{inverseequivalence}
	give a functorial isomorphism of pre-sheaves
	$$
	\mbox{\boldmath	$\Pi^{-1}$}\left[ (\mathcal{F}_{\bul}, \varphi_{\bul}) 
	^f \right] \overset{\sim}{\longrightarrow} (\mathcal{F}_{\bul}, \varphi_{\bul}),
	$$
	for any $(\mathcal{F}_{\bul}, \varphi_{\bul}) \in \widetilde{\mathit{E}}_f.$
	In fact, recalling that $\Pi^{-1}=B^*$, cf. \ref{resultsEpsteinsite}, its inverse is obtained as follows:
	\begin{equation*}
		\begin{aligned}
			& s \in \mathcal{F}_{0}(U) \mapsto 
			 (s,\varphi_0(s)\times ad(\varphi_1)(s) )\in 
	 ( \mathcal{F}_{0}(U) \times \mathcal{F}_{1}(U \coprod f^{-1}U)  )\\
			& s \in \mathcal{F}_{1}(U) \mapsto 	  (pt, s) \in ( pt\times \mathcal{F}_{1}(U) ).
		\end{aligned}
	\end{equation*} 
 The map above is well defined, \textit{i.e.} one can check directly that its 
 compositions with $\alpha_0,\alpha_1$ coincide. 
 Moreover, another straightforward computation shows that they 
 are inverse to each other, and hence the above isomorphism 
 yields an isomorphism between the associated sheaves, \textit{i.e.}
	\begin{equation*}
			\mbox{\boldmath	$\Pi^{*}$}\left[ (\mathcal{F}_{\bul}, \varphi_{\bul})^f  \right] 
			\overset{\sim}{\longrightarrow} (\mathcal{F}_{\bul}, \varphi_{\bul}).
	\end{equation*}
Finally, there is a functorial isomorphism
$$
\mathcal{F}  \overset{\sim}{\longrightarrow}   (	\mbox{\boldmath	$\Pi^{*}$}\mathcal{F})^f,
$$
for any $\mathcal{F}\in Sh({{\rm E}_f})$, which at the level of pre-sheaves is 
given by the pull-back along the co-unit 
$B(T(U_{\bul},u_{\bul})) \to (U_{\bul},u_{\bul})$. 
In fact, a direct computation shows that
$$
\left( \mbox{\boldmath	$\Pi^{*}$}\mathcal{F} \right)^f(U_{\bul}, u_{\bul}) 
=\ker\left( \mathcal{F}(B(U_{0}, U_1)) \rightrightarrows \mathcal{F}(B^1(U_{0}\coprod f^{-1}U_0))\right),
$$
where the parallel morphisms are given by the pull-back along the 
maps in \eqref{coeqobjectsEf}.
Since  the sieve generated by the family
\eqref{coeqobjectsEf} is a covering sieve, 
the resulting equalizer is isomorphic 
to $\mathcal{F}(U_{\bul},u_{\bul})$, by the sheaf property.
\end{proof}

\begin{cor}[{\bf E-dynamical sieves}]\label{coveringsievesEfcor} 
	
		\noindent
	Let us consider, for any $(U_{\bul},u_{\bul}) \in ob({{\rm E}_f})$, the following collection:
	$$
	\{ R_{\bul}: R_i \in J_{X}(U_i) ,\,  i=0,1, \mbox{ such that } u_{\bul}  \mbox{ factorizes as } R_{0}\coprod f^{-1} R_{0}
	\rightarrow	R_{1}\}
	$$
	to which we refer as the family of $E$-dynamical sieves.
	Then, by \ref{sheavesonepsteinsite}, we shall identify the covering sieves 
	$R_{u_{\bul}} \hookrightarrow (U_{\bul},u_{\bul})$ with the $E$-dynamical sieve given by 
	$\Pi^{-1}R_{u_{\bul}} $. In other words, giving a covering sieve $i_{R_{\bul}}: R_{u_{\bul}} \hookrightarrow (U_{\bul},u_{\bul})$ in
	${{\rm E}_f}$
	is equivalent to giving
	a pair of covering sieves
	$i_0: R_0 \hookrightarrow \underline{U}_0$, $i_1: R_1 \hookrightarrow \underline{U}_1$ in $X$, 
	with a morphism 
	$$
	\underline{u}'_{\bul}: R_{0}\coprod f^{-1} R_{0}
	\rightarrow	R_{1},
	$$
	such that the the following diagrams commute
	\begin{equation*}
		\begin{tabular}{l r }
			\begin{tikzcd}[row sep=2.3pc, column sep=2.3pc]
				R_0	\arrow[r, "\underline{u}'_{0}"] \arrow[d, hook ,  "i_0"'] &  R_1 \arrow[d, hook,  "i_1"] \\ 
				\underline{U}_0  \arrow[ r, "u_0"] & \underline{U}_1
			\end{tikzcd} 
			& 	\qquad
			\begin{tikzcd}[row sep=2.3pc, column sep=2.3pc]
				f^{-1}R_0	\arrow[r, "\underline{u}'_{1}"] \arrow[d, hook , "f^{-1}i_0"'] &  R_1 \arrow[d, hook,  "i_1"] \\ 
				f^{-1}	\underline{U}_0  \arrow[ r, "u_1"] & \underline{U}_1
			\end{tikzcd} 
		\end{tabular}
	\end{equation*}
\end{cor}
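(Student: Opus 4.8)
The plan is to argue exactly as in the proof of Corollary~\ref{coveringsievesXfcor}, transporting the equivalence $\mbox{\boldmath$\Pi^{*}$}$ of Theorem~\ref{sheavesonepsteinsite} (and its essential inverse $(\cdot)^f=\ker(\alpha_0,\alpha_1)$) from sheaves to sub-objects of representables. First I would observe that the essential inverse extends to a functor on sub-pre-sheaves: given an $E$-dynamical sieve, i.e. a pair of sieves $i_0:R_0\hookrightarrow\underline{U}_0$, $i_1:R_1\hookrightarrow\underline{U}_1$ together with $\underline{u}'_{\bul}:R_0\coprod f^{-1}R_0\to R_1$ making the two squares of the statement commute, one sets $(R_{\bul},\underline{u}'_{\bul})^f:=\ker(\alpha_0,\alpha_1)$ formed out of $R_{\bul}$ in place of $\mathcal{F}_{\bul}$. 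The point to retain, in analogy with \ref{example1cap1}, is that such an $E$-dynamical sieve is nothing but a sub-object of the representable $\underline{(U_{\bul},u_{\bul})}$ inside $\widehat{\mathit{E}}_f$ whose two underlying components are sieves on $U_0$ and $U_1$ in $X$; the extra datum $\underline{u}'_{\bul}$ together with the two commuting squares is precisely the restriction of the $f$-compatibility $u_{\bul}$.

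Next I would check the correspondence on the maximal sieve, which is the base case. By the definition of $(\cdot)^f$ one has $(\underline{U}_{\bul},\underline{u}_{\bul})^f\cong\underline{(U_{\bul},u_{\bul})}$, while \ref{resultsEpsteinsite} gives $\mbox{\boldmath$\Pi^{-1}$}\,\underline{(U_{\bul},u_{\bul})}\cong(\underline{U}_{\bul},\underline{u}_{\bul})$ (this is the Yoneda/adjunction computation underlying $\Pi^{-1}=B^*$). For a general $E$-dynamical sieve the same bookkeeping as in \ref{coveringsievesXfcor} applies: the value $(R_{\bul},\underline{u}'_{\bul})^f(V_{\bul},v_{\bul})$ consists exactly of those maps $(V_{\bul},v_{\bul})\to(U_{\bul},u_{\bul})$ that factor through $R_{\bul}$, which one reads off from the coequalizer presentation \eqref{coeqobjectsEf} of $(V_{\bul},v_{\bul})$ via $B$. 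This yields $\mbox{\boldmath$\Pi^{-1}$}(R_{\bul},\underline{u}'_{\bul})^f\cong(R_{\bul},\underline{u}'_{\bul})$ for every $E$-dynamical sieve.

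For the converse I would start from an arbitrary covering sieve $R_{u_{\bul}}\hookrightarrow\underline{(U_{\bul},u_{\bul})}$ in ${\rm E}_f$ and apply $\mbox{\boldmath$\Pi^{-1}$}$. Since $\mbox{\boldmath$\Pi^{-1}$}\,\underline{(U_{\bul},u_{\bul})}\cong(\underline{U}_{\bul},\underline{u}_{\bul})$, the image $\mbox{\boldmath$\Pi^{-1}$}R_{u_{\bul}}$ is a sub-object of $(\underline{U}_{\bul},\underline{u}_{\bul})$ in $\widehat{\mathit{E}}_f$, hence an $E$-dynamical sieve; and by the very definition of the induced topology, \ref{Epsdynsitedef}, together with the description \ref{fact:prodvscoprod} of the coverings of $X\times X$ as pairs of coverings, $R_{u_{\bul}}$ is covering precisely when its two components $R_0,R_1$ are covering in $X$. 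Running the base-case identification once more gives $R_{u_{\bul}}\cong(\mbox{\boldmath$\Pi^{-1}$}R_{u_{\bul}})^f$, which is the asserted identification.

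The main obstacle I anticipate is not any single computation but the careful verification that $\mbox{\boldmath$\Pi^{-1}$}=B^*$ really sends a sub-object of $\underline{(U_{\bul},u_{\bul})}$ to a pair of sieves carrying the compatibility $\underline{u}'_{\bul}$, and in particular that the two restricted maps $\underline{u}'_0$ and $\underline{u}'_1$ remain genuinely independent data. This rests on the disjointness identity $B^0(U)\times_{B^0(U)}B^1(f^{-1}U)\cong\emptyset_{{\rm E}_f}$ recorded in \ref{resultsEpsteinsite}, which is exactly what prevents the two squares of the statement from collapsing into a single one (as they would in $\XX{f}$). Matching the covering conditions on the two sides is then routine given \ref{fact:prodvscoprod}.
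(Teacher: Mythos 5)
Your proposal is correct and follows essentially the same route as the paper: the paper's proof likewise reduces to the two mutually inverse identifications $R_{u_{\bul}}\overset{\sim}{\to}(\Pi^{-1}R_{u_{\bul}})^f$ and $\Pi^{-1}[(R_{\bul},\underline{u}'_{\bul})^f]\overset{\sim}{\to}(R_{\bul},\underline{u}'_{\bul})$, deducing the first from the coequalizer presentation of \ref{resultsEpsteinsite}, 2) by the same argument as in \ref{coveringsievesXfcor}, and the second from the computations in \ref{resultsEpsteinsite}, 3). Your additional remarks on the disjointness of $B^0$ and $B^1$ and on coverings of $X\times X$ being pairs of coverings are exactly the ingredients the paper leaves implicit.
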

\begin{proof}
	In the notation of \ref{sheavesonepsteinsite} we need to show that
	for any covering sieve $R_{u_{\bul}} \hookrightarrow (U_{\bul},u_{\bul})$ in ${{\rm E}_f}$ and any pair of covering sieves
	$R_{\bul}$ on $U_{\bul}$ in $X \times X$, with action $\underline{u}'_{\bul}: R_{0}\coprod f^{-1} R_{0}
	\rightarrow	R_{1}$ we have
	$$
	R_{u_{\bul}}  \overset{\sim}{\longrightarrow} (\Pi^{-1}R_{u_{\bul}})^f,
	$$
	and 
	$$
	\Pi^{-1}\left[(R_{\bul},\underline{u}'_{\bul})^f\right] \overset{\sim}{\longrightarrow} (R_{\bul},\underline{u}'_{\bul}).
	$$
	The first isomorphism, following the same argument as in \ref{coveringsievesXfcor}, is deduced from \ref{resultsEpsteinsite}, 2).
	On the other hand, the second isomorphism is deduced by the computations in \ref{resultsEpsteinsite}, 3).
	\end{proof}
\begin{notation}
	In view of \ref{sheavesonepsteinsite} we shall employ the convention that a sheaf on 
	${{\rm E}_f}$ is a pair $(\mathcal{F}_{\bul}, \varphi_{\bul}) \in ob(\widetilde{\mathit{E}}_f)$.
	When there is no room for confusion 
	we shall omit the the map $ \varphi_{\bul}$ and write just $\mathcal{F}_{\bul}$.
	\end{notation}
Intuition suggests that the category 
$Sh(\XX{f})$ may be identified with a sub-category of $Sh({{\rm E}_f})$.
Consider the projection map given by 
$$
p: {{\rm E}_f} \to \XX{f},\;  p^{-1}(U,u)=((U,U), (id_U \coprod u)).
$$
\begin{definition}
Let	$S^f \hookrightarrow Sh({{\rm E}_f})$ be the full sub-category consisting
of pairs $(\mathcal{F}_{\bul}, \varphi_{\bul})$ for which 
$\mathcal{F}_{0}=\mathcal{F}_{1}$ and $\varphi_0=id_{\mathcal{F}_{0}}$.
\end{definition}

\begin{lemma}\label{F0=F1}
	$S^f$ is equivalent to $Sh(\XX{f})$.
\end{lemma}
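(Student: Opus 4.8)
The plan is to reduce the statement to the explicit descriptions of the two toposes furnished by Theorem~\ref{equivalencelemmaXf} and Theorem~\ref{sheavesonepsteinsite}, and then to produce a comparison functor that is visibly an isomorphism of categories onto the prescribed full subcategory. First I would invoke Theorem~\ref{sheavesonepsteinsite} to identify $Sh({\rm E}_f)$ with $\widetilde{\mathit{E}}_f$ of Definition~\ref{Epsteintopos}; under this identification $S^f$ becomes the full subcategory of $\widetilde{\mathit{E}}_f$ whose objects $(\mathcal{F}_{\bul},\varphi_{\bul})$ satisfy $\mathcal{F}_0=\mathcal{F}_1=:\mathcal{F}$ and $\varphi_0={\rm id}_{\mathcal{F}}$. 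For such an object the only surviving datum is the map $\varphi_1: f^{*}\mathcal{F}\to\mathcal{F}$, so that objects of $S^f$ are exactly pairs $(\mathcal{F},\varphi_1)$ with $\mathcal{F}\in ob(Sh(X))$. By Theorem~\ref{equivalencelemmaXf}, together with Definition~\ref{dynamicaltoposdef}, this is precisely the data of an object of $Sh(X)_f\simeq Sh(\XX{f})$.

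Next I would define the comparison functor $\Psi: Sh(X)_f \to S^f$ on objects by sending $(\mathcal{F},\varphi)$ to the diagonal quadruple $(\mathcal{F},\mathcal{F},{\rm id}_{\mathcal{F}},\varphi)$, whose action map is ${\rm id}_{\mathcal{F}}\amalg\varphi:\mathcal{F}\amalg f^{*}\mathcal{F}\to\mathcal{F}$, and on arrows by $\theta\mapsto(\theta,\theta)$. This lands in $S^f$ by construction, and every object of $S^f$ has exactly this shape, so $\Psi$ is strictly surjective on objects, hence essentially surjective. The content of the proof is therefore fully faithfulness, which is the one place requiring an actual observation rather than bookkeeping.

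For that, I would unwind the two commutative squares \eqref{arrowsEf} defining an arrow $(\theta_0,\theta_1): \Psi(\mathcal{F},\varphi)\to\Psi(\mathcal{G},\gamma)$ in $S^f$. Since $\varphi_0={\rm id}_{\mathcal{F}}$ and $\gamma_0={\rm id}_{\mathcal{G}}$, the left-hand square reads $\theta_1\circ{\rm id}_{\mathcal{F}}={\rm id}_{\mathcal{G}}\circ\theta_0$, forcing $\theta_1=\theta_0=:\theta$; this collapse of the two components is the key (and only nontrivial) step. The right-hand square then becomes exactly $\theta\circ\varphi=\gamma\circ f^{*}\theta$, i.e. the defining commutative square of a morphism in $Sh(X)_f$, cf. Remark~\ref{remarkhomsetsheafXf}. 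Thus $(\theta,\theta)\mapsto\theta$ gives a bijection ${\rm Hom}_{S^f}(\Psi(\mathcal{F},\varphi),\Psi(\mathcal{G},\gamma))\overset{\sim}{\longrightarrow}{\rm Hom}_{Sh(X)_f}((\mathcal{F},\varphi),(\mathcal{G},\gamma))$, inverse to the action of $\Psi$ on Hom-sets, so $\Psi$ is fully faithful. Combining this with essential surjectivity yields the asserted equivalence (indeed an isomorphism) $S^f\simeq Sh(X)_f\simeq Sh(\XX{f})$.
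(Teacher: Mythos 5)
Your proof is correct, but it takes a genuinely different route from the paper's. The paper works at the level of the sites: it introduces the projection $p:{\rm E}_f\to\XX{f}$, $p^{-1}(U,u)=((U,U),(id_U\coprod u))$, asserts that $p^*$ is fully faithful, and identifies its essential image with $S^f$ by comparing the two filtered colimits computing $(p^*\mathcal{F})(B^0(U))$ and $(p^*\mathcal{F})(B^1(U))$ — the point being that the index categories $\{B^0(U)\to p^{-1}(V,v)\}$ and $\{B^1(U)\to p^{-1}(V,v)\}$ range over the same objects $\mathcal{F}(V,v)$, since a map $U\to V$ yields $f^{-1}U\to f^{-1}V\overset{v}{\to}V$ and conversely. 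You instead work entirely on the descended side, invoking Theorems \ref{equivalencelemmaXf} and \ref{sheavesonepsteinsite} to replace both toposes by their explicit descriptions, after which the equivalence collapses to the elementary observation that $\varphi_0=\gamma_0={\rm id}$ forces $\theta_0=\theta_1$ in the left square of \eqref{arrowsEf} and turns the right square into the defining square of a morphism in $Sh(X)_f$. Your route is shorter and sidesteps the paper's unargued assertion that $p^*$ is fully faithful (which is the real content of its proof and is only stated, not proved); the cost is that you do not exhibit the equivalence as induced by the geometric morphism $p$, a fact the paper uses later (e.g.\ in the identity $\varepsilon_*\pi^*\overset{\sim}{\to}p^*$ and in the functor $\mathbb{H}{\rm om}(p^*\mathcal{F},-)$ of Chapter III). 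If you wanted to recover that, you would add the one-line check that your $\Psi$ agrees with $p^*$ under the identifications of the two equivalence theorems.
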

\begin{proof}
	Note that $p^*$ is a fully faithful functor. Hence, the claim will follow once we 
	prove that $S^f$ is its essential image.
	Let $U \in ob({X})$ and observe that is sufficient to show the following
	$$
	(p^*\mathcal{F})(B^0(U))=(p^*\mathcal{F})(B^1(U))
	$$
	at the level of pre-sheaves, \textit{i.e.}
	$$
	\varinjlim_{ B^0(U) \to p^{-1}(V, v)} \mathcal{F}(V,v)= 
	\varinjlim_{B^1(U) \to p^{-1}(V, v)} \mathcal{F}(V,v).
	$$
	The above equality is a consequence of the fact that the objects $\mathcal{F}(V,v)$ 
	in the two direct systems above are the same. In fact, the existence of a map
	$$
(U, U \coprod f^{-1}U) \to 	(V, V)
	$$
	clearly implies the existence of a map
	$$
(\emptyset, U)\to	(V, V).
	$$
	For the converse implication, we observe that from any map $j: U \to V$ we obtain a map
	$f^{-1}j: f^{-1}U \to f^{-1}V $ and consequently, 
	since $v: f^{-1}V \to V $, we obtain by composition a map $f^{-1}U \to V$.
\end{proof}

In view of the above Lemma, we set the following
\begin{notation}\label{notationXfbul}
In view of \ref{F0=F1}, any sheaf $(\mathcal{F}, \varphi)$ on $\XX{f}$
 shall be identified with the ``diagonal'' pair $(\mathcal{F}_{\bul}, \varphi_{\bul})$, \textit{i.e.}
 $\mathcal{F}_{\bul}=(\mathcal{F},\mathcal{F})$ and $\varphi_{\bul}=id_{\mathcal{F}} \coprod \varphi $.
  When there is no room for confusion we shall write just $\mathcal{F}_{\bul}$.
\end{notation}

Note that there exist two morphisms of sites that 
fit into a commutative diagram
	\begin{equation*}
	\begin{tikzcd}[row sep=2.6pc, column sep=1.6pc]
	 X  \arrow[dr, "\pi"'] \arrow[rr, dashed]& & {{\rm E}_f} \arrow[dl, "p"] \\
	 &\XX{f}  & 
	\end{tikzcd} 
\end{equation*}
whose defining functors are the projections on the two factors of the underlying object $U_{\bul} \in X \times X$.
Although neither of these maps are descent maps,
there is a preferred choice between them given by the morphism
induced by the second projection,
$$
\varepsilon:  {X}   \to {{\rm E}_f}, \; \varepsilon^{-1}(U_{\bul}, u_{\bul})= U_1.
$$
Our reason for the aforementioned preference lies on the fact that $\varepsilon$
has the following property: for any sheaf $\mathcal{F} $ on ${\bf	X}$ we have, cf. \ref{sheavesonepsteinsite}, that
$
\varepsilon_*\mathcal{F}
$
corresponds to the pair of sheaves
$$
U \mapsto (\mathcal{F}(U \coprod f^{-1}U), \mathcal{F}(U)).
$$
On the contrary, the choice of the first projection would have lead to 
a pair whose second entry is trivial.
In this case, the resulting map would not satisfy the following nice property,
which instead holds true for $\varepsilon$:
$$
\varepsilon_*\pi^* \overset{\sim}{\longrightarrow} p^*.
$$
\section{Dynamical considerations}
  Let us consider the action of the group $Aut({X})$
on $End({ X})$ given by conjugation.
The first question that arises naturally is whether there is 
a relation between the Tòpoi $Sh(\XX{f})$ and $Sh(\XX{g})$, 
when $f$ and $g$ are conjugated by an automorphism of $X$.
 \begin{fact}
	The site $\XX{f}$
	depends only on the conjugation class of $f$, 
	\textit{i.e.} if $g=\phi^{-1}f\phi$, for some $\phi \in Aut(X)$, 
	then $\XX{f}$ and $\XX{g}$ are equivalent sites.
\end{fact}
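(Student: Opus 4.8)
The plan is to produce the equivalence directly from the conjugating automorphism at the level of underlying categories, and then to verify that it transports the two induced topologies into one another. Write $a\coloneqq \phi^{-1}:X\to X$ for the defining functor of $\phi$; since $\phi$ is an automorphism of sites, $a$ is an auto-equivalence of the category $X$ which preserves and reflects covering sieves, and it has a quasi-inverse $a^{-1}$. Unwinding $g=\phi^{-1}f\phi$ at the level of defining functors (which compose in the opposite order to the morphisms of sites) yields $g^{-1}=a\circ f^{-1}\circ a^{-1}$, equivalently the identity of functors
\[
g^{-1}\circ a \;=\; a\circ f^{-1}.
\]
This single identity is the only bookkeeping that requires care.

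First I would define a functor $\Phi:\XX{f}\to\XX{g}$ by applying $a$ everywhere. Given an object $u:f^{-1}U\to U$ of $\XX{f}$, the arrow $a(u):a(f^{-1}U)\to aU$ has source $a(f^{-1}U)=(a\circ f^{-1})(U)=(g^{-1}\circ a)(U)=g^{-1}(aU)$ by the displayed identity, so $\Phi(U,u)\coloneqq(aU,a(u))$ is genuinely an object of $\XX{g}$. On a commutative square presenting an arrow $u\to v$ in $\XX{f}$ one applies $a$; functoriality of $a$ preserves commutativity, and the same identity guarantees that the image square has the shape defining an arrow in $\XX{g}$. Running the identical construction with $a^{-1}$ (legitimate because $f=\phi g\phi^{-1}$ gives $f^{-1}\circ a^{-1}=a^{-1}\circ g^{-1}$) produces $\Phi':\XX{g}\to\XX{f}$, and $\Phi,\Phi'$ are mutually quasi-inverse since $a,a^{-1}$ are. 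Hence $\Phi$ is an equivalence of the underlying categories.

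Next I would upgrade this to an equivalence of \emph{sites}. By Definition \ref{defdynsite1} the topology $J_{\XX{f}}$ (resp. $J_{\XX{g}}$) is the one induced, cf. \ref{sitewithdyn}, along the target functor $t:\XX{f}\to X$ (resp. $t:\XX{g}\to X$), and the square
\[
\begin{tikzcd}[row sep=2.6pc, column sep=2.6pc]
\XX{f} \arrow[r, "\Phi"] \arrow[d, "t"'] & \XX{g} \arrow[d, "t"] \\
X \arrow[r, "a"] & X
\end{tikzcd}
\]
commutes on the nose, since $t(\Phi(U,u))=aU=a(t(U,u))$. Because $a=\phi^{-1}$ is an isomorphism of sites it sends covers to covers and reflects them; combined with the description of covering sieves in \ref{coveringsievesXfcor} --- a covering sieve of $(U,u)$ is a covering sieve $R\hookrightarrow\underline{U}$ of $U=t(U,u)$ in $X$ equipped with a compatible action --- the functor $\Phi$ carries a covering sieve $(R,\underline{u}')$ on $(U,u)$ to $(aR,a\underline{u}')$ on $(aU,a(u))$, which is covering precisely because $aR$ is a cover of $aU$. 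Thus $\Phi$ matches $J_{\XX{f}}$ with $J_{\XX{g}}$, so it is an equivalence of sites.

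The main obstacle is conceptually minor: it is the correct identification of the conjugation relation at the level of defining functors together with the verification that the two induced topologies correspond; the categorical equivalence itself is formal. As an independent cross-check one may argue tòposically instead: by Theorem \ref{equivalencelemmaXf} one has $Sh(\XX{f})\simeq Sh(X)_f$, and since $\phi g=f\phi$ gives $g^*\phi^*=\phi^*f^*$, the auto-equivalence $\phi^*$ of $Sh(X)$ restricts to an equivalence $Sh(X)_f\to Sh(X)_g$, $(\mathcal{F},\varphi)\mapsto(\phi^*\mathcal{F},\phi^*\varphi)$, recovering the equivalence of the associated tòpoi.
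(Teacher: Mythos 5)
Your proposal is correct and follows essentially the same route as the paper: both define the equivalence by applying the conjugating automorphism objectwise, using the identity $g^{-1}\circ a = a\circ f^{-1}$ (the paper phrases this as rewriting $u$ as $f^{-1}(\phi^{-1}V)\to\phi^{-1}V$ and applying $\phi$) to see that the image lands in $\XX{g}$, with the inverse construction giving the quasi-inverse. Your treatment of the induced topologies via the commuting square with the target functors, and the tòpos-theoretic cross-check via Theorem \ref{equivalencelemmaXf}, merely spell out what the paper dismisses with ``it is easy to see.''
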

\begin{proof}
	Let $\phi \in Aut({ X})$ be as in the statement. 
	We claim that the map
	$$
	(U,u) \in \XX{f} \mapsto (\phi(U), \phi(u)) \in \XX{g}
	$$
	is an equivalence of categories.
	Setting $V=\phi(U)$, the map $u$
	can be written as 
	$f^{-1}(\phi^{-1} V) \to \phi^{-1}(V)$, so applying $\phi$ we obtain a map $g^{-1}V \to V$. It is easy to see that the above functor
	induces an equivalence between their respective Tòpoi.
\end{proof}
Let us describe the relation between $\XX{f}$
and $\XX{f^n}$.
\begin{fact}
	Let $k>0$ be an integer that divides $n$. Then, we have a pair of adjoint functors
	\begin{equation*}
		\begin{tikzcd}[row sep=2.6pc, column sep=1.6pc]
			p_{n,k} \dashv s_{n,k}:  \XX{f^k} \arrow[r, shift right] \arrow[ from=r,shift right]&
			\XX{f^n}.
		\end{tikzcd} 
	\end{equation*}
\end{fact}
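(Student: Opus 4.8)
Write $m=n/k$, so that $f^{n}=(f^{k})^{m}$ and an object of $\XX{f^k}$ is, a fortiori, a candidate object of $\XX{f^n}$ once its structure map is iterated $m$ times. This is the common source of both functors, and the plan is to exhibit the ``obvious'' one and construct its adjoint as a truncated version of the functor $b$ of Lemma \ref{objectinitialXffact}. Conceptually, $s_{n,k}$ and $p_{n,k}$ will be the restriction and the (finite) induction functors for the submonoid inclusion $\N[f^{n}]\hookrightarrow\N[f^{k}]$, the finite coproduct over $m$ terms playing the role of the $m$ cosets.

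First I would define the right adjoint $s_{n,k}\colon \XX{f^k}\to\XX{f^n}$ as the iterate functor. Given $(U,u)\in ob(\XX{f^k})$ with $u\colon f^{-k}U\to U$, form the $m$-fold iterate $u^{(m)}\colon f^{-n}U\to U$ exactly as in \eqref{actionNobjects} but with $f$ replaced by $f^{k}$, i.e. $u^{(0)}=id_{U}$ and $u^{(j+1)}=u\circ f^{-k}u^{(j)}$; then set $s_{n,k}(U,u)\coloneqq (U,u^{(m)})$. An arrow $i\colon(U,u)\to(U',u')$ satisfies $iu=u'(f^{-k}i)$, and a short induction gives $iu^{(m)}=u'^{(m)}(f^{-n}i)$, so $i$ is again an arrow $s_{n,k}(U,u)\to s_{n,k}(U',u')$ in $\XX{f^n}$; functoriality is immediate.

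Next I would construct the left adjoint $p_{n,k}\colon\XX{f^n}\to\XX{f^k}$ as an ``$m$-truncated induction''. For $(W,w)\in ob(\XX{f^n})$ with $w\colon f^{-n}W\to W$ put
$$p_{n,k}(W,w)\coloneqq\Big(\coprod_{j=0}^{m-1}f^{-jk}W,\ \tilde w\Big),$$
where, using that $f^{-k}$ commutes with coproducts (as in Lemma \ref{objectinitialXffact}), the map $\tilde w\colon \coprod_{j=0}^{m-1}f^{-(j+1)k}W\to\coprod_{j=0}^{m-1}f^{-jk}W$ is the inclusion $\iota_{j+1}$ on the summand of index $j\le m-2$ and is $\iota_{0}\circ w\colon f^{-mk}W=f^{-n}W\to W$ on the summand $j=m-1$ (the ``wrap-around''). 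On arrows $p_{n,k}$ is the coproduct of the maps $f^{-jk}(-)$, in complete analogy with $b(i)$.

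Finally I would verify the adjunction $p_{n,k}\dashv s_{n,k}$ by adapting the computation of Lemma \ref{objectinitialXffact}. A map $\phi\colon\coprod_{j}f^{-jk}W\to U$ in $X$ is a family $(\phi_{j})_{j=0}^{m-1}$ with $\phi_{j}\colon f^{-jk}W\to U$, and the requirement that $\phi$ be a morphism $p_{n,k}(W,w)\to(U,u)$ in $\XX{f^k}$, namely $\phi\,\tilde w=u\,(f^{-k}\phi)$, reads on summands as the recursion $\phi_{j+1}=u\circ f^{-k}\phi_{j}$ for $j\le m-2$ together with the single wrap-around relation $\phi_{0}\circ w=u\circ f^{-k}\phi_{m-1}$. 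The recursion forces $\phi_{j}=u^{(j)}\circ f^{-jk}\phi_{0}$, whereupon the wrap-around relation becomes exactly $\phi_{0}\circ w=u^{(m)}\circ f^{-n}\phi_{0}$, i.e. $\phi_{0}$ is a morphism $(W,w)\to(U,u^{(m)})=s_{n,k}(U,u)$ in $\XX{f^n}$. Thus $\phi\mapsto\phi_{0}$ gives a bijection $\mathrm{Hom}_{\XX{f^k}}(p_{n,k}(W,w),(U,u))\cong\mathrm{Hom}_{\XX{f^n}}((W,w),s_{n,k}(U,u))$, natural in both variables by functoriality of the constructions. The only delicate point, precisely as in Lemma \ref{objectinitialXffact}, is the bookkeeping of the recursion and the identification of the wrap-around relation with membership in $\XX{f^n}$; everything else is formal.
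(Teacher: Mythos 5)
Your construction is essentially identical to the paper's: the right adjoint is the iteration functor $(U,u)\mapsto(U,u^{m})$ and the left adjoint is the $m$-fold coproduct $\coprod_{j=0}^{m-1}f^{-jk}W$ equipped with the shift-plus-wrap-around structure map, and your explicit recursion/wrap-around computation of the hom-set bijection actually supplies the verification that the paper only gestures at by analogy with Lemma \ref{objectinitialXffact}. The one discrepancy is purely notational --- you attach the name $p_{n,k}$ to the induction functor while the paper's $p_{n,k}^{-1}$ is the iteration functor (the paper is itself inconsistent about its direction conventions here) --- but this does not affect the correctness of the adjunction you prove.
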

\begin{proof}
Let $n>1$ be an integer. Note that the map
	\begin{equation*}
	\begin{tikzcd}[row sep=2.6pc, column sep=1.8pc]
			X  \arrow[r, bend right=45, "\pi_n"']&  \XX{f^n}  \arrow[l, bend right=45, " \sigma_n"']
	\end{tikzcd} 
\end{equation*}
defined in \eqref{projectionmapXf} and \ref{pullbackXfonbasefact} is a particular case of the following. This generalizes to maps
	\begin{equation*}
	\begin{tikzcd}[row sep=2.6pc, column sep=1.6pc]
	\XX{f^k}  \arrow[r, bend right=45, "p_{n,k}"']& \XX{f^n}.  \arrow[l, bend right=45, " s_{n,k}"']
	\end{tikzcd} 
\end{equation*}
for every  $k>0$ that divides $n$. The map $p_{n,k}$ commutes with the projection maps $\pi_k,\pi_n$
and $s_{n,k}$ commutes with the sections $\sigma_k, \sigma_n$.
Specifically, setting $n=km$,
$p_{n,k}$ is induced by the inclusion functor,
$$
p_{n,k}^{-1}(U, u)=(U, u^m),
$$
while $s_{n,k}$ is induced by the functor
$$
s_{n,k}^{-1}(U,u)= (U \coprod f^{-k}U \coprod \dots \coprod f^{-(m-1)k}U, \mbox{shift}),
$$
where $\mbox{shift}$ denotes the natural permutation for the first $(m-1)$ factors, and the map $u$ for the last factor.
These are well defined morphisms of sites. Indeed,
if we assume there is a map $u:f^{-k}U \to U$, then for any $j >0$, we obtain a map
$u^j: f^{-jk}U \to U$ by applying inductively the functor 
$f^{-k}$ at each step, eventually composed with the map found at the 
previous step.
 On the other hand, if we are given a map $u: f^{-n}U \to U$, then there 
is a map 
$$
f^{-k}(t(s_{n,k}^{-1}(U,u)))= f^{-k}U \coprod  f^{-(k+1)}U \coprod \dots 
\coprod f^{-n}U \rightarrow t(s_{n,k}^{-1}(U,u)),
$$
given by shifting, except for the last factor 
where we apply the given map $u$ valued in the first factor.
\end{proof}
From the above facts it is not difficult to see the following.
\begin{cor}
	The association $n \mapsto \XX{f^n}$ is functorial in $n$, \textit{i.e.}
	it defines a $2$-functor
	$$
	(\N, |) \to \mathscr{C}/{ X},
	$$
	where $(\N, |)$ is the $1$-category where $i \to j \iff i|j$, and
	 $ \mathscr{C}/{X}$ denotes the $2$-category of
	 categories over ${ X}$.
\end{cor}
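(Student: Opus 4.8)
The plan is to write down the $2$-functor explicitly and then to check that the structure maps of the preceding Fact compose strictly, so that no nontrivial coherence data are needed. On objects I send $n \mapsto \XX{f^n}$, viewed as an object of $\mathscr{C}/{X}$ through its target functor $t_n: \XX{f^n} \to X$. A morphism $k \to n$ of $(\N,|)$ is a divisibility relation $k \mid n$; writing $n = km$, I assign to it the $1$-cell
$$
\iota_{n,k}: \XX{f^k} \to \XX{f^n}, \qquad (U,u) \mapsto (U, u^m),
$$
where $u^m: f^{-n}U = (f^k)^{-m}U \to U$ is the $m$-fold iterate of $u: f^{-k}U \to U$, built recursively by $u^1 = u$ and $u^{i+1} = u\circ f^{-k}(u^i)$, exactly as the maps $v^n$ of \eqref{actionNobjects} with $f^{-1}$ there replaced by $f^{-k}$. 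This $\iota_{n,k}$ is the functor underlying the morphism $p_{n,k}$ produced in the preceding Fact.

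First I would confirm that $\iota_{n,k}$ is a well-defined $1$-cell of $\mathscr{C}/{X}$. It is a functor: an arrow $j: (U,u) \to (V,v)$ of $\XX{f^k}$ satisfies $j\,u = v\,(f^{-k}j)$, and a short induction on $i$ (apply $f^{-k}$ and compose, precisely as in the proof of \ref{objectinitialXffact}) yields $j\,u^i = v^i\,(f^{-ik}j)$, which at $i=m$ reads $j\,u^m = v^m\,(f^{-n}j)$ --- exactly the condition that $j$ be an arrow $(U,u^m) \to (V,v^m)$ in $\XX{f^n}$. It lies over $X$ because $t_n \circ \iota_{n,k} = t_k$ on the nose, both sides sending $(U,u)$ to $U$.

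The crux is the composition law. For $k \mid l \mid n$, write $l = ka$ and $n = lb$, so that $n = k(ab)$. Since iterating an iterate adds the exponents, one computes
$$
\iota_{n,l}\circ \iota_{l,k}(U,u) = \iota_{n,l}(U, u^{a}) = (U, (u^{a})^{b}) = (U, u^{ab}) = \iota_{n,k}(U,u),
$$
and likewise on arrows, so that $\iota_{n,l}\circ \iota_{l,k} = \iota_{n,k}$ strictly; for $m=1$ one has $u^1 = u$, whence $\iota_{n,n} = \mathrm{id}_{\XX{f^n}}$. Because all these identifications are genuine equalities, every coherence $2$-cell may be taken to be the identity, and the $\mathscr{C}/{X}$-structure is respected since each $\iota_{n,k}$ commutes strictly with the target functors.

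Finally, as $(\N,|)$ is a poset it carries no nonidentity $2$-cells, so nothing remains to be checked on $2$-morphisms, and the data above already assemble into a (strict) $2$-functor $(\N,|) \to \mathscr{C}/{X}$. The only step demanding genuine care --- the one I would carry out in full --- is the inductive identity $j\,u^m = v^m\,(f^{-n}j)$ showing that $u \mapsto u^m$ respects arrows; once that, together with the exponent-additivity $(u^a)^b = u^{ab}$, is in hand, the remaining verifications are purely formal.
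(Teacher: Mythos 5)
Your proof is correct and follows exactly the route the paper intends: the paper leaves the corollary as an immediate consequence of the preceding Fact, whose functors $p_{n,k}^{-1}:(U,u)\mapsto (U,u^m)$ are precisely your $\iota_{n,k}$, and the additivity $u^{m+n}=u^m\circ f^{-m}(u^n)$ established in the proof of \ref{objectinitialXffact} gives your key identity $(u^a)^b=u^{ab}$. Nothing is missing.
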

\begin{fact}
The map $p_{n,k}$ defined above is not, in general, a $\Z/m\Z$-torsor. 
\end{fact}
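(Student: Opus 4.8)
The plan is to prove the negative statement by exhibiting one explicit instance where the torsor property fails, taking the most transparent case $X=pt$ with trivial dynamics $f=\mathrm{id}$, realised in the extended (\'etale) site of the paragraph following Theorem \ref{intro:theorem1}. There both $\XX{f^k}$ and $\XX{f^n}$ become the classifying site $B_{\N}$, whose topos is the category of $\N$-sets $(F,\varphi)$ (a set with one endomorphism), cf. the Corollary to Theorem \ref{intro:theorem1}. First I would record that, under this identification, the inducing functor $(U,u)\mapsto (U,u^{m})$ of $p_{n,k}$ corresponds on $\N$-sets to the restriction of scalars along the monoid map $\phi\colon\N\to\N$, $a\mapsto ma$, i.e. the inverse-image functor is $\phi^{*}\colon (F,\varphi)\mapsto (F,\varphi^{m})$, whose left adjoint is induction $\phi_{!}$. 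Thus $p_{n,k}$ is realised as the degree-$m$ self-correspondence of $B_{\N}$ built out of the non-invertible semigroup $\N$, and the whole content is to detect that this is not a $\Z/m\Z$-covering.

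The key step is the observation that a $\underline{\Z/m\Z}$-torsor is in particular an \'etale (finite, locally constant) geometric morphism: it is a pullback of the universal torsor $pt\to B_{\Z/m\Z}$, which is \'etale, and \'etaleness is stable under pullback. Hence if $p_{n,k}$ were a torsor, the canonical comparison $Sh(\XX{f^k})_{/T}\to Sh(\XX{f^n})$ with $T\coloneqq (p_{n,k})_{!}(\mathbb{1})=\phi_{!}(\{\ast\},\mathrm{id})$ would be an equivalence. I compute $\phi_{!}(\{\ast\})$ to be the coequalizer of $\N$ identifying $a$ with $a+m$, that is $T=(\Z/m\Z,+1)$: the set $\Z/m\Z$ on which the generator acts by the cyclic shift $x\mapsto x+1$. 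Now $\N\text{-Set}=\widehat{\N}$ is the presheaf topos on the one-object category $\N$, and $\widehat{\N}_{/T}\simeq\widehat{\textstyle\int_{\N}T}$, presheaves on the category of elements. The contradiction I am after is then purely categorical: $\int_{\N}T$ has $m$ objects, which are pairwise non-isomorphic precisely because $\N$ has no inverses (a morphism $i\to j$ is an $a\in\N$ with $i+a\equiv j$, and for $i\neq j$ such $a$ is never invertible), and it is already idempotent-complete; so its Cauchy completion has $m$ isomorphism classes of objects, whereas that of $\N$ has one. Since two presheaf topoi are equivalent iff their base categories have equivalent Cauchy completions, $\widehat{\N}\not\simeq\widehat{\int_{\N}T}$ for $m\geq 2$, so $p_{n,k}$ is not even \'etale, a fortiori not a $\Z/m\Z$-torsor.

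The hard part, and the conceptual heart of the statement, is recognising that the failure is invisible to the coarser invariants, so one must not try to detect it there. On fundamental groups $p_{n,k}$ looks exactly like the degree-$m$ self-cover $\Z\xrightarrow{\times m}\Z$ of the circle, and the would-be $\Z/m\Z$-action on the total space is homotopically trivial even in the genuine group case; hence neither $\pi_{1}$ (computed in Chapter IV) nor the automorphism group of the total topos sees anything. To make the obstruction rigorous and to localise its source I would run the identical computation with $\Z$ in place of $\N$ as a control: there $\int_{\Z}T$ is a connected groupoid (all $m$ objects become isomorphic, since $a\mapsto -a$ now exists), its Cauchy completion has a single object, and one recovers the classical fact that the analogous map for the group action is a bona fide $\Z/m\Z$-torsor. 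This side-by-side comparison pinpoints the non-invertibility of $\N$ as the exact source of the failure, in line with the ``softness'' of $B_{\N}$ relative to $B_{\Z}$ stressed in the introduction.
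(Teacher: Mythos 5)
Your proof is correct, and it takes a genuinely different route from the paper's. The paper argues via the universal property of torsors: it writes down the canonical candidate for a $\Z/m\Z$-structure on the pullback of a sheaf $(\mathcal{F},\varphi)$ --- the $m$-tuple $\prod_{g}g_{*}\mathcal{F}$ with the shift-and-apply-$\varphi$ maps $\psi_{h}$ --- observes that these satisfy only the monoid axioms, and notes that a genuine group action would force $\varphi$ to be invertible (since $g^{*}\psi_{g^{-1}}$ would then invert $\psi_{g}$); as $\varphi$ need not be invertible, the torsor property fails. You instead specialise to $X=pt$, $f=\mathrm{id}$, realise everything as presheaf topoi on monoids, and show the map is not even \'etale, by computing $\phi_{!}(1)=(\Z/m\Z,+1)$ and distinguishing $\widehat{\N}$ from $\widehat{\N}_{/T}\simeq\widehat{\smallint_{\N}T}$ through their Cauchy completions ($1$ versus $m$ isomorphism classes, since $a+b=0$ has no solution with $a,b\geq 1$ in $\N$). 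Your individual steps check out. What each approach buys: the paper's argument identifies the obstruction uniformly in $(\mathcal{F},\varphi)$ over an arbitrary base $(X,f)$ and fits the descent-theoretic framing of the rest of the thesis, but as written it only shows that the \emph{canonical} candidate action fails to be a group action; yours is a single counterexample, but it is a watertight refutation (non-equivalence of topoi detected by a Morita invariant) and proves the strictly stronger fact that $p_{n,k}$ is not even a local homeomorphism of topoi. Both arguments locate the failure in exactly the same place, the absence of inverses in $\N$, as your control computation with $\Z$ makes explicit. One caveat worth flagging: the paper's own description of $p_{n,k}$ is ambiguous about direction and adjoints --- its displayed formula for $p_{n,k}^{*}\mathcal{F}$ is the coinduction $\phi_{*}$, which cannot literally be an inverse-image functor since it does not preserve coproducts --- so you should state explicitly that you are taking $p_{n,k}$ to be the essential geometric morphism induced by the monoid inclusion $\N f^{n}\hookrightarrow\N f^{k}$, with inverse image the restriction of scalars; this is the map for which the torsor question is the meaningful one and is surely what is intended.
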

\begin{proof}
If $p_{n,k}$ were a $\Z/m\Z$-torsor, from the universal property of $\Z/m\Z$-torsors, cf. \cite{et},
would follow that for any sheaf $\mathcal{F}_{\bul}$  
on $\XX{f^k}$, the sheaf $p_{n,k}^*\mathcal{F}_{\bul}$
is a sheaf on $\XX{f^n}$ with a group action of $\Z/m\Z$. 
However, in general, $p_{n,k}^*\mathcal{F}_{\bul}$ carries only 
an action of the
\textit{monoid} $(\Z/m\Z)$
since the structure of $\mathcal{F}_{\bul}$
needs not to be invertible. The semigroup action is described as follows:
for each $g \in [0,m-1] \cong \Z/m\Z$ we denote by $g$ the map $f^{-gk}$.
Let $\mathcal{F}_{\bul}$ be given, cf. \ref{notationXfbul}, by 
$(\mathcal{F},\varphi)$, where $\varphi:(f^k)^*\mathcal{F} \to \mathcal{F}$ 
is the action of $\mathcal{F}_{\bul}$. Observe that, cf. \ref{pullbackXfonbasefact},
$$
p_{n,k}^*\mathcal{F}= \prod_{g \in  \Z/m\Z} g_*\mathcal{F}
$$
carries maps
$$
\psi_h: h^*(p_{n,k}^*\mathcal{F}_{\bul}) \to p_{n,k}^*\mathcal{F}_{\bul},
$$
for each $h \in [1,m-1]$ given by shifting in the first $m-1$ positions,
and the appropriate power of $\varphi$ in position $m$. 
These maps evidently satisfy the semigroup property. 
Note that existence of a group action of $\Z/m\Z$ implies that $\varphi$
is invertible. 
In that case in fact, for every $g \in \Z/m\Z$
the map $g^*\psi_{g^{-1}}$ is the inverse of $\psi_g$.	
\end{proof}
 \begin{fact}[ \textbf{(Geometric points of ${\bf \XX{f}}$)}]. 
 	Let $X$ be a topological space and 
 $x \in X$ a fixed point of $f$. 
 Given a set $F$, let us denote by $\mathcal{F}$ the \textit{skyscraper sheaf} of $X$ supported on $x$. 
 Then, there is a \textit{skyscraper sheaf}
 	 $\mathcal{F}_{\bul}$ on $\XX{f}$ supported on $x$, defined by
 	  the identity map:
 	\begin{equation*}
 		\begin{tikzcd}[row sep=2.6pc, column sep=2.6pc]
 			f^*(\mathcal{F}_{ x} )=\mathcal{F}_{f(x)} \arrow[r, equal ]
 			&	 \mathcal{F}_{x}	
 		\end{tikzcd} 
 	\end{equation*}
 This association is functorial in $F$, and hence defines a geometric
 morphism 
 \begin{equation*}
 	\begin{tikzcd}[row sep=2.6pc, column sep=1.6pc]
 		x^* \dashv x_*:  {\rm Set} \arrow[r, shift right] \arrow[ from=r,shift right]&
 		Sh(\XX{f}),
 	\end{tikzcd} 
 \end{equation*}
\textit{i.e.} a geometric point of $Sh(\XX{f})$. 
 \end{fact}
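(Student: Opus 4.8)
The plan is to build everything on Theorem \ref{intro:theorem1}, which identifies a sheaf on $\XX{f}$ with a pair $(\mathcal{F},\varphi)$ where $\varphi\colon f^*\mathcal{F}\to\mathcal{F}$, together with the classical geometric point of $X$ attached to the topological point $x$, whose inverse image is the stalk $\mathcal{G}\mapsto\mathcal{G}_x$ and whose direct image is the skyscraper $F\mapsto x^X_*F=\mathcal{F}$. First I would check that $\mathcal{F}_{\bul}\coloneqq(\mathcal{F},\varphi)$ is a well-defined object of $Sh(X)_f$. Since $f(x)=x$, the stalk of $f^*\mathcal{F}$ at $x$ is $(f^*\mathcal{F})_x=\mathcal{F}_{f(x)}=\mathcal{F}_x=F$, while its stalk at any $y\neq x$ is a point; and since a morphism of sheaves with skyscraper target is determined by its stalk at the supporting point, there is a unique sheaf map $\varphi\colon f^*\mathcal{F}\to\mathcal{F}$ inducing $\mathrm{id}_F$ on stalks at $x$. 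By Theorem \ref{intro:theorem1} the pair $(\mathcal{F},\varphi)$ is then a sheaf on $\XX{f}$, and the assignment $F\mapsto\mathcal{F}_{\bul}$ is visibly functorial, a map $g\colon F\to F'$ inducing $x^X_*g$, which commutes with the two identity actions in the sense of \eqref{intro:square1}.

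Next I would produce the left adjoint. Given $(\mathcal{G},\gamma)\in Sh(X)_f$, Remark \ref{remarkhomsetsheafXf} writes $\mathrm{Hom}_{Sh(\XX{f})}((\mathcal{G},\gamma),\mathcal{F}_{\bul})$ as the equalizer of $\alpha_0,\alpha_1\colon\mathrm{Hom}(\mathcal{G},\mathcal{F})\rightrightarrows\mathrm{Hom}(f^*\mathcal{G},\mathcal{F})$, with $\alpha_0(\theta)=\theta\gamma$ and $\alpha_1(\theta)=\varphi\circ f^*\theta$. Applying the stalk--skyscraper adjunction on $X$ to both sides and using $f(x)=x$ identifies this with the equalizer of the two maps $\mathrm{Hom}_{\mathrm{Set}}(\mathcal{G}_x,F)\rightrightarrows\mathrm{Hom}_{\mathrm{Set}}(\mathcal{G}_x,F)$ sending $h\mapsto h\circ\gamma_x$ and $h\mapsto h$, where $\gamma_x\colon\mathcal{G}_x\to\mathcal{G}_x$ is the stalk of the action. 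Hence $\mathrm{Hom}_{Sh(\XX{f})}((\mathcal{G},\gamma),\mathcal{F}_{\bul})\cong\{h\colon\mathcal{G}_x\to F:\ h\circ\gamma_x=h\}$, so the left adjoint is $x^*(\mathcal{G},\gamma)=\mathcal{G}_x/\!\sim$, the coequalizer of $\gamma_x$ and $\mathrm{id}_{\mathcal{G}_x}$, i.e. the set of $\gamma_x$-orbits in the stalk. This already exhibits the adjoint pair $x^*\dashv x_*$ of the statement.

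The remaining and genuinely delicate point, which I expect to be the main obstacle, is that $x^*\dashv x_*$ be a \emph{geometric} morphism, that is, that $x^*$ preserve finite limits; this is exactly the content of "geometric point". Here one must be careful, because the orbit-set functor $(\mathcal{G},\gamma)\mapsto\mathcal{G}_x/\!\sim$ is a left adjoint and is not, on its face, left exact (coinvariants of an $\N$-action need not commute with binary products). The clean way to secure left exactness is to realize the point as a composite of established geometric morphisms rather than to argue about the orbit functor directly: the bare stalk $(\mathcal{G},\gamma)\mapsto\mathcal{G}_x$ equals the composite of the inverse image $\pi^*$ of \eqref{geometricmorphism}, which by \ref{equivalencelemmaXf} sends $(\mathcal{G},\gamma)$ to its underlying sheaf $\mathcal{G}$, with the stalk at $x$ on $X$. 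Both factors are inverse-image functors, hence left exact, so $\pi\circ x\colon\mathrm{Set}\to Sh(X)\to Sh(\XX{f})$ is a bona fide geometric point of $\XX{f}$, and this is the argument I would take to be decisive for the "geometric point" assertion.

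Finally I would reconcile this factorization with the skyscraper description. Because $x$ is a fixed point, $(f^n)_*\mathcal{F}=(f^n\circ x)^X_*F=x^X_*F$ for every $n$, so by Example \ref{example2cap1} the direct image of the composite point is $\pi_*\mathcal{F}=(\prod_{n\geq 0}x^X_*F,\ \mathrm{shift})$, a "coinduced" skyscraper on $\XX{f}$, of which the identity-action pair $\mathcal{F}_{\bul}=(\mathcal{F},\varphi)$ is the canonical diagonal section; the computation of the second paragraph is precisely the adjunction that pins down the associated point. I expect the construction in the first paragraph and the equalizer identification in the second to be routine; the single step deserving real care is the left-exactness of the inverse image, which is why I would establish it through the factorization $\pi\circ x$ rather than through the orbit-set functor.
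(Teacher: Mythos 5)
Your construction of the pair $(\mathcal{F},\varphi)$ and your identification of the left adjoint of $F\mapsto(\mathcal{F},\mathrm{id})$ as the orbit functor $(\mathcal{G},\gamma)\mapsto\mathrm{coeq}(\gamma_x,\mathrm{id}_{\mathcal{G}_x})$ are both correct, and your instinct that left-exactness of this left adjoint is the real issue is exactly right. The problem is that your proposed resolution does not address that issue: the factorization $\pi\circ x$ is a geometric point of $\XX{f}$, but it is a \emph{different} adjunction from the one in the statement. Its inverse image is the bare stalk $(\mathcal{G},\gamma)\mapsto\mathcal{G}_x$ and its direct image is, as you yourself compute, the ``coinduced'' skyscraper $\left(\prod_{n\geq 0}x_*F,\ \mathrm{shift}\right)$, which is not isomorphic to $(\mathcal{F},\mathrm{id})$. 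Two adjunctions with non-isomorphic right adjoints have non-isomorphic left adjoints, so left-exactness of the stalk functor says nothing about left-exactness of the orbit functor; the appeal to the ``canonical diagonal section'' in your last paragraph cannot bridge this.

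Worse, the orbit functor genuinely fails to be left exact, so the gap cannot be repaired. Take $X=pt$, so that $Sh(\XX{f})=Sh(B_{\N})$ is the category of sets with an endomorphism, and consider $(\Z,+1)$. Its orbit set is a single point, so the product of the orbit sets of $(\Z,+1)$ with itself is a point; but the orbit set of the product $(\Z\times\Z,\,(a,b)\mapsto(a+1,b+1))$ is in bijection with $\Z$ via $(a,b)\mapsto a-b$. Hence $x^*$ does not preserve binary products and $x^*\dashv x_*$ with $x_*F=(\mathcal{F},\mathrm{id})$ is an adjunction but not a geometric morphism. The statement as written is therefore not provable: the only geometric point sitting over the fixed point $x$ is the composite $\pi\circ x$ (equivalently, a flat $\N$-set such as $\N$ or $\Z$ with translation, not the terminal one), and its direct image is the shift skyscraper rather than the identity pair. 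You should either prove the weaker, correct assertion about $\pi\circ x$ --- for which your third paragraph is already a complete argument --- or flag that $F\mapsto(\mathcal{F},\mathrm{id})$ defines only a right adjoint whose left adjoint is not left exact.
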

\newpage
\phantom{h}
\thispagestyle{empty}
\newpage
\chapter{Generalizations}
In this chapter we generalize the previous construction
and consider a site of $X$ satisfying the property (D), cf. Definition \ref{intro:def1}.
Then, we define the quotient of $X$ by the action of a  countable, cf.  \eqref{countability}, monoid $\Sigma$.
Let us fix a countable, cf.  \eqref{countability}, semigroup with identity $\Sigma$ and an action of
$\Sigma$ on the site $X$, \textit{i.e.} a semigroup homomorphism

$$\Phi: \Sigma \to {\rm End}(X).$$ 
We abuse notation and denote by 
$$
\sigma: X \to X,\; U \mapsto \sigma^{-1}U,
$$
the morphism of sites $A(\sigma)$ corresponding to $\sigma \in \Sigma$.
With this notation we see that the morphism of sites $``\sigma\circ \tau''$
corresponds to the functor $U \mapsto (\sigma\tau)^{-1}U$.

\section{The classifying site $\XQQ{X}{\Sigma}$}
\begin{definition}
	Let us denote by $A: X \to X$ the morphism of sites
	defined by 
	\begin{equation}\label{def:ActionSigma}
		A^{-1}(U)\coloneqq \coprod_{\sigma \in \Sigma}\sigma^{-1}U.	
	\end{equation}
	We say that a {\bf right action} of $\Sigma$ on $U \in ob(X)$ is a map 
	$$ u_{\bul}\coloneqq\coprod_{\sigma \in \Sigma}u_{\sigma}: 	A^{-1}(U) \longrightarrow U$$ 
	such that $u_{id_{\Sigma}}=id_{U}$ and $u_{\tau}(\tau^{-1}u_{\sigma})=u_{\sigma\tau} \quad \forall \sigma,\tau \in \Sigma$.
		\begin{equation*}
		\begin{tikzcd}[row sep=2.6pc, column sep=4.6pc]
			\tau^{-1}U \arrow[r, "u_{\tau}"] & U \\
			\tau^{-1}\left( \sigma^{-1}U \right)=(\sigma \tau)^{-1}U \arrow[u, "\tau^{-1}u_{\sigma}"] 
			\arrow[ur, "u_{\sigma \tau}"']
		\end{tikzcd} 
	\end{equation*}
\end{definition}

\begin{definition}\label{Sigmasite}
	Let $\left( \XQQ{X}{\Sigma}, J_{\Sigma} \right)$ be the following
	site. The category $\XQQ{X}{\Sigma}$ is defined as having
	\begin{itemize}
		\item $ob(\XQQ{X}{\Sigma})=
		\left\{(U,u_{\bul}): U \in ob(X), u_{\bul} \mbox{ is a right action of } \Sigma \mbox{ on } U\right\}$;
		\item ${\rm Hom}_{\XQQ{X}{\Sigma}}((U,u_{\bul}), (V,v_{\bul}))= \{ h:U \to V: 
		h u_{\sigma}=v_{\sigma}(\sigma^{-1}h ) \quad \forall \sigma \in \Sigma\}$;
		\end{itemize}	
	We say that a map $h:U \to V$ satisfying the condition: $	hu_{\sigma}=v_{\sigma}(\sigma^{-1}h ) \quad \forall \sigma \in \Sigma$,
	descends to a map $\bar{h}: (U,u_{\bul})\to (V,v_{\bul})$.
	On the other hand, we say that $h$ is the defining map of $\bar{h}$.\\
	The topology $J_{\Sigma}$ is the one induced by the target functor $$t_{\Sigma}:\XQQ{X}{\Sigma} \to X, \; (U,u_{\bul}) \mapsto U.$$
\end{definition}
\begin{definition}\label{def:Sigmatopos}
	
	Let  $\widetilde{X}_{\Sigma}$ be the following category:
	\begin{itemize}
		\item  $ob(\widetilde{X}_{\Sigma})$ are pairs $ (\mathcal{F}, \varphi_{\bul}) $ consisting of
		\begin{enumerate}[a)]
			\item A sheaf $ \mathcal{F}\in 
			ob(Sh(X)) $;
			\item A (right) action of $\Sigma$ on $\mathcal{F}$, \textit{i.e.}
			 a map of sheaves  
			 $$ \varphi_{\bul}\coloneqq 
		\coprod_{\sigma \in \Sigma}\varphi_{\sigma}: \coprod_{\sigma \in \Sigma}\sigma^{*}\mathcal{F} \to \mathcal{F}\in ar(Sh(X)),
			$$
			satisfying $\varphi_{id_{\Sigma}}=id_{\mathcal{F}}$ and
			 $\varphi_{\sigma \tau}=\varphi_{\tau}(\tau^*\varphi_{\sigma})$ $\forall \; \sigma,\tau \in \Sigma$.
		\end{enumerate}
		\item The arrows from  $  (\mathcal{F}, \varphi_{\bul})$ to $ (\mathcal{G}, \gamma_{\bul}) $  in $
		\widetilde{X}_{\Sigma}$ are natural transformations
		$$
		\theta \in {\rm Hom}(\mathcal{F},
		\mathcal{G})
		$$
		such that $\forall\; \sigma \in \Sigma$ the following diagram commutes
		\begin{equation}\label{arrows:Sigmatopos}
				\begin{tikzcd}[row sep=2.6pc, column sep=2.6pc]
					\mathcal{F} \arrow[r, "\varphi_{\sigma}"] 
					\arrow[d," \sigma^*\theta"'] &
					\mathcal{F} \arrow[d, "\theta"] \\
					\mathcal{G} \arrow[r, "\gamma_{\sigma}"] &
					\mathcal{G}
				\end{tikzcd} 
		\end{equation}
	\end{itemize}
\end{definition}

The main result of this section can be formulated as follows.
\begin{claim*}
	The category $Sh(\XQQ{X}{\Sigma})$ is equivalent to $\widetilde{X}_{\Sigma}$.
\end{claim*}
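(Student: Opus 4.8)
The plan is to reproduce, \emph{mutatis mutandis}, the argument of Theorem~\ref{equivalencelemmaXf}, of which the present statement is the $\Sigma$-graded generalization; the proof of Theorem~\ref{sheavesonepsteinsite} already shows that this machinery survives a passage to a richer indexing datum, so the only genuine work is to equip each ingredient with a $\Sigma$-equivariant analogue. First I would establish that the target functor $t_{\Sigma}\colon\XQQ{X}{\Sigma}\to X$ of Definition~\ref{Sigmasite} admits a left adjoint $b_{\Sigma}\colon X\to\XQQ{X}{\Sigma}$, exactly as in Lemma~\ref{objectinitialXffact}. For $\Delta\in ob(X)$ one sets
$$b_{\Sigma}(\Delta)\coloneqq\left(A^{-1}(\Delta),\,j_{\Delta}\right)=\left(\coprod_{\sigma\in\Sigma}\sigma^{-1}\Delta,\,j_{\Delta}\right),$$
where the right $\Sigma$-action $j_{\Delta}$ is the \textbf{shift}: since $A^{-1}$ commutes with coproducts one has $\tau^{-1}A^{-1}(\Delta)=\coprod_{\sigma}(\sigma\tau)^{-1}\Delta$, and the $\tau$-component $j_{\Delta,\tau}$ reindexes the $\sigma$-summand via $\sigma\mapsto\sigma\tau$ into $A^{-1}(\Delta)$. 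The identity axiom is clear, and the semigroup identity $j_{\Delta,\sigma\tau}=j_{\Delta,\tau}\,(\tau^{-1}j_{\Delta,\sigma})$ is precisely the associativity of reindexing, so $b_{\Sigma}(\Delta)$ is a genuine object of $\XQQ{X}{\Sigma}$. The adjunction $b_{\Sigma}\dashv t_{\Sigma}$ is verified as in Lemma~\ref{objectinitialXffact} by producing, from any $\alpha\colon\Delta\to V$ with $(V,v_{\bul})\in ob(\XQQ{X}{\Sigma})$, the unique descended map $b_{\Sigma}(\Delta)\to(V,v_{\bul})$ with $\sigma$-component $v_{\sigma}\circ\sigma^{-1}\alpha$. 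As in Corollary~\ref{tgeomorph} this promotes $t_{\Sigma}$ to a morphism of sites $\pi_{\Sigma}\colon X\to\XQQ{X}{\Sigma}$, and, as in Corollary~\ref{basissievesXf}, the sieves $\{\,b_{\Sigma!}R : R\in J_{X}(U)\,\}$ furnish a basis of $J_{\Sigma}$.

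Next I would show, generalizing Fact~\ref{actiondown} and Lemma~\ref{boldpullbackfact}, that $\pi_{\Sigma}^{-1}\mathcal{F}$ carries a canonical right $\Sigma$-action. The shift endomorphisms $\tilde{\sigma}^{-1}(U,u_{\bul})\coloneqq(\sigma^{-1}U,\sigma^{-1}u_{\bul})$ of $\XQQ{X}{\Sigma}$ come with structural maps $\bar{u}_{\sigma}\colon\tilde{\sigma}^{-1}(U,u_{\bul})\to(U,u_{\bul})$, and the defining cocycle relation $\bar{u}_{\sigma\tau}=\bar{u}_{\tau}\circ\tau^{-1}\bar{u}_{\sigma}$ of the action $u_{\bul}$ yields, upon applying $\pi_{\Sigma}^{*}$ and using the commutation $\pi_{\Sigma}\circ\sigma=\tilde{\sigma}\circ\pi_{\Sigma}$ as in \eqref{commutativesquareXf}, a family of maps $\varphi_{\sigma}\colon\sigma^{*}\pi_{\Sigma}^{*}\mathcal{F}\to\pi_{\Sigma}^{*}\mathcal{F}$ satisfying $\varphi_{\sigma\tau}=\varphi_{\tau}(\tau^{*}\varphi_{\sigma})$. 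This is exactly datum (b) of Definition~\ref{def:Sigmatopos}, so $\pi_{\Sigma}^{*}$ factors through the forgetful functor as $\boldsymbol{\pi_{\Sigma}^{*}}\colon Sh(\XQQ{X}{\Sigma})\to\widetilde{X}_{\Sigma}$.

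It remains to produce an essential inverse. Following the equalizer of \eqref{inverseequivalenceXf}, for $(\mathcal{G},\gamma_{\bul})\in ob(\widetilde{X}_{\Sigma})$ I would set
$$(\mathcal{G},\gamma_{\bul})^{\Sigma}\coloneqq\ker\left(\pi_{\Sigma*}\mathcal{G}\,\rightrightarrows\,\pi_{\Sigma*}A_{*}\mathcal{G}\right),$$
one arrow being $\pi_{\Sigma*}(ad(\gamma_{\bul}))$ and the other the tautological shift attached to $A$ of \eqref{def:ActionSigma}; this is a sheaf on $\XQQ{X}{\Sigma}$. Both round-trips are checked on the basis $b_{\Sigma}(\Delta)$, where
$$\pi_{\Sigma*}\mathcal{G}\left(b_{\Sigma}\Delta\right)=\mathcal{G}\left(\coprod_{\sigma\in\Sigma}\sigma^{-1}\Delta\right)=\prod_{\sigma\in\Sigma}(\sigma_{*}\mathcal{G})(\Delta),$$
and, exactly as in the computation \eqref{computationkernelXf}, the kernel is $\mathcal{G}(\Delta)$ via the orbit map $x\mapsto\left(ad(\gamma_{\sigma})(x)\right)_{\sigma\in\Sigma}$, the shift on the source matching the $\Sigma$-action of $\boldsymbol{\pi_{\Sigma}^{*}}$. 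The natural isomorphisms $\boldsymbol{\pi_{\Sigma}^{*}}(-)^{\Sigma}\cong\mathrm{id}$ and $(-)^{\Sigma}\boldsymbol{\pi_{\Sigma}^{*}}\cong\mathrm{id}$ then follow from the sheaf property on the covering sieve generated by the coequalizer presentation of $(U,u_{\bul})$, in the manner of Lemma~\ref{equivalencelemmaXf}.

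Everything above being formally parallel to the case $\Sigma=\N f$, the main obstacle is the bookkeeping of the full semigroup law throughout: one must confirm that the single kernel of two arrows encodes the \emph{entire} cocycle condition $\varphi_{\sigma\tau}=\varphi_{\tau}(\tau^{*}\varphi_{\sigma})$ for all pairs, rather than a generator, and that the reindexing shift on $A^{-1}=\coprod_{\sigma}\sigma^{-1}$ is well-defined and associative. Here the hypotheses of property (D) — that coproducts commute with finite limits and are disjoint — together with the countability of $\Sigma$ assumed in \eqref{countability}, are exactly what guarantee that $A$ is a morphism of sites and that the equalizer presentation is legitimate; once these are in place the verification reduces, term by term, to the already-established $\N$-computation.
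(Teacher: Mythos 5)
Your proposal reproduces the paper's own argument: the left adjoint $b_{\Sigma}(\Delta)=(A^{-1}(\Delta),\text{shift})$, the induced $\Sigma$-action on $\pi_{\Sigma}^{-1}\mathcal{F}$ via the factorization of $A$ through $\XQQ{X}{\Sigma}$, the essential inverse as the equalizer $\ker(\pi_{\Sigma*}\mathcal{G}\rightrightarrows\pi_{\Sigma*}A_*\mathcal{G})$, and the verification of both round-trips on the basis $b_{\Sigma}(\Delta)$ are exactly the contents of Lemma~\ref{results:Sigmasite} and Theorem~\ref{sheavesonSigmasite}. The one worry you flag — that a single pair of parallel arrows must capture the full cocycle condition — is resolved in the paper by the explicit computation showing the target is $\prod_{\sigma,\tau}\mathcal{G}_{\sigma\tau}(U)$, indexed over all pairs, so that $\gamma_{\tau}x_{\sigma}=x_{\sigma\tau}$ holds for every $(\sigma,\tau)$ simultaneously.
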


\begin{definition}
	Let $\widehat{X}_{\Sigma}$ denote the category of pre-sheaves 
	corresponding to $\widetilde{X}_{\Sigma}$, \ref{def:Sigmatopos}.
	Its objects (resp. its arrows) are pairs
	$(\mathcal{F}, \varphi_{\bul})$ as in \ref{def:Sigmatopos}, 
	with the only difference that now all the objects and arrows involved are taken in the category of
	pre-sheaves, cf. \ref{pre-toposXf}.
\end{definition}
The following results are analogous to those of $\XX{f}$, to wit:
\begin{lemma}\label{results:Sigmasite}
	The following results hold:
	\begin{enumerate}
		\item
		The morphism $A$ factorizes 
		through $\XQQ{X}{\Sigma}$, \textit{i.e.} there is a commutative diagram
		\begin{equation}\label{commutativesquareSigmaf}
			\begin{tikzcd}[row sep=2.6pc, column sep=2.6pc]
				X	\arrow[r, "\pi_{\Sigma}"] \arrow[d, "A"'] & \XQQ{X}{\Sigma} \arrow[d, "\widetilde{A}"] \\ 
				X \arrow[r, "\pi_{\Sigma}"] & \XQQ{X}{\Sigma}.
			\end{tikzcd} 
		\end{equation}
	We have  
	$$
	\widetilde{A}^{-1}(U,u_{\bul})\coloneqq\left( A^{-1}(U),\; \tilde{u}_{\bul} \right),
	$$
	where $\forall \;\sigma \in \Sigma$, $\tilde{u}_{\sigma}$ is the composition
	\begin{equation*}
			\begin{tikzcd}[row sep=2.6pc, column sep=2.6pc]
		\sigma^{-1}	A^{-1}(U) \arrow[r, "\sigma^{-1}u_{\bul}"] & 
		\sigma^{-1}U \arrow[r, hook] &	A^{-1}(U).
		\end{tikzcd} 
	\end{equation*}
		\item The functor $t_{\Sigma}$ admits a left adjoint $b_{\Sigma}: X  \to \XQQ{X}{\Sigma}$ given by
		\begin{equation}\label{leftadjointtargetSigmaf}
			b_{\Sigma}(U)\coloneqq \left(A^{-1}(U),\; j_{U, \bul} \right), \quad \forall U \in ob(X),
		\end{equation}
	where $\displaystyle j_{U, \bul}= \coprod_{\sigma \in \Sigma}j_{U,\sigma}$,
	and 
	$$
	\displaystyle j_{U,\sigma}: \sigma^{-1}A^{-1}U=
	\coprod_{\tau \in \Sigma}(\tau\sigma)^{-1}U \to \coprod_{\tau \in \Sigma}\tau^{-1}U 
	$$ 
	is induced by the identity via shifting, \textit{i.e.} sends $(\tau \sigma)^{-1}U$ in position 
	$\tau$ on the left to its copy on position $\tau \sigma$ on the right.
	For any $h: U' \to U \in X$, the map 
	$b_{\Sigma}(h): b_{\Sigma}(U') \to b_{\Sigma}(U)$  is
	the coproduct for $\sigma \in {\Sigma}$ of the maps
	\begin{equation*}
		\begin{tikzcd}[row sep=2.6pc, column sep=2.6pc]
			\sigma^{-1}U'  \arrow[r, "\sigma^{-1}h"] 
			& \sigma^{-1} U \arrow[r, hook] & A^{-1}(U).
		\end{tikzcd}
	\end{equation*}
Moreover, for any $U \in ob(X)$, the arrow $j_{U,{\bul}} \in X$ 
descends to an arrow
\begin{equation}\label{jbarSigma}
	\bar{j}_{U}: b_{\Sigma} (A^{-1}(U)) \to 	b_{\Sigma}(U) \in \XQQ{X}{\Sigma}. 
\end{equation}
		\item The co-unit of the above adjunction yields, for any 
		$(U,u_{\bul})\in ob(\XQQ{X{\Sigma}})$, a map 
		$$
		\epsilon_{\bul}=\epsilon_{\bul}(U,u_{\bul}):	b_{\Sigma}(t_{\Sigma}(U,u_{\bul})) \to (U,u_{\bul}),
		$$
		that fits into a natural coequalizer
		\begin{equation}\label{coeqobjectsSigmaf}
			\begin{tikzcd}[row sep=2.6pc, column sep=2.6pc]
				b_{\Sigma}(A^{-1}(U))
				\arrow[r, shift left, "{\bar{j}_{U}}"] 
				\arrow[r,shift right, "b_{\Sigma}(u_{\bul})"'] &
				b_{\Sigma}(U) \arrow[r, "\epsilon_{\bul}"] 
				& (U_{\bul}, u_{\bul}).
			\end{tikzcd} 
		\end{equation}
		\item 	The family of sieves 
		$$
		\{ (b_{\Sigma})_!(R): R \in J_{X}(U)\}
		$$
		is a family of covering sieves on $(U,u_{\bul})$. This collection forms a basis for $\XQQ{X}{\Sigma}$;
		\item The functor 
		$$
		\pi_{\Sigma}^{-1}: \XQQ{X}{\Sigma}^{\wedge} \to  X^{\wedge}, 
		$$
		coincides by adjunction with the functor $b_{\Sigma}^*$, which assigns to a 
		pre-sheaf $\mathcal{F}\in \XQQ{X}{\Sigma}^{\wedge}$  the pre-sheaf 
		$$
		\pi_{\Sigma}^{-1}\mathcal{F}:\; U \mapsto \mathcal{F}(b_{\Sigma}(U)).
		$$
		Moreover, $\pi_{\Sigma}^{-1}:\XQQ{X}{\Sigma}^{\wedge} \to X^{\wedge} $ factorizes through the forgetful functor
		$\widetilde{X}_{\Sigma} \to X^{\wedge}$. We denote by
			\begin{equation}\label{pre-actionSigmaf}
			\mbox{\boldmath	$\pi_{\Sigma}^{-1}$}: \XQQ{X}{\Sigma}^{\wedge} \to 
			\widetilde{X}_{\Sigma},\; 
			\mathcal{F} \mapsto(\pi_{\Sigma}^{-1}\mathcal{F}, \varphi_{\bul})
		\end{equation}	
	the resulting map. The right adjoint of $\varphi_{\bul}$
	is induced, for any $U \in ob(X)$,
	by pullback along the morphism $\bar{j}_{U}$, cf. \eqref{jbarSigma}.
		\item Applying the associated sheaf functor ``$a$'' to the construction above yields a functor
		\begin{equation}\label{fuctoreqSigmaf}
			\mbox{\boldmath	$\pi_{\Sigma}^{*}$}: Sh(\XQQ{X}{\Sigma}) \to 
			\widetilde{X}_{\Sigma},\; 
			\mathcal{F} \mapsto(\pi_{\Sigma}^*\mathcal{F}, \varphi_{\bul}).
		\end{equation}	
	\end{enumerate}
\end{lemma}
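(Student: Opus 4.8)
The plan is to deduce the sheaf-level statement entirely from assertion~(5), which has already produced, at the level of pre-sheaves, the functor $\mbox{\boldmath $\pi_{\Sigma}^{-1}$}\colon \XQQ{X}{\Sigma}^{\wedge}\to\widetilde{X}_{\Sigma}$ together with its pre-sheaf action $\varphi_{\bul}$, and then to transport the whole picture to sheaves by postcomposing with the associated sheaf functor $a$. The only inputs I would need are the formal properties of $a$: being a left adjoint to the inclusion $Sh(X)\hookrightarrow X^{\wedge}$ it preserves all colimits (in particular the coproducts $\coprod_{\sigma}$), and being left exact it preserves finite limits; moreover, since $t_{\Sigma}$ is continuous with left adjoint $b_{\Sigma}$ whose induced $\pi_{\Sigma}^{-1}=b_{\Sigma}^{*}$ has a finite-limit-preserving sheafification (exactly as in \ref{tgeomorph}), $\pi_{\Sigma}$ is a morphism of sites and $\pi_{\Sigma}^{*}=a\circ\pi_{\Sigma}^{-1}$ carries sheaves to sheaves, cf.~\eqref{sheaficationofpreheaf}.

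First I would fix a sheaf $\mathcal{F}\in Sh(\XQQ{X}{\Sigma})$ and apply $a$ to the pre-sheaf morphism $\varphi_{\bul}\colon\coprod_{\sigma\in\Sigma}\sigma^{-1}(\pi_{\Sigma}^{-1}\mathcal{F})\to\pi_{\Sigma}^{-1}\mathcal{F}$ furnished by~(5). Because $a$ commutes with coproducts, and because $a\circ\sigma^{-1}\cong\sigma^{*}\circ a$ on pre-sheaves — the latter being the identity $\sigma^{*}=a\,\sigma^{-1}$ of \eqref{sheaficationofpreheaf} combined with the fact that $\sigma^{-1}$ sends the canonical local isomorphism $\mathcal{P}\to a\mathcal{P}$ to a local isomorphism — the source is identified functorially with $\coprod_{\sigma}\sigma^{*}(\pi_{\Sigma}^{*}\mathcal{F})$ and the target with $\pi_{\Sigma}^{*}\mathcal{F}$. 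Thus $a(\varphi_{\bul})$ is precisely a sheaf-level action
$$
\coprod_{\sigma\in\Sigma}\sigma^{*}(\pi_{\Sigma}^{*}\mathcal{F})\longrightarrow \pi_{\Sigma}^{*}\mathcal{F},
$$
that is, the datum (b) of \ref{def:Sigmatopos} for the sheaf $\pi_{\Sigma}^{*}\mathcal{F}$.

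Next I would verify the two axioms and functoriality. The equalities $\varphi_{id_{\Sigma}}=id$ and $\varphi_{\sigma\tau}=\varphi_{\tau}(\tau^{*}\varphi_{\sigma})$ already hold for the pre-sheaf action by~(5); since $a$ is a functor it preserves the relevant compositions, and the coherence isomorphisms $a\,\sigma^{-1}\tau^{-1}\cong(\sigma\tau)^{*}a$ matching $\sigma^{*}\tau^{*}=(\sigma\tau)^{*}$ identify the sheafified diagrams with the required ones, so both identities pass to $a(\varphi_{\bul})$. For a morphism $\theta\colon\mathcal{F}\to\mathcal{G}$ in $Sh(\XQQ{X}{\Sigma})$, assertion~(5) gives the commuting squares \eqref{arrows:Sigmatopos} for $\pi_{\Sigma}^{-1}\theta$, and applying $a$ yields the corresponding squares for $\pi_{\Sigma}^{*}\theta=a(\pi_{\Sigma}^{-1}\theta)$; hence $\mbox{\boldmath $\pi_{\Sigma}^{*}$}$ is a well-defined functor into $\widetilde{X}_{\Sigma}$.

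The one genuinely delicate point — which I would isolate rather than leave implicit — is the compatibility of the family of isomorphisms $a\circ\sigma^{-1}\cong\sigma^{*}\circ a$ with the monoid structure of $\Sigma$, i.e.\ the coherence ensuring that the sheafified semigroup identity is \emph{literally} the one in \ref{def:Sigmatopos} and not merely isomorphic to it. Once this naturality is pinned down, everything else is the routine observation that an exact functor carries the pre-sheaf description of~(5) verbatim onto its sheaf counterpart.
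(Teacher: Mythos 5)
There is a genuine gap: you have proved only assertion~(6) of the Lemma, taking assertions (1)--(5) as given inputs. But the statement to be proved is the entire Lemma \ref{results:Sigmasite}, and its real mathematical content lives precisely in the parts you assume. The paper's own proof spends essentially all of its effort on: (2) exhibiting the adjunction $b_{\Sigma}\dashv t_{\Sigma}$ (checking that the commutativity condition on a map $b_{\Sigma}(U)\to(V,v_{\bul})$ forces $\sigma^{-1}U\to V$ to be $v_{\sigma}\circ\sigma^{-1}j_{0}$, which is what makes the target functor invertible on Hom-sets, and verifying that $j_{U,\bul}$ descends to $\bar{j}_{U}$ via a compatibility diagram for the shift maps); (3) the universal property of the coequalizer \eqref{coeqobjectsSigmaf}; and (4) the claim that $\coprod_{\sigma\in\Sigma}\sigma^{-1}R\to b_{\Sigma}^{*}(b_{\Sigma})_{!}R$ is a bicovering, which is what makes the sieves $(b_{\Sigma})_{!}(R)$ a basis. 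None of these appears in your argument. Assertion (5) itself, which you treat as already available, is in the paper deduced from (2) together with \cite[I.5.5]{SGA4}, so even the pre-sheaf factorization you start from rests on the unproved adjunction.

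By contrast, the part you do prove is the part the paper dismisses with ``It is immediate from 5)'': once the pre-sheaf action $\varphi_{\bul}$ on $\pi_{\Sigma}^{-1}\mathcal{F}$ exists and satisfies the monoid axioms, applying $a$ and using that $a$ preserves coproducts and that $a\circ\sigma^{-1}\cong\sigma^{*}\circ a$ transports everything to sheaves. Your isolation of the coherence of these isomorphisms with the monoid structure of $\Sigma$ is a legitimate refinement of that one step, and is done correctly. But to constitute a proof of the Lemma you would need to supply, at minimum, the adjunction computation in (2), the coequalizer verification in (3), and the bicovering argument in (4); as written, the proposal establishes only the final, formal consequence.
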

\begin{proof}\
	\begin{enumerate}
		\item The proof of the commutativity of 
		\eqref{commutativesquareSigmaf} is analogous to \eqref{commutativesquareXf};
		\item
		There is a natural functorial map in the variables $U,V$
		$$
		{\rm Hom}_{\XQQ{X}{\Sigma}}(b_{\Sigma}(U), (V,v_{\bul})) \longrightarrow {\rm Hom}_X(U, t_{\Sigma}(V,v_{\bul})),
		$$
		obtained by applying the target functor $t=t_{\Sigma}$.
		Let us fix now an arrow $j_0: U \to t_{\Sigma}(V,v_{\bul}) \in X$.
		The commutativity condition on a map 
		$ b_{\Sigma}(U)\to (V,v_{\bul}) \in \XQQ{X}{\Sigma}$
		is equivalent to asking that $\sigma^{-1}U \to V$
		is the arrow obtained by the following composition
		\begin{equation*}
		\begin{tikzcd}[row sep=2.6pc, column sep=2.6pc]
			\sigma^{-1}U  \arrow[r, "\sigma^{-1}j_0"] 
			& \sigma^{-1} V \arrow[r, "v_{\sigma}"] & V.
		\end{tikzcd}
	\end{equation*}
		Therefore, we obtain an inverse of $t$.
	The arrow $\bar{j}_U$ is well defined, 
		since the following diagram is commutative,
		\begin{equation*}
			\begin{tikzcd}[row sep=2.6pc, column sep=2.6pc]
				\displaystyle	\coprod_{\tau  \in \Sigma}\tau^{-1}
				\left( \coprod_{\sigma \in \Sigma}\sigma^{-1}(U) \right)
				\arrow[r, "j_{U,\bul}"] 
				& \displaystyle \coprod_{\sigma \in \Sigma}\sigma^{-1}(U)
					 	\\
				\displaystyle	\coprod_{\sigma,\tau,\upsilon \in \Sigma}\tau^{-1}
				\left(
				\coprod_{\upsilon \in \Sigma}\upsilon^{-1}
				\left( \coprod_{\sigma \in \Sigma}\sigma^{-1}(U)
				\right)
				\right)	 \arrow[r, "j_{A^{-1}(U),\bul}"] 	\arrow[u, "\coprod_{\tau}\tau^{-1} j_{U,\bul}"]&
				\displaystyle	\coprod_{\upsilon  \in \Sigma}\upsilon^{-1}
				\left( \coprod_{\sigma \in \Sigma}\sigma^{-1}(U) \right) 
				\arrow[u, "j_{U,\bul}"]
			\end{tikzcd}
		\end{equation*}
		
		\item We can check easily that the universal property of 
		coequalizers is satisfied, cf. \ref{resultsEpsteinsite}, \ref{basisobjectsXf};
	
		\item The morphism
		$$
		\coprod_{\sigma \in \Sigma} \sigma^{-1}R \to b_{\Sigma}^*(b_{\Sigma})_!R
		$$
		obtained by composing the covering 
		$	\coprod_{\sigma \in \Sigma} \sigma^{-1}R \to R$ with the unit morphism
		$R \to b_{\Sigma}^*(b_{\Sigma})_!R$ is a bicovering. 
		The proof is analogous to \ref{basissievesXf} and \ref{resultsEpsteinsite};
		\item	It follows already by 2) and \cite[I.5.5]{SGA4};
		\item  It is immediate from 5).
	\end{enumerate}
\end{proof}
\begin{cor}
	There is a  projection morphism, \textit{i.e.} a morphism of sites 
	\begin{equation}\label{PimapSigma}
		\pi_{\Sigma}: {X} 
		\rightarrow \XQQ{X}{\Sigma}.
	\end{equation}
induced by the functor $t_{\Sigma}$.
\end{cor}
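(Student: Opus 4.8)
The plan is to mirror verbatim the argument used for the projection in the one-endomorphism case, cf. \ref{tgeomorph}, substituting Lemma \ref{results:Sigmasite} for the elementary facts about $\XX{f}$ exploited there. Since the topology $J_{\Sigma}$ is, by \ref{Sigmasite}, the one induced by the target functor $t_{\Sigma}$, the functor $t_{\Sigma}:(\XQQ{X}{\Sigma},J_{\Sigma})\to(X,J_X)$ is automatically continuous (cf. \ref{sitewithdyn}). Hence, setting $(\pi_{\Sigma})_*\coloneqq t_{\Sigma}^*|_{Sh(X)}$, the remark following \ref{sitewithdyn} already produces a \emph{weak} geometric morphism, \textit{i.e.} a pair of adjoint functors $(\pi_{\Sigma})^*\dashv(\pi_{\Sigma})_*$ with
$$
(\pi_{\Sigma})^*: Sh\left(\XQQ{X}{\Sigma}\right)\longrightarrow Sh(X),\qquad (\pi_{\Sigma})_*: Sh(X)\longrightarrow Sh\left(\XQQ{X}{\Sigma}\right).
$$
The entire content of the statement is therefore the upgrade of this weak geometric morphism to a genuine morphism of sites, \textit{i.e.} the left exactness of the inverse image $(\pi_{\Sigma})^*$.

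To this end I would first make $(\pi_{\Sigma})^*$ explicit. By \ref{results:Sigmasite}(2) the functor $t_{\Sigma}$ admits a left adjoint $b_{\Sigma}$; consequently, by \cite[I.5.5]{SGA4}, the left adjoint $(t_{\Sigma})_!$ of the composition functor $t_{\Sigma}^*$ is the composition functor $b_{\Sigma}^*$ --- this is exactly \ref{results:Sigmasite}(5). Passing to sheaves by means of the associated sheaf functor $a$ gives $(\pi_{\Sigma})^*=a\circ b_{\Sigma}^*|_{Sh(\XQQ{X}{\Sigma})}$, as recorded in \ref{results:Sigmasite}(6).

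It then remains to verify that this composite is left exact, which is the only real step. Being a pre-composition functor between presheaf categories, $b_{\Sigma}^*$ computes limits objectwise and so preserves all limits; the associated sheaf functor $a$ is left exact, and the inclusion $Sh(\XQQ{X}{\Sigma})\hookrightarrow\XQQ{X}{\Sigma}^{\wedge}$, being a right adjoint, preserves limits. Hence $(\pi_{\Sigma})^*$ preserves finite limits and $\pi_{\Sigma}:X\to\XQQ{X}{\Sigma}$ is a morphism of sites. Alternatively, exactly as in the proof of \ref{tgeomorph}, one may reduce (cf. \cite{finitelimits}) to checking preservation of binary products and equalizers, a direct and routine computation. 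I expect no genuine difficulty here: the composition-functor description of $(\pi_{\Sigma})^*$ renders the left-exactness, which is the sole obstacle to promoting the weak geometric morphism to a morphism of sites, immediate.
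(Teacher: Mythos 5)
Your proposal is correct and follows essentially the same route as the paper, which simply declares the result a consequence of \ref{results:Sigmasite} and of the argument of \ref{tgeomorph}: identify $(\pi_{\Sigma})^*$ with $a\circ b_{\Sigma}^*$ via the adjunction $b_{\Sigma}\dashv t_{\Sigma}$ and then verify left exactness. If anything, your justification of left exactness (pre-composition preserves all limits objectwise, $a$ is left exact, and the inclusion of sheaves into presheaves is a right adjoint) is cleaner and more robust than the paper's appeal to a ``direct computation'' on binary products and equalizers.
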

\begin{proof}
The proof is a consequence of \ref{results:Sigmasite} and it is 
analogous to \ref{tgeomorph}.
\end{proof}
\begin{theorem}\label{sheavesonSigmasite}
	The functor $	\mbox{\boldmath	$\pi_{\Sigma}^{*}$}: Sh(\XQQ{X}{\Sigma}) \to 
	\widetilde{X}_{\Sigma}$ is an equivalence of categories.
\end{theorem}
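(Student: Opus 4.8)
The plan is to follow \textit{verbatim} the strategy of Theorem \ref{equivalencelemmaXf} (and of its Epstein analogue \ref{sheavesonepsteinsite}), the only genuinely new feature being the bookkeeping forced by the cocycle condition for a general monoid; all of the adjunction and sheafification formalism needed has already been assembled in Lemma \ref{results:Sigmasite}. First I would exhibit a candidate essential inverse $(\cdot)^{\Sigma}: \widetilde{X}_{\Sigma} \to Sh(\XQQ{X}{\Sigma})$. Given $(\mathcal{F}, \varphi_{\bul}) \in \widetilde{X}_{\Sigma}$, set
$$
(\mathcal{F},\varphi_{\bul})^{\Sigma} \coloneqq \ker\left( (\pi_{\Sigma})_*\mathcal{F} \rightrightarrows (\pi_{\Sigma})_* A_*\mathcal{F} \right),
$$
where the two parallel maps are $\alpha_0 \coloneqq (\pi_{\Sigma})_*(ad(\varphi_{\bul}))$, with $ad(\varphi_{\bul}) \coloneqq \prod_{\sigma}ad(\varphi_{\sigma}): \mathcal{F} \to A_*\mathcal{F}$, and the ``shift'' $\alpha_1$, whose component at an object $(V, v_{\bul})$ is $x \mapsto (\mathcal{F}(v_{\sigma})(x))_{\sigma \in \Sigma}: \mathcal{F}(V) \to \prod_{\sigma}\mathcal{F}(\sigma^{-1}V) = A_*\mathcal{F}(V)$. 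Since $(\pi_{\Sigma})_*$ carries sheaves to sheaves and a kernel of a morphism of sheaves is again a sheaf, $(\mathcal{F},\varphi_{\bul})^{\Sigma}$ is a sheaf on $\XQQ{X}{\Sigma}$; and the equalizer condition $ad(\varphi_{\sigma}) = \mathcal{F}(v_{\sigma})$ for all $\sigma$ is exactly the equivariance built into the arrows \eqref{arrows:Sigmatopos} of $\widetilde{X}_{\Sigma}$. (Here is where the countability of $\Sigma$, \eqref{countability}, enters, guaranteeing that $A^{-1}U = \coprod_{\sigma}\sigma^{-1}U$ and the products above exist.)

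Next I would check $\mbox{\boldmath$\pi_{\Sigma}^{*}$}(\mathcal{F},\varphi_{\bul})^{\Sigma} \cong (\mathcal{F},\varphi_{\bul})$. By Lemma \ref{results:Sigmasite}, (5), it suffices to compute at the level of pre-sheaves on the objects $b_{\Sigma}(U)$. Evaluating on $b_{\Sigma}(U) = (A^{-1}U, j_{U,\bul})$ and using $\sigma^{-1}A^{-1}U = \coprod_{\tau}(\tau\sigma)^{-1}U$ gives
$$
(\mathcal{F},\varphi_{\bul})^{\Sigma}(b_{\Sigma}U) = \ker\left( \prod_{\tau}\mathcal{F}(\tau^{-1}U) \rightrightarrows \prod_{\sigma,\tau}\mathcal{F}((\tau\sigma)^{-1}U) \right),
$$
and, exactly as in \eqref{computationkernelXf}, the assignment $x \mapsto (ad(\varphi_{\tau})(x))_{\tau \in \Sigma}$ identifies $\mathcal{F}(U)$ with this kernel. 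The cocycle identity $\varphi_{\sigma\tau} = \varphi_{\tau}(\tau^*\varphi_{\sigma})$ is precisely what guarantees that this family is consistent under the shift $j_{U,\bul}$ and lands in the equalizer; this replaces the transparent iteration $ad(\gamma)^n$ of the $\XX{f}$ case and is where essentially all of the new content sits. A further short diagram chase, identical in shape to the one closing \ref{equivalencelemmaXf}, shows that the shift action on the left matches $\varphi_{\bul}$ on the right, and applying the associated-sheaf functor $a$ yields the asserted isomorphism.

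Finally I would check $(\mbox{\boldmath$\pi_{\Sigma}^{*}$}\mathcal{F})^{\Sigma} \cong \mathcal{F}$ for $\mathcal{F} \in Sh(\XQQ{X}{\Sigma})$. The unit morphism $\mathcal{F} \to (\pi_{\Sigma})_*\pi_{\Sigma}^*\mathcal{F}$ factors through the kernel and so defines the comparison map; to see it is invertible it is enough to test on the basis $\{(b_{\Sigma})_!R : R \in J_X(U)\}$ of Lemma \ref{results:Sigmasite}, (4), where the computation is the same kernel calculation as above. Most economically, the sieve generated by the coequalizer \eqref{coeqobjectsSigmaf} is a covering sieve, so the sheaf property identifies $\mathcal{F}(U, u_{\bul})$ directly with the equalizer computing $(\mbox{\boldmath$\pi_{\Sigma}^{*}$}\mathcal{F})^{\Sigma}(U,u_{\bul})$. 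I expect the only real obstacle to be the verification, via the cocycle condition together with the explicit shift maps $j_{U,\sigma}$, that the two kernel computations are natural in all $\sigma \in \Sigma$ simultaneously rather than merely under a single generator; everything else is the formal adjunction and sheafification already prepared in Lemma \ref{results:Sigmasite}.
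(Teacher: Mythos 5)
Your proposal is correct and follows essentially the same route as the paper: the same equalizer $(\mathcal{G},\gamma_{\bul})^{f}=\ker\bigl((\pi_{\Sigma})_*\mathcal{G}\rightrightarrows(\pi_{\Sigma})_*A_*\mathcal{G}\bigr)$ as essential inverse, the same evaluation on $b_{\Sigma}(U)$ reducing one direction to the kernel computation $x\mapsto (ad(\gamma_{\sigma})x)_{\sigma}$ governed by the cocycle identity, and the same appeal to the covering sieve generated by the coequalizer of Lemma \ref{results:Sigmasite} to settle the other direction via the sheaf property. No gaps.
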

\begin{proof}
 Let  $(\mathcal{G}, \gamma_{\bul}) \in \widetilde{X}_{\Sigma}$, 
 and consider the following diagram in $Sh(\XQQ{X}{\Sigma})$:
	\begin{equation}\label{inverseequivalenceSigma}
			\begin{tikzcd}[row sep=2.6pc, column sep=2.6pc]
		(	\pi_{\Sigma})_*\mathcal{G}	
			\arrow[r, shift left, "\alpha_0"] 
			\arrow[r,shift right," \alpha_1"'] &
			(	\pi_{\Sigma})_*\left( A_* \mathcal{G}\right),
		\end{tikzcd} 
	\end{equation}
where $\alpha_0$ is obtained by applying the functor
$(\pi_{\Sigma})_*$ to right adjoint of the defining morphism 
$\gamma_{\bul}: A^*\mathcal{G} \to \mathcal{G}$, while $\alpha_1$ is given by
$$
(\alpha_1)_{(U,u_{\bul})}=\mathcal{G}(u_{\bul}):
\mathcal{G}(U) \to \prod_{\sigma \in \Sigma} \sigma_*(\mathcal{G})(U).
$$

	The equalizer of \eqref{inverseequivalence} provides the required inverse, \textit{i.e.} we define
	\begin{equation*}
		(\mathcal{G}, \gamma_{\bul})^f\coloneqq\ker (\alpha_0, \alpha_1).
	\end{equation*}
	Note that the following is a functorial isomorphism of pre-sheaves
	$$
	\mbox{\boldmath	$	\pi_{\Sigma}^{-1}$}\left[ (\mathcal{G}, \gamma_{\bul}) 
	^f \right] \overset{\sim}{\longrightarrow} (\mathcal{G}, \gamma_{\bul}),
	$$
	which gives an isomorphism of the corresponding sheaves.
	In fact, a direct computation, cf. \ref{results:Sigmasite}, shows:
	\begin{equation*}
	\begin{tikzcd}[row sep=2.6pc, column sep=2.6pc]
		(\mathcal{G},\gamma_{\bul})^f(U,u_{\bul})= 	
		\ker  \Bigg(\displaystyle	\prod_{\sigma \in \Sigma} \mathcal{G}_{\sigma}(U)
		\arrow[r, shift left, " \alpha_0"] 
		\arrow[r,shift right,"\alpha_1"'] &
		\displaystyle	\prod_{\sigma, \tau \in \Sigma} \mathcal{G}_{\sigma\tau}(U) \Bigg),
	\end{tikzcd} 
\end{equation*}
where we have set $\mathcal{G}_{\sigma}\coloneqq
 \sigma_*\mathcal{G}, \; \forall\;\sigma \in \Sigma$ and we have used \ref{results:Sigmasite}, 5).
We have
$$
\left( \alpha_0 (x_{\sigma})_{\sigma}  \right)_{(\sigma,\tau)}=\gamma_{\tau}x_{\sigma}, \quad \quad 
\left( \alpha_1 (x_{\sigma})_{\sigma}  \right)_{(\sigma,\tau)}=x_{\sigma\tau}.
$$
Therefore, the equalizer is isomorphic to $\mathcal{G}$ via 
$
x \mapsto (\gamma_{\sigma}x)_{\sigma}.
$

	Finally, there is a functorial isomorphism
	$$
	\mathcal{F}  \overset{\sim}{\longrightarrow}   (	\mbox{\boldmath	$	\pi_{\Sigma}^{*}$}\mathcal{F})^f,
	$$
	for any $\mathcal{F}\in Sh(\XQQ{X}{\Sigma})$.
	In fact, a direct computation shows that
	$$
	\left( \mbox{\boldmath	$\pi_{\Sigma}^{*}$}\mathcal{F} \right)^f(U, u_{\bul}) 
	=\ker\left( \mathcal{F}(b_{\Sigma}(U)) \rightrightarrows \mathcal{F}(b_{\Sigma}(A^{-1}(U)))\right),
	$$
	where the parallel morphisms are given by the pull-back along the 
	maps in \eqref{results:Sigmasite}, 3).
	Since  the sieve generated by that family
 is a covering sieve, 
	the resulting equalizer is isomorphic 
	to $\mathcal{F}(U,u_{\bul})$, by the sheaf property.
\end{proof}

\begin{cor}\label{coveringsievesSigmafcor} 
	
	\noindent
	Let us consider, for any $(U,u_{\bul}) \in ob(\XQQ{X}{\Sigma})$, the following collection:
	$$
	\{ R: R \in J_{X}(U), \mbox{ such that } u_{\bul}  \mbox{ factorizes as }  A^{-1} R
	\rightarrow	R\}
	$$
	to which we refer as the family of $\Sigma$-dynamical sieves.
	Then, by \ref{sheavesonSigmasite}, we shall identify the covering sieves 
	$R_{u_{\bul}} \hookrightarrow (U,u_{\bul})$ with the $\Sigma$-dynamical sieve given by 
	$\pi_{\Sigma}^{-1}R_{u_{\bul}} $. In other words, 
	giving a covering sieve $i_{R_{\bul}}: R_{u_{\bul}} \hookrightarrow (U,u_{\bul})$ in
	$\XQQ{X}{\Sigma}$
	is equivalent to giving
	a covering sieve
	$i: R \hookrightarrow \underline{U}$, 
	with a morphism 
	$$
 u'_{\bul}:A^{-1} R
	\rightarrow	R,
	$$
	such that the the following diagrams commute
	\begin{equation*}
			\begin{tikzcd}[row sep=2.6pc, column sep=2.6pc]
				A^{-1}R	\arrow[r, "u'_{\bul}"] \arrow[d, hook ,  "A^{-1}i"'] &  R \arrow[d, hook,  "i"] \\ 
				A^{-1}\underline{U}  \arrow[ r, "u_{\bul}"] & \underline{U}
			\end{tikzcd} 
	\end{equation*}
\end{cor}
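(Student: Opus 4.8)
The plan is to mimic the argument of Corollary \ref{coveringsievesXfcor}, replacing the equivalence \ref{equivalencelemmaXf} with its $\Sigma$-analogue \ref{sheavesonSigmasite}. The essential inverse of $\mbox{\boldmath $\pi_{\Sigma}^{*}$}$ constructed there, namely $(\mathcal{G}, \gamma_{\bul}) \mapsto (\mathcal{G}, \gamma_{\bul})^f = \ker(\alpha_0, \alpha_1)$, together with its presheaf-level version $\mbox{\boldmath $\pi_{\Sigma}^{-1}$}$ from \ref{results:Sigmasite}, 5), restricts to subobjects of representables, \textit{i.e.} to sieves. Concretely, I would establish two mutually inverse assignments on sieves: for every covering sieve $R_{u_{\bul}} \hookrightarrow \underline{(U, u_{\bul})}$ in $\XQQ{X}{\Sigma}$ the isomorphism $R_{u_{\bul}} \cong (\mbox{\boldmath $\pi_{\Sigma}^{-1}$} R_{u_{\bul}})^f$, and for every $\Sigma$-dynamical sieve $(R, u'_{\bul})$ the isomorphism $\mbox{\boldmath $\pi_{\Sigma}^{-1}$}[(R, u'_{\bul})^f] \cong (R, u'_{\bul})$.

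First I would observe that, by the definition of the induced topology and the left-exactness of $\pi_{\Sigma}^{-1}$ (exactly as in \ref{defsievesonXf}), a monomorphism $R_{u_{\bul}} \hookrightarrow \underline{(U, u_{\bul})}$ is a covering sieve in $\XQQ{X}{\Sigma}$ precisely when $\pi_{\Sigma}^{-1} R_{u_{\bul}} \hookrightarrow \pi_{\Sigma}^{-1}\underline{(U, u_{\bul})} = \underline{U}$ is a covering sieve on $U$ in $X$. Since $\mbox{\boldmath $\pi_{\Sigma}^{-1}$}\underline{(U,u_{\bul})} = (\underline{U}, \underline{u}_{\bul})$ carries the tautological $\Sigma$-action induced by $u_{\bul}$ (cf. \ref{results:Sigmasite}, 1), applied to the representable, in analogy with \ref{example1cap1}), the functor $\mbox{\boldmath $\pi_{\Sigma}^{-1}$}$ sends $R_{u_{\bul}}$ to a subpresheaf $R \hookrightarrow \underline{U}$ equipped with a $\Sigma$-action. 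The key point — which is what must genuinely be checked — is that a subpresheaf $R \subseteq \underline{U}$ inherits such an action if and only if the defining map $u_{\bul}: A^{-1}\underline{U} \to \underline{U}$ factorizes through $R$, yielding $u'_{\bul}: A^{-1}R \to R$ and making the square in the statement commute; this is immediate from the fact that $A^{-1}$ commutes with coproducts and preserves monomorphisms.

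For the maximal sieve the two isomorphisms are tautological: $(\underline{U}, \underline{u}_{\bul})^f \cong \underline{(U, u_{\bul})}$ by the very definition of $(\cdot)^f$, while $\mbox{\boldmath $\pi_{\Sigma}^{-1}$} \underline{(U, u_{\bul})} \cong (\underline{U}, \underline{u}_{\bul})$ by \ref{results:Sigmasite}, 1)--2). For a general $\Sigma$-dynamical sieve the second isomorphism $\mbox{\boldmath $\pi_{\Sigma}^{-1}$}[(R, u'_{\bul})^f] \cong (R, u'_{\bul})$ is obtained from the projection computation of \ref{results:Sigmasite}, 5), exactly as in the proof of \ref{sheavesonSigmasite}, now carried out with the presheaf $R$ in place of $\mathcal{G}$. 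The first isomorphism $R_{u_{\bul}} \cong (\mbox{\boldmath $\pi_{\Sigma}^{-1}$} R_{u_{\bul}})^f$ follows from the coequalizer presentation \eqref{coeqobjectsSigmaf} of \ref{results:Sigmasite}, 3): the sieve generated by that family is covering, so by the sheaf property the relevant equalizer recovers $R_{u_{\bul}}$ evaluated on each object $(\Delta, \delta_{\bul})$, following verbatim the reasoning of \ref{coveringsievesXfcor}.

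The main obstacle I expect is bookkeeping rather than conceptual: one must verify that the kernel construction $(\cdot)^f$ of \ref{sheavesonSigmasite}, designed for sheaves, behaves correctly on sieves (which are in general only separated presheaves), and in particular that the $\Sigma$-stability condition ``$u_{\bul}$ factorizes through $R$'' matches exactly the semigroup-compatibility $\alpha_0 = \alpha_1$ defining $(R, u'_{\bul})^f$. Once this factorization characterization of $\Sigma$-stable subpresheaves is in hand, everything else is a transcription of the $\XX{f}$ argument of \ref{coveringsievesXfcor}, with the single generator $f$ replaced by the family $\{\sigma\}_{\sigma \in \Sigma}$ assembled into $A^{-1} = \coprod_{\sigma \in \Sigma}\sigma^{-1}$.
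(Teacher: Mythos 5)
Your proposal is correct and follows exactly the route the paper intends: the paper leaves this corollary without a written proof precisely because it is the verbatim transcription of \ref{coveringsievesXfcor} (and \ref{coveringsievesEfcor}) that you carry out, namely establishing the two mutually inverse isomorphisms $R_{u_{\bul}} \cong (\mbox{\boldmath $\pi_{\Sigma}^{-1}$}R_{u_{\bul}})^f$ and $\mbox{\boldmath $\pi_{\Sigma}^{-1}$}[(R,u'_{\bul})^f]\cong (R,u'_{\bul})$ via the maximal-sieve case, the coequalizer presentation \eqref{coeqobjectsSigmaf}, and the factorization characterization of $\Sigma$-stable subpresheaves. No gaps to report.
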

\begin{cor}
 Let $\XX{f}$ and ${\rm E}_f$ be the sites defined in \ref{defdynsite1} 
 and in  \ref{Epsdynsitedef}, respectively. Then, there are natural 
 morphisms of sites
 $$
 \XX{f} \overset{\sim}{\longrightarrow} \XQQ{X}{\N\,f}, 
 \quad \quad 
 {\rm E}_f \overset{\sim}{\longrightarrow} \XQQ{(X\times X)}{\N\,(\mathbbm{1}+f)},
 $$
 cf. \eqref{generatorepsteinsite} for the definition of $\mathbbm{1}+f$,
that induce an equivalence of sites.
\end{cor}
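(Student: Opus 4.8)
The plan is to establish both equivalences directly at the level of sites, by exhibiting an isomorphism of the underlying categories that is compatible with the two Grothendieck topologies; passage to the corresponding tòpoi is then automatic. The organising observation is that for the monoid $\Sigma=\N$ a right action is freely generated by the image of the generator $1$, so the \textit{a priori} infinite datum ``right action of $\N$'' collapses to a single map, which is precisely the defining morphism of an object of $\XX{f}$ (resp.\ of ${\rm E}_f$).

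For the first equivalence I would send $(U,u)\in ob(\XX{f})$ to $(U,u_{\bul})\in ob(\XQQ{X}{\N f})$, where $u_{\bul}=\coprod_{n\ge 0}u_n$ is determined recursively by $u_0=id_U$ and $u_{n+1}=u\circ f^{-1}u_n$; this is precisely the family $\{v^n\}$ of \eqref{actionNobjects}, and a direct induction shows it satisfies the semigroup relation of a right $\N$-action. Conversely, the relation $u_{\sigma\tau}=u_\tau(\tau^{-1}u_\sigma)$ forces every $u_n$ to be built from the value of the action on the generator, so $(U,u_{\bul})\mapsto (U,u_1)$ is a quasi-inverse. On arrows, a morphism in $\XQQ{X}{\N f}$ is a map $h$ commuting with $u_\sigma$ for \textit{all} $\sigma$; since each $u_n$ is obtained from $u_1$ by applying $f^{-1}$ and composing, the single commutation $hu=v\,f^{-1}h$ implies all the others (this is the content of the diagram in the proof of \ref{corleftadjointcap1}). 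Both sites carry the topology induced from $X$ through their target functors $t$ and $t_{\Sigma}$, cf.\ \ref{sitewithdyn}, and these agree under the identification; hence the functor is an isomorphism of sites, and \ref{equivalencelemmaXf} together with \ref{sheavesonSigmasite} records the induced equivalence of tòpoi.

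The second equivalence rests on the computation that $\mathbbm{1}+f$ is ``nilpotent of order two''. Since $f^{-1}\emptyset\cong\emptyset$, iterating \eqref{generatorepsteinsite} gives $((\mathbbm{1}+f)^2)^{-1}(U_0,U_1)=(\emptyset,\emptyset)$, so $(\mathbbm{1}+f)^n$ is the constant-$\emptyset$ endomorphism for every $n\ge 2$. Computing coproducts componentwise in $X\times X$ then yields $A^{-1}(U_0,U_1)=(U_0,\;U_1\coprod U_0\coprod f^{-1}U_0)$. A right $\N$-action on $(U_0,U_1)$ therefore has its degree-$0$ term forced to be the identity, carries its only genuine datum on the generator --- a map $U_0\coprod f^{-1}U_0\to U_1$, i.e.\ an $f$-compatibility in the sense of \ref{f-compatibility}, whose two restrictions \eqref{differentmapsEf} are exactly the pieces of the object of ${\rm E}_f$ --- and has every higher term equal to the unique map out of the initial object $(\emptyset,\emptyset)$. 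The semigroup relation is then automatic, each nontrivial instance equating two maps out of an initial object. Unwinding the morphism condition $hu_\sigma=v_\sigma(\sigma^{-1}h)$ at $\sigma=\mathbbm{1}+f$ reproduces precisely the pair of commuting squares \eqref{conditionmapEf}, the remaining $\sigma$ being automatic; so the underlying categories of ${\rm E}_f$ and $\XQQ{(X\times X)}{\N(\mathbbm{1}+f)}$ coincide, and matching of topologies follows as before from the target functors $T$ and $t_{\Sigma}$, cf.\ \ref{sitewithdyn}.

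The routine points --- functoriality and the inductive checks of the semigroup relations --- are painless. The genuinely delicate step is the second equivalence: one must be certain that the collapse $(\mathbbm{1}+f)^n=0$ for $n\ge 2$ is faithfully reflected on objects, morphisms and sieves, and that the infinitely many ``trivial'' factors indexed by $n\ge 2$ neither lose information nor create any. This is exactly where the disjointness axiom $(D_3)$ of \ref{intro:def1} and the identity $f^{-1}\emptyset\cong\emptyset$ are indispensable: they force the initial-object summands to contribute nothing, so that the identification $A^{-1}(U_0,U_1)=(U_0,\;U_1\coprod U_0\coprod f^{-1}U_0)$ is clean and the resulting $f$-compatibility is recovered without redundancy. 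I would flag this collapse as the crux and dispatch the topology-matching formally.
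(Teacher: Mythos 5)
Your proposal is correct and follows essentially the same route as the paper: both equivalences rest on the observation that a right $\N$-action is freely generated by its value on the generator, which for $\XX{f}$ recovers the single map $u:f^{-1}U\to U$ via the recursion of \eqref{actionNobjects}, and for ${\rm E}_f$ recovers the $f$-compatibility because $(\mathbbm{1}+f)^{n}$ collapses to the constant-$\emptyset$ endomorphism for $n\ge 2$ (a computation the paper leaves implicit in ``it follows easily'' but which you rightly identify as the crux). The only cosmetic difference is that you match the topologies directly through the target functors, whereas the paper deduces the equivalence of sites by comparing the explicit descriptions of the sheaf categories in \ref{equivalencelemmaXf}, \ref{sheavesonepsteinsite} and \ref{sheavesonSigmasite}; both dispatches are valid.
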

\begin{proof}
	In the proof of \ref{objectinitialXffact} we have seen that any $u: f^{-1} \to U$
	defines a unique $u^n: f^{-n} \to U$ such that $u^m(f^{-m}u_n)=u_{n+m}$.
	This implies easily that the categories $\XX{f}$ and $\XQQ{X}{\N\,f}$ are equivalent.
	Moreover, using \ref{equivalencelemmaXf} and \ref{sheavesonSigmasite}, 
	and applying the same argument to sheaves, we see that the functor induces an equivalence of sites.
	Finally, giving a pair of arrows $u_0:U_0 \to U_1$, $u_1:f^{-1}U_0 \to U_1$
	is equivalent to giving a unique arrow $u_{\bul}: (\mathbbm{1}+f)^{-1}U \to U$. 
	It follows easily that the categories ${\rm E}_f$ and 
	$\XQQ{(X\times X)}{\N\,(\mathbbm{1}+f)}$ are equivalent. Finally,
	applying \ref{sheavesonepsteinsite} and \ref{sheavesonSigmasite}
	we see that there is an induced equivalence between their respective sites.
\end{proof}
\begin{cor}\label{cor:topspace}
		Let $X_0$ be a topological space with a continuous
		 action of a monoid $\Sigma$.  
		 Let $\mathcal{X}$ denote the closure of the site $Ouv(X_0)$
		  of open sets of $X_0$
	 with respect to countable, cf.  \eqref{countability}, coproducts. 
		Then, Theorem \ref{intro:theorem2} holds for when we replace 
		$X$ by $X_0$. Therefore, giving a sheaf on
		 $\XQQ{\mathcal{X}}{\Sigma}$, is equivalent to giving a pair
		 $(\mathcal{F},\varphi_{\bul})$ such that $\mathcal{F}$
		 is a sheaf on $X_0$ and $\varphi_{\bul}$ is a morphism of sheaves 
		 on $X_0$.
\end{cor}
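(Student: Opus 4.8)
The plan is to deduce the statement from Theorem \ref{sheavesonSigmasite} by exhibiting $\mathcal{X}$ as a site satisfying property {\rm (D)}, \ref{intro:def1}, carrying an action of $\Sigma$, and whose tòpos is canonically $Sh(X_0)$. The point of passing from $Ouv(X_0)$ to its countable-coproduct closure $\mathcal{X}$ is precisely that $Ouv(X_0)$ itself fails {\rm (D)} (it is not closed for coproducts), so Theorem \ref{sheavesonSigmasite} cannot be applied to it directly; once we identify $Sh(\mathcal{X})$ with $Sh(X_0)$, the theorem applied to $\mathcal{X}$ yields the desired description.

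First I would describe $\mathcal{X}$ concretely, in the spirit of \ref{fact:prodvscoprod}: its objects are countable disjoint unions $\coprod_{\alpha}U_{\alpha}$ of open sets $U_{\alpha}\in Ouv(X_0)$, viewed as étale spaces over $X_0$, its arrows are the maps over $X_0$ (local homeomorphisms), and a covering sieve is generated by a jointly surjective family $\{V_i\hookrightarrow U\}$. Property {\rm (D)} is then checked directly: $\mathcal{X}$ is small and closed for countable coproducts by construction; finite limits exist because fibre products of local homeomorphisms are again local homeomorphisms, the initial object $\emptyset$ being the empty union; and coproducts commute with finite limits and are disjoint, since this already holds set-theoretically for disjoint unions of étale spaces, giving $\emptyset\overset{\sim}{\longrightarrow}U_{\alpha}\times_{U}U_{\beta}$ for $\alpha\neq\beta$.

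Next I would establish the equivalence $Sh(\mathcal{X})\overset{\sim}{\longrightarrow}Sh(X_0)$. The inclusion $\iota: Ouv(X_0)\hookrightarrow\mathcal{X}$ is continuous for the induced topology, and every object $\coprod_{\alpha}U_{\alpha}$ is covered by the open sets $U_{\alpha}\in Ouv(X_0)$ it contains; thus $Ouv(X_0)$ is a dense subsite and, by the Comparison Lemma, cf. \cite[III.4]{SGA4}, restriction along $\iota$ is an equivalence of tòpoi. Concretely, disjointness of coproducts forces the sheaf condition to read $\mathcal{F}(\coprod_{\alpha}U_{\alpha})=\prod_{\alpha}\mathcal{F}(U_{\alpha})$, so a sheaf on $\mathcal{X}$ is determined by, and uniquely reconstructed from, its restriction to $Ouv(X_0)$; this is the assertion that $Ouv(X_0)$ and the small étale site $\mathcal{X}$ have equivalent tòpoi. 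Continuity of the action then supplies for each $\sigma\in\Sigma$ the functor $\sigma^{-1}: Ouv(X_0)\to Ouv(X_0)$, $U\mapsto\sigma^{-1}(U)$, which extends to $\mathcal{X}$ by $\sigma^{-1}(\coprod_{\alpha}U_{\alpha})=\coprod_{\alpha}\sigma^{-1}(U_{\alpha})$, commuting with countable coproducts and finite limits; hence $\Phi$ defines an action of $\Sigma$ on the site $\mathcal{X}$ in the sense of \ref{sheavesonSigmasite}, and under the equivalence above the pullbacks $\sigma^{*}$ on $Sh(\mathcal{X})$ correspond to the usual inverse-image functors $\sigma^{*}$ on $Sh(X_0)$, so the two notions of ``$\Sigma$-action on a sheaf'' coincide.

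Finally, applying Theorem \ref{sheavesonSigmasite} to $\mathcal{X}$ gives $Sh(\XQQ{\mathcal{X}}{\Sigma})\overset{\sim}{\longrightarrow}\widetilde{\mathcal{X}}_{\Sigma}$, and transporting $\widetilde{\mathcal{X}}_{\Sigma}$ across $Sh(\mathcal{X})\overset{\sim}{\longrightarrow}Sh(X_0)$ produces exactly the category of pairs $(\mathcal{F},\varphi_{\bul})$ with $\mathcal{F}\in Sh(X_0)$ and $\varphi_{\bul}$ a right $\Sigma$-action as in \ref{def:Sigmatopos}, which is the claim. The main obstacle is the third step, the topos equivalence $Sh(\mathcal{X})\overset{\sim}{\longrightarrow}Sh(X_0)$: one must verify that the topology induced on $\mathcal{X}$ is precisely the one for which the formal disjoint-union objects become genuine coproducts of sheaves, equivalently that $\iota$ is a dense subsite, since it is only after this identification that Theorem \ref{sheavesonSigmasite}, available for $\mathcal{X}$ but not for $Ouv(X_0)$, can be read back on $X_0$.
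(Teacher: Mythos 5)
Your proposal is correct and follows essentially the same route as the paper: the paper's (one-line) proof consists precisely of your central observation that disjointness of coproducts forces $\mathcal{F}\left(\coprod_{\alpha}U_{\alpha}\right)=\prod_{\alpha}\mathcal{F}(U_{\alpha})$, so that $Sh(\mathcal{X})\simeq Sh(X_0)$ and Theorem \ref{sheavesonSigmasite} applied to $\mathcal{X}$ reads back on $X_0$. The additional verifications you supply (property {\rm (D)} for $\mathcal{X}$, extension of the $\Sigma$-action to $\mathcal{X}$ commuting with coproducts and finite limits, and the dense-subsite formulation) are left implicit in the paper but are exactly the details needed to make its argument complete.
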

\begin{proof}
The category of sheaves on $X_0$ is equivalent to the category
of sheaves on $X$ since for any sheaf $\mathcal{F} \in Sh(X_0)$, we have
$$
\mathcal{F}\left( \coprod_{\alpha \in A}U_{\alpha} \right)= \prod_{\alpha \in A}\mathcal{F}(U_{\alpha}).
$$
\end{proof}
The above result justifies the notation $\XQQ{X}{\Sigma}$ for $X$
a topological space.
\begin{cor}
	Let $X$ be a topological space and let $\Sigma=G$ be a group.
	Then, the category of sheaves on the  
	site $\XQQ{X}{\Sigma}=\XQQ{X}{G}$, defined in \ref{Sigmasite}, is equivalent to the category of $G$-equivariant 
	sheaves on $X$.
\end{cor}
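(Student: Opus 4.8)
The plan is to reduce everything to the main equivalence of this section, Theorem~\ref{sheavesonSigmasite}, which already identifies $Sh(\XQQ{X}{G})$ with the category $\widetilde{X}_{G}$ of pairs $(\mathcal{F},\varphi_{\bul})$ consisting of a sheaf $\mathcal{F}$ on $X$ together with a right monoid action $\varphi_{\bul}=\coprod_{\sigma}\varphi_{\sigma}$ of $G$, cf. \ref{def:Sigmatopos}. For $X$ a topological space this identification is legitimate by Corollary~\ref{cor:topspace}, after replacing $Ouv(X)$ by its closure $\mathcal{X}$ under countable coproducts. Thus it suffices to prove that, when $\Sigma=G$ is a \emph{group}, the category $\widetilde{X}_{G}$ coincides with the category of $G$-equivariant sheaves on $X$, i.e. that the monoid action $\varphi_{\bul}$ is automatically an action by \emph{isomorphisms} satisfying the usual cocycle condition.

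First I would verify that each structural map $\varphi_{\sigma}\colon\sigma^{*}\mathcal{F}\to\mathcal{F}$ is invertible. Recall from \ref{def:Sigmatopos} the normalization $\varphi_{id_{G}}=id_{\mathcal{F}}$ and the semigroup identity $\varphi_{\sigma\tau}=\varphi_{\tau}\circ(\tau^{*}\varphi_{\sigma})$, together with the compatibility $\tau^{*}\sigma^{*}=(\sigma\tau)^{*}$ used throughout. Specializing this identity to the two factorizations $id_{G}=\sigma\sigma^{-1}=\sigma^{-1}\sigma$ yields
\begin{equation*}
	id_{\mathcal{F}}=\varphi_{\sigma^{-1}}\circ(\sigma^{-1})^{*}\varphi_{\sigma},
	\qquad
	id_{\mathcal{F}}=\varphi_{\sigma}\circ\sigma^{*}\varphi_{\sigma^{-1}},
\end{equation*}
where the canonical identifications $(\sigma^{-1})^{*}\sigma^{*}\mathcal{F}\cong\mathcal{F}\cong\sigma^{*}(\sigma^{-1})^{*}\mathcal{F}$ have been used. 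The second identity already exhibits $s_{\sigma}\coloneqq\sigma^{*}\varphi_{\sigma^{-1}}\colon\mathcal{F}\to\sigma^{*}\mathcal{F}$ as a right inverse of $\varphi_{\sigma}$. Applying the functor $\sigma^{*}$ to the first identity, and using $\sigma^{*}(\sigma^{-1})^{*}=id^{*}$, gives $s_{\sigma}\circ\varphi_{\sigma}=id_{\sigma^{*}\mathcal{F}}$, so that $s_{\sigma}$ is a two-sided inverse and $\varphi_{\sigma}$ is an isomorphism with $\varphi_{\sigma}^{-1}=\sigma^{*}\varphi_{\sigma^{-1}}$.

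With invertibility in hand, the matching of the two categories is formal. An object of $\widetilde{X}_{G}$ is now a sheaf $\mathcal{F}$ equipped with isomorphisms $\varphi_{\sigma}\colon\sigma^{*}\mathcal{F}\xrightarrow{\sim}\mathcal{F}$ normalized by $\varphi_{id_{G}}=id_{\mathcal{F}}$ and satisfying the cocycle relation $\varphi_{\sigma\tau}=\varphi_{\tau}\circ(\tau^{*}\varphi_{\sigma})$ --- precisely the datum of a $G$-equivariant sheaf, cf. the descent formulation recalled in the Introduction (the two conventions differing at most by passage to inverses, which is harmless once the maps are isomorphisms). Likewise, the arrows of $\widetilde{X}_{G}$, namely the sheaf morphisms $\theta$ for which each square \eqref{arrows:Sigmatopos} commutes, are exactly the $G$-equivariant morphisms. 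Hence the evident functor, identity on underlying sheaves, is an isomorphism of categories between $\widetilde{X}_{G}$ and the category of $G$-equivariant sheaves on $X$, and composing with Theorem~\ref{sheavesonSigmasite} furnishes the desired equivalence $Sh(\XQQ{X}{G})\simeq\{G\text{-equivariant sheaves on }X\}$.

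I expect the only genuinely delicate point to be the bookkeeping of the pullback composition order in the invertibility argument: one must keep track of the canonical isomorphisms $(\sigma\tau)^{*}\cong\tau^{*}\sigma^{*}$ and verify that applying $\sigma^{*}$ to the first relation really does convert a right inverse into a left inverse. Everything else --- the identification of the normalized cocycle data with equivariant structures and of the commuting squares with equivariant morphisms --- is a direct unwinding of \ref{def:Sigmatopos} against the standard definition.
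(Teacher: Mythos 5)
Your proof is correct and follows essentially the same route as the paper: both arguments hinge on specializing the semigroup identity to the factorizations $id_G=\sigma\sigma^{-1}=\sigma^{-1}\sigma$ to exhibit $\sigma^{*}\varphi_{\sigma^{-1}}$ as a two-sided inverse of $\varphi_{\sigma}$, and then identifying the resulting normalized cocycle of isomorphisms with an equivariant structure. The only cosmetic difference is in the last step, where the paper passes through Deligne's trick of choosing isomorphisms $\eta_{\sigma}\colon\mathcal{F}\to\sigma^{*}\mathcal{F}$ to produce automorphisms $\mathcal{A}_{\sigma}$ of $\mathcal{F}$, whereas you stay with the descent-datum formulation of equivariance directly.
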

\begin{proof}
	If every $\sigma \in \Sigma$ is invertible, so is its image through $\Phi$,
	\textit{i.e.} $\sigma \in Aut(X)$ in our notation. 
	Therefore, to the group action on 
	the site $X$ corresponds a group action on its Tòpos: let 
	$(\mathcal{F}, \varphi_{\bul} )$ be as in \ref{def:Sigmatopos},
	and let us show that the right action $\varphi_{\bul}$ is an action.
	In fact, it is evident from the definition that $$\psi_{\sigma}\coloneqq\sigma^*(\varphi_{\sigma^{-1}}),$$ 
	is a right inverse of $\varphi_{\sigma}$. Applying 
	$(\sigma^{-1})^{*}$ to the resulting diagram, cf. Theorem \ref{intro:theorem2}, 
	and using that $(\sigma^{-1})^{*}\sigma^*=(\sigma \sigma^{-1})^*=id$, 
	we get that $\psi_{\sigma^{-1}}=(\sigma^{-1})^*\varphi_{\sigma}$ is a left inverse of 
	$\varphi_{\sigma^{-1}}$.
	To conclude the proof, we use the usual strategy, cf. \cite[ 6.1.2.b)]{MR0498552}: 
	we can choose for each $\sigma \in \Sigma$ an isomorphism
	 $\eta_{\sigma}: \mathcal{F} \overset{\sim}{\longrightarrow} \sigma^*\mathcal{F}$ such that 
	 the resulting morphisms 
	 $$
	 \mathcal{A}_{\sigma}\coloneqq\varphi_{\sigma}\,\eta_{\sigma}:  \mathcal{F} \longrightarrow \mathcal{F},
	 $$
	 satisfy the (right) group action axiom, \textit{i.e.} $\mathcal{A}_{\tau}\mathcal{A}_{\sigma}=\mathcal{A}_{\sigma \tau}$.
\end{proof}
\chapter{Dynamical Ext functors}
\section{Dynamical Hom functors} 
Recall that in view of Lemma \ref{sheavesonepsteinsite} we can identify a sheaf on 
${{\rm E}_f}$ with a pair $(\mathcal{F}_{\bul}, \varphi_{\bul})$. 
Let us simplify notation by writing $\mathcal{F}_{\bul}$ instead of
 $(\mathcal{F}_{\bul}, \varphi_{\bul})$:
the action $\varphi_{\bul}$ is still to be considered part of the data but it
will be implicit when there is no room for confusion. For example, we write 
$\gamma_{\bul}$ for the action on $\mathcal{G}_{\bul}$, $\varepsilon_{\bul}$ 
for the action on $\mathcal{E}_{\bul}$, etc.\\
Let $\mathcal{F}_{\bul},\mathcal{G}_{\bul}$ be two sheaves on ${{\rm E}_f}$ 
and note that, by definition, the set of morphisms in $Sh({{\rm E}_f})$
$$
{\rm\mathbb{H}om}(\mathcal{F}_{\bul}, \mathcal{G}_{\bul})=
\{ \theta_{\bul}: \mathcal{F}_{\bul} \to \mathcal{G}_{\bul} \in Sh({{\rm E}_f}) \}
$$
can be described as the equalizer of the following diagram
\begin{equation}\label{HomequalizerEf}
	\begin{tikzcd}[row sep=1.6pc, column sep=1.8pc]
		{\rm Hom}(\mathcal{F}_0,\mathcal{G}_0)\times {\rm Hom}(\mathcal{F}_1,\mathcal{G}_1) 
		\arrow[r, shift left, "s"] \arrow[r, shift right, "t"'] & 
		{\rm Hom}(\mathcal{F}_0,\mathcal{G}_1)\times {\rm Hom}(f^*\mathcal{F}_0,\mathcal{G}_1).
	\end{tikzcd}
\end{equation}
where the two arrows assign to $(\theta_0, \theta_1)$
the two maps obtained in diagram \eqref{arrowsEf}, \textit{i.e.}
\begin{equation}\label{HomequalizerEfmaps}
\begin{aligned}
&s(\theta_0, \theta_1)= (\theta_1\circ \varphi_0,\theta_1\circ \varphi_1 ) \\
&t(\theta_0, \theta_1)=(\gamma_0\circ \theta_0, \gamma_1\circ f^*\theta_0)
\end{aligned}	
\end{equation}
It is evident from the definition that the functor 
$$
{\rm\mathbb{H}om}(\mathcal{F}_{\bul}, -) : Sh({{\rm E}_f}) \rightarrow {\rm Set}
$$
is a left exact functor.\\
Recall from Corollary \ref{F0=F1} that if $\mathcal{F},\mathcal{G}$ are sheaves on 
$\XX{f}$ with actions given by $\varphi,\gamma$, then we have 
\begin{equation}\label{HomequalizerX/f}
	\begin{tikzcd}[row sep=1.6pc, column sep=1.8pc]
	{\rm\mathbb{H}om}(p^*\mathcal{F},p^* \mathcal{G})= 
	\ker\Big(  	{\rm Hom}(\mathcal{F},\mathcal{G})
		\arrow[r, shift left, "s"] \arrow[r, shift right, "t"'] & 
	{\rm Hom}(f^*\mathcal{F},\mathcal{G}) \Big).
	\end{tikzcd}
\end{equation}
where the maps here are simplified: 
$s(\theta)=\theta\circ \varphi$, $t(\theta)=\gamma\circ f^*\theta$.\\
It is natural to ask whether it is possible to find a sub-functor of 
$	{\rm\mathbb{H}om}(\mathcal{F}_{\bul}, -)$  which 
can be ``controlled'' by the $0$-th piece of the sheaves involved, as in \eqref{HomequalizerX/f},
and keeping an adequate level of generality as in \eqref{HomequalizerEf}.
The good choice in most of our applications will be the following: we fix $\mathcal{F} \in Sh(\XX{f})$ and consider
$$
{\rm\mathbb{H}om}(p^*\mathcal{F}, -): Sh({{\rm E}_f}) \to {\bf Set}.
$$
 Note that the above functor maps a sheaf $\mathcal{G}_{\bul}$ on ${{\rm E}_f}$ 
 to the equalizer of the following diagram
   \begin{equation}\label{Homequalizerfinal}
 	\begin{tikzcd}[row sep=1.6pc, column sep=1.9pc]
 	 {\rm Hom}(\mathcal{F},\mathcal{G}_0) \arrow[r, shift left, "s"] \arrow[r, shift right, "t"']
 		&  {\rm Hom}(f^*\mathcal{F},\mathcal{G}_1).
 	\end{tikzcd}
 \end{equation}
where the maps $s,t$ behave exactly as in \eqref{HomequalizerEfmaps}, 
keeping in mind that in this case $\theta_1$ is determined by $\theta_0$:
\begin{equation}\label{mapsintheapplication}
	s(\theta)=\gamma_0 \theta \varphi; \quad t(\theta)= \gamma\circ f^*\theta.
\end{equation}
In fact, we have $\theta_1=\gamma_0 \theta_0$, as explained by the following diagram
 \begin{equation}\label{Homdiagramfinal}
	\begin{tikzcd}[row sep=2.6pc, column sep=2.6pc]
		f^*(p^*\mathcal{F})_0 \arrow[r, "\varphi"]
		\arrow[d," f^*\theta_0"'] &
	(p^*\mathcal{F})_1 \arrow[d, "\theta_1"] \arrow[r,equal] & (p^*\mathcal{F})_0 \arrow[d,  "\theta_0"]\\
	f^*\mathcal{G}_0 \arrow[r, "\gamma_1"] &
		\mathcal{G}_1 & \mathcal{G}_0 \arrow[l, "\gamma_0"']
	\end{tikzcd}
\end{equation}
\section{Dynamical extensions}
We restrict our attention to the category of sheaves on ${{\rm E}_f}$ with values in a fixed abelian category 
$\mathfrak{C}$ (= Groups, R-modules, etc.), denoted by $Ab({{\rm E}_f})$. 
The aim of this section is to study the right derived
functors $R^i{\rm\mathbb{H}om}(\mathcal{F}_{\bul},-)$ which are denoted as usual 
by ${\rm\mathbb{E}xt^i}(\mathcal{F}_{\bul},-)$, $i\geq 1$. 
Their existence
is guaranteed from the existence of injective resolutions,
which generalizes the Godement resolution for sheaves on topological spaces, cf. \cite{enoughinjective}:
\begin{fact}\label{enoughinj}
	 The category of 
	abelian sheaves on a site has enough injectives. 
\end{fact}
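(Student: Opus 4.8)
The plan is to reduce the statement to a general structural theorem about Grothendieck abelian categories. By Theorem \ref{sheavesonepsteinsite} the category $Ab({\rm E}_f)$ is equivalent to the category of abelian-group objects in $\widetilde{\mathit{E}}_f$, and the same formalism applies verbatim to $\XX{f}$ and to $\XQQ{X}{\Sigma}$; in every case it therefore suffices to prove the assertion for the category $Ab(Y)$ of abelian sheaves on an arbitrary small site $(Y, J_Y)$. First I would recall the classical fact, cf. \cite{SGA4}, that a cocomplete abelian category possessing a generating set and in which filtered colimits are exact (axiom ${\rm AB}5$) has enough injectives; the entire proof then consists in verifying that $Ab(Y)$ satisfies these hypotheses, the explicit Godement-type resolution being the concrete incarnation of the same principle.

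Next I would check the three axioms. That $Ab(Y)$ is abelian and cocomplete is standard: kernels and products are computed objectwise, a limit of sheaves being again a sheaf, while cokernels and arbitrary colimits are obtained by applying the associated sheaf functor $a$, cf. \eqref{sheaficationofpreheaf}, to the corresponding presheaf construction. For ${\rm AB}5$ the key input is the exactness of $a$: since $a$ is a left adjoint it is right exact, and the plus-construction computing it is a \emph{filtered} colimit over covering sieves, which commutes with finite limits, so $a$ is also left exact. Filtered colimits of presheaves are computed objectwise in $Ab$ and are thus exact; composing with the exact functor $a$ shows that filtered colimits in $Ab(Y)$ are exact, which is precisely ${\rm AB}5$.

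It then remains to exhibit a generating set, for which I would take the sheafified representables $\{ a\underline{U} \}_{U \in ob(Y)}$, a set because $Y$ is small. The adjunction between $a$ and the inclusion, combined with the Yoneda lemma, gives a natural identification ${\rm Hom}_{Ab(Y)}(a\underline{U}, \mathcal{F}) \cong \mathcal{F}(U)$, so that a section over $U$ is the same datum as a map out of $a\underline{U}$. Consequently any proper subsheaf $\mathcal{G} \subsetneq \mathcal{F}$ is separated from $\mathcal{F}$ by some $a\underline{U}$, whence $\coprod_{U} a\underline{U}$ is a generator; invoking the cited theorem concludes the argument.

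The main obstacle — and the only point demanding genuine care — is the exactness of sheafification underlying ${\rm AB}5$: one must confirm that the filtered colimit defining $a$ really does commute with the finite limits involved, which rests on the fact that the covering sieves of each object form a cofiltered system. Where the site has enough points this step, and indeed the entire existence of injectives, admits the more concrete Godement description, embedding $\mathcal{F}$ into the product $\prod_p p_* p^{*}\mathcal{F}$ of its stalks followed by objectwise injective hulls; I would mention this as the classical version that the present argument subsumes, but the Grothendieck-category route above is preferable precisely because it requires no hypothesis on the existence of points of the topos.
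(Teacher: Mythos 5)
The paper does not actually prove this statement: it is recorded as a \textbf{Fact} with a pointer to the literature (\cite{enoughinjective}) and a remark that the construction generalizes the Godement resolution. Your proposal supplies the standard proof that the citation stands for, namely Grothendieck's Tohoku theorem that a cocomplete abelian category with a generator satisfying ${\rm AB}5$ has enough injectives, together with the verification that $Ab(Y)$ is such a category for any small site $Y$. The reduction to an arbitrary small site, the exactness of the associated sheaf functor $a$ (left exact because the plus-construction is a filtered colimit over the cofiltered system of covering sieves, right exact as a left adjoint), and the deduction of ${\rm AB}5$ from objectwise filtered colimits of presheaves are all correct, and this route is indeed preferable to the Godement/stalk argument since it needs no points. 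One detail should be repaired: $a\underline{U}$ is a sheaf of \emph{sets}, not an object of $Ab(Y)$, so ${\rm Hom}_{Ab(Y)}(a\underline{U},\mathcal{F})$ is not literally defined. The generating family should be the free abelian sheaves $\Z_U \coloneqq a\bigl(\Z^{(\underline{U})}\bigr)$ on the representables; one then has ${\rm Hom}_{Ab(Y)}(\Z_U,\mathcal{F})\cong \mathcal{F}(U)$ by the composite of the free--forgetful and sheafification adjunctions with Yoneda, and $\bigoplus_{U}\Z_U$ is a generator. With that substitution the argument is complete and is the standard proof the paper implicitly invokes.
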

First, we establish some useful notation.
\begin{notation}
	We use the index $n=0,1$ to enumerate the $\mathcal{F}_{n}$-piece of a sheaf
	$\mathcal{F}_{\bul} \in Sh({{\rm E}_f})$.
	On the other hand, we use the index $g=0,1$ to enumerate its actions
	$\varphi_g$. Moreover, we abuse notation by writing $``g"$ also for the corresponding
	maps $f^g$ appearing in the domain of $\varphi_g$, \textit{i.e.} $(f^g)^*\mathcal{F}_{0}$. 
	In this way, the index $g=0$ corresponds to map $0\coloneqq id_X$, while the index $g=1$ corresponds to the map
	$1\coloneqq f$. 
\end{notation}

Recall that in an abelian category it is always possible to take 
the point of view of extensions to study the derived functors ${\rm\mathbb{E}xt^i}(\mathcal{F}_{\bul}, -)$, cf. Appendix A.
\begin{factdefinition}
	The set ${\rm\mathbb{E}xt^i}(\mathcal{F}_{\bul}, \mathcal{G}_{\bul})$
	is the set of isomorphism classes
	of $i$-extensions in $Ab({{\rm E}_f})$, 
	\textit{i.e} exact sequences in $Ab({{\rm E}_f})$ of the form
	\begin{equation}\label{exti}
		\begin{tikzcd}[row sep=1.6pc, column sep=1.4pc]
			\xi_{\bul}: & 0 \arrow[r] &  \mathcal{G}_{\bul} \arrow[r, "e_{\bul}^i"] &
			\mathcal{E}^i_{\bul} \arrow[r,  "e_{\bul}^{i-1}"] & \dots \arrow[r, "e_{\bul}^1" ]&
			\mathcal{E}^1_{\bul} \arrow[r,  "e_{\bul}^0"]& \mathcal{F}_{\bul} \arrow[r] &  0
		\end{tikzcd}
	\end{equation}
	Observe that an $i$-extension in $Ab({{\rm E}_f})$ consists of the following data:
	\begin{enumerate}
		\item For $n =0,1$, an $i$-extension
		$\xi_{n}$ of $\mathcal{F}_n$ by $\mathcal{G}_n$,
		\textit{i.e} an exact sequence in $Ab({X})$ of the form
		\begin{equation}\label{locexti}
			\begin{tikzcd}[row sep=1.6pc, column sep=1.4pc]
				\xi_{n}: & 0 \arrow[r] &  \mathcal{G}_{n} \arrow[r, "e_{n}^i"] &
				\mathcal{E}^i_{n} \arrow[r,  "e_{n}^{i-1}"] & \dots \arrow[r, "e_{n}^1" ]&
				\mathcal{E}^1_{n} \arrow[r,  "e_{n}^0"]& \mathcal{F}_{n} \arrow[r] &  0,
			\end{tikzcd}
		\end{equation}
		\item For $g=0,1$, a chain map
		\begin{equation}\label{chainmap}
			\epsilon_g^{\bul}: g^*\xi_0 \to \xi_1,
		\end{equation}
		which agrees with the actions of $\mathcal{F}_{\bul}, \mathcal{G}_{\bul}$
		on the sides, such that each resulting square commutes.
	\end{enumerate}
	Two $i$-extensions $\xi_{\bul}, \xi'_{\bul}$
	are said to be equivalent if there are chain maps
	$\xi_n \to \xi'_n$, for $n=0,1$, which consist of the identity at
	$\mathcal{F}_n, \mathcal{G}_n$, such that the resulting cubes obtained 
	applying the chain maps \eqref{chainmap} commute.\\
	Moreover, there exists a structure of abelian group on the set of equivalence
	classes of $i$-extensions on $Ab({{\rm E}_f})$ given by their Baer sum.
\end{factdefinition}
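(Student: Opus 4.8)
The plan is to split the statement into its two logically independent halves: first, the purely homological identification of the derived functor $\mathbb{E}{\rm xt}^{i}(\mathcal{F}_{\bul},-)$ with Yoneda's set of equivalence classes of $i$-fold extensions under Baer sum; and second, the concrete unpacking of what an $i$-extension in $Ab({{\rm E}_f})$ actually is, using the description of $Sh({{\rm E}_f})=\widetilde{\mathit{E}}_f$ furnished by Theorem \ref{sheavesonepsteinsite}. For the first half I would simply invoke the general theory recalled in Appendix A: since $Ab({{\rm E}_f})$ is an abelian category with enough injectives by Fact \ref{enoughinj}, the right derived functors $R^{i}\mathbb{H}{\rm om}(\mathcal{F}_{\bul},-)$ exist, and Yoneda's theorem identifies $R^{i}\mathbb{H}{\rm om}(\mathcal{F}_{\bul},\mathcal{G}_{\bul})$ with the group of equivalence classes of $i$-extensions of $\mathcal{F}_{\bul}$ by $\mathcal{G}_{\bul}$. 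This part is entirely formal and uses nothing special about ${{\rm E}_f}$; it is the content of the ``Fact'' and I would cite it.

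For the second half --- the observational ``Definition'' part --- the key tool is the exactness and conservativity of the forgetful functor $\Phi: Ab({{\rm E}_f})=\widetilde{\mathit{E}}_f \to Ab(X)\times Ab(X)$, $(\mathcal{F}_{\bul},\varphi_{\bul})\mapsto(\mathcal{F}_0,\mathcal{F}_1)$. First I would check that kernels and cokernels in $\widetilde{\mathit{E}}_f$ are computed componentwise: given a morphism $(\theta_0,\theta_1)$, the pair $(\ker\theta_0,\ker\theta_1)$ (resp.\ $({\rm coker}\,\theta_0,{\rm coker}\,\theta_1)$) inherits a canonical action, because $f^{*}$ is exact --- left exact by the standing hypothesis and right exact as a left adjoint --- and because the finite biproduct $\mathcal{F}_0\oplus f^{*}\mathcal{F}_0$ is an exact functor of $\mathcal{F}_0$. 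Hence $\Phi$ is exact, and being visibly conservative, a sequence in $Ab({{\rm E}_f})$ is exact if and only if both component sequences $\xi_0$ (of the $0$-pieces) and $\xi_1$ (of the $1$-pieces) are exact in $Ab(X)$. This produces the two $i$-extensions $\xi_0,\xi_1$ of item (1).

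It then remains to interpret the action data. On each term $\mathcal{E}^{k}_{\bul}$ the action is a pair $(\varphi^{k}_0,\varphi^{k}_1)$ with $\varphi^{k}_0:\mathcal{E}^{k}_0\to\mathcal{E}^{k}_1$ and $\varphi^{k}_1:f^{*}\mathcal{E}^{k}_0\to\mathcal{E}^{k}_1$, and the requirement that the differentials $e^{k}_{\bul}$ be morphisms in $\widetilde{\mathit{E}}_f$ is exactly the commutativity of the two ladders \eqref{arrowsEf} at each stage. Reading off the collection $\{\varphi^{k}_g\}_k$ for fixed $g$, and noting that $g=0$ is the identity pullback while $g=1$ is $f^{*}$ (exact, so $f^{*}\xi_0$ is again an $i$-extension), this datum is precisely a chain map $\epsilon^{\bul}_g: g^{*}\xi_0\to\xi_1$ agreeing with the actions $(\varphi_g,\gamma_g)$ of $\mathcal{F}_{\bul},\mathcal{G}_{\bul}$ at the two ends, giving item (2). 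The equivalence relation and the Baer sum likewise transport componentwise through $\Phi$, since both are defined by morphisms of extensions and $\Phi$ is exact and conservative.

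The only point demanding genuine care --- and hence the main obstacle --- is the verification that exactness in $Ab({{\rm E}_f})$ is componentwise, i.e.\ that the componentwise (co)kernels really carry a well-defined action rendering $\Phi$ exact. Once the exactness of $f^{*}$ and of the biproduct functor $\mathcal{F}_0\mapsto\mathcal{F}_0\oplus f^{*}\mathcal{F}_0$ is secured, every remaining assertion is bookkeeping with the two commuting ladders; I would therefore devote the bulk of the write-up to that exactness check and treat the chain-map translation and the Baer sum as routine consequences.
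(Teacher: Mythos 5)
Your proposal is correct and follows essentially the same route the paper takes: the paper states this as a Fact/Definition without a formal proof, relying on Appendix A (Yoneda extensions in an abelian category with enough injectives, via Fact \ref{enoughinj}) for the first half and on the exactness of $f^{*}$ — noted in the Remark immediately following the statement — for the componentwise unpacking. Your explicit verification that the forgetful functor to $Ab(X)\times Ab(X)$ is exact and conservative is precisely the detail the paper leaves implicit, and it is the right thing to check.
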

\begin{remark}
	Recall that $f^*$ is an exact functor, hence \eqref{chainmap}
	makes sense as a map of extensions, cf. \cite{inverseimage}.
\end{remark}
\begin{example}\label{1-ext}
	{\bf [The group $\mathbb{E}{\rm xt}^1(\mathcal{F}_{\bul}, \mathcal{G} _{\bul})$]}\\
	Given two $1$-extensions
	\begin{equation*}
		\begin{tikzcd}[row sep=1.6pc, column sep=1.4pc]
			\xi_{\bul}: & 0 \arrow[r] &  \mathcal{G}_{\bul} \arrow[r, "e_{\bul}^1"] &
			\mathcal{E}_{\bul}  \arrow[r,  "e_{\bul}^0"]& \mathcal{F}_{\bul} \arrow[r]  &  0\\
			\xi'_{\bul}: & 0 \arrow[r] &  \mathcal{G}_{\bul} \arrow[r, "(e'_{\bul})^1"] &
			\mathcal{E}'_{\bul}  \arrow[r,  "(e'_{\bul})^0"]& \mathcal{F}_{\bul} \arrow[r] &  0
		\end{tikzcd}
	\end{equation*}
	their Baer sum is
	\begin{equation*}
		\begin{tikzcd}[row sep=1.6pc, column sep=1.4pc]
		\xi''_{\bul}\coloneqq	\xi_{\bul}+\xi'_{\bul}: & 0 \arrow[r] &  \mathcal{G}_{\bul} \arrow[r, "(e''_{\bul})^1"] &
			\mathcal{S}_{\bul}  \arrow[r,  "(e''_{\bul})^0"]& \mathcal{F}_{\bul} \arrow[r]  &  0
		\end{tikzcd}
	\end{equation*}
	where $\mathcal{S}_{\bul}$ is the sheaf
	locally given by the usual Baer sum, \textit{i.e.}
	$$
	\mathcal{S}_n=(\mathcal{E}_n\times_{\mathcal{F}_n} \mathcal{E}'_n)/im(e_{n}^1\times (-(e'_{n})^1))
	$$
	for $n=0,1$, and having as actions the ones constructed
	from those of $\mathcal{E}_{\bul}, \mathcal{E}'_{\bul}$,
	in the natural way by
	noting that the (local) Baer
	sum commutes with the  functor $f^*$.\\
	It can also be proved that the Baer sum
	 is commutative and associative. It gives
	$\mathbb{E}{\rm xt}^1(\mathcal{F}_{\bul}, \mathcal{G} _{\bul})$
	a group structure with trivial element given by
	the class of extensions which are ``globally split".
	Recall that a ``globally split" extension is an extension that
	is ``locally split" \textit{i.e} for $n=0,1$, there exist maps
	$s_n: \mathcal{F}_n \to \mathcal{E}_n$ such that
	$e_n^0\circ s_n=id_{\mathcal{F}_n}$, with the property that
	the local sections $s_n$ glue together into a map $s_{\bul}: \mathcal{F}_{\bul} \to \mathcal{E}_{\bul}$ of sheaves in $Ab({{\rm E}_f})$.
	Note that we obtain the inverse element of $\xi_{\bul}$
	by replacing $e_{\bul}^0$ with $-e_{\bul}^0$.
\end{example}

Observe that the maps $s,t$ appearing in \eqref{HomequalizerEfmaps} are 
functorial, hence they have can be derived to obtain maps $s^i,t^i$ for $i\geq 1$:
\begin{equation}\label{ExtisequenceEf}
	\begin{tikzcd}[row sep=1.6pc, column sep=2.8pc]
\displaystyle	\prod\limits_{n =0,1}	{\rm  Ext^{i}}(\mathcal{F}_{n}, \mathcal{G}_{n})
		\arrow[r, shift left, "s^i"] \arrow[r, shift right, "t^i"'] &
\displaystyle	\prod\limits_{g =0,1}	{\rm  Ext^i}(g^*\mathcal{F}_0, \mathcal{G}_1).
	\end{tikzcd}
\end{equation}
In the following we give a detailed description of these maps for $i=1$.
\begin{example}\label{ex:dermapsExt1}
	For $g=0,1$, one map takes the local extensions
	\begin{equation}\label{ext1_n}
		\begin{tikzcd}[row sep=1.6pc, column sep=2.6pc]
			0 \arrow[r] & \mathcal{G}_n \arrow[r, "e_n^1"] & \mathcal{E}_n \arrow[r, "e_n^0"] & \mathcal{F}_n \arrow[r] & 0
		\end{tikzcd}
	\end{equation}
	for $n=0,1$, and maps it to the short exact sequence
	constructed by pulling back the maps $e_1^0, \varphi_g$,
	\textit{i.e.} we set
	$\mathcal{X}_g\coloneqq\mathcal{E}_1 \times_{\mathcal{F}_1} g^*\mathcal{F}_0 $, thus
	obtaining the following diagram with exact rows:
	\begin{equation}\label{pullback}
		\begin{tikzcd}[row sep={4em,between origins}, column sep={5.3em,between origins}]
			0 \arrow[r] & \mathcal{G}_1 \arrow[r, "e_1^1"] & 
			\mathcal{E}_1 \arrow[r, "e_1^0"] \arrow[dr, phantom, ""{name=U, below , draw=black}]{} & 
			\mathcal{F}_1 \arrow[r] & 0 \\
			0 \arrow[r] & \mathcal{G}_1 \arrow[u, dash, shift left=.1em] \arrow[u,  dash, shift right=.1em]  \arrow[r] & 
			\mathcal{X}_g \arrow[u] \arrow[r] & g^*\mathcal{F}_0 \arrow[u, "\varphi_g"'] \arrow[r] & 0
		\end{tikzcd}
	\end{equation}
	On the other hand, the second map assigns to the  \eqref{ext1_n}
	the short exact sequence constructed by pushing out the maps
	$g^*e_0^1, \gamma_g$, \textit{i.e.} we set
	$\mathcal{Y}_g\coloneqq\mathcal{G}_1\oplus_{g^*\mathcal{G}_0}  g^*\mathcal{E}_0$,
	thus obtaining the following diagram with exact rows: 
	\begin{equation}\label{pushout}
		\begin{tikzcd}[row sep={4em,between origins}, column sep={5.3em,between origins}]
			0 \arrow[r] & \mathcal{G}_1 \arrow[r] \arrow[dr, phantom, ""{name=U, below , draw=black}]{} & 
			\mathcal{Y}_g \arrow[r] & g^*\mathcal{F}_0 \arrow[r] & 0\\
			0 \arrow[r] & g^*\mathcal{G}_0 \arrow[u, "\gamma_g"] \arrow[r, "g^*e_0^1"'] &  
			g^*\mathcal{E}_0  \arrow[u] \arrow[r, "g^*e_0^0"'] & 
			g^*\mathcal{F}_0 \arrow[u, dash, shift left=.1em] \arrow[u,  dash, shift right=.1em] \arrow[r] & 0
		\end{tikzcd}
	\end{equation}
\end{example}

\begin{notation}
Let us denote by $K^1$ the equalizer of \eqref{ExtisequenceEf} in the case $i=1$
and by $C^0$ the coequalizer of \eqref{HomequalizerEf}.
\end{notation}
We can now state the main Lemma of this section:
\begin{lemma}\label{ext1_lemma}
	There exists a canonical short exact sequence
	\begin{equation}\label{ext1sequence}
		\begin{tikzcd}[row sep=1.8pc, column sep=2.6pc]
			0 \arrow[r] & C^0 \arrow[r] & \mathbb{E}{\rm xt}^1(\mathcal{F}_{\bul}, \mathcal{G}_{\bul})
			\arrow[r] & K^1 \arrow[r] & 0
		\end{tikzcd}
	\end{equation}
\end{lemma}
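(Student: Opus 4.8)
The plan is to exploit the explicit description of $i$-extensions in $Ab({\rm E}_f)$ recorded in the Fact/Definition preceding Example \ref{1-ext}: a class in $\mathbb{E}{\rm xt}^1(\mathcal{F}_{\bul},\mathcal{G}_{\bul})$ amounts to a pair of $1$-extensions $\xi_0,\xi_1$ (of $\mathcal{F}_n$ by $\mathcal{G}_n$ in $Ab(X)$, for $n=0,1$) together with chain maps $\epsilon_g^{\bul}:g^*\xi_0\to\xi_1$, $g=0,1$, compatible with the actions $\varphi_g,\gamma_g$. First I would introduce the forgetful map $\Phi:\mathbb{E}{\rm xt}^1(\mathcal{F}_{\bul},\mathcal{G}_{\bul})\to\prod_{n}{\rm Ext}^1(\mathcal{F}_n,\mathcal{G}_n)$, $\xi_{\bul}\mapsto(\xi_0,\xi_1)$, and note that it is additive for the Baer sum of Example \ref{1-ext}, since that sum is computed componentwise.

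The core of the argument is the classical equivalence between congruence of extensions and equality of pullback with pushout: for fixed morphisms $a:g^*\mathcal{G}_0\to\mathcal{G}_1$ and $b:g^*\mathcal{F}_0\to\mathcal{F}_1$, a chain map $g^*\xi_0\to\xi_1$ covering $(a,b)$ exists if and only if the pullback $b^*\xi_1$ and the pushout $a_*(g^*\xi_0)$ agree in ${\rm Ext}^1(g^*\mathcal{F}_0,\mathcal{G}_1)$. Applying this with $a=\gamma_g$, $b=\varphi_g$, and comparing with the explicit descriptions of the derived maps $s^1,t^1$ given in Example \ref{ex:dermapsExt1} (pullback along $\varphi_g$ versus pushout along $\gamma_g$, cf. \eqref{pullback}, \eqref{pushout}), the existence of the required chain maps $\epsilon_g^{\bul}$ for both $g=0,1$ is exactly the condition $s^1(\xi_0,\xi_1)=t^1(\xi_0,\xi_1)$. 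Hence $\Phi$ factors through $K^1$, and conversely any $(\xi_0,\xi_1)\in K^1$ can be completed to an extension $\xi_{\bul}$ by choosing such chain maps; this gives the surjection $\Phi:\mathbb{E}{\rm xt}^1(\mathcal{F}_{\bul},\mathcal{G}_{\bul})\twoheadrightarrow K^1$.

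It then remains to identify $\ker\Phi$ with $C^0$. An extension in $\ker\Phi$ is one for which both $\xi_0$ and $\xi_1$ split; fixing splittings $\mathcal{E}_n\cong\mathcal{G}_n\oplus\mathcal{F}_n$, a chain map $\epsilon_g$ covering $(\gamma_g,\varphi_g)$ is forced into the block form $\left(\begin{smallmatrix}\gamma_g & \psi_g\\ 0 & \varphi_g\end{smallmatrix}\right)$ with $\psi_g:g^*\mathcal{F}_0\to\mathcal{G}_1$ arbitrary, so the extension is recorded by a pair $(\psi_0,\psi_1)\in{\rm Hom}(\mathcal{F}_0,\mathcal{G}_1)\times{\rm Hom}(f^*\mathcal{F}_0,\mathcal{G}_1)$ --- precisely the target of $s,t$ in \eqref{HomequalizerEf}. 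I would then compute how changing the splittings, by $\rho_n:\mathcal{F}_n\to\mathcal{G}_n$, i.e. by conjugating the $\epsilon_g$ with the unipotent automorphisms $\left(\begin{smallmatrix}1 & \rho_n\\ 0 & 1\end{smallmatrix}\right)$, alters $(\psi_0,\psi_1)$; a direct matrix computation gives $\psi_g\mapsto\psi_g+\rho_1\varphi_g-\gamma_g(g^*\rho_0)$, which in the notation of \eqref{HomequalizerEfmaps} is the translate by $(s-t)(\rho_0,\rho_1)$. Thus two such pairs yield equivalent (in particular globally split) extensions exactly when they differ by an element of ${\rm im}(s-t)$, so that $\ker\Phi\cong{\rm coker}(s-t)=C^0$. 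Combining the injection $C^0\hookrightarrow\mathbb{E}{\rm xt}^1(\mathcal{F}_{\bul},\mathcal{G}_{\bul})$ so obtained with the surjection onto $K^1$ produces the exact sequence \eqref{ext1sequence}.

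The hard part will be the bookkeeping in this last step: one must check that the equivalence relation on extensions in $Ab({\rm E}_f)$ (the commuting cubes of the Fact/Definition), once restricted to split $\xi_0,\xi_1$, reduces \emph{exactly} to the change-of-splitting action computed above and to nothing more; equivalently, that giving a global splitting of $\xi_{\bul}$ is the same datum as exhibiting $(\psi_0,\psi_1)=(s-t)(\rho_0,\rho_1)$. The other point requiring care is the pullback/pushout congruence, which relies on each $g^*$ (in particular $f^*$) being exact, as recorded in the Remark after the Fact/Definition, so that $g^*\xi_0$ is again a genuine extension and the maps $s^1,t^1$ are well defined.
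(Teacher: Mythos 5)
Your proposal is correct and follows essentially the same route as the paper: the forgetful map to $\prod_n{\rm Ext}^1(\mathcal{F}_n,\mathcal{G}_n)$, surjectivity onto $K^1$ via the comparison of the pullback along $\varphi_g$ with the pushout along $\gamma_g$ (the paper re-derives this congruence with the diagram \eqref{parallelepiped} rather than quoting it), and identification of the kernel with $C^0$ by the cocycle/coboundary analysis of locally split extensions --- your block-matrix entry $\psi_g$ and its change-of-splitting rule are exactly the paper's $h_g=\epsilon_g(g^*s_0)-s_1\varphi_g$ and the computation \eqref{comp_2}. The point you flag as needing care is precisely the one the paper settles there, so no gap remains.
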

\begin{proof}
	Let us consider the forgetful map
	$\displaystyle \mathbb{E}{\rm xt}^1(\mathcal{F}_{\bul}, \mathcal{G}_{\bul})
	\to \prod_{n=0,1}{\rm Ext}^1(\mathcal{F}_{n}, \mathcal{G}_{n})$,
	taking \eqref{exti} to the product of \eqref{locexti}.
	We are going to show that its image is exactly $K^1$.
	Note that, putting together \eqref{chainmap}, \eqref{pullback} and \eqref{pushout},
	we obtain the following diagram
	\begin{equation}\label{parallelepiped}
		\begin{tikzcd}[row sep={2.5em,between origins}, column sep={2.8em,between origins}]
			& &  & & & & & & 0 \arrow[dll] & & & 0 \arrow[dll] \\
			& & & & & & \mathcal{G}_1 \arrow[dd, dash, shift left=.1em] 
			\arrow[dd,  dash, shift right=.1em]  \arrow[dll] & & & 
			g^*\mathcal{G}_0 \arrow[lll, "\gamma_g"] \arrow[dll] \arrow[dd, "\gamma_g"]  \\
			& & & & \mathcal{E}_1  \arrow[dll] & & & 
			g^*\mathcal{E}_0 \arrow[lll, "\epsilon_g", crossing over] \arrow[dll, crossing over] & 0 \arrow[dll]& & & 
			0 \arrow[dll] \\
			& & \mathcal{F}_1 \arrow[dll]& & & 
			g^*\mathcal{F}_0 \arrow[lll, "\varphi_g"] &  
			\mathcal{G}_1 \arrow[dll] \arrow[from=rrr, dash, shift left=.1em] \arrow[from=rrr, dash, shift right=.1em] & & & 
			\mathcal{G}_1  \arrow[dll] \\
			0 &  & & & \mathcal{X}_g \arrow[uu] \arrow[dll] & & & 
			\mathcal{Y}_g \arrow[from=uu, crossing over]\arrow[lll, dashrightarrow] \arrow[dll]\\
			& & g^*\mathcal{F}_0 \arrow[uu, "\varphi_g"] 
			\arrow[dll] \arrow[from=rrr, dash, shift left=.1em] \arrow[from=rrr, dash, shift right=.1em] & & & 
			g^*\mathcal{F}_0  \arrow[uu, equal, crossing over]   \arrow[dll] \\
			0 &  & &  0
		\end{tikzcd}
	\end{equation}
	for $g=0,1$. In order to prove the claim,
	it is sufficient (\textit{e.g.} by the Five Lemma)
	to show that the map $\epsilon_g$ we start with
	(\textit{i.e.} a map such that the top face of the diagram commutes)
	induces an arrow $\mathcal{Y}_g\dashrightarrow \mathcal{X}_g$
	that makes the bottom face commutative.\\
	Given $\epsilon_g$ there is defined, 
	by the universal property of the pull-back,
	a canonical map
	$g^*\mathcal{E}_0 \to \mathcal{X}_g$,
	while there is a well defined map $\mathcal{G}_1 \to \mathcal{X}_g$
	given by $e_1^1\times 0$.
	Hence we have a canonical map
	$\mathcal{G}_n \oplus g^*\mathcal{E}_m \to \mathcal{X}_g$,
	which factorizes through $\mathcal{Y}_g$ by a classic diagram chase.\\
	Conversely, given the dashed arrow of diagram \eqref{parallelepiped},
	with the property that the bottom face commutes,
	we obtain by composition a map $\epsilon_g$
	which makes the top face commutative.
	Hence, the map
	\begin{equation}\label{rightside}
		{\rm \mathbb{E}xt^1}(\mathcal{F}_{\bul}, \mathcal{G}_{\bul}) 
		\longrightarrow K^1
	\end{equation}
	is surjective.\\
	We claim finally that $C^0$ is exactly the kernel of \eqref{rightside}.
	Observe that the last one consists of isomorphism classes of
	 extensions having the property of being ``locally split"
	(cf. Example \ref{1-ext}) and hence it is characterized as follows:
	starting with local sections
	$s_n: \mathcal{F}_n \to \mathcal{E}_n$ for $n=0,1$,
	we are going to compute,
	for $g=0,1$
	the isomorphism classes of action
	maps $\epsilon_g: g^*\mathcal{E}_0 \to \mathcal{E}_1$ that fit
	into a map of extensions \eqref{chainmap},
	modulo the subgroup given by those maps for which
	there exists a global section
	$s'_{\bul}: \mathcal{F}_{\bul} \to \mathcal{E}_{\bul}$.\\
	The situation is perhaps clarified by the following diagram:
	\begin{equation}\label{locallysplit}
		\begin{tikzcd}[row sep={4em,between origins}, column sep={5.3em,between origins}]
			0 \arrow[r] & \mathcal{G}_1 \arrow[r,"e_1^1"] & \mathcal{E}_1 \arrow[r, "e_1^0"] & 
			\mathcal{F}_1 \arrow[r]  \arrow[l, bend right=50, "s_1"'] & 0 \\
			0 \arrow[r] & g^*\mathcal{G}_0  \arrow[r, "g^*e_0^1"]  \arrow[u, "\gamma_g"] & 
			g^*\mathcal{E}_0  \arrow[r, "g^*e_0^0"]  \arrow[u, dashrightarrow, "\epsilon_g"] & 
			g^*\mathcal{F}_0  \arrow[r] \arrow[l, bend right=50, "g^*s_0"'] \arrow[u, "\varphi_g"'] & 0
		\end{tikzcd}
	\end{equation}
	First we note that giving $\epsilon_g$
	is equivalent to giving $\epsilon_g (g^*s_0)$,
	since the map $\epsilon_g  (g^*e_0^1)$ is already determined
	by the commutativity of the left square of \eqref{locallysplit}
	(recall that the local section $s_0$, together with $e_0^1$,
	gives an isomorphism of $\mathcal{E}_0$ with
	$\mathcal{G}_0\oplus\mathcal{F}_0$). \\
	Moreover, since the map $s_1\varphi_g$ is already part of our data,
	we have that, modulo equivalence,
	the kernel of \eqref{rightside} is classified by the cocycle
	\begin{equation}\label{cocycle}
		h_g\coloneqq\epsilon_g (g^*s_0)- s_1\varphi_g.
	\end{equation}
	Note that $h_g$ may be viewed as an element of
	$Hom(g^*\mathcal{F}_0, \mathcal{G}_1)$, since
	\begin{equation}\label{comp_1}
		\begin{aligned}
			e_1^0h_g\,
			&= (e_1^0\epsilon_g)(g^*s_0)- (e_1^0s_1) \varphi_g= \\
			& = (\varphi_g (g^*e_0^0))(g^*s_0)- \varphi_g = \\
			& =                    \varphi_g-\varphi_g=0.
		\end{aligned}
	\end{equation}
	In order to conclude, we need to prove
	that the two equivalence
	relations coincide, \textit{i.e.} the
	 extension defined by $(h_g)_{g=0,1}$
	is trivial
	if and only if 
	each $h_g$ is of the form
	\begin{equation}\label{coboundary}
		h_g= e_n^1(\gamma_g\theta_m-\theta_n\varphi_g)
	\end{equation}
	for some maps
	$
	\theta_n: \mathcal{F}_{n}\to \mathcal{G}_{n},\, \theta_m: \mathcal{F}_{m}\to \mathcal{G}_{m}
	$,
	where in \eqref{coboundary} we are simplifying the notation
	by confusing $\theta_m$ with its image through $g^*$.\\
	Suppose first that there exists a global section $s'_{\bul}$ of $e_{\bul}^0$.
	Observe that, since $e_1^0s_1=e_1^0s'_1=id_{\mathcal{F}_1}$,
	the map $s_1-s'_1$ defines an element
	$\theta_1 \in Hom(\mathcal{F}_1,\mathcal{G}_1)$ and
	let $\theta_0$ be analogously defined.
	Consequently, noting that by hypothesis
	$h'_g\coloneqq\epsilon_g s'_0-s'_1 \varphi_g=0$, we can write
	\begin{equation}\label{comp_2}
		\begin{aligned}
			e_1^1(\gamma_g\theta_0-\theta_1\varphi_g)&
			=\epsilon_g (e_0^1\theta_0)- (e_1^1\theta_1)\varphi_g=\\
			&               =\epsilon_g (s_0-s'_0)- (s_1-s'_1)\varphi_g=h_g-h'_g=h_g
		\end{aligned}
	\end{equation}
\end{proof}
\begin{cor}\label{exti_cor}
	Let $\mathcal{F}_{\bul}, \mathcal{G}_{\bul}$
	be abelian sheaves on ${{\rm E}_f}$. There exists a spectral sequence converging to
	$\mathbb{E}{\rm xt}^{\bul}(\mathcal{F}_{\bul}, \mathcal{G}_{\bul})$
	which degenerates at the second sheet.
	In particular, for any $i\geq 0$ there is a short exact sequence:
	\begin{equation}\label{extisequence}
		\begin{tikzcd}[row sep=1.8pc, column sep=2.6pc]
			0 \arrow[r] & C^i \arrow[r] &
			\mathbb{E}{\rm xt}^{i+1}(\mathcal{F}_{\bul}, \mathcal{G}_{\bul})
			\arrow[r] & K^{i+1} \arrow[r] & 0
		\end{tikzcd}
	\end{equation}
	where
	\begin{equation*}
	\begin{tikzcd}[row sep=1.6pc, column sep=1.8pc]
			C^i\coloneqq \displaystyle	{\rm coker}\, \Bigg(	{\rm \prod\limits_{n =0,1} Ext^{i}}(\mathcal{F}_{n}, \mathcal{G}_{n})
			\arrow[r, shift left, "s^i"] \arrow[r, shift right, "t^i"'] &
		\displaystyle	{\rm \prod\limits_{g =0,1} Ext^i}(g^*\mathcal{F}_0, \mathcal{G}_1) \Bigg) 
		\end{tikzcd}
	\end{equation*}
	and
	\begin{equation*}
		\begin{tikzcd}[row sep=1.6pc, column sep=1.6pc]
		K^{i}\coloneqq	\displaystyle\ker\Bigg(	{\rm \prod\limits_{n =0,1} Ext^{i}}(\mathcal{F}_{n}, \mathcal{G}_{n})
			\arrow[r, shift left, "s^i"] \arrow[r, shift right, "t^i"'] &
		\displaystyle	{\rm \prod\limits_{g =0,1} Ext^{i}}(g^*\mathcal{F}_0, \mathcal{G}_1) \Bigg) 
		\end{tikzcd}
	\end{equation*}
\end{cor}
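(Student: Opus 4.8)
The plan is to realize the left-exact functor $\mathbb{H}{\rm om}(\mathcal{F}_{\bul},-)$ as an ordinary $\mathrm{Hom}$-functor out of an explicit two-term resolution of $\mathcal{F}_{\bul}$, and then to run the hyper-$\mathbb{E}{\rm xt}$ spectral sequence of that resolution. First I would record, for $n=0,1$, the evaluation functors $ev_n\colon Ab({{\rm E}_f})\to Ab(X)$, $\mathcal{G}_{\bul}\mapsto \mathcal{G}_n$, and observe that each admits an \emph{exact} left adjoint, namely the abelian analogues of the functors $B^n$ of \ref{resultsEpsteinsite}: concretely $L_1\mathcal{H}=(0,\mathcal{H})$ and $L_0\mathcal{H}=(\mathcal{H},\mathcal{H}\oplus f^*\mathcal{H})$ with structure maps $\iota_0,\iota_1$. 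Exactness of $L_0,L_1$ is checked piecewise using that $f^*$ is exact, whence each $ev_n$ preserves injectives, and the resulting derived adjunction yields $\mathbb{E}{\rm xt}^q_{{{\rm E}_f}}(L_n\mathcal{H},\mathcal{G}_{\bul})={\rm Ext}^q_X(\mathcal{H},\mathcal{G}_n)$.

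With these adjoints the source and target of the difference map in \eqref{HomequalizerEf}, \eqref{ExtisequenceEf} become corepresentable. Setting $P_0=L_0\mathcal{F}_0\oplus L_1\mathcal{F}_1$ and $P_1=L_1(\mathcal{F}_0\oplus f^*\mathcal{F}_0)$, adjunction identifies them with $\mathrm{Hom}(P_0,-)$ and $\mathrm{Hom}(P_1,-)$, and $s-t$ with $\mathrm{Hom}(\delta,-)$ for a canonical $\delta\colon P_1\to P_0$. The central computation I would carry out is that
\[
0\to P_1\overset{\delta}{\to}P_0\to \mathcal{F}_{\bul}\to 0
\]
is exact in $Ab({{\rm E}_f})$ with $\delta$ a monomorphism: the counit $P_0\to\mathcal{F}_{\bul}$ is $\mathrm{id}$ in position $0$ and $(\varphi_0,\varphi_1,\mathrm{id})$ in position $1$, and a direct kernel computation in position $1$ identifies $\ker$ with $(0,\mathcal{F}_0\oplus f^*\mathcal{F}_0)=P_1$. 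Thus $P_{\bul}=[P_1\hookrightarrow P_0]$ is a length-one resolution of $\mathcal{F}_{\bul}$ by the corepresenting objects.

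Next I would take an injective resolution $\mathcal{G}_{\bul}\to\mathcal{I}^{\bul}$, available by \ref{enoughinj}, and form the double complex $\mathrm{Hom}(P_{\bul},\mathcal{I}^{\bul})$, supported on the two columns $p=0,1$. Filtering in the resolution direction first: for fixed $q$, injectivity of $\mathcal{I}^q$ makes $\mathrm{Hom}(-,\mathcal{I}^q)$ exact, so applying it to the displayed short exact sequence gives $0\to\mathbb{H}{\rm om}(\mathcal{F}_{\bul},\mathcal{I}^q)\to\mathrm{Hom}(P_0,\mathcal{I}^q)\to\mathrm{Hom}(P_1,\mathcal{I}^q)\to 0$; hence the horizontal cohomology is $\mathbb{H}{\rm om}(\mathcal{F}_{\bul},\mathcal{I}^q)$ in degree $0$ and vanishes in degree $1$, so the totalization computes $\mathbb{E}{\rm xt}^{\bul}(\mathcal{F}_{\bul},\mathcal{G}_{\bul})$. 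Filtering in the other direction, the derived adjunction of the first paragraph identifies the $E_1$-page with the two-column complex \eqref{ExtisequenceEf}, i.e. $E_1^{0,q}=\prod_{n}{\rm Ext}^q(\mathcal{F}_n,\mathcal{G}_n)$ and $E_1^{1,q}=\prod_{g}{\rm Ext}^q(g^*\mathcal{F}_0,\mathcal{G}_1)$, with $d_1=s^q-t^q$ the derived maps, computed by the pull-back/push-out recipe of \ref{ex:dermapsExt1}. Being concentrated in two columns, it degenerates at $E_2$, where $E_2^{0,q}=K^q$ and $E_2^{1,q}=C^q$.

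Finally I would read off the abutment: the two-step filtration on $\mathbb{E}{\rm xt}^{i+1}$ has deeper graded piece $E_2^{1,i}=C^i$ and quotient $E_2^{0,i+1}=K^{i+1}$, yielding the asserted sequence \eqref{extisequence}; for $i=0$ this recovers \ref{ext1_lemma}. The step demanding genuine care is the exactness and monicity of $\delta$ in the resolution above, essentially a kernel bookkeeping in $Ab({{\rm E}_f})$. Once that is in place everything else is formal: in particular the a priori worrying point — surjectivity of $s-t$ on injective arguments, needed to pin down the abutment — is \emph{not} a separate obstacle, since it is exactly the statement that $\mathrm{Hom}(\delta,\mathcal{I}^q)$ is surjective for the monomorphism $\delta$ and the injective object $\mathcal{I}^q$.
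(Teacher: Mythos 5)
Your argument is correct, but it is organized quite differently from the paper's. The paper proves the $i=0$ case separately as Lemma \ref{ext1_lemma} by a direct analysis of $1$-extensions (Baer sums, the pullback/pushout diagrams \eqref{pullback}--\eqref{pushout}, and an explicit cocycle computation), and then obtains the general case by dimension shifting: embedding $\mathcal{G}_{\bullet}$ into an injective $\mathcal{I}_{\bullet}$ and using $\mathbb{E}{\rm xt}^{j}(\mathcal{F}_{\bullet},\mathcal{Q}_{\bullet})\cong\mathbb{E}{\rm xt}^{j+1}(\mathcal{F}_{\bullet},\mathcal{G}_{\bullet})$ together with the naturality of $s^i,t^i$. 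You instead corepresent both sides of \eqref{ExtisequenceEf} via the exact left adjoints $L_0,L_1$ of the evaluation functors, exhibit the two-term resolution $0\to P_1\to P_0\to\mathcal{F}_{\bullet}\to 0$, and run the double complex $\mathrm{Hom}(P_{\bullet},\mathcal{I}^{\bullet})$; I checked the kernel computation in position $1$ and the identification of $s-t$ with $\mathrm{Hom}(\delta,-)$, and both are as you claim. What your route buys is substantial: it produces the spectral sequence and its degeneration in one stroke without needing Lemma \ref{ext1_lemma} as input (you recover it as the $i=0$ case); your adjoint-functor argument actually justifies the paper's unproved assertion that each $\mathcal{I}_n$ is injective; and, most notably, your double complex is exactly the diagram \eqref{specseq2} of Fact \ref{fact:longexact}, whose row-exactness the paper deduces \emph{from} Corollary \ref{exti_cor}, whereas you establish it directly from the injectivity of $\mathcal{I}^q$ and the monicity of $\delta$ and then use it to prove the corollary --- thereby untangling what in the paper reads as a circular dependency. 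What the paper's route buys in exchange is the explicit extension-theoretic description of the boundary maps and of the classes in $C^0$ (the cocycles \eqref{cocycle}), which is used later in the applications and which your more formal argument does not produce by itself.
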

\begin{proof}
	Using Lemma \ref{ext1_lemma} as the base of an induction,
	we suppose \eqref{extisequence} be true for $i>0$. \\
	Since the category $Ab({{\rm E}_f})$ has enough injectives,
	we have a short exact sequence as follows
	\begin{equation}\label{shift}
		\begin{tikzcd}[row sep=1.8pc, column sep=0.8pc]
			0 \arrow[r] & \mathcal{G}_{\bul} \arrow[r] & \mathcal{I}_{\bul}
			\arrow[r] & \mathcal{Q}_{\bul}\arrow[r] & 0
		\end{tikzcd}
	\end{equation}
	where $\mathcal{I}_{\bul}$ is an injective object in $Ab({{\rm E}_f})$ 
	and $\mathcal{Q}_{\bul}$ its quotient.\\
	Note that the long exact sequence associated to \eqref{shift}
	gives an isomorphism
	$\mathbb{E}{\rm xt}^j(\mathcal{F}_{\bul}, \mathcal{Q}_{\bul})
	\overset{\sim}{\longrightarrow}
	\mathbb{E}{\rm xt}^{j+1}(\mathcal{F}_{\bul}, \mathcal{G}_{\bul})$,
	since injectives are acyclic. Since each $\mathcal{I}_n$ is also injective
	we have the analogous isomorphism at each level. By induction,
	we obtain the sequence \eqref{extisequence} with $\mathcal{G}_{\bul}$
	replaced by $\mathcal{Q}_{\bul}$.
	Finally, the naturality of the maps \eqref{ExtisequenceEf}
	guarantees that they transform in the expected way, \textit{i.e.} 
 for $j=i+1$ the following commutes 
		\begin{equation*}
		\begin{tikzcd}[row sep=1.6pc, column sep=1.8pc]
	\displaystyle		{\rm \prod\limits_{n =0,1} Ext^{j}}(\mathcal{F}_{n}, \mathcal{Q}_{n})
			\arrow[r, shift left, "s^j"] \arrow[r, shift right, "t^j"'] 
			\arrow[d, "\sim" {anchor=south, rotate=90, inner sep=.5mm}, "\delta"]& \displaystyle
			{\rm \prod\limits_{g =0,1} Ext^j}(g^*\mathcal{F}_0, \mathcal{Q}_1)  
			\arrow[d, "\sim" {anchor=south, rotate=90, inner sep=.5mm}, "\delta"] \\
			\displaystyle	{\rm \prod\limits_{n =0,1} Ext^{j+1}}(\mathcal{F}_{n}, \mathcal{G}_{n})
			\arrow[r, shift left, "s^{j+1}"] \arrow[r, shift right, "t^{j+1}"'] & 
		\displaystyle	{\rm \prod\limits_{g =0,1} Ext^{j+1}}(g^*\mathcal{F}_0, \mathcal{G}_1) 
		\end{tikzcd}
	\end{equation*}

	so that we obtain \eqref{extisequence} with $i$ replaced by $i+1$.
\end{proof}

\begin{cor}\label{ext1_cor}
	Let $\mathcal{F}, \mathcal{G}$
	be abelian sheaves of $\XX{f}$. We define
	\begin{equation}\label{coker_S}
			\begin{tikzcd}[row sep=1.6pc, column sep=1.8pc]
		E^{1,0}	\coloneqq	{\rm coker}\, \Big( {\rm Hom}(\mathcal{F},\mathcal{G})
			\arrow[r, shift left, "s"] \arrow[r, shift right, "t"'] &
		 {\rm Hom}(f^*\mathcal{F},\mathcal{G})	\Big) 
		\end{tikzcd}
	\end{equation}
	and
	\begin{equation}\label{ker_S}
			\begin{tikzcd}[row sep=1.6pc, column sep=1.8pc]
			E^{0,1}	\coloneqq	{\rm ker}\, \Big( {\rm Ext}^1(\mathcal{F},\mathcal{G})
			\arrow[r, shift left, "s^1"] \arrow[r, shift right, "t^1"'] &
		{\rm Ext}^1(f^*\mathcal{F},\mathcal{G}) \Big).
		\end{tikzcd}
		\end{equation}
	Then, there is a short exact sequence 
	\begin{equation}\label{ext1}
		\begin{tikzcd}[row sep=2.6pc, column sep=1.6pc]
			0 \arrow[r] & E^{1,0} \arrow[r] & \mathbb{E}{\rm xt}^1(\mathcal{F}, \mathcal{G})
			\arrow[r] & E^{0,1} \arrow[r] & 0,
		\end{tikzcd}
	\end{equation}
   which can be viewed as the reduction of \eqref{ext1sequence} in the case 
   $\mathcal{F}_{0}=\mathcal{F}_{1}$ and $\mathcal{G}_{0}=\mathcal{G}_{1}$.\\
	Moreover, analogous reductions of \eqref{extisequence} hold
	with the appropriate changes for all $i\geq 0$.
\end{cor}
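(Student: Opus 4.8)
The plan is to deduce the statement directly from Lemma \ref{ext1_lemma} by specializing to the diagonal situation, using the identification of $Sh(\XX{f})$ with the full subcategory $S^f \hookrightarrow Sh({\rm E}_f)$ furnished by Lemma \ref{F0=F1} and Notation \ref{notationXfbul}. Thus I would regard $\mathcal{F}$ and $\mathcal{G}$, together with their actions $\varphi,\gamma$, as the diagonal sheaves $p^*\mathcal{F}=(\mathcal{F},\mathcal{F},id_{\mathcal{F}},\varphi)$ and $p^*\mathcal{G}=(\mathcal{G},\mathcal{G},id_{\mathcal{G}},\gamma)$ on ${\rm E}_f$; that is, the case $\mathcal{F}_0=\mathcal{F}_1$, $\mathcal{G}_0=\mathcal{G}_1$, $\varphi_0=\gamma_0=id$ of the general theory of the previous sections.

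First I would check that the ambient Ext group is unchanged by this embedding, i.e. that the canonical map $\mathbb{E}{\rm xt}^1(\mathcal{F},\mathcal{G})\to\mathbb{E}{\rm xt}^1(p^*\mathcal{F},p^*\mathcal{G})$ computed in ${\rm E}_f$ is an isomorphism. Since $p^*$ is an inverse-image functor it is exact, and by Lemma \ref{F0=F1} it is fully faithful; the only point to verify is that $S^f$ is stable under extensions in $Ab({\rm E}_f)$. Given a $1$-extension of a diagonal object by a diagonal object, the component $\varphi_0^{\mathcal{E}}\colon \mathcal{E}_0\to\mathcal{E}_1$ of the action restricts to $id$ on the sub-object and induces $id$ on the quotient, so by the five lemma it is an isomorphism; replacing $\mathcal{E}_0$ by $\mathcal{E}_1$ along it shows the extension is isomorphic to one lying in $S^f$. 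Hence $p^*$ induces a bijection on isomorphism classes of $1$-extensions, which gives the desired identification (and the same five-lemma argument handles the higher $\mathbb{E}{\rm xt}^i$ needed for the last assertion).

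It then remains to simplify the two outer terms of \eqref{ext1sequence} under the diagonal hypotheses. For $C^0$ I would substitute $\varphi_0=\gamma_0=id$ into \eqref{HomequalizerEf}--\eqref{HomequalizerEfmaps}: the difference map sends $(\theta_0,\theta_1)$ to $(\theta_1-\theta_0,\;\theta_1\varphi-\gamma f^*\theta_0)$. Because the first coordinate $\theta_1-\theta_0$ is already surjective onto ${\rm Hom}(\mathcal{F},\mathcal{G})$, every target class is represented by one with vanishing first coordinate, and the image elements of that form are exactly $\theta\varphi-\gamma f^*\theta$ with $\theta=\theta_0=\theta_1$; this is precisely the map \eqref{differencemap}, so $C^0\cong E^{1,0}$ as defined in \eqref{coker_S}. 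For $K^1$ I would substitute into \eqref{ExtisequenceEf} with $i=1$, reading off $s^1,t^1$ from Example \ref{ex:dermapsExt1}: the $g=0$ components are pullback and pushout along the identity, hence force $\xi_0=\xi_1=:\xi$, while the $g=1$ components become pullback of $\xi$ along $\varphi$ and pushout of $f^*\xi$ along $\gamma$. The equalizer condition thus collapses to $s^1(\xi)=t^1(\xi)$ in ${\rm Ext}^1(f^*\mathcal{F},\mathcal{G})$, giving $K^1\cong E^{0,1}$ as in \eqref{ker_S}. Feeding these two identifications into \eqref{ext1sequence} yields \eqref{ext1}, and the identical specialization of Corollary \ref{exti_cor} produces the analogous sequences for all $i\geq 0$.

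I expect the main obstacle to be the bridging step: verifying that passing from $\XX{f}$ to the diagonal objects of ${\rm E}_f$ does not alter the Ext groups. Once that is secured, the collapse of $C^0$ and $K^1$ is routine; the only delicate point there is to confirm that the ``identity action'' entries $\varphi_0,\gamma_0$ genuinely force the doubled index structure to degenerate to a single copy, rather than merely rendering the two factors abstractly isomorphic. One could alternatively bypass ${\rm E}_f$ entirely and rerun the diagram chase of Lemma \ref{ext1_lemma} verbatim in $Ab(\XX{f})$ with a single index, but the reduction above is closer in spirit to the statement's phrasing.
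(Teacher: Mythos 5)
Your proposal is correct and follows essentially the same route as the paper: the paper's proof also passes to the diagonal pairs $p^*\mathcal{F},p^*\mathcal{G}$ in $Ab({\rm E}_f)$, observes that a $1$-extension of diagonal objects is ``determined in degree $0$'' (your five-lemma argument is exactly the justification the paper leaves implicit), and then identifies the outer terms of \eqref{ext1sequence} with $E^{1,0}$ and $E^{0,1}$ by rerunning the cocycle computation of Lemma \ref{ext1_lemma}. The only organizational difference is that the paper composes the surjection $\mathbb{E}{\rm xt}^1\twoheadrightarrow K^1$ with the projection $K^1\to E^{0,1}$ and recomputes the kernel of the composite, whereas you collapse $C^0$ and $K^1$ separately; both come to the same thing, and your explicit treatment of the bridging step (stability of $S^f$ under extensions) is a welcome clarification, though for the higher $\mathbb{E}{\rm xt}^i$ one should lean on the dimension-shifting of Corollary \ref{exti_cor} rather than the five lemma, since the middle terms of an $i$-extension need not be diagonal.
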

\begin{proof}
	Let us consider the sheaves $\mathcal{F}_{\bul}\coloneqq\rho^*\mathcal{F},\; 
	\mathcal{G}_{\bul}\coloneqq\rho^* \mathcal{G} \in Ab({{\rm E}_f})$.
	It is clear that any extension $\mathcal{E}_{\bul} \in
	\mathbb{E}{\rm xt}^1(\mathcal{F}_{\bul}, \mathcal{G}_{\bul})$
	is determined in degree 0, hence
	the natural projection $K^1 \to E^{0,1}$ is surjective.
	Composing this map with the canonical map \eqref{rightside}
	gives the surjective map on the right. With the same arguments
	as in the proof of Lemma \ref{ext1_lemma} we find that its kernel
	is exactly $E^{1,0}$.
\end{proof}
In the applications we shall need a mixed version of the above results, to wit:
\begin{cor}[{\bf Mixed version}]\label{ext1_corXf}
	Let $(\mathcal{F}, \varphi) \in Ab(\XX{f})$ and $(\mathcal{G}_{\bul}, \gamma_{\bul}) \in Ab({\rm E}_f)$. 
	Let us identify $(\mathcal{F}, \varphi)$ 
	with a sheaf $\mathcal{F}_{\bul}$ 
	on ${\rm E}_f$, cf. \ref{notationXfbul}.
	Then, there exists
	a spectral sequence $\{ E_r \}_r$ 
	converging to
	${\rm \mathbb{E}xt^{\bul}}(\mathcal{F}_{\bul}, \mathcal{G}_{\bul})$
	which degenerates at the second sheet.
	The latter, in turn, splits into short exact sequences 
	for each $n \geq 0$:
	\begin{equation*}
		\begin{tikzcd}[row sep=2.6pc, column sep=1.6pc]
			0 \arrow[r] & C^{n-1} \arrow[r] & 
			\mathbb{E}{\rm xt}^n(\mathcal{F}_{\bul}, \mathcal{G}_{\bul})
			\arrow[r] & K^n \arrow[r] & 0,
		\end{tikzcd}
	\end{equation*}
	where $C^{-1}\coloneqq 0$, and for each $n \geq 1$ we have set
	\begin{equation*}
		\begin{tikzcd}[row sep=1.6pc, column sep=1.8pc]
			C^n	\coloneqq	{\rm coker}\, \Big( {\rm Ext}^n(\mathcal{F},\mathcal{G}_0)
			\arrow[r, "d^{0,n}"] &
			{\rm Ext}^n(f^*\mathcal{F},\mathcal{G}_1)	\Big) 
		\end{tikzcd}
	\end{equation*}
	and
	\begin{equation*}
		\begin{tikzcd}[row sep=1.6pc, column sep=1.8pc]
			K^n	\coloneqq	{\rm ker}\, \Big( {\rm Ext}^n(\mathcal{F},\mathcal{G}_0)
			\arrow[r, "d^{0,n}"] &
			{\rm Ext}^n(f^*\mathcal{F},\mathcal{G}_1) \Big),
		\end{tikzcd}
	\end{equation*} 
	where the maps $d_1^{0,q}$ are the maps derived from \eqref{Homequalizerfinal}. 
\end{cor}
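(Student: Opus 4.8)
The plan is to deduce the statement from Corollary \ref{exti_cor} by specializing its general spectral sequence to the diagonal sheaf and then performing a purely homological reduction, in the same spirit as the proof of Corollary \ref{ext1_cor}. First I would set $\mathcal{F}_{\bul}:=p^{*}\mathcal{F}$, the diagonal sheaf on ${\rm E}_f$ attached to $(\mathcal{F},\varphi)\in Ab(\XX{f})$ through \ref{F0=F1} and \ref{notationXfbul}, so that $\mathcal{F}_0=\mathcal{F}_1=\mathcal{F}$ with $\varphi_0=id_{\mathcal{F}}$ and $\varphi_1=\varphi$. Since both $p^{*}\mathcal{F}$ and $\mathcal{G}_{\bul}$ are abelian sheaves on ${\rm E}_f$, Corollary \ref{exti_cor} already furnishes a spectral sequence degenerating at the second sheet together with, for every $n$, a short exact sequence $0\to C^{n-1}\to\mathbb{E}{\rm xt}^{n}(\mathcal{F}_{\bul},\mathcal{G}_{\bul})\to K^{n}\to 0$, where $C^{n-1}$ and $K^{n}$ are the cokernel and kernel of the derived maps $s^{n}-t^{n}$ of \eqref{ExtisequenceEf}. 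The whole content of the mixed corollary is then to identify these ``large'' kernels and cokernels, which a priori involve the products $\prod_{n=0,1}{\rm Ext}^{n}(\mathcal{F},\mathcal{G}_n)$ and $\prod_{g=0,1}{\rm Ext}^{n}(g^{*}\mathcal{F},\mathcal{G}_1)$, with the single-factor ones in the statement.

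I would carry out this identification page by page. Evaluating the derived form of the maps in \eqref{HomequalizerEfmaps} on the diagonal input, a class $(\theta_0,\theta_1)\in{\rm Ext}^{n}(\mathcal{F},\mathcal{G}_0)\times{\rm Ext}^{n}(\mathcal{F},\mathcal{G}_1)$ is sent to $(\theta_1-\gamma_0\theta_0,\ \theta_1\varphi-\gamma_1 f^{*}\theta_0)$ in ${\rm Ext}^{n}(\mathcal{F},\mathcal{G}_1)\times{\rm Ext}^{n}(f^{*}\mathcal{F},\mathcal{G}_1)$, using $\varphi_0=id$ and $\varphi_1=\varphi$. The first component eliminates $\theta_1$: on the kernel it forces $\theta_1=\gamma_0\theta_0$, whereupon the second component becomes $\gamma_0\theta_0\varphi-\gamma_1 f^{*}\theta_0=d^{0,n}(\theta_0)$, precisely the $n$-th derived map of $d^{0,0}$ from \eqref{mapsintheapplication}; hence the projection $(\theta_0,\theta_1)\mapsto\theta_0$ is an isomorphism $K^{n}\cong\ker(d^{0,n})$. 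Dually, the affine change of variables $a:=\theta_1-\gamma_0\theta_0$ exhibits the image of $s^{n}-t^{n}$ as $\{(a,\ a\circ\varphi+d^{0,n}(\theta_0))\}$; quotienting first by the free factor carrying $a$ and then by $\operatorname{im}(d^{0,n})$ yields $C^{n}\cong\operatorname{coker}(d^{0,n})$. This collapses the spectral sequence to the two columns $E_1^{0,q}={\rm Ext}^{q}(\mathcal{F},\mathcal{G}_0)$ and $E_1^{1,q}={\rm Ext}^{q}(f^{*}\mathcal{F},\mathcal{G}_1)$ with differential $d_1^{0,q}=d^{0,q}$, exactly as in \eqref{Homequalizerfinal} and \eqref{Homdiagramfinal}, and the abutment filtration then returns the asserted sequences with $C^{-1}=0$ (the case $n=0$ reducing to $\mathbb{H}{\rm om}=\ker d^{0,0}$).

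The main obstacle I anticipate is bookkeeping rather than conceptual: I must check that the substitution $\theta_1\mapsto a=\theta_1-\gamma_0\theta_0$ is an isomorphism of abelian groups compatible with the Baer-sum structure underlying $\mathbb{E}{\rm xt}$, and—more delicately—that these page-wise identifications are natural in the connecting maps, so that they assemble into an isomorphism of the reduced two-column spectral sequence with the one produced by \ref{exti_cor}, not merely a matching of associated graded pieces. I would secure naturality by replaying the dimension-shifting induction of \ref{exti_cor} in parallel with the reduction: starting from \ref{ext1_lemma} as base case and an injective resolution $0\to\mathcal{G}_{\bul}\to\mathcal{I}_{\bul}\to\mathcal{Q}_{\bul}\to 0$, I would use that the component functors $(-)_0,(-)_1\colon Ab({\rm E}_f)\to Ab(X)$ are exact and preserve injectives—their left adjoints being the exact functors $B^0,B^1$ of \ref{resultsEpsteinsite}—so that ${\rm Ext}^{q}(\mathcal{F},(\mathcal{I}^{\bullet})_0)$ and ${\rm Ext}^{q}(f^{*}\mathcal{F},(\mathcal{I}^{\bullet})_1)$ genuinely compute the two columns and the reduction commutes with the shift $\delta$ appearing in the proof of \ref{exti_cor}. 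Alternatively, one may bypass the reduction altogether and construct the two-column spectral sequence directly as the hyperderived functor of the two-term complex $\bigl[{\rm Hom}(\mathcal{F},(-)_0)\xrightarrow{d^{0,0}}{\rm Hom}(f^{*}\mathcal{F},(-)_1)\bigr]$, the same preservation-of-injectives fact guaranteeing that its total complex computes $\mathbb{E}{\rm xt}^{\bullet}(p^{*}\mathcal{F},\mathcal{G}_{\bul})$.
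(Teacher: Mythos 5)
Your proposal is correct and follows exactly the route the paper intends: the paper in fact states Corollary \ref{ext1_corXf} without a written proof, leaving it as the specialization of Corollary \ref{exti_cor} to the diagonal sheaf $p^{*}\mathcal{F}$ in the first variable, in the same way that the proof of Corollary \ref{ext1_cor} handles the fully diagonal case. Your explicit elimination of $\theta_1$ via $\varphi_0=id$ (forcing $\theta_1=\gamma_0\theta_0$ on the kernel and using the change of variables $a=\theta_1-\gamma_0\theta_0$ on the cokernel) correctly recovers the single-column maps $d^{0,n}$ of \eqref{Homequalizerfinal}, and your dimension-shifting check of naturality mirrors the induction already carried out in \ref{exti_cor}.
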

The figure below explains better the structure of the spectral sequence:
\begin{center}
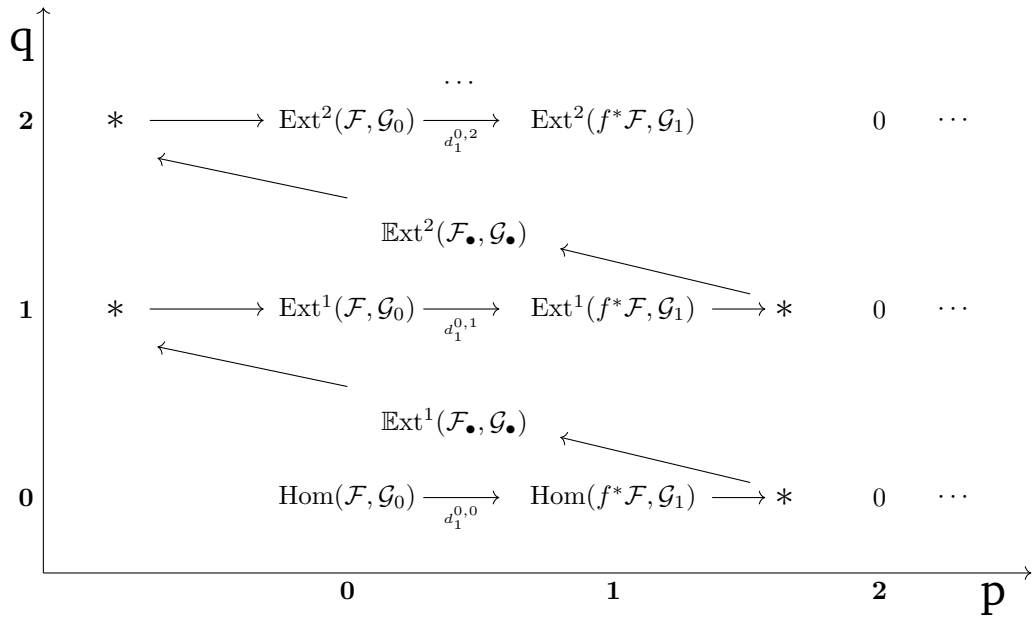
\begin{figure}[H]
	\begin{tikzpicture}
		\pgfplotsset{ticks=none}
		\draw[->] (-4,-1) -- (8.5,-1) node[below] {\LARGE{p}} -- (9,-1);
		\draw (0.0,-1) node[below] {${\bf 0}$};	
		\draw (3.5,-1) node[below] {${\bf 1}$};
		\draw (7,-1) node[below] {${\bf 2}$};
		\draw (-4,0) node[left] {${\bf 0}$};
		\draw (-4,2.5) node[left] {${\bf 1}$};
		\draw (-4,5) node[left] {${\bf 2}$};
		\draw (3.5,0) node[] {${\rm Hom}(f^*\mathcal{F}, \mathcal{G}_1)$};
		\draw[->] (-4,-1) -- (-4,6) node[left] {\LARGE{q}} -- (-4,6.5);
		\draw (0.0,0.0) node[] {${\rm Hom}(\mathcal{F}, \mathcal{G}_0)$};
		\draw (0,2.5) node[] {${\rm Ext^1}(\mathcal{F}, \mathcal{G}_0)$};
		\draw (3.5,2.5) node[] {${\rm Ext^1}(f^*\mathcal{F}, \mathcal{G}_1)$};
		\draw (0,5) node[] {${\rm Ext^2}(\mathcal{F}, \mathcal{G}_0)$};
		\draw (3.5,5) node[] {${\rm Ext^2}(f^*\mathcal{F},\mathcal{G}_1)$};	
		\draw (7,5) node[] {$0$};
		\draw (7,0) node[] {$0$};
		\draw (7,2.5) node[] {$0$};
		\draw[->] (1,0) -- (2,0);
		\draw[->] (1,2.5) -- (2,2.5);
		\draw[->] (1,5) -- (2,5);
		\draw[->] (4.8,0) -- (5.5,0);
		\draw[->] (4.8,2.5) -- (5.5,2.5);
		\draw (1.5,0) node[below] {\tiny$d_1^{0,0}$};
		\draw (1.5,2.5) node[below] {\tiny$d_1^{0,1}$};	
		\draw (1.5,5) node[below] {\tiny$d_1^{0,2}$};
		\draw (1.5,5.5) node[] {$\cdots$};
		\draw (8,5) node[] {$\cdots$};	
		\draw (8,2.5) node[] {$\cdots$};
		\draw (8,0) node[] {$\cdots$};				
		\draw (6,0) node[left] {\Large{${\bf \ast}$}};
		\draw (6,2.5) node[left] {\Large{${\bf \ast}$}};
		\draw[->] (5.3,0.2) -- (2.8, 0.8);
		\draw[->] (5.3,2.7) -- (2.8, 3.3);
		\draw (2.5,1) node[left] {${\mathbb{E}{\rm xt}^1}(\mathcal{F}_{\bul},\mathcal{G}_{\bul})$};
		\draw (2.5,3.5) node[left] {${\mathbb{E}{\rm xt}^2}(\mathcal{F}_{\bul},\mathcal{G}_{\bul})$};
		\draw[->] (0,1.472) -- (-2.5, 2);
		\draw[->] (0,3.972) -- (-2.5, 4.5);
		\draw (-2.8,5) node[left] {\Large{${\bf \ast}$}};
		\draw (-2.8,2.5) node[left] {\Large{${\bf \ast}$}};
		\draw[->] (-2.6,2.5) -- (-1.1, 2.5);
		\draw[->] (-2.6,5) -- (-1.1, 5);
	\end{tikzpicture}	
	\caption{$E_1$ of the spectral sequence in the ``mixed case''with inline computation of $E_2$.}\label{spectralE1}
\end{figure}
\end{center}
\subsection{More Homological Algebra}
Let $\mathcal{F}_{\bul}$, $\mathcal{G}_{\bul}$ be two abelian sheaves 
 ${\bf E}_f$ and consider an injective resolution 
$$
0 \to \mathcal{G}_{\bul} \to \mathcal{I}_{\bul}^{\bul}
$$
in the category $Ab({\bf E}_f)$, \ref{enoughinj}.
Consider the induced commutative diagram 

		\begin{equation}\label{specseq2}
			\begin{tikzcd}[row sep=1.6pc, column sep=0.8pc]
				0 \arrow[r] & {\rm\mathbb{H}om}(\mathcal{F}_{\bul}, \mathcal{I}_{\bul}^0) \arrow[d] \arrow[r] &
				{\rm Hom}(\mathcal{F}_0,\mathcal{I}_0^0)\times {\rm Hom}(\mathcal{F}_1,\mathcal{I}_1^0) \arrow[r, shift left] \arrow[d] \arrow[r, shift right]
				& {\rm Hom}(\mathcal{F}_0,\mathcal{I}_1^0)\times {\rm Hom}(f^*\mathcal{F}_0,\mathcal{I}_1^0) \arrow[d]  \\
				0 \arrow[r] & {\rm\mathbb{H}om}(\mathcal{F}_{\bul}, \mathcal{I}_{\bul}^1) \arrow[d] \arrow[r] &
				{\rm Hom}(\mathcal{F}_0,\mathcal{I}_0^1)\times {\rm Hom}(\mathcal{F}_1,\mathcal{I}_1^1) \arrow[r, shift left] \arrow[d] \arrow[r, shift right]
				& {\rm Hom}(\mathcal{F}_0,\mathcal{I}_1^1)\times {\rm Hom}(f^*\mathcal{F}_0,\mathcal{I}_1^1) \arrow[d]  \\
				0 \arrow[r] & {\rm\mathbb{H}om}(\mathcal{F}_{\bul}, \mathcal{I}_{\bul}^2) \arrow[d] \arrow[r] &
				{\rm Hom}(\mathcal{F}_0,\mathcal{I}_0^2)\times {\rm Hom}(\mathcal{F}_1,\mathcal{I}_1^2) \arrow[r, shift left] \arrow[d] \arrow[r, shift right]
				& {\rm Hom}(\mathcal{F}_0,\mathcal{I}_1^2)\times {\rm Hom}(f^*\mathcal{F}_0,\mathcal{I}_1^2) \arrow[d]\\
				& \dots & \dots & \dots
			\end{tikzcd}
		\end{equation}

\begin{fact}\label{fact:longexact}
	The rows in \eqref{specseq2} are exact. 
	Moreover, the spectral sequence in \ref{exti_cor} is just the long exact 
	sequence in cohomology associated to the short exact sequence of complex \eqref{specseq2}.
\end{fact}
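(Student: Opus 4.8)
The plan is to derive both assertions at once from a single free presentation of $\mathcal{F}_{\bul}$ in $Ab({\rm E}_f)$, against which the injectivity of each term $\mathcal{I}_{\bul}^q$ of the resolution forces the rows of \eqref{specseq2} to be \emph{short} exact. First I would record the obvious \emph{left}-exactness: by the very definition \eqref{HomequalizerEf} of $\mathbb{H}{\rm om}$ as the equalizer of the pair $(s,t)$ of \eqref{HomequalizerEfmaps}, the object $\mathbb{H}{\rm om}(\mathcal{F}_{\bul},\mathcal{I}_{\bul}^q)$ is the kernel of $s-t$, so each row is exact at its first two spots. All the content is therefore in the surjectivity of the difference map $B^q\to C^q$, and this is the step I expect to be the main obstacle, since the action $\varphi_{\bul}$ need not be a monomorphism and so one cannot extend along it directly.

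To handle it I would introduce the two ``free'' functors $L_0,L_1\colon Ab(X)\to Ab({\rm E}_f)$ given on objects by $L_1(\mathcal{H})\coloneqq(0,\mathcal{H},0)$ and $L_0(\mathcal{H})\coloneqq(\mathcal{H},\,\mathcal{H}\oplus f^{*}\mathcal{H},\,\iota_0\amalg\iota_1)$. A direct inspection of the commuting squares \eqref{arrowsEf} yields the adjunctions ${\rm Hom}_{{\rm E}_f}(L_1\mathcal{H},\mathcal{G}_{\bul})={\rm Hom}_X(\mathcal{H},\mathcal{G}_1)$ and ${\rm Hom}_{{\rm E}_f}(L_0\mathcal{H},\mathcal{G}_{\bul})={\rm Hom}_X(\mathcal{H},\mathcal{G}_0)$. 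Since $f^{*}$ is exact, both $L_0$ and $L_1$ are exact, whence their right adjoints --- the component functors $\mathcal{G}_{\bul}\mapsto\mathcal{G}_0,\mathcal{G}_1$ --- preserve injectives; in particular every $\mathcal{I}_n^q$ is injective in $Ab(X)$, which is exactly the fact silently used in the proof of \ref{exti_cor}.

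Next I would assemble the counits into the short exact sequence in $Ab({\rm E}_f)$
$$0\to L_1(\mathcal{F}_0\oplus f^{*}\mathcal{F}_0)\to L_0(\mathcal{F}_0)\oplus L_1(\mathcal{F}_1)\xrightarrow{\ \epsilon_{\bul}\ }\mathcal{F}_{\bul}\to 0,$$
whose middle arrow is an epimorphism because its degree-$1$ component contains the summand ${\rm id}_{\mathcal{F}_1}$, and whose kernel is identified with $L_1(\mathcal{F}_0\oplus f^{*}\mathcal{F}_0)$ via $(x,y)\mapsto(x,y,-\varphi_0x-\varphi_1y)$. Applying $\mathbb{H}{\rm om}(-,\mathcal{I}_{\bul}^q)$ and using that $\mathcal{I}_{\bul}^q$ is injective (so this functor is exact) produces a genuine short exact sequence; the adjunctions of the previous step identify its middle term with $B^q={\rm Hom}(\mathcal{F}_0,\mathcal{I}_0^q)\times{\rm Hom}(\mathcal{F}_1,\mathcal{I}_1^q)$ and its right term with $C^q={\rm Hom}(\mathcal{F}_0,\mathcal{I}_1^q)\times{\rm Hom}(f^{*}\mathcal{F}_0,\mathcal{I}_1^q)$. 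Chasing $(\alpha_0,\alpha_1)$ through the kernel inclusion shows the connecting map to be $(\alpha_0,\alpha_1)\mapsto(\gamma_0\alpha_0-\alpha_1\varphi_0,\ \gamma_1f^{*}\alpha_0-\alpha_1\varphi_1)$, i.e. $t-s$ in the notation of \eqref{HomequalizerEfmaps}. This is precisely the $q$-th row of \eqref{specseq2}, now seen to be short exact --- the surjectivity being supplied, as promised, by the injectivity of $\mathcal{I}_{\bul}^q$ in $Ab({\rm E}_f)$ rather than merely of its components.

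Finally, letting $q$ vary, the three columns of \eqref{specseq2} become a short exact sequence of cochain complexes $0\to\mathbb{H}{\rm om}(\mathcal{F}_{\bul},\mathcal{I}_{\bul}^{\bul})\to B^{\bul}\to C^{\bul}\to 0$. Because $\mathcal{I}_{\bul}^{\bul}$ resolves $\mathcal{G}_{\bul}$ and each $\mathcal{I}_n^{\bul}$ resolves $\mathcal{G}_n$ by injectives, the cohomologies of the three terms are $\mathbb{E}{\rm xt}^{n}(\mathcal{F}_{\bul},\mathcal{G}_{\bul})$, $\prod_{n}{\rm Ext}^{n}(\mathcal{F}_n,\mathcal{G}_n)$ and $\prod_{g}{\rm Ext}^{n}(g^{*}\mathcal{F}_0,\mathcal{G}_1)$, with induced maps $s^{n}-t^{n}$ of \eqref{ExtisequenceEf}. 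Splitting the resulting long exact sequence at each stage into $0\to{\rm coker}(s^{n-1}-t^{n-1})\to\mathbb{E}{\rm xt}^{n}\to\ker(s^{n}-t^{n})\to 0$ reproduces verbatim the sequences \eqref{extisequence}; and since a cohomological spectral sequence concentrated in the two columns $p=0,1$ and degenerating at $E_2$ is \emph{equivalent} to exactly such a long exact sequence, this identifies it with the spectral sequence of \ref{exti_cor}, as claimed.
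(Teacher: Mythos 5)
Your proof is correct, but it reaches the conclusion by a genuinely different route from the paper. The paper deduces the surjectivity of each row of \eqref{specseq2} \emph{from} the already-established extension-theoretic results: by Lemma \ref{ext1_lemma} and Corollary \ref{exti_cor}, the cokernel of the difference map in the $q$-th row embeds into $\mathbb{E}{\rm xt}^1(\mathcal{F}_{\bul},\mathcal{I}^q_{\bul})$, which vanishes because $\mathcal{I}^q_{\bul}$ is injective; the identification with the spectral sequence is then a diagram chase. You instead work bottom-up: you build the explicit presentation
$0\to L_1(\mathcal{F}_0\oplus f^{*}\mathcal{F}_0)\to L_0(\mathcal{F}_0)\oplus L_1(\mathcal{F}_1)\to\mathcal{F}_{\bul}\to 0$
by objects induced from $Ab(X)$ via the exact left adjoints $L_0,L_1$ of the component functors, and obtain both the exactness of the rows and the identification of the second map with $t-s$ directly from the injectivity of $\mathcal{I}^q_{\bul}$ in $Ab({\rm E}_f)$, without ever invoking Lemma \ref{ext1_lemma}. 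This buys two things. First, your adjunction argument ($L_0,L_1$ exact, hence their right adjoints $\mathcal{G}_{\bul}\mapsto\mathcal{G}_0,\mathcal{G}_1$ preserve injectives) supplies a proof of the assertion ``each $\mathcal{I}_n$ is also injective,'' which the paper uses in \ref{exti_cor} without justification. Second, since your argument is independent of the Baer-sum computations, it actually yields a self-contained second proof of Corollary \ref{exti_cor} rather than merely a reinterpretation of it --- arguably closer in spirit to what the Fact is meant to convey. The only points worth making explicit in a final write-up are that kernels, cokernels and exactness in $Ab({\rm E}_f)$ are computed componentwise (so that your presentation is exact, epimorphisms are detected on components, and $\mathcal{I}_n^{\bul}$ is a resolution of $\mathcal{G}_n$), and the harmless sign discrepancy between your connecting map $t-s$ and the paper's $s-t$.
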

\begin{proof}
	By definition, for any injective sheaf $\mathcal{I}_{\bul}$ on ${{\rm E}_f}$ 
	the groups ${\rm \mathbb{E}xt^i}(\mathcal{F}_{\bul}, \mathcal{I}_{\bul})$ vanish for $i \geq 1$. 
	Moreover, by \ref{exti_cor}, we see that in each row in \eqref{specseq2} 
	the co-kernels of the 
	maps on the right vanish, since they compute the ``subgroup"
	piece of ${\rm \mathbb{E}xt^1}(\mathcal{F}_{\bul}, \mathcal{I}^k_{\bul}), \; k \geq 0$. 
	The associated long exact sequence recovers the spectral sequence \ref{exti_cor}
	by a simple diagram chase.
\end{proof}

\begin{cor}
	Fix $\mathcal{F}_{\bul} \in Ab({\bf E}_f)$. 
	Then, for any short exact sequence
	$$
0 \longrightarrow \mathcal{A}_{\bul} \longrightarrow
  \mathcal{B}_{\bul} \longrightarrow \mathcal{C}_{\bul} \longrightarrow 0
	$$
	in $Ab({\bf E}_f)$, there is associated a long exact sequence
		\begin{equation}\label{longexactcohomologyseqEf}
		\begin{tikzcd}[row sep=1.2pc, column sep=1.8pc]
		0 \arrow[r]& {\rm\mathbb{H}om}(\mathcal{F}_{\bul}, \mathcal{A}_{\bul} )
		  \arrow[r] &  {\rm\mathbb{H}om}(\mathcal{F}_{\bul},  \mathcal{B}_{\bul} )
		  \arrow[r] \arrow[d, phantom, ""{coordinate, name=Z}]
			& {\rm\mathbb{H}om}(\mathcal{F}_{\bul}, \mathcal{C}_{\bul}  ) 
			\arrow[dll,
			rounded corners,
			to path={ -- ([xshift=2ex]\tikztostart.east)
				|- (Z) [near end]\tikztonodes
				-| ([xshift=-2ex]\tikztotarget.west)
				-- (\tikztotarget)}, "\delta"] \\
	&	{\rm \mathbb{E}xt^1}(\mathcal{F}_{\bul}, \mathcal{A}_{\bul}) 
		\arrow[r]
			& 	{\rm \mathbb{E}xt^1}(\mathcal{F}_{\bul}, \mathcal{B}_{\bul}) 
			\arrow[r]  \arrow[d, phantom, ""{coordinate, name=Z}]
			& 	{\rm \mathbb{E}xt^1}(\mathcal{F}_{\bul}, \mathcal{C}_{\bul})
				\arrow[dll,
			rounded corners,
			to path={ -- ([xshift=2ex]\tikztostart.east)
				|- (Z) [near end]\tikztonodes
				-| ([xshift=-2ex]\tikztotarget.west)
				-- (\tikztotarget)}, "\delta"] \\
			&	{\rm \mathbb{E}xt^2}(\mathcal{F}_{\bul}, \mathcal{A}_{\bul}) 
			\arrow[r]
			& 	{\rm \mathbb{E}xt^2}(\mathcal{F}_{\bul}, \mathcal{B}_{\bul}) \arrow[r]  
			& \cdots
		\end{tikzcd}
	\end{equation}
\end{cor}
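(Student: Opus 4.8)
The plan is to deduce this long exact sequence by the standard passage from a cohomological $\delta$-functor to its associated long exact sequences, using that $\mathbb{E}{\rm xt}^{\bul}(\mathcal{F}_{\bul}, -)$ is, by construction, the right derived functor of the left exact functor $\mathbb{H}{\rm om}(\mathcal{F}_{\bul}, -)$ on the abelian category $Ab({\rm E}_f)$, which has enough injectives by \ref{enoughinj}. This mirrors the structure of \ref{fact:longexact}, where the spectral sequence of \ref{exti_cor} was realized as the long exact cohomology sequence of a short exact sequence of complexes; here the same mechanism is applied to an arbitrary short exact sequence in the second variable.

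Concretely, first I would apply the horseshoe lemma to the given short exact sequence $0 \to \mathcal{A}_{\bul} \to \mathcal{B}_{\bul} \to \mathcal{C}_{\bul} \to 0$, producing injective resolutions $\mathcal{I}_{\mathcal{A}}^{\bul}, \mathcal{I}_{\mathcal{B}}^{\bul}, \mathcal{I}_{\mathcal{C}}^{\bul}$ in $Ab({\rm E}_f)$ together with a short exact sequence of complexes
$$
0 \to \mathcal{I}_{\mathcal{A}}^{\bul} \to \mathcal{I}_{\mathcal{B}}^{\bul} \to \mathcal{I}_{\mathcal{C}}^{\bul} \to 0.
$$
Second, I would note that this sequence is split in each cohomological degree $n$, since the monomorphism $\mathcal{I}_{\mathcal{A}}^n \hookrightarrow \mathcal{I}_{\mathcal{B}}^n$ admits a retraction by injectivity of $\mathcal{I}_{\mathcal{A}}^n$. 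As $\mathbb{H}{\rm om}(\mathcal{F}_{\bul}, -)$ is additive, it preserves degreewise-split short exact sequences, so applying it yields a short exact sequence of complexes of objects of $\mathfrak{C}$,
$$
0 \to \mathbb{H}{\rm om}(\mathcal{F}_{\bul}, \mathcal{I}_{\mathcal{A}}^{\bul}) \to \mathbb{H}{\rm om}(\mathcal{F}_{\bul}, \mathcal{I}_{\mathcal{B}}^{\bul}) \to \mathbb{H}{\rm om}(\mathcal{F}_{\bul}, \mathcal{I}_{\mathcal{C}}^{\bul}) \to 0.
$$

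Third, the long exact cohomology sequence of this short exact sequence of complexes, furnished by the snake lemma, is exactly \eqref{longexactcohomologyseqEf}: the connecting maps $\delta$ are the resulting boundary operators, and for each object one has $H^i\big(\mathbb{H}{\rm om}(\mathcal{F}_{\bul}, \mathcal{I}_{?}^{\bul})\big) = {\rm R}^i\mathbb{H}{\rm om}(\mathcal{F}_{\bul}, ?) = \mathbb{E}{\rm xt}^i(\mathcal{F}_{\bul}, ?)$, the computation being independent of the chosen injective resolution. I do not expect a genuine obstacle here, as this is routine homological algebra once \ref{enoughinj} is in hand; the only point deserving a word of care is the coherence of the identification $\mathbb{E}{\rm xt}^i = {\rm R}^i\mathbb{H}{\rm om}$ with the extension-theoretic description recorded in \ref{1-ext} and \ref{ext1_lemma}, but this compatibility is the standard comparison between Yoneda $\mathrm{Ext}$ and derived-functor $\mathrm{Ext}$ and was already assumed when the derived functors $\mathbb{E}{\rm xt}^i(\mathcal{F}_{\bul}, -)$ were introduced.
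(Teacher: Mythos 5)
Your proposal is correct and is essentially the argument the paper intends: the paper's proof is a one-line reference to the three-dimensional diagram whose slices are \eqref{specseq2} built from injective resolutions of $\mathcal{A}_{\bul}$, $\mathcal{B}_{\bul}$, $\mathcal{C}_{\bul}$, which amounts to the same mechanism you spell out via the horseshoe lemma, the degreewise-split sequence of resolutions, and the snake lemma. Your version is if anything more complete, since it makes explicit the splitting needed to preserve exactness after applying $\mathbb{H}{\rm om}(\mathcal{F}_{\bul},-)$ and flags the (standard) compatibility of the derived-functor and extension-theoretic descriptions of $\mathbb{E}{\rm xt}^i$.
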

\begin{proof}
 The proof is a straightforward exercise in Homological Algebra, when
 considering the 3-dimensional diagram whose slices are \eqref{specseq2}, 
 for some injective resolution of the sheaves 
 $\mathcal{A}_{\bul}$, $\mathcal{B}_{\bul}$, $\mathcal{C}_{\bul}$.
\end{proof}
\chapter{Topology of \XX{f}}
Let $X$ be a Galois site, cf. \cite[V.5]{sga1}. 
Let us denote by $F$ the fibre functor on $X$, and
consider the fundamental (pro-finite) group of 
$X$ is $\pi_1(X)={\rm Aut}(F)$.
Then, $F$ affords an equivalence of categories
\begin{equation}\label{fibrefunctor}
	F: X \overset{\sim}{\longrightarrow} {\rm F}Set(\pi_1(X)),
\end{equation}
where ${\rm F}Set(\pi_1(X))$ is the category of finite sets with (right)
action of $\pi_1(X)$. 
For any group $\Gamma$ let us denote by $\underline{\Gamma}$ the
constant sheaf on $X$ with values in $\Gamma$.
Recall the following definitions.
\begin{definition}\label{connectedsite}
$	X$ is a connected site if and only if 
$$
\mathbb{H}^0(X, \underline{\Z})=\Z.
$$
\end{definition}
We assume $X$ is a connected site with a fundamental (pro-finite) group in the sense of \cite[V.5]{sga1}.
The following characterization is well known, cf. \cite[III]{et}.
\begin{factdefinition}
	$	X$ is a simply connected site if and only if for any 
	finite group $\Gamma$ we have 
	$$
	\mathbb{H}^1(X, \underline{\Gamma})=0.
	$$
	Analogously, if $X$ is a topological champ that is locally connected and locally simply connected, cf. \cite[III.i]{et}, $X$ is simply connected if and only if for any 
	discrete group $\Gamma$ we have 
	$$
	\mathbb{H}^1(X, \underline{\Gamma})=0.
	$$
\end{factdefinition}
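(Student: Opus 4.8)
The plan is to reduce the statement to the two identifications recorded in the introduction to this chapter, namely
\[
\mathbb{H}^1(X, \underline{\Gamma})=\{ \underline{\Gamma}\text{-torsors on } X \}/\cong\ ={\rm Hom}_{\bf Grp}(\pi_1(X),\Gamma),
\]
together with the convention that $X$ is \emph{simply connected} precisely when $\pi_1(X)=0$. Once these are invoked, the assertion becomes a purely group-theoretic fact about the pro-finite (resp. pro-discrete) group $\pi_1(X)={\rm Aut}(F)$: it is trivial if and only if it admits no nontrivial continuous homomorphism to a finite (resp. discrete) group. So after citing the identification above, the whole proof lives inside the theory of $\pi_1(X)$.

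First I would dispose of the easy implication. If $\pi_1(X)=0$, then for every finite group $\Gamma$ the set ${\rm Hom}_{\bf Grp}(\pi_1(X),\Gamma)$ contains only the homomorphism out of the trivial group, so $\mathbb{H}^1(X,\underline{\Gamma})$ is the one-point pointed set, that is $0$; concretely, the only $\underline{\Gamma}$-torsor on $X$ is the trivial one.

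For the converse I argue by contraposition. Suppose $\pi_1(X)\neq 0$; I must exhibit a finite $\Gamma$ with $\mathbb{H}^1(X,\underline{\Gamma})\neq 0$. Since $\pi_1(X)={\rm Aut}(F)$ is pro-finite, it is the filtered inverse limit of its finite continuous quotients and its open normal subgroups form a neighbourhood basis of the identity; were every such quotient trivial, $\pi_1(X)$ would equal each open normal subgroup and hence be trivial. Thus there is an open normal subgroup $N$ with $Q\coloneqq\pi_1(X)/N\neq\{1\}$ finite. Setting $\Gamma\coloneqq Q$, the quotient map $p\colon \pi_1(X)\to Q$ is a continuous homomorphism whose class in $\mathbb{H}^1(X,\underline{Q})={\rm Hom}_{\bf Grp}(\pi_1(X),Q)$ is nontrivial: conjugation in $Q$ preserves the cardinality of the image, while $p$ is surjective onto $Q\neq\{1\}$ and the trivial homomorphism has image $\{1\}$. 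This contradicts the hypothesis and closes the contraposition.

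The only genuine obstacle — and the point I would flag explicitly — is that one must test against \emph{all} finite groups rather than merely abelian ones: a nontrivial pro-finite group may be perfect, hence possess no nontrivial finite abelian quotient, so cyclic coefficients alone would fail to detect it. This is exactly why the non-abelian $\mathbb{H}^1$ (a pointed set, not a group) is needed, and why the argument is phrased in terms of conjugacy classes of homomorphisms rather than of $\mathrm{Hom}$-groups. The pro-discrete case for a topological champ that is locally connected and locally simply connected is identical word for word, replacing ``finite continuous quotient'' by ``discrete quotient'' and invoking the analogous description of $\mathbb{H}^1$ from \cite[III.i]{et}.
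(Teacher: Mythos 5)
Your argument is correct and is exactly the one the paper intends: the statement is recorded there as a well-known Fact/Definition with no written proof, but the surrounding text supplies precisely the identifications you use, namely $\mathbb{H}^1(X,\underline{\Gamma})=\{\underline{\Gamma}\text{-torsors}\}/\cong\;={\rm Hom}_{\bf Grp}(\pi_1(X),\Gamma)$ via the equivalence $X\simeq {\rm F}Set(\pi_1(X))$, after which the claim reduces to the group-theoretic fact that a pro-finite (resp.\ pro-discrete) group is trivial iff it has no nontrivial finite (resp.\ discrete) quotient. Your explicit remarks on testing against all finite groups (not just abelian ones) and on working with the pointed set of conjugacy classes are apt and consistent with the paper's conventions.
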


It follows from the isomorphism \ref{fibrefunctor}
that there is an equivalence between 
the respective group objects, 
\textit{i.e.} a functorial isomorphism, cf. \cite[III]{et}
$$
H^1(X, \underline{\Gamma})=H^1(\pi_1(X),\Gamma).
$$
When $\Gamma$ is the trivial $\pi_1(X)$- module, 
we have the following characterization
of the fundamental group:
\begin{equation}\label{torsors=maps}
	H^1(X, \underline{\Gamma})={\rm Hom}_{\bf Grp}(\pi_1(X),\Gamma).
\end{equation}

\section{${\bf \Gamma}$-torsors on \XX{f}}
Let us denote, by an abuse of notation, $\mathcal{F}$ a sheaf of groups on $X$.
\begin{definition}
	A (right) $\mathcal{F}$-torsor $\mathcal{E}$ on 
	$X $ is by definition any sheaf of sets on 
	$X$ with (right) $\mathcal{F}$-action 
	which is locally constant, \textit{i.e.} 
	locally isomorphic to the constant sheaf 
	$\mathcal{F}$ with right action given by translation. 
	Equivalently, it can be defined as a locally constant sheaf 
	$\mathcal{E}$ with a transitive action of $\mathcal{F}$ such that for any $U \in ob(X)$
	there exists a covering family $\{ U_i \to U \}$ such that $\mathcal{E}(U_i) \neq \emptyset$.
\end{definition}

In geometric terms, a $\underline{\Gamma}$-torsors arising from a group $\Gamma$ 
may be viewed
as a site $E$ with an action $T$ of $\Gamma$, together with a map
$$
\mathcal{E}:  E \longrightarrow X
$$
which is trivial on a basis of $X$:
\begin{equation*}
	\begin{tikzcd}[row sep=1.8pc, column sep=0.8pc]
		E \arrow[d] &	E_{U} \arrow[d]&\arrow[l, hook'] 
		 \arrow[d] \arrow[dl, phantom, "\square"]S \times \Gamma \\
		X & 	U & 	\arrow[l, hook'] S 
	\end{tikzcd}
\end{equation*}
If, as usual, the topology on $X$ were given by coverings, 
we could use \v{C}ech 1 co-cycles with values in $\Gamma$ to classify 
$\underline{\Gamma}$-torsors.
Recall the following.
\begin{fact}\
	\begin{itemize}
		\item The category whose object consists of 
		$\mathcal{F}$- torsors, and arrows are morphisms of $\mathcal{F}$-torsor (\textit{i.e.} $\mathcal{F}$-
		equivariant map) is a groupoid;
		\item Let $(X, \mathcal{O})$ be a ringed site, and 
		$\mathcal{F}$ a $\mathcal{O}$ module. Then there is an equivalence of the above groupoid 
		with the groupoid whose object are extensions 
		$$
		\{ 0 \to  \mathcal{F} \to E \to \mathcal{O} \to 0\},
		$$
		and arrows are morphisms of extensions. In particular, we have
		$$
		{\rm Ext}^1(\mathcal{O}, \mathcal{F})= \{ \mathcal{F} \mbox{-torsors} \}/\cong.
		$$
	\end{itemize}
\end{fact}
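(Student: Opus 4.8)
The plan is to exhibit a functor from the groupoid of extensions to the groupoid of torsors, given by taking the fibre over the unit section, and to check that it is an equivalence of groupoids; the displayed identity will then drop out by passing to isomorphism classes. I would treat the two bullets in turn, and keep the torsor theory at the level of standard, locally checkable statements.

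For the first bullet, I would show that every $\mathcal{F}$-equivariant morphism $\phi:\mathcal{E}\to\mathcal{E}'$ of $\mathcal{F}$-torsors is automatically an isomorphism. Being an isomorphism of sheaves is a local property, so it suffices to check this after restricting to a covering on which both $\mathcal{E}$ and $\mathcal{E}'$ are trivial. Over such a piece one has $\mathcal{E}\cong\mathcal{F}\cong\mathcal{E}'$ with $\mathcal{F}$ acting on itself by right translation, and any equivariant endomorphism $\psi$ of $\mathcal{F}$ satisfies $\psi(x+f)=\psi(x)+f$; evaluating at $x=0$ shows $\psi$ is translation by $\psi(0)$, hence invertible. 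Thus $\phi$ is locally invertible, and the locally defined inverses glue, by their uniqueness, to a global inverse. Every arrow is therefore invertible and the category is a groupoid; the same computation identifies $\mathrm{Aut}(\mathcal{E})$ with $\Gamma(X,\mathcal{F})$, which I will reuse below.

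For the second bullet, I would define $T$ by sending an extension $0\to\mathcal{F}\to E\xrightarrow{p}\mathcal{O}\to 0$ to $\mathcal{E}:=p^{-1}(1)$, the subsheaf of $E$ of sections mapping to the unit section $1\in\Gamma(X,\mathcal{O})$. The fibres of $p$ are pseudo-torsors under $\ker p=\mathcal{F}$, and since $p$ is an epimorphism of sheaves $\mathcal{E}$ is locally non-empty, hence a genuine $\mathcal{F}$-torsor; a morphism of extensions restricts to an $\mathcal{F}$-equivariant map on these fibres, so $T$ is a functor of groupoids. For the quasi-inverse I would reconstruct $E$ from a torsor $\mathcal{E}$ by gluing: on a trivialising covering with transition sections $f_{ij}\in\mathcal{F}$ one glues copies of $\mathcal{F}\oplus\mathcal{O}$ along the unipotent $\mathcal{O}$-linear automorphisms $(g,a)\mapsto(g+af_{ij},a)$, the cocycle condition guaranteeing descent, and the fibre of the resulting $E$ over $1$ is visibly $\mathcal{E}$. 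Essential surjectivity and full faithfulness of $T$ then follow, the latter because on each side the relevant $\mathrm{Hom}$-sets are empty or torsors under $\Gamma(X,\mathcal{F})=\mathrm{Hom}_{\mathcal{O}}(\mathcal{O},\mathcal{F})$, matched by $T$.

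Finally, passing to $\pi_0$ of these equivalent groupoids yields the displayed bijection: $\pi_0$ of the extension groupoid is by definition the Yoneda group $\mathrm{Ext}^1(\mathcal{O},\mathcal{F})$, while $\pi_0$ of the torsor groupoid is $\{\mathcal{F}\text{-torsors}\}/\cong$. As a consistency check, both are canonically $H^1(X,\mathcal{F})$: the local-to-global spectral sequence together with $\mathcal{E}\mathrm{xt}^i_{\mathcal{O}}(\mathcal{O},\mathcal{F})=0$ for $i>0$ and $\mathcal{H}\mathrm{om}_{\mathcal{O}}(\mathcal{O},\mathcal{F})=\mathcal{F}$ gives $\mathrm{Ext}^1(\mathcal{O},\mathcal{F})\cong H^1(X,\mathcal{F})$, and the standard classification of torsors gives the other. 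The main obstacle is the functorial construction of the quasi-inverse: since the topology on $X$ is presented by sieves rather than by a distinguished basis of coverings, the \v{C}ech gluing above must either be justified on a generating family of covering sieves or replaced by the intrinsic pushout of $\mathcal{E}$ along $\mathcal{F}\hookrightarrow\mathcal{F}\oplus\mathcal{O}$; checking that this pushout is an extension whose fibre over $1$ returns $\mathcal{E}$ is the one genuinely technical point.
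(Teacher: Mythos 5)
Your argument is correct, but it organises the equivalence differently from the paper. The paper passes from an extension $0\to\mathcal F\to E\xrightarrow{p}\mathcal O\to 0$ to a torsor by first choosing local splittings $s_i:\mathcal O|_{U_i}\to E|_{U_i}$, so that $E$ is locally the trivial extension, and then forming the sheaf $\mathcal Iso(\mathcal F\oplus\mathcal O,\,E)$ of local trivialisations; this is a torsor under the automorphism sheaf of the trivial extension, which is identified with $\mathcal F$ acting through the unipotent matrices $\left(\begin{smallmatrix}1&s\\0&1\end{smallmatrix}\right)$, and the inverse direction is absorbed into the general fact that $\mathcal I\times^{\mathcal F}(\mathcal F\oplus\mathcal O)$ recovers $E$. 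You instead take the fibre $p^{-1}(1)$ over the unit section; the two constructions are canonically isomorphic (a local splitting $s$ corresponds to the local section $s(1)$ of the fibre), so this is a change of presentation rather than of substance. What your route buys is a more concrete torsor and a cleaner treatment of the first bullet, which the paper disposes of in one line and you actually verify by the local translation argument; what the paper's route buys is that the quasi-inverse comes for free from the contracted-product formalism, whereas you have to build $E$ back by \v{C}ech gluing or by a pushout, and you are right to flag that on a site presented by sieves this gluing is the one step that needs care (the intrinsic contracted product $\mathcal E\times^{\mathcal F}(\mathcal F\oplus\mathcal O)$, with $\mathcal F$ acting unipotently, is the clean way to do it and inherits the $\mathcal O$-module structure because that action is $\mathcal O$-linear). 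Your closing identification of $\pi_0$ of both groupoids, and the consistency check against $H^1(X,\mathcal F)$, match the intended content of the displayed formula.
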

	\begin{proof}
	The first assertion follows from the fact that any morphism of $\mathcal{F}$- torsors is an isomorphism. 
	The second assertion is well known and we will provide a sketch
	of the proof, which will be found in \cite{torsor}. 
	Let $\mathcal{F},\mathcal{G}$ be two locally isomorphic 
	sheaves on $X$. Then, let
$\mathcal{I}\coloneqq\mathcal{I}so(\mathcal{F},\mathcal{G})$ be 
the canonical sheaf on $X$ given by
	$U \mapsto {\rm Iso}(\mathcal{F}|_U,\mathcal{G}|_U)$.
	Recall that $\mathcal{I}$ is a torsor for the right action of $\Gamma\coloneqq Aut(\mathcal{F})$
	given by $A_{\gamma}: \varphi \mapsto \varphi \gamma$. 
	Note that the quotient of $\mathcal{I}\times \mathcal{F}$
	by the natural right diagonal action, is in fact isomorphic to $Y$.
	In order to conclude, we apply the above
	 fact to the following situation: given an extension of 
	 $\mathcal{O}$-modules
	$$
	\{ 0 \to  \mathcal{F} \to E \to \mathcal{O} \to 0\},
	$$
	 we observe that locally there is no obstruction in lifting the 
	 map on the right, whence we have local section 
	 $s_i: \mathcal{O}|_{U_i} \to E|_{U_i}$, yielding a local splitting of $E$. Therefore, $E$
	 is locally isomorphic to $\mathcal{F}\otimes \mathcal{O}$.
	 Applying the above fact we get a $\mathcal{F}$-torsor 
	 $\mathcal{I}$,
, where $s \in \mathcal{F}$ acts on $\mathcal{F}\otimes \mathcal{O}$ by 
	  the diagonal matrix $\begin{pmatrix}
	  	s & 0 \\
	  	0  &  1
	  \end{pmatrix}$.
\end{proof}
As a consequence, we have
\begin{factdefinition}
	$$
	\mathbb{H}^1(\XX{f}, \underline{\Gamma}_{\bul})=
	\{ \mbox{isomorphism classes of  } \,\underline{\Gamma}_{\bul}-\mbox{torsors on } \XX{f}\}
	$$
\end{factdefinition}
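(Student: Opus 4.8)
The plan is to reduce the statement to the general principle, valid on any site, that first cohomology with coefficients in a sheaf of groups classifies torsors, and then to apply it to the site $\XX{f}$ of Definition \ref{defdynsite1} with the sheaf of groups $\underline{\Gamma}_{\bul}$. Here, by the conventions of \ref{notationcoupleXf}, the coefficient object is the constant sheaf $\underline{\Gamma}$ on $X$ equipped with its canonical identity action, i.e. $\underline{\Gamma}_{\bul}=(\underline{\Gamma},\mathrm{id})$ (the pullback of a constant sheaf being constant). Via the equivalence $Sh(\XX{f})\simeq Sh(X)_f$ of Theorem \ref{equivalencelemmaXf}, a $\underline{\Gamma}_{\bul}$-torsor is precisely a $\underline{\Gamma}$-torsor $\mathcal{E}$ on $X$ together with a compatible action $\varphi\colon f^*\mathcal{E}\to\mathcal{E}$. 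Since $\XX{f}$ is a bona fide small site, the whole torsor formalism of \cite{torsor}, \cite{et}, \cite[XIII.4.5]{sga1} applies verbatim, and it is this that I would invoke.

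First I would record, from the preceding Fact, that the $\underline{\Gamma}_{\bul}$-torsors on $\XX{f}$ form a groupoid (every morphism of torsors is an isomorphism), so that the right-hand side is a well-defined set of isomorphism classes; and that, in the abelian case, this set is identified with $\mathbb{E}{\rm xt}^1(\underline{\Z}_{\bul},\underline{\Gamma}_{\bul})=\mathbb{H}^1(\XX{f},\underline{\Gamma}_{\bul})$ through the torsor/extension dictionary already established there, the unit object being $\underline{\Z}_{\bul}$. Then I would make the bijection explicit by the sieve-theoretic \v{C}ech recipe transported to $\XX{f}$: a torsor is locally trivial, so it becomes isomorphic to $\underline{\Gamma}_{\bul}$ with translation action over some covering sieve $R_u\hookrightarrow\underline{(U,u)}$; by \ref{coveringsievesXfcor} such a sieve is a dynamical sieve $(R,\underline{u}')$, and comparing two trivializations on the relevant fibre products produces a $1$-cocycle valued in $\underline{\Gamma}_{\bul}$ whose class in $\mathbb{H}^1$ is independent of the chosen trivialization. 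Conversely, a $1$-cocycle glues the trivial pieces into a torsor, and cohomologous cocycles give isomorphic torsors; this is the usual bookkeeping, now carried out inside $Sh(X)_f$.

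The hard part will be not the homological algebra but the verification that local triviality in $\XX{f}$ is genuinely controlled by the \emph{dynamical} covering sieves of \ref{coveringsievesXfcor}, and not merely by covering sieves of $X$: a trivialization of a $\underline{\Gamma}_{\bul}$-torsor must respect the action $\varphi$, so the trivializing datum is an $f$-invariant (site-theoretic) cover on which the fibre admits a nowhere-vanishing $f$-invariant section, exactly in the spirit of the supervisor's note following Fact \ref{intro:bundlefact}. To handle this I would use \ref{basissievesXf} to replace an arbitrary covering sieve by one of the cofinal family $\{b_!R:R\in J_X(U)\}$, on which the $f$-equivariance is automatic by construction, and then check that refining along the shift $\tilde f$ does not change the cocycle class. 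Once this cofinality and $f$-compatibility are in place, the remaining identification of \v{C}ech $H^1$ with the derived-functor $\mathbb{H}^1$ in degree one is the standard one and holds on any site, completing the argument.
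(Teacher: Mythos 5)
Your proposal is correct and follows essentially the same route as the paper, which offers no explicit proof at all: it states the identity as an immediate consequence of the preceding Fact (the groupoid of torsors and the torsor/extension dictionary ${\rm Ext}^1(\mathcal{O},\mathcal{F})=\{\mathcal{F}\text{-torsors}\}/\cong$) applied to the bona fide site $\XX{f}$. Your additional care about local triviality along dynamical sieves and the cofinal family $\{b_!R\}$ is a sound elaboration of details the paper leaves implicit, not a departure from its argument.
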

By means of \ref{equivalencelemmaXf} we have the following description
of $\mathbb{H}^1(\XX{f}, \underline{\Gamma}_{\bul})$.
\begin{lemma}
	The set of $\underline{\Gamma}_{\bul}$-torsors on $\XX{f}$ is in bijective correspondence with the set of
	$\underline{\Gamma}$-torsors $\mathcal{E}$ on $	X$ with an action of $f$
	$$
	f^*\mathcal{E} \to \mathcal{E},
	$$
	which is locally a self-bijection of $\underline{\Gamma}$.
\end{lemma}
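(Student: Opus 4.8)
The plan is to transport the statement across the equivalence of Theorem \ref{equivalencelemmaXf}, under which a sheaf of sets on $\XX{f}$ is the same as a pair $(\mathcal{E},\varphi)$ consisting of a sheaf $\mathcal{E}$ on $X$ together with an action $\varphi: f^*\mathcal{E}\to\mathcal{E}$, and a morphism is a commutative square. Since $\mbox{\boldmath $\pi^*$}$ is an equivalence it preserves finite limits, hence group objects and their actions; moreover its composite with the forgetful functor to $Sh(X)$ is the left-exact inverse image $\pi^*$ of the geometric morphism \eqref{geometricmorphism}. In particular the sheaf of groups $\underline{\Gamma}_{\bul}$ on $\XX{f}$ is carried to the pair $(\underline{\Gamma},\mathrm{can})$, where $\mathrm{can}: f^*\underline{\Gamma}\overset{\sim}{\to}\underline{\Gamma}$ is the canonical isomorphism identifying the pullback of a constant sheaf with the constant sheaf. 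I would first record these compatibilities, so that a $\underline{\Gamma}_{\bul}$-module structure on a sheaf of $\XX{f}$ becomes exactly a right $\underline{\Gamma}$-action on $\mathcal{E}$ for which $\varphi$ is $\underline{\Gamma}$-equivariant (with $\underline{\Gamma}$ acting on $f^*\mathcal{E}$ through $\mathrm{can}$).

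Given a $\underline{\Gamma}_{\bul}$-torsor $\mathcal{T}$ on $\XX{f}$, applying $\mbox{\boldmath $\pi^*$}$ yields such a pair $(\mathcal{E},\varphi)$ with a right $\underline{\Gamma}$-action. The substantive point is local triviality: I would use the description of the covering sieves of $\XX{f}$ as dynamical sieves (\ref{defsievesonXf}, \ref{coveringsievesXfcor}), together with the fact that $\pi^{-1}$ of a covering sieve $R_u$ on $(U,u)$ restricts to a covering sieve $R$ on $U$ in $X$, to conclude that a trivialisation of $\mathcal{T}$ over $R_u$ produces a trivialisation of $\mathcal{E}$ over $R$. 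Hence $\mathcal{E}$ is a $\underline{\Gamma}$-torsor on $X$, and $\varphi$ is a morphism of torsors by equivariance. For the converse I would apply the inverse equivalence $(\cdot)^f$ of Theorem \ref{equivalencelemmaXf} to a pair $(\mathcal{E},\varphi)$ with $\mathcal{E}$ a $\underline{\Gamma}$-torsor and $\varphi$ equivariant, and trivialise the resulting sheaf over a dynamical sieve obtained by pairing a trivialising cover $R$ of $\mathcal{E}$ with the induced map $\underline{u}': f^{-1}R\to R$.

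It remains to match the condition that $\varphi$ be \emph{locally a self-bijection of $\underline{\Gamma}$}. Since $f$ is a morphism of sites, $f^*$ is exact and carries coverings to coverings and $\underline{\Gamma}$ to $\underline{\Gamma}$; therefore $f^*\mathcal{E}$ is again a $\underline{\Gamma}$-torsor on $X$. By the groupoid property of torsors recorded in the Fact preceding this Lemma, the $\underline{\Gamma}$-equivariant map $\varphi: f^*\mathcal{E}\to\mathcal{E}$ is automatically an isomorphism, and an isomorphism of $\underline{\Gamma}$-torsors is, over any cover trivialising both sides, left translation by a section of $\underline{\Gamma}$, i.e. a self-bijection of $\underline{\Gamma}$. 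This is precisely the asserted condition, so the two data sets coincide.

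The main obstacle I anticipate is the faithful comparison of local triviality on the two sites: a covering of $(U,u)$ in $\XX{f}$ involves the whole backward orbit of $U$, so a priori being trivial on $\XX{f}$ is a stronger requirement than being trivial on $X$ together with a pointwise condition on $\varphi$. The explicit dictionary of \ref{coveringsievesXfcor}, which presents a covering sieve of $(U,u)$ as a pair $(R,\underline{u}')$ with $R\in J_X(U)$ and $\underline{u}': f^{-1}R\to R$ compatible with $u$, is what makes this comparison manageable: trivialising $\mathcal{T}$ over $(R,\underline{u}')$ is equivalent to trivialising $\mathcal{E}$ over $R$ and simultaneously rendering $\varphi$ a self-bijection of $\underline{\Gamma}$ there, which is exactly the content of the two conditions in the statement.
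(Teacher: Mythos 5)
Your argument is correct and follows essentially the same route as the paper: transport the statement across the equivalence of Theorem \ref{equivalencelemmaXf} together with the identification $f^*\underline{\Gamma}\cong\underline{\Gamma}$. The paper's own proof is only a two-line sketch of this, so the extra care you take with local triviality (via the dynamical-sieve dictionary of \ref{coveringsievesXfcor}) and with the automatic invertibility of $\underline{\Gamma}$-equivariant maps of torsors supplies details the paper leaves implicit.
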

\begin{proof}
The sheaf $f^*\underline{\Gamma}$ is a constant sheaf with values in 
$\Gamma$ and hence coincides with $\underline{\Gamma}$.
We deduce that, \ref{equivalencelemmaXf}, each self map
$$
\phi: \Gamma \to \Gamma,
$$
defines a sheaf $\underline{\Gamma}_{\bul}^{\phi}$ on $\XX{f}$: it is
the sheaf $\underline{\Gamma}$ on $X$ with action given by $\phi$.

\end{proof}

\begin{example}
Translation by an element $x \in \Gamma$ of the group defines a 
$\underline{\Gamma}_{\bul}$- torsor on $\XX{f}$ which, as a locally constant sheaf,
is the sheaf $\underline{\Gamma}_{\bul}^{x}$ defined as follows.
	$$
	\{\underline{\Gamma}_{\bul}^{x}\coloneqq (\underline{\Gamma}, T_x) : x\in \Gamma\}
	$$
	where $T_x: \underline{\Gamma} \to \underline{\Gamma}$ denotes the translation by $x$ on $\Gamma$.\\
	This are, by construction,
	non-trivial $\underline{\Gamma}_{\bul}$-torsor on $\XX{f}$ 
	which pull back to the trivial $\underline{\Gamma}$-torsor on ${\bf 	X}$. 
We abuse notation, when there is no room
for confusion, by writing $\underline{\Gamma}_{\bul}$ for the sheaf on $\XX{f}$
given by the trivial action, \textit{i.e.} $\phi=id_{\Gamma}$, 
which we call the \textit{constant sheaf} on $\XX{f}$ with values in $\Gamma$.
\end{example}

	Let us consider the following group morphism
\begin{equation*}
\begin{tikzcd}[row sep=1.8pc, column sep=1.8pc]
	{\rm H}^1(X, \underline{\Gamma})	\arrow[r, shift left, "f^*"] \arrow[r, shift right, "id"'] & 
	{\rm H}^1(X, \underline{\Gamma}),
\end{tikzcd}
\end{equation*}
where $f^*$ is the morphism that assigns to any 
(isomorphism class of) $\Gamma$-torsor $\mathcal{E}$
the (isomorphism class of) $\Gamma$-torsor $f^*\mathcal{E}$.\\
The following is a consequence of Lemma \ref{intro:lemma1}.
\begin{lemma}\label{top:Gammatorsorlemma}
	\begin{enumerate}
		\item The map 
		$$T: \Gamma \longrightarrow 
		\mathbb{H}^1(\XX{f}, \underline{\Gamma}_{\bul}),\;
		x \in \Gamma \mapsto \underline{\Gamma}_{\bul}^{x}$$ 
		is injective;
		\item The image of $T$ is the kernel of the 
		restriction
			\begin{equation}\label{top:restrictiontorsor}
			\begin{tikzcd}[row sep=1.8pc, column sep=1.8pc]
				\mathbb{H}^1(\XX{f}, \underline{\Gamma}_{\bul}) 
				\arrow[r]  & 
				\ker(f^* -id) \subseteq  {\rm H}^1(X, \underline{\Gamma}).
			\end{tikzcd}
		\end{equation}
	    Therefore, $T$ is
		an isomorphism iff 
		there are no non-trivial $\underline{\Gamma}$-torsors $\mathcal{E}$
		on $X$ such that 
		$$
		f^*\mathcal{E} \cong \mathcal{E}.
		$$		
	\end{enumerate}
\end{lemma}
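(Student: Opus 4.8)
The plan is to obtain $\mathbb{H}^1(\XX{f}, \underline{\Gamma}_{\bul})$ from the long exact sequence of Lemma \ref{intro:lemma1}, observing that $\mathbb{H}^i(\XX{f}, -) = \mathbb{E}{\rm xt}^i(\underline{\Z}_{\bul}, -)$. First I would specialise \eqref{intro:longexactseq} to $\mathcal{F}_{\bul} = \underline{\Z}_{\bul}$ and $\mathcal{G}_{\bul} = \underline{\Gamma}_{\bul}$, both equipped with the trivial action. Since $f^*$ preserves constant sheaves, $f^*\underline{\Z} = \underline{\Z}$ and $f^*\underline{\Gamma} = \underline{\Gamma}$, so ${\rm Ext}^q(f^*\underline{\Z}, \underline{\Gamma}) = H^q(X, \underline{\Gamma})$; and by \eqref{differencemap}, using that the actions on $\underline{\Z}_{\bul}$ and $\underline{\Gamma}_{\bul}$ are both the identity, the differential $d^{0,q}$ becomes exactly $\mathrm{id} - f^*$ on $H^q(X, \underline{\Gamma})$.

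Next I would invoke connectedness of $X$: it gives $H^0(X, \underline{\Gamma}) = \Gamma$, on which $f^*$ acts as the identity, so that $d^{0,0} = 0$ and ${\rm coker}(d^{0,0}) = \Gamma$. Feeding this into Lemma \ref{intro:lemma1} (equivalently Corollary \ref{ext1_cor}) collapses the relevant portion to a short exact sequence
\begin{equation*}
	0 \longrightarrow \Gamma \longrightarrow \mathbb{H}^1(\XX{f}, \underline{\Gamma}_{\bul}) \overset{r}{\longrightarrow} \ker(f^* - \mathrm{id}) \longrightarrow 0,
\end{equation*}
in which $r$ lands in $\ker(d^{0,1}) = \ker(f^* - \mathrm{id}) \subseteq H^1(X, \underline{\Gamma})$, i.e. is precisely the restriction \eqref{top:restrictiontorsor}. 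Injectivity of the left map, and the equality of its image with $\ker(r)$, are then immediate from exactness.

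It remains to match the two maps of this sequence with the geometric ones in the statement. For $r$ I would use the preceding Lemma: a $\underline{\Gamma}_{\bul}$-torsor on $\XX{f}$ is a pair $(\mathcal{E}, \varphi)$ with $\mathcal{E}$ a $\underline{\Gamma}$-torsor on $X$ and $\varphi \colon f^*\mathcal{E} \to \mathcal{E}$, and $r$ simply forgets $\varphi$; existence of $\varphi$ forces $f^*\mathcal{E} \cong \mathcal{E}$, whence $\ker(r)$ is the class of pairs with trivial underlying $X$-torsor, namely the $(\underline{\Gamma}, T_x)$. For the left inclusion I would trace $x \in \Gamma = {\rm Hom}(f^*\underline{\Z}, \underline{\Gamma})$ through the cocycle description of ${\rm coker}(d^{0,0}) \hookrightarrow \mathbb{E}{\rm xt}^1$ furnished by Lemma \ref{ext1_lemma} (the cocycle \eqref{cocycle}): the extension it produces has trivial underlying $X$-torsor and action the translation $T_x$, i.e. it is exactly $\underline{\Gamma}_{\bul}^{x}$. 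This identifies the inclusion with $T$, giving assertion (1); combined with the torsor description of $r$ it yields assertion (2) and the criterion that $T$ is an isomorphism precisely when $\ker(f^* - \mathrm{id}) = 0$, i.e. when there is no nontrivial $f$-invariant $\underline{\Gamma}$-torsor on $X$.

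I expect the one genuinely non-formal point to be this identification of the abstract edge map with $T$, the rest being bookkeeping inside the long exact sequence. As a safeguard I would also record the direct computation that $(\underline{\Gamma}, T_x) \cong (\underline{\Gamma}, T_y)$ over $\XX{f}$ forces $x = y$---the torsor automorphisms of $\underline{\Gamma}$ being the translations, which are $f^*$-invariant, so the compatibility condition reads $T_z T_x = T_y T_z$---which reproves injectivity independently of the cocycle computation.
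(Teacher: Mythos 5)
Your proposal follows essentially the same route as the paper: the paper also specialises the spectral sequence of Lemma \ref{intro:lemma1} to the constant sheaves, identifies the coequalizer of $f^*,\mathrm{id}$ on $H^0(X,\underline{\Gamma})=\Gamma$ as the subgroup injecting into $\mathbb{H}^1(\XX{f},\underline{\Gamma}_{\bul})$, and identifies the cokernel of that injection with $\ker(f^*-\mathrm{id})\subseteq H^1(X,\underline{\Gamma})$. Your extra care in tracing the edge maps to $T$ and to the forgetful restriction, and your independent check that $(\underline{\Gamma},T_x)\cong(\underline{\Gamma},T_y)$ forces $x=y$, only fills in details the paper leaves implicit (the latter is close in spirit to the remark following the paper's proof).
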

\begin{proof}
	The spectral sequence computing 
	$\mathbb{H}^{\ast}(\XX{f}, \underline{\Gamma}_{\bul})$ is the following:
\begin{center}
	\begin{figure}[H]
		\begin{tikzpicture}
			\pgfplotsset{ticks=none}
			\draw[->] (-4,-1) -- (8.5,-1) node[below] {\LARGE{p}} -- (9,-1);
			\draw (0.0,-1) node[below] {${\bf 0}$};	
			\draw (3.5,-1) node[below] {${\bf 1}$};
			\draw (-4,0) node[left] {${\bf 0}$};
			\draw (-4,2.5) node[left] {${\bf 1}$};
			\draw (3.5,0) node[] {${\rm H}^0(X, \underline{\Gamma})$};
			\draw[->] (-4,-1) -- (-4,6) node[left] {\LARGE{q}} -- (-4,6.5);
			\draw (0.0,0.0) node[] {${\rm H}^0(X, \underline{\Gamma})$};
			\draw (0,2.5) node[] {${\rm H}^1(X, \underline{\Gamma})$};
			\draw (3.5,2.5) node[] {${\rm H}^1(X, \underline{\Gamma})$};
			\draw (7,0) node[] {$0$};
			\draw (7,2.5) node[] {$0$};
			\draw[->] (1.25,-0.1) -- (2.25,-0.1);
			\draw[->] (1.25,0.1) -- (2.25,0.1);
	    	\draw[->] (1.25,2.4) -- (2.25,2.4);
	    	\draw[->] (1.25,2.6) -- (2.25,2.6);
			\draw[->] (4.8,0) -- (5.5,0);
			\draw (1.75,-0.1) node[below] {\tiny$id$};
				\draw (1.75,0) node[rotate=90]  {\small $=$};
			\draw (1.75,0.1) node[above] {\tiny$f^*$};
			\draw (1.75,2.4) node[below] {\tiny$id$};
			\draw (1.75,2.6) node[above] {\tiny$f^*$};	
			\draw (8,2.5) node[] {$\cdots$};
			\draw (8,0) node[] {$\cdots$};				
			\draw (6,0) node[left] {\Large{${\bf \ast}$}};
			\draw[->] (5.3,0.2) -- (2.8, 0.8);
			\draw (2.5,1) node[left] {$\mathbb{H}^1(\XX{f},\underline{\Gamma}_{\bul})$};
			\draw[->] (0,1.472) -- (-2.5, 2);
			\draw (-2.8,2.5) node[left] {\Large{${\bf \ast}$}};
			\draw[->] (-2.6,2.5) -- (-1.1, 2.5);
		\end{tikzpicture}	
	\caption{$E_1$ of the spectral sequence} \label{Gammatorsorfigure}
	\end{figure}
\end{center}
The coequalizer on the bottom row is exactly 
${\rm H}^0(X, \underline{\Gamma})=\Gamma$ 
which is thus injective in $\mathbb{H}^1(\XX{f},\underline{\Gamma}_{\bul})$. 
The co-kernel of this injection consists on 
$\underline{\Gamma}$-torsors $\mathcal{E}$ on $X$
which are in the kernel of the top parallel arrows, \textit{i.e} satisfy 
the condition $f^*\mathcal{E} \cong \mathcal{E}$. 
In particular, it is trivial when there are no non-trivial 
$\underline{\Gamma}$-torsors on $X$, \textit{e.g} when $X$ is simply connected.
\end{proof}
\begin{remark}
The proof of \ref{top:Gammatorsorlemma} can be simplified
and we do not really need \ref{ext1_lemma}. In fact, 
in any abelian category
the automorphism of extensions $Aut(0 \to B \overset{h}{\rightarrow} E \to A \overset{k}{\rightarrow} 0 )$
are isomorphic to ${\rm Hom}(A,B)$ via $f \mapsto f-id_E$. Note in fact that
$k(f-id_E)=0$ and also that $(f-id_E)h=0$.
Therefore, (isomorphism classes of) 
$\underline{\Gamma}$-torsors $\mathcal{E}$ 
that are isomorphic to $f^*\mathcal{E}$,
\textit{i.e.} the class of $\mathcal{E}$ lies in the kernel of \eqref{top:restrictiontorsor}, are
in bijective correspondence with the automorphism 
group of extension, cf. Appendix A, 
in 
${\rm H}^1(X, \underline{\Gamma})\cong {\rm Ext}^1(\mathcal{O}_X, \underline{\Gamma})$.
\end{remark}

\begin{cor}\label{cor:pi1}
	The fundamental group of $\XX{f}$ is 
	$$
	\pi_1(\XX{f})=\Z
	$$
	if $X$ 
	is simply connected, and it is an extension of $\Z$ by a quotient of 
	$\pi_1(X)$ otherwise.
\end{cor}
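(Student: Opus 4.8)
The plan is to use the torsor-theoretic characterization \eqref{torsors=maps} of the fundamental group. First note that $\XX{f}$ is connected: by the $\mathbb{H}om$-description of Lemma \ref{intro:lemma1} one has $\mathbb{H}^0(\XX{f},\underline{\Z}_{\bul})=\ker\big(id-id\colon \mathbb{H}^0(X,\underline{\Z})\to \mathbb{H}^0(X,\underline{\Z})\big)=\Z$, since the action on the constant sheaf $\underline{\Z}_{\bul}$ is the identity and $X$ is connected. Hence $\pi_1(\XX{f})$ is determined, functorially in the finite (resp. discrete) group $\Gamma$, by the identification $\mathbb{H}^1(\XX{f},\underline{\Gamma}_{\bul})={\rm Hom}_{\bf Grp}(\pi_1(\XX{f}),\Gamma)$ (taken modulo conjugation when $\Gamma$ is non-abelian), so it suffices to compute this functor and exhibit a representing pro-group. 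The essential input is Lemma \ref{top:Gammatorsorlemma}, whose spectral sequence (Figure \ref{Gammatorsorfigure}) degenerates at $E_2$ and yields, for every $\Gamma$, a natural short exact sequence
\[
0 \longrightarrow \Gamma \overset{T}{\longrightarrow} \mathbb{H}^1(\XX{f},\underline{\Gamma}_{\bul}) \longrightarrow \ker\big(f^*-id\colon {\rm H}^1(X,\underline{\Gamma})\to {\rm H}^1(X,\underline{\Gamma})\big) \longrightarrow 0 .
\]

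When $X$ is simply connected, one has ${\rm H}^1(X,\underline{\Gamma})=0$ for every $\Gamma$, so by Lemma \ref{top:Gammatorsorlemma}(2) the map $T$ is an isomorphism: $\mathbb{H}^1(\XX{f},\underline{\Gamma}_{\bul})\cong\Gamma={\rm Hom}_{\bf Grp}(\Z,\Gamma)$, functorially in $\Gamma$. By the Yoneda-type uniqueness of the representing pro-group this forces $\pi_1(\XX{f})=\Z$ (its pro-finite completion $\widehat{\Z}$ in the pro-finite case), as claimed. As a special case ($X=pt$) this records $\pi_1(B_{\N})=\Z$, which we use below.

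In general, the equivalence \eqref{fibrefunctor} identifies ${\rm H}^1(X,\underline{\Gamma})$ with ${\rm Hom}_{\bf Grp}(\pi_1(X),\Gamma)$ and $f^*$ with precomposition by the endomorphism $f_*\colon \pi_1(X)\to \pi_1(X)$ induced by $f$; therefore $\ker(f^*-id)={\rm Hom}_{\bf Grp}(Q,\Gamma)$, where $Q\coloneqq \pi_1(X)/N$ is the largest quotient of $\pi_1(X)$ on which $f_*$ acts as the identity, $N$ being the closed normal subgroup generated by $\{\,g^{-1}f_*(g)\,\}_{g\in\pi_1(X)}$. The sequence above becomes the functorial short exact sequence
\[
0 \longrightarrow {\rm Hom}_{\bf Grp}(\Z,\Gamma) \longrightarrow {\rm Hom}_{\bf Grp}(\pi_1(\XX{f}),\Gamma) \longrightarrow {\rm Hom}_{\bf Grp}(Q,\Gamma) \longrightarrow 0 .
\]
To read a group extension off this I would use two structural morphisms of sites: the projection $\pi\colon X\to \XX{f}$ of \ref{tgeomorph}, and the collapse $q\colon \XX{f}\to B_{\N}$ induced by the $\N$-equivariant map $X\to pt$ (on sheaves, $q^*$ sends a set with endomorphism $(\Gamma,\phi)$ to the torsor $\underline{\Gamma}_{\bul}^{\phi}$ appearing in the proof of Lemma \ref{top:Gammatorsorlemma}). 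These induce $\pi_1(X)\xrightarrow{\pi_*}\pi_1(\XX{f})\xrightarrow{q_*}\pi_1(B_{\N})=\Z$, with $q_*\pi_*$ trivial because $q\pi$ factors through $pt$. Surjectivity of $q_*$ follows since the translation torsors $\underline{\Z/n\Z}_{\bul}^{\,1}$ are non-trivial by Lemma \ref{top:Gammatorsorlemma}(1), so the composites $\pi_1(\XX{f})\to\Z\to\Z/n\Z$ are all surjective; exactness in the middle and the identification $\ker q_*=\mathrm{im}\,\pi_*\cong Q$ come from the computation of $\ker(f^*-id)$ together with the homotopy exact sequence of the fibration $q$. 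This exhibits $\pi_1(\XX{f})$ as an extension
\[
1 \longrightarrow Q \longrightarrow \pi_1(\XX{f}) \longrightarrow \Z \longrightarrow 1 ,
\]
with $Q$ a quotient of $\pi_1(X)$.

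The hard part will be precisely the last step: upgrading the functorial short exact sequence of (pointed) sets $\Gamma\mapsto \mathbb{H}^1(\XX{f},\underline{\Gamma}_{\bul})$ to an honest extension of pro-groups. For abelian $\Gamma$ the sequence lives in abelian groups and pins down $\pi_1(\XX{f})^{\mathrm{ab}}$, but the full, possibly non-abelian, pro-finite fundamental group requires the Galois formalism: one must verify that $\XX{f}$ is a Galois category with a fibre functor, that $\pi$ and $q$ are exact functors of Galois categories, and that the induced sequence of fundamental groups is exact in the middle and on the right — an analogue of the SGA1 homotopy exact sequence. The genuinely substantive points are the surjectivity of $q_*$ and exactness \emph{at} $\pi_1(\XX{f})$ (rather than merely $\mathrm{im}\,\pi_*\subseteq\ker q_*$); the injectivity in Lemma \ref{top:Gammatorsorlemma}(1) and the non-vanishing of the translation torsors are the key facts that make these go through.
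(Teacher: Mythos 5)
Your proposal is correct and follows essentially the same route as the paper: both rest on the torsor characterization \eqref{torsors=maps} together with the short exact sequence $0\to\Gamma\to\mathbb{H}^1(\XX{f},\underline{\Gamma}_{\bul})\to\ker(f^*-id)\to 0$ supplied by Lemma \ref{top:Gammatorsorlemma}, which is then ``dualized'' to exhibit $\pi_1(\XX{f})$ as an extension of $\Z$ by a quotient of $\pi_1(X)$. The extra structure you add --- the identification of $Q$ as the largest quotient of $\pi_1(X)$ on which $f_*$ acts trivially, the collapse morphism to $B_{\N}$, and the explicit flagging of the passage from a functorial exact sequence of $H^1$'s to an honest exact sequence of (pro-)groups --- is a faithful elaboration of steps the paper's own proof leaves implicit rather than a genuinely different argument.
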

\begin{proof}
It follows from the characterization of fundamental group $\pi_1(\XX{f})$ 
\begin{equation}\label{pi0prodef}
	\mathbb{H}^1(\XX{f},\underline{\Gamma}_{\bul})\overset{\sim}{\rightarrow}
	{\rm Hom}_{ \Z}(\pi_1(\XX{f}), \Gamma).
\end{equation}
We have seen that if $X$ 
is simply connected, there is a bijection 
$$
\mathbb{H}^1(\XX{f},\underline{\Gamma}_{\bul})\cong {\rm Hom}_{\Z}(\Z, \Gamma)\cong \Gamma.
$$
There exists a canonical injection 
	\begin{equation*}
Q \hookrightarrow  \pi_1(\XX{f})
\end{equation*}
where $Q$ is a quotient of $\pi_1(X)$,
\textit{i.e.} the image of $ \pi_1(X)$ under 
the canonical map, cf. \eqref{projectionmapXf},
$$
\pi_*: \pi_1(X) \to  \pi_1(\XX{f}).
$$
The exact sequence 
	\begin{equation*}
	\begin{tikzcd}[row sep=1.8pc, column sep=0.8pc]
		0 \arrow[r] & \Gamma \arrow[r] & \mathbb{H}^1(\XX{f},\underline{\Gamma}_{\bul})
		\arrow[r] & K \arrow[r] & 0
	\end{tikzcd}
\end{equation*}
discussed above is obtained by applying ${\rm Hom}(-,\Gamma)$
to an exact sequence of the form
	\begin{equation*}
	\begin{tikzcd}[row sep=1.8pc, column sep=0.8pc]
		0 \arrow[r] & Q \arrow[r] & \pi_1(\XX{f})
		\arrow[r] &Q'\arrow[r] & 0
	\end{tikzcd}
\end{equation*}
where $Q'$ is the quotient of the above inclusion. We deduce that,
up to isomorphism, $Q'=\Z$.
\end{proof}
In geometric terms the isomorphism \eqref{pi0prodef} can be explained as
follows: up to
 isomorphism, a $\underline{\Gamma}_{\bul}$-torsor $\mathcal{E}$ is determined by
  $\pi^*\mathcal{E}= X\times_{\XX{f}}{ E}$, which is 
   a trivial $\underline{\Gamma}$-torsor on $X$ by assumption and thus it is isomorphic to $\underline{\Gamma}$. 
   The action
   on $\mathcal{E}$ pulls back to an action on $\pi^*\mathcal{E}$, and thus gives a map
   $$
   \phi: \underline{\Gamma} \to \underline{\Gamma},
   $$
   \textit{i.e.} an element $x$ of $\Gamma$ such that $\phi=T_x$.
    On the other hand, by functoriality to each homomorphism 
    $x: \Z \to \Gamma$ (\textit{i.e} an element $x \in \Gamma$), there is associated a homomorphism
    $$
   x_*: \mathbb{H}^1(\XX{f},\underline{\Z}_{\bul}) \to \mathbb{H}^1(\XX{f},\underline{\Gamma}_{\bul}).
    $$
     This map is determined by the image of the generator $\underline{\Z}_1$
     which is the sheaf $\underline{\Z}$ with action given by translation by $1$. The commutativity of
    	\begin{equation*}
    	\begin{tikzcd}[row sep=1.8pc, column sep=1.8pc]
    		\underline{\Z} \arrow[d, "T_1"] \arrow[r, "x"]& \arrow[d, "A"] \underline{\Gamma}  \\
    		\underline{\Z} \arrow[r, "x"]&  \underline{\Gamma}
    	\end{tikzcd}
    \end{equation*}
gives $A=T_x$, and therefore we have
 $x_*(\underline{\Z}_{\bul}^1)=\underline{\Gamma}_{\bul}^x$.\\
\begin{fact}
	If ${X}$ is connected, so is ${{\rm E}_f}$.
	Moreover, if ${X}$ is simply connected, then
$$
	\pi_1({{\rm E}_f})\cong \Z.
$$
\end{fact}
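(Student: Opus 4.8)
The plan is to compute the low-degree cohomology of ${\rm E}_f$ with constant coefficients by means of the degenerate spectral sequence of \ref{exti_cor}, in exact analogy with the proof of \ref{cor:pi1} and \ref{top:Gammatorsorlemma} for $\XX{f}$. Throughout I write $\underline{\Gamma}_{\bul}=(\underline{\Gamma},\underline{\Gamma})$ for the constant sheaf on ${\rm E}_f$, equipped with the trivial action $\gamma_0=\gamma_1={\rm id}$ under the canonical identification $f^*\underline{\Gamma}=\underline{\Gamma}$; global sections and first cohomology are then $\mathbb{H}^0({\rm E}_f,\underline{\Gamma}_{\bul})=\mathbb{E}{\rm xt}^0(\underline{\Z}_{\bul},\underline{\Gamma}_{\bul})$ and $\mathbb{H}^1({\rm E}_f,\underline{\Gamma}_{\bul})=\mathbb{E}{\rm xt}^1(\underline{\Z}_{\bul},\underline{\Gamma}_{\bul})$. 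The essential structural difference with the $\XX{f}$ case of Figure \ref{Gammatorsorfigure} is that, because an object of ${\rm E}_f$ is a \emph{pair}, the two columns $p=0,1$ of the $E_1$-page now carry \emph{doubled} entries: in row $q$ both columns equal $H^q(X,\underline{\Gamma})\times H^q(X,\underline{\Gamma})$, since $g^*\underline{\Gamma}=\underline{\Gamma}$ for $g=0,1$.

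First I would settle connectedness. Taking $\mathcal{F}_{\bul}=\mathcal{G}_{\bul}=\underline{\Z}_{\bul}$ in \ref{exti_cor}, the decisive point is to identify the differential $d_1$ on each row from the explicit maps $s,t$ of \eqref{HomequalizerEfmaps}. As all four structure maps are the identity, one computes $s^q(\theta_0,\theta_1)=(\theta_1,\theta_1)$ and $t^q(\theta_0,\theta_1)=(\theta_0,\theta_0)$, whence $d_1(\theta_0,\theta_1)=(\theta_1-\theta_0,\theta_1-\theta_0)$. Its kernel in row $q=0$ is the diagonal copy of $H^0(X,\underline{\Z})$, which equals $\Z$ because $X$ is connected, cf. \ref{connectedsite}. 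Since $C^{-1}=0$, the short exact sequence of \ref{exti_cor} gives $\mathbb{H}^0({\rm E}_f,\underline{\Z}_{\bul})=\Z$, i.e. ${\rm E}_f$ is connected.

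Next, assuming $X$ simply connected, I would run the same computation with $\mathcal{F}_{\bul}=\underline{\Z}_{\bul}$ and $\mathcal{G}_{\bul}=\underline{\Gamma}_{\bul}$ for an arbitrary finite (resp.\ discrete) group $\Gamma$. The $E_1$-page has entries $H^q(X,\underline{\Gamma})^2$ and the same differential $d_1(\theta_0,\theta_1)=(\theta_1-\theta_0,\theta_1-\theta_0)$. Because $X$ is simply connected, $H^1(X,\underline{\Gamma})=0$, so the kernel term $K^1$ vanishes; meanwhile the cokernel term $C^0$ is $H^0(X,\underline{\Gamma})^2$ modulo the diagonal image, hence naturally isomorphic to $\Gamma$ via $(\theta_0,\theta_1)\mapsto\theta_1-\theta_0$. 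The split sequence $0\to C^0\to\mathbb{H}^1\to K^1\to 0$ of \ref{exti_cor} then yields a natural isomorphism $\mathbb{H}^1({\rm E}_f,\underline{\Gamma}_{\bul})\cong\Gamma$.

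Finally I would conclude by the Yoneda/representability argument of \ref{cor:pi1}. Since ${\rm E}_f$ is a connected site to which the Galois formalism applies, the characterization \eqref{torsors=maps} gives $\mathbb{H}^1({\rm E}_f,\underline{\Gamma}_{\bul})\cong{\rm Hom}_{\bf Grp}(\pi_1({\rm E}_f),\Gamma)$, functorially in $\Gamma$. Comparing with the natural isomorphism $\mathbb{H}^1({\rm E}_f,\underline{\Gamma}_{\bul})\cong\Gamma={\rm Hom}_{\bf Grp}(\Z,\Gamma)$ just established, Yoneda forces $\pi_1({\rm E}_f)\cong\Z$, precisely as in \eqref{pi0prodef}. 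I expect the main obstacle to be not the bookkeeping but the two conceptual checks: pinning down $d_1$ as the difference of the two \emph{independent} structure maps $\varphi_0,\varphi_1$ (so that it collapses onto the diagonal, giving $C^0\cong\Gamma$ and the vanishing $K^1$), and justifying the representability step, namely that ${\rm E}_f$, like $\XX{f}$, possesses a fundamental group classifying $\underline{\Gamma}_{\bul}$-torsors in the sense of \ref{top:Gammatorsorlemma}, so that the functor $\Gamma\mapsto\mathbb{H}^1({\rm E}_f,\underline{\Gamma}_{\bul})$ is genuinely corepresented by $\Z$.
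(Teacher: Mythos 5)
Your proposal is correct and follows essentially the same route as the paper: identify $\mathbb{H}^0({\rm E}_f,\underline{\Gamma}_{\bul})$ with the equalizer of the two structure maps on $\underline{\Gamma}\times\underline{\Gamma}$ (the diagonal, giving connectedness), identify $C^0$ with the cokernel $\cong\Gamma$, kill $K^1$ by simple connectedness of $X$, and conclude $\pi_1({\rm E}_f)\cong\Z$ from the functorial isomorphism $\mathbb{H}^1({\rm E}_f,\underline{\Gamma}_{\bul})\cong{\rm Hom}_{\bf Grp}(\Z,\Gamma)$. If anything, your explicit computation of $s(\theta_0,\theta_1)=(\theta_1,\theta_1)$ and $t(\theta_0,\theta_1)=(\theta_0,\theta_0)$ from \eqref{HomequalizerEfmaps} is more careful than the paper's displayed diagram, and your flagged caveat about corepresentability of $\Gamma\mapsto\mathbb{H}^1({\rm E}_f,\underline{\Gamma}_{\bul})$ is left equally implicit in the paper's ``we conclude as before.''
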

\begin{proof}
	We can define $\underline{\Z}_{\bul}$ as the constant sheaf on ${{\rm E}_f}$
	given by $(\underline{\Z},\underline{\Z})$ with trivial action map, \ref{sheavesonepsteinsite}, 
	which coincides clearly with the pull back of the constant sheaf $\underline{\Z}_{\bul}$ on 
	$\XX{f}$ discussed before. Global sections of $\underline{\Z}_{\bul} $, \eqref{HomequalizerEf},
	are isomorphic to $	\underline{\Z}$ through the diagonal map
		\begin{equation*}
		\begin{tikzcd}[row sep=1.8pc, column sep=1.8pc]
	0 \arrow[r] & 	\underline{\Z} \arrow[r, "\Delta"]	&
	\underline{\Z}\times \underline{\Z} \arrow[r, shift right, "id"']	\arrow[r, shift left, "id"]&
		 	\underline{\Z}\times \underline{\Z}.
		\end{tikzcd}
	\end{equation*}
Hence, the co-kernel above, which is by definition 
$E_2^{1,0}=E_{\infty}^{1,0}$ of the spectral sequence computing
$\mathbb{H}^p({{\rm E}_f}, \underline{\Z}_{\bul})$, 
is isomorphic to $\underline{\Z}$. We conclude as before 
by using the hypothesis that ${\bf 	X}$ is simply connected.
\end{proof}
\section{Complex line bundles over \XX{f}}
Let $X$ be a complex manifold (resp. a differentiable manifold) and let 
$\mathcal{O}_X$ denote its structure sheaf 
(resp. $\mathcal{A}_X$ the sheaf of infinitely differentiable complex valued functions). 
Let us fix a holomorphic 
(resp. infinitely differentiable) self map $f$ of $X$. 
Since sheaf cohomology on $X$ coincides with hypercohomology on $X$,
we will write simply $H^p(X, \mathcal{F})$ for the cohomology groups on $X$.
In this setting, we are able to give a description of the cohomology group
$\mathbb{H}^2(\XX{f}, \underline{\Z})$.
Recall that we can define the exponential map 
$$
\mbox{exp}: \mathcal{A}_X \to  \mathcal{A}_X^*,
$$
where $ \mathcal{A}_X^* \hookrightarrow   \mathcal{A}_X$ is the 
sub-sheaf of $\C^*$-valued functions. 
There is a canonical short exact sequence
	\begin{equation*}
	\begin{tikzcd}[row sep=1.8pc, column sep=1.8pc]
	0 \arrow[r]& \underline{\Z}(1) \arrow[r] &  \mathcal{A}_X \arrow[r, "exp" ]&  \mathcal{A}_X^* \arrow[r] & 0
	\end{tikzcd}
\end{equation*}
The associated long exact sequence reads
$$
\dots \to \mathbb\mathcal{{}H}^1(X, \mathcal{A}_X) 
\to \mathbb{H}^1(X, \mathcal{A}_X^*) \overset{\delta}{\rightarrow}
\mathbb{H}^2(X, \underline{\Z}(1)) \to \mathbb{H}^2(X, \mathcal{A}_X) \to \dots
$$
whence,
\begin{cor}\label{expexactseqX}
  Under our hypothesis $\mathcal{A}_X$-modules are acyclic,
 there exist partitions of unity, cf. \cite{BottTu}. Therefore, the group
    $\mathbb{H}^2(X, \underline{\Z}(1))$
    parametrizes complex line bundles on $X$ up 
    to isomorphism, \textit{i.e} the connecting homomorphism $\delta$
    is an isomorphism.
\end{cor}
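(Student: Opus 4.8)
The plan is to combine the acyclicity of $\mathcal{A}_X$ with the long exact sequence already displayed, and then to invoke the classical \v{C}ech identification of $\mathbb{H}^1(X, \mathcal{A}_X^*)$ with the group of isomorphism classes of complex line bundles on $X$.

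First I would justify that $\mathcal{A}_X$, and indeed every sheaf of $\mathcal{A}_X$-modules, is cohomologically trivial in positive degrees. Since $X$ is a (paracompact) differentiable manifold, the sheaf $\mathcal{A}_X$ of smooth $\C$-valued functions admits partitions of unity subordinate to any open cover, cf. \cite{BottTu}; it is therefore a fine, hence soft, sheaf, and soft sheaves on a paracompact space are acyclic. The same reasoning applies verbatim to any $\mathcal{A}_X$-module, a module over a fine sheaf of rings being itself fine. Hence
\begin{equation*}
	\mathbb{H}^i(X, \mathcal{A}_X)=0, \qquad i\geq 1.
\end{equation*}

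Next I would insert this vanishing into the relevant portion of the long exact sequence,
\begin{equation*}
	\mathbb{H}^1(X, \mathcal{A}_X) \longrightarrow \mathbb{H}^1(X, \mathcal{A}_X^*)
	\overset{\delta}{\longrightarrow} \mathbb{H}^2(X, \underline{\Z}(1)) \longrightarrow \mathbb{H}^2(X, \mathcal{A}_X).
\end{equation*}
Both outer terms vanish by the previous step, so exactness forces $\delta$ to be injective (its kernel is the image of $\mathbb{H}^1(X, \mathcal{A}_X)=0$) and surjective (its cokernel injects into $\mathbb{H}^2(X, \mathcal{A}_X)=0$); therefore $\delta$ is an isomorphism.

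Finally I would identify the source of $\delta$. Because $X$ is paracompact and sheaf cohomology on $X$ agrees with \v{C}ech cohomology for good covers, a class in $\mathbb{H}^1(X, \mathcal{A}_X^*)$ is represented by a $1$-cocycle of nowhere-vanishing smooth transition functions, and two such cocycles are cohomologous exactly when the associated line bundles are isomorphic. Thus $\mathbb{H}^1(X, \mathcal{A}_X^*)$ is the group of isomorphism classes of complex line bundles on $X$, and the isomorphism $\delta$ exhibits $\mathbb{H}^2(X, \underline{\Z}(1))$ as their parameter space, $\delta$ playing the role of the smooth first Chern class. The argument is wholly classical; the only points meriting care are the softness of $\mathcal{A}_X$-modules over a paracompact base and the comparison of the cohomology of the site with ordinary \v{C}ech cohomology, and neither is a genuine obstacle once the paracompactness of $X$—implicit in the existence of partitions of unity—is assumed.
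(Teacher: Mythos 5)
Your proposal is correct and follows the same route the paper takes: partitions of unity make $\mathcal{A}_X$-modules fine, hence acyclic, and plugging this vanishing into the displayed exponential long exact sequence forces $\delta$ to be an isomorphism, with the source identified as the group of line bundle classes. The extra detail you supply (softness of modules over a fine sheaf of rings, the \v{C}ech comparison) is exactly what the paper leaves implicit by citing \cite{BottTu}.
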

A natural exponential exact sequence appears also in $Ab(\XX{f})$, since the natural map
$$
f^*: f^*\mathcal{A}_X \to \mathcal{A}_X,
$$
which defines the sheaf $\mathcal{A}_{\bul}$ on $\XX{f}$, restricts naturally to a map
$$
f^*: f^*\mathcal{A}_X^* \to \mathcal{A}_X^*,
$$
which makes the following diagram commutative
	\begin{equation*}
	\begin{tikzcd}[row sep=1.2pc, column sep=1.8pc]
		0  \arrow[r]& \underline{\Z}(1)  \arrow[r]&  \mathcal{A}_X  
		\arrow[r, "{\rm exp}"] & \mathcal{A}_X^*   \arrow[r]& 0 \\
			0  \arrow[r]& f^* \underline{\Z}(1) \arrow[r]  \arrow[u, equal]&  
			f^*\mathcal{A}_X  \arrow[r, "f^*{\rm exp}"]  \arrow[u, "f^*"] & 
			f^*\mathcal{A}_X^*   \arrow[r]  \arrow[u, "f^*"]& 0.
	\end{tikzcd}
\end{equation*}
Therefore, we get the induced short exact sequence in $Ab(\XX{f})$
	\begin{equation}\label{expseq:Xf}
	\begin{tikzcd}[row sep=1.8pc, column sep=1.8pc]
		0 \arrow[r]& \underline{\Z}(1)_{\bul} \arrow[r] &  
		\mathcal{A}_{\bul} \arrow[r, "{\rm exp}" ]&  \mathcal{A}_{\bul}^* \arrow[r] & 0,
	\end{tikzcd}
\end{equation}
whence the long exact sequence in cohomology 
	\begin{equation}\label{expexactseqXf}
	\begin{tikzcd}[row sep=1.2pc, column sep=1.8pc]
			\mathbb{H}^1(\XX{f}, \underline{\Z}(1)_{\bul})  \arrow[r] &
			 \mathbb{H}^1(\XX{f}, \mathcal{A}_{\bul}) \arrow[r] \arrow[d, phantom, ""{coordinate, name=Z}]
		& \mathbb{H}^1(\XX{f}, \mathcal{A}_{\bul}^*) \arrow[dll,
		rounded corners,
		to path={ -- ([xshift=2ex]\tikztostart.east)
			|- (Z) [near end]\tikztonodes
			-| ([xshift=-2ex]\tikztotarget.west)
			-- (\tikztotarget)}, "\delta"] \\
	\mathbb{H}^2(\XX{f}, \underline{\Z}(1)_{\bul}) \arrow[r]
		&\mathbb{H}^2(\XX{f}, \mathcal{A}_{\bul}).
	\end{tikzcd}
\end{equation}
By means of \ref{ext1_corXf}, we see that the group $\mathbb{H}^2(\XX{f}, \mathcal{A}_{\bul}) $ 
is trivial since $\mathcal{A}_X$ is acyclic, so the connecting homomorphism 
$\delta$ is still onto, but fails in general to be an isomorphism.
In fact, the obstruction given by $\mathbb{H}^1(\XX{f}, \mathcal{A}_{\bul}) $ 
is evident already for $X$ a simply connected compact K\"aler manifold, in which case
\begin{fact}\label{fact:cohomologylinebundles}
	If $X$ is a simply connected compact K\"aler manifold, then	
	\begin{equation*}
		\begin{tikzcd}[row sep=1.2pc, column sep=1.8pc]
			\mathbb{H}^1(\XX{f}, \mathcal{O}_{\bul}) 
			\cong E^{1,0}	=\textrm{coker}\big( {\rm H}^0(X, \mathcal{O}_X) 
			\arrow[r,shift left, "f^*"] \arrow[r,shift right, "id"']  & {\rm H}^0(X, f^*\mathcal{O}_X) \big) \cong \C.
		\end{tikzcd}
\end{equation*}
\end{fact}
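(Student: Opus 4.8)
The plan is to read the assertion off the spectral sequence of Corollary~\ref{ext1_cor}, applied to the structure sheaf, and then to invoke Hodge theory. First I would note that global sections on $\XX{f}$ are computed by the functor $\mathbb{H}{\rm om}(\mathcal{O}_{\bul},-)$, so that $\mathbb{H}^1(\XX{f},\mathcal{O}_{\bul})=\mathbb{E}{\rm xt}^1(\mathcal{O}_{\bul},\mathcal{O}_{\bul})$ in the category of $\mathcal{O}_{\bul}$-modules, and Corollary~\ref{ext1_cor} applies with $\mathcal{F}=\mathcal{G}=\mathcal{O}_X$ and actions $\varphi=\gamma=f^*$. Since $f^*\mathcal{O}_X=\mathcal{O}_X$ as $\mathcal{O}_X$-modules and $\mathrm{Ext}^q(\mathcal{O}_X,\mathcal{O}_X)=H^q(X,\mathcal{O}_X)$, the short exact sequence \eqref{ext1} specializes to
\begin{equation*}
0 \longrightarrow E^{1,0} \longrightarrow \mathbb{H}^1(\XX{f},\mathcal{O}_{\bul}) \longrightarrow E^{0,1} \longrightarrow 0,
\end{equation*}
with $E^{1,0}=\mathrm{coker}\big(H^0(X,\mathcal{O}_X)\to H^0(X,f^*\mathcal{O}_X)\big)$ and $E^{0,1}=\ker\big(H^1(X,\mathcal{O}_X)\to H^1(X,f^*\mathcal{O}_X)\big)$, in agreement with the notation of the statement.

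The decisive step is to show $E^{0,1}=0$, for which it suffices that $H^1(X,\mathcal{O}_X)=0$; this is exactly where the Kähler and simply-connected hypotheses enter. By Hodge theory on the compact Kähler manifold $X$ one has $H^1(X,\mathcal{O}_X)=H^{0,1}(X)$, and the Hodge decomposition gives $b_1(X)=h^{1,0}+h^{0,1}=2h^{0,1}$; since $X$ is simply connected, $H_1(X,\Z)=\pi_1(X)^{\mathrm{ab}}=0$, whence $b_1(X)=0$ and $h^{0,1}=0$. Thus $E^{0,1}$, being a subgroup of $H^1(X,\mathcal{O}_X)=0$, vanishes, and the sequence collapses to the isomorphism $\mathbb{H}^1(\XX{f},\mathcal{O}_{\bul})\cong E^{1,0}$.

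It then remains to identify $E^{1,0}$ with $\C$. Here I would use that $X$ is compact and connected, so $H^0(X,\mathcal{O}_X)=\C$ consists of the constants, and likewise $H^0(X,f^*\mathcal{O}_X)=\C$ via $f^*\mathcal{O}_X=\mathcal{O}_X$. Matching the abstract differential $d^{0,0}$ of \eqref{differencemap} with the map $f^*-\mathrm{id}$, the point is that the pullback of a constant function is the same constant, so $f^*$ acts as the identity on $H^0(X,\mathcal{O}_X)$; hence $d^{0,0}=0$ and $E^{1,0}=\mathrm{coker}(0\colon\C\to\C)=\C$. The homological bookkeeping is supplied verbatim by Corollary~\ref{ext1_cor}, so the only genuine inputs are the standard vanishing $H^1(X,\mathcal{O}_X)=0$ and the constancy of global holomorphic functions on a compact connected manifold; the mild subtlety, and the step I would be most careful about, is the identification of $d^{0,0}$ with $f^*-\mathrm{id}$ together with its vanishing on constants.
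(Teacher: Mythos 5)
Your proposal is correct and follows essentially the same route as the paper: the paper's proof consists precisely of invoking the spectral sequence of Corollary~\ref{ext1_cor} and citing Hodge theory (Griffiths--Harris) for ${\rm H}^1(X,\mathcal{O}_X)=0$, hence $E^{0,1}=0$, leaving $\mathbb{H}^1(\XX{f},\mathcal{O}_{\bul})\cong E^{1,0}\cong\C$. You merely spell out the details the paper leaves implicit ($b_1=2h^{0,1}=0$ from simple connectedness, and the vanishing of $d^{0,0}$ on constants), all of which check out.
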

\begin{proof}
	Under the above hypotheses, cf. \cite[7]{griffithsharris}, we have ${\rm H}^1(X, \mathcal{O}_X)=0$ and hence $E^{0,1}=0$.
\end{proof}
In any case, the group $	\mathbb{H}^1(\XX{f}, \mathcal{O}_{\bul}) $
has non trivial image into $\mathbb{H}^1(\XX{f}, \mathcal{O}_{\bul}^*) $
which, by the same reasoning, contains a copy of $\C^*$, whence we have
\begin{fact}\label{imageofconnectingXf}
There is a subgroup of 
$\mathbb{H}^1(\XX{f}, \mathcal{O}_{\bul}^*) $ 
contained in the image of
$\mathbb{H}^1(\XX{f}, \mathcal{O}_{\bul}) $, consisting of the
 group of holomorphic line bundles on $\XX{f}$ given by the
pair $(\mathcal{O}_X, \lambda)$, where the action is given by multiplication 
by a non-zero complex number $\lambda \in \C^*$,
$$
\lambda: f^*\mathcal{O}_X \to \mathcal{O}_X, \; f^*g \mapsto \lambda g.
$$
\end{fact}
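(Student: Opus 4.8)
The plan is to realise the family $\{(\mathcal{O}_X,\lambda)\}_{\lambda\in\C^*}$ as the bottom edge term of the dynamical spectral sequence of Lemma \ref{intro:lemma1} (in the guise used for $\underline{\Gamma}$-torsors in \ref{top:Gammatorsorlemma} and for $\mathcal{O}_{\bul}$ in \ref{fact:cohomologylinebundles}), and then to hit that edge term by the exponential through a morphism of spectral sequences. First I would record, via the equivalence \ref{equivalencelemmaXf}, that a line bundle on $\XX{f}$ whose underlying bundle on $X$ is $\mathcal{O}_X$ is exactly a choice of $\mathcal{O}_X$-linear isomorphism $\lambda\colon f^*\mathcal{O}_X\to\mathcal{O}_X$, i.e.\ a global unit; for $X$ compact and connected these are the constants, so $\lambda\in H^0(X,\mathcal{O}_X^*)=\C^*$. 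The assignment $\lambda\mapsto(\mathcal{O}_X,\lambda)$ is a group homomorphism $\C^*\to\mathbb{H}^1(\XX{f},\mathcal{O}_{\bul}^*)$, since tensoring two such bundles multiplies their actions, $(\mathcal{O}_X,\lambda)\otimes(\mathcal{O}_X,\mu)=(\mathcal{O}_X,\lambda\mu)$; its image is therefore a subgroup, and it is the one in the statement.

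To locate this subgroup, I would run the dynamical spectral sequence for the abelian sheaf $\mathcal{O}_{\bul}^*$, exactly as in \ref{top:Gammatorsorlemma}: it has $E_1^{0,q}=H^q(X,\mathcal{O}_X^*)$ and $E_1^{1,q}=H^q(X,\mathcal{O}_X^*)$ with first differential $f^*-\mathrm{id}$, and produces
\[
0\to E^{1,0}\to\mathbb{H}^1(\XX{f},\mathcal{O}_{\bul}^*)\to E^{0,1}\to 0,\qquad
E^{1,0}=\operatorname{coker}\!\big(H^0(X,\mathcal{O}_X^*)\xrightarrow{f^*-\mathrm{id}}H^0(X,\mathcal{O}_X^*)\big).
\]
On constants $f^*$ acts as the identity, so $E^{1,0}=H^0(X,\mathcal{O}_X^*)=\C^*$, and the edge inclusion $E^{1,0}\hookrightarrow\mathbb{H}^1$ is precisely $\lambda\mapsto(\mathcal{O}_X,\lambda)$: the $E^{1,0}$ classes are those trivial on $X$ yet carrying a nontrivial action. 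The same cokernel computation shows $(\mathcal{O}_X,\lambda)\cong(\mathcal{O}_X,\mu)$ in $\XX{f}$ forces $\lambda/\mu\in\operatorname{im}(f^*-\mathrm{id})=\{1\}$, whence $\lambda=\mu$, so the homomorphism is injective and the subgroup is a genuine copy of $\C^*$.

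The key step is then surjectivity of $\exp_*$ onto this edge term. The holomorphic exponential gives a morphism of short exact sequences of abelian sheaves on $\XX{f}$, namely $0\to\underline{\Z}(1)_{\bul}\to\mathcal{O}_{\bul}\xrightarrow{\exp}\mathcal{O}_{\bul}^*\to0$, hence a morphism of the associated dynamical spectral sequences. On the edge terms this restricts to $H^0(X,-)$ of the exponential, i.e.\ to the classical surjection $\exp\colon H^0(X,\mathcal{O}_X)=\C\twoheadrightarrow\C^*=H^0(X,\mathcal{O}_X^*)$; by naturality of the edge inclusions $E^{1,0}\hookrightarrow\mathbb{H}^1$, a class $\beta\in\C\subseteq\mathbb{H}^1(\XX{f},\mathcal{O}_{\bul})$ (the $E^{1,0}$ summand of \ref{fact:cohomologylinebundles}) is carried to $(\mathcal{O}_X,e^{\beta})$. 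As $\beta$ ranges over $\C$ we obtain every $(\mathcal{O}_X,\lambda)$, proving the subgroup lies in the image of $\exp_*$. The main obstacle is purely bookkeeping: one must verify that $\exp$ is genuinely a map of the two spectral sequences of Lemma \ref{intro:lemma1}—equivalently that it respects the filtrations defining the edge sequences—and that the additive identification $E^{1,0}=H^0(X,\mathcal{O}_X)$ and the multiplicative one $E^{1,0}=H^0(X,\mathcal{O}_X^*)$ are intertwined by $H^0(X,\exp)$, keeping track of the $\underline{\Z}(1)=2\pi i\,\Z$ normalisation. Everything else is the naturality already exploited for $\underline{\Gamma}$-torsors in \ref{top:Gammatorsorlemma}.
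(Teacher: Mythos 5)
Your proposal is correct and follows essentially the same route as the paper: identify the subgroup as the edge term $E^{1,0}=\operatorname{coker}(f^*-\mathrm{id})$ on global units (constants, hence $\C^*$) in the spectral sequence of Lemma \ref{intro:lemma1} applied to $\mathcal{O}_{\bul}^*$, and observe via the exponential sequence \eqref{expseq:Xf} and the corresponding edge term $\C$ of \ref{fact:cohomologylinebundles} that these classes lie in the image of $\mathbb{H}^1(\XX{f},\mathcal{O}_{\bul})$. The compatibility of $\exp$ with the two spectral sequences, which you flag as the remaining bookkeeping, is exactly what the paper takes for granted from the functoriality of the long exact sequence \eqref{expexactseqXf}.
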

\begin{remark}
	Note that the result of \ref{imageofconnectingXf} remains true if 
	we replace everywhere
	the sheaf $\mathcal{O}_{\bul}$ by $\mathcal{A}_{\bul}$.
\end{remark}
In the setting of \ref{fact:cohomologylinebundles}, we have a complete description of 
the exact sequence \eqref{expexactseqXf}
\begin{fact}\label{cor:linebundles}
	If $X$ is a simply connected compact K\"aler manifold, the sequence \eqref{expexactseqXf}
	reduces to
		\begin{equation*}
		\begin{tikzcd}[row sep=1.2pc, column sep=1.8pc]
			0 \arrow[r]& \Z(1) \arrow[r] &  \C \arrow[r ]& 
			\mathbb{H}^1(\XX{f}, \mathcal{O}_{\bul}^*)\arrow[r, "\delta"] &
				\mathbb{H}^2(\XX{f}, \underline{\Z}(1)_{\bul}) \arrow[r] & 0
		\end{tikzcd}
	\end{equation*}
where
\begin{equation*}
		\mathbb{H}^2(\XX{f}, \underline{\Z}(1)_{\bul})\cong E^{0,2}=
		\ker ( H^2(X,  \underline{\Z}(1)) \rightrightarrows H^2(X,  f^*\underline{\Z(1)})   )
	\end{equation*}
Moreover, through the isomorphism in \ref{expexactseqX}, we find
\begin{equation*}
	\begin{aligned}
		&	\mathbb{H}^2(\XX{f}, \underline{\Z}(1)_{\bul})	
			= \ker(  	\mathbb{H}^1(\XX{f}, \mathcal{O}_X^*) 
			\rightrightarrows 	\mathbb{H}^1(\XX{f}, f^*\mathcal{O}_X^*)) = \\
			& =\{ \mbox{holomorphic line bundles } E \mbox{ on }
			X  : f^*E \overset{\sim}{\rightarrow} E \}/\sim
	\end{aligned}
\end{equation*}
	where the equivalence relation is given, as usual, by isomorphisms that respect
	the action.
\end{fact}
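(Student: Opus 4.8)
The plan is to read off the four–term sequence from the long exact cohomology sequence \eqref{expexactseqXf} of the exponential extension \eqref{expseq:Xf} in $Ab(\XX{f})$, after computing each of its terms by means of the degenerate spectral sequence of \ref{ext1_corXf} (equivalently \ref{intro:lemma1}). For an abelian sheaf $\mathcal{G}_{\bul}=(\mathcal{G},\gamma)$ on $\XX{f}$ that spectral sequence has only the columns $p=0,1$ and degenerates at $E_2$, so for every $p$ it produces a short exact sequence
\[
0 \longrightarrow \operatorname{coker} d^{0,p-1} \longrightarrow \mathbb{H}^p(\XX{f},\mathcal{G}_{\bul}) \longrightarrow \ker d^{0,p} \longrightarrow 0,
\]
in which $d^{0,q}\colon H^q(X,\mathcal{G})\to H^q(X,\mathcal{G})$ is the difference map induced by $\operatorname{id}-\gamma\circ f^{\ast}$, cf. \eqref{differencemap}. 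First I would apply this to the three sheaves $\underline{\Z}(1)_{\bul}$, $\mathcal{O}_{\bul}$, $\mathcal{O}_{\bul}^{\ast}$ of \eqref{expseq:Xf}, keeping in mind that $f^{\ast}\underline{\Z}(1)=\underline{\Z}(1)$ and that $f^{\ast}$ acts as the identity on $H^0$.

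Next I would assemble the low–degree terms. Since $X$ is compact and connected the invariant global sections give $\mathbb{H}^0(\XX{f},\mathcal{O}_{\bul})=\C$ and $\mathbb{H}^0(\XX{f},\mathcal{O}_{\bul}^{\ast})=\C^{\ast}$, and the map between them is the classical surjection $\exp\colon\C\to\C^{\ast}$; hence the connecting map $\mathbb{H}^0(\mathcal{O}_{\bul}^{\ast})\to\mathbb{H}^1(\underline{\Z}(1)_{\bul})$ vanishes. Simple connectivity yields $H^1(X,\underline{\Z})=0$, so the kernel piece of $\mathbb{H}^1(\underline{\Z}(1)_{\bul})$ dies and only its cokernel piece $\Z(1)$ survives; likewise \ref{fact:cohomologylinebundles} gives $\mathbb{H}^1(\XX{f},\mathcal{O}_{\bul})=\C$. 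The inclusion $\underline{\Z}(1)\hookrightarrow\mathcal{O}_X$ identifies the former with $2\pi i\Z\subset\C$, so the sequence opens as $0\to\Z(1)\to\C\to\mathbb{H}^1(\mathcal{O}_{\bul}^{\ast})$, the image of $\C$ being exactly the subgroup $\C^{\ast}$ of constant–multiplier bundles of \ref{imageofconnectingXf}.

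For the right–hand end I would compute $\mathbb{H}^2(\XX{f},\underline{\Z}(1)_{\bul})$: its cokernel piece $\operatorname{coker} d^{0,1}$ vanishes since $H^1(X,\underline{\Z})=0$, whence $\mathbb{H}^2(\underline{\Z}(1)_{\bul})\cong E^{0,2}=\ker\big(\operatorname{id}-f^{\ast}\colon H^2(X,\underline{\Z}(1))\to H^2(X,\underline{\Z}(1))\big)$, the group of $f^{\ast}$–invariant integral classes. The geometric identification then follows from the exponential isomorphism on $X$: because $H^1(X,\mathcal{O}_X)=H^2(X,\mathcal{O}_X)=0$ in the cases of interest (e.g. $X=\PP$ or projective space), the connecting map of the holomorphic exponential sequence on $X$ is an $f^{\ast}$–equivariant isomorphism $c_1\colon\operatorname{Pic}(X)\xrightarrow{\sim}H^2(X,\underline{\Z}(1))$, cf. \ref{expexactseqX}, and passing to $\ker(\operatorname{id}-f^{\ast})$ identifies $\mathbb{H}^2(\underline{\Z}(1)_{\bul})$ with the holomorphic line bundles $E$ on $X$ admitting $f^{\ast}E\cong E$. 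Surjectivity of $\delta$ is equivalent to the vanishing of $\mathbb{H}^2(\underline{\Z}(1)_{\bul})\to\mathbb{H}^2(\mathcal{O}_{\bul})$; since $\mathbb{H}^2(\mathcal{O}_{\bul})$ has trivial cokernel piece (as $H^1(X,\mathcal{O}_X)=0$) and kernel piece inside $H^2(X,\mathcal{O}_X)=0$, this map is zero, and under $\delta$ one merely forgets the action $\psi\colon f^{\ast}E\xrightarrow{\sim}E$, the $\C^{\ast}$ of its rescalings being precisely $\ker\delta$.

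The main obstacle I anticipate is the bookkeeping that glues the three two–column spectral sequences into the single long exact sequence \eqref{expexactseqXf}: I must verify that the differentials $d^{0,q}$ commute with the connecting maps of the exponential extension on $X$, so that the sequence in $\XX{f}$–cohomology is genuinely the total complex of a short exact sequence of two–column double complexes, and, more delicately, that the isomorphism $c_1$ of \ref{expexactseqX} is $f^{\ast}$–equivariant so that taking invariants commutes with it. The one genuinely hypothesis–dependent point — where \emph{simply connected compact Kähler} must be strengthened so as to also force $H^2(X,\mathcal{O}_X)=0$, as happens for $\PP$ — is exactly the vanishing that makes $\delta$ onto; everything else is formal once the spectral sequence of \ref{ext1_corXf} is in hand.
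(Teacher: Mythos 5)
Your proposal is correct and follows essentially the same route as the paper, which offers no separate proof of this Fact but derives it from the long exact sequence \eqref{expexactseqXf} together with the two-column spectral sequence of \ref{ext1_corXf}, the vanishing $H^1(X,\underline{\Z})=H^1(X,\mathcal{O}_X)=0$ for simply connected compact K\"ahler $X$, and the identification $\mathbb{H}^1(\XX{f},\mathcal{O}_{\bul})\cong\C$ of \ref{fact:cohomologylinebundles}. The caveat you flag at the end is a fair one: the paper establishes surjectivity of $\delta$ only for the acyclic sheaf $\mathcal{A}_{\bul}$ and then transfers the conclusion to $\mathcal{O}_{\bul}$ without comment, whereas making $\delta$ onto (and making $c_1$ an isomorphism onto the $f^*$-invariants of $H^2(X,\underline{\Z}(1))$) genuinely uses $H^2(X,\mathcal{O}_X)=0$, which holds for $\PP$ but not for every simply connected compact K\"ahler manifold.
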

\begin{example}
	For $X=\PP$, and $f$ a rational map of degree greater than $1$, we have 
$$
	\mathbb{H}^2(\QQ{f}, \underline{\Z}(1)_{\bul})=0,
$$
while, being compact cf. \ref{imageofconnectingXf}, 
 $$
 	\mathbb{H}^1(\QQ{f},\mathcal{O}_{\bul}^*)\cong \C^*.
 $$
\end{example}
\begin{proof}
	For a holomorphic line bundle $E$ on $\PP$ we have 
	$$
	\deg(f^*E)=\deg(f)\deg(E),
	$$
	and since $deg(f)>1$ we conclude that the map $f^*-id$ 
	on $	\mathbb{H}^1(\QQ{f},\mathcal{O}_{\bul}^*)$ is injective.
\end{proof}

\section{De Rham cohomology of \XX{f}}
Let $X$ be a differentiable manifold. Let us denote, abusing notation, 
by $\mathcal{A}_X=\mathcal{A}_X^0$ the sheaf of real valued infinitely differentiable functions on $X$.
 Recall that we can define a sheaf $\mathcal{A}_X^p$ on $X$
given by differential $p$-forms and if $X$ has dimension $n$,
we can consider the De-Rham complex of $X$,
$$
\mathcal{A}_X^0 \overset{d}{\longrightarrow} \mathcal{A}_X^1
 \overset{d}{\longrightarrow} \mathcal{A}_X^2 \overset{d}{\longrightarrow} 
\dots \overset{d}{\longrightarrow}   \mathcal{A}_X^{n-1}
 \overset{d}{\longrightarrow}  \mathcal{A}_X^{n},
$$
whose hypercohomology gives the De-Rham cohomology 
$H_{DR}^{\ast}(X)=H^{\ast}(X, \underline{\R})$.
Consider now an infinitely differentiable map $f: X \to X$ and recall that
we have a functorial action of $f$ on the sheaf $\mathcal{A}_X^p$ defined
as follows. If $U$ and $V$ are two open sets in $\R^n$ immersed in $X$ 
with coordinate functions $x_i$ and $y_i$, respectively, such that 
$U \subset f^{-1}V$ in $X$, then 
or $i=1, \dots, n$, the functorial maps
$$
f^*dy_i= (\partial_{x_j}f_i )\,dx_j
$$
define a morphism of sheaves
$$
f^*\mathcal{A}_X^1 \to \mathcal{A}_X^1.
$$
The above map is to be intended as a morphism of $\mathcal{A}^0$-modules,
and the sheaf $f^*\mathcal{A}^1$ has to be understood as
$\mathcal{A}^0$-module given by
$$
f^*\mathcal{A}^1 \otimes_{f^*\mathcal{A}^0} \mathcal{A}^0,
$$
in the notation of \eqref{sheaficationofpreheaf}. 
Therefore, by means of \ref{equivalencelemmaXf},
there is defined a sheaf of differential forms $\mathcal{A}_{\bul}^1$ on $\XX{f}$.
As usual, taking exterior power affords an action of $f$ on $\mathcal{A}_X^p$, 
and hence a sheaf $\mathcal{A}_{\bul}^p$ on $\XX{f}$. Moreover, since
this action commutes
with exterior differentiation, we have a De-Rham complex on $\XX{f}$
$$
\mathcal{A}_{\bul}^0 \overset{d_{\bul}}{\longrightarrow} \mathcal{A}_{\bul}^1
 \overset{d_{\bul}}{\longrightarrow} \mathcal{A}_{\bul}^{2}
  \overset{d_{\bul}}{\longrightarrow} 
\dots \overset{d_{\bul}}{\longrightarrow}  \mathcal{A}_{\bul}^{n-1}
 \overset{d}{\longrightarrow}  \mathcal{A}_{\bul}^{n}.
$$
\begin{definition}
	The De-Rham cohomology ${\mathbb{H}}_{DR}^{\ast}(\XX{f})$ is the
	 hypercohomology of the above complex, \text{a.k.a} the cohomology
	 $$
	 \mathbb{H}^{\ast} (\XX{f}, \underline{\R}_{\bul}).
	 $$
\end{definition}
\begin{remark}
	Observe that this definition of De-Rham cohomology does not coincide
	with the usual definition for manifolds, \textit{i.e.} the cohomology of the complex
	$$
	H^0(\XX{f}, \mathcal{A}_{\bul}^{0}) \overset{d_{\bul}}{\longrightarrow}
	H^0(\XX{f}, \mathcal{A}_{\bul}^{1})  \overset{d_{\bul}}{\longrightarrow}\dots
	\overset{d_{\bul}}{\longrightarrow} H^0(\XX{f}, \mathcal{A}_{\bul}^{n}).
	$$
	The reason is that $\mathcal{A}_{\bul}^{0}$-modules do not need to be acyclic in general, 
	and they are not as soon as $E_2^{1,0}$ of the spectral sequence \ref{ext1_corXf} is non trivial. 
\end{remark}
From the discussion above, we deduce the following result.
\begin{cor}\label{cor:DeRham}
If $X$ is simply connected, the De Rham cohomology groups of $\XX{f}$ are
\begin{equation*}
	\mathbb{H}^{p} (\XX{f}, \underline{\R}_{\bul})= 
	\begin{cases}
		 \R \quad \mbox{ if } p=0,1;\\
		  0 \; \quad \mbox{ if } p>1.
	\end{cases}
\end{equation*}
 \end{cor}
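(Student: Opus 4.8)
The plan is to compute the De Rham cohomology of $\XX{f}$, which by definition is the hypercohomology $\mathbb{H}^{\bul}(\XX{f},\underline{\R}_{\bul})$ of the complex of sheaves $\mathcal{A}_{\bul}^{\bul}$, by identifying it with $\mathbb{E}{\rm xt}^{\bul}(\underline{\R}_{\bul},\mathcal{A}_{\bul}^{\bul})$ and feeding it into the two-column spectral sequence of Lemma \ref{intro:lemma1} (in the refined form \ref{ext1_corXf}). That lemma is stated for a single abelian sheaf in the second slot, so first I would record the routine extension to a bounded complex: resolving $\mathcal{A}_{\bul}^{\bul}$ by a Cartan--Eilenberg resolution, the vanishing $E_1^{p,q}=0$ for $p>1$ persists, and one obtains a spectral sequence, degenerating at $E_2$ (it occupies only the columns $p=0,1$, so every $d_r$ with $r\ge 2$ is forced to vanish), with
$$
E_1^{0,q}={\rm Ext}^q_X(\underline{\R},\mathcal{A}_X^{\bul}),\qquad E_1^{1,q}={\rm Ext}^q_X(f^*\underline{\R},\mathcal{A}_X^{\bul}),
$$
abutting to $\mathbb{H}^{p+q}(\XX{f},\underline{\R}_{\bul})$, the single possibly-nonzero $d_1$ being the difference map $d^{0,q}$ of \eqref{differencemap}.

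The key identification is that both columns compute the ordinary De Rham cohomology of $X$. Since $f^*$ takes a constant sheaf to a constant sheaf, $f^*\underline{\R}=\underline{\R}$, so the second column equals the first. Moreover $\mathcal{A}_X^{\bul}$ is a resolution of $\underline{\R}$ (Poincaré lemma) whose terms are $\mathcal{A}_X^0$-modules, hence acyclic (cf. \ref{expexactseqX}); therefore ${\rm Ext}^q_X(\underline{\R},\mathcal{A}_X^{\bul})=\mathbb{H}^q(X,\mathcal{A}_X^{\bul})=H^q(X,\underline{\R})=H^q_{DR}(X)$. Thus $E_1^{0,q}=E_1^{1,q}=H^q_{DR}(X)$ for every $q$.

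It then remains to feed in the hypothesis on $X$. When $X$ is contractible---which is the hypothesis actually needed, and is what the discussion preceding the statement intends (the clause ``simply connected'' should be read as ``contractible'', since mere simple connectivity controls only $H^1$)---we have $H^q_{DR}(X)=\R$ for $q=0$ and $0$ for $q>0$. Hence the whole $E_1$-page is concentrated at the two spots $(0,0)$ and $(1,0)$, each equal to $\R$. The only surviving differential is $d^{0,0}\colon\R\to\R$, which is $\mathrm{id}-f^*$ on $H^0_{DR}(X)$; as the pullback of a locally constant function is itself, $f^*$ acts as the identity on constants and $d^{0,0}=0$. Therefore $E_2=E_1=E_\infty$, and reading off the abutment gives $\mathbb{H}^0=E_\infty^{0,0}=\R$, then $\mathbb{H}^1=E_\infty^{1,0}\oplus E_\infty^{0,1}=\R\oplus 0=\R$, and $\mathbb{H}^p=0$ for $p\ge 2$ (no graded piece $E_\infty^{p',q'}$ with $p'+q'\ge 2$ survives), which is the claimed description.

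The only genuine point requiring care---the ``hard part''---is the passage from the single-sheaf Lemma \ref{intro:lemma1} to a spectral sequence for the hypercohomology of the complex $\mathcal{A}_{\bul}^{\bul}$; this is standard (via a Cartan--Eilenberg resolution and the fact that $d^{0,\bul}$ commutes with the exterior derivative, since pullback of forms does), but it should be spelled out rather than merely asserted. Everything else---the identity $f^*\underline{\R}=\underline{\R}$, the acyclicity of $\mathcal{A}_X^0$-modules, the triviality of $f^*$ on constants, and the automatic degeneration of a two-column spectral sequence---is formal.
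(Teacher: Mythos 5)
Your proposal is correct and follows essentially the same route the paper intends: the text gives no explicit proof of \ref{cor:DeRham} beyond ``from the discussion above, we deduce'', and that discussion is precisely the two-column spectral sequence of \ref{ext1_corXf} applied to the De Rham complex, with both columns computing $H^{\bul}_{DR}(X)$ and the only surviving differential being $f^*-\mathrm{id}$ on constants, which vanishes. You are also right to flag the hypothesis: ``simply connected'' must be read as ``contractible'' (as in the introduction's version of this fact), since otherwise $H^q_{DR}(X)$ for $q\ge 2$ need not vanish and the conclusion fails, e.g.\ for $X=S^2$ and $f=\mathrm{id}$.
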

\newpage
\phantom{h}
\thispagestyle{empty}
\newpage
\chapter{Applications in holomorphic dynamics on $\PP$}

\section{Revisions on Holomorphic dynamics on $\PP$}
Throughout this section, $f: \PP \to \PP$ is a rational map of degree $D>1$.
We denote by $C_f$ the set of critical points of $f$ and with
$\displaystyle \Gamma_f=\sum_{x\in C_f}(deg_x(f)-1)[x]$ the correspondent
ramification divisor. As usual, $S_f$ and $\mathcal{P}_f$ denote, respectively,
the set of critical values and the postcritical set of $f$,
$$
S_f=f(C_f), \quad \mathcal{P}_f=\bigcup_{n \geq 1} f^n(C_f).
$$
\noindent
The following two sections, in which we set up notation,
are a revision of the results we find in \cite{1999math......2158E},
while the last one
contains our original proof of the Fatou-Shishikura Inequality.

\subsection{The Fatou-Shishikura Inequality}\label{Fatou-Shishikura}
The bound on the number of invariant Fatou components of $f$,
depending on its degree, is a famous result
in holomorphic dynamics. The sharp bound of $2D-2$
is due to Shishikura, \cite{ASENS_1987_4_20_1_1_0}, who uses perturbative methods
to count the number of nonrepelling periodic cycles.
Observe that in Epstein's formulation of this result,
\cite{1999math......2158E}, the degree of $f$ no longer appears.
  In his paper, Epstein manages to develop an accurate machinery
that produces an algebraic proof of the
Fatou-Shishikura Inequality, 
relying only on (his extension of) Thurston's
fundamental result
(whose only known proof is transcendental).
The aim of the second section of
this chapter is to show that, once we organize
the homological algebra involved in a proper way,
these results find a nice geometric explanation
within the language of Tòpoi.
In particular,
we are not providing another proof of the 
fundamental result in \cite{1999math......2158E},
\textit{i.e.} a refined version of 
``Infinitesimal Thurston's Rigidity''. 	 \\
Given a cycle $\langle x\rangle=\{x,\dots,f^{k-1}(x)\}$ of $f$, with multiplier
$\rho=(f^k)'(x)$, we say that
$$
\langle x \rangle \quad \mbox{ is } \quad \begin{cases}
	\mbox{superattracting}, & \mbox{if } \rho=0 ;\\
	\mbox{attracting}, & \mbox{if } 0<|\rho| <1; \\
	\mbox{indifferent}, & \mbox{if } |\rho|=1; \\
	\mbox{repelling}, & \mbox{if } |\rho|>1.
\end{cases}
$$
An indifferent cycle may be \textit{rationally} indifferent
if $\rho$ is a root of unity, or \textit{irrationally}
indifferent otherwise.
Note that the count of
parabolic, \textit{i.e.} rationally indifferent,
since we are taking $D >1$, cycles in \cite{1999math......2158E}
has been refined by taking account of the
\textit{parabolic multiplicity}. We recall briefly its
definition, see e.g. \cite{article}
for a more detailed presentation.
Let $\langle x \rangle$ be a parabolic cycle of period $k$ and
suppose its multiplier $\rho$ is a primitive element of
$\mu_q$. It is known that
the multiplicity of $x$ as a fixed point of $f^{kq}$ is
congruent to 1 modulo $q$. If $N+1$ is this multiplicity,
let $N=\nu q$: 
\begin{equation}\label{parabolicmult}
	\text{the \textit{parabolic multiplicity}
		of the cycle $\langle x \rangle$ is defined
		to be the integer $\nu$.}
\end{equation}
Moreover, there
exists a \textit{preferred local coordinate} $\zeta$
around $x$ such that,
in this coordinate, $f^k$ is expressible as
\begin{equation}\label{preferred}
	\zeta \mapsto \rho\zeta( 1+ \zeta^{N} + \alpha\zeta^{2N}+ \mathcal{O}(\zeta^{2N+1})).
\end{equation}
The complex number $\alpha$, called
the \textit{formal invariant} of the cycle, is
related to \'Ecalle's ``r\'esidu it\'eratif" by
$$
r\acute{e}sit(f, \langle x \rangle)=\beta\coloneqq\frac{N+1}{2}-\alpha.
$$
Following Epstein,
to each cycle $\langle x \rangle= \{x, \dots, f^{k-1}(x)\}$,
we assign the multiplicity $\gamma_{\langle x\rangle}$, where
\begin{equation*}\label{gamma}
	\gamma_{\langle x\rangle}=\begin{cases}
		0, & \mbox{if } \langle x\rangle  \mbox{ is superattracting or repelling};\\
		1, & \mbox{if } \langle x\rangle \mbox{ is attracting or irrationally indifferent};\\
		\nu, & \mbox{if } \langle x\rangle
		\mbox{ is parabolic-repelling, \textit{i.e.} } \Re(\beta)>0;\\
		\nu +1, & \mbox{if } \langle x\rangle \mbox{ is parabolic-nonrepelling, \textit{i.e.} } \Re(\beta)\leq 0.
	\end{cases}
\end{equation*}
For $A \subset \PP$ a finite set, we write
$$
\gamma_A\coloneqq\sum_{\langle x\rangle\subset A} \gamma_{\langle x\rangle},
$$
where the sum is taken over all cycles of $f$ inside $A$.\\
The total count of nonrepelling cycles, with multiplicity, is given by
$$
\gamma_f\coloneqq \sup_A \gamma_A=\sum_{\langle x\rangle\subset \PP} \gamma_{\langle x\rangle},
$$
which \textit{a priori} might be infinite.
Finally, let $\mathcal{P}_f^{\infty}\subset \mathcal{P}_f$ denote
the set of non-preperiodic points lying in the postcritical set.
We write $\delta_f$ for the number of infinite tails, which is by definition
the number of orbit-equivalence classes
in $\mathcal{P}_f^{\infty}$. It is useful to note that,
if $S_n=S_{f^n}$ is the set of critical
values of $f^n$, then
\begin{equation}\label{delta}
	\delta_f= \# S_{N+1}-\#S_N
\end{equation}
for $N\in \N$ large enough.\\
In fact, if we set $A_n=f^n(S_f)$, for $n\geq 0$, from the chain rule
we get $S_{n+1}=S_n \cup A_{n}$,
so the sequence $\#S_n$ is nondecreasing and
thus $a_n\coloneqq\#S_{n+1}- \#S_n$ is a positive integer valued sequence.
Moreover, $a_n$ is nonincreasing and hence eventually constant: note that we have
$a_n=\#( A_n \setminus S_n)$ and also,
by construction, $f(A_{n}\cap S_{n}) \subset A_{n+1}\cap S_{n+1}$.
So,
$$
\begin{aligned}
	\#( A_{n+1} \setminus S_{n+1})=&  \# A_{n+1} - \#(A_{n+1} \cap S_{n+1})   \\
	&\leq \#f(A_{n}\setminus S_{n}) + \#f(A_{n}\cap  S_{n}) -\#(A_{n+1} \cap S_{n+1})  \\
	&\leq \#f(A_{n}\setminus S_{n}) \leq  \#( A_{n}\setminus S_{n})
\end{aligned}
$$
The following result, as mentioned before, is obtained in \cite{1999math......2158E} .
\begin{theorem}\label{F-S}
	$$
	\gamma_f\leq \delta_f.
	$$
\end{theorem}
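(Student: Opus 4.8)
The plan is to derive the inequality from the cohomological apparatus of Chapter III, organised exactly as in the Introduction: I would convert the statement into a comparison of dimensions of deformation spaces on the Epstein site ${\rm E}_f$, where the global deformations have fixed dimension $2D-2$ and the local deformations carry the dynamical information of the nonrepelling cycles and of the critical orbit. Concretely, I would first recall that the space of infinitesimal deformations of ${\rm E}_f$ is the $\C$-vector space $\mathbb{E}{\rm xt}^1(\Omega_{\bul},\mathcal{O}_{\bul})$, which by \ref{intro:lemma1} is computed as the cokernel of $d^{0,0}$ on $H^0(\PP,T_{\PP})\to H^0(\PP,f^*T_{\PP})$ and has dimension $2D-2$ (the tangent space to ${\bf rat}_D$ up to conjugacy). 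The whole argument then turns on feeding local data into this finite-dimensional space.

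Next I would fix an arbitrary finite set $A$ of cycles of $f$ and construct, via \ref{E-dynimacaldivisor}, a ``divisor'' $\Delta_{\bul}=(\Delta_0,\Delta_1)$ on ${\rm E}_f$ whose support contains $A$ and which simultaneously encodes a sufficiently long truncation of the forward orbit of the critical divisor; this truncation is precisely why one must work on ${\rm E}_f$ rather than on $\QQ{f}$, since the full critical orbit need not be a divisor on $\PP$. The key bookkeeping step is then the dimension formula
\begin{equation*}
	2D-2+\gamma_A-\delta_f=\dim_{\C}\mathbb{E}{\rm xt}^1(\Omega_{\bul},\mathcal{O}_{\Delta_{\bul}}),
\end{equation*}
which I would assemble from the cycle contributions of \ref{locallemma1} (each nonrepelling cycle contributing at least its multiplicity $\gamma_{\langle x\rangle}$, with the parabolic refinement built into the preferred coordinate \eqref{preferred}) and from the critical-orbit contribution computed in \ref{localHomEps}, \ref{computationExtcriticaldivisor}, the latter being responsible for the $-\delta_f$ correction through the stabilisation of $\#S_{N+1}-\#S_N$ recorded in \eqref{delta}.

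I would then run the long exact sequence of $\mathbb{H}{\rm om}(\Omega_{\bul},-)$ attached to the short exact sequence $0\to\mathcal{O}(-\Delta_{\bul})\to\mathcal{O}_{\bul}\to\mathcal{O}_{\Delta_{\bul}}\to 0$ in $Ab({\rm E}_f)$, whose tail reads
\begin{equation*}
	\mathbb{E}{\rm xt}^1(\Omega_{\bul},\mathcal{O}_{\bul})\longrightarrow\mathbb{E}{\rm xt}^1(\Omega_{\bul},\mathcal{O}_{\Delta_{\bul}})\longrightarrow\mathbb{E}{\rm xt}^2(\Omega_{\bul},\mathcal{O}(-\Delta_{\bul}))\longrightarrow 0.
\end{equation*}
The surjectivity of the restriction map \eqref{intro:resmap} is equivalent to the vanishing of the obstruction group on the right, which is the Thurston-Epstein vanishing, Claim \ref{intro:claim2} (proved in \ref{Epsteinvanishing}). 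Granting it, the exactness forces $\dim_{\C}\mathbb{E}{\rm xt}^1(\Omega_{\bul},\mathcal{O}_{\Delta_{\bul}})\leq\dim_{\C}\mathbb{E}{\rm xt}^1(\Omega_{\bul},\mathcal{O}_{\bul})=2D-2$, and combined with the dimension formula this yields $\gamma_A\leq\delta_f$. Taking the supremum over all finite sets $A$ of cycles gives $\gamma_f=\sup_A\gamma_A\leq\delta_f$, which is \eqref{intro:EpsFS}; the excluded flexible $(2,2,2,2)$ Lattès maps would be treated separately, where the relevant $\mathbb{E}{\rm xt}^2$ is one-dimensional and the count is verified by hand.

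The main obstacle is the Thurston-Epstein vanishing itself: everything else is the formal homological algebra of \ref{exti_cor} and a divisor computation, but the vanishing of $\mathbb{E}{\rm xt}^2(\Omega_{\bul},\mathcal{O}(-\Delta_{\bul}))$ is genuinely the dual of Epstein's extension of Infinitesimal Thurston's Rigidity, whose only known proof is transcendental, resting on the contraction of the Thurston pullback map on Teichmüller space. My contribution here is not to reprove that input but to recognise, through \ref{equivalencelemmaXf} and \ref{sheavesonepsteinsite}, that the injectivity statement Epstein proves for quadratic differentials is exactly the functorial $\mathbb{E}{\rm xt}^2$-vanishing on ${\rm E}_f$; the secondary difficulty is verifying that the truncated critical-orbit divisor really produces the sharp $-\delta_f$ term rather than a cruder degree bound, which is where the choice of the site ${\rm E}_f$ over $\QQ{f}$ is essential.
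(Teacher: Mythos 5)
Your proposal is correct and follows essentially the same route as the paper's own argument: it is precisely the content of Claim \ref{claim1} and its proof, namely choosing the ${\rm E}$-dynamical divisor $\Delta_{\bul}=\Lambda^N_{\bul}+\sum_i\Delta_{\bul}(C_i)$, assembling $\dim_{\C}\mathbb{E}{\rm xt}^1(\Omega_{\bul},\mathcal{O}_{\Delta_{\bul}})=2D-2+\gamma_A-\delta_f$ from \ref{locallemma1}, \ref{localHomEps} and \ref{computationExtcriticaldivisor}, and using the surjectivity in \eqref{ideallongexact} granted by the vanishing \ref{Epsteinvanishing} to bound this by $2D-2$ before taking the supremum over $A$. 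You also correctly isolate the one non-formal input, the Thurston--Epstein vanishing dual to Epstein's transcendental rigidity theorem, which the paper likewise imports rather than reproves.
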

The well-known formulation of the Fatou-Shishikura Inequality is recovered
as a corollary of the above theorem
once noticed that the number of superattracting cycles is at most $2D-2-\delta_f$,
since the chain rule implies they contain at least one critical point.
\subsection{The pushforward operator}
The key point in proving Theorem \ref{F-S} is the assertion
$f_*q \neq q$ for any $q$
belonging to some subspace of $\mathcal{M}_2(\PP)$.
Here $\mathcal{M}_k(\PP), \;k\in \N$ denotes the space of meromorphic
$k$-differentials on $\PP$, \textit{i.e.}
meromorphic sections of the sheaf $\Omega_{\PP}^{\otimes k}$,
and $f_*$ is the \textit{pushforward} operator, whose definiton,
which we give in a much wider generality than we actually need, is as follows.\\
Let $f: X \to Y$ be an analytic map between Riemann surfaces.\\
Recall that for any open set $V\subseteq Y $ and $U \subseteq f^{-1}(V)$.
we have a pullback operator
\begin{equation}\label{pull-back}
	f^*: H^0(V, \Omega_Y^{\otimes k}) \to H^0(U, \Omega_X^{\otimes k})
\end{equation}
obtained by composing the canonical map
\begin{equation}\label{canonicalpullback}
	f^{\star}: H^0(V, \Omega_Y^{\otimes k}) \to H^0(U, f^*\Omega_Y^{\otimes k})
\end{equation}
with the functorial map induced by
$
df^{\top}: f^*\Omega_Y \to \Omega_X,
$
that is, the transpose of the differential of $f$,
$$
df: T_X  \to f^*T_Y.
$$
The map $f^*$ extends to a map $\mathcal{M}_k(Y) \to \mathcal{M}_k(X)$,
which we still denote by $f^*$.
\begin{factdefinition}
	There is a well-defined linear map
	$$
	f_*: \mathcal{M}_k(X) \to \mathcal{M}_k(Y),
	$$
	given by
	\begin{equation}\label{push-forward}
		f_*q=\sum_g g^*q,
	\end{equation}
	where the sum ranges over the inverse branches of $f$. \\
	
\end{factdefinition}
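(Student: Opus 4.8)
The plan is to verify two things: that the locally defined sum $\sum_g g^* q$ patches into a single global meromorphic $k$-differential on $Y$, and that the assignment $q \mapsto f_* q$ is linear. Linearity is immediate, since each inverse branch $g$ is a fixed local biholomorphism and $g^*$ is a linear operator on $k$-differentials, so the finite sum $\sum_g g^*$ is linear wherever defined. The whole substance therefore lies in well-definedness, and I would organise it as a gluing argument over a punctured base followed by a meromorphic extension across the punctures.

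First I would work over the open set $Y^\circ = Y \setminus S_f$ obtained by deleting the discrete set of critical values. There $f$ restricts to an unramified covering of degree $D = \deg f$, so over a small disc $V \subset Y^\circ$ there are exactly $D$ holomorphic inverse branches $g_1,\dots,g_D : V \to X$, and I set $f_* q|_V = \sum_{i=1}^{D} g_i^* q$. This expression is manifestly symmetric in the branches, hence independent of their labelling; and as one analytically continues around loops in $Y^\circ$ the branches are merely permuted by the monodromy of the covering, so the symmetric sum is invariant under continuation. Consequently the local formulae agree on overlaps and glue to a single-valued meromorphic $k$-differential on all of $Y^\circ$.

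The hard part will be the extension across a critical value $y_0 \in S_f$. I would argue locally: pick a coordinate $w$ centred at $y_0$, and over each preimage $x_j \in f^{-1}(y_0)$ a coordinate $z$ in which $f$ takes the normal form $w = z^{e_j}$, where $e_j = \deg_{x_j}(f)$ and $\sum_j e_j = D$. Writing $q = \phi(z)\,(dz)^k$ with $\phi$ meromorphic, say $\phi(z) = \sum_n a_n z^n$ with finitely many negative terms, the $e_j$ inverse branches issuing from $x_j$ are $z = \zeta\, w^{1/e_j}$ for $\zeta^{e_j}=1$. A direct computation gives $(dz)^k = \zeta^k e_j^{-k}\, w^{k/e_j - k}(dw)^k$, so summing these $e_j$ branches produces $e_j^{-k}\sum_n a_n\big(\sum_{\zeta^{e_j}=1}\zeta^{n+k}\big) w^{(n+k)/e_j - k}(dw)^k$. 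The inner character sum equals $e_j$ when $e_j \mid n+k$ and vanishes otherwise, in which surviving case $(n+k)/e_j$ is an integer; hence every surviving term carries an integer power of $w$, and the contribution of $x_j$ is a genuine meromorphic $k$-differential in $w$ near $y_0$, with a pole only where $\phi$ itself has one. Summing over the finitely many preimages $x_j$ shows that the $f_* q$ constructed on $Y^\circ$ extends meromorphically across $y_0$.

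Since $y_0 \in S_f$ was arbitrary and $S_f$ is finite, this furnishes a global meromorphic $k$-differential $f_* q \in \mathcal{M}_k(Y)$, completing the argument. The only genuine obstacle is the root-of-unity cancellation above: one must see that although each individual pullback $g^* q$ involves fractional powers $w^{1/e_j}$ and is therefore multivalued, the symmetric sum over conjugate branches annihilates precisely the non-integral exponents, leaving an honest meromorphic object. Everything else — gluing by symmetry, invariance under monodromy, linearity — is formal.
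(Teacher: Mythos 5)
Your argument is correct and takes essentially the same route the paper sketches: gluing over $Y\setminus S_f$ using the symmetry of the sum under the monodromy permutation of the branches, then meromorphic extension across each critical value via the local normal form $w=z^{e_j}$; the paper compresses your root-of-unity cancellation into the remark that near $y\in S_f$ the local inverses ``glue'' and that $g_i^*$ amounts to dividing by a power of the derivative of $f$. One slip in a side remark: your own exponent $w^{(n+k)/e_j-k}$ can be negative even when $\phi$ is holomorphic at $x_j$ (e.g.\ $n=0$, $k=2$, $e_j=2$ gives $w^{-1}$), so $f_*q$ in general acquires poles at the critical values themselves rather than ``only where $\phi$ has one'' --- this is precisely the pole creation the paper notes, and it is harmless here since the target is the space of \emph{meromorphic} $k$-differentials.
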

Observe that, if $V\subset Y\setminus S_f$ is small enough
so that $U\coloneqq f^{-1}(V)= \coprod_{i}U_i$, we have $D$ inverse branches
of $f$, $g_i: V \to U_i$, and the map $g_i^*$,
cf. \eqref{pull-back}, is well-defined. On a punctured neighborhood
of $y \in S_f$ the local inverses glue two by two and create a pole
singularity in $y$, since in local coordinates $g_i^*$ consists in dividing
by an appropriate power of the derivative of $f$.\\
We set now $X=Y$ and observe that
$$
f_*f^*q=Dq
$$
We are interested in studying the fixed points of
$f_*$, that is, the kernel of the linear endomorphism $\nabla_f\coloneqq Id-f_*$.
If $A\subset X$, we write $\mathcal{M}_k(X, A)$
for the subspace of meromorphic $k$-differentials whose poles
are contained in $A$. Then, for $B \subset Y$ such that
$B \supseteq f(A) \cup S_f$, we still denote by $f_*$
the restriction
$$
f_*|_{\mathcal{M}_k(X, A)}: \mathcal{M}_k(X, A) \to \mathcal{M}_k(Y, B)
$$
Set now $X=Y$ and assume also that $A\subseteq B$, so that
there is an inclusion
$i: \mathcal{M}_k(X, A) \hookrightarrow \mathcal{M}_k(X, B)$.
We still write $\nabla_f$ for the map
$$
i-f_*:  \mathcal{M}_k(X, A) \to \mathcal{M}_k(X, B).
$$
Considering that our aim is to provide a systematic
co-homological interpretation of the mentioned results,
we prefer to set up notation in a different way.
For $E \subset X$ a finite set, we denote by $[E]$ the divisor
\begin{equation}\label{divisor1}
	[E]=\sum_{x \in E}[x].
\end{equation}
Recall that for any analytic map $g: U\subset X \to Y$, and any
meromorphic $k$-differential $q$, we have
$$
\forall x \in X \quad \quad ord_x g^*q=deg_x(g)(ord_{g(x)}q +k)-k,
$$
so that
\begin{equation}\label{order}
	\forall y \in Y \quad \quad ord_y f_*q \geq \min_{x \in f^{-1}y} ( \dfrac{ord_x q +k}{deg_x(f)}-k).
\end{equation}
Hence, for $q\in H^0(X, \Omega_X^{\otimes k}(+N[E]))$, where $N \geq k$ is an integer,
then
$$
\forall y \in Y \quad ord_y f_*q \geq  \frac{-N+k}{D}-k \geq -N.
$$
Consequently, we have an induced map
\begin{equation}\label{f_*0}
	f_*: H^0(X, \Omega_X^{\otimes k}(+N[A])) \to   
	H^0(Y,\Omega_Y^{\otimes k}(+N[B])),
\end{equation}
where $B \supseteq S_f$ is a finite set and $A \subseteq f^{-1}(B)$.\\
Let now choose a divisor $\Delta_0$ on $Y$, without
assuming that its support contains the critical values.
We can still write a pushforward map with target space
$H^0(Y, \Omega_Y^{\otimes k}(+\Delta_0))$
if we restrict the domain to the subspace of differentials of $X$ having
zeroes of the appropriate multiplicity along the critical points.
In fact, equation \eqref{order} implies that if we have
$ord_x(q)\geq (k-1)(\deg_x(f)-1)$ along all critical points $x$, then
for $y \in S_f$,
$$
ord_y f_*q\geq \min_{x \in f^{-1}y} \frac{(k-1)(\deg_x(f)-1)+k}{\deg_x(f)}-k=
 \min_{x \in f^{-1}y} \frac{1}{\deg_x(f)}-1 >-1.
$$
Thus, for any divisor
$\Delta_1\preceq f^{*}\Delta_0$
the discussion above shows that we have an induced map
\begin{equation}\label{f_*}
	f_*: H^0(X, \Omega_X^{\otimes k}(+ \Delta_1- (k-1)\Gamma_f)) \to   
	H^0(Y,\Omega_Y^{\otimes k}(+ \Delta_0))
\end{equation}
Set now $X=Y$ and let $\Delta_0$ be any divisor on $X$.
Then, for any divisor
\begin{equation}\label{simplicialdivisor}
	\Delta_1 \preceq \Delta_0 \wedge f^*\Delta_0,
\end{equation}
we have a well-defined map
\begin{equation}\label{nabla_f}
	\nabla_f: H^0(X, \Omega_X^{\otimes k}(+ \Delta_1- (k-1)\Gamma_f)) \to  
	 H^0(X,\Omega_X^{\otimes k}(+ \Delta_0)).
\end{equation}
We will provide in \ref{ThurstonvanishingLemma} a co-homological description of the transpose of the maps above.
\subsection{Infinitesimal Thurston's Rigidity}
In the following we give a description of ``Infinitesimal Thurston Rigidity''.
We first attempt to keep a certain level of generality, which is largely 
more than what is needed in practice. 
The reader who is only interested in the applications may skip to \ref{T-E}.
Let us fix a completely invariant compact subset $Z \subset X$, \textit{i.e.} $f^{-1}(Z)=Z$ .
There is a well defined positive measure associated to
every $q \in \mathcal{M}_k(Z)$. The integral of such measure defines
a norm, $\|q\|_Z$, which might be infinite.
Let us fix a (continuous) $k$-differential $q$ with support on $Z$.
We consider the pseudo-metric associated with $q$,
given by $w=w(q)=(\overline{q}\otimes q)^{1/k}$ and
the Hausdorff measure on $Z$ of dimension $2r>0$,
which we denote as $H_w^r$, associated to this pseudo-metric.
Recall that $H_w^r$ is defined as the result of Carath\'eodory construction with respect to
$
\zeta_w(S)=const\cdot \mbox{diam}_w^{2r}(S)
$,
for $S$ a Borel set.\\
We define the Hausdorff dimension of $Z$ with respect to $w$ as the number
\begin{equation}\label{eq0}
	m=m(w)=\inf\{ r>0: H_w^r(Z)=0 \}=
	\sup\{ r>0: H_w^r(Z)=+\infty \}
\end{equation}
It is rather easy to see that, in fact, $m$ does not depend on $w$,
\textit{i.e.} the condition $H_w^r(Z)=0$ (and also $H_w^r=+\infty$)
are independent of the pseudo-metric $w$. If $w_1$ and $w_2$ were real metrics
(\textit{i.e.} $w_i(x)$ a positive definite bilinear form on the tangent space
$T_xX$ for each $x\in Z$, $i=1,2$) this would be an 
immediate consequence of the compactness of $Z$,
which implies $0<c\leq g\leq C$, for $g$ the transition function $w_1=gw_2$, and hence
\begin{equation}\label{eq1}
	c\cdot\mbox{diam}_{w_2}(S)\leq \mbox{diam}_{w_1}(S)\leq C\cdot\mbox{diam}_{w_2}(S)
\end{equation}
In general we compare $w_1=w(q_1)$ and $w_2=w(q_2)$ with a metric $v$,
\textit{i.e.} we find nonnegative functions $g_i\geq 0,\;i=1,2$ such that $w_1=g_1v$ and $w_2=g_2v$.
Denote $\Omega$ the finite set given by the union of the zeroes of $g_i$.\\
For all $\varepsilon >0$, $p\in \Omega$ we find Borel sets $p\in S_{\varepsilon}(p)$ 
such that $\mbox{diam}_{w_i}(S_{\varepsilon}(p))<\varepsilon$, $i=1,2$.
Choose now a covering $\{ S_{\alpha}\}_{\alpha \in A}$ of $Z$ 
such that $H_{w_i}^r(Z)=const\cdot\sum_{\alpha}\mbox{diam}_{w_i}^r(S_{\alpha})+\varepsilon$.
We can always find a refinement of such a covering for which the $S_{\varepsilon}(p)$
are members of the covering and no other member intersects them, while the approximating
result for the measure is still valid. Denote $A'$ the set of indices which do not include the $S_{\varepsilon}(p)$.\\
It is clear that for $\alpha\in A'$ we still have the inequality \eqref{eq1} with $S=S_{\alpha}$, 
while the contribution from $A\setminus A'$ is less than 
$\varepsilon'\coloneqq const\cdot|\Omega|\varepsilon^r$, so that finally we have
\begin{equation}\label{eq2}
	c^r\cdot\sum_{\alpha}\mbox{diam}_{w_2}^r(S_{\alpha})-\varepsilon'\leq \sum_{\alpha}
	\mbox{diam}_{w_1}^r(S_{\alpha})\leq C^r\cdot\sum_{\alpha}\mbox{diam}_{w_2}^r(S_{\alpha})+\varepsilon'
\end{equation}
and finally, as $\varepsilon$ is arbitrary small, we deduce that
\begin{equation}\label{eq3}
	H_{w_1}^r(Z)=0 \quad (\mbox{resp. } +\infty) \iff H_{w_2}^r(Z)=0 \quad(\mbox{resp. } +\infty)
\end{equation}
This argument is clearly independent of the choice of the metric $v$.
\begin{definition}
	\rm{For a }$k$-\rm{differential }$q$\rm{ supported on }$Z$ \rm{we define its norm as}
	$$
	\|q\|_Z=H_{w(q)}^m(Z)
	$$
	\rm{where m is as in }\eqref{eq0}. \\
	We will say that $q$ is (nontrivially) integrable if $\|q\|_Z\neq 0, +\infty$.
\end{definition}

\begin{remarks}\
	\begin{enumerate}
		\item  Observe that, if we have a global coordinate $z$ on $Z$,
		we can write $q$ as $q(z)dz^{\otimes k}$ for some function $q(z)$
		defined on a neighborhood of $Z$ and supported on $Z$.
		If we denote by $v(z)$ the standard Hermitian metric $d\overline{z}\otimes dz$,
		then $w(q)(z)=|q(z)|^{2/k}v(z)$ and by a change of variable we have
		\begin{equation}\label{norm}
			\|q\|_Z=\int_Z dH_{w(q)}^m(z)=\int_Z |q(z)|^{2m/k}dH_v^m(z)
		\end{equation}
		We will use from now on the following notation, which is justified by the last equality:
		\begin{equation}\label{int}
			\|q\|_Z=\int_Z|q|^{2m/k}.
		\end{equation}
		\item If we assume $q$ to be meromorphic,
		then formally, if $\Delta$ is the polar divisor of $q$,
		we have
		\begin{equation}\label{dl}
			q\in  \varinjlim\limits_{Z\subset U} H^0(U, \Omega^{\otimes k}(+\Delta))
		\end{equation}
		Note that we can define $f_*q$ as in \eqref{push-forward}, since $Z$ is assumed to
		be completely invariant. Around any interior point of $Z$ it is clear.
		If instead $p\in Z$ is on the boundary, we proceed as follows:
		we take $q$ defined on a neighborhood $U$ of $Z$ and argue that around $p$
		we can take a small neighborhood $W_p$ with the property that
		$f^{-1}W_p \subset U$, so that  $f_*q$ is well defined on $W_p$,
		and hence it is well defined as an element of \eqref{dl}.
	\end{enumerate}
\end{remarks}

Let $\alpha =\frac{2m}{k}$. Since we will always assume $k\geq2$, we have $\alpha\leq 1$, 
and $\alpha=1$ if and only if $k=2$ and $m=1$, that is the case when $Z$ 
has positive Lesbegue measure and $q$ is a quadratic differential. 
Recall that for every $v\in \C^n$, $\|v\|_1\leq\|v\|_{\alpha}$.\\
We still have the following fundamental fact.
\begin{lemma}[\bf{Contraction Principle}]
	Let $q$ be a meromorphic $k$-differential with support on $Z$. Then
	\begin{equation}\label{contr}
		\|f_*q\|_Z \leq \|q\|_Z.
	\end{equation}
	In particular, if $q$ is integrable, so is $f_*q$.
\end{lemma}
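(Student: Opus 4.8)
The plan is to reduce the global inequality to a pointwise algebraic estimate combined with a change of variables exploiting the complete invariance $f^{-1}(Z)=Z$. The two ingredients are the subadditivity of $t\mapsto t^{\alpha}$ for the exponent $\alpha=\tfrac{2m}{k}\le 1$ recorded just above, and the conformal invariance of the measure attached to $q$: in any local coordinate the density $|q|^{\alpha}\,dH_v^{m}$ is coordinate-independent, which is precisely what makes \eqref{norm} intrinsic. Indeed, under $z=\phi(w)$ the coefficient of a $k$-differential transforms by $(\phi')^{k}$ while $dH_v^{m}$ transforms by $|\phi'|^{2m}$, and $\alpha k=2m$ makes the two factors cancel.

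First I would work locally away from the critical values. Fix a small open set $V\subset X\setminus S_f$ with $f^{-1}(V)=\coprod_i U_i$ and inverse branches $g_i\colon V\to U_i$, so that on $V$ one has $f_*q=\sum_i g_i^{*}q$. Writing $q=q(z)\,dz^{\otimes k}$ and $g_i^{*}q=q(g_i(w))\,(g_i'(w))^{k}\,dw^{\otimes k}$, the subadditivity of $t\mapsto t^{\alpha}$ yields the pointwise bound
\begin{equation*}
	|f_*q|^{\alpha}=\Big|\sum_i q(g_i)\,(g_i')^{k}\Big|^{\alpha}
	\le \sum_i |q(g_i)|^{\alpha}\,|g_i'|^{2m},
\end{equation*}
using $\alpha k=2m$.

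Next I would integrate this estimate over $Z$ against $dH_v^{m}$ and change variables $z=g_i(w)$ in each summand. Since $|q|^{\alpha}\,dH_v^{m}$ is conformally invariant and each $g_i$ is a biholomorphism onto its image, the $i$-th term transforms as $\int_{Z}|q(g_i)|^{\alpha}|g_i'|^{2m}\,dH_v^{m}=\int_{g_i(Z)}|q|^{\alpha}\,dH_v^{m}$. Summing over the branches, and using that the images $g_i(Z)$ cover $f^{-1}(Z)=Z$ and overlap only along the lower-dimensional branch locus, the right-hand side collapses to $\int_{f^{-1}(Z)}|q|^{\alpha}=\int_{Z}|q|^{\alpha}=\|q\|_Z$, so that $\|f_*q\|_Z\le\|q\|_Z$. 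The last assertion is then immediate: if $q$ is integrable, i.e. $\|q\|_Z<+\infty$, the bound forces $\|f_*q\|_Z<+\infty$ as well.

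The main obstacle I anticipate is the passage across the critical values $S_f$: there the $g_i$ are not globally defined, they glue pairwise, and $f_*q$ acquires pole singularities. I would handle this by an exhaustion argument, excising small $H_v^{m}$-negligible neighborhoods of the finitely many points of $S_f$ (and of the poles of $q$), running the branch-wise estimate on the complement where the $g_i$ are honest biholomorphisms, and then letting the neighborhoods shrink. The legitimacy of this limit rests on $m$ being the Hausdorff dimension, so that the excised exceptional set contributes negligibly to $H_v^{m}$, together with the coordinate-independence \eqref{eq3} of the vanishing and divergence of $H^{r}_w$, which guarantees that no contribution is lost in the limiting process.
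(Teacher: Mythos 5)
Your proposal is correct and follows essentially the same route as the paper's proof: the pointwise subadditivity $\bigl(\sum_i |a_i|\bigr)^{\alpha}\leq \sum_i |a_i|^{\alpha}$ for $\alpha=\tfrac{2m}{k}\leq 1$ (the paper phrases this via the triangle inequality together with $\|v\|_1\leq\|v\|_{\alpha}$), followed by the identity $\int_Z f_*|q|^{\alpha}=\int_Z|q|^{\alpha}$, which is exactly the branch-by-branch change of variables you carry out using $f^{-1}(Z)=Z$. Your explicit unpacking of the conformal invariance of $|q|^{\alpha}\,dH_v^{m}$ and the exhaustion near $S_f$ only makes rigorous what the paper compresses into that last equality.
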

\begin{proof}
	This is an immediate consequence of the triangle inequality:
	\begin{equation}
		\begin{aligned}
			&\|f_*q\|_Z=\int_Z \Big| \sum_g g^*q\Big|^{\alpha}\leq  \int_Z \Big( \sum_g |g^*q|\Big)^{\alpha}\leq  \\
			&  \leq \int_Z \sum_g| g^*q|^{\alpha}=\int_Z f_*|q|^{\alpha}=\|q\|_Z
		\end{aligned}
	\end{equation}
\end{proof}

\begin{lemma} {\bf{[Rigidity principle]}}\\
	Let $q$ be integrable. If $f_*q=q$, then $f^*q=Dq$.
\end{lemma}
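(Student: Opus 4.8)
The plan is to exploit the equality case in the Contraction Principle. Since $f_*q = q$ we have $\|f_*q\|_Z = \|q\|_Z$, so every inequality in the proof of the Contraction Principle must in fact be an equality; as these compare integrals of nonnegative functions, the corresponding integrands must agree almost everywhere on $Z$. In particular the triangle inequality $\big|\sum_g g^*q\big| \le \sum_g |g^*q|$ is an equality a.e., which forces, at almost every $y \in Z$, all the inverse-branch pullbacks $g^*q(y)$ to be nonnegative real multiples of one common phase $u(y) \in \Omega_{X,y}^{\otimes k}$. Their sum $q(y) = (f_*q)(y)$ is then a nonnegative multiple of $u(y)$ as well.

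Next I would introduce the meromorphic ratio $\psi := f^*q/q$, a well-defined meromorphic function on $X$. A direct local computation, using $g'(y) = 1/f'(g(y))$, gives for each inverse branch $g$ with $g(y) = x_g$ the identity $\psi(x_g) = q(y)/g^*q(y)$. By the phase alignment just established, both $q(y)$ and $g^*q(y)$ are nonnegative multiples of $u(y)$, so $\psi(x_g)$ is a nonnegative real number. Since $f^{-1}(Z) = Z$, the points $x_g$ exhaust almost all of $Z$ as $y$ ranges over $Z$, whence $\psi$ is real-valued almost everywhere on $Z$.

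In the principal case $\alpha = 1$ (that is, $k=2$ and $m=1$) the set $Z$ carries positive planar measure; then $\mathrm{Im}\,\psi$ is a harmonic function vanishing on a set of positive measure, so it vanishes identically and $\psi$ is constant, say $\psi \equiv c$. To pin down $c$, I would invoke the identity $f_*f^*q = Dq$ (which holds because $f\circ g = \mathrm{id}$ on each branch): writing $f^*q = cq$ this reads $c\,f_*q = Dq$, i.e. $c\,q = Dq$, so $c = D$ and therefore $f^*q = Dq$, as claimed.

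The main obstacle is precisely the passage from \emph{real-valued on $Z$} to \emph{constant}, which is clean only when $Z$ has positive area, i.e. exactly when $\alpha = 1$ — the situation relevant to the applications. When $\alpha < 1$ the identity-theorem step is unavailable, but then the \emph{second} equality in the Contraction Principle carries extra information: strict concavity of $t \mapsto t^\alpha$ makes $\big(\sum_g|g^*q|\big)^\alpha = \sum_g |g^*q|^\alpha$ fail unless a single inverse branch is nonzero at almost every point, and one runs the same comparison of $f_*f^*q = Dq$ against $f_*q = q$ along that dominant branch to force $f^*q = Dq$. I would organise the write-up so that the phase-alignment computation of $\psi$ is common to both cases, with only the final rigidity step branching on the value of $\alpha$.
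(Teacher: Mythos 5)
Your proof is correct and follows essentially the same route as the paper's: equality in the contraction estimate forces pointwise phase alignment of the branch pullbacks, a meromorphic ratio is then shown to be real-valued and hence constant, and the identity $f_*f^*q=Dq$ pins the constant to $D$. The only cosmetic difference is that you work with $\psi=f^*q/q$ upstairs and deduce constancy from real-analyticity of $\mathrm{Im}\,\psi$ on a set of positive area, whereas the paper works with the ratios $g^*q/\phi^*q$ of inverse branches and deduces it from real-valuedness on an open set; both rest on the same rigidity of real-valued meromorphic functions, and your closing remark on the $\alpha<1$ case is no less complete than the paper's, which simply defers to Epstein there.
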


\begin{proof}
	Since $\|f_*q\|_Z=\|q\|_Z$, the inequalities in \eqref{contr} are all equalities, 
	and this means that $\Big| \sum_g g^*q\Big|= \sum_g |g^*q|$. 
	This in turn means, (c.f. \cite{1999math......2158E}), that the quotient
	$g^*q/\phi^*q$ is a real valued meromorphic function on an open subset $U\subset X \setminus S_f$,
	hence it is globally constant,
	where $\phi:U \to \PP$ is any preferred inverse branch of $f$.
	We deduce that $f_*q=\lambda \phi^*q$, for some constant $\lambda\in \C$.\\
	Consequently, $f^*f_*q=\lambda f^*\phi^*q=\lambda (\phi f)^*q=\lambda q$ on $\phi(U)$.
	It follows easily, since we have $f_*f^*q=Dq$ for any differential $q$
	and in particular for $f_*q$, that $\lambda=D$.
\end{proof}
Let now consider the case $Z=X=\PP$. The class of maps $f:\PP \to \PP$
for which there exists a $k$-differential $q$ such that $f^*q$ is a constant multiple of $q$
have been classified, e.g. in \cite{MR1251582}, and they are all quotients of an
endomorphism of a torus. We refer to it as the class of \textit{Latt\`es maps}.\\
Let us denote $\mathcal{Q}_k(\PP)$ the set of integrable $k$-differentials on $\PP$.
Recall that, by \eqref{contr}, the map $\nabla_f$ restricts to an endomorphism of $\mathcal{Q}_k(\PP)$.
An easy computation shows that $q \in \mathcal{Q}_k(\PP)$ if and only if there is an effective divisor
$\Delta$ with $deg_x(\Delta)<k\quad \forall x \in \PP$, such that 
$q\in H^0(\PP, \Omega_{\PP}^{\otimes k}(+\Delta))$. The above discussion may be summarized in the following.
\begin{theorem}\label{T-E}{\bf{[Infinitesimal Thurston Rigidity]}}\\
	Assume $f$ is not a Latt\`es map. Then,
	$$
	\nabla_f: \mathcal{Q}_k(\PP) \to \mathcal{Q}_k(\PP)
	$$
	is injective.
\end{theorem}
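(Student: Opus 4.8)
The plan is to derive the statement directly from the two principles established just above, arguing by contraposition. Suppose that $\nabla_f$ fails to be injective on $\mathcal{Q}_k(\PP)$. Since $\nabla_f = \mathrm{Id} - f_*$ is linear and (by the Contraction Principle) does restrict to an endomorphism of $\mathcal{Q}_k(\PP)$, a failure of injectivity produces a nonzero integrable $k$-differential $q \in \mathcal{Q}_k(\PP)$ in its kernel, that is, $q = f_*q$. The whole argument then reduces to showing that such a fixed differential forces $f$ into the excluded Lattès class.

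First I would invoke the Rigidity Principle: because $q$ is integrable and satisfies $f_*q = q$, it yields $f^*q = Dq$. It is essential here that $q$ be genuinely integrable, so that all the inequalities in the Contraction Principle collapse to equalities; this is exactly what membership in $\mathcal{Q}_k(\PP)$ guarantees. Next, since $f$ has degree $D > 1$, the scalar $D$ is a nonzero constant, and so $q$ is a nonzero $k$-differential with $f^*q$ a constant multiple of itself. By the classification of such maps recalled before the statement, the mere existence of such a $q$ characterizes $f$ as a Lattès map, contradicting the standing hypothesis. Hence no nonzero element can lie in the kernel, and $\nabla_f$ is injective.

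The main obstacle is not located in this final deduction, which is essentially formal, but has already been absorbed into the Rigidity Principle on which it rests: the delicate passage is from equality in the triangle inequality $|\sum_g g^*q| = \sum_g |g^*q|$ to the conclusion that the ratios $g^*q/\phi^*q$ are globally constant real functions, and thence to $f^*q = Dq$. Since that lemma is already in hand, the only residual care required here is to check that the kernel element is both nonzero and integrable, and that the phrase ``constant multiple'' in the definition of Lattès map is read with the specific nonzero constant $D$; both points are immediate, so the theorem follows as a short corollary.
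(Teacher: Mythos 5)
Your proposal is correct and is essentially identical to the paper's own argument: the paper presents Theorem \ref{T-E} as a summary of the preceding discussion, namely that a nonzero integrable $q$ with $f_*q=q$ satisfies $f^*q=Dq$ by the Rigidity Principle, and the classification of maps admitting a $k$-differential with $f^*q$ a constant multiple of $q$ then forces $f$ to be Latt\`es. Nothing further is needed.
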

The interesting case for applications is, of course,
the case of quadratic differentials.
In order to state the extension of Theorem \ref{T-E} due to Epstein,
we need the notion of \textit{invariant divergences}.
The space of \textit{algebraic divergences} is, by definition,
the quotient space $\mathcal{D}(\PP)\coloneqq\mathcal{M}(\PP)/\mathcal{Q}(\PP)$,
where we have suppressed the subscript $k=2$.
They consist in polar parts, $[q]=([q]_x)_{x \in X}$, of order $\geq -2$ of
meromorphic quadratic differentials.
There is an induced endomorphism $\nabla_f: \mathcal{D}(\PP) \to \mathcal{D}(\PP)$,
whose kernel is defined to be the space $\mathcal{D}(f)$ of \textit{invariant divergences}.
Observe that \eqref{order} implies, for $[q]\in \mathcal{D}(f)\setminus \{0\}$, since we have $[q]_x=0$
for all but finite $x\in \PP$, that
$[q]_x\neq 0$ if and only if $x$ is periodic.
Moreover, for $A \subseteq \PP$ we denote $\mathcal{D}(\PP, A)$ the corresponding
quotient of $\mathcal{M}(\PP, A)$. We have, again by \eqref{order},
a map
$$
\nabla_f|_{\mathcal{D}(\PP, A)}: \mathcal{D}(\PP, A) \to \mathcal{D}(\PP, f(A)).
$$
We set $\mathcal{D}_A(f)= \ker \nabla_f|_{\mathcal{D}(\PP,A)}$,
and observe that
$$
\mathcal{D}_A(f)=
\bigoplus_{\langle x \rangle \subset A}\mathcal{D}_{\langle x \rangle}(f).
$$
The space $\mathcal{D}_{\langle x \rangle}(f)$ is computed in \cite{1999math......2158E}
in terms of the local invariants of $\langle x \rangle$.
Moreover, it is sufficient to compute it for fixed points,
since the morphism
$\mathcal{D}_{\langle x \rangle}(f) \overset{\pi}{\longrightarrow} \mathcal{D}_x(f^{p})$,
induced by the projection $\mathcal{D}_x(\PP, \langle x \rangle ) \to \mathcal{D}(\PP, \{x\})$,
is an isomorphism. Indeed, its inverse is
\begin{equation}\label{cyclereduction}
	\pi^{-1}( [q]_x)=\bigoplus\limits_{k=0}^{p-1}(f^k)_*( [q]_x).
\end{equation}
Let us denote with $\mathcal{D}^{k}_x $
the subspace of $\mathcal{D}_x(\PP)$ given by
$$
\mathcal{D}^{k}_x =\C\left[ \frac{(d\zeta)^2}{\zeta^{k}}\right].
$$
The following is the result obtained in \cite{1999math......2158E}.
\begin{lemma}\label{divergence1}
	Let $x$ be a fixed point of $f$ and $\zeta$ a local coordinate centered at $x$.
	Assume $\zeta$ is a preferred local coordinate in the parabolic case, \eqref{preferred}.
	Then, in the notation of \ref{Fatou-Shishikura},
	$$
	\mathcal{D}_x(f)= \begin{cases}
		0, & \mbox{if } x \mbox{ is superattracting or repelling};\\
		\mathcal{D}^{2}_x , & \mbox{if } x \mbox{ is attracting or irrationally indifferent};\\
		\mathcal{D}_x^{\circ}(f) \oplus \C[q_f], & \mbox{if } x \mbox{ is parabolic},
	\end{cases}
	$$
	where  $\mathcal{D}_x^{\circ}(f)\coloneqq\bigoplus\limits_{k=0}^{\nu-1}\mathcal{D}^{kn+2}_x $, and
	$\displaystyle [q_f]=\left[\frac{(d\zeta)^2}{(\zeta^{N+1}-\beta \zeta^{N+2})^2}\right]$.
\end{lemma}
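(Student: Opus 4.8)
The statement to prove is Lemma \ref{divergence1}, which computes the local space of invariant divergences $\mathcal{D}_x(f)$ at a fixed point $x$ in terms of the dynamical type of $x$. This is a purely local computation in the formal/analytic local ring at $x$, so the plan is to work in a local coordinate $\zeta$ centred at $x$ and analyze the operator $\nabla_f = \mathrm{Id} - f_*$ acting on polar parts of quadratic differentials, i.e.\ on the space $\mathcal{D}_x(\PP) = \bigoplus_{k \geq 1} \mathcal{D}^{k}_x$ where $\mathcal{D}^k_x = \C\left[ (d\zeta)^2/\zeta^k \right]$. Since $x$ is fixed, by the order estimate \eqref{order} the pushforward $f_*$ preserves the filtration by pole order at $x$, so I would first reduce to understanding how $f_*$ (equivalently $f^*$, via the adjunction $f_* f^* = D\cdot\mathrm{Id}$) acts on each graded/filtered piece.

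The core of the argument is an eigenvalue computation for the local action. Writing $f$ in the local coordinate as $\zeta \mapsto \rho\zeta + O(\zeta^2)$, the pullback $f^*$ on a differential $(d\zeta)^2/\zeta^k$ scales to leading order by $\rho^{k-2}$ (the factor $\rho^2$ from $(d\zeta)^2$ against $\rho^{-k}$ from $\zeta^{-k}$). Thus $\nabla_f$ acts on the top-order term with eigenvalue $1 - \rho^{2-k}$ up to the relation with $f_*$, and I would organize the four cases by when this leading eigenvalue vanishes or not:
\begin{itemize}
\item In the \emph{repelling} and \emph{superattracting} cases one shows $\nabla_f$ is injective on all of $\mathcal{D}_x(\PP)$ by a strict contraction/expansion estimate on pole orders, forcing $\mathcal{D}_x(f) = 0$; the superattracting case uses that a critical fixed point lowers pole order strictly under $f_*$.
\item In the \emph{attracting} or \emph{irrationally indifferent} cases, $|\rho|<1$ or $|\rho|=1$ non-root-of-unity, the only solvable order is $k=2$, where $\rho^{2-k}=1$, giving exactly the one-dimensional space $\mathcal{D}^2_x$.
\item The \emph{parabolic} case, with $\rho$ a primitive $q$-th root of unity, is the delicate one: here the leading symbol degenerates and one must use the preferred coordinate normal form \eqref{preferred}, $\zeta \mapsto \rho\zeta(1+\zeta^N + \alpha\zeta^{2N}+\cdots)$ with $N=\nu q$, to compute the kernel beyond the leading order.
\end{itemize}

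The parabolic computation is where I expect the main obstacle. The strategy is to pass to the iterate $f^{kq}$ (or directly use the preferred coordinate) so that $\rho$ becomes a genuine $q$-th root of unity and the resonant pole orders are precisely $k \equiv 2 \pmod{q}$; solving $\nabla_f [q] = 0$ order by order then becomes a recursion whose solvability is governed by the formal invariant $\alpha$ and the $\nu q$-term. One must show the resonant subspace splits as $\mathcal{D}_x^{\circ}(f) = \bigoplus_{k=0}^{\nu-1}\mathcal{D}^{kn+2}_x$ contributing the $\nu$ ``lower'' resonances, plus a single extra class $[q_f]$ built from the explicit differential $(d\zeta)^2/(\zeta^{N+1}-\beta\zeta^{N+2})^2$ whose residue is fixed by the r\'esidu it\'eratif $\beta$. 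The key is that the normal form \eqref{preferred} is exactly engineered so that the coefficient obstruction at the critical order $2N+2$ is controlled by $\beta$, and one invokes Epstein's computation in \cite{1999math......2158E} for this delicate residue bookkeeping. Finally, the reduction from a general periodic cycle to a fixed point via the isomorphism \eqref{cyclereduction}, $\pi^{-1}([q]_x)=\bigoplus_{k=0}^{p-1}(f^k)_*([q]_x)$, is routine and lets me carry out the entire analysis for fixed points only.
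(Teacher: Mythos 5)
First, a point of reference: the paper does not prove this lemma at all. It is stated verbatim as ``the result obtained in \cite{1999math......2158E}'', with only the reduction from cycles to fixed points (the isomorphism \eqref{cyclereduction}) justified in the text; the fixed-point computation itself is imported from Epstein. So there is no in-paper proof to compare your argument against, and your sketch should be judged as a stand-alone attempt. On that basis, your overall architecture is the right one -- work in a normalizing local coordinate, let the local inverse branch act on the graded pieces $\mathcal{D}^k_x$, and sort cases by when the leading eigenvalue $1-\rho^{k-2}$ vanishes. This is precisely dual (via the residue pairing $\mathcal{D}^{k+1}_x\times T^k_x\to\C$ that the author records after the lemma) to the computation the thesis \emph{does} carry out in detail in \ref{locallemma1}, and your deferral of the parabolic residue bookkeeping to Epstein mirrors what the paper itself does.

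The genuine gap is your repelling case. You claim injectivity of $\nabla_f$ there ``by a strict contraction/expansion estimate on pole orders,'' but no such estimate can work at pole order $k=2$: for any local biholomorphism $g$ with $g(0)=0$, $g'(0)=\lambda\neq 0$, one has $g'/g=\zeta^{-1}+O(1)$, hence $g^*\bigl((d\zeta)^2/\zeta^2\bigr)=(d\zeta)^2/\zeta^2+O(\zeta^{-1})(d\zeta)^2$, so the class $[(d\zeta)^2/\zeta^2]$ is fixed \emph{exactly} by the local inverse branch regardless of whether $|\rho|$ is larger or smaller than $1$ (the coefficient of $\zeta^{-2}(d\zeta)^2$ is the coordinate-free quadratic residue). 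An order-by-order eigenvalue analysis therefore places $\mathcal{D}^2_x$ inside $\ker\nabla_f$ for repelling fixed points just as for attracting ones, and indeed the dual computation in \ref{locallemma1}(b) gives the same one-dimensional answer for attracting and repelling. Whatever forces $\mathcal{D}_x(f)=0$ in the repelling case in Epstein's lemma is of a finer, non-formal nature (it is exactly where $\mathcal{D}(f)$ and its ``nonpositive residue'' subspace $\mathcal{D}^{\flat}(f)$ of \ref{divergence2} part company with the purely local adjoint computation), and your sketch does not identify that mechanism. Your superattracting argument also needs a small repair: by \eqref{order} the pole order is only \emph{strictly} lowered for poles of order $>2$; at order exactly $2$ one must instead observe that summing the $e=\deg_x(f)$ branches in a B\"ottcher coordinate gives $f_*[(d\zeta)^2/\zeta^2]=e^{-1}[(d\zeta)^2/\zeta^2]$, so the relevant eigenvalue is $1-e^{-1}\neq 0$. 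The remaining cases (attracting/irrationally indifferent, the resonant bookkeeping for parabolic points in the preferred coordinate, and the cycle-to-fixed-point reduction) are sketched consistently with how such a proof would actually go.
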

Following Epstein, along any cycle
$\langle x\rangle$ of $f$ there is a dynamically defined residue
associated to an invariant divergence $[q]\in \mathcal{D}(f)$,
which is denoted $Res_{\langle x\rangle}(f, [q])$. We refer to
\cite{1999math......2158E} for its definition, which is not relevant
for our discussion.
The space $\mathcal{D}^{\flat}(f)=
\bigoplus_{\langle x\rangle \subset \PP}\mathcal{D}_{\langle x\rangle}^{\flat}(f)$ is, by definition,
the subspace of $\mathcal{D}(f)$ having nonpositive residue along any cycle.
\begin{lemma}\label{divergence2}
	In the hypothesis of Lemma \ref{divergence1}, we have
	$$
	\mathcal{D}^{\flat}_x(f)= \begin{cases}
		0, & \mbox{if } x \mbox{ is superattracting or repelling};\\
		\mathcal{D}^{2}_x , & \mbox{if } x \mbox{ is attracting or irrationally indifferent};\\
		\mathcal{D}_x^{\circ}(f) , & \mbox{if } x \mbox{ is parabolic-repelling};\\
		\vspace{0.3cm}
		\mathcal{D}_x^{\circ}(f)  \oplus \C[q_f], & \mbox{if } x \mbox{ is   parabolic-nonrepelling}.
	\end{cases}
	$$
	If $\langle x\rangle$ is a cycle of $f$ of period $p$, then
	$\mathcal{D}^{\flat}_{\langle x\rangle}(f)=\pi^{-1}(\mathcal{D}^{\flat}_x(f^p))$.
\end{lemma}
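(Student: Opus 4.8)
The plan is to read off $\mathcal{D}^{\flat}_x(f)$ from the computation of $\mathcal{D}_x(f)$ in Lemma \ref{divergence1} by evaluating the dynamically defined residue $Res_{\langle x\rangle}(f,\cdot)$ on each summand and imposing the sign condition $Res_{\langle x\rangle}(f,[q])\le 0$. First I would dispose of the superattracting and repelling cases, where $\mathcal{D}_x(f)=0$ forces $\mathcal{D}^{\flat}_x(f)=0$ trivially. For the attracting and irrationally indifferent cases, the point is that the residue functional vanishes identically on $\mathcal{D}^2_x$ (cf. \cite{1999math......2158E}), so the nonpositivity condition is vacuous and $\mathcal{D}^{\flat}_x(f)=\mathcal{D}^2_x=\mathcal{D}_x(f)$.

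The substance is the parabolic case. Here $\mathcal{D}_x(f)=\mathcal{D}_x^{\circ}(f)\oplus\C[q_f]$, and I would argue that the residue respects this decomposition: it vanishes on the ``lower'' part $\mathcal{D}_x^{\circ}(f)=\bigoplus_{k=0}^{\nu-1}\mathcal{D}^{kn+2}_x$, while on the distinguished line $\C[q_f]$, with $[q_f]=\left[(d\zeta)^2/(\zeta^{N+1}-\beta\zeta^{N+2})^2\right]$ in the preferred coordinate \eqref{preferred}, the residue of $c[q_f]$ is a \emph{positive} real multiple of $|c|^2\,\Re(\beta)$. Granting this (which is exactly the local residue computation of \cite{1999math......2158E}), nonpositivity of the residue on the whole line $\C[q_f]$ is equivalent to $\Re(\beta)\le 0$. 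Consequently, when $\Re(\beta)>0$ (parabolic-repelling) the line $\C[q_f]$ is excluded and $\mathcal{D}^{\flat}_x(f)=\mathcal{D}_x^{\circ}(f)$, whereas when $\Re(\beta)\le 0$ (parabolic-nonrepelling) the line survives and $\mathcal{D}^{\flat}_x(f)=\mathcal{D}_x^{\circ}(f)\oplus\C[q_f]$, as claimed.

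It remains to reduce the case of a cycle $\langle x\rangle$ of period $p$ to that of a fixed point of $f^p$. I would invoke the isomorphism $\pi:\mathcal{D}_{\langle x\rangle}(f)\overset{\sim}{\longrightarrow}\mathcal{D}_x(f^p)$ recorded after Lemma \ref{divergence1}, whose inverse is $\pi^{-1}([q]_x)=\bigoplus_{k=0}^{p-1}(f^k)_*([q]_x)$, cf. \eqref{cyclereduction}. The key compatibility to check is that the residue is preserved under this reduction, i.e. $Res_{\langle x\rangle}(f,[q])=Res_x(f^p,\pi[q])$: the total residue of an invariant divergence along the cycle equals the residue at the single point $x$ of the return map $f^p$. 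This is an additivity property of the residue under the pushforwards $(f^k)_*$, compatible with the identification of kernels of $\nabla_f$ and $\nabla_{f^p}$; once it is in place, the sign condition transports verbatim and yields $\mathcal{D}^{\flat}_{\langle x\rangle}(f)=\pi^{-1}(\mathcal{D}^{\flat}_x(f^p))$.

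The main obstacle is the parabolic residue computation, namely establishing that $Res_{\langle x\rangle}(f,[q_f])$ has the sign of $\Re(\beta)$ (the real part of the r\'esidu it\'eratif). This is the genuinely analytic input, carried out through the preferred coordinate \eqref{preferred} and the leading-order behaviour of $f^k$ near $x$; I would simply cite it from \cite{1999math......2158E} rather than reprove it, since all the remaining, organizational content of the lemma rests on top of that single local estimate.
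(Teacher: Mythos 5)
The paper does not actually prove this lemma: both Lemma \ref{divergence1} and Lemma \ref{divergence2} sit in the ``Revisions'' section and are recalled verbatim from Epstein \cite{1999math......2158E}, and the text explicitly declines even to define the residue $Res_{\langle x\rangle}(f,[q])$ (``We refer to \cite{1999math......2158E} for its definition, which is not relevant for our discussion''). So there is no in-paper argument to compare yours against; the relevant question is whether your reconstruction is sound, and structurally it is. The trivial cases are handled correctly; for the surviving summands $\mathcal{D}^{2}_x$ and $\mathcal{D}_x^{\circ}(f)$ the residue must indeed be identically nonpositive (in fact vanish) for the stated answer to be a whole linear subspace; on the line $\C[q_f]$ the nonpositivity condition must hold either for every coefficient or only for the zero coefficient, which forces the dichotomy governed by $\Re(\beta)$; and the cycle-to-fixed-point reduction via $\pi$ together with the compatibility $Res_{\langle x\rangle}(f,[q])=Res_x(f^p,\pi[q])$ is exactly the mechanism behind \eqref{cyclereduction}. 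The one caveat is your specific formula that $Res(c[q_f])$ is a positive real multiple of $|c|^2\,\Re(\beta)$: this is a guess about the precise shape of Epstein's residue that cannot be verified from this paper, which never writes the residue down. What your argument genuinely needs, and what you should cite, is only the qualitative fact that the sign condition on $\C[q_f]$ is satisfied identically when $\Re(\beta)\le 0$ and only at the origin when $\Re(\beta)>0$ --- though you are right that no $\C$-linear residue could produce such a dichotomy, so some Hermitian or real-quadratic dependence of the kind you posit is forced by the statement itself. Since both you and the paper ultimately defer the analytic content to \cite{1999math......2158E}, there is no genuine gap; just be aware that you are supplying a proof sketch where the paper supplies only a citation.
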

Let us consider $A\subset\PP$ a finite set and $B\subset\PP$ such that $B\supseteq A \cup f(A)\cup S_f$.
We have a commutative diagram with exact rows:
\begin{equation}\label{diagramEps}
	\begin{tikzcd}
		0 \arrow[r] & \mathcal{Q}(\PP,A)\arrow[r] \arrow[d, "\nabla_f"] & 
		\mathcal{M}(\PP,A) \arrow[r] \arrow[d, "\nabla_f"] & \mathcal{D}(\PP,A)\arrow[r] \arrow[d, "\nabla_f"] & 0\\
		0 \arrow[r] & \mathcal{Q}(\PP,B)\arrow[r] & \mathcal{M}(\PP,B) \arrow[r] & \mathcal{D}(\PP,B)\arrow[r] & 0
	\end{tikzcd}
\end{equation}
By the Snake Lemma, there is a connecting homomorphism
\begin{equation}\label{connecting}
	\blacktriangledown: \mathcal{D}(f, A) \to \mathcal{Q}(\PP,B)/\nabla_f \mathcal{Q}(\PP,A).
\end{equation}
The extension of Theorem \ref{T-E} proved by Epstein can be formulated as follows.
\begin{theorem} \label{Eps} {\bf{[Epstein]}}\\
	Assume $f$ is not a Latt\`es map. Then $\blacktriangledown|_{\mathcal{D}^{\flat} (f, A)}$ is injective.
\end{theorem}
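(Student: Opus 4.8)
The plan is to read off injectivity from the long exact sequence that the Snake Lemma attaches to the commutative diagram \eqref{diagramEps}. Denote by $\nabla_Q,\nabla_M,\nabla_D$ the three vertical copies of $\nabla_f$. The Snake Lemma yields exactness of
\[
\cdots \longrightarrow \ker\nabla_M \longrightarrow \ker\nabla_D \overset{\blacktriangledown}{\longrightarrow} \operatorname{coker}\nabla_Q \longrightarrow \cdots ,
\]
where $\ker\nabla_D=\mathcal{D}(f,A)$ and $\ker\nabla_M=\{\,Q\in\mathcal{M}(\PP,A): f_*Q=Q\,\}$. The first thing I would record is the resulting description of the kernel of the connecting map: a class $[q]\in\mathcal{D}(f,A)$ satisfies $\blacktriangledown([q])=0$ if and only if it is the divergence of an \emph{invariant} meromorphic quadratic differential $Q$, i.e. one with $f_*Q=Q$ and polar divisor on $A$. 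Thus proving Theorem \ref{Eps} amounts to showing that a nonzero $[q]\in\mathcal{D}^{\flat}(f,A)$ can never arise as the divergence of such an invariant $Q$, unless $f$ is Lattès.

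So suppose, for contradiction, that $[q]\in\mathcal{D}^{\flat}(f,A)$ is nonzero and lifts to an invariant $Q$. By Lemma \ref{divergence1} and Lemma \ref{divergence2}, $Q$ then carries poles of order $\ge 2$ precisely along the nonrepelling cycles meeting $A$, and cycle by cycle its divergence lies in the explicit spaces $\mathcal{D}^2_x$, $\mathcal{D}^{\circ}_x(f)$, or $\mathcal{D}^{\circ}_x(f)\oplus\C[q_f]$; the $\flat$-hypothesis is exactly the requirement that the dynamical residues $\mathrm{Res}_{\langle x\rangle}(f,[q])$ have nonpositive real part. In particular $Q$ fails to be integrable, and the quantity $\int|Q|$ diverges only logarithmically at these double poles, the coefficient of the logarithmic divergence being governed by those very residues.

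The engine of the contradiction is the Contraction Principle together with the strictness of the Rigidity Principle for non-Lattès maps. I would truncate $Q$ to an integrable differential $Q_\varepsilon$ by excising $\varepsilon$-disks around its poles, apply $\|f_*Q_\varepsilon\|_{\PP}\le\|Q_\varepsilon\|_{\PP}$, and let $\varepsilon\to 0$. Because $f_*Q=Q$, the finite parts cancel and the defect between the two sides is, to leading order, governed by the residues $\mathrm{Res}_{\langle x\rangle}(f,[q])$. The sign imposed by the $\flat$-condition makes this defect compatible with the Contraction inequality only when equality holds throughout the triangle inequality in the Contraction Principle; by the Rigidity Principle this equality forces $f^*q=Dq$ for an integrable $q$, hence $f$ Lattès — contradicting the hypothesis. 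Therefore $[q]=0$ and $\blacktriangledown$ is injective on $\mathcal{D}^{\flat}(f,A)$.

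The main obstacle I anticipate is the residue bookkeeping at parabolic cycles, where the local normal form \eqref{preferred}, the parabolic multiplicity $\nu$ and the formal invariant $\beta$ all intervene: there $\mathcal{D}^{\flat}_x(f)$ acquires the extra line $\C[q_f]$ precisely when $\Re(\beta)\le 0$, and one must verify that the leading asymptotics of $\|f_*Q_\varepsilon\|_{\PP}-\|Q_\varepsilon\|_{\PP}$ attaches the correct sign to the $[q_f]$-component. Matching this regularized Contraction estimate uniformly across superattracting, attracting, irrationally indifferent and parabolic cycles — and identifying its leading term with the residue data of Lemma \ref{divergence2} — is the delicate analytic heart, and is exactly the point at which the transcendental Thurston rigidity underlying the Rigidity Principle is indispensable.
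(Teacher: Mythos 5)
First, a point of reference: the paper does not prove this theorem. It is stated as a citation of Epstein's result in \cite{1999math......2158E}, and the introduction to the chapter says explicitly that the section is ``a revision of the results we find in \cite{1999math......2158E}'' and that ``we are not providing another proof of the fundamental result'' -- indeed the whole thesis is built on taking Theorem \ref{Eps} as the transcendental input whose dual it reinterprets as a vanishing of $\mathbb{E}{\rm xt}^2$. So your proposal is attempting something the paper deliberately avoids. Your opening reduction is correct and is exactly the paper's Remark \ref{hatrmk}: the Snake Lemma identifies $\ker\blacktriangledown$ with divergences of $f_*$-invariant elements of $\mathcal{M}(\PP,A)$, so the theorem becomes: a non-integrable $Q$ with $f_*Q=Q$ cannot have $[Q]\in\mathcal{D}^{\flat}(f,A)\setminus\{0\}$ unless $f$ is Latt\`es.

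The analytic core of your argument, however, has concrete gaps. (i) The claim that $\int|Q|$ ``diverges only logarithmically at these double poles'' is false in the parabolic case: by Lemma \ref{divergence2}, $\mathcal{D}^{\flat}_x(f)$ contains classes with poles of order up to $2N_x+2$ (the components $\mathcal{D}_x^{kn+2}$ and $\C[q_f]$), for which the truncated mass grows polynomially in $1/\varepsilon$, and it is precisely the cancellation of these non-logarithmic terms that constitutes the hard part of Epstein's proof. (ii) Excising round $\varepsilon$-disks does not interact with $f_*$: the pushforward of a truncated differential is not the truncation of the pushforward, so the inequality $\|f_*Q_\varepsilon\|\le\|Q_\varepsilon\|$ does not by itself compare two quantities whose difference is ``governed by the residues.'' One must use dynamically adapted deleted neighbourhoods (linearizing disks with $f^{-1}(D)\supset D$, Fatou petals at parabolic points), and the residue asymptotics must be established there; this is where the parabolic normal form \eqref{preferred} and the invariant $\beta$ actually enter, and your sketch only names the difficulty without resolving it. (iii) The concluding appeal to the Rigidity Principle is circular: that principle, as stated, applies to an \emph{integrable} $q$ with $f_*q=q$, whereas your $Q$ is by assumption non-integrable -- integrability of $Q$ (equivalently $[Q]=0$) is exactly the conclusion you are trying to reach, so you cannot invoke the Rigidity Principle to force the Latt\`es alternative before having established it.
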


The Theorem above is, after all, the Fatou-Shishikura Inequality: in fact, by
Lemma \ref{divergence2}, if $A$ is finite, we have
$$
\dim_{\C}\mathcal{D}^{\flat}(f, A)= \gamma_A.
$$
On the other hand, by classical results in deformation theory of Riemann surfaces,
and by Theorem \ref{T-E}, we have $\dim_{\C} \mathcal{Q}(\PP,B)/\nabla_f \mathcal{Q}(\PP,A)=\#B- \#A$,
whenever $\#A \geq 3$. Hence, if we take $A$ any finite set of the form $A=S_{f^n}\cup C$,
where $C$ is a collection of nonrepelling cycles with $\#C \geq 3$,
$n\gg 0$ is as in \eqref{delta}, and $B=A\cup f(A)$, we get
$$
\dim_{\C}\mathcal{D}^{\flat}(f, A) \leq \delta_f.
$$
Taking the sup over $A$ yelds Theorem \ref{F-S}.
\begin{remark}\label{hatrmk}
	Looking at the definition of the connecting
	homomorphism $\blacktriangledown$, we see immediately that Theorem \ref{Eps} is
	equivalent to the injectivity of $\nabla_f$ on the subspace
	of
	$$
	\hat{\mathcal{Q}}(f, A)\coloneqq\{q \in \mathcal{M}(\PP,A): [q] \in \mathcal{D}(f, A)\}=
	\nabla_f^{-1}\mathcal{Q}(\PP, B)
	$$
	given by
	$\hat{\mathcal{Q}}^{\flat}(f, A)
	\coloneqq\{q \in \mathcal{M}(\PP,A): [q] \in \mathcal{D}^{\flat}(f, A)\}$.
\end{remark}

\section{The Snake argument reloaded}
As said before, we shall present the previews results
in a different way. The aim is to show that they have a manifestation
 in the Tòpoi of $\QQ{f}$ and ${\rm E}_f$.\\
Let us first give some general definitions.
Let $f: X \to X$ be an analytic endomorphism of a Riemann surface,
and let $\XX{f}$ denote the associated classifying site, cf \ref{defdynsite1}.
Recall that we denote with $\mathcal{O}_{\bul}$ the structure sheaf of $\XX{f}$
and with $\Omega_{\bul}$ the sheaf of holomorphic differential forms on $\XX{f}$.
\begin{definition}[{\bf Divisor on \XX{f}}]\label{f.i.divisor} 
	Let us consider an effective divisor $\Delta$ on $X$ and let us suppose 
	that $\Delta$ is forward invariant, \textit{i.e.} 
	$f(\Delta) \subseteq \Delta$ or, equivalently,
	\begin{equation}\label{f.i.divisorcondition}
		\Delta\preceq f^*\Delta.
	\end{equation}
	Let us denote by $\Delta_{\bul}$ a divisor $\Delta$ on $X$ 
	satisfying condition \eqref{f.i.divisorcondition}. 
	With an abuse of language 
	we refer to $\Delta_{\bul}$ as a ``divisor'' on $\XX{f}$.
	The reason is that $\Delta_{\bul}$ defines a sheaf 
	$\mathcal{O}(-\Delta_{\bul})$ on $\XX{f}$ given 
	by the pair $(\mathcal{O}_{X}(-\Delta), \iota)$, where 
	\begin{equation}\label{inclusiondivisorsheaf}
		\iota: f^*\mathcal{O}_{X}(-\Delta) \cong \mathcal{O}_{X}(-f^*\Delta) \hookrightarrow \mathcal{O}_{X}(-\Delta)
	\end{equation}
	is induced by the natural inclusion. 
\end{definition}

If $\Delta_{\bul}$ is a divisor on $\XX{f}$,
we have a canonical injection in $Ab(\XX{f})$ given by
$$
i: \mathcal{O}(-\Delta_{\bul}) \hookrightarrow \mathcal{O}_{\bul},
$$
whose co-kernel will be denoted with $\mathcal{O}_{\Delta_{\bul}}$.\\
Applying the functor $\mathbb{H}{\rm om}(\Omega_{\bul}, -)$ to the following exact
sequence in $Ab(\XX{f})$
\begin{equation}\label{idealexact}
	\begin{tikzcd}[row sep=2.6pc, column sep=1.6pc]
		0 \arrow[r] & \mathcal{O}(-\Delta_{\bul})\arrow[r]
		&  \mathcal{O}_{\bul}\arrow[r]  & \mathcal{O}_{\Delta_{\bul}} \arrow[r]  & 0
	\end{tikzcd}
\end{equation}
we obtain a long exact sequence of $\C$-vector spaces
\begin{equation}\label{ideallongexact}
	\begin{tikzcd}[row sep=1.6pc, column sep=0.8pc]
		0 \arrow[r] & \mathbb{H}{\rm om}(\Omega_{\bul},\mathcal{O}(-\Delta_{\bul})) \arrow[r]
		& \mathbb{H}{\rm om}(\Omega_{\bul},\mathcal{O}_{\bul} ) \arrow[r]
		\arrow[d, phantom, ""{coordinate, name=Z}]
		&  \mathbb{H}{\rm om}(\Omega_{\bul}, \mathcal{O}_{\Delta_{\bul}}) \arrow[dll,
		"\delta^1",
		rounded corners,
		to path={ -- ([xshift=2ex]\tikztostart.east)
			|- (Z) [near end]\tikztonodes
			-| ([xshift=-2ex]\tikztotarget.west)
			-- (\tikztotarget)}] \\
		& \mathbb{E}{\rm xt^1}(\Omega_{\bul}, \mathcal{O}(-\Delta_{\bul})) \arrow[r]
		& \mathbb{E}{\rm xt^1}(\Omega_{\bul},\mathcal{O}_{\bul} ) \arrow[r]
		\arrow[d, phantom, ""{coordinate, name=Z}]
		& \mathbb{E}{\rm xt^1}(\Omega_{\bul}, \mathcal{O}_{\Delta_{\bul}}) \arrow[dll,
		"\delta^2",
		rounded corners,
		to path={ -- ([xshift=2ex]\tikztostart.east)
			|- (Z) [near end]\tikztonodes
			-| ([xshift=-2ex]\tikztotarget.west)
			-- (\tikztotarget)}]\\
		& \mathbb{E}{\rm xt^2}(\Omega_{\bul}, \mathcal{O}(-\Delta_{\bul})) \arrow[r]
		& \mathbb{E}{\rm xt^2}(\Omega_{\bul},\mathcal{O}_{\bul} ) \arrow[r]
		& 0
	\end{tikzcd}
\end{equation}
Observe that for any pair of coherent sheaves $\mathcal{F},\mathcal{G}$ 
on a $d$-dimensional complex manifold $X$, the groups
${\rm Ext}^i(\mathcal{F},\mathcal{G})$ vanish for all $i>d$, hence by 
\ref{exti_cor} we obtain the vanishing of the terms
$\mathbb{E}{\rm xt^2}(\Omega_{\bul},\mathcal{O}_{\Delta_{\bul}})$,
$\mathbb{E}{\rm xt^3}(\Omega_{\bul},\mathcal{O}_{\bul})$, etc. 

Let us consider now the analogous sheaves $\mathcal{O}_{\bul}$ and $\Omega_{\bul}$ on Epstein's site ${\rm E}_f$.
\begin{definition}[{\bf E-dynamical divisor}]\label{E-dynimacaldivisor}
	Let $\Delta_{0}, \Delta_{1}$ be effective divisors on $X$ such that
	\begin{equation}\label{divisorcondition}
		\Delta_{1}\preceq \Delta_0 \wedge f^*\Delta_0.
	\end{equation}
	Let us denote by $\Delta_{\bul}\coloneqq (\Delta_{0},\Delta_{1})$ a pair of divisors on $X$ 
	satisfying condition \eqref{divisorcondition}. With an abuse of language 
	we refer to $\Delta_{\bul}$ as a ``divisor'' on ${\rm E}_f$, or simply ``{\rm E}-dynamical'' divisor.
	The reason is that $\Delta_{\bul}$ defines a sheaf 
	on ${\rm E}_f$ given by 
	$$
	\mathcal{O}(-\Delta_{\bul})\coloneqq\big((\mathcal{O}_{X}(-\Delta_0), \mathcal{O}_{X}(-\Delta_1)), \iota_0 \coprod \iota_1\big),
	$$ 
	where 
	\begin{equation}\label{inclusiondivisorEf}
		\iota_0: \mathcal{O}_{X}(- \Delta_0) \hookrightarrow \mathcal{O}_{X}(-\Delta_1), \quad 
		\iota_1:	\mathcal{O}_{X}(- f^*\Delta_0) \hookrightarrow \mathcal{O}_{X}(-\Delta_1)
	\end{equation}
\end{definition}
Note that, \textit{mutatis mutandis}, we have analogous exact sequences
\eqref{idealexact}, \eqref{ideallongexact} in $Ab({\rm E}_f)$, where 
in this case we have denoted by $\Delta_{\bul}$ a divisor on ${\rm E}_f$.
 Let 
$(\mathcal{G}_{\bul}, \gamma_{\bul}) \in Sh({\rm E}_f)$ be
one of the sheaves appearing in the sequence \eqref{idealexact} 
(viewed as a sequence in $Sh({\rm E}_f)$)
and let $(\mathcal{F}, \varphi)\in Sh(\XX{f})$ be a
sheaf of $\mathcal{O}_{\bul}$-modules
such that $\mathcal{F}$ is coherent. Recall that we identify 
$(\mathcal{F}, \varphi)$ with $(\mathcal{F}_{\bul},\varphi_{\bul}) \in Sh({\rm E}_f)$, cf. \ref{notationXfbul}.
By Corollary \eqref{ext1_corXf} there is a converging spectral sequence
$$
E_r^{p,q} \Rightarrow {\rm \mathbb{E}xt^{p+q}}(\mathcal{F}_{\bul}, \mathcal{G}_{\bul}),
$$
which degenerates at $E_2$.

\begin{center}
	\begin{figure}[H]
		\begin{tikzpicture}
			\pgfplotsset{ticks=none}
			\draw[->] (-1.5,-1) -- (8.8,-1) node[below] {${\bf{p}}$} -- (9,-1);
			\draw (0.0,-1) node[below] {$0$};
			\draw[->] (-1.5,-1) -- (-1.5,3.8) node[left] {${\bf{q}}$} -- (-1.5,4);
			\draw (0.0,0.0) node[] {${\rm Hom}(\mathcal{F}, \mathcal{G}_0)$};
			\draw (-1.5,0) node[left] {$0$};
			\draw (3.5,0) node[] {${\rm Hom}(f^*\mathcal{F}, \mathcal{G}_1)$};
			\draw (3.5,-1) node[below] {$1$};
			\draw (7,-1) node[below] {$2$};
			\draw (0,1.25) node[] {${\rm Ext^1}(\mathcal{F}, \mathcal{G}_0)$};
			\draw (3.5,1.25) node[] {${\rm Ext^1}(f^*\mathcal{F}, \mathcal{G}_1)$};
			\draw (0,2.5) node[] {$0$};
			\draw (7,0) node[] {$0$};
			\draw (7,1.25) node[] {$0$};
			\draw (7,2.5) node[] {$0$};
			\draw (-1.5,1.25) node[left] {$1$};
			\draw (-1.5,2.5) node[left] {$2$};
			\draw (3.5,2.5) node[] {$0$};
			\draw[->] (1,0) -- (2,0);
			\draw[->] (1,1.25) -- (2,1.25);
			\draw (1.5,0) node[below] {\tiny$d_1^{0,0}$};
			\draw (1.5,1.25) node[below] {\tiny$d_1^{0,1}$};
		\end{tikzpicture}	
		\caption{$E_1$ of the spectral sequence}\label{spectralE1P1}
	\end{figure}
\end{center}

The maps $d_1^{0,0}, d_1^{0,1}$ in Figure \ref{spectralE1P1} are, respectively, given by
the difference of the maps appearing on the right side of 
\eqref{Homequalizerfinal},
and its derived one.
Recall, cf. \cite{BottTu}, that there is a canonical isomorphism
\begin{equation}\label{tensor}
	{\rm Ext}^i(\mathcal{F},\mathcal{G})
	\overset{\sim}{\longrightarrow}
	{\rm H}^i(\mathcal{F}^{\vee} \otimes \mathcal{G}),
\end{equation}
for all $i \geq 0$.\\
We fix from now on $\mathcal{F}_{\bul}=\Omega_{\bul}$.
We can identify
the differentials of the spectral sequence by means of \eqref{Homequalizerfinal}, \eqref{tensor} as follows:
\begin{equation}\label{alpha}
	\begin{tikzcd}[row sep=1.2pc, column sep=0.8pc]
		H^0(T_X\otimes \mathcal{G}_0 ) \arrow[r, "d_1^{0,0}"]
		& H^0(f^*T_X\otimes \mathcal{G}_1) \\
		x \arrow[r, mapsto] & (df\otimes \gamma_0)(x) - (id \otimes \gamma_1)(f^{\star}x)
	\end{tikzcd}
\end{equation}
where
$f^{\star}:  H^0(T_X\otimes \mathcal{G}_0 )
\to  H^0(f^*T_X\otimes f^*\mathcal{G}_0) $ denotes the canonical pullback map.
Observe that, for our choice of $\mathcal{G}_{\bul}$, the map $\gamma_0$ is given,
respectively, by an inclusion, an identity or a projection
$\mathcal{O}_{\Delta_0} \to \mathcal{O}_{\Delta_1}$,
and the same is true for $\gamma_1$.
Thus, we are justified to abuse notation and set,
for all of them,
\begin{equation}\label{differencemapnablaf}
d_1^{0,0}=df-f^{\star}.	
\end{equation}
We fix from now on $X=\PP$.
Let $f: \PP \to \PP$ be a rational map and let $\QQ{f}$ be
the associated site, cf \ref{defdynsite1}.

\begin{definition}
	Let $C= \mathop{\bigcup}\limits_iC_i$ be a finite (disjoint) union of
	cycles of $f$ and let $\Delta$ be an effective divisor on $\PP$
  shaving support $|\Delta|=C$. 
  If for for each $x$ in the support of $\Delta$ we have
$$
deg_x(\Delta)=
\begin{cases}
	2, & \mbox{if } x\in C_i \mbox{ for some nonrepelling cycle } C_i;\\
	1, & \mbox{otherwise},
\end{cases}
$$
we say that the divisor $\Delta_{\bul}$ on $\QQ{f}$
defined by $\Delta$ is a collection of
``rigid cycles'' on $\QQ{f}$.
\end{definition}
\begin{claim}[{\bf Fatou-Shishikura (weak version)}]\label{claimF-SXf}
	Assume that $f$ is not a Latt\`es map. As a consequence of
	of Theorem \ref{F-S} we deduce the following Vanishing Theorem:
	let $\Delta_{\bul}$ consists of rigid cycles on $\XX{f}$,
	then
	\begin{equation}\label{vanishingXf}
		\mathbb{E}{\rm xt^2}(\Omega_{\bul}, \mathcal{O}(-\Delta_{\bul}))=0.
	\end{equation}
	Moreover, part of the exact sequence \eqref{ideallongexact}
	reads as the following exact sequence of $\C$-vector spaces
	(which have exactly the dimension appearing below)
	\begin{equation}\label{simplicialF-SXf}
		\begin{tikzcd}[row sep=1.2pc, column sep=0.8pc]
			\underset{ 2D-2 }{\mathbb{E}{\rm xt^1}(\Omega_{\bul},\mathcal{O}_{\bul})} \arrow[r]
			& \underset{N}{\mathbb{E}{\rm xt^1}(\Omega_{\bul}, \mathcal{O}_{\Delta_{\bul}})} \arrow[r] & 0
		\end{tikzcd}
	\end{equation}
	where $N$ denotes the total number of nonrepelling cycles contained in the support of $\Delta$.
\end{claim}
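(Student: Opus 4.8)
The plan is to run the long exact sequence \eqref{ideallongexact} through the spectral sequence of Corollary \ref{ext1_corXf}, isolate the single obstruction group $\mathbb{E}{\rm xt}^2(\Omega_{\bul},\mathcal{O}(-\Delta_{\bul}))$, and recognise its Serre dual as the kernel of the operator $\nabla_f$ whose vanishing is Epstein's rigidity theorem. First I would compute that obstruction group. For the sheaf $\mathcal{O}(-\Delta_{\bul})=(\mathcal{O}_{\PP}(-\Delta),\iota)$ on $\XX{f}$ the two columns of the $E_1$-page of Corollary \ref{ext1_corXf} are ${\rm Ext}^q(\Omega_{\PP},\mathcal{O}(-\Delta))$ and ${\rm Ext}^q(f^*\Omega_{\PP},\mathcal{O}(-\Delta))$. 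Since $\PP=\mathbb{P}^1$ is a curve, the identification \eqref{tensor} gives ${\rm Ext}^q(-,-)\cong H^q(T_{\PP}\otimes-)$, which vanishes for $q\geq 2$; hence the kernel piece $K^2$ of Corollary \ref{ext1_corXf} is automatically zero and
\[
\mathbb{E}{\rm xt}^2(\Omega_{\bul},\mathcal{O}(-\Delta_{\bul}))\cong C^1={\rm coker}\big(H^1(T_{\PP}(-\Delta))\xrightarrow{\,d_1^{0,1}\,}H^1((f^*T_{\PP})(-\Delta))\big),
\]
where $d_1^{0,1}$ is the derived difference map $df-f^{\star}$ of \eqref{alpha}, \eqref{differencemapnablaf}.

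Next I would dualise. By Serre duality on $\mathbb{P}^1$ the cokernel $C^1$ vanishes if and only if the transpose of $d_1^{0,1}$ is injective; using $T_{\PP}^{\vee}=\Omega_{\PP}$ and the isomorphism $f^*\Omega_{\PP}\cong\Omega_{\PP}(-\Gamma_f)$ furnished by $df^{\top}$, this transpose becomes a map $H^0(\Omega_{\PP}^{\otimes 2}(+\Delta-\Gamma_f))\to H^0(\Omega_{\PP}^{\otimes 2}(+\Delta))$ of meromorphic quadratic differentials, the transpose of $df$ being $df^{\top}$ and the transpose of $f^{\star}$ being the trace $f_*$. This is precisely the cohomological description promised in \ref{ThurstonvanishingLemma}: the transpose of $d_1^{0,1}$ is the operator $\nabla_f={\rm id}-f_*$ of \eqref{nabla_f} with $k=2$, so that $(C^1)^{\vee}=\ker\nabla_f$. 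For a rigid collection of cycles the prescribed pole orders — order $2$ along nonrepelling cycles, order $1$ along repelling ones — cut out exactly the flat invariant divergences: no divergence survives at a repelling point by Lemma \ref{divergence1}, and a double pole at a nonrepelling point lies in the flat subspace by Lemma \ref{divergence2}. Thus $(C^1)^{\vee}$ is identified with the space $\hat{\mathcal{Q}}^{\flat}(f,|\Delta|)$ of Remark \ref{hatrmk}, on which Epstein's Theorem \ref{Eps} (equivalently the Rigidity Principle of Theorem \ref{T-E} after the flatness reduction) gives injectivity of $\nabla_f$, since $f$ is not Latt\`es. Hence $(C^1)^{\vee}=0$ and $\mathbb{E}{\rm xt}^2(\Omega_{\bul},\mathcal{O}(-\Delta_{\bul}))=0$.

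With the vanishing in hand I would read off \eqref{simplicialF-SXf} from \eqref{ideallongexact}. A direct spectral-sequence computation gives $\mathbb{E}{\rm xt}^2(\Omega_{\bul},\mathcal{O}_{\bul})=0$, because $H^1(T_{\PP})=H^1(\mathcal{O}(2))$ and $H^1(f^*T_{\PP})=H^1(\mathcal{O}(2D))$ both vanish on $\mathbb{P}^1$; so the connecting map of \eqref{ideallongexact} forces $\mathbb{E}{\rm xt}^1(\Omega_{\bul},\mathcal{O}_{\bul})\to\mathbb{E}{\rm xt}^1(\Omega_{\bul},\mathcal{O}_{\Delta_{\bul}})$ to be surjective. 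The left-hand dimension $2D-2$ is the count ${\rm coker}(H^0(T_{\PP})\to H^0(f^*T_{\PP}))$, i.e. $(2D+1)-3$, the subtracted $3=\dim H^0(T_{\PP})$ being accounted for by the absence of invariant vector fields \ref{noinvariantvf}; the right-hand dimension $N$ is the sum of the local contributions of \ref{locallemma1}, each nonrepelling cycle contributing $1$ and each repelling cycle $0$. Surjectivity then yields the weak inequality $N\leq 2D-2$.

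The hard part will be the duality step: pinning down in \ref{ThurstonvanishingLemma} that the transpose of the purely cohomological difference map $d_1^{0,1}$ is exactly Epstein's analytic pushforward operator $\nabla_f$, and checking the pole-order bookkeeping so that a rigid divisor selects precisely the flat subspace $\hat{\mathcal{Q}}^{\flat}$ to which Theorem \ref{Eps} applies. Everything else is formal homological algebra together with Riemann--Roch on $\mathbb{P}^1$.
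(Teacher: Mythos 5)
Your proposal is correct and takes essentially the same route as the paper: the reduction of $\mathbb{E}{\rm xt}^2(\Omega_{\bul},\mathcal{O}(-\Delta_{\bul}))$ to the cokernel $C^1$ via Corollary \ref{ext1_corXf}, the Serre-duality identification of the transpose of $d_1^{0,1}$ with $\nabla_f$ (Lemma \ref{ThurstonvanishingLemma}), the pole-order bookkeeping placing the kernel inside $\hat{\mathcal{Q}}^{\flat}$ so that Theorems \ref{T-E} and \ref{Eps} apply, and the local counts of Lemma \ref{locallemma1} for the dimensions in \eqref{simplicialF-SXf}. The only organizational differences are that the paper routes this weak version through the sharper Claim \ref{claim1} on ${\rm E}_f$ (with the divisor \eqref{sharprigiddivisorEf}) rather than arguing directly on $\XX{f}$, and obtains $\mathbb{E}{\rm xt}^2(\Omega_{\bul},\mathcal{O}_{\bul})=0$ by the Euler-characteristic argument of Corollary \ref{nohigherdeformations} instead of your direct vanishing of $H^1(T_{\PP})$ and $H^1(f^*T_{\PP})$ --- both immaterial.
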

The above Claim shall follow from a more general statement, in
which it is needed the use 
 Epstein's site ${\rm E}_f$.
Using the identification \ref{notationXfbul} we can deduce the first one from the general statement, to wit:
\begin{claim}\label{claim1}
	Assume $f$ is not a Latt\`es map. An equivalent formulation
	of Theorem \ref{F-S} is the following Vanishing Theorem:
	for an appropriate choice of a ``{\rm E}-dynamical'' divisor $\Delta_{\bul}$,
	that will be specified later, cf. \ref{rigiddivisordef} and  \eqref{sharprigiddivisorEf}, we have
	\begin{equation}\label{vanishing}
		\mathbb{E}{\rm xt^2}(\Omega_{\bul}, \mathcal{O}(-\Delta_{\bul}))=0.
	\end{equation}
	Moreover, the exact sequence \eqref{ideallongexact}
	reduces to the following exact sequence of $\C$-vector spaces
	(which have exactly the dimension appearing below)
	\begin{equation}\label{simplicialF-S}
		\begin{tikzcd}[row sep=1.2pc, column sep=0.8pc]
			0 \arrow[r] &  \underset{2D-2+ \gamma_A}{\mathbb{H}{\rm om}(\Omega_{\bul}, \mathcal{O}_{\Delta_{\bul}})}\arrow[r,
			"\delta^1"] \arrow[d, phantom, ""{coordinate, name=Z}]
			& \underset{2D-2+\delta_f}{\mathbb{E}{\rm xt^1}(\Omega_{\bul}, \mathcal{O}(-\Delta_{\bul}))}  \arrow[dll,
			rounded corners,
			to path={ -- ([xshift=2ex]\tikztostart.east)
				|- (Z) [near end]\tikztonodes
				-| ([xshift=-2ex]\tikztotarget.west)
				-- (\tikztotarget)}] \\
			\underset{ 2D-2 }{\mathbb{E}{\rm xt^1}(\Omega_{\bul},\mathcal{O}_{\bul})} \arrow[r]
			& \underset{2D-2+ \gamma_A -\delta_f}{\mathbb{E}{\rm xt^1}(\Omega_{\bul}, \mathcal{O}_{\Delta_{\bul}})} \arrow[r] & 0
		\end{tikzcd}
	\end{equation}
	where $A= |\Delta_0|$ denotes the support of $\Delta_0$.
\end{claim}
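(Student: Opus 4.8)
The plan is to compute every term of \eqref{ideallongexact} through the mixed spectral sequence of Corollary \ref{ext1_corXf}, applied with $\mathcal{F}_{\bul}=\Omega_{\bul}$ (regarded, via \ref{notationXfbul}, as the diagonal sheaf on ${\rm E}_f$ coming from $\QQ{f}$) and $\mathcal{G}_{\bul}=\mathcal{O}(-\Delta_{\bul})$. Since $\PP$ has complex dimension one, ${\rm Ext}^q$ of coherent sheaves vanishes for $q\geq 2$, so the $E_1$-page is supported in the square $p,q\in\{0,1\}$, with, by \eqref{tensor},
\begin{equation*}
\begin{aligned}
E_1^{0,q} &= {\rm Ext}^q(\Omega_{\PP},\mathcal{O}_{\PP}(-\Delta_0))=H^q(T_{\PP}(-\Delta_0)),\\
E_1^{1,q} &= {\rm Ext}^q(f^*\Omega_{\PP},\mathcal{O}_{\PP}(-\Delta_1))=H^q(f^*T_{\PP}\otimes\mathcal{O}_{\PP}(-\Delta_1)).
\end{aligned}
\end{equation*}
Degeneracy at $E_2$ then presents each $\mathbb{E}{\rm xt}^n(\Omega_{\bul},\mathcal{O}(-\Delta_{\bul}))$ as an extension of $\ker d_1^{0,n}$ by ${\rm coker}\,d_1^{0,n-1}$, exactly as in Figure \ref{spectralE1P1}.

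First I would read off the $\mathbb{E}{\rm xt}^2$. Because $E_1^{0,2}={\rm Ext}^2=0$ and $E_1^{p,q}=0$ for $p\geq 2$, the only surviving contribution is $E_2^{1,1}={\rm coker}\,d_1^{0,1}$, so the vanishing \eqref{vanishing} is equivalent to surjectivity of
\begin{equation*}
d_1^{0,1}=df-f^{\star}: H^1(T_{\PP}(-\Delta_0))\longrightarrow H^1(f^*T_{\PP}\otimes\mathcal{O}_{\PP}(-\Delta_1)),
\end{equation*}
the identification of the differential coming from \eqref{alpha}, \eqref{differencemapnablaf}. Next I would dualise by Serre duality on $\PP$, whose dualising sheaf is $\Omega_{\PP}$: the transpose of $d_1^{0,1}$ is a map $H^0(f^*\Omega_{\PP}\otimes\Omega_{\PP}(\Delta_1))\to H^0(\Omega_{\PP}^{\otimes 2}(\Delta_0))$. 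The decisive computation is that $df^{\top}$ realises $f^*\Omega_{\PP}\cong\Omega_{\PP}(-\Gamma_f)$ as a subsheaf, so the source becomes $H^0(\Omega_{\PP}^{\otimes 2}(\Delta_1-\Gamma_f))$; this is precisely the ramification correction $-(k-1)\Gamma_f$ of \eqref{f_*} in the case $k=2$, and, by \ref{ThurstonvanishingLemma}, the transposed map is Epstein's operator $\nabla_f=Id-f_*$ of \eqref{nabla_f}.

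With this dictionary, surjectivity of $d_1^{0,1}$ — hence \eqref{vanishing} — is equivalent, by finite dimensionality, to injectivity of $\nabla_f$ on $H^0(\Omega_{\PP}^{\otimes 2}(\Delta_1-\Gamma_f))$. For the divisor $\Delta_{\bul}$ fixed in \ref{rigiddivisordef}, \eqref{sharprigiddivisorEf}, this space is exactly $\hat{\mathcal{Q}}^{\flat}(f,A)$ of Remark \ref{hatrmk}, and the injectivity is precisely (Epstein's extension of) Infinitesimal Thurston's Rigidity, Theorem \ref{Eps}, cf. \ref{T-E}. The point at which the site ${\rm E}_f$ rather than $\QQ{f}$ is indispensable is that the truncated pair $\Delta_{\bul}=(\Delta_0,\Delta_1)$ with $\Delta_1\preceq\Delta_0\wedge f^*\Delta_0$ is what encodes the critical value divisor together with a finite initial segment of its forward orbit, which is not itself a divisor on $\PP$. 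I expect the main obstacle to be exactly this bookkeeping: reconciling the pole orders $+\Delta_0$, $+\Delta_1$ and the ramification vanishing $-\Gamma_f$ appearing on the cohomological side after Serre duality with the $\flat$-conditions (order $\leq 2$ along nonrepelling cycles, prescribed vanishing along $C_f$) that cut out $\hat{\mathcal{Q}}^{\flat}(f,A)$ on Epstein's side.

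Finally I would assemble \eqref{simplicialF-S}. The remaining terms of \eqref{ideallongexact} degenerate: $\mathbb{H}{\rm om}(\Omega_{\bul},\mathcal{O}_{\bul})=\ker d_1^{0,0}=0$ by the absence of invariant vector fields \ref{noinvariantvf}, and $\mathbb{E}{\rm xt}^2(\Omega_{\bul},\mathcal{O}_{\bul})={\rm coker}\,d_1^{0,1}=0$ since $H^1(T_{\PP})=H^1(\mathcal{O}_{\PP}(2))=0$; the same computation gives $\mathbb{E}{\rm xt}^1(\Omega_{\bul},\mathcal{O}_{\bul})\cong T_f{\bf rat}_D$ as the cokernel of $d_1^{0,0}$ on $H^0(T_{\PP})\to H^0(f^*T_{\PP})$, of dimension $(2D+1)-3=2D-2$, cf. \cite{transversality}. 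What survives of \eqref{ideallongexact} is then the four-term sequence \eqref{simplicialF-S}. The individual dimensions follow from Serre duality together with the counts recalled in the revision: $\dim_{\C}\mathcal{D}^{\flat}(f,A)=\gamma_A$ and $\dim_{\C}\mathcal{Q}(\PP,B)/\nabla_f\mathcal{Q}(\PP,A)=\#B-\#A=\delta_f$, supplemented by the local computations \ref{localHomEps}, \ref{computationExtcriticaldivisor} for $\mathbb{H}{\rm om}(\Omega_{\bul},\mathcal{O}_{\Delta_{\bul}})$; the last term $2D-2+\gamma_A-\delta_f$ is then forced by exactness, the alternating sum of the four dimensions being zero. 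The asserted equivalence with Theorem \ref{F-S} follows upon taking the supremum over finite $A$, as in the passage to \eqref{intro:EpsFS}.
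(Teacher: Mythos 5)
Your proposal follows essentially the same route as the paper: the vanishing \eqref{vanishing} is obtained by Serre-dualising $d_1^{0,1}$ into Epstein's operator $\nabla_f$ (Lemma \ref{ThurstonvanishingLemma}, Theorem \ref{Epsteinvanishing}) for the divisor \eqref{sharprigiddivisorEf}, and the dimensions are assembled from \ref{noinvariantvf}, \ref{lemmaRR}, \ref{localHomEps} and \ref{computationExtcriticaldivisor}, with the last term forced by exactness just as in the paper. The one imprecision is the assertion that $H^0(\Omega_{\PP}^{\otimes 2}(+\Delta_1-\Gamma_f))$ \emph{equals} $\hat{\mathcal{Q}}^{\flat}(f,A)$: what is actually true (and what the paper checks via Lemmas \ref{divergence1}, \ref{divergence2} together with the pole-order bounds of \ref{rigiddivisordef}) is only that $\ker\nabla_f$ restricted to that space lands inside $\hat{\mathcal{Q}}^{\flat}(f,A)$, which is all that is needed to invoke Theorem \ref{Eps}.
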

\begin{remark}
	Both rows of diagram \eqref{simplicialF-S} are manifestations
	of the Fatou-Shishikura Inequality \ref{F-S}: taking the sup over $A$
	yields $\gamma_f\leq \delta_f$. Moreover, the second row has a
	nice geometric interpretation: the space
	$\mathbb{E}{\rm xt^1}(\Omega_{\bul},\mathcal{O}_{\bul})$
	is canonically isomorphic to the (orbifold) tangent space at $[f]$
	of the moduli space $\textbf{rat}_D$.
	In our formalism, it is evident that 
	$T_f\textbf{rat}_D$ coincides with
	 the space of globally invariant
	(infinitesimal) deformations and carries
	a natural restriction map 
	$$
	{\rm Res}_{\Delta_{\bul}} :
	\mathbb{E}{\rm xt^1}(\Omega_{\bul},\mathcal{O}_{\bul}) \longrightarrow
	\mathbb{E}{\rm xt^1}(\Omega_{\bul}, \mathcal{O}_{\Delta_{\bul}})
	$$
	to the space of local invariant deformations around the points of $|\Delta_{\bul}|$.
	Thurston's theorem states that this map is surjective, or equivalently that there is no obstruction
	in extending such a local deformation into a global one.
\end{remark}

Let us give a description of the maps involved in the
spectral sequence \eqref{spectralE1P1}, which have been studied earlier,
although in a different way,
in \cite{1999math......2158E}, \cite{transversality}. 
Let us explain the differences between them.
 A substantial difference is that we are not identifying
the sheaf $f^*T_X$ with $T_X(+\Gamma_f)$.\\
Moreover,
another novelty of this new approach 
is that our formalism forces us to compare
two independent phenomena:
the dynamics of differentials forms 
and the dynamics of divisors (supported both on
cycles and the critical locus of $f$) under the action of $f^*$. 
A comparison of the two leads up to 
the definition of the operator, cf. \eqref{differencemapnablaf},
$d_1^{0,0}$.
On the contrary, in Epstein's work these two aspects are merged together in a 
single operator $f_*$, which is eventually compared with the identity. \\
For example, in the case
$\mathcal{G}_{\bul}= \mathcal{O}_{\bul}$, the map $d_1^{0,0}$ is canonically identified with
the derivative at the identity of the conjugation by $f$:
\begin{equation}\label{mobiusaction}
	Aut(\PP) \to {\bf Rat_D}, \; \varphi \mapsto \varphi^{-1}f \varphi,
\end{equation}
where ${\bf Rat_D}$ denotes the parameter space of rational maps of degree $D$, which
is a smooth projective variety of dimension $2D+1$.
In fact, recall that the tangent space at $f$ of ${\bf Rat_D}$
is canonically isomorphic to $H^0(f^*T_{\PP})$: if $f_t$ is an analytic path in ${\bf Rat_D}$
with $f_0=f$, then $\displaystyle \frac{d}{dt}f_t |_{t=0}\in H^0(f^*T_{\PP})$.
On the other hand, we have an isomorphism of $\mathcal{O}_{\PP}$-modules
\begin{equation}\label{dfinverse}
	Df^{-1}: f^*T_{\PP} \overset{\sim}{\longrightarrow} T_{\PP}(+ \Gamma_f),
\end{equation}
which is expressible on stalks by
$$
f^*\frac{\partial}{\partial w} \mapsto \frac{1}{Df(z)}\frac{\partial}{\partial z},
$$
where $(U,z)$ and $(V,w)$ are local coordinate around, respectively, $x,f(x)$,
such that $w=f(z)$ and $Df(z)$ denotes the derivative of $f$ in $z\in U$.
We
denote again with $Df^{-1}$ the isomorphism induced on the
respective $H^0$'s.
Note that the operation given by the composition
$Df^{-1} \circ f^{\star}$ coincides with the usual pull-back
of vector fields
$$
f^{\ast}: H^0(V, T_{\PP}) \to H^0(U,  T_{\PP}(+ \Gamma_f)),
\quad h(w)\frac{\partial}{\partial w} \mapsto \frac{h(f(z))}{Df(z)}\frac{\partial}{\partial z}.
$$
In this way, the map $Df^{-1} \circ d_1^{0,0}$ becomes the
linear map $\Delta_f\coloneqq id- f^{\ast}$,
studied by Epstein.

The fact that that $\Delta_f$ has vanishing kernel, 
cf. \cite{transversality},
is an assertion on the tangent space of
$\QQ{f}$, \textit{i.e.} ${\rm \mathbb{H}om}(\Omega_{\bul}, \mathcal{O}_{\bul}).$ 
The equivalent statement is the following.
\begin{lemma}\label{noinvariantvf}
	Let $f$ be a rational map of degree $D>1$.
	Then, there is no invariant vector field, \textit{i.e.}
	$$
	{\rm \mathbb{H}om}(\Omega_{\bul}, \mathcal{O}_{\bul})=0.
	$$
\end{lemma}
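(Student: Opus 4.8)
The statement to prove is Lemma \ref{noinvariantvf}: for a rational map $f:\PP\to\PP$ of degree $D>1$, we have ${\rm \mathbb{H}om}(\Omega_{\bul},\mathcal{O}_{\bul})=0$. By the ``mixed'' homological machinery of Corollary \ref{ext1_corXf} (equivalently, the degenerate spectral sequence of Figure \ref{spectralE1P1}), the group ${\rm \mathbb{H}om}(\Omega_{\bul},\mathcal{O}_{\bul})$ is computed as the kernel of the differential $d_1^{0,0}$ on the bottom row, i.e. $$ {\rm \mathbb{H}om}(\Omega_{\bul},\mathcal{O}_{\bul})= \ker\Big( H^0(\PP,T_{\PP}) \overset{d_1^{0,0}}{\longrightarrow} H^0(\PP,f^*T_{\PP}) \Big), $$ where I have used the identification \eqref{tensor} of ${\rm Hom}(\Omega,\mathcal{O})$ with $H^0(T_{\PP})$ and of ${\rm Hom}(f^*\Omega,\mathcal{O})$ with $H^0(f^*T_{\PP})$. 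Thus the plan is to show this kernel is trivial. The first step is therefore purely bookkeeping: write down the functor and invoke Corollary \ref{ext1_corXf} to reduce the vanishing of a $\mathbb{H}{\rm om}$-group on $\XX{f}$ to the injectivity of $d_1^{0,0}$ on global vector fields.

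**Reducing to Epstein's operator.** The core of the argument is to reinterpret $d_1^{0,0}$ using the isomorphism $Df^{-1}$ of \eqref{dfinverse}. Composing $d_1^{0,0}=df-f^{\star}$ with the $\mathcal{O}_{\PP}$-linear isomorphism $Df^{-1}: f^*T_{\PP}\overset{\sim}{\longrightarrow}T_{\PP}(+\Gamma_f)$ yields, as recorded in the text immediately preceding the lemma, the operator $$ \Delta_f = Df^{-1}\circ d_1^{0,0} = id - f^{\ast}: H^0(\PP,T_{\PP})\longrightarrow H^0\big(\PP,T_{\PP}(+\Gamma_f)\big), $$ where $f^{\ast}=Df^{-1}\circ f^{\star}$ is the usual pullback of vector fields. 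Since $Df^{-1}$ is an isomorphism, $\ker d_1^{0,0}=\ker\Delta_f$, so it suffices to prove that $\Delta_f=id-f^{\ast}$ has trivial kernel on $H^0(\PP,T_{\PP})$. This is precisely the ``dual'' transversality assertion on the tangent space of $\QQ{f}$ alluded to in the text, and it is dual to the pushforward statement of Theorem \ref{T-E} / \ref{Eps} in the case $k=1$.

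**The analytic heart.** The remaining point is that no nonzero global holomorphic vector field $v\in H^0(\PP,T_{\PP})$ can satisfy $f^{\ast}v=v$. I would argue this directly rather than via the (weight-$2$) duality used for differentials. A global vector field on $\PP$ is a section of $\mathcal{O}(2)$, so $H^0(\PP,T_{\PP})\cong\mathfrak{sl}_2$ has dimension $3$ and corresponds to one-parameter subgroups of $Aut(\PP)=PGL_2(\C)$; concretely, an invariant $v$ generates a holomorphic flow $\{\phi_t\}$ with $f\circ\phi_t=\phi_t\circ f$, by the interpretation of $d_1^{0,0}$ as the derivative at the identity of the conjugation action \eqref{mobiusaction}. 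Thus $f$ would commute with a nontrivial one-parameter subgroup of Möbius transformations. The conclusion then follows from the rigidity of the centralizer: a rational map of degree $D>1$ has finite symmetry group (equivalently, $\Delta_f$ is injective by \cite{transversality}), so it cannot commute with a positive-dimensional subgroup of $Aut(\PP)$, forcing $v=0$. Concretely, a nonzero element of $\mathfrak{sl}_2$ is conjugate either to $z\partial_z$ (semisimple) or $\partial_z$ (nilpotent); invariance $f^{\ast}v=v$ forces $f$ to commute with the corresponding flow ($z\mapsto e^t z$ or $z\mapsto z+t$), and comparing local degrees at the fixed points of the flow shows that such an $f$ must have degree $1$, a contradiction with $D>1$.

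**Expected obstacle.** The bookkeeping of the first two paragraphs is routine given the earlier results; the genuine content is the last step, and the main obstacle is simply that it rests on the nontrivial fact that the degree-$D$ centralizer is finite — the $k=1$ analogue of Infinitesimal Thurston Rigidity. I would either cite \cite{transversality} for the injectivity of $\Delta_f$ directly, or give the self-contained flow/fixed-point count above; the latter is elementary for vector fields (unlike the $k=2$ case, which genuinely needs the contraction and rigidity principles of \ref{T-E}), so I expect the clean route is the explicit Möbius-flow argument, reserving the heavier duality machinery for the later $\mathbb{E}{\rm xt}^2$-vanishing claims.
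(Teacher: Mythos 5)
Your proof is correct. The reduction to the injectivity of $d_1^{0,0}=df-f^{\star}$ on $H^0(\PP,T_{\PP})$, and its rewriting as $\Delta_f=id-f^{\ast}$ via the twist by $Df^{-1}$, is exactly the paper's starting point; where you diverge is in the elementary endgame. The paper supposes a nonzero $v$ with $(df)v=f^{\star}v$ and argues on the zero divisor $\mathcal{Z}$ of $v$: the relation $v(f(z))=Df(z)\,v(z)$ is used to show $\mathcal{Z}$ is a finite backward-invariant set of critical points, hence exceptional with at most two points, so that in suitable coordinates $f(z)=z^{\pm D}$ and $v=z\,\partial_z$, and a one-line computation gives the contradiction. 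You instead integrate $v$ to a one-parameter subgroup $\{\phi_t\}\subset Aut(\PP)$ commuting with $f$, put $v$ (rather than $f$) into one of the two $PGL_2$-normal forms $z\,\partial_z$ or $\partial_z$, and read off from $f\circ\phi_t=\phi_t\circ f$ --- or, even more directly, from the identity $v(f(z))=f'(z)v(z)$, which for these normal forms becomes $f(z)=zf'(z)$ or $f'(z)=1$ --- that $f$ is M\"obius, contradicting $D>1$. The two arguments are of the same elementary weight and both are sound; yours has the small advantage of being self-contained (it does not invoke the classification of exceptional points, nor the slightly delicate step that the zero set itself, rather than its union with $C_f$, is backward invariant), while the paper's normal form $z^{\pm D}$ is the one it reuses elsewhere in excluding degenerate Latt\`es-type cases. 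Your closing observation that the $k=1$ case needs none of the contraction/rigidity machinery of Theorem \ref{T-E} also matches the paper's treatment.
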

\begin{proof}
	Let us suppose that there exists a nonzero global vector field $X$
	on $\PP$ such that $f^*v=(df)v$. 
	The equation implies easily that the divisor $\mathcal{Z}$ of zeroes of $v$ 
	is backward invariant and consists of critical points of $f$. Then,
	since $\mathcal{Z}$ consists of exceptional points of $f$, cf. \cite{Milnor},
	his cardinality is less than $2$. Therefore, there is a coordinate $z$
	in which $v=z^{\pm 1}\partial_{z}$ and $f(z)=z^{\pm D}$. This yields 
	to an absurd, since we have $f^*v \neq (df)v$.
	 \end{proof}
\begin{cor}\label{noinvariantvfzero}
	Let $\Delta_{\bul}$ be a ``dynamical'' divisor. \\
	Then,
	$$
	{\rm \mathbb{H}om}(\Omega_{\bul}, \mathcal{O}(-\Delta_{\bul}))=0.
	$$
\end{cor}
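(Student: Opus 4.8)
The plan is to deduce the vanishing directly from Lemma \ref{noinvariantvf}, by exploiting the left-exactness of the functor $\mathbb{H}{\rm om}(\Omega_{\bul}, -)$. Recall that to a ``dynamical'' divisor $\Delta_{\bul}$ on $\XX{f}$, cf. \ref{f.i.divisor}, there is associated a canonical monomorphism
\begin{equation*}
	i: \mathcal{O}(-\Delta_{\bul}) \hookrightarrow \mathcal{O}_{\bul}
\end{equation*}
in $Ab(\XX{f})$, namely the inclusion furnished by \eqref{inclusiondivisorsheaf}. The whole point is that $\mathbb{H}{\rm om}(\Omega_{\bul}, \mathcal{O}(-\Delta_{\bul}))$ sits as a subspace of the group of global invariant vector fields, which has just been shown to vanish.

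Concretely, first I would apply $\mathbb{H}{\rm om}(\Omega_{\bul}, -)$ to the monomorphism $i$. Since this functor is left exact (as observed in the section on Dynamical Hom functors), it sends $i$ to an injection of $\C$-vector spaces
\begin{equation*}
	\mathbb{H}{\rm om}(\Omega_{\bul}, \mathcal{O}(-\Delta_{\bul})) \hookrightarrow \mathbb{H}{\rm om}(\Omega_{\bul}, \mathcal{O}_{\bul}).
\end{equation*}
Equivalently, this injection is precisely the leftmost segment of the long exact sequence \eqref{ideallongexact}. By Lemma \ref{noinvariantvf} the target $\mathbb{H}{\rm om}(\Omega_{\bul}, \mathcal{O}_{\bul})$ is zero, whence the source is zero as well, which is the assertion.

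There is essentially no obstacle to overcome: the argument is purely formal, the only inputs being the existence of the monomorphism $i$ in $Ab(\XX{f})$ (recorded in \ref{f.i.divisor}) together with the already-established vanishing of Lemma \ref{noinvariantvf}. Conceptually, the content is transparent: a nonzero element of $\mathbb{H}{\rm om}(\Omega_{\bul}, \mathcal{O}(-\Delta_{\bul}))$ would be an $f$-invariant vector field on $\PP$ vanishing, with the prescribed multiplicities, along the support of $\Delta$; in particular it would be a nonzero $f$-invariant vector field, contradicting \ref{noinvariantvf}. Finally, I would remark that the same reasoning applies verbatim when $\Delta_{\bul}$ is instead an E-dynamical divisor on ${\rm E}_f$, using the analogous inclusion \eqref{inclusiondivisorEf} and the corresponding long exact sequence; the requisite vanishing of $\mathbb{H}{\rm om}(\Omega_{\bul}, \mathcal{O}_{\bul})$ there reduces, via the identification \ref{notationXfbul}, to Lemma \ref{noinvariantvf} once more.
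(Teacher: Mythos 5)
Your argument is correct and is exactly the one the paper intends: the corollary is stated without proof precisely because the inclusion $\mathcal{O}(-\Delta_{\bul}) \hookrightarrow \mathcal{O}_{\bul}$ and the left-exactness of $\mathbb{H}{\rm om}(\Omega_{\bul},-)$ (equivalently, the first segment of \eqref{ideallongexact}) reduce it immediately to Lemma \ref{noinvariantvf}. Nothing is missing.
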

The orbifold ${\bf rat_D}$ is by definition the quotient
of ${\bf Rat_D}$ by the action \eqref{mobiusaction}.
Thus, its orbifold tangent space
at $[f]$, which we denote with $T_f{\bf rat_D}$, is canonically isomorphic to
${\rm coker}(d_1^{0,0})$ and its dimension is
$$
\dim T_f{\bf rat_D}=\dim H^0(f^*T_{\PP})- \dim H^0(T_{\PP})= 2D-2.
$$
The following is an immediate consequence of Lemma
\ref{ext1_lemma} and the fact that $H^1(T_{\PP})=0$.
\begin{lemma}\label{globaldeformations}
	We have a canonical isomorphism
	$$
	{\rm \mathbb{E}xt^1}(\Omega_{\bul}, \mathcal{O}_{\bul}) \overset{\sim}{\longrightarrow} T_f{\bf rat_D}.
	$$
\end{lemma}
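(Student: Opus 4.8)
The plan is to deduce the statement directly from Corollary \ref{ext1_cor}, which already reduces the computation of $\mathbb{E}{\rm xt}^1$ on $\XX{f}$ to the ordinary $\mathrm{Ext}$-groups on $X=\PP$, together with the vanishing of $H^1(\PP,T_{\PP})$. First I would apply the short exact sequence \eqref{ext1} of \ref{ext1_cor} to the pair $\mathcal{F}=\Omega_{\bul}$, $\mathcal{G}=\mathcal{O}_{\bul}$, which yields
$$
0 \longrightarrow E^{1,0} \longrightarrow \mathbb{E}{\rm xt}^1(\Omega_{\bul},\mathcal{O}_{\bul}) \longrightarrow E^{0,1} \longrightarrow 0,
$$
where, cf. \eqref{coker_S} and \eqref{ker_S}, $E^{1,0}=\mathrm{coker}(d_1^{0,0})$ and $E^{0,1}=\ker(d_1^{0,1})$.

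Next I would rewrite the four relevant groups as sheaf cohomology of $\PP$ by means of the tensor identification \eqref{tensor}. Since the action on $\mathcal{O}_{\bul}$ is the canonical one, we have $\gamma_0=\gamma_1=\mathrm{id}$, so that $\mathrm{Hom}(\Omega_{\PP},\mathcal{O}_{\PP})=H^0(\PP,T_{\PP})$, $\mathrm{Hom}(f^*\Omega_{\PP},\mathcal{O}_{\PP})=H^0(\PP,f^*T_{\PP})$, and $\mathrm{Ext}^1(\Omega_{\PP},\mathcal{O}_{\PP})=H^1(\PP,T_{\PP})$. As $T_{\PP}\cong\mathcal{O}_{\PP}(2)$, the last group vanishes, hence $E^{0,1}=\ker(d_1^{0,1})=0$ and the sequence above collapses to a canonical isomorphism
$$
\mathbb{E}{\rm xt}^1(\Omega_{\bul},\mathcal{O}_{\bul}) \overset{\sim}{\longrightarrow} E^{1,0}= {\rm coker}\Big( H^0(\PP,T_{\PP}) \overset{d_1^{0,0}}{\longrightarrow} H^0(\PP,f^*T_{\PP}) \Big).
$$

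It then remains to identify $E^{1,0}$ with $T_f{\bf rat_D}$. Here I would invoke the explicit description $d_1^{0,0}=df-f^{\star}$ recorded in \eqref{alpha} and \eqref{differencemapnablaf}: a short computation with a path $\varphi_t$ through the identity in $Aut(\PP)$ shows that, for $v\in H^0(\PP,T_{\PP})=\mathrm{Lie}(Aut(\PP))$, the element $df(v)-f^{\star}(v)$ is exactly the derivative at the identity of the conjugation map \eqref{mobiusaction}, $\varphi\mapsto\varphi^{-1}f\varphi$. Thus $d_1^{0,0}$ is the infinitesimal orbit map of the $Aut(\PP)$-action on ${\bf Rat_D}$ at $f$, and its cokernel is by definition the orbifold tangent space of ${\bf rat_D}=[{\bf Rat_D}/Aut(\PP)]$ at $[f]$. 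Composing the two isomorphisms gives the asserted canonical identification, and the dimension $2D-2$ follows at once since $d_1^{0,0}$ is injective by \ref{noinvariantvf}.

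The argument is essentially an assembly of previously established facts, so I do not expect a serious obstacle. The one point I would take care to spell out is the identification in the third paragraph: one must check, with the correct sign and functoriality conventions, that the purely homological differential $d_1^{0,0}$ coincides \emph{on the nose} with the geometric infinitesimal conjugation map — and not merely up to the twist $Df^{-1}$ of \eqref{dfinverse} which converts it into Epstein's operator $\Delta_f$ — so that the resulting isomorphism is genuinely canonical and natural in $f$.
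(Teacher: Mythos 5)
Your proof is correct and follows the same route as the paper: the paper derives the statement as an immediate consequence of the short exact sequence of Lemma \ref{ext1_lemma} (in its $\XX{f}$-reduction \ref{ext1_cor}) together with the vanishing of $H^1(\PP,T_{\PP})$, having already identified $T_f{\bf rat_D}$ with ${\rm coker}(d_1^{0,0})$ via the infinitesimal conjugation map \eqref{mobiusaction} in the discussion immediately preceding the lemma. Your extra care in checking that $d_1^{0,0}$ coincides on the nose with the derivative of $\varphi\mapsto\varphi^{-1}f\varphi$, rather than only up to the twist $Df^{-1}$, is a worthwhile point that the paper leaves implicit.
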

\begin{cor}\label{nohigherdeformations}
	For all $i \geq 2$, we have
	$$
	{\rm \mathbb{E}xt}^i(\Omega_{\bul}, \mathcal{O}_{\bul})=0.
	$$
\end{cor}
We recall the notion of \textit{Euler characteristic}, cf. \cite{BottTu}, of a spectral sequence $\{E_r\}_r$. 
Setting $e_r^{p,q}=\dim E_r^{p,q}$, the quantity
$$
\chi(E_r)=\sum_{p,q} (-1)^{p+q}\,e_r^{p,q}
$$
is constant in $r$ and it is denoted with $\chi(E_{\bul})$.
Recall that when $\{E_r\}_r$ is converging to
${\rm \mathbb{E}xt}^{\ast}(\mathcal{F}_{\bul}, \mathcal{G}_{\bul})$,
for $r \gg 0$ we have
$$
E_r^{p,q}= {\rm \mathbb{E}xt}^{p+q}(\mathcal{F}_{\bul}, \mathcal{G}_{\bul}).
$$
Hence,
$$
\chi(E_{\bul})=
\chi(\mathcal{F}_{\bul}, \mathcal{G}_{\bul})\coloneqq
\sum_i (-1)^i \, {\rm \mathbbm{e}xt}^i(\mathcal{F}_{\bul},\mathcal{G}_{\bul}),
$$
where ${\rm \mathbbm{e}xt}^i(\mathcal{F}_{\bul},\mathcal{G}_{\bul})\coloneqq
\dim_{\C} {\rm \mathbb{E}xt}^i(\mathcal{F}_{\bul}, \mathcal{G}_{\bul})$.

\begin{proof}[Proof of Corollary \ref{nohigherdeformations}]
	In the case $i \geq 3$, this is an
	immediate consequence of Corollary \ref{exti_cor}:
	by Grothendieck's vanishing theorem, for any abelian sheaf $\mathcal{F}$,
	$H^i(\PP, \mathcal{F})=0$
	for all $i \geq 2$.\\
	Let $\{E_r\}_r$ be the spectral sequence computing
	${\rm \mathbb{E}xt}^{\ast}(\Omega_{\bul},\mathcal{O}_{\bul})$.
	The value of the \textit{Euler characteristic} is given by
	(cf. Figure \ref{spectralE1P1})
	$$
	\chi(\Omega_X, \mathcal{O}_X)- \chi(f^*\Omega_X, \mathcal{O}_X),
	$$
	which for $X=\PP$ is equal to $2-2D$ by the \textit{Riemann-Roch} Theorem.\\
	Hence, setting
	$\mathbbm{e}^i\coloneqq {\rm \mathbbm{e}xt}^i(\Omega_{\bul}, \mathcal{O}_{\bul})$,
	we have
	$$
	\mathbbm{e}^0-\mathbbm{e}^1+\mathbbm{e}^2=2-2D.
	$$
	The vanishing of $\mathbbm{e}^2$ now follows from Lemma \ref{noinvariantvf}
	and Lemma \ref{globaldeformations}.
\end{proof}

We now study the group ${\rm \mathbb{E}xt}^{\ast}(\Omega_{\bul},\mathcal{O}(-\Delta_{\bul}))$,
where $\Delta_{\bul}$ is any ``dynamical'' divisor.
Observe that, with the same argument as in the proof above,
we have
$${\rm \mathbbm{e}xt}^i(\Omega_{\bul}, \mathcal{O}(-\Delta_{\bul}))=0
\quad \quad  i \geq 3.$$ 
\begin{notation}
	Let us denote with $\delta$ the nonnegative integer
	$$
	\delta\coloneqq\deg(\Delta_0)-\deg(\Delta_1)
	$$
\end{notation}

\begin{lemma}\label{lemmaRR}
	Let $\Delta_{\bul}$ be a ``dynamical'' divisor. Then,
	$$
	{\rm \mathbbm{e}xt}^1(\Omega_{\bul}, 
	\mathcal{O}(-\Delta_{\bul}))-{\rm \mathbbm{e}xt}^2(\Omega_{\bul}, \mathcal{O}(-\Delta_{\bul}))=2D-2+\delta
	$$
\end{lemma}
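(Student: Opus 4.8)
The plan is to extract the desired numerical identity from the invariance of the Euler characteristic of the spectral sequence of Corollary \ref{ext1_corXf} (displayed in Figure \ref{spectralE1P1}), specialized to $\mathcal{F}_{\bul}=\Omega_{\bul}$ and $\mathcal{G}_{\bul}=\mathcal{O}(-\Delta_{\bul})$, so that $\mathcal{G}_0=\mathcal{O}_{\PP}(-\Delta_0)$ and $\mathcal{G}_1=\mathcal{O}_{\PP}(-\Delta_1)$. Since $\{E_r\}_r$ converges to ${\rm \mathbb{E}xt}^{\bul}(\Omega_{\bul},\mathcal{O}(-\Delta_{\bul}))$ and degenerates at the second sheet, the quantity $\chi(E_{\bul})$ computed on the first sheet equals the Euler characteristic of the abutment, i.e.
\[
\chi(E_1)=\sum_i(-1)^i\,{\rm \mathbbm{e}xt}^i(\Omega_{\bul},\mathcal{O}(-\Delta_{\bul})).
\]
Thus the whole argument reduces to computing the two sides independently.

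First I would compute $\chi(E_1)$. The $E_1$-page has only the two columns $p=0,1$, so grouping the $(-1)^{p+q}$ contributions columnwise gives
\[
\chi(E_1)=\chi\bigl(\Omega_{\PP},\mathcal{O}_{\PP}(-\Delta_0)\bigr)-\chi\bigl(f^*\Omega_{\PP},\mathcal{O}_{\PP}(-\Delta_1)\bigr).
\]
Using the identification \eqref{tensor} I would rewrite these as ${\rm Ext}^q(\Omega_{\PP},\mathcal{O}_{\PP}(-\Delta_0))\cong H^q(\PP,T_{\PP}(-\Delta_0))$ and ${\rm Ext}^q(f^*\Omega_{\PP},\mathcal{O}_{\PP}(-\Delta_1))\cong H^q(\PP,f^*T_{\PP}(-\Delta_1))$, where I use $(f^*\Omega_{\PP})^{\vee}=f^*T_{\PP}$. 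On $\PP=\mathbb{P}^1$ one has $T_{\PP}=\mathcal{O}(2)$ and $f^*T_{\PP}=\mathcal{O}(2D)$, since $f$ has degree $D$; hence $T_{\PP}(-\Delta_0)=\mathcal{O}(2-\deg\Delta_0)$ and $f^*T_{\PP}(-\Delta_1)=\mathcal{O}(2D-\deg\Delta_1)$. Riemann--Roch for $\mathcal{O}(d)$ on $\mathbb{P}^1$ reads $\chi=d+1$, so
\[
\chi(E_1)=\bigl(3-\deg\Delta_0\bigr)-\bigl(2D+1-\deg\Delta_1\bigr)=2-2D-\delta.
\]

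It then remains to evaluate the abutment side. I would note that ${\rm \mathbbm{e}xt}^0(\Omega_{\bul},\mathcal{O}(-\Delta_{\bul}))=0$: by left-exactness of $\mathbb{H}{\rm om}(\Omega_{\bul},-)$ the inclusion $\mathcal{O}(-\Delta_{\bul})\hookrightarrow\mathcal{O}_{\bul}$ embeds this group into ${\rm \mathbb{H}om}(\Omega_{\bul},\mathcal{O}_{\bul})$, which vanishes by Lemma \ref{noinvariantvf} (compare Corollary \ref{noinvariantvfzero}). Likewise ${\rm \mathbbm{e}xt}^i(\Omega_{\bul},\mathcal{O}(-\Delta_{\bul}))=0$ for $i\geq 3$, as already observed before the statement. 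Therefore $\sum_i(-1)^i{\rm \mathbbm{e}xt}^i=-{\rm \mathbbm{e}xt}^1+{\rm \mathbbm{e}xt}^2$, and equating this with the value $2-2D-\delta$ found above yields ${\rm \mathbbm{e}xt}^1-{\rm \mathbbm{e}xt}^2=2D-2+\delta$, which is the claim. I do not expect a genuine obstacle here: the only delicate point is the bookkeeping of the twists together with the identification $f^*T_{\PP}=\mathcal{O}(2D)$, which is precisely where the degree of $f$ enters and produces the $2D-2$ term.
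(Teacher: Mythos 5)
Your proposal is correct and follows essentially the same route as the paper: the identity is the invariance of the Euler characteristic of the two-column spectral sequence, combined with the vanishing of $\mathbbm{e}^0$ (Corollary \ref{noinvariantvfzero}) and of $\mathbbm{e}^i$ for $i\geq 3$, and a Riemann--Roch computation of the column-wise characteristics. The paper merely phrases the final step as $\deg(f^*T_{\PP}(-\Delta_1))-\deg(T_{\PP}(-\Delta_0))=2D-2+\delta$, which is the same bookkeeping you carry out via $T_{\PP}=\mathcal{O}(2)$ and $f^*T_{\PP}=\mathcal{O}(2D)$.
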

\begin{proof}
	The opposite of the quantity on the left is the
	\textit{Euler characteristic} $\chi(\Omega_{\bul}, \mathcal{O}(-\Delta_{\bul}))$.
	Thus, it coincides with
	$$
	\chi(f^*\Omega_{\PP}, \mathcal{O}_{\PP}(-\Delta_1))-
	\chi(\Omega_{\PP}, \mathcal{O}_{\PP}(-\Delta_0)).
	$$
	Finally, the \textit{Riemann-Roch}
	formula implies that the latter is
	$$
	\deg(f^*T_{\PP} (-\Delta_1))-\deg(T_{\PP} (-\Delta_0))=
	2D-2+\delta.
	$$
\end{proof}

The following Lemma establishes the relation between
Thurston's Theorem and our Vanishing Theorem.
\begin{lemma}\label{ThurstonvanishingLemma}
	Let $\Delta_{\bul}$ be a ``dynamical'' divisor on ${\rm E}_f$. \\
	We have a natural isomorphism
	$$
	{\rm\mathbb{E}xt}^{2}(\Omega_{\bul},\mathcal{O}(-\Delta_{\bul}))^{\vee}
	\overset{\sim}{\longrightarrow}
	\ker\left( H^0(\Omega_{\PP}^{\otimes 2}(+\Delta_1-\Gamma_f))
	\overset{\nabla_f}{\longrightarrow}
	H^0(\Omega_{\PP}^{\otimes 2}(+\Delta_0))\right)
	$$
\end{lemma}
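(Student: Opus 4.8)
The plan is to read $\mathbb{E}{\rm xt}^{2}$ off the degenerate spectral sequence, pass to linear duals, and recognize the dual of the surviving differential as Epstein's operator $\nabla_f$ by way of Serre duality.

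First I would apply the mixed spectral sequence of Corollary \ref{ext1_corXf} to $\mathcal{F}_{\bul}=\Omega_{\bul}$ and $\mathcal{G}_{\bul}=\mathcal{O}(-\Delta_{\bul})$. Since $\PP$ is a curve we have ${\rm Ext}^{q}(-,-)=H^{q}(-)=0$ for $q\geq 2$, while the site ${\rm E}_f$ contributes only the columns $p=0,1$; hence the sole term of total degree $2$ is $E_2^{1,1}$, and degeneration at $E_2$ gives, using \eqref{tensor} and $\Omega_{\PP}^{\vee}=T_{\PP}$,
$$
\mathbb{E}{\rm xt}^{2}(\Omega_{\bul},\mathcal{O}(-\Delta_{\bul}))=E_2^{1,1}={\rm coker}\,\big( d_1^{0,1}\colon H^1(T_{\PP}(-\Delta_0))\longrightarrow H^1(f^{*}T_{\PP}(-\Delta_1))\big).
$$
Equivalently this is the group $C^{1}$ of \ref{ext1_corXf}, the complementary piece $K^{2}$ vanishing because ${\rm Ext}^2=H^2=0$. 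Taking linear duals converts the cokernel into a kernel, so $\mathbb{E}{\rm xt}^{2}(\Omega_{\bul},\mathcal{O}(-\Delta_{\bul}))^{\vee}=\ker\big((d_1^{0,1})^{\vee}\big)$.

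Next I would identify the source and target of $(d_1^{0,1})^{\vee}$ by Serre duality on $\PP$, where $K_{\PP}=\Omega_{\PP}$. One gets $H^1(T_{\PP}(-\Delta_0))^{\vee}\cong H^0(\Omega_{\PP}\otimes\Omega_{\PP}(\Delta_0))=H^0(\Omega_{\PP}^{\otimes 2}(+\Delta_0))$, and, dualizing the isomorphism \eqref{dfinverse} to $f^{*}\Omega_{\PP}\cong\Omega_{\PP}(-\Gamma_f)$, one gets $H^1(f^{*}T_{\PP}(-\Delta_1))^{\vee}\cong H^0(\Omega_{\PP}^{\otimes 2}(+\Delta_1-\Gamma_f))$. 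Thus $(d_1^{0,1})^{\vee}$ is a map $H^0(\Omega_{\PP}^{\otimes 2}(+\Delta_1-\Gamma_f))\to H^0(\Omega_{\PP}^{\otimes 2}(+\Delta_0))$, precisely the source and target appearing in the statement. It then remains to prove that this transposed map is $\nabla_f=id-f_{*}$. Recalling from \eqref{alpha} and \eqref{differencemapnablaf} that $d_1^{0,1}$ is the difference of the cohomology map induced by the (twisted) transpose of the codifferential and the composite of the canonical pullback $f^{\star}$ with $\iota_1$, functoriality of Serre duality reduces the computation to transposing these two sheaf morphisms: the codifferential piece transposes, after the normalization $Df^{-1}$, to the identity inclusion of quadratic differentials, and the piece $f^{\star}$ transposes to the pushforward $f_{*}q=\sum_{g}g^{*}q$ of \eqref{push-forward}. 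Combining, $(d_1^{0,1})^{\vee}=id-f_{*}=\nabla_f$, which yields the asserted isomorphism.

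The hard part will be the very last identification: verifying that the Serre transpose of the pullback part of $d_1^{0,1}$ is exactly Epstein's analytic $f_{*}$, with all the twists by $\Delta_0$, $\Delta_1$ and $\Gamma_f$ correctly bookkept. Concretely this means reconciling the algebraic ($\mathcal{E}xt$/Serre-duality) transpose with the definition $f_{*}q=\sum_{g}g^{*}q$, i.e.\ checking in local coordinates the trace–residue identity $\sum_{x\in f^{-1}y}{\rm Res}_{x}\langle f^{\star}v,q\rangle={\rm Res}_{y}\langle v,f_{*}q\rangle$ summed over the fibres of $f$, and confirming that the pole orders produced by $f_{*}$ land in $\Omega_{\PP}^{\otimes 2}(+\Delta_0)$ exactly under the hypothesis $\Delta_1\preceq\Delta_0\wedge f^{*}\Delta_0$ defining a dynamical divisor on ${\rm E}_f$. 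Everything else is formal manipulation of the spectral sequence and of Serre duality on $\PP$.
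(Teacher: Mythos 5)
Your proposal is correct and follows essentially the same route as the paper: read $\mathbb{E}{\rm xt}^2$ off the degenerate spectral sequence as the cokernel of the $E_1$-differential $H^1(T_{\PP}(-\Delta_0))\to H^1(f^*T_{\PP}(-\Delta_1))$, dualize to a kernel, identify source and target by Serre duality together with the twist $f^*\Omega_{\PP}\cong\Omega_{\PP}(-\Gamma_f)$, and check that the transpose of $df-f^{\star}$ is $id-f_*=\nabla_f$. The only cosmetic difference is that the paper carries out the final local identification $(f^{\star})^{\top}=f_*$ via the Dolbeault/integral realization of Serre duality and an explicit change of variables, whereas you propose the equivalent residue-pairing computation.
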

\begin{proof}
	We know by Corollary \ref{exti_cor} 
	that ${\rm\mathbb{E}xt}^{2}(\Omega_{\bul},\mathcal{O}(-\Delta_{\bul}))$
	is naturally isomorphic to the co-kernel of
	$$
	H^1(T_{\PP} (-\Delta_0)) \overset{d_1^{1,0}}{\longrightarrow}
	H^1(f^*T_{\PP} (-\Delta_1)).
	$$
	Since all the spaces involved are finite dimensional
	vector spaces, its dual is the kernel of the transpose map $d'\coloneqq d_1^{{1,0}^{\top}}$.\\
	Via \textit{Serre duality}, the latter is canonically identified with a map
	$$
	H^0(\Omega_{\PP}\otimes f^*\Omega_{\PP}(+\Delta_1))
	\overset{d'}{\longrightarrow}
	H^0(\Omega_{\PP}^{\otimes 2}(+\Delta_0)).
	$$
	On the other hand, we have an isomorphism
	$$
	H^0(\Omega_{\PP}\otimes f^*\Omega_{\PP}(+\Delta_1))
	\overset{\eta}{\longleftarrow}
	H^0(\Omega_{\PP}^{\otimes 2}(+\Delta_1-\Gamma_f)),
	$$
	obtained by transposing the map
	$$
	Df^{-1}\otimes id :H^1(f^*T_{\PP} (-\Delta_1))
	\overset{\sim}{\longrightarrow}
	H^1(T_{\PP} (+\Gamma_f-\Delta_1)).
	$$
	In a local coordinate $(U,z)$, the map $\eta$ is expressible as
	follows: for $\omega$ a local section of
	$\Omega_{\PP}(+\Delta_1-\Gamma_f)$, we have
	$\omega(z) \otimes dz \mapsto \frac{\omega(z)}{Df(z)}\otimes f^*dz$.
	The conclusion follows immediately
	once we show that the diagram
	\begin{equation}\label{diagramext2vanishing}
		\begin{tikzcd}
			H^0(\Omega_{\PP}\otimes f^*\Omega_{\PP}(+\Delta_1))
			\arrow[r,"d'"] \arrow[from=d, "\eta"] &
			H^0(\Omega_{\PP}^{\otimes 2}(+\Delta_0)) \\
			H^0(\Omega_{\PP}^{\otimes 2}(+\Delta_1-\Gamma_f)) \arrow[ur, "\nabla_f"']
		\end{tikzcd}
	\end{equation}
	commutes.\\
	In order to compute the map $d_1^{1,0}=df-f^{\star}$ we use the equivalence between Grothendieck and
	Dolbeault cohomology.
	We take a sufficiently fine open covering $U=\coprod_{i} U_i$ of $\PP \setminus S(f)$.
	An element $\xi \in H^1(T_{\PP} (-\Delta_0))$ is represented by a cocycle
	$\{\xi_i\}_i$ of the form $\xi_i=\overline{\tau}_i\otimes v_i$, where $\tau_i,v_i$
	are respectively a nonvanishing holomorphic form and a vector field on $U_i$,
	$v_i$ vanishing on the divisor $\Delta_0$ with multiplicity.
	Now, $d_1^{1,0}(\xi)=\{d_1^{1,0}(\xi_i)\}$ and we have by functoriality
	$df(\xi_i)=\overline{\tau}_i\otimes df(v_i)$. It follows that the Serre transpose of $df$
	is the $\mathcal{O}_{\PP}$-linear map $df^{\top}$, which is locally of the form
	$\omega \otimes f^*dz \mapsto \omega\otimes df^{\top}(f^*dz)= Df(z)\omega\otimes dz$,
	and thus it is immediate to check that $df^{\top}\eta= i$, where $i$ is the natural inclusion.
	On the other hand,
	$f^{\star}(\xi_i)=f^*\overline{\tau}_i \otimes f^{\star}(v_i)$,
	where $f^*\overline{\tau}_i$ denotes the usual pullback
	of differential forms.
	The map $(f^{\star})^{\top}$ is computed as follows:
	by the change of variables $z=f(w)$, we have
	\begin{equation}\label{serreduality}
		\begin{aligned}
			& \int_{f^{-1}U_i} \big((f^*\overline{\tau}_i \otimes f^{\star}(v_i))
			\wedge (\omega \otimes f^*dz)\big)(w)= &\\
			& \int_{f^{-1}U_i} \big((f^*\overline{\tau}_i \wedge \omega) \langle f^{\star}(v_i), f^*dz\rangle\big)(w)=
			& \\
			& \int_{U_i} \big( (\overline{\tau}_i \wedge f_*\omega) \langle v_i, dz\rangle \big)(z)=\\
			&  \int_{U_i} \big( (\overline{\tau}_i \otimes v_i) \wedge (f_*\omega \otimes dz) \big)(z).
		\end{aligned}
	\end{equation}
	Hence $(f^{\star})^{\top}(\omega \otimes f^*dz)=f_*\omega \otimes dz$.\\
	Now, if $\phi: U_i \to \PP$ is a local inverse branch of $f$, we have
	$$(Df(\phi(z)))^{-1}=D\phi(z),$$ and hence locally
	$$
	(f^{\star})^{\top}\eta(\omega \otimes dz)=
	f_*\left( \frac{\omega}{Df(z)} \right)\otimes dz=f_*(\omega \otimes dz).
	$$
\end{proof}
Thanks to the above Lemma we can formulate Theorems \ref{T-E} and \ref{Eps}. Recall, cf. \cite{MR1251582},  \cite{1999math......2158E}, that
 the condition $f_*q=q$ for an integrable $q$
implies $f^*q=\lambda q$
for a real constant $\lambda$.
Hence $f$ is a Latt\`es map and lifts to an affine map
$z \mapsto \alpha z + \beta$. Thus, $\alpha$ is real and the only possibility is
that $\alpha$ is an integer satisfying $|\alpha|^2=D$, since the integrability
condition on $q$ excludes the case $f(z)=z^{\pm D}$.
The only possibility remaining is the orbifold $(2,2,2,2)$ Latt\`es map.
In this case the holomorphic
quadratic differential $dz^{\otimes 2}$ on $\C$ descends to a quadratic differential
with $4$ simple poles that is invariant under the pushforward operator.

Consequently, we deduce the following equivalent statement
of Thurston's original theorem (which, in Epstein's formalism,
 is equivalent to the injectivity of $\nabla_f$ on $\mathcal{Q}(\PP)$):
\begin{theorem}\label{Thurstonvanishing}{\bf{[Thurston vanishing]}}\\
	Let $f: \PP \to \PP$ a rational map of degree $D$.
	If $\Delta_{\bul}$ is a ``dynamical'' (effective)
	divisor on $ \QQ{f}$ with $deg_x(\Delta_0)\leq 1 \; \forall x$. Then,
	$$
	{\rm\mathbbm{e}xt}^{2}(\Omega_{\bul},\mathcal{O}(-\Delta_{\bul})
	=\begin{cases}
		1, & \mbox{if } f \mbox{ is a $(2,2,2,2)$ Latt\`es map} \\
		0, & \mbox{otherwise}.
	\end{cases}
	$$
\end{theorem}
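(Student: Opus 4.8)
The plan is to reduce the statement to a kernel computation for the pushforward operator $\nabla_f = \mathrm{id} - f_*$ and then to import the (transcendental) rigidity results of the preceding section. First I would use Notation \ref{notationXfbul} to regard the dynamical divisor $\Delta_{\bul}$ on $\QQ{f}$ as the diagonal $E$-dynamical divisor with $\Delta_0 = \Delta_1 = \Delta$, so that Lemma \ref{ThurstonvanishingLemma} applies and gives a canonical isomorphism
\[
\mathbb{E}{\rm xt}^2(\Omega_{\bul}, \mathcal{O}(-\Delta_{\bul}))^{\vee} \overset{\sim}{\longrightarrow} \ker\!\left( H^0(\Omega_{\PP}^{\otimes 2}(+\Delta - \Gamma_f)) \overset{\nabla_f}{\longrightarrow} H^0(\Omega_{\PP}^{\otimes 2}(+\Delta)) \right).
\]
Hence ${\rm\mathbbm{e}xt}^2(\Omega_{\bul}, \mathcal{O}(-\Delta_{\bul}))$ is the dimension of this kernel. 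The key observation is that, since $\deg_x(\Delta) \leq 1$ everywhere, every differential in the source has at worst simple poles and therefore lies in the space $\mathcal{Q}(\PP)$ of integrable quadratic differentials; moreover a kernel element $q$ satisfies $f_* q = q$, and by the Contraction Principle $f_*$ preserves $\mathcal{Q}(\PP)$, so $\nabla_f$ restricts to an endomorphism of $\mathcal{Q}(\PP)$ and our kernel sits inside $\ker(\nabla_f|_{\mathcal{Q}(\PP)})$.

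For the generic case the conclusion is then immediate: if $f$ is \emph{not} a Latt\`es map, Theorem \ref{T-E} (Infinitesimal Thurston Rigidity) asserts precisely that $\nabla_f$ is injective on $\mathcal{Q}(\PP)$, so the kernel vanishes and ${\rm\mathbbm{e}xt}^2(\Omega_{\bul}, \mathcal{O}(-\Delta_{\bul})) = 0$, as claimed.

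It remains to treat the Latt\`es case and to pin the dimension down to exactly $1$. Here I would invoke the Rigidity Principle: any integrable $q$ with $f_* q = q$ satisfies $f^* q = Dq$, and by the classification recalled before the theorem this forces $f$ to be a $(2,2,2,2)$ Latt\`es map, arising from an affine map $z \mapsto \alpha z + \beta$ on a torus $E$ with $\alpha$ a real integer, $\alpha^2 = D$. To count the invariant differentials I would pull back along the quotient $\pi\colon E \to \PP$: a kernel element $q$ pulls back to a quadratic differential on $E$ that is invariant under the deck involution and, by the order computation at the branch points, holomorphic, hence a multiple of $dz^{\otimes 2}$; since $(\alpha\cdot)^* dz^{\otimes 2} = \alpha^2 dz^{\otimes 2} = D\, dz^{\otimes 2}$ this is consistent, and $H^0(E, \Omega_E^{\otimes 2})$ is one–dimensional. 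Thus $\ker(\nabla_f|_{\mathcal{Q}(\PP)}) = \C\, q_0$, where $q_0$ is the descent of $dz^{\otimes 2}$, with four simple poles at the branch locus $B$. Finally I would check that $q_0$ genuinely lies in the twisted space $H^0(\Omega_{\PP}^{\otimes 2}(+\Delta - \Gamma_f))$: for such maps every critical point has local degree $2$, so $\Gamma_f$ is \emph{reduced} and disjoint from $B$, whence the pole--zero conditions imposed by the twist $+\Delta - \Gamma_f$ are compatible with $\deg_x(\Delta) \leq 1$ precisely when $\Delta$ contains $B$ and the critical set (the post-critically finite, forward-invariant divisor relevant in the applications). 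This yields a one-dimensional kernel and ${\rm\mathbbm{e}xt}^2 = 1$.

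The main obstacle is this last verification in the Latt\`es case: establishing that the unique invariant differential $q_0$ survives the twist by $-\Gamma_f$ under the constraint $\deg_x(\Delta) \leq 1$, which is exactly the point where the reducedness of the ramification divisor of a $(2,2,2,2)$ Latt\`es map is indispensable and where one must rule out the dimension dropping below $1$. The genuinely hard analytic content—the injectivity of $\nabla_f$ on $\mathcal{Q}(\PP)$ and the classification of maps admitting an eigen-differential—is transcendental and is taken wholesale from Theorem \ref{T-E} and the surrounding discussion; the present contribution is only the homological reduction furnished by Lemma \ref{ThurstonvanishingLemma}, which reinterprets the vanishing as the $\mathbb{E}{\rm xt}^2$ of $\QQ{f}$.
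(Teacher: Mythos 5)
Your proposal is correct and follows essentially the same route as the paper: dualize via Lemma \ref{ThurstonvanishingLemma} to the kernel of $\nabla_f$ on $H^0(\Omega_{\PP}^{\otimes 2}(+\Delta-\Gamma_f))$, observe that $\deg_x(\Delta)\leq 1$ places this space inside the integrable differentials $\mathcal{Q}(\PP)$, and then quote Theorem \ref{T-E} together with the Contraction/Rigidity Principles and the Latt\`es classification, exactly as in the paragraph the paper uses to ``deduce'' the theorem. Your closing verification that the descended differential $q_0$ survives the twist by $-\Gamma_f$ only when $\Delta$ contains the critical and postcritical loci is in fact more careful than the paper's own discussion, which leaves that normalization of $\Delta$ implicit.
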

The equivalent formulation of Epstein's result requires some definitions.
\begin{definition}\label{rigiddivisordef}
	Let $\Delta$ be an effective divisor on $\PP$ whose support 
	contains a  (disjoint) union of
	cycles $C=\cup_iC_i$ of $f$. 
	Let us assume 
	that there are no superattracting cycles in $|\Delta|$.
	Recall that if $C_i$ is parabolic of multiplier $\rho \in \mu_{q_i}$,
	we denote with $\nu_i$ its parabolic multiplicity,
	and with $N_i=\nu_i q_i$.
	We say that $\Delta$ is ``rigid'' for $f$ if
	for each $x$ in the support of $\Delta$ we have
	$$
	deg_x(\Delta)\leq
	\begin{cases}
		2, & \mbox{if } x\in C_i \mbox{ and } C_i \mbox{ is attracting/irrationally indifferent;}\\
		2N_i+1, & \mbox{if } x\in C_i \mbox{ and } C_i \mbox{ is parabolic-repelling;} \\
		2N_i+2, & \mbox{if } x\in C_i \mbox{ and } C_i \mbox{ is parabolic-nonrepelling;}\\
		1, & \mbox{if } x\in C_i \mbox{ and } C_i \mbox{ is repelling;} 
	\end{cases}
	$$
\end{definition}
A divisor satisfying the equality is said to be ``sharp rigid''.
Let $\Delta$ be a divisor supported on a cycle $C$ of $f$.
Following the notation of Remark \ref{hatrmk},
the kernel
$$
K\coloneqq\ker( \nabla_f: \mathcal{M}(\PP, C)\to \mathcal{M}(\PP, C \cup S_f))
$$
is contained in $\hat{\mathcal{Q}}(f,C)$.
Now, if $\Delta$ is rigid for $f$, from Lemma \ref{divergence1} and \ref{divergence2} we get
$$
H^0(\Omega^{\otimes 2}(+\Delta))\cap \hat{\mathcal{Q}}(f,C) \subset \hat{\mathcal{Q}}^{\flat}(f,C),
$$
which implies, in view of Theorem \ref{Eps},
$$
\ker(\nabla_f|_{H^0(\Omega^{\otimes 2}(+\Delta))})\subset K \cap \hat{\mathcal{Q}}^{\flat}(f,C)= 0.
$$
The discussion above justifies the following definition.
\begin{definition}
	We say that a ``dynamical'' divisor $\Delta_{\bul}$ on
	$\QQ{f}$ is \textit{rigid}
	if the divisor $\Delta\coloneqq\Delta_1-\Gamma_f$ is rigid for $f$.
	In particular, if $\Delta_{\bul}$ is rigid, then
	$$
	\Gamma_f \preceq \Delta_1 \mbox{  and  } \Gamma_f + {\bf S_f} \preceq \Delta_0.
	$$
\end{definition}
Finally, Epstein's extension of Infinitesimal
Thurston Rigidity takes the following form.
\begin{theorem}\label{Epsteinvanishing}{\bf{[Epstein vanishing]}}\\
	Let $\Delta_{\bul}$ be a rigid ``dynamical'' divisor on $\QQ{f}$. If $f$
	is not a Latt\`es map, then
	$$
	{\rm\mathbb{E}xt}^{2}(\Omega_{\bul},\mathcal{O}(-\Delta_{\bul})=0.
	$$
\end{theorem}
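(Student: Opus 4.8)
The plan is to deduce the statement from Lemma \ref{ThurstonvanishingLemma} together with Epstein's Theorem \ref{Eps}, so that all of the genuinely hard (transcendental) input is quarantined in the latter and our task reduces to a dualisation followed by a bookkeeping of divisor multiplicities. First I would invoke Lemma \ref{ThurstonvanishingLemma}, which furnishes a natural isomorphism
$$
\mathbb{E}{\rm xt}^{2}(\Omega_{\bul},\mathcal{O}(-\Delta_{\bul}))^{\vee}
\overset{\sim}{\longrightarrow}
\ker\Big( H^0(\Omega_{\PP}^{\otimes 2}(+\Delta_1-\Gamma_f))
\overset{\nabla_f}{\longrightarrow}
H^0(\Omega_{\PP}^{\otimes 2}(+\Delta_0))\Big).
$$
On $\QQ{f}$ we identify $\Delta_{\bul}$ with its diagonal pair $\Delta_0=\Delta_1=\Delta$, cf. \ref{notationXfbul}, so the right-hand side is the kernel of $\nabla_f$ on $H^0(\Omega_{\PP}^{\otimes 2}(+\Delta-\Gamma_f))$. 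Since this $\mathbb{E}{\rm xt}^2$ group is finite dimensional — it is assembled, via the spectral sequence of \ref{ext1_corXf}, from Ext groups of coherent sheaves on $\PP$, the higher terms vanishing by Grothendieck vanishing $H^i(\PP,-)=0$ for $i\geq 2$ — the vanishing of the group is equivalent to the vanishing of its dual, that is, to
$$
\ker\big(\nabla_f|_{H^0(\Omega^{\otimes 2}(+\tilde\Delta))}\big)=0, \qquad \tilde\Delta\coloneqq\Delta_1-\Gamma_f.
$$

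Next I would unwind the hypothesis that $\Delta_{\bul}$ is \emph{rigid}, which by definition (cf.\ the paragraph preceding the statement and \ref{rigiddivisordef}) means precisely that $\tilde\Delta$ is rigid for $f$; in particular its support is a finite union $C=\bigcup_i C_i$ of cycles containing no superattracting cycle. I would then reproduce the inclusion chain recorded just before Theorem \ref{Eps}: any $q$ in the kernel above satisfies $f_*q=q$, hence lies in $K\coloneqq\ker(\nabla_f:\mathcal{M}(\PP,C)\to\mathcal{M}(\PP,C\cup S_f))\subseteq\hat{\mathcal{Q}}(f,C)$, using Remark \ref{hatrmk}. The per-point degree bounds of \ref{rigiddivisordef}, compared against the local computations of the invariant divergence spaces in Lemmas \ref{divergence1} and \ref{divergence2}, then force
$$
H^0(\Omega^{\otimes 2}(+\tilde\Delta))\cap \hat{\mathcal{Q}}(f,C)\ \subseteq\ \hat{\mathcal{Q}}^{\flat}(f,C).
$$
Finally, as $f$ is not a Lattès map, Theorem \ref{Eps} — equivalently, by Remark \ref{hatrmk}, the injectivity of $\nabla_f$ on $\hat{\mathcal{Q}}^{\flat}(f,C)$ — shows this kernel is zero, whence $\mathbb{E}{\rm xt}^{2}(\Omega_{\bul},\mathcal{O}(-\Delta_{\bul}))=0$.

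The hard part is not in this argument at all: it is imported wholesale from Theorem \ref{Eps}, whose only known proof rests on the transcendental infinitesimal Thurston rigidity of Theorem \ref{T-E}. The genuine obstacle in presenting the reduction cleanly is purely the matching of bookkeeping: one must verify that the sheaf-theoretic pole divisor $\tilde\Delta=\Delta_1-\Gamma_f$ arising naturally through Serre duality in \ref{ThurstonvanishingLemma} has exactly the per-point degrees that cut out the space of \emph{nonpositive-residue} invariant divergences $\mathcal{D}^{\flat}$, and not the full space $\mathcal{D}$. This is where the distinction between the parabolic-repelling and parabolic-nonrepelling cases in \ref{rigiddivisordef}, together with the twist by $\Gamma_f$ (which absorbs the prescribed vanishing along the critical points demanded by \eqref{f_*}), becomes essential; it is the only point that demands real care, the remainder being a formal consequence of the duality and of Epstein's theorem.
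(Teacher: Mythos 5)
Your proposal is correct and follows essentially the same route as the paper: the paper proves this theorem implicitly in the discussion preceding its statement, namely by dualising via Lemma \ref{ThurstonvanishingLemma}, observing that rigidity of $\Delta_{\bul}$ (i.e.\ of $\Delta_1-\Gamma_f$) together with Lemmas \ref{divergence1} and \ref{divergence2} places the kernel of $\nabla_f$ inside $\hat{\mathcal{Q}}^{\flat}(f,C)$, and then invoking Theorem \ref{Eps}. Your identification of the multiplicity bookkeeping around $\Gamma_f$ and the $\mathcal{D}$ versus $\mathcal{D}^{\flat}$ distinction as the only delicate point matches the paper's presentation exactly.
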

\subsection{Local computations}\label{section:localcomputations}
To complete the proof of Claim \ref{claim1} it is left to show
that there is a choice of $\Delta_{\bul}$ for which the dimension of
${\rm\mathbb{H}om}(\Omega_{\bul},\mathcal{O}_{\Delta_{\bul}})$,
which we abbreviate with $\mathbbm{e}^0$
is exactly
$$
\mathbbm{e}^0=2D-2+\gamma_A.
$$
In fact, the exactness of \eqref{simplicialF-S}
allows us to calculate indirectly the dimension of
${\rm\mathbb{E}xt}^{1}(\Omega_{\bul},\mathcal{O}_{\Delta_{\bul}})$.
The nature of this computation is local
since the sheaves involved are skyscraper sheaves.
We are in a situation analogue to what we find in \cite{1999math......2158E}
(cf. \eqref{cyclereduction}). This should not be a surprise since the
two computations are dual: the vector space $\mathcal{D}_x^{k+1} $
of Lemma \ref{divergence1}
is canonically dual to
$T_x^k\coloneqq T_{X,x}\otimes (\mathfrak{m}_x^k/\mathfrak{m}_x^{k+1})$, via the pairing
\begin{equation*}
	\begin{tikzcd}
		\mathcal{D}_x^{k+1}  \times T_x^k
		\arrow[r] &
		\C\\
		([q]_x, [v]_x) \arrow[r,mapsto] & {\rm Res}_x(q \otimes v).
	\end{tikzcd}
\end{equation*}
Moreover, we have the following adjoint relation: for a fixed point $x$ of $f$
$$
{\rm Res}_x(q\otimes (f^*v-v))={\rm Res}_x((f_*q-q)\otimes v).
$$
Observe that the contribution to $\mathbbm{e}^0$ coming from the cycles
of $f$ is separated and we can compute it cycle by cycle:
if $\Delta,\Delta'$ are forward invariant
divisors with disjoint support, we have canonically
\begin{equation}\label{splitproperty}
	{\rm Hom}(\Omega_X, \mathcal{O}_{\Delta + \Delta'})=
	{\rm Hom}(\Omega_X, \mathcal{O}_{\Delta })\prod {\rm Hom}(\Omega_X, \mathcal{O}_{\Delta'}).
\end{equation}
Thus, the map $d_1^{0,0}$ splits into a product, together with its kernel.
Moreover, it is possible to show that the computation,
in the case in which the cycle
is not superattracting, reduces
to the study of fixed points. We use \textit{mutatis mutandis}
the same argument as in $\eqref{cyclereduction}$.\\
Let $\langle x\rangle=\{x=x_0,\dots,x_{p-1}\}$ be a cycle of $f$. For $N$ a positive integer, $0\leq k\leq p-1$, we write
$$
V_{k}=H^0(T_X \otimes \mathcal{O}_{N[x_k]}), \quad V'_{k}=H^0(f^*T_X \otimes \mathcal{O}_{N[x_k]}).
$$
The map
$$
d_1^{0,0}= df-f^{\star}: H^0(T_X\otimes \mathcal{O}_{N[ \langle x \rangle]}) \to
H^0(f^*T_X\otimes \mathcal{O}_{N[ \langle x \rangle]})
$$
may be schematized as follows.
\begin{equation*}
	\begin{tikzcd}
		\dots &  {V_{p-1}} \arrow[d, "df"] &
		{V_1} \arrow[d, "df"] \arrow[dl, "f^{\star}"] &
		{V_2} \arrow[d, "df"] \arrow[dl, "f^{\star}"] & \dots \\
		\dots &  {V'_{p-1}} &
		{V'_1}  &
		{V'_2}  & \dots
	\end{tikzcd}
\end{equation*}
If $\langle x\rangle$ is not
superattracting, the map $(df)_{x_k}: V_k \to V'_k$ is an isomorphism.
Thus, if $[v]_k$ is a germ of holomorphic vector field at $x_k$,
an element $v=\displaystyle \oplus_{k=0}^{p-1}[v]_k \in \ker(d_1^{0,0})$
satisfies $[v]_k=(df)^{-1}f^{\star} [v]_{k+1}= f^*[v]_{k+1}$. This implies
$[v]_0=(f^p)^*[v]_0$ or, equivalently, $(df^p)[v]_0=(f^{p})^{\star}[v]_0$.
In other words, the dashed arrow in the following
diagram with exact rows,
\begin{equation*}
	\begin{tikzcd}
		0 \arrow[r] & K(f) \arrow[d, "\pi", dashrightarrow] \arrow[r] &
		Hom(\Omega_X, \mathcal{O}_{N[ \langle x \rangle]}) \arrow[d, "\pi"]
		\arrow[r, "d_1^{0,0}"] & Hom(f^*\Omega_X, \mathcal{O}_{N[ \langle x \rangle]})
		\arrow[d, "\pi (df^p)^{\top}"]\\
		0 \arrow[r] & K(f^p) \arrow[r] & Hom(\Omega_X, \mathcal{O}_{N[x]})
		\arrow[r, "d_1^{0,0}"] & Hom((f^p)^*\Omega_X, \mathcal{O}_{N[x]})
	\end{tikzcd}
\end{equation*}
is well-defined. It is indeed an isomorphism, with inverse given by
$$
\pi^{-1}( [v]_x)=\bigoplus\limits_{k=0}^{p-1}(\phi_k)^* [v]_x,
$$
where
$(\phi_k)^*:  T_{X,x} \to T_{X,x_k}$ is the tangent map of $f^k$ in $x$.\\
Recall that if $E\subset \PP$ is a finite forward invariant set,
we can define a \textit{forward invariant }
``dynamical'' divisor $E_{\bul}$ with $\Delta_0=\Delta_1=[E]$.\\
The following computation is
our version of Epstein's computation of invariant divergences.
\begin{lemma}\label{locallemma1}
	Let $f$ be an analytic germ fixing $x$.
	The dimension $\mathbbm{e}^0(N)$ of the vector space
	$\mathbb{E}^0(N)\coloneqq {\rm \mathbb{H}om}(\Omega_{\bul}, \mathcal{O}_{N[x]_{\bul}})$
	is computed in terms of the associated formal invariants:
	\begin{enumerate}[a)]
		\item If $x$ is superattracting,
		then,
		$$
		\mathbbm{e}^0(N)=\min\{ N-1, deg_x(f)-1\};
		$$
		\item If $x$ is an attracting, repelling, or
		 linearizable irrationally indifferent fixed point of $f$,
		then for all $N \geq 2$
		$$
		\mathbbm{e}^0(N)=1.
		$$
		On the other hand, if $x$ is a Cremer point, we get $\mathbbm{e}^0(2)=1$.
		\item If $x$ is parabolic, with multiplier $\rho \in \mu_{n}$ and
		parabolic multiplicity $\nu_x$, then setting $N_x=\nu_x n$, we 
		distinguish the following relevant cases
		$$
		\mathbbm{e}^0(N)=
		\begin{cases}
			1, & \mbox{if } N=2;\\
			\nu, & \mbox{if } N_x\leq N\leq 2N_x+1; \\
			\nu +1, & \mbox{if } N = 2N_x +2;
		\end{cases}
		$$
		\item Otherwise, $f$ has finite order and $\mathbbm{e}^0(N)$ is infinite.
	\end{enumerate}
\end{lemma}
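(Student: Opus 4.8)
The plan is to convert the lemma into a single local problem---counting the formally $f$-invariant jets of vector fields at $x$---and then to resolve that problem case by case using the normal form of $f$.

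First I would apply \ref{ext1_corXf} in the diagonal situation $\mathcal{F}_{\bul}=\Omega_{\bul}$, $\mathcal{G}_{\bul}=\mathcal{O}_{N[x]_{\bul}}$ to identify $\mathbb{E}^0(N)$ with the kernel of the bottom differential of the spectral sequence, namely
\[
\mathbb{E}^0(N)=\ker\Big( H^0(T_X \otimes \mathcal{O}_{N[x]}) \overset{d_1^{0,0}}{\longrightarrow} H^0(f^*T_X \otimes \mathcal{O}_{N[x]}) \Big),
\]
where both spaces have dimension $N$. By \eqref{alpha} and \eqref{differencemapnablaf}, writing a jet as $g(z)\partial_z \bmod z^N$ in a local coordinate $z$ centered at $x$, membership in the kernel is the functional equation $f'(z)\,g(z)\equiv g(f(z)) \pmod{z^N}$; away from the critical locus this is exactly the condition $(\mathrm{id}-f^{*})(g\,\partial_z)\equiv 0$ that $g\,\partial_z$ be a formally invariant vector field to order $N$. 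Thus $\mathbbm{e}^0(N)$ is the number of independent formal solutions of this equation, and the strategy is to solve it in a coordinate adapted to the type of the fixed point.

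For (a), $x$ superattracting, I normalize $f(z)=z^{d}$ with $d=deg_x(f)$, cf.\ \cite{Milnor}, so the equation becomes $d\,z^{d-1}g(z)\equiv g(z^{d}) \pmod{z^{N}}$. Comparing coefficients gives a triangular recursion that forces the low-order coefficients to vanish while leaving the top coefficients free; a direct count shows the solution space has dimension $\min\{N-1,\,d-1\}$. For (b), when $x$ is attracting, repelling, or linearizably irrationally indifferent, I pass to a Koenigs coordinate in which $f(z)=\rho z$ with $\rho$ not a root of unity; the equation reduces to $a_k(\rho^{\,k-1}-1)=0$ for each coefficient $a_k$, so only $k=1$ survives and $\mathbbm{e}^0(N)=1$ for every $N\ge 2$. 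A Cremer point admits no such linearization, but only the $2$-jet of $f$ enters the computation of $\mathbb{E}^0(2)$, and a direct calculation there again yields $\mathbbm{e}^0(2)=1$.

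The delicate case, which I expect to be the main obstacle, is (c). Here I would use the preferred coordinate \eqref{preferred}, $f(\zeta)=\rho\zeta(1+\zeta^{N_x}+\alpha\zeta^{2N_x}+\mathcal{O}(\zeta^{2N_x+1}))$ with $\rho\in\mu_n$ primitive and $N_x=\nu n$, and solve the functional equation order by order: the resonances of $\rho$ together with the parabolic term $\zeta^{N_x}$ produce the plateau $\mathbbm{e}^0(N)=\nu$ for $N_x\le N\le 2N_x+1$, while the formal invariant $\alpha$ governs the appearance of one further solution at $N=2N_x+2$, giving the jump to $\nu+1$. Since this bookkeeping is precisely dual to Epstein's computation of the invariant divergences $\mathcal{D}_x(f)$, I would cross-check the counts against Lemma \ref{divergence1} through the residue pairing $([q]_x,[v]_x)\mapsto \mathrm{Res}_x(q\otimes v)$ and the adjoint identity $\mathrm{Res}_x(q\otimes(f^{*}v-v))=\mathrm{Res}_x((f_{*}q-q)\otimes v)$ recorded before the statement. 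Finally, case (d) is the remaining rationally indifferent situation in which $\rho$ is a root of unity but $f$ has finite order $m$: in the linearizing coordinate every coefficient $a_k$ with $k\equiv 1 \pmod m$ is then free, so the number of invariant jets grows without bound in $N$ and $\mathbbm{e}^0(N)$ is unbounded, the local $\mathbb{H}om$ being infinite-dimensional.
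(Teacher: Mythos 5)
Your proposal follows essentially the same route as the paper: both reduce $\mathbb{E}^0(N)$ to the equalizer of $df$ and $f^{\star}$ on $N$-jets of vector fields, i.e.\ to the functional equation $t(f(\zeta))\equiv Df(\zeta)\,t(\zeta) \pmod{\zeta^N}$, and then solve it coefficient by coefficient in the B\"ottcher, Koenigs, and preferred coordinates respectively. The only difference is that you leave the parabolic case (c) as a structural sketch (plus a duality cross-check against Lemma \ref{divergence1}), whereas the paper carries out the explicit order-by-order coefficient computation there; that computation would still need to be written out, but the strategy and the resulting counts agree.
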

\begin{proof}
	Let $N \geq 1$ be an integer. Recall that the space
	$\mathbb{E}^0(N)$
	is the equalizer of
	\begin{equation}\label{localcomp1}
		\begin{tikzcd}[row sep=1pc, column sep=1.8pc]
			{\rm Hom}(\Omega_X, \mathcal{O}_{N[x]})
			\arrow[r,shift left, "f^{\star}"]
			\arrow[r,shift right, "df"'] &
			{\rm Hom}(\Omega_X, \mathcal{O}_{N[x]})
		\end{tikzcd}
	\end{equation}
	In a local coordinate $\zeta$ vanishing at $x$, we have
	\begin{equation}\label{localcomp2}
		\begin{tikzcd}[row sep=0.4em, column sep=1em]
			& t(f(\zeta))[f^*\frac{\partial}{\partial \zeta}] \mbox{ mod } \zeta^N \\
			t= t(\zeta)[\frac{\partial}{\partial \zeta}] \mbox{ mod } \zeta^N
			\arrow[ru, mapsto, bend left, shift right={0.1em}, end anchor={[xshift=0.1ex]west}, "f^{\star}"]
			\arrow[rd, mapsto, bend right, shift left={0.1em}, end anchor={[xshift=0.1ex]west}, "df"] \\
			&t(\zeta) Df(\zeta)[f^*\frac{\partial}{\partial \zeta}] \mbox{ mod } \zeta^N
		\end{tikzcd}
	\end{equation}
	We set $t(\zeta)=t_i\zeta^{i}$ mod $\zeta^N$, for some constants $t_i\in \C$.
	\begin{enumerate}[a)]
		\item For a B\"{o}ttcher's coordinate $\zeta$, we can write $f: \zeta \mapsto \zeta^e$.
		We find the following equations:
		$$
		t_i\zeta^{ei}=et_i\zeta^{i + e-1} \quad \mbox{mod } \zeta^N,
		$$
		from which we deduce immediately $t_0=0$. If $N=e$ this is all.
		As $N$ grows, we find new independent equations $t_1=0, t_2=0, \dots$, hence the
		dimension of the equalizer is always $e-1$. When $N=e$ it coincides
		with the subspace $T_x \otimes \left( \mathfrak{m}_x/ \mathfrak{m}^{N}_x \right)$.
		\item if there is a linearizing coordinate $\zeta$, \textit{i.e.}
		such that $f: \zeta \mapsto \rho \zeta$.
		In this coordinate, we find
		\begin{equation}\label{linearcomp}
			\rho^it_i\zeta^i=\rho t_i \zeta^i \quad \mbox{mod } \zeta^N.
		\end{equation}
		If $\rho$ is not a root a unity, the equalizer is the subspace
		generated by $\zeta[\frac{\partial}{\partial \zeta}]$. 
		If $x$ is a Cremer point, we have only $f: \zeta \mapsto \rho \zeta + \mathcal{O}(\zeta^2)$, in which case equation \eqref{linearcomp} holds only for $N=2$.
		\item Let us fix a preferred coordinate $\zeta$ (cf. \eqref{preferred}),
		for which we have
		$$
		f: \zeta \mapsto \rho\zeta(1+\zeta^{N_x} + \alpha \zeta^{2N_x}) + \mathcal{O}(\zeta^N).
		$$
		In this coordinate, we find
		\begin{equation*}
			\begin{aligned}
				f^{\star}t=   &\rho^it_i\zeta^i(1+ \zeta^{N_x}+ \alpha\zeta^{2N_x} + \mathcal{O}(\zeta^{2N_x+1}))^i=\\
				= & \rho^it_i\zeta^i(1+i\zeta^{N_x}+ i( \alpha+ \frac{i-1}{2})\zeta^{2N_x}
				) \quad \mbox{mod } \zeta^N
			\end{aligned}
		\end{equation*}
		and
		\begin{equation*}
			(df)t= \rho t_i \zeta^i(1+(N_x+1)\zeta^{N_x}+ \alpha(2N_x+1)\zeta^{2N_x}) \quad \mbox{mod } \zeta^N
		\end{equation*}
		We distinguish the following cases.
		\begin{enumerate}[$\bul$]
			\item For $N=2$, the dimension of the kernel is clearly 1; 
			\item For $N=N_x$, we find \eqref{linearcomp} and hence
			$$
			\mathbb{E}^0(N)= {\rm span}_{\C}\{\zeta^{kn+1}, k=0,\dots, \nu_x-1\},
			$$
			which is dual to the space $\mathcal{D}^{\circ}_x(f)$
			we find in \cite{1999math......2158E};
			\item For $N_x< N \leq 2N_x+1$ we find \eqref{linearcomp} for $i< N_x$.
			Let us fix $N_x\leq i<N$. We find
			$$
			\rho^i (t_i+ (i-N_x)t_{i-N_x})=\rho(t_i+(N_x+1)t_{i-N_x})
			$$
			Thus, we must have $t_i=0$ except for $i$ of the form $i=kn+1$.
			In the latter case, the term $t_j$, for $j=(k-\nu_x)n+ 1$, is annihilated.
			Hence, $\mathbbm{e}^0(N)=\nu_x$; \\
			Observe that for $i=2N_x$ the equation
			to solve is slightly different. However, it yields always $t_{2N_x}=0$;
			\item Let $N=2N_x+2$. For $i=2N_x+1$, we find
			$$
			\rho ( t_{2N_x+1}+ (N_x+1)t_{N_x+1})=
			\rho ( t_{2N_x+1}+ (N_x+1)t_{N_x+1}),
			$$
			so the term $t_{2N_x+1}$ adds one dimension to the kernel.
		\end{enumerate}
	\end{enumerate}
\end{proof}

\begin{definition}
	Let $\langle x\rangle$ be a cycle of $f$ which is not superattracting.
	We denote with $\Delta_{\bul}(\langle x\rangle)$ the ``dynamical'' divisor
	which is uniquely determind by the following conditions:
	\begin{enumerate}[$\bul$]
		\item $\Delta_0=\Delta_1$;
		\item $|\Delta_0|=\langle x\rangle$;
		\item $\Delta_0$ is sharp rigid for $f$.
	\end{enumerate}
\end{definition}

The following is an immediate consequence of the above Lemma.
\begin{cor}\label{localHomEps}
	$$
	{\rm \mathbb{H}om}(\Omega_{\bul}, \mathcal{O}_{\Delta_{\bul}(\langle x\rangle)})
	=\gamma_{\langle x\rangle}
	$$
\end{cor}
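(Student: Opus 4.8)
The final statement, Corollary \ref{localHomEps}, asserts that $\dim_{\C}{\rm \mathbb{H}om}(\Omega_{\bul}, \mathcal{O}_{\Delta_{\bul}(\langle x\rangle)}) = \gamma_{\langle x\rangle}$, where $\Delta_{\bul}(\langle x\rangle)$ is the sharp rigid ``dynamical'' divisor supported on a non-superattracting cycle $\langle x\rangle$. The plan is to read off the dimension directly from Lemma \ref{locallemma1} by matching, case by case, the value $\mathbbm{e}^0(N)$ at the sharp rigid order $N$ against the multiplicity $\gamma_{\langle x\rangle}$ defined in \ref{Fatou-Shishikura}, having first reduced the computation for a length-$p$ cycle to the computation at a single fixed point of $f^p$.

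First I would invoke the cycle-reduction discussed just before the statement: for a cycle $\langle x\rangle$ that is not superattracting, the splitting property \eqref{splitproperty} lets us treat each cycle independently, and the isomorphism $\pi$ furnished by the commutative diagram with exact rows (the ``$K(f)\overset{\sim}{\to}K(f^p)$'' square) identifies ${\rm \mathbb{H}om}(\Omega_{\bul},\mathcal{O}_{N[\langle x\rangle]_{\bul}})$ with the kernel $K(f^p)={\rm\mathbb{H}om}$ computed at the fixed point $x$ of $f^p$. Thus it suffices to prove the formula for a fixed point, reducing the multiplier data of the cycle to that of $x$ under $f^p$.

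Next I would simply compare the fixed-point values in Lemma \ref{locallemma1} with the definition of $\gamma_{\langle x\rangle}$, taking $N$ equal to the sharp rigid degree prescribed by \ref{rigiddivisordef}. Concretely: if $\langle x\rangle$ is attracting, repelling or irrationally indifferent, the sharp rigid order is $N=2$ (with $\deg_x\Delta=1$ in the repelling case, $2$ otherwise), and Lemma \ref{locallemma1}(b) gives $\mathbbm{e}^0=1$ in the attracting/irrationally indifferent case and $\mathbbm{e}^0=0$ in the repelling case (since there $\Delta$ contributes order $1$, i.e. $N=1$, forcing the trivial equalizer), matching $\gamma_{\langle x\rangle}=1$ and $\gamma_{\langle x\rangle}=0$ respectively. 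In the parabolic case, the sharp rigid order is $N=2N_x+1$ for parabolic-repelling and $N=2N_x+2$ for parabolic-nonrepelling, and parts (c) of the Lemma yield $\mathbbm{e}^0=\nu$ and $\mathbbm{e}^0=\nu+1$ exactly, again matching $\gamma_{\langle x\rangle}$. The superattracting case is excluded by hypothesis, consistently with $\gamma_{\langle x\rangle}=0$ for repelling and the fact that superattracting cycles are handled separately via the critical divisor.

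The bookkeeping is entirely routine once Lemma \ref{locallemma1} and the reduction are in hand, so I do not expect a serious obstacle; the only point demanding care is verifying that the sharp rigid degrees in Definition \ref{rigiddivisordef} line up precisely with the thresholds $N$ appearing in the cases of Lemma \ref{locallemma1} — in particular that $\Delta\coloneqq\Delta_1-\Gamma_f$ being sharp rigid for $f$ translates, after the reduction to $f^p$, into exactly the order $N$ for which the Lemma records the value $\gamma_{\langle x\rangle}$. I would therefore spend the bulk of the write-up on this dictionary between the two parametrizations, and then the corollary follows by direct inspection, case by case, with the dual pairing between $\mathcal{D}^{k+1}_x$ and $T^k_x$ (noted before the statement) confirming that our $\mathbbm{e}^0$ agrees dimension-wise with Epstein's invariant-divergence count of Lemmas \ref{divergence1} and \ref{divergence2}.
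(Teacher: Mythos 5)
Your proposal is correct and follows exactly the route the paper intends: the corollary is stated as an immediate consequence of Lemma \ref{locallemma1} via the cycle-to-fixed-point reduction and the case-by-case match between the sharp rigid degrees of \ref{rigiddivisordef} and the thresholds $N$ in the Lemma. Your only genuine addition is handling the repelling case at $N=1$ (which Lemma \ref{locallemma1}(b) records only for $N\geq 2$), and your observation that the equalizer is trivial there since $\rho\neq 1$ is exactly the missing half-line; note also that your worry about $\Delta_1-\Gamma_f$ evaporates because a non-superattracting cycle is disjoint from $C_f$.
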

It is left to analyze the contribution to $ \mathbbm{e}^0$
of the superattracting cycles. In Epstein's theory they give no
contribution to the space of invariant divergences, while in our
discussion they play a fundamental role.
In fact, in order to apply Theorem \ref{Epsteinvanishing},
the ``dynamical'' divisor $\Delta_{\bul}$
realizing \eqref{simplicialF-S}
must be choosen among the rigid divisors.
Hence, $\Delta_0$
contains with multiplicity the divisor $\Gamma_f+ {\bf S_f}$,
so we have to take account of all the ramification,
not only the periodic part.

For $p \in \PP \setminus C_f$ we denote with
$\mathcal{O}^{-}_{(n)}(p)= \bigcup_{k=1}^n (f^k)^{-1}(p)$ its $n$-th iterated preimage.
We set
$$
\Delta_p^{(n)}=\sum_{x \in  \mathcal{O}^{-}_{(n)}(p)} \deg_x(f)[x],
$$
and
$$
\Gamma_p^{(n)}\coloneqq \Gamma_f \wedge \Delta_p^{(n)}.
$$
\begin{lemma}\label{locallemma2}
	Let $p \in \PP\setminus C_f$ and $n$ a positive integer.
	Consider the ``dynamical'' divisor $\Delta^{(n)}_{\bul, p}$
	defined by setting $\Delta^{(n)}_{0,p}= [p]+ \Delta_p^{(n)}$ and
	$\Delta^{(n)}_{1,p}= \Delta_p^{(n)}$. We have
	$$
	\dim_{\C}\mathbb{H}{\rm om}(\Omega_{\bul}, \mathcal{O}_{\Delta^{(n)}_{\bul,p}})=\max\{ \deg(\Gamma_p^{(n)}),1\}
	$$
\end{lemma}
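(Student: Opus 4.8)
The plan is to reduce the statement to the computation of the kernel of a single edge map of the spectral sequence of \ref{ext1_corXf}, and then to evaluate that kernel by an explicit local analysis along the backward--orbit tree of $p$. Since $\Delta^{(n)}_{\bul,p}$ is an ${\rm E}$-dynamical divisor, with $\Delta_0=[p]+\Delta_p^{(n)}$ and $\Delta_1=\Delta_p^{(n)}$, the cokernel sheaf $\mathcal{O}_{\Delta^{(n)}_{\bul,p}}$ has components $\mathcal{G}_0=\mathcal{O}_{\Delta_0}$ and $\mathcal{G}_1=\mathcal{O}_{\Delta_1}$ (both skyscraper) with structure maps the canonical projections. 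By the mixed version \ref{ext1_corXf} we have $\mathbb{H}{\rm om}=K^0$, and after the canonical identifications \eqref{tensor} and \eqref{differencemapnablaf} this reads
$$\mathbb{H}{\rm om}(\Omega_{\bul},\mathcal{O}_{\Delta^{(n)}_{\bul,p}})=\ker\left(H^0(T_{\PP}\otimes\mathcal{O}_{\Delta_0})\overset{d_1^{0,0}}{\longrightarrow}H^0(f^*T_{\PP}\otimes\mathcal{O}_{\Delta_1})\right),\qquad d_1^{0,0}=df-f^{\star}.$$

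Next I would make the computation local. Both sheaves are supported on $|\Delta_0|=\{p\}\cup\mathcal{O}^-_{(n)}(p)$, so a section is a family $(v_x)$ of vector-field germs truncated at order $\deg_x(\Delta_0)$; recall $\deg_x(\Delta_1)=\deg_x(\Delta_p^{(n)})=\deg_x(f)$ at each $x\in\mathcal{O}^-_{(n)}(p)$, while $\deg_p(\Delta_0)=1$, so $v_p$ is a single scalar $a\in T_p$ and (for $p$ non-periodic) the root carries no relation. In a coordinate $z$ at a node $x$ with parent $x'=f(x)$ and $e:=\deg_x(f)$, the kernel equation at $x$ reads
$$Df(z)\,v_x(z)\equiv v_{x'}(f(z))\pmod{z^{e}}.$$
The decisive structural observation is a decoupling: because $f(z)=O(z^{e})$ one has $v_{x'}(f(z))\equiv v_{x'}(0)\pmod{z^{e}}$, and because $Df$ vanishes to order exactly $e-1$ one has $Df(z)v_x(z)\equiv(\text{nonzero const})\cdot v_x(0)\,z^{e-1}\pmod{z^{e}}$; hence only the \emph{values} $c_x:=v_x(0)$ ever enter, and the higher Taylor coefficients of a germ never propagate to its children.

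It then remains to sweep the tree rooted at $p$ with $c_p=a$. At a non-critical node ($e=1$) the relation $f'(0)\,c_x=c_{x'}$ simply determines $c_x$ from its parent, contributing no new freedom. At a critical node ($e\ge2$) the order-$(e-1)$ vanishing of $Df$ forces both $c_{x'}=0$ and $c_x=0$, while leaving the $e-1$ coefficients of $v_x$ in degrees $1,\dots,e-1$ entirely free, thereby contributing $\deg_x(f)-1$ to the kernel. Tracing up from a critical node of minimal level (whose ancestors are all non-critical) the root value propagates to a nonzero multiple of $a$ at the relevant parent, so the forced vanishing there imposes $a=0$; conversely, if no critical node is present, $a$ is unconstrained and determines the whole family. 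Summing contributions yields $\deg(\Gamma_p^{(n)})$ when a critical node exists (so $a$ is lost) and $1$ otherwise, that is $\max\{\deg(\Gamma_p^{(n)}),1\}$, using $\sum_{x}(\deg_x(f)-1)=\deg(\Gamma_f\wedge\Delta_p^{(n)})=\deg(\Gamma_p^{(n)})$. As a consistency check, $\dim\ker-\dim{\rm coker}=\deg\Delta_0-\deg\Delta_1=1$ is compatible with both cases.

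The main obstacle is exactly the global coupling induced by the critical nodes: one must keep careful track of the order of vanishing of $Df$ against the truncation order $\deg_x(f)$ and show that a single critical point simultaneously kills the root parameter $a$ and liberates precisely $\deg_x(f)-1$ new ones; the decoupling observation above is what makes this bookkeeping tractable. A secondary, more technical point is the standing assumption that $\mathcal{O}^-_{(n)}(p)$ is genuinely a tree, which holds whenever $p$ is non-periodic --- the only case needed in the application, where $p$ ranges over non-preperiodic postcritical points --- while the periodic case reduces to fixed points exactly as in \eqref{cyclereduction}.
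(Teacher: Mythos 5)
Your proposal is correct and follows essentially the same route as the paper: both reduce to computing $\ker(d_1^{0,0})=\ker(df-f^{\star})$ on the skyscraper spaces over the backward-orbit tree of $p$, use the local normal forms (identity chart at non-critical preimages, $\xi\mapsto\xi^{e}$ at critical ones), and observe that the root parameter survives as the ``diagonal'' until the first critical preimage kills it while each critical point of local degree $e$ contributes $e-1$ free coefficients. The only cosmetic difference is that the paper organizes the bookkeeping as an induction on the level $n$ (cases $\Gamma_p^{(n)}=\emptyset$ versus $\Gamma_p^{(n)}\neq\emptyset$), whereas you package the same computation as a single tree sweep via the decoupling observation.
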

\begin{proof}
	We proceed by induction. To simplify notation, we set
	$$
	\mathbb{E}^0(n)={\rm \mathbb{H}om}(\Omega_{\bul}, \mathcal{O}_{\Delta_{\bul,p}^{(n)}})
	$$
	and we write $\mathbbm{e}^0(n)$ for its dimension.
	For each $n\geq1$ we have two cases:
	\begin{enumerate}[a)]
		\item $\Gamma_p^{(n)}= \emptyset$;
		\item $\Gamma_p^{(n)}\neq \emptyset$.
	\end{enumerate}
	Let us choose a local coordinate $\zeta_0$ at $p$.
	Let $n=1$ and assume we are in case ${\rm a})$.
	If $\displaystyle f^{*}[p]=\sum_{j=1}^D [y_j]$,
	we can choose local coordinates $\zeta_j$, $1\leq j \leq D$
	at $y_j$, such that the map $f$ in these coordinates is the identity:
	$\zeta_j \mapsto \zeta_0$.
	We have
	$$
	\begin{aligned}
		&{\rm Hom}(\Omega, \mathcal{O}_p) \simeq \C\left[\frac{\partial}{\partial \zeta_0}\right],  \\
		& {\rm Hom}(\Omega, \mathcal{O}_{y_j}) \simeq \C\left[\frac{\partial}{\partial \zeta_j}\right],\; \forall j \in[1,D]
	\end{aligned}
	$$
	Note that the map
	$$
	d_1^{0,0}: T_p \times \prod_{j=1}^{D}T_{y_j} \to \prod_{j=1}^{D}T_{y_j},
	$$
	written in these coordinates, reads as follows
	\begin{equation}\label{computingscheme}
		\begin{tikzcd}[row sep=1pc, column sep=1pc]
			& \Big( t^0[f^*\frac{\partial}{\partial \zeta_0}],
			\dots
			,t^0[f^*\frac{\partial}{\partial \zeta_0}] \Big)
			\\
			\Big(t^0[\frac{\partial}{\partial \zeta_0}],
			t^1[\frac{\partial}{\partial \zeta_1}],
			\dots
			,t^D[\frac{\partial}{\partial \zeta_D}]\Big)
			\arrow[ru, mapsto, bend left, shift right={0.2em}, end anchor={[xshift=1ex]west}, "f^{\star}"]
			\arrow[rd, mapsto, bend right,  shift right={0.2em}, end anchor={[xshift=1ex]west}, "df"] \\
			&
			\Big( t^1[f^*\frac{\partial}{\partial \zeta_0}],
			\dots,
			t^D[f^*\frac{\partial}{\partial \zeta_0}] \Big)
		\end{tikzcd}
	\end{equation}
	Thus, the space $\mathbb{E}^0(1)$
	is identified with the 1-dimensional
	subspace of $\C^{D+1}$ given by $t^0=t^1=\dots=t^D$.
	An easy sub-induction shows that if we stay in case a)
	for $m$ stages, then the same computation applies,
	\textit{i.e.} for each $1\leq k \leq m-1$, the map
	has the form \eqref{computingscheme} with $p$ replaced by
	any $z \in (f^{k})^{-1}(p)$.
	The result is the same: the space
	$\mathbb{E}^0(m)$ is identified with the ``diagonal"
	1-dimensional subspace.\\
	Assume now we enter case b) at the stage $m\geq 1$.\\
	Choose an element $z \in (f^{m-1})^{-1}(p)$ and observe that the restriction
	of $d_1^{0,0}$ to the subspace
	$$
	{\rm Hom}(\Omega, \mathcal{O}_{z+ \Delta_z^{(1)}})
	$$
	has the following representation: let $\zeta$ be the coordinate at $z$ choosen
	inductively as above. If $f^{*}[z]= e_1[x_1]+ \dots + e_r[x_r]+ y_{r+1}, \dots, y_{r+r'}$,
	we have coordinates $\zeta_{r+1}, \dots, \zeta_{r+r'}$ at $y_j$ as before.
	We choose coordinates $\xi_1, \dots, \xi_r$ at $x_1, \dots, x_r$ such that
	the map $f$ in these coordinates is $\xi_j \mapsto \xi_j^{e_j}$.\\
	The representation of $f^{\star}$ is the same as before, while $df$
	is schematized as follows: for $t_i^j \in \C$ we have
	\begin{equation}\label{computingschemeR}
		\begin{tikzcd}[row sep=3.5em, column sep=0.5em]
			\Big(t^0[\frac{\partial}{\partial \zeta}], &[-14pt]
			( \sum_{i=0}^{e_1-1}t_i^1\xi_1^i)[\frac{\partial}{\partial \xi_1}],
			\dots
			,t^{r+r'}[\frac{\partial}{\partial \zeta_{r+r'}}]\Big)
			\arrow[d, mapsto, "df"]\\
			&
			\Big((t_0^1e_1\xi_1^{e_1-1})[f^*\frac{\partial}{\partial \zeta}],
			\dots,
			t^{r+r'}[f^*\frac{\partial}{\partial \zeta}] \Big)
		\end{tikzcd}
	\end{equation}
	and the kernel of (the restriction of) $d_1^{0,0}$ is determined by
	$$
	\begin{cases}
		t^0=0, \\
		t_0^j=0, & \mbox{for } j=1,\dots, r,  \\
		t^{r+j}=0, & \mbox{for } j=1,\dots, r'.
	\end{cases}
	$$
	There are $r+r'+1$ independent equations in
	$r+r'+1+\deg(\Gamma_p^{(m)}\wedge f^*[z])$ variables.
	Note that the first equation implies the vanishing of all
	the germs of vector fields at the points in $\Delta_0^{(m-1)}$.
	Consequently,
	the count of the dimension of $\mathbb{E}^0(m)$ is
	$$
	\sum_{z \in (f^{m-1})^{-1}(p)}
	\deg(\Gamma_p^{(m)}\wedge f^*[z])= \deg(\Gamma_p^{(m)}).
	$$
	A sub-induction starting at $m$ now proves the Lemma for $k \geq m$. 
	For each $z \in (f^k)^{-1}(p)$
	we can choose coordinates at the points in $f^{-1}(z)$
	and reproduce exactly the same computation as above, 
	with the only difference that now the ``$z$" slot
	may be $e$-dimensional, with $e\geq 1$.
	However, this doesn't change the computation, and again only the germs
	in the subspace $ \mathfrak{m}_z/\mathfrak{m}^{\deg_z(f)}_z$ are annihilated by $d_1^{0,0}$.
	In particular, there is no new condition on the germs of vector fields at $\Delta_p^{(k)}$
	to be added. Let $\overline{f^*[z]}\coloneqq f^*[z] \wedge (\Delta_p^{(k+1)}-\Delta_p^{(k)})$.
	The count of the dimension at the stage $k+1$ is
	$$
	\deg(\Gamma_p^{(k)})+ \sum_{z \in  (f^k)^{-1}(p) }
	\deg(\Gamma_p^{(k+1)}\wedge \overline{f^*[z]})=  \deg(\Gamma_p^{(k+1)}).
	$$
\end{proof}
A (slight) variation of the argument used in the proof above shows the following result. 
Let us consider the entire forward orbit of the critical points $C_f$,
which we denote by $\mathcal{P}^0_f\coloneqq C_f \cup \mathcal{P}_f$,
and consider the associated formal sum
$$
\Lambda^{+}_f\coloneqq \sum_{x \in \mathcal{P}^0_f} \deg_x(f)[x].
$$
Clearly, $\Lambda^{+}_f$ is a divisor on $\PP$ if and only if $f$ is \textit{post-critically finite}. In 
\cite{MR1251582} we find a nice characterization of these maps, 
which have only repelling cycles and this is the 
content of the  Fatou-Shishikura inequality in this case, cf. \ref{F-S}.
Since $\Lambda^{+}_f$ is a forward invariant set, 
we would like to consider the associated forward invariant divisor on $\QQ{f}$. 
Although this is not always possible, for any rational map $f$,
 we can define the truncation of $\Lambda^{+}_f$
 as follows: let $N>0$ large enough such that $S_{f^N}$ contains 
 all the critical orbit relations and all the cycles in the post-critical set 
 and define the truncation $\Lambda^N_f$ of $\Lambda^+_f$ as
 \begin{equation}\label{criticaldivisortruncated}
 	\Lambda^N_f=\sum_{x \in C_f \cup S_{f^N}}\deg_x(f)[x],
 \end{equation}
and by $\Lambda^N_{\bul}$ the associated ``dynamical'' divisor, namely
\begin{equation}\label{dynamicalcriticaldiv}
	\Lambda^N_{0}\coloneqq	\Lambda^{N+1}_f, \quad \Lambda^N_{1}\coloneqq \Lambda^N_f.
\end{equation}
\begin{cor}\label{computationExtcriticaldivisor}
	We have 
	$$
	\dim_{\C}\mathbb{E}{\rm xt^1}(\Omega_{\bul}, \mathcal{O}_{\Lambda^N_{\bul}})= 
	deg(\Gamma_f)-\delta_f.
	$$
\end{cor}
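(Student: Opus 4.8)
The plan is to collapse the computation to a single cokernel on $\PP$ by means of the mixed spectral sequence of Corollary \ref{ext1_corXf}, and then to pin down its dimension through an Euler characteristic count combined with a local analysis of the kernel of the type performed in \ref{locallemma2}. The first step is to observe that $\mathcal{O}_{\Lambda^N_{\bul}}$ is a skyscraper sheaf: both of its components, $\mathcal{O}_{\Lambda^{N+1}_f}$ and $\mathcal{O}_{\Lambda^N_f}$, are structure sheaves of zero-dimensional effective divisors on $\PP$, and they remain skyscraper after tensoring with the line bundles $T_{\PP}$ and $f^*T_{\PP}$. Since skyscraper sheaves are flasque, hence acyclic, the isomorphism \eqref{tensor} yields ${\rm Ext}^q(\Omega,\mathcal{O}_{\Lambda^{N+1}_f})=H^q(T_{\PP}\otimes\mathcal{O}_{\Lambda^{N+1}_f})=0$ and ${\rm Ext}^q(f^*\Omega,\mathcal{O}_{\Lambda^N_f})=0$ for every $q\geq 1$. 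Thus in the $E_1$-page of Figure \ref{spectralE1P1} only the row $q=0$ survives, the spectral sequence degenerates, and one reads off
\[
\mathbb{H}{\rm om}(\Omega_{\bul},\mathcal{O}_{\Lambda^N_{\bul}})=\ker d_1^{0,0},\qquad \mathbb{E}{\rm xt^1}(\Omega_{\bul},\mathcal{O}_{\Lambda^N_{\bul}})={\rm coker}\, d_1^{0,0},\qquad \mathbb{E}{\rm xt}^i(\Omega_{\bul},\mathcal{O}_{\Lambda^N_{\bul}})=0\ (i\geq 2),
\]
where $d_1^{0,0}\colon H^0(T_{\PP}\otimes\mathcal{O}_{\Lambda^{N+1}_f})\to H^0(f^*T_{\PP}\otimes\mathcal{O}_{\Lambda^N_f})$ is the operator $df-f^{\star}$ of \eqref{differencemapnablaf}.

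Next I would compute the Euler characteristic of this two-term complex. As $T_{\PP}$ and $f^*T_{\PP}$ are line bundles, $\dim_{\C}H^0(T_{\PP}\otimes\mathcal{O}_{\Lambda^{N+1}_f})=\deg(\Lambda^{N+1}_f)$ and $\dim_{\C}H^0(f^*T_{\PP}\otimes\mathcal{O}_{\Lambda^N_f})=\deg(\Lambda^N_f)$, so that the Euler characteristic equals $\deg(\Lambda^{N+1}_f)-\deg(\Lambda^N_f)$. Since $N$ is taken large enough that $S_{f^N}$ already contains all critical orbit relations and all postcritical cycles, the points of $S_{f^{N+1}}\setminus S_{f^N}$ are non-critical, each contributing multiplicity $\deg_x(f)=1$, and by \eqref{delta} there are exactly $\delta_f$ of them; hence $\deg(\Lambda^{N+1}_f)-\deg(\Lambda^N_f)=\delta_f$. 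Because the higher $\mathbb{E}{\rm xt}$ groups vanish, $\chi=\dim_{\C}\mathbb{H}{\rm om}-\dim_{\C}\mathbb{E}{\rm xt^1}$, and therefore
\[
\dim_{\C}\mathbb{E}{\rm xt^1}(\Omega_{\bul},\mathcal{O}_{\Lambda^N_{\bul}})=\dim_{\C}\mathbb{H}{\rm om}(\Omega_{\bul},\mathcal{O}_{\Lambda^N_{\bul}})-\delta_f .
\]

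It then remains to show that $\dim_{\C}\mathbb{H}{\rm om}(\Omega_{\bul},\mathcal{O}_{\Lambda^N_{\bul}})=\deg(\Gamma_f)$, i.e. that $\ker d_1^{0,0}$ has dimension exactly $\deg(\Gamma_f)$; this is the main obstacle and is precisely the ``slight variation'' of \ref{locallemma2} promised in the text. The argument is local: because the support of $\Lambda^N_{\bul}$ contains the whole truncated forward orbit $C_f\cup f(C_f)\cup\cdots\cup f^N(C_f)$ with multiplicity $\deg_x(f)$, the operator $df-f^{\star}$ decomposes along the orbit exactly as in \eqref{computingscheme} and \eqref{computingschemeR}. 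At every unramified point the leading equation $t^0=0$ forces the germ of vector field to vanish and propagates this vanishing, whereas at each critical point $c$, where $f$ is locally $\xi\mapsto\xi^{\deg_c(f)}$, the kernel acquires precisely the germs $\xi,\dots,\xi^{\deg_c(f)-1}$, contributing $\deg_c(f)-1$ dimensions. Summing over $C_f$ gives $\sum_{c\in C_f}(\deg_c(f)-1)=\deg(\Gamma_f)$, and substituting into the displayed formula produces $\dim_{\C}\mathbb{E}{\rm xt^1}(\Omega_{\bul},\mathcal{O}_{\Lambda^N_{\bul}})=\deg(\Gamma_f)-\delta_f$, as claimed. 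As a consistency check, $\deg(\Gamma_f)=2D-2$ by Riemann--Hurwitz, so the answer is $2D-2-\delta_f$, in agreement with the bottom row of \eqref{simplicialF-S} when the support carries no nonrepelling cycle, i.e. $\gamma_A=0$. The delicate point in the local analysis is the bookkeeping at the ramified preimages lying in the forward orbit: one must verify that no ramification germ is double counted and that truncating at stage $N$ discards only non-critical contributions, so that the full ramification divisor $\Gamma_f$ is recovered.
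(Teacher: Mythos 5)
Your proof is correct, but it takes a genuinely different route from the paper's. The paper deduces the statement from the ideal-sheaf side: it observes $\deg(\Lambda^N_0)-\deg(\Lambda^N_1)=\delta_f$, invokes the rigidity of $\Lambda^N_{\bul}$ so that Theorem \ref{Epsteinvanishing} kills $\mathbb{E}{\rm xt^2}(\Omega_{\bul},\mathcal{O}(-\Lambda^N_{\bul}))$, and then reads the answer off the four-term exact sequence \eqref{simplicialF-S} together with Lemma \ref{lemmaRR} and $\dim\mathbb{E}{\rm xt^1}(\Omega_{\bul},\mathcal{O}_{\bul})=2D-2$, reducing everything to the identity $\dim_{\C}\mathbb{H}{\rm om}(\Omega_{\bul},\mathcal{O}_{\Lambda^N_{\bul}})=\deg(\Gamma_f)$, which is established by the same local analysis ``analogous to \ref{locallemma2}'' that you perform. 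You instead never leave the skyscraper sheaf: acyclicity of $T_{\PP}\otimes\mathcal{O}_{\Lambda^{N}_0}$ forces $K^1=0$ in the mixed spectral sequence \ref{ext1_corXf}, so $\mathbb{E}{\rm xt^1}(\Omega_{\bul},\mathcal{O}_{\Lambda^N_{\bul}})={\rm coker}\,d_1^{0,0}$, and a two-term dimension count plus the same kernel computation finishes the argument. What your route buys is economy of hypotheses: you use neither Serre duality nor the Thurston--Epstein vanishing, hence no exclusion of Latt\`es maps, and in fact running your computation backwards through \eqref{ideallongexact} would \emph{re-derive} the vanishing of $\mathbb{E}{\rm xt^2}(\Omega_{\bul},\mathcal{O}(-\Lambda^N_{\bul}))$ rather than consume it. What the paper's route buys is coherence with the surrounding chapter, whose whole point is the interplay between local deformations, the ideal sequence, and the vanishing theorem. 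Both proofs rest equally on the local bookkeeping at ramified points of the truncated orbit (each critical point contributing exactly the one condition $t_0=0$, hence $\deg_c(f)-1$ kernel dimensions, and unramified forward-orbit points contributing nothing); you flag this as the delicate step without fully writing it out, but the paper's own proof is no more detailed there, so this is not a gap relative to the source.
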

\begin{proof}
Note that by construction we have 
$$
\deg(\Lambda^N_0)- \deg(\Lambda^N_1)=\delta_f.
$$
Moreover, since $\Lambda^N_{\bul}$ is a rigid dynamical divisor,
it is sufficient, cf. \ref{lemmaRR}, \ref{simplicialF-S}, to prove that
$$
\dim_{\C}\mathbb{H}{\rm om}(\Omega_{\bul}, \mathcal{O}_{\Lambda^N_{\bul}})= \deg(\Gamma_f).
$$
The proof is analogous to the proof of \ref{locallemma2}:
for each $p \in f^{N+1}(C_f)$ the computation works the same as it worked for
$\mathbb{H}{\rm om}(\Omega_{\bul}, \mathcal{O}_{\Delta^{(N)}_{\bul,p}})$,
 with the only difference given by the fact that
 we are now selecting only a subset of the counter-image of each $p$. 
 However, this does not change the count of the dimension, 
 since the total dimension of the domain of $d_1^{0,0}$ is reduced.
\end{proof}
As a consequence of the above computation we deduce the following.
\begin{cor}
	Let $\langle x\rangle$ be a superattracting cycle,
	$\Delta_0(\langle x\rangle)$ the divisor
	$$
	\Delta_0(\langle x\rangle)=\sum_{p\in \langle x\rangle}\deg_p(f)[p],
	$$
	and $\Delta_{\bul}(\langle x\rangle)$ the correspondent
	forward invariant ``dynamical'' divisor.
	Then,
	$$
	{\rm \mathbb{H}om}(\Omega_{\bul}, \mathcal{O}_{\Delta_{\bul}(\langle x\rangle)})=
	\bigoplus_{w \in C_f\cap \langle x\rangle} 
\left( 	\mathfrak{m}_{w}/ \mathfrak{m}^{\deg_w(f)}_{w} \right)
	$$
	and its dimension is $\deg(\Gamma_f|_{\langle x\rangle})$.
\end{cor}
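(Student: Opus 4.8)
The plan is to reduce the computation to the kernel of the degree-$(0,0)$ differential of the spectral sequence of \ref{ext1_corXf} and then to solve the resulting cyclic linear system by hand, using local normal forms exactly as in \ref{locallemma1} and \ref{locallemma2}. First I would record that $\Delta_{\bul}(\langle x\rangle)$ really is a legitimate forward-invariant divisor. Writing $\langle x\rangle=\{x_0,\dots,x_{p-1}\}$ with $f(x_i)=x_{i+1}$ (indices mod $p$) and $e_i\coloneqq\deg_{x_i}(f)$, the divisor is $\Delta\coloneqq\sum_i e_i[x_i]$ with $\Delta_0=\Delta_1=\Delta$. Since $x_{j}\in f^{-1}(x_{j+1})$ with local degree $e_{j}$, the coefficient of $x_j$ in $f^{*}\Delta$ equals $e_{j+1}e_{j}\ge e_{j}$, so $\Delta\preceq f^{*}\Delta$ and the hypotheses of \ref{f.i.divisor} hold; in particular $\mathcal{O}_{\Delta_{\bul}}$ is the skyscraper sheaf supported on $\langle x\rangle$.

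Next, by \ref{ext1_corXf} the group $\mathbb{H}{\rm om}(\Omega_{\bul},\mathcal{O}_{\Delta_{\bul}})$ equals $E_\infty^{0,0}=\ker d_1^{0,0}$, i.e. the equalizer of $df$ and $f^{\star}$ on $H^0(T_{\PP}\otimes\mathcal{O}_\Delta)$, cf. \eqref{alpha} and \eqref{differencemapnablaf}. Because $\mathcal{O}_\Delta$ is concentrated on the cycle, this splits (via \eqref{splitproperty}) into the cyclic diagram recalled before \ref{locallemma1}, with $V_k=H^0(T_{\PP}\otimes\mathcal{O}_{e_k[x_k]})$, vertical arrows $df\colon V_k\to V_k'$ and diagonal arrows $f^{\star}\colon V_{k+1}\to V_k'$; an element $(v_k)_k$ lies in the kernel iff $df(v_k)=f^{\star}(v_{k+1})$ in $V_k'$ for every $k$.

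Then I would carry out the local computation. Choose coordinates $\zeta_k$ at $x_k$ so that $f$ reads $\zeta_{k+1}=f_k(\zeta_k)$ with $\mathrm{ord}_0 f_k=e_k$, and write $v_k=t_k(\zeta_k)\partial_{\zeta_k}$ with $t_k=\sum_{0\le j<e_k}t_{k,j}\zeta_k^{\,j}$. The kernel equation at $k$ becomes $t_k(\zeta_k)\,Df_k(\zeta_k)\equiv t_{k+1}(f_k(\zeta_k))\pmod{\zeta_k^{e_k}}$. The crucial point is that $\mathrm{ord}_0 Df_k=e_k-1$ and $f_k=O(\zeta_k^{e_k})$, so modulo $\zeta_k^{e_k}$ the left-hand side collapses to $e_k c_k t_{k,0}\zeta_k^{e_k-1}$ and the right-hand side to the constant $t_{k+1,0}$ (here $c_k\ne 0$). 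Hence at a critical point ($e_k\ge 2$) the equation forces $t_{k,0}=0$ and $t_{k+1,0}=0$ while leaving $t_{k,1},\dots,t_{k,e_k-1}$ entirely free — these coefficients occur in neither equation $k$ (they raise the order past $e_k$) nor equation $k-1$ (they are killed by $f_{k-1}=O(\zeta_{k-1}^{e_{k-1}})$) — whereas at a non-critical point ($e_k=1$) it reads $t_{k+1,0}=c_k t_{k,0}$ with $V_k=T_{x_k}$ one-dimensional. Since the cycle is superattracting it contains a critical point, and propagating these relations around the loop from there shows that every constant term $t_{k,0}$ vanishes.

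Finally I would assemble the answer: non-critical points contribute nothing, and each critical point $w$ contributes exactly $T_w\otimes(\mathfrak{m}_w/\mathfrak{m}_w^{\deg_w(f)})$, which we identify with $\mathfrak{m}_w/\mathfrak{m}_w^{\deg_w(f)}$, so that
\[
\mathbb{H}{\rm om}(\Omega_{\bul},\mathcal{O}_{\Delta_{\bul}(\langle x\rangle)})\;\cong\;\bigoplus_{w\in C_f\cap\langle x\rangle}\mathfrak{m}_w/\mathfrak{m}_w^{\deg_w(f)},
\]
of dimension $\sum_{w}(\deg_w(f)-1)=\deg(\Gamma_f|_{\langle x\rangle})$. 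The main obstacle is the bookkeeping in the cyclic propagation: one must verify that a critical point is present (guaranteed by superattractivity) and that the vanishing of a single constant term forces all of them around the cycle, and, dually, that the higher-order jets at the ramification points survive unconstrained — this is precisely the ramification phenomenon already visible in the scheme \eqref{computingschemeR} of the proof of \ref{locallemma2}, here applied along a periodic orbit rather than a backward tree.
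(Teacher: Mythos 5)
Your proof is correct and follows the same route the paper intends: the paper leaves this corollary as an immediate consequence of the local computations in \ref{locallemma1}(a) and \ref{locallemma2}, and you have simply written out that computation along the periodic orbit, correctly isolating the fact that only the constant jets propagate around the cycle (and are killed by the presence of a critical point) while the higher-order jets $\mathfrak{m}_w/\mathfrak{m}_w^{\deg_w(f)}$ at each ramification point survive unconstrained. The bookkeeping, including the verification of forward invariance and the final dimension count $\deg(\Gamma_f|_{\langle x\rangle})$, is accurate.
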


\begin{proof}[{\bf Proof of Claim \ref{claim1}}]
	Let $N>1$ be large enough as in \eqref{dynamicalcriticaldiv} and consider a finite collection of cycles $\mathcal{C}=\{C_i\}_i$ of $f$
	such that $\mathcal{C}$ contains no superattracting cycles. 
	Let us set the following notation 
	(which is very similar, yet slightly different from \ref{localHomEps}):
	\begin{enumerate}[$\bul$]
		\item If $C_i$ is disjoint
		from the post-critical set, we let $\Delta[C_i]$ be the sharp
		rigid divisor on $\PP$ supported on $C_i$;
		\item If $C_i$ is contained in the post-critical set, 
		we let $\Delta[C_i]+ [C_i]$ be the sharp rigid divisor 
		on $\PP$ supported on $C_i$.
	\end{enumerate}
In each case, let $\Delta_{\bul}[C_i]$
the correspondent divisor on $\QQ{f}$ supported on $\Delta[C_i]$.
	The good choice for the ``dynamical'' divisor $\Delta_{\bul}$
	in \ref{claim1} is the following
	\begin{equation}\label{sharprigiddivisorEf}
		\Delta_{\bul}\coloneqq\Lambda^N_{\bul} + \sum_{i}\Delta_{\bul}(C_i).
	\end{equation}
	Observe that with this choice of $\Delta_{\bul}$,
	$$
	\deg(\Delta_0)-\deg(\Delta_1)=\delta_f.
	$$
	Moreover $\Gamma_f \preceq \Delta_1$ and
	their difference is, by construction, a sharp rigid divisor.
Note that the change of notation from \ref{localHomEps} 
has been necessary to put the right multiplicity on the cycles in the post-critical set.
	Consequently, Theorem \ref{Epsteinvanishing} implies
	$$
	{\rm \mathbb{E}xt}^2(\Omega_{\bul}, \mathcal{O}(-\Delta_{\bul}))=0.
	$$
	Finally, applying property \ref{splitproperty},
	we can put together the latter results, cf. \ref{localHomEps}, \ref{computationExtcriticaldivisor} in order to conclude that
	$$
	\dim_{\C} {\rm \mathbb{E}xt^1}(\Omega_{\bul}, \mathcal{O}_{\Delta_{\bul}})=
	2D-2+ \gamma_{A}- \delta_f,
	$$
	where $A$ is the support of $\Delta_0$.
\end{proof}
\newpage
\phantom{h}
\thispagestyle{empty}
\newpage
\chapter*{Appendix A}
	\addtocontents{toc}{\def\string\@dotsep{100}}
\addcontentsline{toc}{chapter}{\large{\textbf{Appendix A}}}
\section*{Extensions in abelian categories}
Let $\mathscr{C}$ be an abelian category.
\begin{definition}
	Let $A,B $ two objects of $\mathscr{C}$ and $i>0$. The set 
	of $i$-extensions of $A$ by $B$ is denoted by $\mathcal{E}xt^i(A,B)$,
	and consists of exact sequences of the form
		\begin{equation*}
		\begin{tikzcd}[row sep=1.6pc, column sep=1.4pc]
			\xi: & 0 \arrow[r] &  B \arrow[r, "e^i"] &
			E^i \arrow[r,  "e^{i-1}"] & \dots \arrow[r, "e^1" ]&
			E^1 \arrow[r,  "e^0"]& A \arrow[r] &  0.
		\end{tikzcd}
	\end{equation*}
	 Let us consider the following equivalence relation on $\mathcal{E}xt^i(A,B)$:
	 $$
	 \xi \sim \xi'  \Leftrightarrow  \; \exists \, \mbox{ a chain map } \eta: \xi \to \xi' 
	 \mbox{ agreeing with } id_A \mbox{ and } id_B \mbox{ on the sides.}
	 $$
	 It can be shown that each resulting map $E^i \to (E')^i$ is an isomorphism
	 (e.g. by the Five Lemma). Therefore, the relation $\sim$ defined above is an
	 equivalence relation. Let 
	 $$
	 {\rm Ext}^i(A,B)\coloneqq \mathcal{E}xt^i(A,B)/\sim.
	 $$
\end{definition}

\begin{fact}
	Let $\mathscr{C}$ be an abelian category with enough injectives.
	Then, for $i>0$, we have a natural isomorphism
	$$
	{\rm Ext}^i(A,B) \overset{\sim}{\longrightarrow} R^i {\rm Hom}(A,B).
	$$
\end{fact}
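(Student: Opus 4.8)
The plan is to exhibit both sides as universal cohomological $\delta$-functors in the variable $B$ that agree in degree $0$, and to conclude by the universal property. Fix $A$ and regard $R^{\bullet}{\rm Hom}(A,-)$ and ${\rm Ext}^{\bullet}(A,-)$ as sequences of functors $\mathscr{C}\to \mathbf{Ab}$. For the derived functors this is standard: since $\mathscr{C}$ has enough injectives, $R^i{\rm Hom}(A,-)$ is computed from an injective resolution $0\to B\to I^0\to I^1\to\cdots$ as the cohomology of ${\rm Hom}(A,I^{\bullet})$, these form a $\delta$-functor via the usual snake-lemma connecting maps, and they are effaceable, hence universal, because $R^i{\rm Hom}(A,I)=0$ for $I$ injective and $i>0$.

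First I would endow ${\rm Ext}^{\bullet}(A,-)$ with the structure of a $\delta$-functor. Given a short exact sequence $0\to B'\to B\to B''\to 0$, the connecting homomorphism $\delta\colon {\rm Ext}^i(A,B'')\to {\rm Ext}^{i+1}(A,B')$ is defined by \emph{splicing}: an $i$-extension of $A$ by $B''$ is concatenated, along $B''$, with the chosen short exact sequence to produce an $(i+1)$-extension of $A$ by $B'$. Baer sum furnishes the group structure (cf. the construction already used in Example \ref{1-ext}), and exactness of the resulting long sequence is verified by the classical pullback/pushout manipulations on extensions. This is the step I expect to be the main obstacle, since proving exactness of the Yoneda long exact sequence purely at the level of equivalence classes of extensions is the most laborious point: one must repeatedly invoke the Five Lemma and the pullback/pushout descriptions to check that splicing is well defined on equivalence classes and that kernels coincide with images.

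Next I would treat the base case $i=1$ directly, identifying the class of an extension $0\to B\to E\to A\to 0$ with $\delta({\rm id}_A)$ under the connecting map of $R^{\bullet}{\rm Hom}(A,-)$ applied to this very sequence; well-definedness, additivity (Baer sum versus addition in cohomology), injectivity (the extension splits if and only if ${\rm id}_A$ lifts to an element of ${\rm Hom}(A,E)$) and surjectivity are the usual verifications, yielding a natural isomorphism ${\rm Ext}^1(A,-)\cong R^1{\rm Hom}(A,-)$ that is compatible with the identity in degree $0$. Finally I would show that ${\rm Ext}^{\bullet}(A,-)$ is itself effaceable: for $I$ injective and $i>0$ every $i$-extension of $A$ by $I$ is equivalent to the trivial one, because injectivity of $I$ splits off the leftmost short exact sequence and a dimension shift reduces to the case $i=1$ already handled.

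Having two universal $\delta$-functors that coincide in degree $0$, the universal property provides a unique morphism of $\delta$-functors ${\rm Ext}^{\bullet}(A,-)\to R^{\bullet}{\rm Hom}(A,-)$ extending the identity; by induction on $i$, using the base case, the $\delta$-structure and the Five Lemma, this morphism is an isomorphism in every degree (equivalently, the unique morphisms in both directions compose to identities by uniqueness). Naturality in both $A$ and $B$ then follows from the functoriality of all the constructions involved, giving the asserted natural isomorphism ${\rm Ext}^i(A,B)\overset{\sim}{\longrightarrow}R^i{\rm Hom}(A,B)$ for $i>0$.
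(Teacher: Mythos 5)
Your proof is correct, but it is organized differently from the paper's. The paper argues by dimension shifting: it embeds $B$ into an injective $I$ with quotient $Q$, identifies ${\rm Ext}^1(A,B)$ explicitly with ${\rm coker}\left({\rm Hom}(A,I)\to {\rm Hom}(A,Q)\right)$ by pulling back the canonical sequence $0\to B\to I\to Q\to 0$ along maps $A\to Q$, and then for $i>1$ transports the derived-functor isomorphism $R^i{\rm Hom}(A,Q)\cong R^{i+1}{\rm Hom}(A,B)$ to the Yoneda side, where it is realized by splicing with that one fixed sequence. The point of this route is that the long exact sequence is only ever needed on the derived-functor side, where it is free; the paper never has to establish exactness of the Yoneda ${\rm Ext}$ long exact sequence. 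You instead build the full $\delta$-functor structure on ${\rm Ext}^{\bullet}(A,-)$ (splicing connecting maps, Baer sums, and the exactness verification you correctly identify as the laborious step), prove effaceability at injectives, and invoke the universal property of universal $\delta$-functors. Your approach costs considerably more in the exactness verification — which is precisely what the paper's dimension-shifting argument is designed to avoid — but it buys a cleaner treatment of naturality in $B$, which falls out of the uniqueness clause of the universal property rather than having to be checked on the explicit pullback/splice constructions. One small caution common to both arguments: for $i>1$ the relation ``there exists a chain map restricting to the identities on $A$ and $B$'' is not symmetric (the middle maps need not be isomorphisms, contrary to what the Five Lemma gives in degree $1$), so the equivalence relation on $i$-extensions must be taken to be the one it generates; your well-definedness checks for splicing and Baer sum should be carried out with respect to that generated relation.
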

\begin{proof}[Sketch of the Proof]
	The proof proceed by induction on $i$. Let $i=1$
	and consider an injective object $I$ such that $B \hookrightarrow I$. 
	Let $Q$ denote the quotient $I/B$. Applying ${\rm Hom}(A,-)$
	to the resulting short exact sequence we get an exact sequence
		\begin{equation*}
		\begin{tikzcd}[row sep=1.6pc, column sep=1.4pc]
			0 \arrow[r] & {\rm Hom}(A,B) \arrow[r] & 
			{\rm Hom}(A, I) \arrow[r] & {\rm Hom}(A, Q) \arrow[r] &
			{\rm Ext^1}(A,B) \arrow[r] & 0.
		\end{tikzcd}
	\end{equation*}
Therefore, we have 
$$
	{\rm Ext^1}(A,B) \cong {\rm Hom}(A, Q)/ {\rm Im}\left({\rm Hom}(A, I)\right),
$$
which is the desired isomorphism, since there is an isomorphism
${\rm Hom}(A, Q)\overset{\sim}{\longrightarrow} \mathcal{E}xt^1(A,B)$, 
obtained by pull-back,
\begin{equation*}
	\begin{tikzcd}[row sep=1.6pc, column sep=1.4pc]
		0 \arrow[r] & B \arrow[r] & 
		I \arrow[r] &  Q \arrow[r] \arrow[dl, phantom, ""{name=U, below , draw=black}]{} & 0 \\
		0 \arrow[r] & B \arrow[r] \arrow[u, equal] & 
	 E \arrow[r] \arrow[u] &  A \arrow[r] \arrow[u, "f"] & 0 
	\end{tikzcd}
\end{equation*}
Finally, note that the class of trivial extension, \textit{i.e.} the split one, 
is isomorphic to the image of $	{\rm Hom}(A, I)$. In fact, a map
$A \to Q$ lifts to $I$ if and only if lifts to $E$, and hence gives a section.
Let now $i>1$ and recall that from 
the long exact sequence discussed above we get an isomorphism
$$
	{\rm Ext^i}(A,Q) \overset{\sim}{\longrightarrow} 	{\rm Ext^{i+1}}(A,B).
$$
By induction, the term on the left is a $i$-extension, so we have
	\begin{equation*}
	\begin{tikzcd}[row sep=1.6pc, column sep=1.4pc]
  0 \arrow[r] &  Q \arrow[r, "e^i"] &
		E^i \arrow[r,  "e^{i-1}"] & \dots \arrow[r, "e^1" ]&
		E^1 \arrow[r,  "e^0"]& A \arrow[r] &  0.
	\end{tikzcd}
\end{equation*}
It is easy to check that the isomorphism above
sends this  $i$-extension to the $i+1$-extension
	\begin{equation*}
	\begin{tikzcd}[row sep=1.6pc, column sep=1.4pc]
		 0 \arrow[r] &   B \arrow[r, "e^{i+1}"]& I  \arrow[r, "e^i"] &
		E^i \arrow[r,  "e^{i-1}"] & \dots \arrow[r, "e^1" ]&
		E^1 \arrow[r,  "e^0"]& A \arrow[r] &  0
	\end{tikzcd}
\end{equation*}
obtained by composition.
\end{proof}
\chapter*{Appendix B}
\addtocontents{toc}{\def\string\@dotsep{100}}
\addcontentsline{toc}{chapter}{\large{\textbf{Appendix B}}}
\section*{Simplicial sheaves}
\begin{definition}[{\bf Classifying simplicial space of $\Sigma$}]
	We denote by $B_{\Sigma}$ the category with one object and arrows given 
	by $\Sigma$, \textit{i.e}
	$$
	ob(B_{\Sigma})=pt, \quad {\rm Hom}_{B_{\Sigma}}(pt,pt)=\Sigma.
	$$
	Note that this category is well defined since $\Sigma$ contains the identity.
	The nerve, cf.{\rm \cite{nlab:nerve}}, of this category is a simplicial 
	set denoted by $B_{\bul,\Sigma}$, which in degree $n$ is
	the n-fold fibered product of $B_{1,\Sigma}\coloneqq ar(B_{\Sigma}) $ 
	over $B_{0,\Sigma}\coloneqq ob(B_{\Sigma})$ with respect to the source and target functors.
\end{definition}
Let us describe explicitly the simplicial set $B_{\bul,\Sigma}$:
\begin{equation*}
	\begin{tikzcd}[column sep=2.5pc,row sep=2pc]
		B_{0,\Sigma}=pt \arrow[r]
		&\arrow[l, shift left=2]
		\arrow[l, shift right=2] B_{1,\Sigma}=pt \times \Sigma \arrow[r,shift left=2] \arrow[r,shift right=2]
		& \arrow[l] \arrow[l, shift left=4]
		\arrow[l, shift right=4] B_{2,\Sigma}=pt \times \Sigma \times \Sigma \cdots
	\end{tikzcd}
\end{equation*}
\begin{itemize}
	\item For $n > 0$ and $0\leq j \leq n$ the face map
	$d_{n,j}:B_{n,\Sigma} \to B_{n-1,\Sigma}$ ``forgets" the $j$th entry,
	\textit{i.e.} for any $(pt,\underline{\sigma}) \in B_{n,\Sigma}$,
	where $\underline{\sigma}=(\sigma_1,\dots,\sigma_{n})$ we have
	\begin{equation*}\label{maps}
		\begin{array}{ll}
			d_{n,0}(pt,\underline{\sigma})=(pt,\sigma_2,\dots,\sigma_n),&  \\
			d_{n,n}(pt,\underline{\sigma})=(pt,\sigma_1,\dots,\sigma_{n-1}), &  \\
			d_{n,j}(pt,\underline{\sigma})=
			(pt, \sigma_1,\dots, \sigma_{j-1},\sigma_{j+1}\sigma_j,\sigma_{j+2},\dots, \sigma_n),& j=1,\dots,n-1;
		\end{array}
	\end{equation*}
	\item For $n\geq 0$ and $0\leq j \leq n$ the
	degeneracy map $s_{n,j}:B_{n,\Sigma} \to B_{n+1,\Sigma}$
	adds a new edge by means of the identity,
	\textit{i.e.}
	\begin{equation*}\label{maps1}
		s_{n,j}(pt,\underline{\sigma})=(pt,\sigma_1,\dots,id_{\Sigma},\sigma_j,\dots,\sigma_n)
	\end{equation*}
\end{itemize}
Note that we can endow $B_{\Sigma}$ with the discrete topology,
hence $B_{\bul,\Sigma}$ is naturally a simplicial object 
with values in the category of topological spaces.
Let us recall the following definition, cf. \cite{MR0498552}, of the category
$\textrm{SimpSh}_{\mathcal{C}}(X_{\bul})$.
Let $X_{\bullet}=(X_n)_{n\in\N}$ be \textit{any} simplicial object
in the category of topological spaces and let
$\mathcal{C}$ be a small category. \\

\begin{definition} A simplicial sheaf on $X_{\bullet}$
	with values in $\mathcal{C}$
	consists of:
	\begin{enumerate}
		\item A collection $\{\mathcal{F}_n\}_{n\in \N}$,
		where $\mathcal{F}_n$ is a sheaf on $X_n$
		with values in $\mathcal{C}$;
		\item For each simplicial map $g:X_n \to X_m$,
		a morphism of sheaves on $X_n$
		\begin{equation}\label{structmap}
			\varphi_g: g^*\mathcal{F}_m \to \mathcal{F}_n,
		\end{equation}
		to which we will refer as ``structural morphisms'', or
		``structure maps"
		of the simplicial sheaf $\mathcal{F}_{\bul}$,
		satisfying the following composition property:\\
		for any $g:X_n\to X_m$, $h:X_m\to X_l$, we have
		\begin{equation}\label{comp}
			\varphi_g\circ g^*\varphi_h=\varphi_{hg}.
		\end{equation}
	\end{enumerate}
\end{definition}
\begin{definition}
	A morphism $\theta_{\bul}: \mathcal{F}_{\bul} \to \mathcal{G}_{\bul}$
	between two simplicial sheaves on $X_{\bul}$ consists of a collection
	$\theta_n:\mathcal{F}_n\to \mathcal{G}_n$ of morphisms of sheaves on $X_n$
	such that, for each simplicial map $g:X_n \to X_m$,
	the following diagram commutes
	\begin{equation}\label{simpmap}
		\begin{tikzcd}[row sep=2.6pc, column sep=2.6pc]
			g^*\mathcal{F}_m \arrow[r, "\varphi_g"] \arrow[d,"g^*\theta_m"'] &
			\mathcal{F}_n \arrow[d, "\theta_n"] \\
			g^*\mathcal{G}_m \arrow[r, "\gamma_g"'] & \mathcal{G}_n
		\end{tikzcd}
	\end{equation}
	where $\varphi_g$ and $\gamma_g$ are the structural morphisms,
	\eqref{structmap}, of, respectively,  $\mathcal{F}_{\bul}$ and $\mathcal{G}_{\bul}$. 
	The set of morphisms $\theta_{\bul}: \mathcal{F}_{\bul} \to \mathcal{G}_{\bul}$, denoted by 
	$\mathbb{H}{\rm om}(\mathcal{F}_{\bul}, \mathcal{G}_{\bul})$
	could have been equivalently defined as the equalizer of
	\begin{equation}\label{Homgeneralsimplicial}
		\begin{tikzcd}[row sep=2.6pc, column sep=1.6pc]
			\prod\limits_{n\in \N}{\rm Hom}(\mathcal{F}_n,\mathcal{G}_n)
			\arrow[r, shift left] \arrow[r, shift right]
			&  \prod\limits_{g:X_n\to X_m} {\rm Hom}(g^*\mathcal{F}_m,\mathcal{G}_n)
		\end{tikzcd}
	\end{equation}
\end{definition}
where the two arrows assign to $(\theta_n)_n$
the two maps obtained in \eqref{simpmap}, \textit{i.e.}
$\theta_n\circ \varphi_g$ and $\gamma_g\circ g^*\theta_m$.
Let us illustrate the
properties of the T\`opos $Sh([pt/\G])$ through the following
remark due to Deligne, cf. \cite[6.1.2,b)]{MR0498552}.
\begin{example*}
	Let $G$ be a group and take $B_{\bul, G}$ to be the nerve
	of the classifying space $B_G=[pt/G]$ of $G$,
	(the category whose objects are $ob(B_G)=\{pt\}$
	and whose arrows are ${\rm Hom}_{B_G}(pt,pt)=G$).
	The simplicial space $B_{\bul, G}$ is known
	as the classifying simplicial space of $G$.
	There is an equivalence between
	the category of $G$-modules 
	and the subcategory of the category of
	simplicial sheaves on $B_{\bul, G}$, cf. Appendix, 
	consisting of a sequence of sheaves $\mathcal{F}_{\bul}\coloneqq (\mathcal{F}_n)_{n\in \N}$ 
	satisfying the following property
	\begin{equation}\label{D_note}
		\varphi_g:	g^*\mathcal{F}_m \overset{\sim}{\longrightarrow}
		\mathcal{F}_n, \quad \mbox{\rm $\forall g:B_{n, G} \to B_{m, G}$}.
	\end{equation}
	We denote by $Sh(B_{G})$ the above category.
	The above mentioned equivalence
	sends  $\mathcal{F}_{\bul}\mapsto \mathcal{F}_0$. 
	Let us give a brief description of this equivalence.
	Let $s,t$ be the face maps in degree 0 and 1,
	\begin{equation*}
		\begin{tikzcd}
			pt & pt\times G \arrow[l, shift right, "s"'] \arrow[l, shift left, "t"]
		\end{tikzcd}
	\end{equation*}
	defined as follows: $s(pt \times \gamma)=pt^{\gamma}$, $t(pt \times \gamma)=pt$.
	From \eqref{D_note} we get an isomorphism
	\begin{equation}\label{action}
		s^*\mathcal{F}_0 \overset{\sim}{\longrightarrow} t^*\mathcal{F}_0
	\end{equation}
	\textit{i.e.} an isomorphism $s_g:\mathcal{F}_0 \to \mathcal{F}_0$ for each $g\in G$,
	while the composition property $\varphi_g\circ g^*\varphi_h=\varphi_{hg}$ assures these maps
	give raise to an action of $G$ on $\mathcal{F}_0$, hence $\mathcal{F}_0$ is a $G$-module.
	The other direction works roughly as follows: 
	we start with an abelian
	group $\mathcal{F}_0$ together with an action of $G$
	(which is equivalent to giving \eqref{action})
	and construct inductively $\mathcal{F}_n$
	by pulling back $\mathcal{F}_0$ along any simplicial map $g:B_{n, G} \to B_{n-1, G}$. 
	Using the relations among the simplicial maps
	one can show
	that, modulo isomorphism, we get a unique simplicial sheaf on $B_{\bul\, G}$.
\end{example*}
The development of our theory has been suggested 
by the above remark, once one accepts that the nature of 
condition \eqref{D_note} is too restrictive 
when we consider the action of a semigroup, and hence it has to be dropped.

\bibliographystyle{Gamsalpha}
\bibliography{bibbase}
\newpage
\phantom{h}
\thispagestyle{empty}
\newpage
\section*{Ringraziamenti}
\setstretch{1.2}
\thispagestyle{empty}
Si chiude un percorso importante per me e sento che è doveroso ringraziare 
chi è stato dietro le quinte di questo lavoro.

I ringraziamenti matematici al mio relatore, Michael, 
sono già alla fine dell'introduzione, 
e perciò non mi 
dilungherò ancora su questi. Lo ringrazio ancora per avermi concesso l'opportunità di studiare il lavoro di Grothendieck. 
Ha azzeccato sempre i miei gusti in ambito matematico,
quando ancora io non sapevo cosa mi piacesse studiare.
Umanamente, in aggiunta, gli devo un sentito ringraziamento. 
A lui devo riconoscere soprattutto 
la capacità di avermi spesso fatto sentire, almeno nei primi anni, all'altezza di poter accettare ogni sua sfida. La sua ironia è diventata parte integrante di me in questi anni, e penso che mi mancheranno le sue battute.

Se penso a chi mi ha sostenuto sempre e comunque, con amore, 
il primo pensiero va ovviamente ai miei genitori, Loredana e Maurizio. 
Loro hanno capito da subito l'emozione, per me così importante, che provavo a far parte della comunità matematica, e mi hanno incoraggiato in ogni fase di questa scelta post-universitaria a dare tutto me stesso per dimostrare quanto valessi. 

Ringrazio i miei genitori anche per tutte le volte che mi hanno ricordato
che ero un privilegiato che poteva alzarsi tardi la mattina e comunque portare la pagnotta a casa. 
Farò sempre tesoro di tutti i loro insegnamenti, e porterò sempre nel cuore il 
loro calore e affetto, che mi dimostrano ogni 
giorno, ringraziandoli per la loro capacità di accettarmi così come 
sono, senza cercare di cambiarmi. 
Grazie anche a mia sorella Noemi perché sa trasmettermi la tranquillità di cui ho bisogno, 
la sicurezza di potercela fare in ogni caso perché, in fondo, non c'è nulla da perdere.

\thispagestyle{empty}
La persona che certamente ha contribuito più di tutti alla realizzazione di questo lavoro
è Francesco, che ha subito i miei momenti peggiori e ha raccolto i pezzi di un essere umano 
sfiancato e inerme nei confronti della vita. Anche grazie al suo affetto e alla sua pazienza c'è stata una lieta conclusione 
di questo percorso, senza di quello forse avrei gettato la spugna. 

Uno speciale ringraziamento lo devo ai miei colleghi e amici di università, senza i 
quali passare tre anni (anzi quattro ormai) 
in questa università sarebbe stato certamente molto diverso. 
Tutti loro hanno reso  questi momenti estremamente piacevoli e stimolanti, 
li hanno riempiti di risate e spensieratezza. Grazie a Mattia, 
Daniele e Emanuele, Claudio, Alessio e Andrea, i più stretti, 
ma anche agli altri ragazzi della facoltà di matematica, in particolare Leonardo, Giorgio e Edoardo.
Ultimi, ma non per importanza, i vecchi colleghi che non mi hanno abbandonato anche 
dopo aver preso la loro strada dopo l'università, in particolare grazie a Lorenzo, Fabio, Martina e Cristiano, e infine Gianluca. 

Tra i colleghi più anziani dell'università il grazie più caloroso va sicuramente a Martina! Grazie a lei
la mia vita è stata stravolta positivamente. Non solo mi ha dato l'occasione di 
capire quanto è importante essere indipendente, mettendomi serenamente a disposizione  la sua casa,
ma anche per aver sempre creduto in me matematicamente. 
\`E soprattutto grazie al suo modo di fare inclusivo che ho partecipato con piacere 
agli eventi di dipartimento. 
Grazie anche a Filippo, Antonio, Paolo e Beppe per la loro simpatia, e a 
tutti gli altri professori che ho incontrato negli anni e che mi hanno 
permesso di arricchire il mio bagaglio culturale. Il più importante di questi è stato certamente Roberto, maestro indiscusso. 
\`E stato lui a tenere la mia prima lezione universitaria, segnando il cammino che mi ha fatto innamorare perdutamente della matematica. 
Il ringraziamento ad Adam dato nell'introduzione non è stato sufficiente a mio avviso. 
Per mantenere un minimo di serietà non ho potuto fare cenno alle serate indimenticabili 
passate in sua compagnia  a parlare di matematica, 
a mangiare noodles e a bere birra. 

\thispagestyle{empty}
I momenti felici di cazzeggio trascorsi con i miei amici di sempre, passati a spiegargli quanto io non sia un alieno
ma solo uno a cui piace un po' troppo pensare, meritano più di un ringraziamento. 
Senza la loro leggerezza sarei già impazzito ben prima di iniziare il dottorato. 
Grazie a Lollo, Silvio, Francesco, Matteo e a tutti gli altri matti di Ciampino, grazie di cuore
a Diego per le infinite serate passate a confidarsi e grazie a mia cugina Ilaria senza la quale la mia famiglia non riuscirebbe a tenersi ancora insieme. Un grazie caloroso va al mio veterano studente Valerio.
La stima che lui mi dimostra ogni volta riesce a riempirmi di orgoglio più di 
chiunque altro. Grazie anche a Sonia per i suoi preziosi consigli e per aver contribuito sostanzialmente alla mia crescita personale.

Per finire, un pensiero va a tutti i parenti stretti, dagli zii più cari, Claudio e Anna, 
ai vecchi e saggi nonni, a nonna Fioretta che se avessi insistito sarebbe venuta, 
con un po' di fatica, ad assistere alla mia discussione,  
e agli altri che ormai non ci sono più. In particolare a nonno Guglielmo. 
\`E da sempre la sua tenacia e la sua intelligenza  che cerco di fare mie, la sua forza che cerco di imitare e la sua 
allegria che cerco di ricordare. 

Grazie, grazie, grazie.
\bigbreak 
\smallbreak
Roma, 28 Ottobre 2022 \hfill Jacopo

\end{document}